\documentclass[a4paper,12pt]{amsart}
\RequirePackage{plautopatch}
\RequirePackage[l2tabu, orthodox]{nag}
\usepackage{bxtexlogo}
\usepackage{amsfonts}
\usepackage{bbm}
\usepackage{amssymb, amsmath,stmaryrd,mathrsfs,mathtools}
\usepackage{tikz,float}
\usepackage{tikz-cd}
\usepackage{amsthm}
\usepackage{enumerate}
\usepackage{enumitem}

\usetikzlibrary{calc}
\usepackage{url}
\usepackage[hidelinks]{hyperref}
\usepackage{cleveref}
\usepackage{graphicx}
\usepackage {diagbox}


\usepackage{fullpage}

\usepackage[
 defernumbers=true,
 maxbibnames=6,
 backend=biber,
 style=alphabetic,
 giveninits=false,
 sortcites=true,
 maxalphanames=4,
 minalphanames=3,
 doi=true,
 isbn=false,
 url=false,
 backend=biber,
 safeinputenc
]{biblatex}

\usepackage[T1]{fontenc}
\addbibresource{main.bib}

\usepackage[utf8]{inputenc}
\usepackage{bxtexlogo}

\newtheorem{thm}{Theorem}
\newtheorem{prop}[thm]{Proposition}
\newtheorem{lem}[thm]{Lemma}
\newtheorem{cor}[thm]{Corollary}
\newtheorem{fac}[thm]{Fact}

\newtheorem{claim}[thm]{Claim}

\newcounter{enuAlph}

\newtheorem{teorema}[enuAlph]{Theorem}

\theoremstyle{definition}

\newtheorem{dfn}[thm]{Definition}
\newtheorem{rem}[thm]{Remark}
\newtheorem{que}[thm]{Question}

\crefname{dfn}{Definition}{Definitions}
\crefname{thm}{Theorem}{Theorems}
\crefname{lem}{Lemma}{Lemmas}
\crefname{cor}{Corollary}{Corollaries}
\crefname{prop}{Proposition}{Propositions}

\numberwithin{thm}{section} 
\numberwithin{equation}{section}

\renewcommand{\a}{\alpha}
\renewcommand{\b}{\beta}

\newcommand{\p}{\mathbb{P}}
\newcommand{\q}{\mathbb{Q}}
\newcommand{\br}{\mathbb{R}}

\newcommand{\qd}{\dot{\mathbb{Q}}}

\DeclareMathAlphabet{\mymathbb}{U}{BOONDOX-ds}{m}{n}

\newcommand{\oo}{{\omega}^\omega}
\newcommand{\ooo}{[{\omega}]^\omega}

\newcommand{\fsq}{\omega^{<\omega}}
\newcommand{\sq}{2^{<\omega}}

\newcommand{\on}{\mathpunct{\upharpoonright}}

\newcommand{\bb}{\mathfrak{b}}
\newcommand{\cc}{\mathfrak{c}}
\newcommand{\dd}{\mathfrak{d}}
\newcommand{\ee}{\mathfrak{e}}

\newcommand{\hh}{\mathfrak{h}}
\newcommand{\mm}{\mathfrak{m}}
\newcommand{\pp}{\mathfrak{p}}
\newcommand{\rr}{\mathfrak{r}}

\newcommand{\uu}{\mathfrak{u}}

\newcommand{\m}{\mathcal{M}}
\newcommand{\n}{\mathcal{N}}

\DeclareMathOperator{\non}{non}
\DeclareMathOperator{\cov}{cov}
\DeclareMathOperator{\add}{add}
\DeclareMathOperator{\cof}{cof}

\newcommand{\covm}{\cov(\mathcal{M})}
\newcommand{\nonm}{\non(\mathcal{M})}
\newcommand{\addm}{\add(\mathcal{M})}
\newcommand{\cofm}{\cof(\mathcal{M})}
\newcommand{\covn}{\cov(\mathcal{N})}
\newcommand{\nonn}{\non(\mathcal{N})}
\newcommand{\addn}{\add(\mathcal{N})}
\newcommand{\cofn}{\cof(\mathcal{N})}
\newcommand{\covi}{\cov(I)}
\newcommand{\noni}{\non(I)}
\newcommand{\addi}{\add(I)}
\newcommand{\cofi}{\cof(I)}

\newcommand{\R}{\mathbf{R}}

\renewcommand{\lq}{\preceq_T}

\DeclareMathOperator{\dom}{dom}
\DeclareMathOperator{\ran}{ran}

\DeclareMathOperator{\cf}{cf}

\newcommand{\forces}{\Vdash}

\makeatletter
\renewcommand{\p@enumi}{}
\makeatother

\newcommand{\ed}{\mathcal{ED}}

\newcommand{\lebb}{\mathbb{LE}}

\newcommand{\ind}{i}

\newcommand{\none}{\non(\mathcal{E})}
\newcommand{\cove}{\cov(\mathcal{E})}

\DeclareMathOperator{\Pred}{Pred}

\newcommand{\I}{\mathscr{I}}
\newcommand{\J}{\mathscr{J}}
\DeclareMathOperator{\Exh}{Exh}
\DeclareMathOperator{\Fin}{Fin}

\newcommand{\dif}{\mathbb{P}^{\I_L^f}}

\newcommand{\edfin}{\mathcal{ED}_\mathrm{fin}}
\newcommand{\finfin}{\mathrm{Fin}\otimes\mathrm{Fin}}

\newcommand{\cal}{\mathcal}

	


\newcommand{\rb}[1]{(#1)}    

\newcommand{\set}[2]
{\left\{{#1}:{#2}\right\}}  

\newcommand{\card}[1]{\left|#1\right|}   

\newcommand{\calA}{\mathcal{A}}

\newcommand{\ki}{K_\I}
\newcommand{\mzb}{M}
\newcommand{\mzbi}{\mzb_{\I}}

\newcommand{\sbd}{\trianglelefteq^*}
\newcommand{\Z}{\mathcal{Z}}

\DeclareMathOperator{\stem}{stem}

\DeclareMathOperator{\suc}{succ}

\newcommand{\nwd}{\mathrm{nwd}}
\newcommand{\conv}{\mathrm{conv}}

\newcommand{\covomn}{\operatorname{cov}_\omega(\mathcal{N})}

\DeclareMathOperator{\Spec}{Spec}

\renewcommand{\subset}{\subseteq}

\newcommand{\baire}{\omega^\omega}

\newcommand{\cantor}{2^{\omega}}
\newcommand{\cantortree}{2^{<\omega}}




\newcommand{\addo}[1]{\add_\omega(#1)}
\newcommand{\covo}[1]{\cov_\omega(#1)}

\newcommand{\addostar}[1]{\add_\omega^*(#1)}

\newcommand{\continuum}{\mathfrak{c}}


\newcommand{\Meager}{\mathcal{M}}
\newcommand{\Null}{\mathcal{N}}

\newcommand{\Baire}{\omega^{\omega}}



\newcommand{\RandomGraph}{\mathcal{R}}
\newcommand{\Solecki}{\mathcal{S}}

\newcommand{\cano}{(\cantortree)^\omega}
\newcommand{\bbmz}{\mathbb{M}_{\Z}}

\newcommand{\IL}{\I_{L}}
\newcommand{\ILf}{\I_{L}^f}

\newcommand{\KS}{\mathbb{K}_{\Solecki}}

\newcommand{\posetEDfin}{\mathbb{P}^{\edfin}}
\newcommand{\posetED}{\mathbb{P}^{\ed}}
\newcommand{\posetJL}{\mathbb{P}^{\J_L}}
\newcommand{\eec}{\ee^{\mathrm{const}}}
\newcommand{\eect}{\ee^{\mathrm{const}}(2)}
\newcommand{\eeck}{\ee^{\mathrm{const}}(k)}
\newcommand{\vv}{\mathfrak{v}}
\newcommand{\vvc}{\vv^{\mathrm{const}}}
\newcommand{\vvct}{\vv^{\mathrm{const}}(2)}
\newcommand{\vvck}{\vv^{\mathrm{const}}(k)}
\newcommand{\pc}{\sqsubset^{\mathrm{pc}}}
\newcommand{\pck}{\sqsubset^{\mathrm{pc},k}}

\newcommand{\Polynomial}{\mathscr{I}_P}
\newcommand{\Linear}{\mathscr{I}_L}

\newcommand{\ctbl}{[\mathbb{R}]^{\leq{\aleph_0}}}
\usepackage{caption}
\captionsetup{
    skip = 10pt,
}


\title{Cardinal invariants of idealized Miller null sets} 
\subjclass[2020]{03E05, 03E17, 03E35}
\keywords{$\sigma$-idelas on Polish spaces, ideals on countable sets, idealized Miller forcing, Cicho\'n's maximum, Fr\'echet-limit, ultrafilter-limit}

\author{Aleksander Cieślak}
\address{Faculty of Pure and Applied Mathematics, Wrocław University of Science and Technology, Wybrzeże Stanisława Wyspiańskiego 27, 50-370 Wrocław, Poland}
\email{aleksander.cieslak@pwr.edu.pl}

\author{Takehiko Gappo}
\address{Institut f\"{u}r Diskrete Mathematik und Geometrie, TU Wien, Wiedner Hauptstra\ss e 8-10/104, 1040 Wien, Austria}
\email{takehiko.gappo@tuwien.ac.at}

\author{Arturo Martínez-Celis}
\address{Instytut Matematyczny, Uniwersytet Wrocławski, pl.\ Grunwaldzki 2, 50-384 Wrocław, Poland}
\email{arturo.martinez-celis@math.uni.wroc.pl}

\author{Takashi Yamazoe}
\address{Graduate School of System Informatics, Kobe University,
	Rokko--dai 1--1, Nada--ku, 657--8501 Kobe, Japan}
\email{212x502x@cloud.kobe-u.jp}

\begin{document}
	
    \begin{abstract}
        This paper provides an extensive study of the $\I$-Miller null ideals $M_\I$, $\sigma$-ideals on the Baire space parametrized by ideals $\I$ on countable sets. These $\sigma$-ideals are associated to the idealized versions of Miller forcing in the same way that the meager ideal is associated to Cohen forcing. We compute the cardinal invariants of $M_\I$ for typical examples of Borel ideals $\I$ and show that Cicho\'{n}'s Maximum can be extended by adding the uniformity and covering numbers of $M_\I$ for different ideals $\I$.
    \end{abstract}
	
    \maketitle

    \tableofcontents

    \section{Introduction}

    Cicho\'n's diagram (\Cref{figure:Cd_Intro}) illustrates the relationship of well-known cardinal invariants of the continuum, which we call \emph{Cicho\'n characteristics}.

    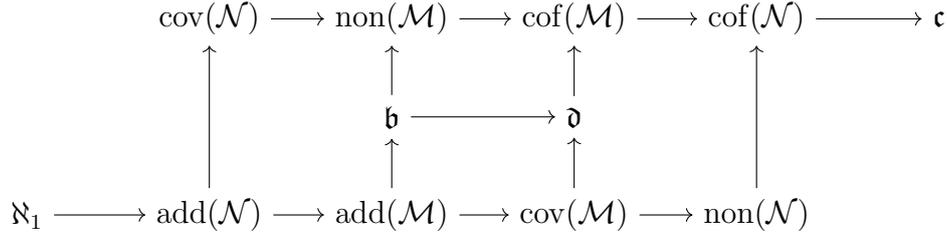
\begin{figure}[ht]	
		\centering
		\begin{tikzpicture}
			\tikzset{
				textnode/.style={text=black},
			}
			\tikzset{
				edge/.style={color=black,thin},
			}
			\newcommand{\w}{2.4}
			\newcommand{\h}{1.3}
			
			\node[textnode] (addN) at (0, 0) {$\addn$};
			\node[textnode] (covN) at (0, \h*2) {$\covn$};
			
			\node[textnode] (addM) at (\w, 0) {$\addm$};
			\node[textnode] (b) at (\w, \h) {$\bb$};
			\node[textnode] (nonM) at (\w, \h*2) {$\nonm$};
			
			\node[textnode] (covM) at (\w*2, 0) {$\covm$};
			\node[textnode] (d) at (\w*2, \h) {$\dd$};
			\node[textnode] (cofM) at (\w*2, \h*2) {$\cofm$};

			\node[textnode] (nonN) at (\w*3, 0) {$\nonn$};
			\node[textnode] (cofN) at (\w*3, \h*2) {$\cofn$};
			
			\node[textnode] (aleph1) at (-\w, 0) {$\aleph_1$};
			\node[textnode] (c) at (\w*4, \h*2) {$\cc$};
			
			\draw[->,edge] (addN) to (covN);
			\draw[->,edge] (addN) to (addM);
			\draw[->,edge] (covN) to (nonM);	
			\draw[->,edge] (addM) to (b);
			\draw[->,edge] (b) to (nonM);
			\draw[->,edge] (addM) to (covM);
			\draw[->,edge] (nonM) to (cofM);
			\draw[->,edge] (covM) to (d);
			\draw[->,edge] (d) to (cofM);
			\draw[->,edge] (b) to (d);
			\draw[->,edge] (covM) to (nonN);
			\draw[->,edge] (cofM) to (cofN);
			\draw[->,edge] (nonN) to (cofN);
			\draw[->,edge] (aleph1) to (addN);
			\draw[->,edge] (cofN) to (c);
		\end{tikzpicture}
		\caption{Cicho\'n's diagram. An arrow $\mathfrak{x}\to\mathfrak{y}$ denotes that $\mathfrak{x}\leq\mathfrak{y}$ holds.}\label{figure:Cd_Intro}
    \end{figure}

    All of them are cardinal invariants associated with $\sigma$-ideals on Polish spaces; eight of them are defined for the Lebesgue null ideal $\n$ or the meager ideal $\m$. The bounding number $\bb$ and the dominating number $\dd$ are related to the ideal $K_\sigma$ of $\sigma$-compact subsets of the Baire space:
    \[
    \bb=\add(K_\sigma)=\non(K_\sigma) \text{ and }\dd=\cov(K_\sigma)=\cof(K_\sigma).
    \]
    In addition, $\aleph_1$ and $\cc$ correspond to the $\sigma$-ideal $[\mathbb{R}]^{\leq{\aleph_0}}$ of countable sets of the reals:
    \[
    \aleph_1=\add(\ctbl)=\non(\ctbl)\text{ and }\cc=\cov(\ctbl)=\cof(\ctbl).
    \]
    Thus, Cicho\'n characteristics are the cardinal invariants of the four $\sigma$-ideals $\n, \m, K_\sigma$, and $\ctbl$, which we call \emph{Cicho\'n $\sigma$-ideals}.
    
    \emph{Cicho\'{n}'s maximum} is a maximal separation constellation of Cicho\'n's diagram, where all Cicho\'n characteristics have distinct values except for the two dependent numbers
    \[
    \addm=\min\{\bb,\covm\}\text{ and }\cofm=\max\{\dd,\nonm\}.
    \]
    Goldstern, Kellner and Shelah \cite{GKS19} constructed a model of Cicho\'{n}'s maximum assuming four strongly compact cardinals. Later, with Mej\'{\i}a \cite{GKMS22}, the large cardinal assumption was eliminated.
    From the perspective of $\sigma$-ideals, their result would mean that the four Cicho\'n $\sigma$-ideals are so different that their cardinal invariants can have distinct values at the same time.
    Moreover, it is possible to force Cicho\'n's maximum together with other cardinal invariants taking pairwise different values. Several such examples are known: Goldstern, Kellner, Mej{\'i}a, and Shelah added to Cicho\'n's maximum the cardinal invariants $\mm, \pp, \hh$ in \cite{GKMS21a} and $\mathfrak{s},\mathfrak{r}$ in \cite{GKMS21b}. The fourth author forced Cicho\'n's maximum with the evasion and prediction numbers in \cite{Yam25}. Cardona, Repick{\'y}, and Shelah \cite{CRS25} forced Cicho\'n's maximum together with the constant evasion and constant prediction numbers. The fourth author forced Cicho\'n's maximum together with the covering and uniformity number of the closed null ideal $\mathcal{E}$ in \cite{Yam24}.

    Our main aim is to systematically extend Cicho\'n's maximum to include cardinal invariants associated with other $\sigma$-ideals. In this paper, we focus on the following family of $\sigma$-ideals on Polish spaces parametrized with ideals on countable sets:

    \begin{dfn}
        Let $\I$ be an ideal on a countable set $X$. We define the \emph{$\I$-Miller null ideal} $M_\I$ as the $\sigma$-ideal on $X^\omega$ generated by sets of the form
        \[
        M_\phi = \{x\in X^\omega:\forall^{\infty}n<\omega~x(n)\in\phi(x\upharpoonright n)\},
        \]
        where $\phi\colon X^{<\omega}\to\I$.
    \end{dfn}
    
    \begin{dfn}
        Let $\I$ be an ideal on a coutable set $X$. We define $K_\I$ as the $\sigma$-ideal on $X^\omega$ generated by sets of the form
        \[
        K_\phi = \{x\in X^\omega:\forall^{\infty}n<\omega~x(n)\in\phi(n)\},
        \]
        where $\phi\colon\omega\to\I$.
    \end{dfn}
    
    Through the study of cardinal invariants of $M_\I$ and $K_\I$ for various Borel ideals $\I$, we will see how much these $\sigma$-ideals $M_\I$ and $K_\I$ are similar to/different from Cicho\'n $\sigma$-ideals.
    It turns out that for some ideals $\I$, the cardinal invariants of $M_\I$ and $K_\I$ are totally characterized using Cicho\'n characteristics, while for some other ideals $\I$, it is possible to extend Cicho\'{n}'s maximum to include some of the cardinal invariants of $M_\I$ and $K_\I$.

    Recall that the poset of all nonmeager Borel sets is forcing equivalent to Cohen forcing and that the poset of all Borel sets with a positive Lebesgue measure  is forcing equivalent to Random forcing.
    The ideal $M_\I$ is related to some natural tree forcing in the same way: Given an ideal $\I$ on a countable set $X$, we say that a tree $T\subset X^{<\omega}$ is an \emph{$\I$-Miller tree} if for every $\sigma\in T$ there exists $\tau\in T$ with $\sigma\subset\tau$ and
    \[
    \mathrm{succ}_T(\tau):=\{i\in X: \tau^{\frown}\langle i\rangle\in T\}\in\I^+.
    \]
    The \emph{$\I$-Miller forcing}, denoted by $\mathbb{M}_{\I}$, is the forcing poset of all $\I$-Miller trees ordered by inclusion. Note that $\mathbb{M}_{\Fin}=\mathbb{M}$ is the standard Miller forcing, where $\Fin=[\omega]^{<\omega}$ denotes the Fr\'{e}chet ideal. The forcings $\mathbb{M}_\I$ were studied by Sabok and Zapletal \cite{SZ11}, where they showed the relationship between $\I$-Miller trees and the $\I$-Miller null ideal.
    
    \begin{lem}[{Sabok--Zapletal \cite{SZ11}}]
        Let $X$ be a countable set. For every analytic subset $A\subset X^\omega$, either $A \in M_\I$ or there is $T\in\mathbb{M}_\I$ such that $[T]\subset A$.
    \end{lem}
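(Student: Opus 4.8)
The plan is to prove the statement by a transfinite ``$\I$-branching derivative'' on trees, treating closed $A$ first (this contains the combinatorial core) and then reducing the general analytic case to it by unfolding. Throughout we may assume $\I$ is a proper ideal, the other case being trivial since then $M_\I=\mathcal{P}(X^\omega)$.

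\textit{Closed sets.} Assume $A=[S]$ for a tree $S\subseteq X^{<\omega}$. Define a decreasing sequence of subtrees by $S_{0}=S$, $S_{\lambda}=\bigcap_{\alpha<\lambda}S_{\alpha}$ at limits, and
\[
S_{\alpha+1}=\bigl\{\sigma\in S_{\alpha}: \exists\tau\in S_{\alpha}\ \bigl(\sigma\subseteq\tau \text{ and }\suc_{S_{\alpha}}(\tau)\in\I^{+}\bigr)\bigr\}.
\]
Each $S_{\alpha+1}$ is downward closed, so this is a $\subseteq$-decreasing sequence of subtrees of the countable set $X^{<\omega}$, hence it stabilizes at some countable ordinal $\alpha^{*}$; put $S_{\infty}=S_{\alpha^{*}}$. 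If $S_{\infty}\neq\emptyset$, then by the fixed-point equation $S_{\infty}=S_{\alpha^{*}+1}$ every node of $S_{\infty}$ has an extension in $S_{\infty}$ whose immediate successor set, computed inside $S_{\infty}$, lies in $\I^{+}$; iterating shows every node of $S_{\infty}$ lies on an infinite branch, so $S_{\infty}\in\mathbb{M}_{\I}$ and $[S_{\infty}]\subseteq[S]=A$. If $S_{\infty}=\emptyset$, then every $\sigma\in S$ has a rank $\rho(\sigma)$, the unique $\alpha$ with $\sigma\in S_{\alpha}\setminus S_{\alpha+1}$; since the $S_{\alpha}$ are downward closed, $\rho$ is non-increasing along each branch $x\in[S]$, hence eventually constant, say equal to $\gamma_{x}$ from some $N_{x}$ on. For $n\geq N_{x}$ the node $x\upharpoonright n$ lies in $S_{\gamma_{x}}\setminus S_{\gamma_{x}+1}$, which by definition of the derivative forces $\suc_{S_{\gamma_{x}}}(x\upharpoonright n)\in\I$, and also $x(n)\in\suc_{S_{\gamma_{x}}}(x\upharpoonright n)$ because $x\upharpoonright(n+1)\in S_{\gamma_{x}}$. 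Hence, setting $\phi_{\gamma}(\sigma)=\suc_{S_{\gamma}}(\sigma)$ when $\sigma\in S_{\gamma}$ and $\phi_{\gamma}(\sigma)=\emptyset$ otherwise, we get maps $\phi_{\gamma}\colon X^{<\omega}\to\I$ with $x\in M_{\phi_{\gamma_{x}}}$; as there are only countably many ranks, $[S]\subseteq\bigcup_{\gamma<\alpha^{*}}M_{\phi_{\gamma}}\in M_{\I}$.

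\textit{Analytic sets.} Write $A=p[[U]]$ for a pruned tree $U$ on $X\times\omega$, with $p$ the projection to the first coordinate, and run the analogous derivative on subtrees $R\subseteq U$, now declaring a node $(\sigma,t)$ to be \emph{$X$-branching in $R$} when $\suc^{X}_{R}(\sigma,t):=\{i\in X:\exists j\ (\sigma^{\frown}i,t^{\frown}j)\in R\}\in\I^{+}$, and letting $R_{\alpha+1}$ keep exactly the nodes of $R_{\alpha}$ having an $X$-branching extension inside $R_{\alpha}$; this stabilizes at some $R_{\infty}$ at a countable stage. If $R_{\infty}\neq\emptyset$, a fusion through $R_{\infty}$ gives the tree: starting at the root of $R_{\infty}$ pass to an $X$-branching extension $(\tau,s)$, pick for $\I^{+}$-many $i$ a companion $j_{i}$ with $(\tau^{\frown}i,s^{\frown}j_{i})\in R_{\infty}$, recurse from each $(\tau^{\frown}i,s^{\frown}j_{i})$; this produces $T\in\mathbb{M}_{\I}$ on $X$ together with a coherent assignment giving, for every branch $x\in[T]$, a $y\in\omega^{\omega}$ with $(x,y)\in[R_{\infty}]\subseteq[U]$, so $[T]\subseteq A$. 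If $R_{\infty}=\emptyset$, one wants $A\in M_{\I}$; the rank $\rho(\sigma,t)$ is again non-increasing along every branch $(x,y)\in[U]$ and eventually equals some $\gamma$ from some level $N$ on, whence $x(n)\in\suc^{X}_{R_{\gamma}}(x\upharpoonright n,y\upharpoonright n)\in\I$ for all $n\geq N$.

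\textit{Main obstacle.} The one genuinely delicate point is turning the last observation into a covering of $A$ by sets $M_{\phi}$: a generator $M_{\phi}$ only sees $x\upharpoonright n$, whereas the $\I$-set $\suc^{X}_{R_{\gamma}}(x\upharpoonright n,y\upharpoonright n)$ depends on the auxiliary string $y\upharpoonright n$, and the naive union over all admissible $t=y\upharpoonright n$ need not be $\I$-small, so one cannot simply read $\phi$ off node by node. The resolution is to make the auxiliary witness a \emph{continuous function of $x$} on a large set --- equivalently, to establish that $\mathbb{M}_{\I}$ is proper with continuous reading of names, after which one argues: from $A\notin M_{\I}$ and $\Pi^1_1$-absoluteness one gets a condition $T$ forcing $\dot x_{\mathrm{gen}}\in A$, then thins out to $T'\leq T$ on which the $\omega$-witness is read continuously, so $[T']\subseteq A$; dually, the generic avoids every ground-model member of $M_{\I}$, so when no such $T$ exists one concludes $A\in M_{\I}$. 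Establishing this properness-and-fusion package for $\mathbb{M}_{\I}$ is the real work; the derivative and rank computations above are then routine. Finally, the two alternatives are exclusive: for $T\in\mathbb{M}_{\I}$ and any countable list $\langle\phi_{n}\rangle_{n<\omega}$, a fusion that at step $(n,k)$ of a surjection $\omega\to\omega\times\omega$ selects, at the current branching node $\tau$, a successor outside $\phi_{n}(\tau)$ --- possible since $\suc_{T}(\tau)\in\I^{+}$ and $\phi_{n}(\tau)\in\I$ --- builds a branch of $[T]$ meeting each $X^\omega\setminus M_{\phi_{n}}$ infinitely often, so $[T]\notin M_{\I}$.
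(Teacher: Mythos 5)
Your closed-set argument is essentially fine (one small slip: as written, $\phi_{\gamma}(\sigma)=\suc_{S_{\gamma}}(\sigma)$ need not lie in $\I$ for an arbitrary $\sigma\in S_{\gamma}$, since such a $\sigma$ may also belong to $S_{\gamma+1}$; you must restrict to $\sigma\in S_{\gamma}\setminus S_{\gamma+1}$, where $\I$-smallness of the successor set is exactly what failure of the derivative condition gives — this is trivially repaired). The genuine gap is the analytic case, which is the actual content of the lemma, and you concede it yourself: after running the unfolded derivative you observe that $x(n)\in\suc^{X}_{R_{\gamma}}(x\upharpoonright n,\,y\upharpoonright n)$ with the $\I$-set depending on the auxiliary string $y\upharpoonright n$, that the union over the second coordinates need not be $\I$-small, and then you ``resolve'' this by invoking properness of $\mathbb{M}_{\I}$ with continuous reading of names plus $\Pi^{1}_{1}$-absoluteness, while stating that establishing this package ``is the real work.'' That package is not a routine add-on: it is essentially the substance of the Sabok--Zapletal theorem you are asked to prove (and in the standard development the dichotomy is what one proves first, by a direct game/fusion argument on the unfolded tree, with properness and continuous reading derived afterwards — so your proposed order of deduction is at real risk of circularity, and in any case nothing in your text carries it out). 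As it stands the proposal proves the lemma only for closed $A$ and identifies, without closing, the obstruction for analytic $A$.

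It is worth noting why the situation differs from the analogous dichotomy the paper does prove, namely the one for $K_\I$ and $\mathbb{K}_\I$, where the authors run a derivative on a tree and then say that ``the standard unfolding trick'' handles analytic sets. For $K_\I$ the generators $K_{\phi}$ depend only on the level $n$, so in the unfolded tree one may take the spectrum at level $n$ over \emph{all} extensions (in both coordinates) of a fixed node of non-maximal rank; that union is $\I$-small by the very definition of the derivative, and the countable cover is indexed by nodes of the unfolded tree. For $M_\I$ the generators see $x\upharpoonright n$ but not $y\upharpoonright n$, so precisely this amalgamation fails — which is the obstacle you located. The paper itself does not reprove the $M_\I$ dichotomy but cites \cite{SZ11}; a self-contained proof would need either the unfolded-game/fusion argument of that paper (where the strategy supplies the second coordinate continuously in $x$) or a full proof of the properness-with-continuous-reading facts you appeal to. Your final paragraph on the exclusivity of the two alternatives (a fusion showing $[T]\notin M_\I$ for $T\in\mathbb{M}_\I$) is correct but not required by the statement.
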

    
    This lemma justifies our terminology. It also implies that the poset of all $M_{\I}$-positive Borel subsets of $\omega^\omega$ is forcing equivalent to $\mathbb{M}_{\I}$. The ideal $K_\I$ does not seem to correspond to any standard forcing notion. We introduce a tree forcing $\mathbb{K}_\I$ that is forcing equivalent to the poset of all $K_\I$-positive Borel sets, and this forcing notion seems new. This is why we did not give a particular name to $K_\I$; the letter $K$ was chosen as it is a direct generalization of the ideal $K_\sigma$. 
    It appears harder to study forcing properties of $\mathbb{K}_\I$ compared to $\mathbb{M}_\I$, and this fact sometimes prevents us to compute the exact value of $K_\I$, even when we could get the exact value of $M_\I$.

    A systematic study of the cardinal invariants of $M_\I$ was initiated in \cite{CM25} by the first and third authors.
    However, the cardinal invariants of $M_\I$ and $K_\I$ implicitly appeared in previous literature in different contexts. Forcing results in the aforementioned paper by Sabok--Zapletal \cite{SZ11} have direct corollaries on cardinal invariants of $M_\I$, e.g.\
    \[
    \non(M_\nwd) = \non(\m)\text{ and }\non(M_\Solecki)\geq\cov(\n),
    \]
    where $\nwd$ is the nowhere dense ideal and $\Solecki$ is the Solecki ideal (cf.\ \Cref{cor:Mnwd_nonm}, \Cref{cor:nonMSolecki}). In \cite{Paw00}, Pawlikowski studied the ideal $K_\Z$, where $\Z$ is the asymptotic density zero ideal on $\omega$, and proved that a poset adding a perfect set of random reals forces $\omega^\omega\cap V \in K_\Z$. Another result in the paper immediately implies that
    \[
    \non(K_\Z)\leq\max\{\bb, \non(\mathcal{E})\} \text{ and }\cov(K_\Z)\geq\min\{\dd, \cov(\mathcal{E})\},
    \]
    where $\mathcal{E}$ denotes the $\sigma$-ideal on $2^\omega$ generated by closed null sets (cf.\ \Cref{cor:paw00_ineq}). Also, $\non(K_\I)$ and $\cov(K_\I)$ appear as one of the slalom numbers in \cite{Sup23} and \cite{CGMRS24}. According to the notation in \cite{CGMRS24}, we have
    \[
    \non(K_\I) = \mathfrak{sl}_\mathrm{t}^{\bot}(\I, \mathrm{Fin})\text{ and }\cov(K_\I) = \mathfrak{sl}_\mathrm{t}(\I, \mathrm{Fin}).
    \]
    Slalom numbers are related to topological selection principles. For example, in \cite{Sup23}, \v{S}upina showed that $\cov(K_\I)$ is the least size of non-$S_1(\I\mathchar`-\Gamma, \mathcal{O})$ Hausdorff spaces. He also proved
    \[
    \cov(K_\I) \geq \min\{\dd, \cov^*(\I)\}.
    \]
    Moreover, one can see uniformity and covering numbers of $M_\I$ and $K_\I$ as idealized version of evasion/prediction numbers. Blass studied many variants of the evasion number $\ee$ in \cite{Bla10} and, according to his terminologies, $\non(M_\I)$ is the evasion number for global adaptive predictions with values in $\I$, while $\non(K_\I)$ is the evasion number for global non-adaptive predictions with values in $\I$.
    
    The first step to compute the cardinal invariants of the $\sigma$-ideals $M_\I$ and $K_\I$ is to clarify their connections with those of the original ideal $\I$. Such a connection was already noticed in \cite{CM25} and we refine their results to get the following.

    \begin{teorema}[\Cref{thm:exact_values_of_add}, \Cref{lem:b_nonM}, \Cref{thm:nonmi_leq_max}] \label{mainthm:basic_ineq}
        Let $\I$ be an ideal on a countable set. Then the following hold:
        \begin{enumerate}
            \item $\add(M_\I) = \add(K_\I) = \min\{\bb, \add^*_\omega(\I)\}$.
            \item\label{item:thmA_two} $\bb\leq\non(K_\I)\leq\non(M_\I)\leq\max\{\bb, \non^*_\omega(\I)\}$.
            \item $\min\{\dd, \cov^*(\I)\}\leq \cov(M_\I)\leq\cov(K_\I)\leq\dd$.
            \item $\cof(M_\I) = \cof(K_\I) = \max\{\dd, \cof^*_\omega(\I)\}$.
        \end{enumerate}
    \end{teorema}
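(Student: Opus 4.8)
The plan is to work with Borel bases of $M_\I$ and $K_\I$ together with the relational-systems calculus. First I would record the inclusions $K_\sigma\subseteq K_\I\subseteq M_\I$: a finite-valued level-slalom is an $\I$-valued level-slalom, and $K_\phi=M_{\widehat\phi}$ where $\widehat\phi(\sigma):=\phi(|\sigma|)$. Since $\add(K_\sigma)=\non(K_\sigma)=\bb$ and $\cof(K_\sigma)=\cov(K_\sigma)=\dd$, this already yields $\bb\le\non(K_\I)\le\non(M_\I)$ and $\cov(M_\I)\le\cov(K_\I)\le\dd$. Next, every member of $K_\I$ lies below a single generator $K_\psi$ (take $\psi(n):=\bigcup_{k\le n}\phi_k(n)\in\I$), and every member of $M_\I$ below a single $M_\psi$; hence $\add$ and $\cof$ of each ideal equal the bounding and dominating numbers of the corresponding generator system. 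On the $K$-side this is the poset of slaloms $\phi\colon\omega\to\I$ with nonempty values under eventual coordinatewise inclusion ($\phi(n)\subseteq\psi(n)$ for all but finitely many $n$), which is equivalent to $K_\phi\subseteq K_\psi$; on the $M$-side inclusion of generators is not a coordinatewise order, so one passes to the associated relational system.

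For parts (1) and (4), each generator system splits into a \emph{horizontal} part — recording how far out in $n$, resp.\ along each branch, the values have settled — governed by $(\omega^\omega,\le^*)$, and a \emph{vertical} part governed purely by the value ideal $\I$, whose bounding and dominating numbers are $\add^*_\omega(\I)$ and $\cof^*_\omega(\I)$. I would make this a Tukey equivalence with the product of $(\omega^\omega,\le^*)$ and the $\I$-system and apply $\mathfrak{b}(\mathbf{A}\times\mathbf{B})=\min\{\mathfrak{b}(\mathbf{A}),\mathfrak{b}(\mathbf{B})\}$ and $\mathfrak{d}(\mathbf{A}\times\mathbf{B})=\max\{\mathfrak{d}(\mathbf{A}),\mathfrak{d}(\mathbf{B})\}$, yielding $\add=\min\{\bb,\add^*_\omega(\I)\}$ and $\cof=\max\{\dd,\cof^*_\omega(\I)\}$ for both ideals. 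The $\le^*$-factor is realised concretely: a $\le^*$-unbounded $\{f_\alpha\}$ maps to the slaloms $\phi_\alpha(n):=\{x_0,\dots,x_{f_\alpha(n)}\}$ along a fixed enumeration $\{x_m\}$ of $X$ — since $\I$ is proper, no single $\I$-set contains arbitrarily long initial segments of it — and dually for $\dd$. The equalities $\add(M_\I)=\add(K_\I)$ and $\cof(M_\I)=\cof(K_\I)$ then follow once the $M$- and $K$-systems are shown Tukey-equivalent: one direction is immediate since every $K_\phi$ is an $M_\phi$, and for the other one prunes — given $\phi\colon X^{<\omega}\to\I$ and a dominating function, one reads off a level-slalom lying above $\phi$ along every \emph{slow} branch, the fast branches being absorbed into the $\bb$/$\dd$-factor.

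The remaining estimates are the dual-side bounds. For $\non(M_\I)\le\max\{\bb,\non^*_\omega(\I)\}$ I would take a $\le^*$-unbounded family of size $\bb$ and a family of size $\non^*_\omega(\I)$ witnessing $\non^*_\omega(\I)$, code a suitable combination of them into $X^\omega$, and check it is $M_\I$-positive: covering it by an $M_\phi$ would force $\phi$ either to eventually dominate the horizontal behaviour of the first family or to give a uniform $\I$-cover of the second. For $\min\{\dd,\cov^*(\I)\}\le\cov(M_\I)$ I would show that fewer than $\min\{\dd,\cov^*(\I)\}$ sets $M_{\phi_\alpha}$ cannot cover $X^\omega$, constructing $x\notin\bigcup_\alpha M_{\phi_\alpha}$ by a recursion on $X^{<\omega}$: having fewer than $\dd$ of the $\phi_\alpha$ lets me escape the horizontal part of all of them that are currently in play, and having fewer than $\cov^*(\I)$ of them lets me choose $x(n)$ outside the relevant $\I$-sets at the current node, with bookkeeping that handles every $\alpha$ cofinally often.

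I expect the main obstacle to be the $M_\I$ side throughout: because the threshold in the quantifier $\forall^\infty n$ depends on the branch, the generator relation is not a coordinatewise mod-finite order, and the naive flattening of $\phi\colon X^{<\omega}\to\I$ to a level-slalom breaks on branches along which $\phi$ grows quickly. Making the pruning along a (co)dominating function precise — so that the $\bb$ (resp.\ $\dd$) correction exactly measures the gap between the $M$- and $K$-systems — is the technical heart, and it is also what forces $\bb$ (resp.\ $\dd$) into the formulas no matter how small $\add^*_\omega(\I)$ (resp.\ how large $\cof^*_\omega(\I)$) happens to be. That (3) is stated only as an inequality reflects that this crude fusion does not pin down $\cov(M_\I)$ or $\cov(K_\I)$, which are computed for concrete ideals $\I$ later in the paper.
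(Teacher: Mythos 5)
Your overall architecture matches the paper's: the inclusions $K_\sigma\subseteq K_\I\subseteq M_\I$ for the easy bounds in (2) and (3), a ``horizontal $\times$ vertical'' analysis giving $\min\{\bb,\add^*_\omega(\I)\}$ and $\max\{\dd,\cof^*_\omega(\I)\}$ in (1) and (4), and a combination of an unbounded family with a $\non^*_\omega(\I)$-witness for the upper bound in (2). However, there is a genuine gap at precisely the point you yourself flag as the ``technical heart'' and then leave unresolved: the direction $\add(M_\I)\leq\bb$ and $\cof(M_\I)\geq\dd$ (equivalently, that the $\le^*$-factor really sits inside the $M_\I$-system). Your concrete realisation via level slaloms $\phi_\alpha(n)=\{x_0,\dots,x_{f_\alpha(n)}\}$ works for $K_\I$, where $K_{\phi}\subseteq K_{\psi}$ does force eventual coordinatewise inclusion (\Cref{lem:K_phi_psi}); but for $M_\I$ the inclusion $M_{\phi}\subseteq M_{\psi}$ yields only a much weaker, cone-local statement (\Cref{lem:M_phi_psi}: for every node there is a $\phi$-extension beyond which $\phi\subseteq\psi$ pointwise \emph{along $\phi$-branches}), and extracting a bound from this requires two further ideas you do not supply: the finite-shift trick that modifies a branch on an initial segment to enter the good cone, and the reformulation of $\bb$ and $\dd$ in terms of adaptive domination $x\sbd g$ by predictors $g\colon X^{<\omega}\to\omega$ (\Cref{lem:seq_unbounded}); this is exactly the content of \Cref{lem:addM_I_leq_b}, and the same adaptive-domination lemma is also what makes your sketch of $\non(M_\I)\leq\max\{\bb,\non^*_\omega(\I)\}$ go through (the paper's witnesses are $h(A,f)(n)=\min(A\setminus f(n))$ in \Cref{thm:nonmi_leq_max}). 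Saying that the pruning ``absorbs the fast branches into the $\bb$/$\dd$-factor'' names the goal, not the argument.

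A second, smaller but real problem is your plan for $\min\{\dd,\cov^*(\I)\}\leq\cov(M_\I)$. A recursion along one branch ``with bookkeeping that handles every $\alpha$ cofinally often'' cannot work when there are uncountably many generators $M_{\phi_\alpha}$: an $\omega$-length construction can only attend to countably many indices, and choosing $x(n)$ outside \emph{all} sets $\phi_\alpha(x\upharpoonright n)$ at once is impossible, since fewer than $\cov^*(\I)$ many members of $\I$ may well cover $X$. The hypotheses must be used globally, not node-by-node: $\kappa<\cov^*(\I)$ gives a single infinite $B\subseteq X$ almost disjoint from every value $\phi_\alpha(\sigma)$ simultaneously, and then $\kappa<\dd$, in the adaptive form (the least size of a $\sbd$-dominating family is $\dd$), gives one branch through $B$ that escapes every $\phi_\alpha$ infinitely often. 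With these repairs your product/Tukey framing does deliver the theorem, and it is essentially the paper's proof reorganised; without them the $M_\I$-side of (1), (2) and (4) and the covering bound in (3) are not yet proved.
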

    
    Here, $\add^*_\omega(\I), \non^*_\omega(\I)$ and $\cof^*_\omega(\I)$ are the ``$\omega$-versions'' of $\add^*(\I), \non^*(\I)$ and $\cof^*(\I)$. See \Cref{dfn:omega_star_numbers}. These cardinal invariants (for ultrafilters) were introduced by Brendle--Shelah \cite{BS99} to compute the cardinal invariants of null ideals of Laver and Mathias type forcings associated to ultrafilters. While the usual $*$-numbers have been extensively studied as surveyed in \cite{HH07}, these $\omega$-versions have been ignored for a long time. The only papers discussing the $\omega$-versions for Borel ideals are \cite{CM25} and \cite{FK25}.\footnote{In \cite{FK25}, they studied $\add^*_\omega(\I)$, but not $\non^*_\omega(\I)$ and $\cof^*_\omega(\I)$.} In this work, we continue this line of research, and compute additional values of these $\omega$-versions of $*$-numbers for even more examples of Borel ideals, see \Cref{table_values}.

    \begin{table}[t]
		\centering
		\begin{tabular}{c|ccccc}
			\hline
			ideal &  $\add^*_\omega(\I)$  & $\non^*_\omega(\I)$  & $\cof^*_\omega(\I)$ & $\non(\ki)$ & $\non(\mzbi)$\\
			
			\hline
			
			$\mathcal{R}$   & $\omega_1$  & $\omega_1$ &   $\cc$ & $\bb$ & $\bb$\\
			$\mathcal{S}$   & $\omega_1$  & $\covomn$ &   $\cc$ & $\max\{\bb,\covomn\}$ & $\max\{\bb,\covomn\}$\\
			$\nwd$   & $\addm$  & $\nonm$ &   $\cofm$ & $?$ & $\nonm$\\
			$\conv$   & $\omega_1$  & $\omega_1$ &    $\cc$ & $\bb$ & $\bb$\\
			$\finfin$   & $\bb$  & $\dd$ &   $\dd$ & $\bb$ & $\ee^{\mathrm{const}}_\leq(2)$\\
			$\ed$   & $\omega_1$  & $\covm$  & $\cc$ & $\bb$ & $\max\{\bb, \ee^{\mathrm{const}}(2)\} \leq \mathord{?}$\\
			$\edfin$   & $\omega_1$  & $?$  & $\cc$ & $?$ & $\max\{\bb, \ee^{\mathrm{const}}_b(2)\}\leq\mathord{?}$\\
            $\I_L$ & $\omega_1$ & $?$ & $\cc$ & $?$ & $?$\\
            $\I_P$ & $ \add_t(\n)\leq\mathord{?}$ & $?$ & $\mathfrak{l}\leq \mathord{?}$ & $?$ & $?$ \\
			$\Z$   & $\addn$  & $\non^*(\Z)$ &   $\cofn$ & $?$ & $\max\{\bb,\none\}$\\
			\hline 
		\end{tabular}
        \caption{Values of cardinal invariants associated with Borel ideals.}\label{table_values} 
	\end{table}

    The $\omega$-versions of $*$-numbers can be seen as improved versions of the original ones. Recall that the original $*$-numbers do not necessarily behave well for non-P-ideals: $\add^*(\I) = \omega$ for every non-P-ideal $\I$ and $\non^*(\I) = \omega$ for every Borel ideal $\I$ that is not Kat\v{e}tov reducible to $\edfin$. While $\add^*_\omega(\I) = \add^*(\I)$ and $\non^*_\omega(\I) = \non^*(\I)$ hold for all P-ideals $\I$, the $\omega$-versions take nontrivial values for non-P-ideals. For example, we show that $\add^*_\omega(\I)\geq\add_t(\n)$ for some $F_\sigma$ non-P-ideal $\I$, where $\add_t(\n)$ denotes the transitive additivity of the null ideal (\Cref{prop:addstaromega_IP}, \Cref{thm:addstaromega_grad_frag}). As a corollary, we get the consistency of $\add^*_\omega(\I)>\dd$ for such an ideal $\I$. This result should be compared to the fact that there are only three known values of $\add^*(\I)$ so far, which are $\omega, \add(\n)$ and $\bb$. We also note that the consistency of $\cof^*_\omega(\I)<\bb$ for some $F_\sigma$ non-P-ideal $\I$ was essentially shown in \cite{HRRZ14}. Even though $\add^*_\omega(\I)$ and $\cof^*_\omega(\I)$ do not have dual definitions, our proof looks like dual to the argument in \cite{HRRZ14}. So, by considering $\omega$-versions of $*$-numbers, we can naturally extend duality between additivity and cofinality even for non-P-ideals. A similar duality also appears between $\non^*_\omega(\I)$ and $\cov^*(\I)$ in our computation of $\non^*_\omega(\Solecki)$ and $\non^*_\omega(\ed)$.

    While the additivity and cofinality of $M_\I$ and $K_\I$ can be computed from $\add^*_\omega(\I)$ and $\cof^*_\omega(\I)$, the uniformity and covering numbers of $M_\I$ and $K_\I$ have very complicated patterns. Our $\mathsf{ZFC}$-provable results on $\non(M_\I)$ and $\non(K_\I)$ are summarized in \Cref{table_values}, but let us highlight some of them by stating here:
    
    \begin{teorema}\label{mainthm:comp_non_MI}\leavevmode
        \begin{enumerate}
            \item \textnormal{(\Cref{prop:non_M_finfin,prop:K_finfin_b})} $\non(M_{\finfin}) = \ee^{\mathrm{const}}_{\leq}(2)$ and $\cov(M_{\finfin}) = \vv^{\mathrm{const}}_{\leq}(2)$.
            On the other hand, $\non(K_{\finfin}) = \bb$ and $\cov(K_{\finfin}) = \dd$.
            \item \textnormal{(\Cref{thm:M_S_exact}, \Cref{thm:K_S_exact})} $\non(M_\Solecki) = \non(K_\Solecki) = \max\{\bb, \cov_\omega(\n)\}$ and $\cov(M_\Solecki) = \cov(K_\Solecki) = \min\{\dd, \non(\n)\}$.
            \item \textnormal{(\Cref{thm:nonMZ_exact_value})} $\non(M_\Z) = \max\{\bb, \non(\mathcal{E})\}$ and $\cov(M_\Z) = \min\{\dd, \cov(\mathcal{E})\}$.
        \end{enumerate}
    \end{teorema}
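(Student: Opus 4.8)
The plan is to obtain all six equalities from the Galois--Tukey framework. For any ideal $\I$ on a countable set $X$, a countable union $\bigcup_k M_{\phi_k}$ is contained in the single generator $M_\psi$ with $\psi(\sigma)=\bigcup_{k\le|\sigma|}\phi_k(\sigma)\in\I$, and the same holds for $K_\I$; hence $M_\I=\{A\subseteq X^\omega:\exists\phi\ A\subseteq M_\phi\}$ and $K_\I=\{A\subseteq X^\omega:\exists\phi\ A\subseteq K_\phi\}$, so that $\non(M_\I),\non(K_\I)$ (resp.\ $\cov(M_\I),\cov(K_\I)$) are exactly the norms $\mathfrak b$ (resp.\ $\mathfrak d$) of the relational system whose left side is $X^\omega$, whose right side is the set of predictors $\phi$, and whose relation is ``$x$ is eventually captured by $\phi$''. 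It therefore suffices, for each of the three ideals, to produce Tukey morphisms in both directions between this relational system and the relational system defining the target invariant. In every case one half of each inequality is already supplied by \Cref{mainthm:basic_ineq}: item (2) gives $\bb\le\non(K_\I)\le\non(M_\I)\le\max\{\bb,\non^*_\omega(\I)\}$ and item (3) gives $\min\{\dd,\cov^*(\I)\}\le\cov(M_\I)\le\cov(K_\I)\le\dd$, so I focus below on the remaining ``inner'' bounds, where the combinatorics of $\finfin,\Solecki,\Z$ enters. Since the $\non$- and $\cov$-sides are formally dual, I describe only the $\non$-side; the $\cov$-statements follow by reversing all arrows, i.e.\ interchanging the role of the evader family with that of the family of predictors.

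\emph{The ideal $\finfin$.} An adaptive predictor $\phi\colon(\omega\times\omega)^{<\omega}\to\finfin$ decomposes at every node $\sigma$ into (i) a finite set $F_\sigma\subseteq\omega$ of ``full columns'' of $\phi(\sigma)$ and (ii) for each column $a\notin F_\sigma$ a finite guess for the second coordinate. Escaping (i) infinitely often is an unboundedness phenomenon, and coupling it adaptively with (ii) is precisely a constant prediction of width $2$; so the plan is to fix a coding map $(\omega\times\omega)^\omega\to 2^\omega$ together with an inverse coding of constant predictors into $\finfin$-adaptive predictors, and to check that these transport $M_{\finfin}$-positivity to and from $\le$-constant inevadability, yielding $\non(M_{\finfin})=\ee^{\mathrm{const}}_{\leq}(2)$ and dually $\cov(M_{\finfin})=\vv^{\mathrm{const}}_{\leq}(2)$. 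For $K_{\finfin}$ the non-adaptive version severs this coupling; \Cref{mainthm:basic_ineq} already gives $\bb\le\non(K_{\finfin})$ and $\cov(K_{\finfin})\le\dd$, and for the reverse inequalities I would encode a $\le^*$-unbounded family $\{f_\alpha:\alpha<\bb\}$ of increasing functions as a ``staircase'' family $x_\alpha(n)=(f_\alpha(n),f_\alpha(n+1))$ and verify that no single $\phi\colon\omega\to\finfin$ captures all of $\{x_\alpha\}$: writing $E_n$ for the finite set of full columns of $\phi(n)$, whenever $f_\alpha(n)\notin E_n$ the column $(\phi(n))_{f_\alpha(n)}$ is finite, so capture forces $f_\alpha(n+1)$ below its maximum, and chaining this along the staircase would manufacture a single function lying $\le^*$ above the whole family, contradicting unboundedness. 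This gives $\non(K_{\finfin})=\bb$ and dually $\cov(K_{\finfin})=\dd$. \textbf{The main obstacle in part (1) is exactly this last verification}: a careless choice of coding map has its image absorbed by a single $\finfin$-slalom---for instance the full diagonal $\{(j,j):j<\omega\}$ lies in $\finfin$ and captures every diagonal path---so the staircase encoding and the extraction of the dominating function must be set up carefully, and the off-by-one bookkeeping in the chaining step has to be handled with care.

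\emph{The ideal $\Solecki$.} Combining \Cref{mainthm:basic_ineq} with the value $\non^*_\omega(\Solecki)=\cov_\omega(\n)$ from \Cref{table_values} already yields $\bb\le\non(K_\Solecki)\le\non(M_\Solecki)\le\max\{\bb,\cov_\omega(\n)\}$, so for the $\non$-equalities it remains to prove $\cov_\omega(\n)\le\non(K_\Solecki)$. Here I would exploit the defining feature of the Solecki ideal---its positive sets ``carry Lebesgue measure''---to turn a $K_\Solecki$-positive family of size $\kappa$ into a covering of $2^\omega$ by $\kappa$ many null sets, thereby refining the Sabok--Zapletal bound $\non(M_\Solecki)\ge\cov(\n)$ to its $\omega$-version. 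For the $\cov$-equalities, $\cov(K_\Solecki)\le\dd$ is from \Cref{mainthm:basic_ineq}; $\cov(K_\Solecki)\le\non(\n)$ comes from producing $\non(\n)$ many generators whose union is the whole space, again via the measure-theoretic content of $\Solecki$; and $\min\{\dd,\non(\n)\}\le\cov(M_\Solecki)$ follows from \Cref{mainthm:basic_ineq} once $\cov^*(\Solecki)\ge\non(\n)$ is established, which is the dual of the computation of $\non^*_\omega(\Solecki)$.

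\emph{The ideal $\Z$.} \Cref{mainthm:basic_ineq} gives $\bb\le\non(M_\Z)$, and since $K_\Z\subseteq M_\Z$, Pawlikowski's result recalled in the introduction, $\non(K_\Z)\le\max\{\bb,\non(\mathcal{E})\}$, has to be upgraded to $\non(M_\Z)\le\max\{\bb,\non(\mathcal{E})\}$; I would do so by revisiting Pawlikowski's argument---which compares $\Z$ with Lebesgue measure and uses a perfect set of random reals---directly for the adaptive forcing $\mathbb{M}_\Z$, so that a suitable small family is seen to lie in $M_\Z$. The matching lower bound $\non(\mathcal{E})\le\non(M_\Z)$ is the converse transfer: from an $M_\Z$-positive set one reads off a subset of $2^\omega$ meeting every closed null set, using the correspondence between $\Z$-positive branching in a $\Z$-Miller tree and positive Lebesgue measure. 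The $\cov$-statement is again dual, with $\non(\mathcal{E})$ and $\cov(\mathcal{E})$ interchanged and Pawlikowski's inequality $\cov(K_\Z)\ge\min\{\dd,\cov(\mathcal{E})\}$ furnishing one side. Across parts (2) and (3) the recurring difficulty is that the measure invariants enter through the $\sigma$-ideal structure rather than through a single predictor, so all four transfers must be carried out at the level of countable families of generators rather than a single $M_\phi$ or $K_\phi$.
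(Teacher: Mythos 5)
Most of your outline follows the paper's actual route: the interleaving coding for $\non(M_{\finfin})=\ee^{\mathrm{const}}_{\leq}(2)$, the conversion of a $K_\Solecki$/$M_\Solecki$-positive set into null sets $B_x$ absorbing every countable set of reals (which is how $\covomn\leq\non(K_\Solecki)$ and its dual are obtained), and for $\Z$ the adaptive upgrade of Pawlikowski's lemma together with an $\mathcal{E}$-quasi-generic transfer are exactly the paper's arguments. The genuine gap is the step you yourself flag in part (1): the staircase witness for $\non(K_{\finfin})\leq\bb$ does not work, and the failure is not off-by-one bookkeeping. Write a generator as $\phi(n)\subseteq(E_n\times\omega)\cup\{(i,j):j\leq h_n(i)\}$ with $E_n$ finite; capture of $x_\alpha(n)=(f_\alpha(n),f_\alpha(n+1))$ means that for almost all $n$ either $f_\alpha(n)\in E_n$ or $f_\alpha(n+1)\leq h_n(f_\alpha(n))$. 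In the first case the second coordinate is completely unconstrained (those columns of $\phi(n)$ are full), so after any visit to $E_n$ the function may jump to an arbitrary, family-dependent value, and from then on the bounds $h_m$ are evaluated at a value you do not control; the natural recursion $G(n+1)=\max\{\max E_{n+1},\ \max_{i\leq G(n)}h_n(i)\}$ never recovers from such a jump, so no dominating function is manufactured and no contradiction with unboundedness is reached.

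This is a real obstruction, not a defect of presentation: there exist unbounded families of strictly increasing functions whose staircase family is contained in a single $K_\phi$. For instance, take $E_n=\{2^j:j\leq n\}$ and $h_n(i)=i+1$, and let each $f_\alpha$ increase by exactly $1$ except that, whenever its current value is a power of $2$ whose exponent is at most the current level, it jumps to the next value of a prescribed unbounded $g_\alpha$; then every level beyond the first jump is captured, while the family remains $\leq^*$-unbounded, because no single $G$ can satisfy $g_\alpha(k+1)\leq G(g_\alpha(k))$ for almost all $k$ for every $\alpha$ (otherwise $\sigma$-directedness of $\leq^*$ would bound the whole family by a common upper bound of the countably many iterate functions $k\mapsto G^{(k)}(s)$). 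So the witness must decouple the two coordinates, which is precisely what the proof of \Cref{prop:K_finfin_b} does: it takes $\{f\times g:f,g\in B\}$ for an unbounded family $B$ of increasing functions and applies unboundedness twice, first choosing $f\in B$ whose values escape the finitely many full columns on an infinite set $D$, and then, re-indexing along $D$, choosing $g\in B$ that escapes the finite column bounds $h_n(f(n))$ there; the dual repair is needed for $\cov(K_{\finfin})\geq\dd$. With that replacement your plan for part (1) closes, and the remaining parts are consistent with the paper, modulo filling in the fusion/$\sigma$-ideal details behind the Solecki and $\Z$ transfers.
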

    
    In \Cref{mainthm:comp_non_MI}(1), $\ee^{\mathrm{const}}_{\leq}(2)$ and $\vv^{\mathrm{const}}_{\leq}(2)$ denote variants of constant evasion and prediction numbers introduced by Kamo \cite{Kam00}. Such variations are considered in \cite{CR25}. It is known that $\ee^{\mathrm{const}}_{\leq}(2)$ and $\vv^{\mathrm{const}}_{\leq}(2)$ are consistently different from $\bb$ and $\dd$, respectively. Thus, $\finfin$ is an example of ideals $\I$ such that $\non(M_\I)$ and $\non(K_\I)$ are consistently different. Since we also know that $\non^*_\omega(\finfin) = \dd$, $\finfin$ is also an example of ideals $\I$ such that $\non(M_\I) < \max\{\bb, \non^*(\I)\}$ is consistent, which should be compared to \Cref{mainthm:basic_ineq}\eqref{item:thmA_two}. In \Cref{mainthm:comp_non_MI}(2), $\cov_\omega(\n)$ denotes the least size of a family $\mathcal{F}$ of Lebesgue null sets such that any countable set $A$ of reals is included by some $N\in\mathcal{F}$, which turns out to be equal to $\non^*_\omega(\Solecki)$. So, $\mathcal{S}$ is an example of ideals $\I$ such that $\non(M_\I) = \non(K_\I) = \non\{\bb, \non^*_\omega(\I)\}$ is provable. Moreover, combining \Cref{mainthm:comp_non_MI}(3) with results of Raghavan \cite{Rag20}, we get new bounds for $\non^*(\Z)$ and $\cov^*(\Z)$:
    \[
    \non(\mathcal{E})\leq\non^*(\Z), \cov^*(\Z)\leq\cov(\mathcal{E}) \text{ and }\cov^*(\Z)\leq\non^*(\Z).
    \]
    See \cref{thm:new_bounds}. It is still unknown whether $\non(K_\Z) = \max\{\bb, \non(\mathcal{E})\}$ and $\cov(K_\Z) = \min\{\dd, \cov(\mathcal{E})\}$ hold or not.

    We also get numerous consistency results that are not direct consequences of $\mathsf{ZFC}$-provable inequalities. In a similar way as \Cref{mainthm:comp_non_MI}(1), we show that
    \[
    \max\{\bb, \eect\}\leq\non(M_{\ed})\text{ and }\max\{\bb,\eec_b(2)\}\leq\non(M_{\edfin})
    \]
    for any increasing function $b\in\oo$ (\Cref{prop:eect_nonMed}, \Cref{prop:edfin_const}). The converses of these inequalities are not $\mathsf{ZFC}$-provable by the following results.
    
    \begin{teorema}\label{mainthm:sep_into_two_values}
        The following (and their duals) consistently hold:
        \begin{enumerate}
            \item \textnormal{(\Cref{thm:Con_max_b_eect_nonMED})} $\max\{\bb,\eect\}<\non(M_{\ed})$.
            \item \textnormal{(\Cref{thm:Con_nonMfinfin_nonKEDfin})} $\non(M_{\finfin})<\non(K_{\edfin})$.
            \item \textnormal{(\Cref{thm:Con_max_nonKEDfin})} $\max\{\bb,\eec_b(2)\}<\non(K_{\edfin})$ for any increasing $b\in\oo$.
        \end{enumerate}
    \end{teorema}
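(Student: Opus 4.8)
The plan is to obtain all three statements (and their duals) from a single finite‑support ccc iteration over a model of $\mathsf{CH}$, using two tailor‑made ``catching'' forcings whose only job is to push $\non(M_{\ed})$ and $\non(K_{\edfin})$ up to $\mathfrak{c}$, while each iterand is mild enough to leave $\bb$ and the relevant constant‑evasion numbers equal to $\aleph_1$. Note that one model suffices: in it $\non(M_{\ed})=\non(K_{\edfin})=\aleph_2$ while $\bb=\eect=\eec_{\leq}(2)=\aleph_1$, and since $\non(M_{\finfin})=\eec_{\leq}(2)$ by the earlier computation, (1), (2), (3) are all read off at once.

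\textbf{The catching forcings.} For $M_{\ed}$, let $\mathbb{Q}_{\ed}$ consist of pairs $p=(\phi_p,A_p)$ where $\phi_p$ is a finite partial function from $(\omega\times\omega)^{<\omega}$ into $[\omega\times\omega]^{<\omega}$ and $A_p$ assigns to finitely many $x\in(\omega\times\omega)^\omega$ a ``commitment point'' $n_x\in\omega$; order it so that $q\le p$ extends $\phi_p$, extends $A_p$, and satisfies $x(m)\in\phi_q(x\upharpoonright m)$ whenever $x\in\dom A_p$, $m\ge n_x$ and $x\upharpoonright m\in\dom\phi_q$. A density argument shows the generic $\phi\colon(\omega\times\omega)^{<\omega}\to\ed$ satisfies $V\cap(\omega\times\omega)^\omega\subseteq M_\phi$, and since finite commitments pin $\phi$ down only by finite sets (all lying in $\ed$), any two conditions with the same $\phi$‑part are compatible, so $\mathbb{Q}_{\ed}$ is $\sigma$‑centered. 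The forcing $\mathbb{Q}_{\edfin}$ for $K_{\edfin}$ is defined the same way but with $\phi$ depending only on the length of its argument, i.e.\ $\phi\colon\omega\to\edfin$, so the generic object lands in a generator $K_\phi$ of $K_{\edfin}$. The main work at this stage is \emph{mildness}: for each relational system $\mathbf{R}$ on the small side — $\le^*$‑domination for $\bb$; the plain constant predictors of $\eect$; Kamo's $\le$‑predictors of $\eec_{\leq}(2)$; the $b$‑bounded constant predictors of $\eec_b(2)$ for every increasing $b\in\baire$ — one proves, uniformly, that $\mathbb{Q}_{\ed}$ and $\mathbb{Q}_{\edfin}$ are $\mathbf{R}$‑good in the sense used for Cichoń's‑maximum iterations: the $\sigma$‑centered structure together with the fact that committed reals constrain $\phi$ only finitely lets one read off, from any name $\dot y$ and condition, a ground‑model interpretation $\mathbf{R}$‑dominated by a ground‑model object. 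In particular these forcings add no dominating real and no constant‑evading real of any of the listed types, and goodness of this kind is preserved by finite‑support ccc iterations.

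\textbf{The iteration.} Run a finite‑support iteration $\langle\mathbb{P}_\alpha,\dot{\mathbb{Q}}_\alpha:\alpha<\omega_2\rangle$ over a model of $\mathsf{CH}$, book‑keeping so as to alternate $\mathbb{Q}_{\ed}$ and $\mathbb{Q}_{\edfin}$ and, cofinally often, to use a copy catching all reals of the current model $V_\alpha$. A ccc‑reflection argument (using $\aleph_1<\cf(\omega_2)$) shows every set of reals of size $\aleph_1$ is contained in some $V_\alpha\cap(\omega\times\omega)^\omega$, hence in a generator $M_{\phi_\alpha}\in M_{\ed}$, resp.\ $K_{\phi_\alpha}\in K_{\edfin}$; thus $\non(M_{\ed})=\non(K_{\edfin})=\mathfrak{c}=\aleph_2$. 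Applying the goodness lemmas throughout keeps $\bb=\eect=\eec_{\leq}(2)=\aleph_1$; for (3), any increasing $b$ in the final model already lies in some $V_\alpha$, and goodness of the tail iteration over $V_\alpha$ for the system $\mathbf{R}_b$ shows the $\aleph_1$ many $b$‑bounded predictors of $V_\alpha$ still dominate, so $\eec_b(2)=\aleph_1$ — this is how the ``for every increasing $b$'' clause is absorbed. The duals ($\cov(M_{\ed})<\min\{\dd,\vvct\}$, $\cov(K_{\edfin})<\non(M_{\finfin})$, $\min\{\dd,\vv^{\mathrm{const}}_b(2)\}<\cov(K_{\edfin})$) follow from the order‑dual iteration, which instead adds dominating and predicting reals by forcings preserving a fixed ground‑model cover of $M_{\ed}$, resp.\ $K_{\edfin}$.

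I expect the main obstacle to be the goodness lemmas for the constant‑prediction systems — especially proving, uniformly in $b$, that $\mathbb{Q}_{\edfin}$ is $\mathbf{R}_b$‑good — since this is exactly where the combinatorics of the catching forcing must be fitted together with that of Kamo's (bounded) constant predictors; by contrast the $\sigma$‑centeredness, the density computation witnessing $V\cap(\omega\times\omega)^\omega\subseteq M_\phi$, and the reflection argument are routine, and the dual side is a symmetric variant of the preservation machinery already in the literature.
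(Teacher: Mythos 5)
Your high-level architecture (a ``catching'' poset forcing $V\cap(\omega\times\omega)^\omega\subseteq M_\phi$, resp.\ $K_\phi$, iterated with finite support while preserving the small invariants) is the same as the paper's, but the two points you defer are exactly where the content lies, and as written both fail. First, your poset $\mathbb{Q}_{\ed}$: since $q\le p$ requires $\phi_q\supseteq\phi_p$ as partial functions into $[\omega\times\omega]^{<\omega}$, the generic slalom $\phi$ is \emph{finite-valued}. But then $M_\phi$ is a countable union of branch-sets of finitely branching trees, i.e.\ $M_\phi\in K_\sigma$, so forcing $V\cap(\omega\times\omega)^\omega\subseteq M_\phi$ covers the ground-model reals by a $\sigma$-compact set and hence adds a dominating real ($\sigma$-centeredness is no protection: Hechler forcing is $\sigma$-centered). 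Thus $\mathbb{Q}_{\ed}$ is not $\mathbf{D}$-good, and iterating it cofinally drives $\bb$ to $\cc$, destroying all three separations. If instead you let values grow along the generic, you need a built-in quantitative bound guaranteeing that the limit value of each node lies in $\ed$ (resp.\ $\edfin$); without one, genericity makes the values leave the ideal. This is precisely what the paper's $\posetED$ and $\posetEDfin$ are engineered for: in $\posetED$ a value is ``a column up to $\sigma^p(u,v)$ plus the graph of $s^p$'' (an infinite member of $\ed$), and the catching clause is the disjunction ``either $x(i)\le\sigma^q(\cdot)$ or $y(i)=s^q(\cdot)$''; in $\posetEDfin$ the width bound $k_p$ and the constraint $|s'(n)\cap I_j|\le n$ play this role. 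These structural choices are what make the Fr-limit analysis (hence $\mathbf{D}$-goodness) possible at all.

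Second, the preservation of the bounding-prediction invariants cannot be waved through as ``goodness''. Keeping $\eect$, $\eec_b(2)$ small is indeed done by goodness in the paper, but only via the Brendle--Shelah lemma that $\mu$-$2^k$-linked posets are $\mathbf{CPR}_2(k)$-good, which lives on $2^\omega$ and applies because the iterands are unions of countably many centered (hence $k$-linked for all $k$) pieces; note also that $\eec_b(2)\le\eec_2$ for every increasing $b$, so uniformity in $b$ is free once $\eec_2$ is controlled. By contrast, $\non(M_{\finfin})=\eec_{\leq}(2)$ is a bounding-prediction relation on the unbounded space $\oo$, no linkedness-to-goodness lemma is available for it, and the paper does \emph{not} keep it small by goodness: it organizes the iteration as a ${<}\kappa$-closed-Fr-iteration whose first block consists of Cohen forcings and proves directly (\Cref{lem:closed_Fr_limits_keep_nonMfinfin_small}), via closed-Fr-limits of uniform $\Delta$-systems, that those Cohen reals stay outside every $M_\phi$ with $\phi$ into $\finfin$; closedness of the limits (which $\posetEDfin$ has, while for $\posetED$ even UF-limits are unclear) is essential. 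So the step you flag as ``the main obstacle'' is a genuinely missing idea, not a verification to be fitted in later, and filling it essentially forces the redesign of the catching forcings and the replacement of the goodness framework by the (closed-)Fr-limit machinery --- which is the paper's proof of (2) and (3).
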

    
    These results suggest that $\non(M_{\ed})$ and $\non(M_{\edfin})$ cannot be characterized by constant evasion numbers, unlike $\non(M_{\finfin})=\eec_{\leq}(2)$.
    The main method to prove (1) is the \emph{Fr\'{e}chet-limit (Fr-limit)} method, introduced by Mej\'{\i}a \cite{Mej19}, which preserves $\bb$ small through the forcing iteration. To show (2), we introduce a variant of this method, called \emph{closed-Fr-limit} method, which preserves $\non(M_{\finfin})$ small. (3) is obtained by combining the arguments of (1) and (2).
    
    In addition to the separations into two values in \Cref{mainthm:sep_into_two_values}, we could construct a model of extended Cicho\'n's maximum with uniformity and covering numbers of $M_\I$ and $K_\I$ for some ideal $\I$. Consequently, we encounter the following ideals, which are closely related to the linear growth ideal $\I_L$ (cf.\ {\cite[Page 56]{Hru11}, \cite[Page 3]{BM14}}):
    \[
    \J_L \coloneqq \{A\subset\omega\times\omega: \exists k<\omega~\forall^{\infty} n<\omega~\lvert\{m<\omega\colon\langle n, m\rangle\in A\}\rvert \leq k\cdot i\},
    \]
    and for $f\in\oo$ with $\textstyle\lim_{n\to\infty}f(n)/2^n=0$,
    \[
    \IL^f\coloneqq\{A\subset\omega\colon\exists k<\omega~\forall^\infty n<\omega~\lvert A\cap [2^n-1, 2^{n+1}-1)\rvert\leq k\cdot f(n)\}.
    \]
    These ideals provide the classes of ideals $\I_0, \I_1$ such that cardinal invariants of $M_{\I_0}$ and $K_{\I_1}$ can be added to a model of Cicho\'n's maximum with distinct values: 

    \begin{teorema}[\Cref{thm:CM}] \label{mainthm:ext_CM}
        Let $\aleph_1\leq\theta_1\leq\cdots\leq\theta_{12}$ be regular cardinals and $\theta_\cc$ an infinite cardinal with $\theta_\cc\geq\theta_{12}$ and $\theta_\cc^{\aleph_0}=\theta_\cc$. Then there exists a ccc poset that forces the separation constellation described in \Cref{fig:ext_CM}.
        Moreover, in the forcing extension, for ideals $\I_0, \I_1$ on countable sets such that $\J_L \leq_{\mathrm{K}} \I_0 \leq_{\mathrm{K}} \finfin$ and $\I_L^f \leq_{\mathrm{K}} \I_1 \leq_{\mathrm{K}} \Z$ for some $f\in\omega^\omega$ such that $\textstyle\lim_{n\to\infty}f(n)=\infty$ and $\textstyle\lim_{n\to\infty}f(n)/2^n=0$, we have:
        \[\non(M_{\I_0})=\theta_4,~\cov(M_{\I_0})=\theta_9,\]
        \[\non(K_{\I_1})=\non(M_{\I_1})=\theta_5,~\cov(K_{\I_1})=\cov(M_{\I_1})=\theta_8.\]
    \end{teorema}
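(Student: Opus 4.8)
The plan is to realize $\mathbb{P}$ as a coherent system of finite-support ccc iterations in the style of Mej\'{\i}a and Goldstern--Kellner--Mej\'{\i}a--Shelah \cite{Mej19,GKMS22}, refined by the Fr\'echet-limit and closed-Fr\'echet-limit methods developed in this paper. The first move is to collapse the two Kat\v{e}tov intervals to their endpoints: if $\I\leq_{\mathrm{K}}\J$ is witnessed by $f\colon\dom\J\to\dom\I$, the map $F(y):=f\circ y$ sends $(\dom\J)^\omega$ into $(\dom\I)^\omega$ and pulls the generators of $M_\I$ back into $M_\J$ (indeed $F^{-1}[M_\phi]=M_\psi$ with $\psi(s):=f^{-1}[\phi(f\circ s)]$, and $F^{-1}[K_\phi]=K_{f^{-1}\circ\phi}$), whence $\non(M_\I)\leq\non(M_\J)$, $\cov(M_\J)\leq\cov(M_\I)$, and likewise for $K$. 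Applying this across $\J_L\leq_{\mathrm{K}}\I_0\leq_{\mathrm{K}}\finfin$ and $\I_L^f\leq_{\mathrm{K}}\I_1\leq_{\mathrm{K}}\Z$, it suffices to force $\theta_4\leq\non(M_{\J_L})$, $\non(M_{\finfin})\leq\theta_4$, $\theta_9\leq\cov(M_{\finfin})$, $\cov(M_{\J_L})\leq\theta_9$, together with the analogous four inequalities for $K_{\I_L^f}$ and $K_\Z$. Using \Cref{mainthm:basic_ineq} ($\bb\leq\non(K_\I)$, $\cov(M_\I)\leq\dd$) and \Cref{mainthm:comp_non_MI} ($\non(M_{\finfin})=\ee^{\mathrm{const}}_{\leq}(2)$, $\cov(M_{\finfin})=\vv^{\mathrm{const}}_{\leq}(2)$, $\non(M_\Z)=\max\{\bb,\non(\mathcal{E})\}$, $\cov(M_\Z)=\min\{\dd,\cov(\mathcal{E})\}$), and arranging $\bb=\theta_3$, $\dd=\theta_{10}$ by the standard part of the construction, the remaining upper bounds reduce to keeping $\ee^{\mathrm{const}}_{\leq}(2)\leq\theta_4$, $\cov(M_{\J_L})\leq\theta_9$, $\non(\mathcal{E})\leq\theta_5$ and $\cov(\mathcal{E})\leq\theta_8$ in the extension, while the lower bounds amount to pushing $\non(M_{\J_L})$, $\cov(M_{\finfin})$, $\non(\mathcal{E})$, $\cov(\mathcal{E})$ (and $\non(K_{\I_L^f})$, $\cov(K_{\I_L^f})$) up to their targets; the $\I_1$-conclusion then follows from the $\mathcal{E}$-side by these identities, Kat\v{e}tov monotonicity, and the relations between $\mathcal{E}$ and the $\I_L^f$-ideals (\Cref{mainthm:comp_non_MI}(3), \cite{Paw00,Rag20}).

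The book-keeping runs over $\theta_\cc$ tasks along $\theta_i$-indexed cofinal spines. On the spines of the eight Cicho\'n characteristics it uses the customary iterands (Cohen, (anti-)localization, Hechler confined to the $\theta_{10}$-spine so that the Fr\'echet limits hold $\bb$ at $\theta_3$, and amoeba-type forcing); on the $\theta_5$- and $\theta_8$-spines it uses the $\mathcal{E}$-forcings of \cite{Yam24}. The two new iterands sit on the $\theta_4$- and $\theta_9$-spines. The first, $\mathbb{Q}^{\mathrm{pr}}_{\J_L}$, has as conditions finite approximations to a function $\phi\colon(\omega\times\omega)^{<\omega}\to\J_L$ together with finite commitments forcing designated reals into $M_\phi$; it is $\sigma$-centered (conditions sharing an approximation are compatible), its generic $\phi$ catches every ground-model real, and iterated along the $\theta_4$-spine it forces $\non(M_{\J_L})\geq\theta_4$. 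The second, $\mathbb{Q}^{\mathrm{ev}}_{\finfin}$, adds an ``evader'' $x\in(\omega\times\omega)^\omega$ with $x\notin M_\phi$ for every ground-model $\phi$ (feasible since every value $\phi(s)\in\finfin$ has non-empty complement); it is again $\sigma$-centered and along the $\theta_9$-spine forces $\cov(M_{\finfin})\geq\theta_9$. Since each $\J_L$-predictor is a fortiori a $\finfin$-predictor and each $\finfin$-evader a fortiori a $\J_L$-evader, these spines also yield $\non(M_{\finfin})\geq\theta_4$ and $\cov(M_{\J_L})\geq\theta_9$, consistently with the reductions above. Completely parallel $\sigma$-centered forcings $\mathbb{Q}^{\mathrm{pr}}_{\I_L^f}$ and $\mathbb{Q}^{\mathrm{ev}}_{\Z}$ are placed on the $\theta_5$- and $\theta_8$-spines to drive $\non(K_{\I_L^f})$ and $\cov(K_{\I_L^f})$.

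The lower bounds at every $\theta_i$ are then $\lcu$ statements read directly off the spines, as in \cite{GKMS22}. For the upper bounds, $\bb\leq\theta_3$ is the Fr\'echet-limit argument of Mej\'{\i}a, $\non(\mathcal{E})\leq\theta_5$ and $\cov(\mathcal{E})\leq\theta_8$ are the $\mathcal{E}$-preservation of \cite{Yam24}, and the genuinely new ingredient is keeping $\non(M_{\finfin})=\ee^{\mathrm{const}}_{\leq}(2)\leq\theta_4$ and, dually, $\cov(M_{\J_L})\leq\theta_9$: this is a closed-Fr\'echet-limit preservation theorem to the effect that the whole system is good for the relational systems witnessing these two invariants small. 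I expect the main obstacle to be the \emph{simultaneous} compatibility of all these preservation demands: one must verify that every iterand — the standard Cicho\'n forcings, the $\mathcal{E}$-forcings, $\mathbb{Q}^{\mathrm{pr}}_{\J_L}$, $\mathbb{Q}^{\mathrm{ev}}_{\finfin}$, and their $\I_L^f$/$\Z$-analogues — is good for the closed-Fr\'echet relational systems (so that neither the $\mathcal{E}$/$\I_L^f$-machinery nor the Cohen reals inflate $\ee^{\mathrm{const}}_{\leq}(2)$ past $\theta_4$, and symmetrically for $\cov(M_{\J_L})$), and that the Fr\'echet limit (for $\bb$), the closed-Fr\'echet limit (for $\non(M_{\finfin})$), and the $\mathcal{E}$-preservation can all be taken along one and the same coherent system; the $\sigma$-centeredness of the new forcings and the hypothesis $\theta_\cc^{\aleph_0}=\theta_\cc$ are what keep the book-keeping and the isomorphism-of-names symmetry arguments sound. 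Granting this, the constellation of \Cref{fig:ext_CM} together with the four displayed equalities follows by combining the $\lcu$ lower bounds, the preservation upper bounds, and the Kat\v{e}tov-monotonicity squeeze of the first paragraph.
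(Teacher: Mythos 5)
Your overall skeleton (Kat\v{e}tov--monotonicity squeeze via \Cref{lem:Katetov_non}, a $\sigma$-centered prediction poset for $\non(M_{\J_L})$ and one for $\non(K_{\I_L^f})$, and limit-based preservation to cap $\bb$ and $\non(M_{\finfin})=\ee^{\mathrm{const}}_{\leq}(2)$) is the same family of ideas as the paper's, but there is a concrete gap at the heart of your preservation argument: you run everything on Fr-limits and closed-Fr-limits, and these do not survive the step the construction actually requires. To keep $\nonm,\none,\non(M_{\finfin}),\bb$ at distinct values, the bookkeeping must iterate \emph{subforcings} of the iterands of size below the respective $\theta_i$, and a subforcing of a $\sigma$-Fr-linked poset need not be $\sigma$-Fr-linked (the limit of a sequence from a component may leave the subforcing), so neither \Cref{fac:Fr_good} nor an analogue of \Cref{lem:closed_Fr_limits_keep_nonMfinfin_small} applies to your iteration as described. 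The paper's fix is the ultrafilter-limit machinery of Goldstern--Mej\'{\i}a--Shelah and its closed variant from \cite{Yam24}: one needs $\sigma$-UF-lim-linkedness, i.e.\ the conditions $(\star)_k$ of \Cref{lem:chara_UF_linked} for \emph{all} $k$, not just $(\star)_1$. This is exactly why the theorem is stated for the intervals $[\J_L,\finfin]$ and $[\I_L^f,\Z]$ rather than, say, $[\ed,\finfin]$: the naive prediction posets you sketch (finite approximations to $\phi$ plus finite commitments, essentially $\posetED$/$\posetEDfin$) are only known to satisfy $(\star)_1$, and the paper explicitly flags (\Cref{rem:posetED_UF_limits}, \Cref{rem:posetEDfin_UF_limits}) that $(\star)_k$ for $k\geq 2$ is unclear for them; the posets $\posetJL$ and $\dif$ are engineered (the width-$a$ sets $s^p(u^\frown a,v)$, the factor $f(j)$ with $f\gg 1$ in the density clause) precisely so that limits of $k$ sequences admit common extensions (\Cref{lem:posetfin_centered_Frlinked}, \Cref{lem:DI_has_UF-limits}). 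Your proposal treats the choice of $\J_L$ and $\I_L^f$ as incidental and never verifies this, so the step ``keep $\bb=\theta_3$ and $\ee^{\mathrm{const}}_{\leq}(2)\leq\theta_4$'' is unsupported.

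A second, structural divergence: you propose to drive the covering numbers up directly with dedicated ``evader'' iterands on $\theta_8$- and $\theta_9$-spines and to cap $\cov(M_{\J_L})$, $\cov(K_\Z)$ by ``dual'' closed-Fr-limit preservation theorems. No such dual preservation is established in the paper or in the literature, and attempting all twelve values in a single iteration this way is essentially the pre-\cite{GKMS22} approach that required large cardinals. The paper instead proves the left-side separation with all dominating-side values equal to $\cc$ together with the Tukey equivalences $\R_\ind\cong_T C_{[\lambda_7]^{<\lambda_\ind}}\cong_T[\lambda_7]^{<\lambda_\ind}$ (\Cref{thm:CM_left}, using \Cref{fac_suff_eq_CI_and_I}, \Cref{fac_cap_V}), and then obtains $\theta_7,\dots,\theta_{12}$ --- including $\cov(M_{\I_0})=\theta_9$ and $\cov(K_{\I_1})=\theta_8$ --- by the submodel method of \cite{GKMS22} applied to that single poset, with no new preservation needed on the right side. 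If you replace your Fr-limit bookkeeping by the UF-limit construction of \cite{Yam24,Yam25} with the iterands $\posetJL$, $\dif$, $\lebb$, and route the right side through the submodel method, your outline becomes the paper's proof.
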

    
    This result implies a substantial difference of the $\sigma$-ideals $M_{\I_0}$ and $K_{\I_1}$ for such ideals $\I_0,\I_1$ from Cicho\'n $\sigma$-ideals.
    The framework to construct a model of the extended Cicho\'n's maximum in \Cref{mainthm:ext_CM} is based on the fourth author's work \cite{Yam25}, \cite{Yam24}. To construct such a model, we need to use a stronger method called \emph{Ultrafilter-limit (UF-limit)} method than the Fr-limit method, introduced by Goldstern--Mej{\'\i}a--Shelah \cite{GMS16}, to control $\bb$ in a finer way. We also need the \emph{closed-UF-limit} method, introduced by the fourth author \cite{Yam24}, to control $\non(M_{\finfin})$. According to these necessities, we find the ideals $\J_L$ and $\I^L_f$ and the classes of ideals for $\I_0,\I_1$ in \Cref{mainthm:ext_CM}.
    These classes of ideals seem interesting in themselves. It turns out $\J_L$ and $\IL^f$ are critical for some selective properties. The class of ideals for $\I_0$ is considered in \cite{DFGT21} and every tall non-pathological analytic P-ideal belongs to the class of ideals for $\I_1$. See \Cref{sec:more_growth}.

    \begin{figure}[h!]
        \centering	
		\begin{tikzpicture}
			\tikzset{
				textnode/.style={text=black}, 
			}
			\tikzset{
				edge/.style={color=black, thin, opacity=0.4}, 
			}
			\newcommand{\w}{2.8}
			\newcommand{\h}{2.5}
			
			\node[textnode] (addN) at (0,  0) {$\addn$};
			\node (t1) [fill=lime, draw, text=black, circle,inner sep=1.0pt] at (-0.25*\w, 0.8*\h) {$\theta_1$};
			
			\node[textnode] (covN) at (0,  \h*3) {$\covn$};
			\node (t2) [fill=lime, draw, text=black, circle,inner sep=1.0pt] at (0.15*\w, 3.3*\h) {$\theta_2$};

			\node[textnode] (addM) at (\w,  0) {$\cdot$};
			
			\node (t3) [fill=lime, draw, text=black, circle,inner sep=1.0pt] at (0.68*\w, 1.1*\h) {$\theta_3$};
			
			\node[textnode] (nonM) at (\w,  \h*3) {$\nonm$};
			\node (t6) [fill=lime, draw, text=black, circle,inner sep=1.0pt] at (1.35*\w, 3.3*\h) {$\theta_6$};
			
			\node[textnode] (covM) at (\w*2,  0) {$\covm$};
			\node (t7) [fill=lime, draw, text=black, circle,inner sep=1.0pt] at (1.65*\w, -0.3*\h) {$\theta_7$};
			
			\node[textnode] (d) at (\w*2,  1.9*\h) {$\dd$};
			\node (t10) [fill=lime, draw, text=black, circle,inner sep=1.0pt] at (2.32*\w, 1.9*\h) {$\theta_{10}$};
			\node[textnode] (cofM) at (\w*2,  \h*3) {$\cdot$};

			\node[textnode] (nonN) at (\w*3,  0) {$\nonn$};
			\node (t11) [fill=lime, draw, text=black, circle,inner sep=1.0pt] at (2.85*\w, -0.3*\h) {$\theta_{11}$};
			
			\node[textnode] (cofN) at (\w*3,  \h*3) {$\cofn$};
			\node (t12) [fill=lime, draw, text=black, circle,inner sep=1.0pt] at (3.25*\w, 2.2*\h) {$\theta_{12}$};
			
			\node[textnode] (aleph1) at (-\w,  0) {$\aleph_1$};
			\node[textnode] (c) at (\w*4,  \h*3) {$\continuum$};
			\node (t10) [fill=lime, draw, text=black, circle,inner sep=1.0pt] at (3.67*\w, 3.4*\h) {$\theta_\cc$};

			\node (t4) [fill=lime, draw, text=black, circle,inner sep=1.0pt] at (0.35*\w, 1.8*\h) {$\theta_4$};

			\node (t9) [fill=lime, draw, text=black, circle,inner sep=1.0pt] at (2.65*\w, 1.2*\h) {$\theta_9$};

			\node (t5) [fill=lime, draw, text=black, circle,inner sep=1.0pt] at (1.3*\w, 2.5*\h) {$\theta_5$};

			\node (t8) [fill=lime, draw, text=black, circle,inner sep=1.0pt] at (1.7*\w, 0.5*\h) {$\theta_8$};

            \node[textnode] (b) at (\w,  1.1*\h) {$\bb$};

            \node[textnode] (nonMFinFin) at (\w,  1.9*\h) {$\non(M_{\finfin})$}; 
            \node[textnode] (covMFinFin) at (2*\w,  1.1*\h) {$\cov(M_{\finfin})$};

            \node[textnode] (nonMZ) at (\w*0.5,  \h*2.65) {$\non(M_{\Z})$};
            
			\node[textnode] (covMZ) at (\w*2.5,  \h*0.35) {$\cov(M_{\Z})$};
            \node[textnode] (nonMILf) at (\w*0.5,  \h*2.2) {$\non(K_{\I_L^f})$};
            \node[textnode] (covMILf) at (\w*2.5,  \h*0.8) {$\cov(K_{\I_L^f})$};
			
			\node[textnode] (nonMJL) at (\w,  1.5*\h) {$\non(M_{\J_L})$};
            \node[textnode] (covMJL) at (2.0*\w,  1.5*\h) {$\cov(M_{\J_L})$};

			\draw[->, edge] (addN) to (covN);
			\draw[->, edge] (addN) to (addM);
			\draw[->, edge] (covN) to (nonM);	
			\draw[->, edge] (addM) to (b);
			\draw[->, edge] (nonMFinFin) to (nonM);
			\draw[->, edge] (addM) to (covM);
			\draw[->, edge] (nonM) to (cofM);
			\draw[->, edge] (covM) to (covMFinFin);
			\draw[->, edge] (d) to (cofM);

			\draw[->, edge] (covM) to (nonN);
			\draw[->, edge] (cofM) to (cofN);
			\draw[->, edge] (nonN) to (cofN);
			\draw[->, edge] (aleph1) to (addN);
			\draw[->, edge] (cofN) to (c);

			\draw[->, edge] (nonMFinFin) to (d);
			\draw[->, edge] (nonMJL) to (nonMFinFin);

			\draw[->, edge] (b) to (covMFinFin);
			\draw[->, edge] (covMFinFin) to (covMJL);
			
			\draw[->, edge] (nonMZ) to (nonM);
			
			\draw[->, edge] (covM) to (covMZ);

			\draw[->, edge] (b) to (nonMILf);
			\draw[->, edge] (nonMILf) to (nonMZ);

            \draw[->, edge] (b) to (nonMJL);

            \draw[->, edge] (covMILf) to (d);
			\draw[->, edge] (covMZ) to (covMILf);

            \draw[->, edge] (covMJL) to (d);

			\draw[blue,thick] (-0.5*\w,1.3*\h)--(1.5*\w,1.3*\h);
			\draw[blue,thick] (3.5*\w,1.7*\h)--(1.5*\w,1.7*\h);
			\draw[blue,thick] (1.5*\w,-0.25*\h)--(1.5*\w,3.25*\h);
			
			\draw[blue,thick] (-0.5*\w,-0.25*\h)--(-0.5*\w,3.25*\h);
			\draw[blue,thick] (3.5*\w,-0.25*\h)--(3.5*\w,3.25*\h);
			
			\draw[blue,thick] (0.5*\w,-0.25*\h)--(0.5*\w,1.3*\h);
			\draw[blue,thick] (2.5*\w,1.7*\h)--(2.5*\w,3.25*\h);
			
			\draw[blue,thick] (0.15*\w,2.05*\h)--(1.5*\w,2.05*\h);
			\draw[blue,thick] (0.15*\w,2.75*\h)--(1.5*\w,2.75*\h);
			\draw[blue,thick] (0.15*\w,1.3*\h)--(0.15*\w,2.75*\h);
			\draw[blue,thick] (0.5*\w,2.75*\h)--(0.5*\w,3.25*\h);
			
			\draw[blue,thick] (2.85*\w,0.95*\h)--(1.5*\w,0.95*\h);
			\draw[blue,thick] (2.85*\w,0.25*\h)--(1.5*\w,0.25*\h);
			\draw[blue,thick] (2.85*\w,1.7*\h)--(2.85*\w,0.25*\h);
			\draw[blue,thick] (2.5*\w,0.25*\h)--(2.5*\w,-0.25*\h);
		\end{tikzpicture}
        \caption{Extended Cicho\'{n}'s Maximum.   $f$ represents any function $f\in\oo$ such that $\textstyle\lim_{n\to\infty}f(n)=\infty$ and  $\textstyle\lim_{n\to\infty}f(n)/2^n=0$.}\label{fig:ext_CM}
    \end{figure}

    \subsection*{Acknowledgments}
    
    The second author's research was funded by the Austrian Science Fund (FWF) [10.55776/I6087, 10.55776/ESP5842424]. He thanks Piotr Borodulin-Nadzieja for inviting him to Wrocław and for supporting his stay.
    The fourth author thanks his supervisor J\"{o}rg Brendle for his support throughout this work. He also thanks Osvaldo Guzm\'{a}n and Yurii Khomskii, who formulated and proved \Cref{lem:quasi}. He was supported by JSPS KAKENHI Grant Number JP25KJ1818.
    For the purpose of open access, the authors have applied a CC BY public copyright license to any Author Accepted Manuscript version arising from this submission.

    \section{Preliminaries}\label{sec:Preliminaries}

    Given a set $X$, $\I\subset\mathcal{P}(X)$ is an \emph{ideal} if it is closed under taking finite unions and subsets. An ideal $\I$ is called a \emph{$\sigma$-ideal} if it is closed under taking countable unions. We always assume that an ideal $\I$ on $X$ is \emph{proper} in the sense that $X\notin\I$ and $\I$ contains every finite subset of $X$. An ideal $\I$ on $X$ is \emph{tall} if every infinite subset of $X$ contains an infinite set in $\I$.
    
    Let $\I$ be an ideal on $X$. We say that $A\subset X$ is \emph{$\I$-positive} if $A\notin \I$. We write $\I^+ = \mathcal{P}(X)\setminus\I$. The \emph{dual filter} of $\I$ is defined by $\I^* \coloneq \{A\subset X: X\setminus A\in\I\}$.
    
    The following are well-known relations among ideals and partial orders.
    
    \begin{dfn}
        Let $\I, \J$ be ideals on $X, Y$ respectively.
        \begin{enumerate}
            \item $\I$ is \emph{Kat\v{e}tov reducible} to $\J$, denoted by $\I\leq_\mathrm{K}\J$, if there is a function $f\colon Y\to X$ such that for all $A\in\I$, $f^{-1}[A]\in\J$.
            \item $\I$ is \emph{Kat\v{e}tov--Blass reducible} to $\J$, denoted by $\I\leq_{\mathrm{KB}}\J$, if there is a finite-to-one function $f\colon Y\to X$ such that for all $A\in\I$, $f^{-1}[A]\in\J$.
        \end{enumerate}
    \end{dfn}
    
    \begin{dfn}
        Let $\langle P, \leq\rangle$ be a partially ordered set. A subset $X\subset P$ is \emph{bounded} if there is $a\in P$ such that $\forall x\in X\,(x\leq a)$ holds. We also say that $X$ is \emph{$\sigma$-bounded} if there is a countable subset $A\subset P$ such that $\forall x\in X\, \exists a\in A\,(x\leq a)$ holds.
    
        Let $P, Q$ be partially ordered sets.
        \begin{enumerate}
            \item $P$ is \emph{Tukey reducible} to $Q$, denoted by $P\preceq_T Q$, if there is $f\colon P\to Q$ such that for any unbounded subset $X\subset P$, $f[X]$ is a unbounded subset of $Q$.
            \item $P$ is \emph{$\omega$-Tukey reducible} to $Q$, denoted by $P\preceq_{\omega T} Q$, if there is $f\colon P\to Q$ such that for any $\sigma$-unbounded subset $X\subset P$, $f[X]$ is a $\sigma$-unbounded subset of $Q$.
        \end{enumerate}
        Trivially, $P\preceq_T Q$ implies $P\preceq_{\omega T} Q$.
    \end{dfn}
    
    For any countable set $X$, its power set $\mathcal{P}(X)$, equipped with the product topology, is a Polish space homeomorphic to $2^\omega$. Thus, it makes sense to classify an ideal $\I \subseteq \mathcal{P}(X)$ by its complexity, i.e. $F_\sigma$, Borel, or analytic. We now list the definitions of the Borel ideals that appear in \Cref{table_values}.
    \begin{itemize}
        \item The random graph ideal $\mathcal{R}$ is the ideal on $\omega$ generated by the homogeneous sets in Rado's random graph.
        \item The Solecki ideal $\mathcal{S}$ is the ideal on the countable set
        \[
        \Omega\coloneqq\{U\in\operatorname{Clopen}(2^\omega):U\text{ has Lebesgue measure }1/2\}
        \]
        generated by subsets $A\subseteq\Omega$ with non-empty intersection.
        \item The nowhere dense ideal, denoted by $\nwd$, is the ideal on the rational numbers $\mathbb{Q}\cong2^{<\omega}$ of nowhere dense subsets of $2^{<\omega}$.
        \item The convergent ideal, denoted by $\conv$, is the ideal on $\mathbb{Q}\cap[0, 1]$ generated by convergent sequences in $\mathbb{Q}\cap [0,1]$.
        \item $\Fin\otimes\Fin$ is the Fubini product of two $\Fin$'s, where $\Fin$ is the ideal of finite sets. In general, for ideals $\I,\J$ on $X, Y$ respectively, their Fubini product $\I\otimes\J$ is the ideal on $X\times Y$ defined by
        \[
        \I\otimes\J = \{A\subset X\times Y:\{x\in X:(A)_x\notin\J\}\in\I\},
        \]
        where $(A)_x \coloneqq \{y\in Y:\langle x, y\rangle\in A\}$.
        \item The eventually different ideal is defined by
        \[
        \ed\coloneqq\{A\subseteq\omega\times\omega:\exists k<\omega~\forall^\infty n<\omega~|(A)_n|\leq k\}.
        \]
        Also, we define $\edfin\coloneqq\ed \on \Delta$, where $\Delta\coloneqq\{(n,m)\in\omega\times\omega:m\leq n\}$.
        \item  Fix the interval partition $\bar{P}^\mathrm{exp}=(P_i^\mathrm{exp})_{i<\omega}$ of $\omega$ such that $\lvert P^\mathrm{exp}_i\rvert = 2^i$. 
        The linear growth ideal $\I_L$ is defined by
        \[
        \I_L = \{A\subset\omega: \exists k<\omega\, \forall^\infty i <\omega\, (\lvert A \cap P^\mathrm{exp}_i\rvert \leq k\cdot i)\}
        \]
        The polynomial growth ideal $\I_P$ is
        \[
        \I_P = \{A\subset\omega: \exists k<\omega\, \forall^\infty i <\omega\, (\lvert A \cap P^\mathrm{exp}_i\rvert \leq i^k)\}
        \]
        \item The asymptotic density zero ideal $\Z$ is defined by
        \[
        \Z = \left\{A\subseteq\omega:\lim_{n\to\infty}\frac{\lvert A\cap n\rvert}{n}=0\right\}.
        \]
    \end{itemize}
    
    \Cref{figure:Hrusaks_heart} illustrates the Kat\v{e}tov-Blass orders among the Borel ideals listed above (including $\Fin$). The details can be consulted in \cite{Hru17} ($\edfin\leq_{\mathrm{KB}}\I_L$ is easy to see and the KB-reductions $\I_L\leq_{\mathrm{KB}}\I_P\leq_{\mathrm{KB}}\Z$ follow from the inclusions $\I_L\subseteq\I_P\subseteq\Z$).
	\begin{figure}[h]
		\centering
		\begin{tikzpicture}[]
			\matrix[matrix of math nodes,column sep={50pt,between origins},row
			sep={30pt,between origins},nodes={asymmetrical rectangle}] 
			{
				|[name=nwd]| \nwd &&|[name=finfin]| \finfin &&|[name=Z]| \Z &\\
				&&&&&|[name=Pol]| \Polynomial \\
				&&&&&|[name=Lin]| \Linear \\
				&&&&&|[name=edfin]| \edfin \\
				|[name=sol]| \Solecki &&|[name=conv]| \conv &&|[name=ed]| \ed  \\
				\\
				&&|[name=rand]| \RandomGraph \\
				&&|[name=fin]| \Fin \\
			};
			\draw[->] 
			(conv) edge (finfin)
			(ed) edge (finfin)
			(Pol) edge (Z)
			(conv) edge (nwd)
			(conv) edge (Z)
			(rand) edge (ed)
			(ed) edge (edfin)
			(edfin) edge (Lin)
			(Lin) edge (Pol)
			(rand) edge (conv)
			(fin) edge (rand)
			(fin) edge (sol)
			(sol) edge (nwd);
			
		\end{tikzpicture}
		\caption{Diagram of Borel ideals and Kat\v{e}tov-Blass orders. An arrow $\I\to\J$ denotes that $\I\leq_{\mathrm{KB}}\J$ holds.}\label{figure:Hrusaks_heart}
	\end{figure}
    
    Recall that an ideal $\I$ is called a \emph{P-ideal} if for any $\overline{A}\in[\I]^\omega$, there is an $B\in\I$ such that $\forall A\in\overline{A}\,(A\subset^* B)$. The classes of $F_\sigma$ ideals and analytic P-ideals (on countable sets) are especially important subclasses of Borel ideals, so let us state relevant definitions and basic results:
    
    \begin{dfn}
        A function $\varphi\colon\mathcal{P}(\omega)\to\left[0,\infty\right]$ is called a \emph{submeasure} on $\omega$ if:
    	\begin{itemize}
    		\item $\varphi(\emptyset)=0$,
    		\item $A\subseteq B\Rightarrow\varphi(A)\leq\varphi(B)$,
    		\item $\varphi(A\cup B)\leq\varphi(A)+\varphi(B)$.
    	\end{itemize}
    	A submeasure $\varphi$ is \emph{lower semi-continuous} if $\varphi(A)=\displaystyle\lim_{n\to\infty}\varphi(A\cap n)$ for all $A\subseteq\omega$.
    \end{dfn}
    
    \begin{fac}\label{fac:submeasure_chara}
        Let $\I$ be an ideal on $\omega$.
        \begin{itemize}
            \item \textnormal{(Mazur \cite{Maz91})} $\I$ is an $F_\sigma$ ideal if and only if there is a lower semi-continuous submeasure $\varphi$ such that
            \[
            \I=\Fin(\varphi)\coloneqq\{A\subseteq\omega:\varphi(A)<\infty\}.
            \]
            \item \textnormal{(Solecki \cite{Sol99})} $\I$ is an analytic P-ideal if and only if there is a lower semi-continuous submeasure $\varphi$ such that
            \[
            \I=\Exh(\varphi)\coloneqq\{A\subseteq\omega:\lim_{n\to\infty}\varphi(A\setminus n)=0\}.
            \]
        \end{itemize}
    \end{fac}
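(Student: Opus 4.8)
The statement unpacks into four implications: two routine verifications and two classical theorems. \textbf{The two ``if'' directions} are direct. If $\varphi$ is a lower semicontinuous submeasure, normalized so that $\varphi(\{n\})<\infty$ for every $n$ and $\varphi(\omega)=\infty$ (respectively $\inf_n\varphi(\omega\setminus n)>0$), then monotonicity and subadditivity make $\Fin(\varphi)$ and $\Exh(\varphi)$ proper ideals, and $\Exh(\varphi)$ is closed under finite unions since $\varphi((A\cup B)\setminus n)\le\varphi(A\setminus n)+\varphi(B\setminus n)$. The stated form of lower semicontinuity coincides with topological lower semicontinuity of $\varphi\colon 2^\omega\to[0,\infty]$, so each set $\{A:\varphi(A)\le c\}$ is closed; hence $\Fin(\varphi)=\bigcup_n\{A:\varphi(A)\le n\}$ is $F_\sigma$, and $\Exh(\varphi)=\bigcap_{k\ge 1}\bigcup_N\{A:\varphi(A\setminus N)\le 1/k\}$ is $F_{\sigma\delta}$ (using that $A\mapsto\varphi(A\setminus N)$ is lsc), in particular analytic. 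Finally, $\Exh(\varphi)$ is a P-ideal by the usual diagonalization: given $A_j\in\Exh(\varphi)$, choose $N_0<N_1<\cdots$ with $\varphi(A_i\setminus N_j)<2^{-j}$ for all $i\le j$, and put $B=\bigcup_j\bigl(A_j\cap[N_j,N_{j+1})\bigr)$; then each $A_j\setminus B$ is finite and $\varphi(B\setminus N_m)\le\sum_{j\ge m}2^{-j}\to 0$, so $B\in\Exh(\varphi)$ and $A_j\subseteq^* B$.

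\textbf{For Mazur's direction} the plan is an explicit construction. Write the $F_\sigma$ ideal as $\I=\bigcup_n F_n$ with the $F_n$ closed and increasing. First I normalize: replacing each $F_n$ by its downward closure (still closed, by a pointwise-limit argument), and then replacing it by $\{B_1\cup\cdots\cup B_n:B_i\in F_n\}$ (still closed, as a continuous image of a compact power, and automatically downward closed), I may assume that $F_0=\{\emptyset\}$, that the $F_n$ are closed, increasing and downward closed, and that $F_n+F_m\subseteq F_{n+m}$, all while keeping $\bigcup_n F_n=\I$. Then I set
\[
\varphi(A)=\min\{\,n+|A\setminus a|:n\in\omega,\ a\subseteq A,\ a\in F_n\,\}
\]
and verify the three required properties. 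That $\varphi$ is a submeasure: monotonicity follows from downward closedness of the $F_n$, and for subadditivity one combines an optimal witness $a\in F_n$ for $A$ with an optimal witness $b\in F_m$ for $B$ into $a\cup b\in F_{n+m}$, noting $(A\cup B)\setminus(a\cup b)\subseteq(A\setminus a)\cup(B\setminus b)$. That $\Fin(\varphi)=\I$: if $A\in\I$ then $a=A$ is a legal witness, so $\varphi(A)<\infty$, while a finite value of $\varphi(A)$ exhibits $A$ as the union of a member of $\I$ and a finite set. That $\varphi$ is lsc: if $\sup_k\varphi(A\cap k)=M<\infty$, pass to a subsequence along which the optimal index $n_k$ is a constant $n^\ast$ and, using compactness of $F_{n^\ast}$, along which the optimal witnesses converge to some $a\in F_{n^\ast}$ with $a\subseteq A$; a pointwise-convergence argument then gives $|A\setminus a|\le M-n^\ast$, hence $\varphi(A)\le M=\sup_k\varphi(A\cap k)$. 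Each of these is short once the normalization is done.

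\textbf{Solecki's direction is the main obstacle}: that every analytic P-ideal $\I$ equals $\Exh(\varphi)$ for some lsc submeasure $\varphi$. I would not attempt to reprove this and would cite \cite{Sol99}, but the shape of the argument is to build $\varphi$ so that $\varphi(A\setminus n)$ quantifies how far the tail of $A$ is from membership in $\I$. Fixing a continuous surjection $f\colon\omega^\omega\to\I$ witnessing analyticity, one uses that an analytic proper ideal has the Baire property and is therefore meager, which---in the spirit of the Jalali--Naini/Talagrand characterization of meager ideals---yields an interval structure bounding how much of $\omega$ a member of $\I$ can occupy; the P-ideal property then allows one to organize, along the tree $\omega^{<\omega}$ of conditions, a coherent family of auxiliary lsc submeasures $\mu_s$ reflecting the behavior of $f$ above $s$ and each satisfying $\I\subseteq\Exh(\mu_s)$; and $\varphi$ is assembled as a suitable countable supremum of these. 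The genuinely hard step, the one really using both analyticity and the P-property, is to make this auxiliary family rich enough that the reverse inclusion $\Exh(\varphi)\subseteq\I$ holds as well---equivalently, that every $A\notin\I$ is detected by some $\mu_s$ with $\limsup_n\mu_s(A\setminus n)>0$.
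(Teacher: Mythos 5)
The paper does not prove this statement at all: it is stated as a \Cref{fac:submeasure_chara}-style Fact and simply cited to Mazur and Solecki, so your decision to defer Solecki's theorem (and, implicitly, Mazur's) to the literature matches the paper's treatment, and everything you add on top of that is extra. Your verification of the two ``if'' directions and your construction for Mazur's direction are essentially correct and are standard arguments: the normalization to closed, downward closed, increasing $F_n$ with $F_n\cup F_m$-type closure, the submeasure $\varphi(A)=\min\{n+|A\setminus a|:a\subseteq A,\ a\in F_n\}$, and the compactness argument for lower semicontinuity all go through as you describe.

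There is one concrete slip, in the routine P-ideal check for $\Exh(\varphi)$. With your choice $B=\bigcup_j\bigl(A_j\cap[N_j,N_{j+1})\bigr)$ it is \emph{not} true that each $A_j\setminus B$ is finite: for $i>j$ the piece $A_j\cap[N_i,N_{i+1})$ was thrown away unless it happens to lie inside $A_i$, since $B\cap[N_i,N_{i+1})=A_i\cap[N_i,N_{i+1})$, so $A_j\subseteq^* B$ can fail. The standard fix is to take $B=\bigcup_j(A_j\setminus N_j)$ (equivalently, keep $\bigcup_{i\le j}A_i$ on the interval $[N_j,N_{j+1})$); then $A_j\setminus B\subseteq A_j\cap N_j$ is finite, and $\varphi(B\setminus N_m)\le\sum_{j\le m}\varphi(A_j\setminus N_m)+\sum_{j>m}\varphi(A_j\setminus N_j)\le(m+1)2^{-m}+2^{-m}\to0$, where one uses that lower semicontinuous submeasures are $\sigma$-subadditive (any finite initial segment of a countable union is covered by finitely many of the sets). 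With that repair your outline is complete modulo the cited Solecki direction, whose sketch you give only impressionistically, but which you correctly identify as the genuinely hard step and appropriately leave to \cite{Sol99}, exactly as the paper does.
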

    
    The following is a natural subclass of $F_\sigma$ non-P-ideals, introduced in \cite{HRRZ14}.
    
    \begin{dfn}
        An ideal $\I$ on $\omega$ is \emph{fragmented} if there is a partition $\langle P_n\rangle_{n<\omega}$ of $\omega$ into nonempty finite sets and a sequence $\langle\varphi_n\rangle_{n<\omega}$ such that each $\varphi_n$ is a submeasure on $P_n$ such that
        \[
        X\in\I \iff \sup_{n<\omega}\varphi_n(X\cap P_n) < \infty.
        \]
        A fragmented ideal associated to $\langle P_n, \varphi_n\rangle_{n<\omega}$ is \emph{gradually fragmented} if there is a function $f\colon\omega\to\omega$ such that for all $n<\omega$, the following holds:
        \[
        \forall i<\omega\,\forall^{\infty} j<\omega\,\forall \overline{B}\in[\mathcal{P}(P_j)]^{\leq i}\,\left(\forall B\in\overline{B}\,(\varphi_j(B)\leq n)\Rightarrow\varphi_j\left(\bigcup\overline{B}\right)\leq f(n)\right).
        \]
    \end{dfn}

    Let $\I$ be a fragmented ideal on $\omega$ associated to $\langle P_n, \varphi_n\rangle_{n<\omega}$. Then a function $\varphi\colon\mathcal{P}(\omega)\to[0, \infty]$ defined by $\varphi(A) = \sup_{n<\omega}\varphi_n(A\cap P_n)$ is a lower semi-continuous submeasure on $\omega$ such that $\I = \Fin(\varphi)$. By \cref{fac:submeasure_chara} (1), $\I$ is $F_\sigma$. It was shown in \cite[Corollary 2.9]{BM14} that any tall (proper) fragmented ideal is not a P-ideal. One can find more information about fragmented ideals in \cite{BM14}.

    Next, we introduce the standard cardinal invariants associated to ideals.

    \begin{dfn}
        Let $\mathcal{I}$ be a $\sigma$-ideal on a set $X$. Then we define
        \begin{align*}
            \add(\mathcal{I}) &= \min\{\lvert\mathcal{A}\rvert:\mathcal{A}\subset\mathcal{I}\land\bigcup\mathcal{A}\notin\mathcal{I}\}\\
            \non(\mathcal{I}) &= \min\{\lvert A\rvert: A\subset X\land A\notin \mathcal{I}\}\\
            \cov(\mathcal{I}) &= \min\{\lvert\mathcal{A}\rvert:\mathcal{A}\subset\mathcal{I}\land\bigcup\mathcal{A} = X\}\\
            \cof(\mathcal{I}) &= \min\{\lvert\mathcal{A}\rvert:\mathcal{A}\subset\mathcal{I}\land\forall B\in\mathcal{I}\,\exists A\in\mathcal{A}\,(B\subset A)\}
        \end{align*}
    \end{dfn}

    \begin{dfn}\label{dfn:star_numbers}
        Let $\I$ be an ideal on a countable set $X$. Then we define
        \begin{align*}
            \add^*(\I) &= \min\{\lvert\mathcal{A}\rvert: \mathcal{A}\subset\I\land\forall B\in\I\,\exists A\in\mathcal{A}\,(A \not\subset^* B)\}\\
            \non^*(\I) &= \min\{\lvert\mathcal{A}\rvert: \mathcal{A}\subset[X]^\omega\land\forall B\in\I\,\exists A\in\mathcal{A}\,(\lvert A\cap B \rvert<\omega)\}\\
            \cov^*(\I) &= \min\{\lvert\mathcal{A}\rvert: \mathcal{A}\subset\I\land\forall B\in [X]^{\omega}\,\exists A\in\mathcal{A}\,(\lvert A\cap B\rvert = \omega)\}\\
            \cof^*(\I) &= \min\{\lvert\mathcal{A}\rvert: \mathcal{A}\subset\I\land\forall B\in\I\,\exists A\in\mathcal{A}\,(B \subset^* A)\}
        \end{align*}
        If $\I$ is not tall, $\cov^*(\I)$ is ill-defined. In this case, we set $\cov^*(\I) = \infty$ for convenience.\footnote{Recall that our implicit assumption that an ideal is always proper. This guarantees that $\add^*(\I)$ and $\non^*(\I)$ are well-defined. $\cof^*(\I)$ is well-defined even without properness.}
    \end{dfn}

    The notation in \cref{dfn:star_numbers} come from \cite{HH07}.
    Note that if $\I$ is tall, then $\cov^*(\I)$ and $\cof^*(\I)$ are always uncountable, but $\add^*(\I)$ and $\non^*(\I)$ may not. There is an easy criteria for $\add^*(\I)$ and $\non^*(\I)$ to be countable as follows.

    \begin{lem}\label{lem:addstar_nonstar}
        Let $\I$ be a tall ideal on a countable set. Then the following hold:
        \begin{enumerate}
            \item $\add^*(\I) > \omega \iff \I$ is a P-ideal.
            \item \textnormal{(\cite{HMM10})} If $\I$ is Borel, then $\non^*(\I) > \omega \iff \I\geq_{\mathrm{KB}}\edfin$.
        \end{enumerate}
    \end{lem}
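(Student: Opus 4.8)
The plan is to prove the two parts separately: part (1) is a direct unwinding of the definitions once one records a small well-definedness observation, and in part (2) one implication is elementary while the other is a cited theorem that I would not reprove.

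\emph{Part (1).} First note that tallness guarantees that $(\I,\subseteq^*)$ has no greatest element: if $B\in\I$ then $X\setminus B$ is infinite (properness, since $X\notin\I$), so by tallness it contains an infinite set $A\in\I$, and $A\cap B=\emptyset$ gives $A\not\subseteq^*B$. Hence $\add^*(\I)$ is well-defined, with $\mathcal{A}=\I$ itself lying in the defining family, and moreover $\add^*(\I)\geq\aleph_0$ because any finite $\mathcal{A}\subseteq\I$ is $\subseteq^*$-bounded by $\bigcup\mathcal{A}\in\I$. Now I would simply unwind: ``$\add^*(\I)\leq\omega$'' says there is a countable $\mathcal{A}\subseteq\I$ with $\forall B\in\I\,\exists A\in\mathcal{A}\,(A\not\subseteq^*B)$, i.e.\ a countable subfamily of $\I$ with no $\subseteq^*$-upper bound inside $\I$; its negation, ``every countable subfamily of $\I$ has a $\subseteq^*$-upper bound in $\I$'', is precisely the stated definition of a P-ideal (the restriction to countably \emph{infinite} subfamilies there is harmless since finite ones are bounded by their union). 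So ``$\add^*(\I)>\omega$'' $\iff$ ``$\I$ is a P-ideal'', with nothing further to check.

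\emph{Part (2).} I would begin with the parallel reformulation: for any $B\in\I$ the set $X\setminus B$ is an infinite subset of $X$ disjoint from $B$, so $\non^*(\I)$ is well-defined, and unwinding the definition shows ``$\non^*(\I)>\omega$'' is equivalent to $\I$ being \emph{$\omega$-hitting}, meaning that for every countable family $\{A_n:n<\omega\}\subseteq[X]^\omega$ there is $B\in\I$ with $\lvert B\cap A_n\rvert=\omega$ for all $n$. With this in hand, the implication ``$\I\geq_{\mathrm{KB}}\edfin\Rightarrow\non^*(\I)>\omega$'' is elementary. The core fact is that $\edfin$ is itself $\omega$-hitting: since every column of $\Delta$ is finite, each infinite $A_n\subseteq\Delta$ meets infinitely many columns, so a round-robin construction works — visit each index $n$ infinitely often and, at each visit, add to $B$ one point of $A_n$ lying in a column not used so far — producing $B\subseteq\Delta$ with at most one point per column (hence $B\in\edfin$) and $\lvert B\cap A_n\rvert=\omega$ for every $n$. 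Being $\omega$-hitting then transfers upward along $\leq_{\mathrm{KB}}$: if $f$ is a finite-to-one witness of $\edfin\leq_{\mathrm{KB}}\I$ and $\{B_n\}\subseteq[X]^\omega$, each $f[B_n]$ is infinite, so choosing $C\in\edfin$ with $\lvert C\cap f[B_n]\rvert=\omega$ for all $n$ gives $f^{-1}[C]\in\I$ with $\lvert f^{-1}[C]\cap B_n\rvert=\omega$ for all $n$, since each point of $C\cap f[B_n]$ lifts to a distinct point of $f^{-1}[C]\cap B_n$.

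The remaining direction — ``$\I$ Borel and $\omega$-hitting $\Rightarrow\edfin\leq_{\mathrm{KB}}\I$'' — is the substantive content of part (2), and here I would just invoke \cite{HMM10}. This is the step I do not expect to reprove, and it is the genuine obstacle: everything above uses no descriptive-set-theoretic hypothesis at all, so the whole difficulty of this implication lies exactly in using Borelness of $\I$ to produce, from the failure of $\edfin\leq_{\mathrm{KB}}\I$, a single countable family of infinite sets that cannot be simultaneously hit by a member of $\I$.
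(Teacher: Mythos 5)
Your proposal is correct, and it matches the paper's treatment of this statement: the paper states the lemma without proof, regarding part (1) as a routine unwinding of the definition of $\add^*(\I)$ (exactly your argument, including the use of tallness for well-definedness) and citing \cite{HMM10} for part (2). Your added detail for the easy half of (2) — reformulating $\non^*(\I)>\omega$ as $\omega$-hitting, checking that $\edfin$ is $\omega$-hitting via a one-point-per-column selector, and transferring this upward along a finite-to-one Kat\v{e}tov--Blass reduction — is sound, and you correctly identify the Borel dichotomy direction as the part that genuinely rests on \cite{HMM10}.
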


    We recall the role of idealized forcings for computing cardinal invariants. Let $\mathcal{I}$ be a $\sigma$-ideal on a Polish space $X$. The poset of $\mathcal{I}$-positive Borel sets ordered by inclusion is denoted by $\mathbb{P}_{\mathcal{I}}$. The forcing $\mathbb{P}_\mathcal{I}$ adds a \emph{canonical real} $\dot{r}^{\mathcal{I}}_{\text{gen}}$ such that, for every Borel $B \subseteq X$ coded in the ground model, $B$ is in the generic filter if and only if $\dot{r}^{\mathcal{I}}_{\text{gen}} \in B$ \cite[Proposition 2.1.2]{Zapl}. 
    
    When computing $\non(\mathcal{I})$ and $\cov(\mathcal{I})$, it is natural to look at the forcing $\mathbb{P}_\mathcal{I}$ because of the following connection between the cardinal invariants and the generic reals added by $\mathbb{P}_\mathcal{I}$.
    
    \begin{dfn}[{\cite[Definition 2.3.1]{Kho11}}]
        Let $\mathcal{I}$ be a $\sigma$-ideal on a Polish space $X$ and $M$ a transitive model of set theory. A real $x$ is called \emph{$\mathcal{I}$-quasi-generic} over $M$ if for every Borel set $B\in\mathcal{I}$ whose Borel code lies in $M$, $x\notin B$.
    \end{dfn}

    \begin{lem}\label{lem:quasi}
        Let $\mathcal{I}, \mathcal{J}$ be $\sigma$-ideals on Polish spaces $X$ and $Y$ generated by Borel sets. Assume that there is a Borel function $f\colon Y\to X$ such that
        \[
        \mathbb{P}_\mathcal{J}\Vdash f(\dot{r}^\mathcal{J}_\mathrm{gen})\text{ is }\mathcal{I}\text{-quasi-generic}.
        \]
        Then, for any $A\in\mathcal{I}$, $f^{-1}[A]\in\mathcal{J}$ holds (in other words, $f$ witnesses $\mathcal{I} \leq_{\mathrm{K}} \mathcal{J}$). In particular, $\non(\mathcal{I})\leq\non(\mathcal{J})$ and $\cov(\mathcal{I})\geq\cov(\mathcal{J})$ hold.
    \end{lem}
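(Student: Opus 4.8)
The plan is to prove the Katětov reduction $\mathcal{I}\leq_{\mathrm{K}}\mathcal{J}$ directly from the forcing hypothesis, and then read off the two cardinal inequalities in the standard way. First I would reduce to the case where $A$ is Borel. Since $\mathcal{I}$ is generated by Borel sets, any $A\in\mathcal{I}$ is contained in a countable union $\bigcup_{n<\omega}B_n$ of Borel sets $B_n\in\mathcal{I}$; as $f$ is Borel, $f^{-1}[A]\subseteq\bigcup_{n<\omega}f^{-1}[B_n]$ where each $f^{-1}[B_n]$ is Borel, and since $\mathcal{J}$ is a $\sigma$-ideal (hence closed under countable unions and subsets), it is enough to show $f^{-1}[B_n]\in\mathcal{J}$ for each $n$. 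So it suffices to fix a Borel set $A\in\mathcal{I}$, whose Borel code then lies in the ground model $V$, and prove $f^{-1}[A]\in\mathcal{J}$.

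The core step is a short argument by contradiction. Suppose $B\coloneqq f^{-1}[A]\notin\mathcal{J}$. Then $B$ is a $\mathcal{J}$-positive Borel set, i.e.\ a condition in $\mathbb{P}_{\mathcal{J}}$. Every forcing condition forces itself into the generic filter, so by the defining property of the canonical real $\dot r^{\mathcal{J}}_{\mathrm{gen}}$ (\cite[Proposition 2.1.2]{Zapl}) we get $B\Vdash \dot r^{\mathcal{J}}_{\mathrm{gen}}\in B$, hence $B\Vdash f(\dot r^{\mathcal{J}}_{\mathrm{gen}})\in A$. On the other hand, by hypothesis $\mathbb{P}_{\mathcal{J}}$ forces $f(\dot r^{\mathcal{J}}_{\mathrm{gen}})$ to be $\mathcal{I}$-quasi-generic over $V$, and $A$ is a Borel member of $\mathcal{I}$ whose code lies in $V$, so $\mathbb{P}_{\mathcal{J}}\Vdash f(\dot r^{\mathcal{J}}_{\mathrm{gen}})\notin A$. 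Since $B$ is a nonzero condition, this is a contradiction; hence $f^{-1}[A]\in\mathcal{J}$. Together with the first paragraph, this shows $f$ witnesses $\mathcal{I}\leq_{\mathrm{K}}\mathcal{J}$.

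Finally I would deduce the two cardinal inequalities from $\mathcal{I}\leq_{\mathrm{K}}\mathcal{J}$ in the usual way. For the uniformity: take $W\subseteq Y$ with $W\notin\mathcal{J}$ and $\lvert W\rvert=\non(\mathcal{J})$; then $f[W]\notin\mathcal{I}$, since otherwise $W\subseteq f^{-1}[f[W]]\in\mathcal{J}$ by the reduction, contradicting $W\notin\mathcal{J}$; hence $\non(\mathcal{I})\leq\lvert f[W]\rvert\leq\non(\mathcal{J})$. For the covering: take $\mathcal{A}\subseteq\mathcal{I}$ covering $X$ with $\lvert\mathcal{A}\rvert=\cov(\mathcal{I})$; then $\{f^{-1}[A]:A\in\mathcal{A}\}$ is a subfamily of $\mathcal{J}$ that covers $Y=f^{-1}[X]$, so $\cov(\mathcal{J})\leq\cov(\mathcal{I})$.

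There is no serious obstacle here: the whole argument is a soft forcing computation. The only points requiring a little care are the initial reduction to Borel sets (which uses precisely that $\mathcal{I}$ is Borel-generated, that $f$ is Borel, and that $\mathcal{J}$ is a genuine $\sigma$-ideal) and the bookkeeping that the Borel codes involved lie in the ground model, so that the ``$\mathcal{I}$-quasi-generic over $V$'' hypothesis can be applied to $A$.
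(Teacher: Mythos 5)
Your proof is correct and follows essentially the same route as the paper: reduce to Borel $A$, then derive a contradiction by viewing $f^{-1}[A]$ as a condition in $\mathbb{P}_\mathcal{J}$ forcing $f(\dot r^{\mathcal{J}}_{\mathrm{gen}})\in A$, against the quasi-genericity hypothesis. The extra details you supply (the Borel-generation reduction and the standard derivation of the cardinal inequalities from the Katětov reduction) are fine and merely spell out what the paper leaves implicit.
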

    \begin{proof}
        Let $A\in\mathcal{I}$ and we may assume $A$ is Borel. Assume $f^{-1}[A]\notin\mathcal{J}$. Then $f^{-1}[A]$ is Borel so $f^{-1}[A]\in \mathbb{P}_\mathcal{J}$. $f^{-1}[A]$ forces $\dot{r}^\mathcal{J}_\mathrm{gen}\in f^{-1}[A]$, and hence $ f(\dot{r}^\mathcal{J}_\mathrm{gen})\in A$, which contradicts that $f(\dot{r}^\mathcal{J}_\mathrm{gen})$ is forced to be $\mathcal{I}$-quasi-generic. 
    \end{proof}

    \section{$\omega$-versions of cardinal invariants of $\I$}\label{sec:omega}

    To compute the cardinal invariants of $M_\I$, the following ``$\omega$-versions'' of the cardinal invariants in \cref{dfn:star_numbers} will be useful.

    \begin{dfn}\label{dfn:omega_star_numbers}
        Let $\I$ be an ideal on a countable set $X$. Then we define
        \begin{align*}
            \add^*_\omega(\I) &= \min\{\lvert\mathcal{A}\rvert:\mathcal{A}\subset\I\land\forall\overline{B}\in[\I]^\omega\,\exists A\in\mathcal{A}\,\forall B\in\overline{B}\, (A \not\subset^* B)\}\\
            \non^*_\omega(\I) &= \min\{\lvert\mathcal{A}\rvert:\mathcal{A}\subset[X]^\omega\land\forall\overline{B}\in[\I]^\omega\,\exists A\in\mathcal{A}\,\forall B\in\overline{B}\, (\lvert A \cap B \rvert<\omega)\}\\
            \cof^*_\omega(\I) &= \min\{\lvert\mathcal{A}\rvert:\mathcal{A}\subset[\I]^\omega\land\forall\overline{B}\in[\I]^\omega\,\exists\overline{A}\in\mathcal{A}\,\forall B\in\overline{B}\,\exists A\in\overline{A}\, (B \subset^* A)\}
        \end{align*}
        If $\I$ is countably generated, then $\add^*_\omega(\I)$ is ill-defined. In this case, we set $\add^*_\omega(\I) = \infty$ for convenience.\footnote{Properness of $\I$ guarantees that $\non^*_\omega(\I)$ is well-defined. Even without properness, $\cof^*_\omega(\I)$ is well-defined.}
    \end{dfn}
    
    One might think that we could naturally define the $\omega$-version of $\cov^*(\I)$ by
    \[
    \cov^*_\omega(\I) = \min\{\lvert\mathcal{A}\rvert:\mathcal{A}\subset\I\land\forall\overline{B}\in[[X]^\omega]^\omega\,\exists A\in\mathcal{A}\,\forall B\in\overline{B}\,(\lvert A\cap B\rvert = \omega)\},
    \]
    but obviously $\cov^*_\omega(\I) = \cov^*(\I)$ holds.

    Except for $\cof^*_\omega(\I)$, the notions in \cref{dfn:omega_star_numbers} come from \cite{CM25}, but the cardinal invariants themselves were introduced in \cite{BS99} for ultrafilters. Using the notation in \cite{BS99}, we have
    \[
    \mathfrak{p}'(\mathcal{U}) = \add^*_\omega(\mathcal{U}^*), \pi\chi_\sigma(\mathcal{U}) = \non^*_\omega(\mathcal{U}^*), \chi_\sigma(\mathcal{U}) = \cof^*_\omega(\mathcal{U}^*)
    \]
    for any ultrafilter $\mathcal{U}$ on $\omega$. In \cite{LV99}, $\add_\omega(P)$ was defined for partial orders $P$ and $\add^*_\omega(\I) = \add_\omega(\langle\mathcal{I}, \subset^*\rangle)$ holds. Also, $\add^*_\omega(\I)$ was studied in \cite{FK25}, but denoted by $\add_\omega(\I)$.

    It is natural to ask when these $\omega$-versions of $*$-numbers are different from the original $*$-numbers. The following observations cover the case of $P$-ideals.

    \begin{lem}
        Let $\I$ be a tall ideal on a countable set. Then the following hold:
        \begin{enumerate}
            \item $\omega_1\leq\add^*_\omega(\I)\leq\non^*_\omega(\I)\leq\cof^*_\omega(\I)$.
            \item Let $\mathrm{inv}$ denote either $\add, \non$, or $\cov$. Then $\mathrm{inv}^*(\I) \leq \mathrm{inv}^*_\omega(\I)$ and if $\I$ is a P-ideal, then $\mathrm{inv}^*(\I) = \mathrm{inv}^*_\omega(\I)$.
        \end{enumerate}
    \end{lem}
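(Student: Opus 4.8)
The plan is to establish the chain in (1) and the two assertions in (2) by elementary combinatorial arguments, using only properness and tallness of $\I$ for part (1) and the pseudo-union property of P-ideals for part (2). For $\omega_1\le\add^*_\omega(\I)$ I would first record that a tall ideal is never countably generated, so that $\add^*_\omega(\I)$ is not set to $\infty$ by the stated convention: if $\I$ were generated by an increasing sequence $\langle G_n\rangle_{n<\omega}$, then properness forces each $X\setminus G_n$ to be infinite (otherwise $X=G_n\cup(X\setminus G_n)$ would be a union of two members of $\I$), so one can recursively choose $y_n\in(X\setminus G_n)\setminus\{y_i:i<n\}$, and the infinite set $\{y_n:n<\omega\}$ then has no infinite subset in $\I$, contradicting tallness. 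Given this, for any \emph{countable} $\mathcal{A}\subseteq\I$ one enlarges $\mathcal{A}$ to some $\overline{B}\in[\I]^\omega$ (padding with finitely many finite sets if necessary); since $A\subseteq^* A\in\overline{B}$ for every $A\in\mathcal{A}$, the family $\mathcal{A}$ fails the defining property of $\add^*_\omega(\I)$, whence $\add^*_\omega(\I)>\omega$.

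For $\add^*_\omega(\I)\le\non^*_\omega(\I)$, take a witnessing family $\mathcal{A}\subseteq[X]^\omega$ for $\non^*_\omega(\I)$ and, by tallness, replace each $A\in\mathcal{A}$ by an infinite $A'\subseteq A$ lying in $\I$; then $\{A':A\in\mathcal{A}\}\subseteq\I$ witnesses $\add^*_\omega(\I)$, since for any $\overline{B}\in[\I]^\omega$ the $A\in\mathcal{A}$ with $|A\cap B|<\omega$ for all $B\in\overline{B}$ yields an $A'$ that is infinite with $A'\cap B$ finite, hence $A'\not\subseteq^* B$ for all $B\in\overline{B}$. For $\non^*_\omega(\I)\le\cof^*_\omega(\I)$, take a witnessing family $\mathcal{A}\subseteq[\I]^\omega$ for $\cof^*_\omega(\I)$ and, for each $\overline{A}=\{A_n:n<\omega\}\in\mathcal{A}$, diagonalize by choosing $c_n\in X\setminus(A_0\cup\dots\cup A_n\cup\{c_i:i<n\})$ — possible since the removed set lies in $\I$ by properness — and set $C_{\overline{A}}=\{c_n:n<\omega\}\in[X]^\omega$. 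Whenever $\overline{A}$ is a $\cof^*_\omega$-cover of a given $\overline{B}\in[\I]^\omega$, then for each $B\in\overline{B}$ there is $A_m\in\overline{A}$ with $B\subseteq^* A_m$; since $c_n\notin A_m$ for all $n\ge m$, the set $C_{\overline{A}}\cap A_m$ is finite, and as $B\setminus A_m$ is finite too, so is $C_{\overline{A}}\cap B\subseteq(C_{\overline{A}}\cap A_m)\cup(B\setminus A_m)$; hence $\{C_{\overline{A}}:\overline{A}\in\mathcal{A}\}$ witnesses $\non^*_\omega(\I)$.

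For (2), the inequality $\mathrm{inv}^*(\I)\le\mathrm{inv}^*_\omega(\I)$ holds because a family witnessing the $\omega$-version automatically witnesses the plain version: a single $B\in\I$ can always be padded to a countable $\overline{B}\in[\I]^\omega$, and for $\mathrm{inv}=\cov$ this is just the already-noted equality $\cov^*_\omega(\I)=\cov^*(\I)$. For the converse when $\I$ is a P-ideal, I would use that any $\overline{B}\in[\I]^\omega$ admits a pseudo-union $B^*\in\I$ with $B\subseteq^* B^*$ for all $B\in\overline{B}$: given a witness $\mathcal{A}$ for $\add^*(\I)$ (respectively $\non^*(\I)$) and any $\overline{B}$, the member $A\in\mathcal{A}$ selected against $B^*$ (so $A\not\subseteq^* B^*$, respectively $|A\cap B^*|<\omega$) works simultaneously against every $B\in\overline{B}$, since $B\subseteq^* B^*$ propagates $A\not\subseteq^* B$ (respectively the finiteness of $A\cap B$). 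Hence $\mathrm{inv}^*_\omega(\I)\le\mathrm{inv}^*(\I)$, giving equality.

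None of this is deep; the two points requiring slight care are the preliminary observation that a tall ideal is not countably generated — needed so that the $\add^*_\omega$-part of the statement is not vacuously true via the $\infty$-convention — and checking, in the $\non^*_\omega(\I)\le\cof^*_\omega(\I)$ diagonalization, that $X\setminus(A_0\cup\dots\cup A_n\cup\{c_i:i<n\})\ne\emptyset$; both reduce to applying properness of $\I$ to a finite union of its members.
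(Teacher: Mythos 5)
The paper states this lemma without proof (it is presented as a routine observation), so there is no official argument to compare against; your write-up is correct and supplies exactly the intended elementary verifications, using tallness precisely where it is needed (to shrink a $\non^*_\omega$-witness into $\I$ for $\add^*_\omega(\I)\leq\non^*_\omega(\I)$, and to rule out the $\infty$-convention for $\add^*_\omega$) and the P-ideal pseudo-union together with $\cov^*_\omega(\I)=\cov^*(\I)$ for part (2). The only blemish is cosmetic: to turn a finite $\mathcal{A}\subseteq\I$ into a member of $[\I]^\omega$ you must pad with infinitely many (not ``finitely many'') further elements of $\I$, say infinitely many distinct finite sets, which is harmless.
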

    
    This lemma implies that $\add^*(\I) = \add^*_\omega(\I) \iff \I$ is a P-ideal (If $\add^*(\I) = \add^*_\omega(\I)$ then $\add^*(\I)$ is uncountable by (1) and thus $\I$ has to be a P-ideal. The other direction is (2)).
    By \cref{lem:addstar_nonstar}(2), there are many tall ideals $\I$ such that $\omega = \non^*(\I) < \non^*_\omega(\I)$ holds (in $\mathsf{ZFC}$). Also, it is not hard to find a Borel ideal $\I$ such that $\omega_1\leq \non^*(\I) < \non^*_\omega(\I)$ is consistent (\cref{cor:nonstar_vs_nonstar_omega}).
    In addition, it is consistent that $\cof^*(\I) < \cof^*_\omega(\I)$ for some ideal $\I$ (see \Cref{rem:aleph_omega_Cohen}).

    \begin{lem}\label{lem:reductions_and_invstaromega}
        Let $\I, \J$ be tall ideals on $\omega$.
        \begin{enumerate}
            \item If $\I \preceq_{\omega T} \J$ then $\add^*_\omega(\I)\geq\add^*_\omega(\J)$ and $\cof^*_\omega(\I)\leq\cof^*_\omega(\J)$ hold.
            \item \textnormal{(\cite[Proposition 4.6]{CM25})} $\I \leq_{\mathrm{KB}} \J$ implies $\non^*_\omega(\I) \leq \non^*_\omega(\J)$. 
        \end{enumerate}
    \end{lem}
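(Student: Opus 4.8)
Part (2) is exactly \cite[Proposition 4.6]{CM25}, so the work is in part (1). For that, I would first record two reformulations of \Cref{dfn:omega_star_numbers}, both immediate after padding finite families to countably infinite ones: for a tall ideal $\I$ on $\omega$, $\add^*_\omega(\I)$ is the least cardinality of a $\subseteq^*$-$\sigma$-unbounded subfamily of $\I$ (so $\add^*_\omega(\I)=\add_\omega(\langle\I,\subseteq^*\rangle)$, as recalled in the excerpt), and $\cof^*_\omega(\I)$ is the least cardinality of a $\mathcal{D}\subseteq[\I]^{\le\omega}$ such that for every $\overline{B}\in[\I]^{\le\omega}$ there is $\overline{A}\in\mathcal{D}$ with $(\forall B\in\overline{B})(\exists A\in\overline{A})\,(B\subseteq^* A)$. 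The $\infty$-convention for countably generated ideals never arises here, since a tall ideal on $\omega$ is not countably generated. Fix $f\colon\I\to\J$ witnessing $\I\preceq_{\omega T}\J$ (i.e.\ $\langle\I,\subseteq^*\rangle\preceq_{\omega T}\langle\J,\subseteq^*\rangle$): thus $f$ maps $\subseteq^*$-$\sigma$-unbounded subsets of $\I$ to $\subseteq^*$-$\sigma$-unbounded subsets of $\J$.

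The additivity inequality is the easy half: if $\mathcal{A}\subseteq\I$ is $\subseteq^*$-$\sigma$-unbounded with $|\mathcal{A}|=\add^*_\omega(\I)$, then $f[\mathcal{A}]\subseteq\J$ is $\subseteq^*$-$\sigma$-unbounded by the choice of $f$, and $|f[\mathcal{A}]|\le|\mathcal{A}|$, whence $\add^*_\omega(\J)\le\add^*_\omega(\I)$.

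For the cofinality inequality I would manufacture, out of $f$, an auxiliary map $g\colon[\J]^{\le\omega}\to[\I]^{\le\omega}$. Given $\overline{B}\in[\J]^{\le\omega}$, put $X_{\overline{B}}=\{A\in\I:(\exists B\in\overline{B})\,(f(A)\subseteq^* B)\}$. The crucial point is that $X_{\overline{B}}$ is $\subseteq^*$-$\sigma$-bounded in $\I$: otherwise $f[X_{\overline{B}}]$ would be $\subseteq^*$-$\sigma$-unbounded in $\J$ by the choice of $f$, yet $\overline{B}$ is by construction a countable $\subseteq^*$-bound for $f[X_{\overline{B}}]$ --- contradiction. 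So, using the axiom of choice, fix a countable $g(\overline{B})\subseteq\I$, padded to be infinite, with $(\forall A\in X_{\overline{B}})(\exists A'\in g(\overline{B}))\,(A\subseteq^* A')$. Now let $\mathcal{D}\subseteq[\J]^\omega$ witness $\cof^*_\omega(\J)$ and put $\mathcal{D}'=\{g(\overline{B}):\overline{B}\in\mathcal{D}\}\subseteq[\I]^\omega$; then $|\mathcal{D}'|\le|\mathcal{D}|=\cof^*_\omega(\J)$, and $\mathcal{D}'$ witnesses $\cof^*_\omega(\I)$: given $\overline{A}\in[\I]^\omega$, apply the $\cof^*_\omega(\J)$-property of $\mathcal{D}$ to a countably infinite superset of $f[\overline{A}]$ to obtain $\overline{B}\in\mathcal{D}$ such that for every $A\in\overline{A}$ there is $B\in\overline{B}$ with $f(A)\subseteq^* B$; this says $\overline{A}\subseteq X_{\overline{B}}$, so every $A\in\overline{A}$ satisfies $A\subseteq^* A'$ for some $A'\in g(\overline{B})\in\mathcal{D}'$. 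Hence $\cof^*_\omega(\I)\le\cof^*_\omega(\J)$.

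The one genuinely non-routine observation is that the $\omega$-Tukey property of $f$ must be used \emph{contrapositively}, to force $X_{\overline{B}}$ to be $\sigma$-bounded --- this is exactly what makes the ``reverse'' map $g$ available, and it is the $\sigma$-analogue of the classical fact that a Tukey map induces a map carrying cofinal sets to cofinal sets. The rest (padding finite families to infinite ones, and the harmless $\infty$-convention) is bookkeeping. If one wanted both inequalities in one stroke, one could instead check that the pointwise-image map $\overline{A}\mapsto f[\overline{A}]$ together with $g$ constitutes a Galois--Tukey morphism from the relational system $\langle[\I]^{\le\omega},[\I]^{\le\omega},\sqsubseteq\rangle$ to the corresponding one for $\J$, where $\overline{B}\sqsubseteq\overline{A}$ means $(\forall B\in\overline{B})(\exists A\in\overline{A})\,(B\subseteq^* A)$ --- these systems have unbounding number $\add^*_\omega$ and dominating number $\cof^*_\omega$, so both monotonicities drop out uniformly. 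I would nonetheless present the direct argument above for transparency.
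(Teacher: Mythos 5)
Your proposal is correct and follows essentially the same route as the paper: the additivity inequality via the image of a $\sigma$-unbounded family, and the cofinality inequality by observing (contrapositively, via the $\omega$-Tukey map) that the set $\{A\in\I: f(A)\subseteq^* B\text{ for some }B\in\overline{B}\}$ is $\sigma$-bounded and replacing each $\overline{B}$ in a $\cof^*_\omega(\J)$-witness by a countable bound for that set. The only differences are cosmetic (the paper normalizes to $\subseteq$ instead of $\subseteq^*$ and phrases the additivity step contrapositively), and (2) is likewise only cited there.
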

    \begin{proof}
        To show (1), let $f\colon\I\to\J$ be a witness of $\I \preceq_{\omega T} \J$. 
        
        To see $\add^*_\omega(\I)\geq\add^*_\omega(\J)$, let $\mathcal{A}\subset\I$ be such that $\lvert\mathcal{A}\rvert < \add^*_\omega(\J)$. Then $f[\mathcal{A}]$ is $\sigma$-bounded in $\J$ and thus $\mathcal{A}$ is $\sigma$-bounded in $\I$. So, $\lvert\mathcal{A}\rvert < \add^*_\omega(\I)$.
    
        To see $\cof^*_\omega(\I)\leq\cof^*_\omega(\J)$, let $\mathcal{A}\subseteq[\J]^\omega$ witness $|\mathcal{A}|=\cof^*_\omega(\J)$. 
        We may assume:
        \[\forall\overline{X}\in[\J]^\omega\exists\overline{A}\in\mathcal{A}\forall X\in\overline{X}\exists A\in\overline{A}\, (X \subset A).\]
        For each $\overline{A}\in\mathcal{A}$, there is $\mathcal{A}_{\overline{A}}\in[\I]^\omega$ witnessing $\overline{I}_{\overline{A}}\coloneqq\{X:f(X)\subseteq A\text{ for some }A\in\overline{A}\}\subseteq\I$ is $\sigma$-bounded, since $\overline{A}\subseteq\J$ is $\sigma$-bounded. To see $\{\overline{I}_{\overline{A}}:A\in\mathcal{A}\}$ witnesses $\cof^*_\omega(\J)\leq|\mathcal{A}|$, let $\overline{X}\in[\I]^\omega$ be arbitrary. Take $\overline{A}\in\mathcal{A}$ such that for all $X\in\overline{X}$ there is $A\in\overline{A}$ such that $f(X) \subseteq A$. Thus $X\in\overline{I}_{\overline{A}}$ and hence there is $A^\prime\in\mathcal{A}_{\overline{A}}$ such that $X\subseteq A^\prime$, which finishes the proof.
    \end{proof}

    \subsection{Computation of $\add^*_\omega(\I)$ and $\cof^*_\omega(\I)$}
    
    We compute $\add^*_\omega(\I)$ for various Borel non-P ideals $\I$. According to \cite{Hru11} (the paragraph before Question 6.21), there are only three known distinct values of $\add^*(\I)$ for analytic ideals $\I$, which are $\omega$, $\add(\mathcal{N})$ and $\bb$. In contrast, we will see that $\add^*_\omega(\I)$ can take values other than $\omega_1, \add(\mathcal{N})$ and $\bb$. Indeed, $\add^*_\omega(\nwd) = \add(\m)$ and $\add^*_\omega(\I) > \bb$ is consistent for some $F_\sigma$ non-P ideal $\I$. In \cite{CM25} and \cite{FK25}, the following was already shown.

    \begin{prop}[Cieślak--Martínez-Celis \cite{CM25} and Filip\'{o}w--Kwela \cite{FK25}\footnotemark]\label{prop:addstar_omega_in_CM25} \leavevmode
        \begin{enumerate}
            \item $\add^*_\omega(\finfin)=\bb$.
            \item $\add^*_\omega(\I) = \omega_1$ for $\I = \Solecki, \RandomGraph, \mathrm{conv}, \ed, \edfin$.
        \end{enumerate}
    \end{prop}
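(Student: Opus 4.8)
The plan is to handle the two parts by producing, in each case, a small family in $\I$ that no countable subfamily $\subseteq^*$-dominates, and by showing that every family below the claimed size is $\sigma$-bounded in $\langle\I,\subseteq^*\rangle$. Since $\addostar{\I}\geq\omega_1$ for every tall ideal (by the preceding lemma), for part (2) only the upper bound needs work.

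\textbf{Part (1): $\addostar{\finfin}=\bb$.} For $\leq\bb$, fix a $\leq^*$-unbounded $\{h_\alpha:\alpha<\bb\}\subseteq\oo$ and set $A_\alpha=\{(n,j):j\leq h_\alpha(n)\}\in\finfin$ (all columns finite). If $\{B_m:m<\omega\}\subseteq\finfin$ dominated $\{A_\alpha\}$, then letting $b_m(n)=\max(B_m)_n+1$ on the cofinitely many $n$ with $(B_m)_n$ finite and $g(n)=\max_{m\leq n}b_m(n)+1$, the relation $A_\alpha\subseteq^* B_m$ forces $h_\alpha\leq^* b_m\leq^* g$ (all but finitely many columns of $A_\alpha$ lie entirely inside $B_m$), so $g$ dominates every $h_\alpha$ --- a contradiction. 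For $\geq\bb$, take $\mathcal{A}\subseteq\finfin$ with $|\mathcal{A}|<\bb$; each $A\in\mathcal{A}$ has a finite set $C_A$ of infinite columns and a function $f_A$ with $(A)_n\subseteq[0,f_A(n)]$ for $n\notin C_A$, and fixing $g$ with $f_A\leq^* g$ for all $A$, the sets $B_m=(m\times\omega)\cup\{(n,j):j\leq g(n)\}\in\finfin$ dominate $\mathcal{A}$: given $A$, choosing $m>\max C_A$ absorbs everything except the finitely many points lying in the finitely many columns $n\in[m,N_A)\setminus C_A$.

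\textbf{Part (2): $\addostar{\I}=\omega_1$.} For each $\I$ in the list I build $\{A_\alpha:\alpha<\omega_1\}\subseteq\I$ that is not $\sigma$-bounded, via one coding idea: attach to each $A_\alpha$ a datum from an uncountable set, with distinct $A_\alpha$ getting distinct data, so that any single $B\in\I$ can $\subseteq^*$-dominate only those $A_\alpha$ whose datum lies in a countable (often finite) set $T(B)$ determined by $B$; then $\bigcup_m T(B_m)$ is countable for any countable $\{B_m\}\subseteq\I$, so some $A_\alpha$ escapes. For $\I=\ed$: fix an eventually different family $\{g_\alpha:\alpha<\omega_1\}$ in $\oo$ (e.g.\ $g_r(n)=$ the integer coded by $r\upharpoonright n$, for $\omega_1$ many $r\in\cantor$) and put $A_\alpha=\{(n,g_\alpha(n)):n<\omega\}\in\ed$; if $B\in\ed$ is eventually $\leq k$ per column it dominates at most $k$ of the $A_\alpha$, since $k+1$ of them would put $k+1$ distinct values into some late column of $B$. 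The case $\I=\edfin$ is identical, using eventually different functions below the identity. For $\I=\conv$: pick injective rational sequences $s_\alpha\to r_\alpha$ for $\omega_1$ many distinct $r_\alpha\in[0,1]$ and put $A_\alpha=\ran(s_\alpha)\in\conv$; if $A_\alpha\subseteq^* B$ then $r_\alpha$ is an accumulation point of $B$, and each member of $\conv$ has only finitely many accumulation points. For $\I=\Solecki$: take $A_\alpha=\{U\in\Omega:x_\alpha\in U\}$ (a generator of $\Solecki$) for $\omega_1$ many distinct $x_\alpha\in\cantor$; each $B\in\Solecki$ is covered by finitely many pencils $\{U:z_i\in U\}$, and for $x\notin\{z_1,\dots,z_k\}$ there are infinitely many clopen sets of measure $1/2$ containing $x$ and avoiding all $z_i$, so $B$ dominates $A_\alpha$ only when $x_\alpha$ is one of those finitely many $z_i$.

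\textbf{The case $\I=\RandomGraph$} is the one I expect to need extra care, because the naive coding fails: a family witnessing it by homogeneous sets would essentially require $\omega_1$ pairwise almost-disjoint infinite cliques inside a countable graph, which do not exist. Instead I would use a \emph{restriction inequality}: for $Y\in\I^+$ one has $\addostar{\I}\leq\addostar{\I\upharpoonright Y}$, since a family $\{A_\alpha\}\subseteq\I\upharpoonright Y$ that were $\sigma$-bounded in $\langle\I,\subseteq^*\rangle$ by $\{B_m\}$ would already be $\sigma$-bounded in $\langle\I\upharpoonright Y,\subseteq^*\rangle$ by $\{B_m\cap Y\}$. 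Then fix an induced copy of a disjoint union of countably many infinite cliques inside Rado's graph, with vertex set $Y$; its homogeneous sets are exactly the subsets of a single clique and the partial selectors, so $\RandomGraph\upharpoonright Y$, being generated by the $Y$-homogeneous sets, is isomorphic to $\ed$ under $Y\cong\omega\times\omega$. Hence $\omega_1\leq\addostar{\RandomGraph}\leq\addostar{\RandomGraph\upharpoonright Y}=\addostar{\ed}=\omega_1$. The only genuinely delicate point is verifying $\RandomGraph\upharpoonright Y\cong\ed$: that intersecting the homogeneous generators of $\RandomGraph$ with $Y$ yields precisely the homogeneous sets of the induced subgraph, and that these generate $\ed$.
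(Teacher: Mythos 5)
Your argument is correct; note, however, that the paper itself contains no proof of this proposition — it is imported from the cited work (the footnote defers to Section 4 of \cite{CM25}) — so the relevant comparison is with the surrounding machinery this paper does develop. Part (1) of your proof is exactly the special case $\I=\J=\Fin$ of the paper's proof of \Cref{prop:addstart_omega_of_Fubini_prod}: the upper bound uses the same sets $A_\alpha=\{(n,j):j\leq h_\alpha(n)\}$ and the lower bound the same sets of the form $(m\times\omega)\cup\{(n,j):j\leq g(n)\}$, and both verifications go through as you wrote them. For part (2), your families for $\ed,\edfin,\conv,\Solecki$ are precisely uncountable strongly unbounded families in the sense used before \Cref{prop:strongly_unbounded}; in fact you prove the sharper and more convenient statement that a single $B\in\I$ can $\subseteq^*$-absorb only finitely many members of the family, which cleanly sidesteps the $\subseteq$ versus $\subseteq^*$ point that \Cref{prop:strongly_unbounded} otherwise handles by closing the countable family under finite modifications. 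Your treatment of $\RandomGraph$ is a genuinely different (and correct) route: the restriction inequality you use is an instance of \Cref{lem:reductions_and_invstaromega}(1), since the inclusion map witnesses $\mathcal{R}\upharpoonright Y\preceq_{\omega T}\mathcal{R}$ for $\mathcal{R}$-positive $Y$, and your identification of $\mathcal{R}\upharpoonright Y$ with $\ed$ is right: in an induced disjoint union of infinitely many infinite cliques the cliques are the subsets of a single block, the independent sets are the partial selectors, and together these generate exactly $\ed$ under the obvious bijection of $Y$ with $\omega\times\omega$ (this also shows $Y\in\mathcal{R}^+$, so the restriction is a proper ideal). Equivalently, pulling your eventually different family back along that bijection yields a strongly unbounded family of Rado-independent sets, so the route through \Cref{prop:strongly_unbounded} would also work. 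One motivational aside is false but harmless: there certainly are $\omega_1$ (indeed continuum many) pairwise almost disjoint infinite cliques in the random graph, e.g.\ an almost disjoint family of infinite subsets of one infinite clique; the real reason the naive approach fails is that such a family is absorbed by that single clique, hence cannot witness $\add^*_\omega(\RandomGraph)=\omega_1$, which is exactly what your restriction argument repairs.
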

    \footnotetext{All the proofs can be found in Section 4 of \cite{CM25}. In \cite{FK25}, only $\add^*_\omega(\finfin)$ and $\add^*_\omega(\Solecki)$ are computed as Example 5.12 and Theorem 5.13(7).}

    Theorem 5.11(3) of \cite{FK25} can be improved as follows:
    
	\begin{prop}\label{prop:addstart_omega_of_Fubini_prod}
	   	For any ideals $\I, \J$ on countable sets,
	 	\[
	  	\add^*_\omega(\I\otimes\J) = \min\{\bb, \add^*_\omega(\I), \add^*_\omega(\J)\}.
	   	\]
	\end{prop}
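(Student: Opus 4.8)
The plan is to work with ideals on $\omega$ (a proper ideal on a countable set is automatically supported on a countably infinite set), to identify $\I\otimes\J$ with an ideal on $\omega\times\omega$, and for $B\subseteq\omega\times\omega$ to write $(B)_n$ for its $n$-th vertical section and $S_B=\{n<\omega:(B)_n\notin\J\}$, so that $B\in\I\otimes\J$ iff $S_B\in\I$. A remark used repeatedly: $S_B$ is always \emph{coinfinite}, since if $\omega\setminus S_B$ were finite then $\omega=S_B\cup(\omega\setminus S_B)\in\I$. Recall that $\add^*_\omega(\I\otimes\J)$ is the least size of a family not $\sigma$-bounded in $\langle\I\otimes\J,\subseteq^*\rangle$; I would prove $\ge$ and $\le$ separately, writing $\mu=\min\{\bb,\add^*_\omega(\I),\add^*_\omega(\J)\}$.

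For $\add^*_\omega(\I\otimes\J)\le\mu$ it suffices to produce $\sigma$-unbounded families in $\langle\I\otimes\J,\subseteq^*\rangle$ of sizes $\bb$, $\add^*_\omega(\I)$ and $\add^*_\omega(\J)$ (the latter two vacuous if the ideal is countably generated). I would verify that $A\mapsto A\times\omega$ and $B\mapsto\omega\times B$ are $\omega$-Tukey maps $\langle\I,\subseteq^*\rangle\to\langle\I\otimes\J,\subseteq^*\rangle$ and $\langle\J,\subseteq^*\rangle\to\langle\I\otimes\J,\subseteq^*\rangle$: if $A\times\omega\subseteq^*C$ then $(C)_n=\omega$ for almost all $n\in A$ and $\{n:(C)_n=\omega\}\subseteq S_C\in\I$, so a countable $\subseteq^*$-bound for a family of $A_i\times\omega$ yields one for the $A_i$ inside $\I$; dually, since $S_C$ is coinfinite, $\omega\times B\subseteq^*C$ forces $B$ to be contained in some section $(C)_n\in\J$ with $n\notin S_C$, giving the bound for $\J$. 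For the $\bb$-bound I would fix a $\le^*$-unbounded family $\{f_i:i<\bb\}$ of \emph{strictly increasing} functions and put $\widehat{f_i}=\{(n,m):m<f_i(n)\}$, which lies in $\I\otimes\J$ since all its sections are finite. If $\{\widehat{f_i}\}$ were $\sigma$-bounded by $\{C_j:j<\omega\}$, then for each $i$ some $j$ satisfies $[0,f_i(n))\subseteq(C_j)_n$ for almost all $n$; setting $\ell_j(n)=\min(\omega\setminus(C_j)_n)$ on the coinfinite set where this is finite and extending $\ell_j$ to $\widehat{\ell_j}\in\oo$ by repeating the next defined value, strict monotonicity of $f_i$ gives $f_i\le^*\widehat{\ell_j}$, so $\{\widehat{\ell_j}:j<\omega\}$ $\le^*$-dominates $\{f_i\}$ — contradicting unboundedness. (This last argument refines the computation $\add^*_\omega(\finfin)=\bb$ of \Cref{prop:addstar_omega_in_CM25}.)

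For $\add^*_\omega(\I\otimes\J)\ge\mu$, let $\mathcal{A}\subseteq\I\otimes\J$ with $|\mathcal{A}|<\mu$; I must show $\mathcal{A}$ is $\sigma$-bounded. As $|\{S_A:A\in\mathcal{A}\}|<\add^*_\omega(\I)$, this family is $\sigma$-bounded: fix an increasing sequence $\langle T_k:k<\omega\rangle$ in $\I$ and, for each $A$, some $k_A$ with $S_A\subseteq^*T_{k_A}$, and note $E_A:=S_A\setminus T_{k_A}$ is finite. Partition $\mathcal{A}$ into the countably many pieces $\mathcal{A}_{k,E}=\{A:k_A=k,\ E_A=E\}$; it is enough to $\sigma$-bound each. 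Fix one, set $T=T_k\cup E\in\I$, so $(A)_n\in\J$ for every $A$ in the piece and every $n\notin T$, and define $g_A\colon\omega\to\J$ by $g_A(n)=(A)_n$ for $n\notin T$ and $g_A(n)=\emptyset$ for $n\in T$. The crux is an auxiliary lemma: any family $\mathcal{G}$ of functions $\omega\to\J$ with $|\mathcal{G}|<\min\{\bb,\add^*_\omega(\J)\}$ admits a countable family $\mathcal{H}$ of functions $\omega\to\J$ such that every $g\in\mathcal{G}$ satisfies $g(n)\subseteq h(n)$ for all $n$, for some $h\in\mathcal{H}$. Granting this for $\{g_A\}$, I would put $B_h=(T\times\omega)\cup\{(n,m):m\in h(n)\}\in\I\otimes\J$ for $h\in\mathcal{H}$; then $A\subseteq B_h$ as soon as $g_A(n)\subseteq h(n)$ for all $n$, and collecting the $B_h$ over all pieces gives a countable $\subseteq$-cofinal family for $\mathcal{A}$.

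It remains to prove the auxiliary lemma, and this is where I expect the only real difficulty. For each $n$, using $|\mathcal{G}|<\add^*_\omega(\J)$, fix an increasing $\langle V^n_m:m<\omega\rangle$ in $\J$ that $\sigma$-bounds $\{g(n):g\in\mathcal{G}\}$; for $g\in\mathcal{G}$ record the index $e_g(n)=\min\{m:g(n)\subseteq^*V^n_m\}$ and a finite bound $r_g(n)$ so that $g(n)\subseteq V^n_{e_g(n)}\cup[0,r_g(n))$. Since $|\mathcal{G}|<\bb$, dominate all $e_g$ and $r_g$ by one $d\in\oo$; then for each $g$ and every $n$ past a threshold $N_g$ one gets $g(n)\subseteq V^n_{d(n)}\cup[0,d(n))=:W(n)$, a single $W\in{}^\omega\J$. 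The obstacle is precisely the finitely many rows $n<N_g$, on which $g(n)$ may be an infinite member of $\J$ not covered by $W$ — this is exactly the rigidity of $\subseteq^*$ on $\omega\times\omega$ in finitely many coordinates — and it is absorbed by a \emph{second} countable partition of $\mathcal{G}$, by the value $N_g$ together with the finite tuple $(e_g(n),r_g(n))_{n<N_g}$: on each resulting piece one modifies $W$ in those finitely many coordinates to obtain a single dominating $h$. The same device of peeling finitely many exceptional rows into a countable partition is what licenses the decomposition $\mathcal{A}=\bigsqcup_{k,E}\mathcal{A}_{k,E}$ above; keeping these finite exceptional parts from inflating the countable bounding families, together with the coinfiniteness of every $S_B$, is, I believe, the whole of the technical content.
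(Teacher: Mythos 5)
Your proof is correct, and for the hard direction ($\geq$) it uses the same three ingredients as the paper ($\add^*_\omega(\I)$ to $\sigma$-bound the sets of $\J$-positive columns, $\bb$ to dominate indices, $\add^*_\omega(\J)$ to $\sigma$-bound columns, assembled into sets of the form (bad columns)$\times\omega$ together with a region below a dominated index function), but it is organized genuinely differently. The paper applies $\add^*_\omega$ of one factor once, globally, to all sections of all sets in $\mathcal{A}$, and absorbs the finitely many coordinates where domination or almost-containment fails by padding its bounding sets with full initial columns (arranging $n\subset J'_n$); you instead isolate an auxiliary lemma about families of $\J$-valued functions, apply $\add^*_\omega(\J)$ column-by-column, and absorb the finite exceptional data twice over by countable partitions of $\mathcal{A}$ and of $\mathcal{G}$ — more bookkeeping, but it cleanly separates the combinatorial core and even yields full $\subseteq$-bounds. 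In the ($\leq$) direction you also prove more than the paper does on the page: the inequality $\add^*_\omega(\I\otimes\J)\leq\min\{\add^*_\omega(\I),\add^*_\omega(\J)\}$ is simply cited from \cite{FK25} there, whereas you get it directly from the injections $A\mapsto A\times\omega$ and $B\mapsto\omega\times B$ (using coinfiniteness of the bad-column sets), and your $\bb$-bound is the contrapositive form of the paper's argument with the same $\finfin$-sets below increasing functions. The only steps you leave implicit — that a countable family of reals is $\leq^*$-dominated by a single function, and that a countable $\subseteq^*$-bounding family can be upgraded to an increasing (or $\subseteq$-bounding) one — are standard and harmless.
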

	\begin{proof}
        We may assume that the ideals $\I$ and $\J$ are both on $\omega$.
        
		($\leq$) The inequality $\add^*_\omega(\I\otimes\J) \leq \min \{ \add^*_\omega(\I), \add^*_\omega(\J)\}$ was shown in \cite{FK25} as Theorem 5.11(3), so it suffices to prove that $\add^*_\omega(\I\otimes\J) \leq \bb$.
            
        Let $\{f_\alpha : \alpha < \bb\} \subseteq \baire$ be an unbounded family of strictly increasing functions and let
        \[
        A_\alpha = \{ \langle i,j \rangle\in\omega\times\omega : j \leq f_\alpha(i)\}.
        \]
        Clearly $A_\alpha \in \finfin \subset \I\otimes\J$. We claim that $\{A_\alpha : \alpha<\bb\}$ is a witness for $\add^*_\omega(\I\otimes\J)$. To show this, let $\{B_n: n<\omega\} \subseteq \I\otimes\J$. For each $n<\omega$, there is a function $g_n\in\omega^\omega$ such that for all $m<\omega$,
        \[
        (\omega \setminus m \times g_n(m))\setminus B_n \neq\emptyset,
        \]
        since otherwise $(\omega\setminus m)\times\omega\subset B_n$ for some $m<\omega$, which contradicts $B_n\in\I\otimes\J$. We also choose $\alpha<\bb$ such that $f_\alpha \not\leq^* g_n$ for all $n<\omega$. Now we show that $A_\alpha\not\subset^* B_n$ for all $n<\omega$: For each $n<\omega$, there is $m<\omega$ such that $f_\alpha(m) > g_n(m)$. By the choice of $g_n$, we can find $\langle i, j \rangle \in \omega\times\omega$ such that $i\geq m, j\leq g_n(m)$, and $\langle i, j\rangle\notin B_n$. Since
        \[
        f_\alpha(i) \geq f_\alpha(m) > g_n(m) \geq j,
        \]
        so $\langle i, j\rangle \in A_\alpha \setminus B_n$.
			
		($\geq$) Let $\kappa < \min\{\bb, \add^*_\omega(\I), \add^*_\omega(\J)\}$ and let $\mathcal{A}\coloneqq\{A_\alpha : \alpha<\kappa\} \subseteq \I\otimes\J$. We want to show that $\mathcal{A}$ is not a witness for $\add^*_\omega(\I\otimes\J)$.
        
        For each $\alpha<\kappa$, let $J_\alpha = \{n<\omega : (A_\alpha)_n \notin \I\}$. Clearly $J_\alpha \in \J$. Since $\kappa<\add^*_\omega(\I)$, there must be $\{I_n: n< \omega\}\subset\I$ such that for any $\alpha<\kappa$ and any $n \notin J_\alpha$, there is $m<\omega$ such that $(A_\alpha)_n \subseteq I_m$. We may assume that $I_n\subset I_{n+1}$ for all $n<\omega$. For each $\alpha<\kappa$, let $g_\alpha\in\omega^\omega$ be such that for all $n<\omega$,
        \[
        n\notin J_\alpha \Rightarrow (A_\alpha)_n \subseteq I_{g_\alpha(n)}.
        \]
        As $\kappa < \bb$, there is a function $f\in\omega^\omega$ such that $f \geq^* g_\alpha$ for all $\alpha<\kappa$. Moreover, since $\kappa < \add^*_\omega(\J)$, there is $\{J'_n :n<\omega\}\subset\J$ such that for every $\alpha < \kappa$, there is $n<\omega$ such that $J_\alpha \subseteq J'_n$. We may also assume that $n\subset J'_n\subset J'_{n+1}$ for all $n<\omega$.
        
        For each $n<\omega$, we define $B_n\subset\omega\times\omega$ by
        \[
        B_n = \{\langle i, j\rangle: i\in J'_n \lor j\in I_{f(i)}\}.
        \]
        Clearly $B_n \in \I \otimes \J$. We claim that for each $\alpha < \kappa$ there is $n<\omega$ such that $A_\alpha \subseteq B_n$, which implies $\mathcal{A}$ is not a witness for $\add^*_\omega(\I\otimes\J)$. To show this, fix $\alpha < \kappa$. Then we can find $n<\omega$ such that $f(m)\geq g_\alpha(m)$ for all $m\geq n$ and $J_\alpha\subset J'_n$. It is easy to see $A_\alpha\subset B_n$ as follows: let $\langle i, j\rangle\in A_\alpha$. If $i \in J'_m$, then $\langle i, j\rangle\in B_n$ by definition. Otherwise, since $i \notin J_\alpha$ and $i\geq n$, we have $j\in (A_\alpha)_i \subset I_{g_\alpha (i)} \subset I_{f(i)}$ and thus $\langle i, j\rangle\in B_n$.
	\end{proof}

    The following proposition was independently proved by Osvaldo Guzm\'an and Francisco Santiago Nieto de la Rosa. It gives an affirmative answer to \cite[Question 4.10]{CM25}.

    \begin{prop}[Independently, Guzm\'an--Nieto de la Rosa]\label{prop:nwd_addm}
        $\add^*_\omega(\nwd) = \addm$.
    \end{prop}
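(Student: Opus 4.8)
The plan is to exploit the identification $\mathbb{Q}\cong 2^{<\omega}$, under which $A\in\nwd$ exactly when $(\forall s\in 2^{<\omega})(\exists t\supseteq s)([t]\cap A=\emptyset)$ with $[t]:=\{u\in 2^{<\omega}:t\subseteq u\}$. Since the downward closure of a nowhere dense set is again nowhere dense, the nowhere dense subtrees of $2^{<\omega}$ are $\subseteq$-cofinal in $\nwd$, so one only needs to test the definition of $\add^*_\omega$ on such trees. For a nowhere dense tree $T$ fix a \emph{witness function} $F_T\colon 2^{<\omega}\to 2^{<\omega}$ with $s\subseteq F_T(s)$ and $[F_T(s)]\cap T=\emptyset$, and recall that when $T$ is pruned one has, for every tree $B$, that $T\subseteq^* B\iff T\subseteq B\iff [T]\subseteq[B]$, where $[T]\subseteq 2^\omega$
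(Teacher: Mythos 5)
Your proposal breaks off in mid-sentence and never gets to an actual argument, so there is a genuine gap: beyond the (correct) remarks that nowhere dense subsets of $2^{<\omega}$ may be replaced by nowhere dense subtrees and that each such tree admits a witness function $F_T$, nothing is proved. The substantive direction $\addm\leq\add^*_\omega(\nwd)$ requires, for any $\kappa<\addm$ many sets $A_\alpha\in\nwd$, the construction of countably many sets in $\nwd$ almost containing every $A_\alpha$; no such construction is sketched. The reverse inequality $\add^*_\omega(\nwd)\leq\addm$, which the paper simply quotes from \cite[Theorem 4.9]{CM25}, is not addressed either, and since the statement is an equality both directions are needed.

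Moreover, the route you seem to be setting up --- converting $A\subseteq^* B$ into an inclusion of branch sets $[T]\subseteq[B]$ via pruned trees and then working inside $2^\omega$ --- cannot succeed on its own. The equivalence you state is fine for a pruned tree $T$, but a general element of $\nwd$ is not of this form, and pruning is not harmless: if $A$ is an infinite nowhere dense antichain, every element of $A$ is a leaf of its downward closure, so the pruned part of that closure is disjoint from $A$, and covering it (equivalently, covering its branch set) says nothing about almost covering $A$ itself. The cardinal $\add^*_\omega(\nwd)$ lives at the level of nodes, not branches, and this is precisely why the paper's proof does not reduce to branch sets: it first covers the meager branch sets of the $A_\alpha$ by countably many closed nowhere dense sets $[T_n]$ (using $\kappa<\addm$), then for each $t\in2^{<\omega}$ builds a real $x_t\supseteq t$ off all $[T_n]$, uses $\kappa<\addm\leq\bb$ to find levels $m_t$ dominating the levels $m_{t,\alpha}$ at which the initial segments of $x_t$ have left $A_\alpha$, and finally defines open dense sets $D_k\subseteq2^{<\omega}$ from the nodes $x_t\upharpoonright m_t$, whose complements are the required countably many nowhere dense sets. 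Some node-level argument of this kind, in particular the use of $\bb$, is indispensable and is entirely missing from your sketch.
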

    \begin{proof}
        In \cite[Theorem 4.9]{CM25}, $\add^*_\omega(\nwd) \leq \addm$ was proved. To show the converse inequality, let $\kappa<\addm$ and assume that $\{A_\alpha:\alpha<\kappa\}\subset\nwd$. It suffices to find a countable family $\{D_k:k<\omega\}$ of open dense subsets of $2^{<\omega}$ such that for each $\alpha<\kappa$ there is $k<\omega$ such that $A_\alpha \cap D_k=\emptyset$. Since $\kappa<\addm$, there is a countable family $\{T_n:n<\omega\}$ of nowhere dense subtrees of $2^{<\omega}$ (i.e.\ for any $s\in 2^{<\omega}$, there is $t\supseteq s$ with $t\notin T_n$) such that $\bigcup_{\alpha<\kappa}[A_\alpha]\subset\bigcup_{n<\omega}[T_n]$. 

        Fix $t\in 2^{<\omega}$ for now. Since each $T_n$ is nowhere dense, we can inductively take $t\subsetneq s_0\subsetneq s_1\subsetneq\cdots\in 2^{<\omega}$ such that $s_n\notin T_n$. Let $x_t = \bigcup_{n<\omega}s_n\in2^\omega$. Since $x_t\notin  \bigcup_{n<\omega}[T_n]$, for each $\alpha<\kappa$ there is $m_{t,\alpha}<\omega$ such that $t\subset x_t\upharpoonright m_{t,\alpha}\notin A_\alpha$. Since $\kappa<\addm\leq\bb$, there is $m_t<\omega$ such that for all $\alpha<\kappa$ and for all but finitely many $t\in 2^{<\omega}$, $m_{t,\alpha}\leq m_t$.
        
        Fix some bijection $e\colon\omega\to 2^{<\omega}$ and for each $k<\omega$, let
        \[
        D_k = \{s\in 2^{<\omega}: \exists n\geq k\,(s\supseteq x_{e(n)}\upharpoonright m_{e(n)})\}.
        \]
        Clearly $D_k$ is open dense. For each $\alpha<\kappa$, there is $k<\omega$ such that for all $n\geq k$, $m_{e(n),\alpha}\leq m_{e(n)}$. To show $A_\alpha\cap D_k=\emptyset$, let $s \in D_k$ be arbitrary. Then $s\supseteq x_{e(n)}\upharpoonright m_{e(n)}$ for some $n\geq k$. Since $x_{e(n)}\upharpoonright m_{e(n)}\supseteq x_{e(n)}\upharpoonright m_{e(n),\alpha}\notin A_\alpha$, we have $s\notin A_\alpha$.
    \end{proof}

    Now we aim to find Borel ideals $\I$ such that $\add^*_\omega(\I) > \bb$ is consistent. By the following classical results, such $\I$ must be a $F_\sigma$ non-P-ideal.

    \begin{thm}[Todor\v{c}evi\'{c}, \cite{Tod96}]\label{thm:Todorcevic}
        For every tall analytic P-ideal $\I$,
        \[
        \add(\mathcal{N}) \leq \add^*(\I) \leq \bb\text{ and }\dd\leq\cof^*(\I) \leq \cof(\mathcal{N}).
        \]
    \end{thm}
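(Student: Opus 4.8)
The plan is to reduce all four inequalities to two Tukey reductions. Writing $\mathfrak{b}(P),\mathfrak{d}(P)$ for the least size of an unbounded, respectively cofinal, subset of a directed poset $P$, one has the formal identities $\addstar{\I}=\mathfrak{b}(\I,\subseteq^*)$, $\cofstar{\I}=\mathfrak{d}(\I,\subseteq^*)$, $\addn=\mathfrak{b}(\n,\subseteq)$, $\cofn=\mathfrak{d}(\n,\subseteq)$, $\bb=\mathfrak{b}(\oo,\le^*)$, $\dd=\mathfrak{d}(\oo,\le^*)$, together with the basic fact that $P\preceq_T Q$ implies $\mathfrak{b}(Q)\le\mathfrak{b}(P)$ and $\mathfrak{d}(P)\le\mathfrak{d}(Q)$. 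Thus the theorem follows from
\begin{itemize}
\item[(R1)] $(\oo,\le^*)\preceq_T(\I,\subseteq^*)$, which gives $\addstar{\I}\le\bb$ and $\dd\le\cofstar{\I}$, and
\item[(R2)] $(\I,\subseteq^*)\preceq_T(\n,\subseteq)$, which gives $\addn\le\addstar{\I}$ and $\cofstar{\I}\le\cofn$.
\end{itemize}
By Solecki's theorem (\Cref{fac:submeasure_chara}) fix a lower semicontinuous submeasure $\varphi$ with $\I=\Exh(\varphi)$; it is used in both reductions.

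For (R1) I would use tallness. Since $\I$ is tall and closed under finite changes, a routine recursion — covering a new integer at each step and using properness to keep the unused part of $\omega$ infinite — yields a partition $\omega=\bigsqcup_n P_n$ into infinitely many infinite members of $\I$. Subdividing each $P_n$ into consecutive finite blocks, the candidate Tukey map is $\Psi(f)=\bigcup_n (\text{first } f(n) \text{ blocks of } P_n)$, with dual map $\Phi(B)(n)=$ the index of the last block of $P_n$ contained in $B$; then $\Psi(f)\subseteq^* B$ forces $f\le^*\Phi(B)$, so $\Psi$ takes $\le^*$-unbounded families to $\subseteq^*$-unbounded ones. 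The delicate point is to arrange simultaneously that $\Psi(f)\in\I$ for every $f$ and that $\Phi(B)$ is finite-valued for every $B\in\I$; a naive partition fails this (e.g.\ when $\sup_f\Psi(f)\in\I$, or when some $B\in\I$ almost contains infinitely many $P_n$), so one must choose the partition and its block structure compatibly with the behaviour of $\varphi$ on tails, thinning the columns via the P-ideal property when necessary.

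The heart of the matter is (R2), where analyticity and the P-ideal property enter essentially, via $\varphi$ and Bartoszy\'nski's localization characterization of $\n$: $\addn$ is the least size of a family of functions caught by no single slalom of width $n$, and $\cofn$ the least size of a family of such slaloms catching every function. Given $A\in\Exh(\varphi)$, the sequence $n\mapsto\varphi(A\setminus n)$ decreases to $0$; I would record, at each dyadic scale $2^{-k}$, the least $m^A_k$ with $\varphi(A\setminus m^A_k)<2^{-k}$ together with the trace of $A$ on $[m^A_k,m^A_{k+1})$, and encode this as a pair consisting of a point of a suitable measure space and a growth rate, in such a way that $\subseteq^*$-containment in $\I$ is matched by being localized/covered, while conversely a localizing slalom or covering null set decodes to a $\subseteq^*$-upper bound in $\I$. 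A subfamily of $\I$ of size $<\addn$ then yields functions caught by one slalom, which decodes to a single $\subseteq^*$-bound, giving $\addn\le\addstar{\I}$; dually, a $\cofn$-sized family of slaloms decodes to a $\subseteq^*$-cofinal family in $(\I,\subseteq^*)$, giving $\cofstar{\I}\le\cofn$.

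The main obstacle I expect is this matching in (R2): since $\varphi$ is only a submeasure and the window endpoints $m^A_k$ depend on $A$, one cannot directly read off a bounded-width slalom, so one fixes a master interval partition growing along a dominating function (this is why $\oo$ also appears, and the reduction really lands in $\n\times\oo\equiv_T\n$), and must then show that the $\varphi$-small subsets of each window, assembled across all scales, form a directed set of the Tukey type that governs $\addn$ and $\cofn$ — precisely the point at which the structure theory behind Solecki's representation of analytic P-ideals is used. The reduction (R1), though it needs only tallness, requires analogous bookkeeping to keep $\Psi$ valued in $\I$.
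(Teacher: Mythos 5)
The paper itself gives no proof of this theorem (it is quoted from Todor\v{c}evi\'{c} \cite{Tod96}), so I am comparing your sketch with the standard argument. Your architecture is exactly that argument: $(\oo,\leq^*)\preceq_T(\I,\subseteq^*)$ and $(\I,\subseteq^*)\preceq_T \n\times\oo\cong_T\n$, with $\I=\Exh(\varphi)$ via Solecki. Your (R2) outline is essentially correct: dominate the scale functions $k\mapsto\min\{m:\varphi(A\setminus m)<2^{-k}\}$, code $A$ by its traces on the resulting windows, apply Bartoszy\'nski's width-$k$ localization characterization of $\addn$ and $\cofn$ (for functions with values in a countable set), and take the union of the $\varphi$-small slalom values; countable subadditivity of the lsc submeasure gives that this union is in $\Exh(\varphi)$. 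No further ``structure theory'' behind Solecki's representation is needed there.

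The genuine gap is in (R1), and the ``delicate point'' you flag cannot be arranged inside the scheme you propose. Suppose the $P_n$ are pairwise disjoint infinite members of $\I=\Exh(\varphi)$, each cut into finite blocks, $\Psi(f)$ is the union of the first $f(n)$ blocks of each $P_n$, and $\Phi(B)(n)$ is the last block index of $P_n$ contained in $B$. If $\Psi(f)\in\I$ for every $f$, then necessarily $\varphi(P_n)\to0$: otherwise infinitely many columns contain (by lower semicontinuity) finite block-unions $C_n$ with $\varphi(C_n)\geq\varepsilon/2$; these $C_n$ are pairwise disjoint finite sets, so only finitely many of them meet any $[0,m]$, and the $f$ selecting them gives $\varphi(\Psi(f)\setminus m)\geq\varepsilon/2$ for all $m$, i.e.\ $\Psi(f)\notin\Exh(\varphi)$. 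But once $\varphi(P_n)\to0$, pick an infinite $X$ with $\sum_{n\in X}\varphi(P_n)<\infty$; then $B:=\bigcup_{n\in X}P_n\in\Exh(\varphi)$ by countable subadditivity (using $P_n\in\I$ for the finitely many head terms). This $B$ contains every block of every column in $X$, so $\Phi(B)$ is undefined there, and for any finite-valued repair the function $f$ equal to $\Phi(B)(n)+1$ on $X$ and $0$ elsewhere satisfies $\Psi(f)\subseteq B$ while $f\not\leq^*\Phi(B)$. So no choice of columns and blocks, thinned or not, can satisfy both of your requirements; ``choosing the partition compatibly with $\varphi$'' cannot save this map.

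The standard connection for (R1) measures mass decay instead of block containment. Since $\omega\notin\Exh(\varphi)$, $L:=\lim_m\varphi(\omega\setminus m)>0$; fix $0<\delta\leq L/2$ (or $\delta=1$ if $L=\infty$) and set $\Phi(B)(k):=\min\{m:\varphi(B\setminus m)<\delta 2^{-k}\}$. Given $g$, build $\Psi(g)=\bigcup_k a_k$, where $a_k$ is a finite subset of $[\max\{g(k),\max a_{k-1},N_k\}+1,\infty)$ with $\delta2^{-k}\leq\varphi(a_k)<\delta2^{-k}+2^{-k}$; such $a_k$ exists because every tail of $\omega$ has submeasure $\geq L$ (lower semicontinuity lets a finite piece cross the threshold), and by tallness $\varphi(\{n\})\to0$ (the paper's \Cref{lem:tall_HH07}), so choosing $N_k$ with $\varphi(\{n\})<2^{-k}$ for $n\geq N_k$ and adding points one at a time bounds the overshoot. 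Then $\Psi(g)\in\Exh(\varphi)$, and $\Psi(g)\subseteq^* B$ forces $a_k\subseteq B$ for large $k$, hence $\varphi(B\setminus g(k))\geq\delta2^{-k}$ and $g(k)<\Phi(B)(k)$, i.e.\ $g\leq^*\Phi(B)$. With this replacement for your (R1), the rest of your plan goes through.
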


    \begin{thm}[Louveau--Velickovic, \cite{LV99}]\label{thm:Louveau-Velickovic}
        Let $\I$ be a tall analytic ideal on $\omega$. Then either $\I$ is $F_\sigma$ or $(\omega^\omega, \leq)$ is Tukey reducible to $(\mathcal{I}, \subset)$.
    \end{thm}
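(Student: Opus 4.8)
The plan is to establish the non-trivial alternative: if $\I$ is analytic and not $F_\sigma$, then $(\omega^\omega,\le)\preceq_T(\I,\subseteq)$, where, as a cofinal type, $(\omega^\omega,\le)$ means eventual domination. I would reduce everything to producing a single combinatorial gadget, a \emph{grid} for $\I$: a sequence $\langle C_n\rangle_{n<\omega}$ of pairwise disjoint infinite subsets of $\omega$, each with a fixed enumeration $C_n=\{c^n_i:i<\omega\}$, such that (a) for every $f\in\omega^\omega$ the truncation $A_f:=\bigcup_{n<\omega}\{c^n_i:i<f(n)\}$ belongs to $\I$, and (b) for every infinite $M\subseteq\omega$ the set $\bigcup_{n\in M}C_n$ is $\I$-positive. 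Given such a grid, the map $F\colon\omega^\omega\to\I$, $F(f):=A_f$, is the required Tukey map: if $B\in\I$ were a $\subseteq$-upper bound of $\{A_f:f\in X\}$, then $W=\{n:C_n\subseteq B\}$ is finite by (b) (else $\bigcup_{n\in W}C_n\subseteq B\in\I$), so setting $g(n)=\min\{i:c^n_i\notin B\}$ for $n\notin W$ forces $f(n)\le g(n)$ for all $f\in X$ and all $n\notin W$; hence $X$ is bounded in $(\omega^\omega,\le^*)$. Thus $F$ sends $\le^*$-unbounded sets to $\subseteq$-unbounded sets, which is exactly $(\omega^\omega,\le)\preceq_T(\I,\subseteq)$.

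The substance of the proof is extracting a grid from the hypothesis that $\I$ is analytic and not $F_\sigma$. By \Cref{fac:submeasure_chara}(1), $\I$ fails to be $F_\sigma$ precisely when $\I\ne\Fin(\varphi)$ for every lower semicontinuous submeasure $\varphi$; concretely, representing $\I$ as $\{A\subseteq\omega:\exists y\in\omega^\omega\,\forall n\,(A\upharpoonright n,y\upharpoonright n)\in T\}$ for a tree $T$ on $2\times\omega$, non-$F_\sigma$-ness says that the witnesses $y$ cannot be \emph{uniformly truncated} — the setting of a Hurewicz--Saint-Raymond-type dichotomy. I would run a fusion along the levels of $T$: at stage $n$, commit to finitely many new elements of each $C_m$ ($m\le n$), using the unboundedness of the scheme's \emph{$\I$-rank} (the non-$F_\sigma$-ness) to guarantee that every truncation built so far still extends to a genuine $\I$-witness, securing (a) in the limit through a bookkeeping over all $f\in\omega^\omega$; and, at the same time, spread the columns so aggressively that no single $\I$-witness can cope with an infinite sub-union of them, securing (b). This fusion simultaneously yields a continuous reduction of a $\Pi^0_2$-complete set such as $\{z\in2^\omega:\exists^\infty n\,z(n)=1\}$ into $\I$, which is the softer, purely descriptive form of the dichotomy.

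The hard part is precisely this simultaneous construction. Requirement (a) wants the columns built \emph{slowly}, so that however the heights $f(n)$ are chosen the truncation $A_f$ stays $\I$-small; requirement (b) wants them built \emph{wildly}, so that no $\I$-set swallows infinitely many columns; and both must be arranged along one recursion against a single analytic scheme for $\I$. Setting up the right rank/derivative on $T$ and threading it through the fusion so that these two demands are satisfied together is essentially the whole difficulty — a repackaging of the Hurewicz dichotomy in the present ideal-theoretic form — whereas, once a grid is in hand, the Tukey verification of the first paragraph is routine.
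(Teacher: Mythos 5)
Note first that the paper does not prove this statement at all: it is quoted from Louveau--Velickovic \cite{LV99}, so your proposal has to stand on its own. The first half of your argument (grid $\Rightarrow$ Tukey map) is fine as far as it goes, but there are two genuine problems. The decisive one is that the extraction of the grid from ``analytic, tall, not $F_\sigma$'' --- which \emph{is} the theorem --- is never actually carried out: you describe a fusion along a tree representation guided by an unspecified ``$\I$-rank'' and then concede that making the two conflicting demands (a) and (b) compatible ``is essentially the whole difficulty.'' Worse, the mechanism you propose for (a), ``bookkeeping over all $f\in\omega^\omega$,'' cannot work even in principle: there are continuum many $f$, and an $\omega$-stage fusion can only meet countably many requirements, so condition (a) has to be secured by a single uniform structural feature of the construction (in the actual Louveau--Velickovic argument, and in later treatments via Mazur's lower semicontinuous submeasure characterization of $F_\sigma$ ideals or Solecki-style games, this uniformity is exactly where the non-$F_\sigma$ hypothesis is spent). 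As written, the proposal restates the difficulty rather than resolving it, and the claim that non-$F_\sigma$-ness ``is the setting of a Hurewicz--Saint-Raymond-type dichotomy'' that hands you the needed rank is asserted, not proved.

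Second, even granting a grid, your conclusion is weaker than the stated theorem. Your verification only shows that a $\subseteq$-bound $B$ yields $f\le g$ off the finite set $W=\{n:C_n\subseteq B\}$, i.e.\ you get $(\omega^\omega,\le^*)\preceq_T(\I,\subseteq)$. But $(\omega^\omega,\le)$ with the product (everywhere-domination) order, which is what the cofinal type in \cite{LV99} means, is \emph{strictly} above $(\omega^\omega,\le^*)$ in the Tukey order: the constant functions form a countable $\le$-unbounded family, whereas every countable family is $\le^*$-bounded, so no Tukey map from $(\omega^\omega,\le)$ into $(\omega^\omega,\le^*)$ exists and the two are not Tukey equivalent. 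To recover the full statement you would need to strengthen (b) so that \emph{each individual column} (or each column of finite blocks $F^n_i$) is $\I$-positive; then $W=\emptyset$, $g(n)=\min\{i:F^n_i\not\subseteq B\}$ is defined for every $n$, and you get everywhere domination. This strengthening is exactly what the Louveau--Velickovic construction produces, and it is an additional demand your fusion would have to meet, not a cosmetic reformulation. (For the corollary the paper actually draws --- $\add^*_\omega(\I)\le\bb$ and $\cof^*(\I)\ge\dd$ --- your mod-finite version would suffice, but it does not prove the theorem as stated.)
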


    \begin{cor}
        If $\I$ is a tall non-$F_\sigma$ ideal on a countable set, then
        \[
        \add^*_\omega(\I) \leq \bb \text{ and } \cof^*(\I)\geq\dd.
        \]
    \end{cor}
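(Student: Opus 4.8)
The plan is to combine \Cref{thm:Louveau-Velickovic} with a routine ``finite-modification'' trick that lets one pass freely between the partial orders $(\I,\subseteq)$ and $(\I,\subseteq^*)$. Since $\I$ is tall and non-$F_\sigma$, \Cref{thm:Louveau-Velickovic} supplies a Tukey reduction $(\omega^\omega,\leq)\preceq_T(\I,\subseteq)$; unpacking this in the standard way, fix maps $\phi\colon\omega^\omega\to\I$ and $\psi\colon\I\to\omega^\omega$ with the property that $\phi(g)\subseteq A$ implies $g\leq\psi(A)$ for all $g\in\omega^\omega$ and $A\in\I$. I will also use the classical facts that $\cof(\omega^\omega,\leq)=\dd$ and that a countably generated ideal is $F_\sigma$; in particular our $\I$ is not countably generated, so $\add^*_\omega(\I)$ is genuinely defined and $\cof^*(\I)$ is infinite.

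For $\add^*_\omega(\I)\leq\bb$, I would take a $\leq^*$-unbounded family $\{g_\alpha:\alpha<\bb\}\subseteq\omega^\omega$ and set $A_\alpha\coloneqq\phi(g_\alpha)\in\I$; the claim is that $\{A_\alpha:\alpha<\bb\}$ witnesses $\add^*_\omega(\I)\leq\bb$. If not, there is a countable $\overline{B}=\{B_n:n<\omega\}\subseteq\I$ such that every $A_\alpha$ satisfies $A_\alpha\subseteq^*B_n$ for some $n$. Since $A_\alpha\subseteq^*B_n$ means $A_\alpha\subseteq B_n\cup(A_\alpha\setminus B_n)$ with $A_\alpha\setminus B_n$ finite, each $A_\alpha$ is \emph{actually} contained in some member of the countable family $\mathcal{C}\coloneqq\{B_n\cup F:n<\omega,\ F\in[\omega]^{<\omega}\}\subseteq\I$. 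Applying the Tukey connection, $g_\alpha\leq\psi(C)$ whenever $A_\alpha\subseteq C\in\mathcal{C}$, so $\{g_\alpha:\alpha<\bb\}$ is dominated by the countable set $\psi[\mathcal{C}]$ and hence is $\leq^*$-bounded (a pointwise diagonal of $\psi[\mathcal{C}]$ bounds it in $\leq^*$), contradicting unboundedness.

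For $\cof^*(\I)\geq\dd$, the reduction $(\omega^\omega,\leq)\preceq_T(\I,\subseteq)$ gives $\dd=\cof(\omega^\omega,\leq)\leq\cof(\I,\subseteq)$, so it remains to see $\cof(\I,\subseteq)\leq\cof^*(\I)$. But if $\mathcal{A}\subseteq\I$ is $\subseteq^*$-cofinal, then $\{A\cup F:A\in\mathcal{A},\ F\in[\omega]^{<\omega}\}$ is $\subseteq$-cofinal, and since $\cof^*(\I)$ is infinite this family has cardinality at most $\lvert\mathcal{A}\rvert$; hence $\dd\leq\cof(\I,\subseteq)\leq\cof^*(\I)$.

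I do not expect a genuine obstacle here: the statement really is a corollary. The only point that needs care is that \Cref{thm:Louveau-Velickovic} yields a reduction to $(\I,\subseteq)$, whereas $\add^*_\omega$ and $\cof^*$ are defined via $\subseteq^*$ — and the entire content of the argument is that closing a countable family under adjoining finite sets keeps it countable (and leaves an infinite cofinal family at the same cardinality), so the switch from $\subseteq$ to $\subseteq^*$ costs nothing. An equivalent packaging of the first half: the identity map witnesses $(\I,\subseteq)\preceq_{\omega T}(\I,\subseteq^*)$, so composing with \Cref{thm:Louveau-Velickovic} and invoking that $\preceq_{\omega T}$ reverses $\add_\omega$, together with $\add_\omega(\omega^\omega,\leq)=\bb$, gives the same conclusion.
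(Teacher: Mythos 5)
Your proof is correct and is essentially the intended argument: the corollary is stated in the paper as an immediate consequence of the Louveau--Veli\v{c}kovi\'{c} theorem, and your unpacking of the Tukey reduction $(\omega^\omega,\leq)\preceq_T(\I,\subseteq)$ plus the routine passage between $\subseteq$ and $\subseteq^*$ (closing a countable, resp.\ infinite, family under adjoining finite sets) is exactly the standard way to deduce $\add^*_\omega(\I)\leq\bb$ and $\cof^*(\I)\geq\dd$. Note only that, like the paper, you are tacitly assuming $\I$ is analytic, since that hypothesis is needed to invoke Louveau--Veli\v{c}kovi\'{c}.
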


    Even for $F_\sigma$ non-P ideals $\I$, it is often the case that $\add^*(\I) = \omega_1$, which follows from the following results.

    \begin{dfn}
        Given an ideal $\I$, a subset $X \subset \I$ of $\I$ is \emph{strongly unbounded} if for every countable $A \subset X$ we have that $\bigcup A \notin \I$.
    \end{dfn}

    One can easily show that $\mathscr{I}_L$ has an uncountable perfect strongly unbounded set.
    
    \begin{prop}\label{prop:strongly_unbounded}
        If an ideal $\I$ has a strongly unbounded uncountable subset, then $\add^*_\omega(\I) = \omega_1$.
    \end{prop}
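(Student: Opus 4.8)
The plan is to establish the two inequalities $\add^*_\omega(\I)\ge\omega_1$ and $\add^*_\omega(\I)\le\omega_1$ separately. The lower bound is soft: for any countably infinite $\mathcal{A}=\{A_k:k<\omega\}\subseteq\I$, the family $\overline{B}:=\mathcal{A}$ itself shows that $\mathcal{A}$ is not a witness for $\add^*_\omega(\I)$, since each $A_k$ satisfies $A_k\subseteq^* A_k\in\overline{B}$ (a finite $\mathcal{A}$ is first padded to a countably infinite one). One should also remark that $\I$ cannot be countably generated: were $\I$ generated by an increasing sequence $D_0\subseteq D_1\subseteq\cdots$, pigeonhole would give $n<\omega$ and an uncountable subfamily of the given strongly unbounded set all of whose members are contained in $D_n$, so that a countable sub-subfamily has union inside $D_n\in\I$, contradicting strong unboundedness. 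Hence $\add^*_\omega(\I)$ is genuinely defined (not the conventional $\infty$).

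For the upper bound, I would fix a strongly unbounded uncountable $X\subseteq\I$, thin it to $X=\{A_\alpha:\alpha<\omega_1\}$ with pairwise distinct entries (strong unboundedness is inherited by subfamilies), and claim that $\mathcal{A}:=X$ is a witness for $\add^*_\omega(\I)\le\omega_1$. The verification is by contradiction: if some $\overline{B}=\{B_n:n<\omega\}\in[\I]^\omega$ defeats $\mathcal{A}$, then for every $\alpha<\omega_1$ there is $n_\alpha<\omega$ with $A_\alpha\subseteq^* B_{n_\alpha}$, and a double pigeonhole produces a single $n<\omega$, a single finite set $F$, and countably many indices $\alpha_0<\alpha_1<\cdots$ with $A_{\alpha_k}\subseteq B_n\cup F$ for all $k<\omega$. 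Since $B_n\cup F\in\I$, this forces $\bigcup_{k<\omega}A_{\alpha_k}\in\I$, contradicting that $\{A_{\alpha_k}:k<\omega\}$ is a countable subfamily of the strongly unbounded set $X$. Therefore $\mathcal{A}$ is a witness and, together with the lower bound, $\add^*_\omega(\I)=\omega_1$.

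The only step with real content is the second application of pigeonhole, and it is exactly here that the hypothesis that $\I$ is an ideal on a \emph{countable} set is used: the remainders $A_\alpha\setminus B_n$ all belong to the countable collection of finite subsets of the underlying set, so uncountably many of them coincide, and this upgrades the almost-inclusions $A_\alpha\subseteq^* B_n$ to genuine inclusions $A_\alpha\subseteq B_n\cup F$ with a common finite error $F$. I do not expect any genuine obstacle beyond this $\subseteq^*$-to-$\subseteq$ reduction and the routine bookkeeping around it.
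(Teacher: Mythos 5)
Your proof is correct and takes essentially the same route as the paper: the uncountable strongly unbounded family itself witnesses $\add^*_\omega(\I)\leq\omega_1$, because any member of $\I$ can (almost-)contain only countably many of its elements. Your explicit double pigeonhole upgrading $A_\alpha\subseteq^* B_n$ to $A_\alpha\subseteq B_n\cup F$ (using countability of the underlying set) makes precise a step the paper's terse proof glosses over, and the lower bound $\omega_1\leq\add^*_\omega(\I)$, which the paper takes from an earlier lemma, you verify directly.
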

    \begin{proof}
        Observe that, if $X$ is a strongly unbounded subset of size $\omega_1$, then every $I \in \I$ can only contain finitely many elements of $X$. So if $\{I_n :n\in\omega\} \subset \I$, then there is $A \in X$ not contained in any of the $I_n$'s, thus $X$ is a witness for $\addo{\I}$, i.e.\ $\add^*_\omega(\I) = \omega_1$.
    \end{proof}

    \begin{cor}
        $\add^*_\omega(\I_L) = \omega_1$.
    \end{cor}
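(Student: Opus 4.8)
The plan is to derive the corollary directly from \Cref{prop:strongly_unbounded}: it suffices to exhibit an uncountable (indeed perfect, as asserted in the remark preceding that proposition) strongly unbounded subset of $\I_L$. Recall that $\I_L$ is carried by $\omega$ with the interval partition $\bar P^{\mathrm{exp}}=(P_i^{\mathrm{exp}})_{i<\omega}$, $|P_i^{\mathrm{exp}}|=2^i$, and that $A\notin\I_L$ exactly when $\limsup_{i}|A\cap P_i^{\mathrm{exp}}|/i=\infty$. For each $i$ fix a bijection identifying $P_i^{\mathrm{exp}}$ with $2^i$, and for $i\ge 2$ put $m_i=\lceil\log_2 i\rceil$, so that $i\le 2^{m_i}<2i$ and $i-m_i\to\infty$.

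First I would define, for $x\in 2^\omega$, a set $A_x\subseteq\omega$ by
\[
A_x\cap P_i^{\mathrm{exp}}=\{t\in 2^i : t\upharpoonright(i-m_i)=x\upharpoonright(i-m_i)\}\quad(i\ge 2),
\]
and $A_x\cap(P_0^{\mathrm{exp}}\cup P_1^{\mathrm{exp}})=\emptyset$. Since $|A_x\cap P_i^{\mathrm{exp}}|=2^{m_i}<2i$, each $A_x$ lies in $\I_L$. As $A_x\cap P_i^{\mathrm{exp}}$ depends only on the finite string $x\upharpoonright(i-m_i)$, the map $x\mapsto A_x$ is continuous; and if $x\ne y$ then for all large $i$ we have $x\upharpoonright(i-m_i)\ne y\upharpoonright(i-m_i)$, so $A_x\cap P_i^{\mathrm{exp}}$ and $A_y\cap P_i^{\mathrm{exp}}$ are disjoint and nonempty, whence $x\mapsto A_x$ is injective. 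Thus $\{A_x:x\in 2^\omega\}$ is perfect, hence uncountable.

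The step carrying the actual content is strong unboundedness: a single $A_x$ meets block $i$ in only about $i$ points, yet the union of countably many of the $A_x$ must meet infinitely many blocks in super-linearly many points. Given pairwise distinct $x_n$ ($n<\omega$), put $B=\bigcup_n A_{x_n}$; then
\[
|B\cap P_i^{\mathrm{exp}}|=2^{m_i}\cdot\bigl|\{\,x_n\upharpoonright(i-m_i):n<\omega\,\}\bigr|.
\]
Since the $x_n$ are distinct and $i-m_i\to\infty$, the second factor tends to $\infty$ with $i$; together with $2^{m_i}\ge i$ this gives $|B\cap P_i^{\mathrm{exp}}|/i\to\infty$, so $B\notin\I_L$. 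Hence $\{A_x:x\in 2^\omega\}$ is a perfect strongly unbounded subset of $\I_L$, and \Cref{prop:strongly_unbounded} yields $\add^*_\omega(\I_L)=\omega_1$. The only subtlety is calibrating $m_i\sim\log_2 i$ so that $2^{m_i}\ge i$ (making the union blow up at rate at least $|\{x_n\upharpoonright(i-m_i):n<\omega\}|$) while $2^{m_i}=O(i)$ (keeping each $A_x$ in $\I_L$) and $i-m_i\to\infty$ (so distinct $x$'s are eventually separated); none of this is hard, which is why the remark before \Cref{prop:strongly_unbounded} calls it easy.
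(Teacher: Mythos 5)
Your proof is correct and follows essentially the same route as the paper: the paper's corollary is obtained by invoking \Cref{prop:strongly_unbounded} together with the (unproved, stated-as-easy) observation that $\I_L$ has an uncountable perfect strongly unbounded subset. Your construction with $m_i=\lceil\log_2 i\rceil$ simply supplies the explicit witness for that observation, and the verification (each $A_x$ meets $P_i^{\mathrm{exp}}$ in $2^{m_i}<2i$ points, while a countably infinite union meets $P_i^{\mathrm{exp}}$ in $2^{m_i}\cdot|\{x_n\upharpoonright(i-m_i):n<\omega\}|\geq i\cdot|\{x_n\upharpoonright(i-m_i):n<\omega\}|$ points, which grows super-linearly) is accurate.
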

    
    The ideal $\I_P$ is the typical example of a fragmented ideal that has no strongly unbounded subsets (see \cite{HRRZ14}). We will see that $\add^*_\omega(\I_P) > \bb$ is consistent. To prove this, we need to recall the following cardinal invariant introduced by Pawlikowski \cite{Paw85}.

    \begin{dfn}
        Let $\mathcal{I}$ be a $\sigma$-ideal on $2^\omega$. Note that $2^\omega$ can be regarded as a topological group with the coordinate-wise addition modulo 2. Then the transitive additivity of $\mathcal{I}$, denoted by $\add_t(\mathcal{I})$, is defined by
        \[
        \add_t(\mathcal{I}) = \min\{\lvert A\rvert: A\subset 2^\omega\land \exists X\in\mathcal{I}\,(A + X\notin\mathcal{I})\}.
        \]
    \end{dfn}

    We use Pawlikowski's characterization of $\add_t{(\mathcal{N})}$, where $\n$ is the null ideal on $2^\omega$.\footnote{There is a slight abuse of notation here, because $\n$ also denotes the null ideal on $\omega^\omega$ in this paper.}
    
    \begin{lem}[Pawlikowski, {\cite[Lemma 2.2]{Paw85}}]\label{lem:addtN}
		\[
		\add_t(\mathcal{N}) = \min \left\{\lvert\mathcal{A}\rvert : \exists g\in(\omega\setminus 1)^\omega\,\left(\mathcal{A}\subset\prod_{i<\omega}g(i) \land \forall S\in\prod_{i<\omega}[g(i)]^{\leq i}\,\exists f \in\mathcal{A}\,(f\not\in^* S)\right)\right\},
		\]
        where $f\in^* S$ means that $\forall^\infty i<\omega\,(f(i)\in S(i))$ holds.
	\end{lem}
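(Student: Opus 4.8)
The plan is to derive the characterization from Bartoszy\'nski's combinatorial description of the null ideal, using the fact that the parameter $g$ is existentially quantified on the right-hand side to align the two descriptions. Identify $2^\omega$ with $\prod_n 2^{I_n}$ for an interval partition $\langle I_n\rangle$ of $\omega$, with the group operation acting blockwise, so $(x+y)\upharpoonright I_n=(x\upharpoonright I_n)\oplus(y\upharpoonright I_n)$. Recall that $Y\subseteq 2^\omega$ is null iff there are an interval partition $\langle I_n\rangle$ and sets $J_n\subseteq 2^{I_n}$ with $\sum_n|J_n|/2^{|I_n|}<\infty$ and $Y\subseteq\{x:\exists^{\infty}n\ (x\upharpoonright I_n\in J_n)\}$ (a ``summable small set''), and dually that $Y$ is non-null iff for every summable small set $\langle I_n,J_n\rangle$ there is $x\in Y$ with $x\upharpoonright I_n\notin J_n$ for all but finitely many $n$. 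Write $\kappa$ for the right-hand side of the lemma; I prove the two inequalities separately.

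\emph{$\add_t(\mathcal{N})\geq\kappa$.} Let $A\subseteq 2^\omega$ with $|A|=\add_t(\mathcal{N})$ and $X\in\mathcal{N}$ with $A+X\notin\mathcal{N}$. Fix a summable small set $\langle I_n,K_n\rangle$ covering $X$; since ``$\exists^\infty n$'' ignores finitely many coordinates, we may zero out finitely many $K_n$ and so assume $\sum_n(n+1)|K_n|/2^{|I_n|}<\infty$. Put $g(n)=2^{|I_n|}$, fix bijections $b_n\colon g(n)\to 2^{I_n}$, and let $\mathcal{A}=\{\langle b_n^{-1}(a\upharpoonright I_n)\rangle_n:a\in A\}\subseteq\prod_n g(n)$, so $|\mathcal{A}|\leq|A|$. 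Given a slalom $S\in\prod_n[g(n)]^{\leq n}$, regard each $S(n)$ as a subset of $2^{I_n}$ and set $J_n=\{s\oplus k:s\in S(n),\ k\in K_n\}$; then $|J_n|\leq n|K_n|$, so $\langle I_n,J_n\rangle$ is a summable small set and the set it determines is null. As $A+X$ is non-null, pick $a\in A$, $x\in X$ with $(a+x)\upharpoonright I_n\notin J_n$ for all large $n$; choosing such an $n$ with $k:=x\upharpoonright I_n\in K_n$ (there are infinitely many), if $a\upharpoonright I_n\in S(n)$ then $(a\upharpoonright I_n)\oplus k\in S(n)\oplus k\subseteq J_n$, a contradiction. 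Hence $a\upharpoonright I_n\notin S(n)$ for infinitely many $n$, i.e.\ the corresponding member of $\mathcal{A}$ is not $\in^{*}S$, and $\mathcal{A}$ witnesses $\kappa\leq|A|$.

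\emph{$\add_t(\mathcal{N})\leq\kappa$.} Let $g$ and $\mathcal{A}\subseteq\prod_n g(n)$ realize $\kappa$. Enlarging $g$ coordinatewise preserves the anti-localization property (intersect each slalom with $g$), so we may assume $g(n)=2^{h(n)}$ for a strictly increasing $h$ growing as fast as we like. Let $|I_n|=h(n)$, fix bijections $b_n\colon g(n)\to 2^{I_n}$, put $a_f\upharpoonright I_n=b_n(f(n))$ and $A=\{a_f:f\in\mathcal{A}\}$ (so $|A|=|\mathcal{A}|$), choose finite-index subgroups $V_n\leq 2^{I_n}$ with $[2^{I_n}:V_n]$ of size roughly $n$, and let $X=\{x:x\upharpoonright I_n\in V_n\text{ for all }n\}$, a closed null subgroup of $2^\omega$. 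To see $A+X\notin\mathcal{N}$, fix a null $G_\delta$ set $N$. By a coarsening argument for small-set representations (the ``$\exists^\infty$''-form is stable under passing to a coarser partition while keeping summability, and a summable small set can be coarsened until every block contains a full interval $I_n$), $N$ lies in a summable small set $\langle[m_j,m_{j+1}),S_j\rangle$ in which each block contains a designated $I_{n(j)}$. Prescribing $x$ on all coordinates outside $\{n(j):j<\omega\}$ and letting it range over $V_{n(j)}$ on $I_{n(j)}$, one extracts from $\langle S_j\rangle$ a genuine width-$n$ slalom $S\in\prod_n[g(n)]^{\leq n}$ such that any $f\in\mathcal{A}$ with $f\not\in^{*}S$ admits, after a suitable blockwise choice of $x\in X$, a real $a_f+x$ lying outside $N$; the extraction rests on a Fubini/counting estimate comparing $|S_j|/2^{m_{j+1}-m_j}$ with the index of $V_{n(j)}$. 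Applying anti-localization to $S$ produces such an $f$, so $A+X\not\subseteq N$; as $N$ was arbitrary, $A+X$ is non-null and $\add_t(\mathcal{N})\leq|A|=|\mathcal{A}|$.

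The main obstacle lies in this last step: anti-localization only yields escape from a slalom \emph{infinitely often}, whereas escaping a null set requires missing it \emph{eventually}. Reconciling these is exactly what forces the reduction to coarsenings of $\langle I_n\rangle$ and dictates how fast $h$ and the indices $[2^{I_n}:V_n]$ must grow; the delicate point is to convert the block-level obstruction carried by $\langle S_j\rangle$ into a bona fide single-coordinate slalom whose $n$-th value has size at most $n$ (and not some larger polynomial in $n$), while correctly absorbing the parts of the intervals $I_n$ that straddle the boundaries of the coarsened blocks into the prescribed values of $x$. This calibration of parameters is the technical heart of the proof; everything else is bookkeeping.
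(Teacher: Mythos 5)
The paper itself gives no proof of this lemma (it is quoted directly from Pawlikowski), so your attempt can only be measured on its own merits. Your first inequality, $\add_t(\mathcal{N})\geq\kappa$, is essentially the right argument, but the justification of the strengthened summability is wrong: deleting finitely many $K_n$ cannot turn $\sum_n\lvert K_n\rvert/2^{\lvert I_n\rvert}<\infty$ into $\sum_n(n+1)\lvert K_n\rvert/2^{\lvert I_n\rvert}<\infty$ (take $\lvert K_n\rvert/2^{\lvert I_n\rvert}=1/n^2$). The correct move is to coarsen the partition: merge consecutive intervals into blocks whose tail sums are below, say, $2^{-m}/(m+1)$, replace the $K_n$ by the induced sets on the blocks, and only then define $g$ and $\mathcal{A}$ on the new blocks — harmless because $g$ is existentially quantified. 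With that repair this half goes through.

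The second inequality, $\add_t(\mathcal{N})\leq\kappa$, is not a proof but a plan, and the step you defer is exactly the content of the lemma. Given an arbitrary null $N$, whose small-set representation $\langle[m_j,m_{j+1}),S_j\rangle$ is in no way aligned with your blocks $I_n$, you must produce a single slalom $S\in\prod_n[g(n)]^{\leq n}$ such that $f\not\in^*S$ yields some $x\in X$ with $a_f+x\notin N$. As you note yourself, $f\not\in^*S$ only gives escape at infinitely many coordinates, whereas $a_f+x\notin N$ requires $(a_f+x)\upharpoonright[m_j,m_{j+1})\notin S_j$ for all but finitely many $j$; your sketch supplies no mechanism at the (possibly infinitely many) blocks where $f$ is captured by $S$ — there the freedom of $x$ inside $V_{n(j)}$ is, by the very definition of whatever "bad" set you would place into $S(n(j))$, insufficient, and $\lvert S_j\rvert$ is small only relative to $2^{m_{j+1}-m_j}$, not relative to $\lvert V_{n(j)}\rvert$, so no naive counting rescues those blocks. (Note also that if $S(n)=\emptyset$ off the designated coordinates, $f\not\in^*S$ becomes vacuous and gives nothing at the designated ones.) The promised "Fubini/counting estimate", the treatment of the straddling intervals, and the calibration of $h$ and of the indices $[2^{I_n}:V_n]$ are precisely where the quantifier mismatch must be resolved; labelling them the "technical heart" and "bookkeeping" without carrying them out leaves the direction $\add_t(\mathcal{N})\leq\kappa$ unproven. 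So there is a genuine gap here, and it is the substantial half of Pawlikowski's lemma.
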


    In \cite{Paw85}, Pawlikowski also showed that
    \[
    \add(\mathcal{N}) = \min\{\bb, \add_t(\mathcal{N})\}
    \]
    holds and that $\add(\mathcal{N}) = \bb < \add_t(\mathcal{N})$ is consistent. By Shelah \cite{She92} (see also \cite[Section 3.4, Theorem 2]{Bre95}), it is also known that $\dd<\add_t(\mathcal{N})$ is consistent. One can find other results on $\add_t(\mathcal{N})$ in \cite[Section 2.7]{BJ95} and more recently in \cite{CMRM25}.

    \begin{prop} \label{prop:addstaromega_IP}
		$\add^*_\omega(\I_P) \geq \add_t(\mathcal{N})$.
	\end{prop}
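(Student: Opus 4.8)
The plan is to show that every $\mathcal{A}\subseteq\I_P$ with $\lvert\mathcal{A}\rvert<\add_t(\mathcal{N})$ is $\sigma$-bounded in $\langle\I_P,\subset^*\rangle$, hence is not a witness for $\add^*_\omega(\I_P)$; this immediately yields $\add^*_\omega(\I_P)\geq\add_t(\mathcal{N})$. Since $\mathcal{A}=\bigcup_{k<\omega}\mathcal{A}_k$, where $\mathcal{A}_k:=\{A\in\mathcal{A}:\forall^\infty i<\omega\ (\lvert A\cap P^\mathrm{exp}_i\rvert\leq i^k)\}$, it suffices to bound each $\mathcal{A}_k$ by a single member of $\I_P$. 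Fix $k$. For $A\in\mathcal{A}_k$ only finitely many blocks violate the size estimate, so I pass to the truncation $A^\ast:=A\setminus\bigcup\{P^\mathrm{exp}_i:\lvert A\cap P^\mathrm{exp}_i\rvert>i^k\}$; then $A^\ast\subseteq A$, $A\setminus A^\ast$ is finite (so $A\subset^* A^\ast$), and $\lvert A^\ast\cap P^\mathrm{exp}_i\rvert\leq i^k$ for \emph{every} $i$.

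The key idea is to feed $\{A^\ast:A\in\mathcal{A}_k\}$ into Pawlikowski's characterization of $\add_t(\mathcal{N})$ (\Cref{lem:addtN}) via a coding that uses \emph{one coordinate per block}, exploiting that the witnessing function $g$ there may be taken as large as we please. Put $g(i):=\bigl\lvert[P^\mathrm{exp}_i]^{\leq i^k}\bigr\rvert$, a member of $\omega\setminus 1$ (the empty set always counts), and fix bijections $[P^\mathrm{exp}_i]^{\leq i^k}\leftrightarrow g(i)$; for $A\in\mathcal{A}_k$ let $f_A\in\prod_{i<\omega}g(i)$ be defined by letting $f_A(i)$ be the code of $A^\ast\cap P^\mathrm{exp}_i$. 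Then $\{f_A:A\in\mathcal{A}_k\}$ is a subset of $\prod_{i<\omega}g(i)$ of size $\leq\lvert\mathcal{A}\rvert<\add_t(\mathcal{N})$, so by \Cref{lem:addtN} there is a slalom $S\in\prod_{i<\omega}[g(i)]^{\leq i}$ with $f_A\in^* S$ for all $A\in\mathcal{A}_k$ — otherwise the pair $\bigl(g,\{f_A:A\in\mathcal{A}_k\}\bigr)$ would witness $\add_t(\mathcal{N})\leq\lvert\mathcal{A}_k\rvert<\add_t(\mathcal{N})$.

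Now decode $S$ block by block: set $B_k:=\bigcup_{i<\omega}\bigcup_{s\in S(i)}\bigl(\text{the subset of }P^\mathrm{exp}_i\text{ coded by }s\bigr)$. Each $s\in S(i)$ codes a subset of $P^\mathrm{exp}_i$ of size $\leq i^k$ and $\lvert S(i)\rvert\leq i$, so $\lvert B_k\cap P^\mathrm{exp}_i\rvert\leq i\cdot i^k=i^{k+1}$ for every $i\geq 1$; hence $B_k\in\I_P$. Moreover, for $A\in\mathcal{A}_k$ the relation $f_A\in^* S$ gives $A^\ast\cap P^\mathrm{exp}_i\subseteq B_k\cap P^\mathrm{exp}_i$ for all but finitely many $i$, i.e.\ $A\subset^* A^\ast\subset^* B_k$. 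Therefore $\{B_k:k<\omega\}$ is a countable subfamily of $\I_P$ such that every $A\in\mathcal{A}$ lies $\subset^*$-below some $B_k$ (take any $k$ witnessing $A\in\I_P$), so $\mathcal{A}$ is not a witness for $\add^*_\omega(\I_P)$, which completes the argument.

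I do not expect a serious obstacle here: once the coding is chosen the argument is short. The two points that need attention are the bookkeeping that replaces $A$ by its truncation $A^\ast$ (so that the coding is total and every block obeys the size bound) and the numerical check that decoding a slalom of width $i$ inflates block $i$ only by a factor $i$, which the \emph{polynomial} growth rate of $\I_P$ absorbs by raising the exponent from $k$ to $k+1$. This last point is precisely what makes the one-coordinate-per-block coding succeed for $\I_P$; the same scheme would fail for the linear growth ideal $\I_L$, for which indeed $\add^*_\omega(\I_L)=\omega_1$.
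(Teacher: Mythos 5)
Your proof is correct and follows essentially the same route as the paper's: decompose $\mathcal{A}$ into the pieces $\mathcal{A}_k$ with polynomial bound of exponent $k$, apply Pawlikowski's characterization of $\add_t(\mathcal{N})$ blockwise to get a slalom of width $i$, and decode it into a single set $B_k\in\I_P$ with $\lvert B_k\cap P^\mathrm{exp}_i\rvert\leq i^{k+1}$ that $\subset^*$-bounds $\mathcal{A}_k$. Your explicit coding of $[P^\mathrm{exp}_i]^{\leq i^k}$ into $g(i)$ and the truncation $A^\ast$ are only bookkeeping refinements of what the paper does implicitly.
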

	\begin{proof}
		Assume that $\kappa < \add_t(\mathcal{N})$ and let $\mathcal{A} \subseteq \I_P$ be of size $\kappa$. It suffices to find $\{B_n:n<\omega\}\subset\I_P$ such that for every $A \in \mathcal{A}$ there is $n\in\omega$ such that $A \subseteq^* B_n$. For each $n<\omega$, let
        \[
        \mathcal{A}_n = \{ A \in \mathcal{A} : \forall i<\omega\,(\lvert A\cap P_i^\mathrm{exp}\rvert \leq i^n)\}.
        \]
        Clearly, $\mathcal{A} = \bigcup_{n \in \omega} \mathcal{A}_n$. For each $A \in \mathcal{A}_n$, we define a function $f^n_A\colon\omega\to[P_i^\mathrm{exp}]^{\leq i^n}$ by
        \[
        f^n_A (i) = A \cap P_i^\mathrm{exp}.
        \]
        By \cref{lem:addtN}, for each $n<\omega$, we can find a function $S_n$ on $\omega$ such that 
		\begin{enumerate}
			\item $\lvert S_n(i)\rvert \leq i$,
			\item $S_n(i) \subset [P_i^\mathrm{exp}]^{\leq i^n}$, and
			\item for every $A \in \mathcal{A}_n$, $\forall^\infty i<\omega\,(f^n_A(i) \in S_n(i))$ holds.
		\end{enumerate}
        Then we set $B_n = \bigcup_{i<\omega} \bigcup  S_n(i)$.
        By (1) and (2), we have
        \[
        \lvert B_n \cap P_i^\mathrm{exp}\rvert = \left\lvert\bigcup S_n(i)\right\rvert \leq i \cdot i^n = i^{n+1}
        \]
        and thus $B_n \in \I_P$. By (3), for any $A\in\mathcal{A}_n$, $A\subset^* B_n$ holds.
	\end{proof}

    The above proof can be generalized to show the following.

    \begin{thm} \label{thm:addstaromega_grad_frag}
        For any gradually fragmented ideal $\I$ on $\omega$, $\addostar{\I} \geq \add_t{(\Null)}$.
    \end{thm}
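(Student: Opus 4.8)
The plan is to follow the proof of \cref{prop:addstaromega_IP} as a template, replacing the cardinality bookkeeping used there for $\I_P$ by the gradual fragmentation hypothesis, and inserting one new ingredient: a reindexing of the partition $\langle P_j\rangle_{j<\omega}$ into long blocks, chosen so that the (necessarily increasing) slalom width supplied by Pawlikowski's characterization of $\add_t(\n)$ can be absorbed by the ``$\forall i\,\forall^\infty j$'' shape of the gradual fragmentation condition. So fix a gradually fragmented ideal $\I=\Fin(\varphi)$ on $\omega$ associated with $\langle P_j,\varphi_j\rangle_{j<\omega}$ and a witnessing $f\colon\omega\to\omega$; we may assume $f(n)\geq n$ for all $n$. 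Let $\kappa<\add_t(\n)$ and let $\mathcal{A}\subseteq\I$ have size $\kappa$; the goal is a countable $\{B_n:n<\omega\}\subseteq\I$ with every $A\in\mathcal{A}$ almost contained in some $B_n$. First I would stratify $\mathcal{A}=\bigcup_{n<\omega}\mathcal{A}_n$, where $\mathcal{A}_n=\{A\in\mathcal{A}:\varphi_j(A\cap P_j)\leq n\text{ for all }j\}$; this is legitimate because every $A\in\I=\Fin(\varphi)$ has $\sup_j\varphi_j(A\cap P_j)<\infty$. It then suffices, for each fixed $n$, to build a single $B_n\in\I$ almost containing every member of $\mathcal{A}_n$.

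For the fixed $n$, I would first use the gradual fragmentation condition (with value $n$) to pick, for each $k<\omega$, a threshold $j^*(k)$, nondecreasing in $k$, such that for every $j\geq j^*(k)$, the union of at most $k$ subsets of $P_j$ each of $\varphi_j$-value $\leq n$ has $\varphi_j$-value $\leq f(n)$; such $j^*(k)$ exists since the clause for arity $i=k$ holds for all large $j$. Then choose an increasing sequence $0=m_0<m_1<\cdots$ with $m_k\geq j^*(k)$, and to each $A\in\mathcal{A}_n$ attach its \emph{blocked trace} $\tilde a_A(k)=\langle A\cap P_j:j\in[m_k,m_{k+1})\rangle$, viewed as an element of the finite set $G_k$ of all sequences $\langle B_j\rangle_{j\in[m_k,m_{k+1})}$ with $B_j\subseteq P_j$ and $\varphi_j(B_j)\leq n$. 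Since $\{\tilde a_A:A\in\mathcal{A}_n\}\subseteq\prod_k G_k$ has size $\leq\kappa<\add_t(\n)$, \cref{lem:addtN} yields a slalom $S\in\prod_k[G_k]^{\leq k}$ with $\tilde a_A\in^* S$ for every $A\in\mathcal{A}_n$. Finally I would unpack $S$: for $j\in[m_k,m_{k+1})$ let $S_n(j)=\{\sigma(j):\sigma\in S(k)\}$, a family of at most $k$ subsets of $P_j$ each of $\varphi_j$-value $\leq n$, and set $B_n=\bigcup_j\bigcup S_n(j)$. If $\tilde a_A(k)\in S(k)$ then $A\cap P_j\subseteq\bigcup S_n(j)\subseteq B_n$ for all $j\in[m_k,m_{k+1})$; since this holds for all but finitely many $k$, we get $A\subseteq^* B_n$. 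And $B_n\in\I$: for $j\in[m_k,m_{k+1})$ we have $j\geq m_k\geq j^*(k)$, so $\varphi_j(B_n\cap P_j)=\varphi_j(\bigcup S_n(j))\leq f(n)$ (using $f(n)\geq n$ to also cover $k\leq 1$), whence $\sup_j\varphi_j(B_n\cap P_j)\leq f(n)<\infty$. This shows every $\mathcal{A}$ of size $<\add_t(\n)$ is $\sigma$-bounded in $\langle\I,\subseteq^*\rangle$, i.e.\ cannot witness $\add^*_\omega(\I)$, so $\add^*_\omega(\I)\geq\add_t(\n)$. (If $\I$ happens to be countably generated, the inequality is vacuous by the convention $\add^*_\omega(\I)=\infty$.)

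The only genuinely new point over \cref{prop:addstaromega_IP} — and the step I expect to need the most care — is the blocking. For $\I_P$ one has $|A\cap P_j^{\mathrm{exp}}|\leq j^n$ for $A\in\mathcal{A}_n$, and a union of $\leq j$ such sets has size $\leq j^{n+1}$, so the arity-$j$ instance of ``gradual fragmentation'' holds on the very interval $P_j$ and no reindexing is needed; in general the threshold $j^*(k)$ for arity $k$ may grow arbitrarily fast in $k$, so one must start the $k$-th block past $j^*(k)$ to make the unpacked width (which is the block index $k$) compatible with the arity bound. The remaining points are routine: that $\mathcal{A}=\bigcup_n\mathcal{A}_n$, that each $G_k$ is finite and nonempty so \cref{lem:addtN} applies to $\prod_k G_k$, that unpacking a width-$k$ slalom at block $k$ yields width $\leq k$ at each coordinate of that block, and that the bound $f(n)$ (together with the assumption $f\geq\mathrm{id}$) handles the degenerate arities $k\in\{0,1\}$.
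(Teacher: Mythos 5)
Your argument is correct and essentially identical to the paper's proof: both stratify $\mathcal{A}$ by the submeasure bound $n$, group the intervals $P_j$ into blocks whose $k$-th block starts past the arity-$k$ threshold coming from gradual fragmentation (your $m_k\geq j^*(k)$ is the paper's $g_n(k)$), apply Pawlikowski's characterization of $\add_t(\mathcal{N})$ to the block traces, and unpack the width-$k$ slalom so that gradual fragmentation bounds $\varphi_j$ of the union by $f(n)$. The cosmetic differences (explicit monotone block boundaries, the normalization $f\geq\mathrm{id}$ for arity $\leq 1$) do not change the argument.
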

    \begin{proof}
        Assume that $\kappa < \add_t(\Null)$ and let $\mathcal{A} \subset \I_P$ of size $\kappa$. It suffices to find $\{B_n:n<\omega\}\subset\I_P$ such that for every $A \in \mathcal{A}$ there is $n\in\omega$ such that $A \subseteq^* B_n$.
        
        Assume that a partition $\langle P_i\rangle_{i<\omega}$ of $\omega$ and a sequence $\langle\varphi_i\rangle_{i<\omega}$ of submeasures on $P_i$ witness that $\I$ is fragmented. Let $\varphi$ be a lower semi-continuous submeasure on $\omega$ defined by $\varphi(A) = \sup\{\varphi_i(A\cap P_i):i<\omega\}$. Note that $\I = \mathrm{Fin}(\varphi)$. Also, let $f\in\omega^\omega$ be a function witnessing that $\I$ is gradually fragmented. We may assume that $f$ is increasing and $f(0)=0$. For each $n<\omega$, let $g_n\in\omega^\omega$ be such that for all $i<\omega$,
        \begin{equation}\label{eqn:grad_frag}
            \forall j\geq g_n(i)\,\forall\overline{B}\in[\mathcal{P}(P_j)]^{\leq i}\,\left(\forall B\in\overline{B}\,(\varphi_j(B)\leq n)\Rightarrow\varphi_j\left(\bigcup\overline{B}\right)\leq f(n)\right).
        \end{equation}
        For each $i, n<\omega$, let $P^n_i = \bigcup_{j\in[g_n(i), g_n(i+1))]}P_j$. For each $n<\omega$, let
        \[
        \mathcal{A}_n = \{ A \in \mathcal{A} : \varphi(A)\leq n \}.
        \]
        As $\I = \mathrm{Fin}(\varphi)$, we have $\mathcal{A} = \bigcup_{n<\omega}\mathcal{A}_n$. For each $A \in \mathcal{A}_n$, we define $f^n_A\colon\omega\to\mathcal{P}(P^n_i)$ by
        \[
        f^n_A (i) = A \cap P^n_i.
        \]
        By \cref{lem:addtN}, for each $n<\omega$, we can find a function $S_n$ on $\omega$ such that
		\begin{enumerate}
			\item $\lvert S_n(i)\rvert \leq i$,
			\item $S_n(i) \subset \mathcal{P}(P^n_i)$,
            \item for every $B\in S_n(i)$, $\varphi(B) \leq n$, and 
			\item for every $A\in\mathcal{A}_n$, $\forall^\infty i<\omega\,(f^n_A(i)\in S_n(i))$ holds.
		\end{enumerate}
        Then we set $B_n = \bigcup_{i<\omega}\bigcup S_n(i)$.
        Using $(*)$, it follows from (1), (2) and (3) that
        \[
        \varphi(B_n\cap P^n_i) = \varphi(\bigcup S_n(i)) \leq f(n)
        \]
        and thus $B_n\in\Fin(\varphi) = \I$. By (4), for any $A\in\mathcal{A}_n$, $A\subset^* B_n$.
   \end{proof}

    Next we consider $\cof^*_\omega(\I)$. One interesting phenomena is that we have several arguments for $\cof^*_\omega(\I)$ that can be seen as dual to the previous arguments for $\add^*_\omega(\I)$, even though the definitions of $\add^*_\omega(\I)$ and $\cof^*_\omega(\I)$ are not really dual.

    \begin{prop}\leavevmode
        \begin{enumerate}
            \item $\cof^*_\omega(\mathscr{I}) = \cc$ in the case that $\mathscr{I}$ is $\mathcal{R}, \mathcal{S}, \conv, \mathcal{ED}, \mathcal{ED}_{fin}, \I_L$.
            \item $\cof^*_\omega(\finfin) = \dd$.
            \item $\cof^*_\omega(\nwd) = \cof(\mathcal{M})$.
        \end{enumerate}
    \end{prop}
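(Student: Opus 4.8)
The plan is to handle the three parts by reducing each to a structural property of the ideal. Throughout, the trivial direction $\cofostar{\I}\le\cc$ is immediate from $\lvert[\I]^\omega\rvert\le\cc$, and for $\finfin$ the trivial direction is $\cofostar{\finfin}\le\dd$, obtained by turning a dominating family $\{f_\alpha:\alpha<\dd\}\subseteq\oo$ into the countable families $\overline A_\alpha=\{C_{\alpha,N}:N<\omega\}$, where $C_{\alpha,N}=(N\times\omega)\cup\{\langle n,m\rangle:n\ge N,\ m\le f_\alpha(n)\}\in\finfin$: any countable $\overline B\subseteq\finfin$ assigns to each $B_m$ a function bounding its cofinitely many finite columns, one $f_\alpha$ dominates all of these, and then $B_m\subseteq^*C_{\alpha,N_m}$ past where $B_m$'s columns are finite. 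For $\nwd$ the trivial direction will be $\cofostar{\nwd}\le\cofm$, discussed below.

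For the lower bounds in (1) the key is to produce, for each of the six ideals, a family $\{S_x:x\in\cantor\}$ of $\cc$ many distinct members of $\I$ with \emph{finite fibers}: $\{x:S_x\subseteq^*A\}$ is finite for every $A\in\I$. Granting this, if $\mathcal A\subseteq[\I]^\omega$ has size $<\cc$ then $\bigcup\mathcal A\subseteq\I$ has size $<\cc$, so $\bigcup_{A\in\bigcup\mathcal A}\{x:S_x\subseteq^*A\}$ is a union of $<\cc$ finite sets and omits some $x\in\cantor$; taking any $\overline B\in[\I]^\omega$ all of whose members contain $S_x$ (e.g.\ $\{S_x\cup\{n\}:n\notin S_x\}$, using that $S_x$ is coinfinite) shows $\mathcal A$ is not a witness, so $\cofostar{\I}=\cc$ (in fact also $\cofstar{\I}=\cc$). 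The families come from inspecting each ideal: for $\conv$, a continuously chosen rational sequence $S_x\to x$, whose fiber lies in the finite set of accumulation points of $A$; for $\ed$ (values in $\omega$) and $\edfin$ (values $f(n)\le n$), the graphs of the functions in a perfect eventually different family of size $\cc$, since a fiber of size $k{+}1$ would force $k{+}1$ distinct values inside the eventually $\le k$-element columns of $A$; for $\Solecki$, the generators $S_z=\{U\in\Omega:z\in U\}$ for distinct $z\in\cantor$, since if $z$ is not one of the finitely many points defining $A$ one separates it by a tiny clopen set and extends that set in infinitely many ways to a clopen set of measure $1/2$ avoiding the defining points, so $S_z\not\subseteq^*A$; and for $\I_L$, the perfect strongly unbounded set built so that $\lvert Z_x\cap P_i^{\mathrm{exp}}\rvert=i$ with pairwise block-overlaps of size $o(i)$, since then $2k$ members all $\subseteq^*B$ (with $\lvert B\cap P_i^{\mathrm{exp}}\rvert\le ki$ eventually) is impossible by an inclusion--exclusion count on a single large block.

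The hard part will be the $\RandomGraph$ case of (1). Here I would build, by a recursion along $\cantortree$ that exploits the extension property of the Rado graph $G$, a perfect family $\{H_x:x\in\cantor\}$ of infinite cliques of $G$ such that no two distinct $H_x,H_{x'}$ span an infinite complete bipartite subgraph (equivalently, keeping the branches mutually ``clique-generic''). Granting this, the fiber bound follows: a homogeneous cover of $B\in\RandomGraph$ has only finitely many clique pieces $C_1,\dots,C_k$, an infinite clique meets an independent piece in at most one point, so $H_x\subseteq^*B$ forces $\lvert H_x\cap C_j\rvert=\omega$ for some $j$, and two such $H_x,H_{x'}$ would give a complete bipartite subgraph spanning them inside $C_j$, a contradiction. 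The delicate point is making the recursion actually deliver the ``no spanning complete bipartite'' property while still producing infinite cliques at every branch.

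For (2) it remains to see $\cofostar{\finfin}\ge\dd$. Given $\mathcal A\subseteq[\finfin]^\omega$ with $\lvert\mathcal A\rvert<\dd$, the functions bounding the cofinitely many finite columns of the members of $\bigcup\mathcal A$ fail to form a dominating family, so fix $g\in\oo$ not dominated by any of them; then $B^*=\{\langle n,m\rangle:m\le g(n)\}\in\finfin$ lies $\subseteq^*$-below no member of $\bigcup\mathcal A$, and the $\overline B$-trick from (1) again shows $\mathcal A$ is not a witness. This is precisely dual to $\addostar{\finfin}=\bb$ (\cref{prop:addstar_omega_in_CM25}). For (3) the plan is to mirror \cref{prop:nwd_addm} under the correspondence between $\nwd$ and the meager ideal $\Meager$ on $\cantor$: closed nowhere dense subsets of $\cantor$ are exactly the sets $[T]$ for nowhere dense trees $T\subseteq\cantortree$, and such trees are members of $\nwd$. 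From a cofinal family of $\Meager$ of size $\cofm$---decomposing each member into an increasing union of closed nowhere dense sets and using compactness of $\cantor$---one reads off a witness for $\cofostar{\nwd}$; conversely a witness for $\cofostar{\nwd}$ gives a cofinal family of $\Meager$ by taking closures of its (countable) unions and adjoining the countable set of ``rational'' points. The $\omega$-version is indispensable here exactly because a meager set is a \emph{countable} union of nowhere dense sets and so corresponds to a countable subfamily of $\nwd$, not to a single member; apart from this the bookkeeping is the dual of \cref{prop:nwd_addm}, with dense open sets replaced by their complements, and the only real care needed is in pinning down the two translations.
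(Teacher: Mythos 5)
Parts (1) and (2) of your proposal are correct, but note that your route differs from the paper's: the paper only proves the nontrivial \emph{upper} bounds ($\cof^*_\omega(\finfin)\le\dd$ with essentially your sets $A_{n,f}$, and $\cof^*_\omega(\nwd)\le\cofm$), and gets all the lower bounds for free from the known values $\cof^*(\I)=\cc$ (Hru\v{s}\'ak's survey for $\mathcal{R},\mathcal{S},\conv,\ed$, and \cite[Theorem 2.6]{HRRZ14} for $\edfin,\I_L$), $\cof^*(\finfin)=\dd$, and Fremlin's $\cof^*(\nwd)=\cofm$, together with the trivial inequality $\cof^*(\I)\le\cof^*_\omega(\I)$. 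Your ``finite-fiber'' families reprove these lower bounds directly and make the statement self-contained; the individual constructions you sketch (convergent sequences, eventually different functions for $\ed$ and $\edfin$, the generators $I_z$ for $\Solecki$, block-disjoint sets for $\I_L$) all work. Even the $\RandomGraph$ case you flag as delicate is unproblematic: build the cliques along $2^{<\omega}$, and at each level use the extension property to pick, for each node, a new vertex adjacent to that node's finite clique and non-adjacent to the cliques sitting at all other nodes of the current level (and to the other new vertices). Then any two post-divergence vertices on distinct branches are non-adjacent, so no infinite complete bipartite subgraph can span two branches, and your fiber count goes through.

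The genuine gap is in the upper bound of (3). You claim that from a cofinal family of $\Meager$ of size $\cofm$, ``decomposing each member into an increasing union of closed nowhere dense sets and using compactness of $\cantor$,'' one reads off a witness for $\cof^*_\omega(\nwd)$. Compactness does nothing here: the pieces $F_{\alpha,n}$ of $A_\alpha=\bigcup_nF_{\alpha,n}$ are closed, not open, so $\overline{B}\subseteq A_\alpha$ for a compact set $\overline{B}$ does not put $\overline{B}$ inside any single piece; concretely, if $B\in\nwd$ is a sequence of rationals converging to an irrational point of $A_\alpha$, the decomposition may place only finitely many points of $B$ in each $F_{\alpha,n}$, so $B$ is not $\subseteq^*$ any member of the countable family you would read off (and $F_{\alpha,n}\cap\mathbb{Q}$ may even be empty). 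This is exactly why the paper's proof is the honest dual of \cref{prop:nwd_addm}: besides the cofinal family $\{A_\alpha:\alpha<\cofm\}$ (with $A_\alpha=\bigcup_n[T_{\alpha,n}]$) one also needs a dominating family $\{f_\beta:\beta<\dd\}$ of functions from $2^{<\omega}$ to $\omega$ to uniformize, over all $t\in2^{<\omega}$ and all members of the given countable subfamily of $\nwd$, the level beyond which the chosen branch $x^\alpha_t$ is separated from them; the witness is then indexed by pairs $(\alpha,\beta)$ and consists of the complements of the open dense sets $D_{\alpha,\beta,k}$, yielding $\cof^*_\omega(\nwd)\le\max\{\cofm,\dd\}=\cofm$. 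Your sketch never introduces this dominating-family ingredient (the dual of the use of $\kappa<\bb$ in \cref{prop:nwd_addm}), and without it the ``translation'' does not go through. By contrast, your converse translation (from a witness for $\cof^*_\omega(\nwd)$ to a cofinal family in $\Meager$ via closures) is correct and in fact gives the lower bound $\cofm\le\cof^*_\omega(\nwd)$ without quoting Fremlin's theorem.
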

    \begin{proof}
        Recall that $\cof^*(\I)\leq\cof^*_\omega(\I)$ always holds.
        
        (1) For those ideals $\I$, it is known that $\cof^*(\I) = \cc$; For $\I=\mathcal{R}, \mathcal{S}, \conv, \mathcal{ED}$, see \cite[Section 3]{Hru11}. For $\I = \edfin, \I_L$, this follows from \cite[Theorem 2.6]{HRRZ14}, since $ \edfin, \I_L$ are fragmented but not gradually fragmented (cf.\ \cite{BM14}).
            
        (2) It is known that $\cof^*(\finfin) = \dd$ (cf.\ \cite{Hru11}), so it suffices to show that $\cof^*_\omega(\finfin)\leq\dd$. For any $n<\omega$ and $f\in\omega^\omega$, let
        \[
        A_{n, f} = (n\times\omega)\cup\{\langle i, j\rangle\in\omega\times\omega: j\leq f(i)\}.
        \]
        Now let $\mathcal{F}\subset\omega^\omega$ be a dominating family of size $\dd$. Then it is easy to see that $\mathcal{A} := \{\{A_{n, f}: n<\omega\}: f\in\mathcal{F}\}$ witnesses $\cof^*_\omega(\finfin)\leq\dd$.
            
        (3) It is known that $\cof^*(\nwd) = \cof(\Meager)$ (due to Fremlin \cite{Fr91}, but also see \cite[Theorem 1.6(i)]{BHH04}), so it suffices to show that $\cof^*_\omega(\nwd)\leq\cof(\Meager)$. Note that the following argument is a dual of \Cref{prop:nwd_addm}. Let $\{A_\alpha:\a<\cof(\m)\}\subset\m$ be cofinal in $\m$. We may assume $A_\alpha=\bigcup_{n<\omega}[T_{\alpha,n}]$, where $T_{\alpha,n}\subset 2^{<\omega}$ is a nowhere dense tree. 
        For each $t\in 2^{<\omega}$, take $x^\alpha_t \in 2^\omega$ such that $t\subset x^\alpha_t\notin\bigcup_{n<\omega}[T_{\alpha,n}]$. Let $\{f_\beta:\beta<\dd\}$ be a dominating family of functions from $2^{<\omega}$ to $\omega$. Fix some bijection $e\colon\omega\to 2^{<\omega}$ and for each $k<\omega$, let
        \[
        D_{\alpha, \beta, k} = \{s\in 2^{<\omega}: \exists n\geq k\,(s\supseteq x^\alpha_{e(n)}\upharpoonright f_\beta(e(n))\}.
        \]
        Clearly $D_{\alpha, \beta, k}$ is open dense. It is routine to see that $\{\{2^{<\omega}\setminus D_{\alpha,\beta,k}:k<\omega\}:\alpha<\cofm\land\beta<\dd\}$ witnesses $\cof^*_\omega(\nwd)\leq\max\{\cof(\m),\dd\}=\cofm$.
    \end{proof}

    We show that $\cof^*_\omega(\I) <\dd$ is consistent for some ideal $\I$ by proving the dual of \cref{prop:addstaromega_IP} and \cref{thm:addstaromega_grad_frag}. In \cite{Kad00}, Kada introduced the following cardinal invariant that is closely related to the Laver property.

    \begin{dfn}
        \[
        \mathfrak{l} = \min\{\kappa:\forall g\in\omega^\omega\,\exists\mathcal{S}\subset\prod_{i<\omega}[g(i)]^{\leq i}\,(\lvert\mathcal{S}\rvert = \kappa\land\forall f\leq^* g\,\exists S\in\mathcal{S}\,(f\in^* S))\}.
        \]
    \end{dfn}
    
    Note that $\mathfrak{l}$ is a dual notion to the combinatorial characterization of $\add_t(\mathcal{N})$ by Pawlikowski. In \cite{Kad00}, it was mentioned that $\cof(\mathcal{N}) = \min\{\dd, \mathfrak{l}\}$ and that in the Laver model, $\omega_1 = \mathfrak{l} < \bb = \dd = \omega_2$ holds.
    
    \begin{prop}\label{prop:cofstaromega_IP}
		$\cof^*_\omega(\I_P) \leq \mathfrak{l}$.
	\end{prop}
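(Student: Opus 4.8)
The plan is to prove the dual of \Cref{prop:addstaromega_IP}. In that argument one encodes a set in $\I_P$ of growth rate $n$ as an element of $\prod_{i<\omega}[P^\mathrm{exp}_i]^{\le i^n}$ and uses Pawlikowski's characterisation of $\add_t(\Null)$ to catch all such (encoded) sets by a single slalom; here we encode a whole \emph{countable} subfamily of $\I_P$ as a single function and use the definition of $\mathfrak l$ to catch it by one of $\mathfrak l$-many slaloms. Fix once and for all a polynomial bijection $\langle\cdot,\cdot,\cdot\rangle\colon\omega^3\to\omega$ (e.g.\ an iterate of the Cantor pairing function) and, for all $n,m,i<\omega$, a bijection $e_{n,m,i}\colon[P^\mathrm{exp}_i]^{\le i^n}\to g(\langle n,m,i\rangle)$, where $g(\langle n,m,i\rangle):=\lvert[P^\mathrm{exp}_i]^{\le i^n}\rvert$; this defines $g\in\omega^\omega$.

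The first step is a normalisation of the family $\overline B\in[\I_P]^\omega$ that we must $\subseteq^*$-dominate. For each $n<\omega$ enumerate the members of $\overline B$ whose $\I_P$-growth rate equals $n$ as $B^n_0,B^n_1,\dots$ (padding with $\emptyset$ when there are only finitely many), and then replace each $B^n_m$ by $B^n_m\setminus\bigcup_{i<i_{n,m}}P^\mathrm{exp}_i$ for a large enough threshold $i_{n,m}$, so that afterwards $B^n_m\cap P^\mathrm{exp}_i\in[P^\mathrm{exp}_i]^{\le i^n}$ for \emph{every} $i<\omega$. Since removing finitely many intervals from a set can only make it easier to $\subseteq^*$-cover, any countable family $\subseteq^*$-covering all the modified $B^n_m$ also $\subseteq^*$-covers the original $\overline B$; so it suffices to deal with families $\overline B$ already of this form.

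For such $\overline B$ put $f_{\overline B}(\langle n,m,i\rangle):=e_{n,m,i}(B^n_m\cap P^\mathrm{exp}_i)$, so $f_{\overline B}\in\prod_{j<\omega}g(j)$ and in particular $f_{\overline B}\le^* g$. Apply the definition of $\mathfrak l$ to $g$ to obtain $\mathcal S\subseteq\prod_{j<\omega}[g(j)]^{\le j}$ with $\lvert\mathcal S\rvert=\mathfrak l$ such that every $f\le^* g$ is $\in^* S$ for some $S\in\mathcal S$. For $S\in\mathcal S$ and $n,m<\omega$ set $A^S_{n,m}:=\bigcup_{i<\omega}\bigcup\{e_{n,m,i}^{-1}(k):k\in S(\langle n,m,i\rangle)\}$, $\overline A_S:=\{A^S_{n,m}:n,m<\omega\}$, and let $\mathcal A:=\{\overline A_S:S\in\mathcal S\}$, so $\lvert\mathcal A\rvert\le\mathfrak l$.

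Two routine verifications finish the proof. That $\overline A_S\in[\I_P]^\omega$: each $e_{n,m,i}^{-1}(k)\subseteq P^\mathrm{exp}_i$ has size $\le i^n$ and $\lvert S(\langle n,m,i\rangle)\rvert\le\langle n,m,i\rangle$, so $\lvert A^S_{n,m}\cap P^\mathrm{exp}_i\rvert\le\langle n,m,i\rangle\cdot i^n\le i^{n+3}$ for all large $i$ by the polynomial bound on the pairing, hence $A^S_{n,m}\in\I_P$. That $\mathcal A$ works: given $\overline B$ in normal form, pick $S\in\mathcal S$ with $f_{\overline B}\in^* S$; only finitely many triples are exceptional, so for each fixed $(n,m)$ we have $f_{\overline B}(\langle n,m,i\rangle)\in S(\langle n,m,i\rangle)$, hence $B^n_m\cap P^\mathrm{exp}_i=e_{n,m,i}^{-1}(f_{\overline B}(\langle n,m,i\rangle))\subseteq A^S_{n,m}$, for all but finitely many $i$; since the $P^\mathrm{exp}_i$ partition $\omega$ this yields $B^n_m\subseteq^* A^S_{n,m}$, so $\overline A_S$ dominates $\overline B$. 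The one step that needs care is this normalisation/packing of the \emph{countable} family into a single $f_{\overline B}$, and hence of the dominating family into $\mathfrak l$-many (rather than $\mathfrak l^{\aleph_0}$-many) sets; the growth estimate for $A^S_{n,m}$ and the ``finitely many exceptional triples'' counting are immediate.
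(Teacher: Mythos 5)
Your proof is correct and is essentially the paper's argument (the dual of \Cref{prop:addstaromega_IP}): both invoke the definition of $\mathfrak{l}$ once, for a function $g$ fixed in advance, to catch a coding of the traces $B\cap P_i^{\mathrm{exp}}$ of an arbitrary countable subfamily of $\I_P$ by a single slalom of width $\leq j$ at coordinate $j$, and then decode the slalom into countably many sets that stay in $\I_P$ because the width only inflates the block-wise bound polynomially. The only difference is bookkeeping: the paper packs the first $i+1$ sets' traces on $P_i^{\mathrm{exp}}$ into one tuple per block (giving exponent $n+1$), while you give each pair $(B^n_m,i)$ its own coordinate via a polynomial pairing (giving exponent $n+3$), which changes nothing of substance.
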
	
	\begin{proof}
        For each $i<\omega$, let
        \[
        Q_i = \prod_{n\leq i} [P_i^\mathrm{exp}]^{\leq i^n}.
        \]
        Let $\{S_\alpha:\alpha<\mathfrak{l}\}\subset\prod_{i<\omega}[Q_i]^{\leq i}$ be such that every $f\in\prod_{i<\omega}Q_i$ is captured by some $S_\alpha$ in the sense that for all but finitely many $i<\omega$, $f(i)\in S_\alpha(i)$ holds. For each $\alpha<\mathfrak{l}$ and each $n<\omega$, we define
        \[
        A_\alpha^n = \bigcup_{i\geq n} \bigcup\pi_n[S_\alpha(i)],
        \]
        where $\pi_n$ is the $n$-th projection map, i.e.\ $\pi_n(f) = f(n)$ for every function $f$ with $n\in\dom(f)$. For each $n<\omega$, $A_\alpha^n \in \I_P$ because for all $i\geq n$,
        \[
        \lvert A_\alpha^n\cap P^{\mathrm{exp}}_i\rvert = \lvert\pi_n[S_\alpha(i)]\rvert \leq i\cdot i^n = i^{n+1}
        \]
        holds. Let $A_\alpha = \{A^n_\alpha:n<\omega\}$ for each $\alpha<\mathfrak{l}$ and we claim that $\{A_\alpha : \alpha<\mathfrak{l}\}$ witnesses $\cof^*_\omega(\I_P)$; Let $\{B_n:n \in \omega\}\subset\I_P$. By adding more elements to the sequence if necessary, we may assume that $\varphi(B_n) \leq n$. Let $f$ be a function on $\omega$ defined by
        \[
        f(i) = \langle B_0 \cap P_i^\mathrm{exp}, B_1 \cap P_i^\mathrm{exp}, \ldots, B_i \cap P_i^\mathrm{exp} \rangle.
        \]
        Since $f(i)\in Q_i$ for all but finitely many $i<\omega$, $f$ must be captured by some $S_\alpha$. This implies $B_n \subset^* A^n_\alpha$ for all $n<\omega$.
	\end{proof}

    More generally, the following holds.

    \begin{thm}\label{thm:cofstaromega_grad_frag}
        $\cof^*_\omega(\I)\leq\mathfrak{l}$ for every gradually fragmented ideal $\I$.
    \end{thm}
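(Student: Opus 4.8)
The plan is to carry out the argument of \Cref{prop:cofstaromega_IP} in the abstract setting, replacing the explicit polynomial estimates on the blocks $P_i^{\mathrm{exp}}$ by the behaviour of the submeasures $\varphi_j$ supplied by a gradual fragmentation witness. Fix a partition $\langle P_n\rangle_{n<\omega}$ of $\omega$ into nonempty finite sets and submeasures $\langle\varphi_n\rangle_{n<\omega}$ witnessing that $\I$ is fragmented, and set $\varphi(A)=\sup_{n<\omega}\varphi_n(A\cap P_n)$, so that $\I=\Fin(\varphi)$. Fix a gradual fragmentation witness $f\in\omega^\omega$, which we may take increasing with $f(0)=0$; as in the proof of \Cref{thm:addstaromega_grad_frag}, fix for each $n<\omega$ a strictly increasing $g_n\in\omega^\omega$ such that \eqref{eqn:grad_frag} holds for all $i<\omega$, and set $P^n_i=\bigcup_{j\in[g_n(i),g_n(i+1))}P_j$. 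The point of this reblocking is that every $j$ occurring in $P^n_i$ satisfies $j\geq g_n(i)$, so \eqref{eqn:grad_frag} applies to the $P_j$-trace of any family of at most $i$ subsets of $P^n_i$, each of $\varphi$-measure at most $n$.

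Next, I would construct the witnessing family. For each $i<\omega$ let
\[
Q_i=\prod_{n\leq i}\{B\subseteq P^n_i:\varphi(B)\leq n\},
\]
a finite nonempty set (the all-empty tuple belongs to it). Applying the definition of $\mathfrak{l}$ to $g(i)=\lvert Q_i\rvert$ (after identifying each $Q_i$ with $g(i)$) yields a family $\{S_\alpha:\alpha<\mathfrak{l}\}\subseteq\prod_{i<\omega}[Q_i]^{\leq i}$ such that every $h\in\prod_{i<\omega}Q_i$ is captured by some $S_\alpha$, i.e.\ $h(i)\in S_\alpha(i)$ for all but finitely many $i$. For $\alpha<\mathfrak{l}$ and $n<\omega$ put
\[
A^n_\alpha=\bigcup_{i\geq n}\bigcup\pi_n[S_\alpha(i)],
\]
where $\pi_n$ is the $n$-th coordinate projection (defined on $Q_i$ for $i\geq n$), and let $A_\alpha=\{A^n_\alpha:n<\omega\}$. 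To see $A^n_\alpha\in\I$, note first that $A^n_\alpha\subseteq\bigcup_{j\geq g_n(n)}P_j$, so $A^n_\alpha\cap P_j=\emptyset$ for $j<g_n(n)$; and for $j\in[g_n(i),g_n(i+1))$ with $i\geq n$, the set $A^n_\alpha\cap P_j$ is a union of at most $i$ subsets of $P_j$ of $\varphi_j$-measure $\leq n$, so $\varphi_j(A^n_\alpha\cap P_j)\leq f(n)$ by \eqref{eqn:grad_frag}. Hence $\varphi(A^n_\alpha)\leq f(n)<\infty$, i.e.\ $A^n_\alpha\in\Fin(\varphi)=\I$.

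Finally, I would verify that $\{A_\alpha:\alpha<\mathfrak{l}\}$ witnesses $\cof^*_\omega(\I)\leq\mathfrak{l}$. Given $\{B_n:n<\omega\}\subseteq\I$, enlarge the family so that $\varphi(B_n)\leq n$ for all $n$, and define $h\in\prod_{i<\omega}Q_i$ by $h(i)=\langle B_0\cap P^0_i,\dots,B_i\cap P^i_i\rangle$, each coordinate of which has $\varphi$-measure $\leq n$ by monotonicity of $\varphi$. Pick $\alpha$ capturing $h$. Then for each fixed $n$ and all sufficiently large $i\geq n$ we have $B_n\cap P^n_i=\pi_n(h(i))\in\pi_n[S_\alpha(i)]$, hence $B_n\cap P^n_i\subseteq A^n_\alpha$. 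Since each $P^n_i$ is finite and $\bigcup_{i\geq n}P^n_i$ is cofinite in $\omega$, this gives $B_n\subseteq^* A^n_\alpha\in A_\alpha$, as required.

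The step I expect to require the most care is the reblocking bookkeeping in the third paragraph: one must ensure that the at most $i$ sets whose union is being estimated all lie inside a single original block $P_j$ with $j\geq g_n(i)$ — which is precisely why $P^n_i$ is defined as $\bigcup_{j\in[g_n(i),g_n(i+1))}P_j$ — and keep track of the finitely many blocks, together with the finitely many indices $i$ below the capture threshold, that are only controlled up to $=^*$. Everything else is a direct transcription of the proof of \Cref{prop:cofstaromega_IP}.
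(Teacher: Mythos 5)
Your proposal is correct and follows essentially the same route as the paper: the same reblocked intervals $P^n_i$, the same sets $Q_i$, the same slaloms from $\mathfrak{l}$, and the same sets $A^n_\alpha$, with the final verification being exactly the transcription of the argument of \Cref{prop:cofstaromega_IP} that the paper leaves implicit. The details you spell out (why $\varphi(A^n_\alpha)\leq f(n)$ via \eqref{eqn:grad_frag}, and the bookkeeping for the finitely many uncaptured indices) are the intended ones.
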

    \begin{proof}
        As in \cref{thm:addstaromega_grad_frag}, let $\langle P_i, \varphi_i\rangle_{i<\omega}$ witness that $\I$ is fragmented and  let $\varphi$ be a lower semi-continuous submeasure on $\omega$ defined by $\varphi(A) = \sup\{\varphi_i(A\cap P_i):i<\omega\}$. Also, let $f\in\omega^\omega$ be an increasing function witnessing that $\I$ is gradually fragmented and for each $n<\omega$, let $g_n\in\omega^\omega$ be such that for all $i<\omega$, (\ref{eqn:grad_frag}) holds. For each $i, n<\omega$, let $P^n_i = \bigcup_{j\in[g_n(i), g_n(i+1))]}P_j$. For each $i<\omega$, let
        \[
        Q_i = \prod_{n\leq i}\{A\subset P^n_i:\varphi(A)\leq n\}.
        \]
        Let $\{S_\alpha:\alpha<\mathfrak{l}\}\subset\prod_{i<\omega}[Q_i]^{\leq i}$ be such that every element of $\prod_{i<\omega}Q_i$ is captured by some $S_\alpha$. For each $\alpha<\mathfrak{l}$ and each $n<\omega$, we define
        \[
        A_\alpha^n = \bigcup_{i\geq n} \bigcup\pi_n[S_\alpha(i)],
        \]
        where $\pi_n$ is the $n$-th projection map. Using (\ref{eqn:grad_frag}), we have $A_\alpha^n \in \mathrm{Fin}(\varphi) = \I$. Let $A_\alpha = \{A^n_\alpha:n<\omega\}$ for each $\alpha<\mathfrak{l}$ and one can show that $\{A_\alpha : \alpha<\mathfrak{l}\}$ witnesses $\cof^*_\omega(\I)$ by the same argument for \cref{prop:cofstaromega_IP}.
    \end{proof}

    This result can be seen as a reformulation of the following ``$\omega$-version'' of \cite[Theorem 2.2 \& Proposition 2.3]{HRRZ14}:

    \begin{thm}\label{thm:grad_frag_in_Laver}
        Let $\I$ be a gradually fragmented ideal on $\omega$ and let $\mathbb{P}$ be a forcing with the Laver property. Then $[\I]^\omega\cap V$ witnesses $\cof^*_\omega(\I)$. Therefore, in the Laver model, for all gradually fragmented ideals $\I$ on $\omega$, $\cof^*_\omega(\I) < \bb = \dd$ holds.
    \end{thm}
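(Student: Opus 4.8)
The plan is to run the proof of \Cref{thm:cofstaromega_grad_frag} inside the extension, using the Laver property to supply the thin slaloms that in the ground model would have come from an $\mathfrak{l}$-sized family. Fix the data witnessing that $\I$ is gradually fragmented, exactly as in that proof: the partition $\langle P_i\rangle_{i<\omega}$, the submeasures $\varphi_i$, the lower semi-continuous submeasure $\varphi(A)=\sup_i\varphi_i(A\cap P_i)$ with $\I=\Fin(\varphi)$, the increasing function $f$, the (w.l.o.g.\ increasing, unbounded) functions $g_n$ satisfying \eqref{eqn:grad_frag}, and the finite sets $P^n_i=\bigcup_{j\in[g_n(i),g_n(i+1))}P_j$ and $Q_i=\prod_{n\le i}\{A\subseteq P^n_i:\varphi(A)\le n\}$; all of this is coded in $V$. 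Because $\I=\Fin(\varphi)$ for a lower semi-continuous $\varphi\in V$, a set $A\in V$ lies in $\I^{V[G]}$ iff $\varphi(A)<\infty$ iff $A\in\I^V$; hence $\I^V\subseteq\I^{V[G]}$ and $[\I]^\omega\cap V$ is genuinely a subfamily of $\bigl([\I]^\omega\bigr)^{V[G]}$ of size $(2^{\aleph_0})^V$.

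Now force with $\mathbb P$ and argue in $V[G]$. Let $\overline{B}=\langle B_n:n<\omega\rangle\in[\I]^\omega$. Interleaving the sequence (each $B_n$ has finite $\varphi$-value, and this only shrinks what we must dominate) we may assume $\varphi(B_n)\le n$, so it suffices to find $\overline{A}=\langle A^n:n<\omega\rangle\in[\I]^\omega\cap V$ with $B_n\subseteq^* A^n$ for all $n$. Define $F$ on $\omega$ by $F(i)=\langle B_0\cap P^0_i,\dots,B_i\cap P^i_i\rangle$; since $\varphi(B_n\cap P^n_i)\le\varphi(B_n)\le n$ we have $F(i)\in Q_i$. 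Via bijections $Q_i\leftrightarrow\lvert Q_i\rvert$ fixed in $V$, $F$ becomes an element of $\prod_i\lvert Q_i\rvert$, hence is dominated by the ground-model function $i\mapsto\lvert Q_i\rvert$. The Laver property — in its standard equivalent form allowing slalom width $i$, which is exactly the form governing $\mathfrak{l}$ — then yields $S\in V$ with $S(i)\in[Q_i]^{\le i}$ and $F(i)\in S(i)$ for all but finitely many $i$.

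Now repeat the construction of \Cref{thm:cofstaromega_grad_frag} with this $S$: working in $V$, put $A^n=\bigcup_{i\ge n}\bigcup\pi_n[S(i)]$, where $\pi_n$ is the $n$-th coordinate projection. The verification that $A^n\in\I^V$ is verbatim the one there: for $i\ge n$, $\bigcup\pi_n[S(i)]$ is a union of at most $i$ subsets of $P^n_i$, each of $\varphi$-value $\le n$, so by \eqref{eqn:grad_frag} its $\varphi_j$-value is $\le f(n)$ on every $P_j\subseteq P^n_i$, and hence $\varphi(A^n)\le f(n)<\infty$. Thus $\overline{A}=\langle A^n:n<\omega\rangle\in[\I]^\omega\cap V$. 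Finally, $B_n\cap P^n_i=\pi_n(F(i))\in\pi_n[S(i)]$ for all but finitely many $i\ge n$, so $B_n\cap P^n_i\subseteq A^n$ there; as $\bigcup_{i<\omega}P^n_i$ is cofinite in $\omega$, $B_n\setminus A^n$ is finite, i.e.\ $B_n\subseteq^* A^n$. This shows $[\I]^\omega\cap V$ witnesses $\cof^*_\omega(\I)$ in $V[G]$.

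For the last assertion, let $V\models\mathsf{CH}$ and let $\mathbb P$ be the length-$\omega_2$ countable support iteration of Laver forcing. Laver forcing has the Laver property, which is preserved by countable support iterations of proper forcings, so $\mathbb P$ has the Laver property; by the above, in $V[G]$ we get $\cof^*_\omega(\I)\le\lvert[\I]^\omega\cap V\rvert=\aleph_1$ for every gradually fragmented $\I$, while $\bb=\dd=\aleph_2$ since $\mathbb P$ adds dominating reals. Hence $\cof^*_\omega(\I)<\bb=\dd$ in the Laver model. The only points requiring care are the bookkeeping ones — that $[\I]^\omega\cap V$ really is an admissible family in $V[G]$ (handled by $\I^V\subseteq\I^{V[G]}$ and by $S\in V$ giving $\overline{A}\in V$) and that the Laver property can be taken with slalom width $i$ rather than the textbook $2^i$ (a harmless regrouping of coordinates); the combinatorial core is identical to \Cref{thm:cofstaromega_grad_frag}, so there is no genuinely new difficulty.
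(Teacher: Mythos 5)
Your argument is correct and essentially reproduces the paper's proof: the same function $i\mapsto\langle B_n\cap P^n_i\rangle_{n\le i}\in Q_i$, the Laver property producing a ground-model slalom $S$ of width $i$, the projections $A^n=\bigcup_{i\ge n}\bigcup\pi_n[S(i)]$ kept in $\I$ by gradual fragmentation, and $B_n\subseteq^* A^n$; the paper merely phrases this with names $\dot{X}_n,\dot{Y}_n,\dot{h}$ and the forcing relation (using a case split when $\varphi(\dot{X}_n)>i$ instead of your padding of the sequence) rather than semantically in $V[G]$. The side points you flag (slalom width $i$ versus $2^i$, and the CH plus countable-support Laver iteration computation giving $\cof^*_\omega(\I)\le\aleph_1<\bb=\dd$) are treated as standard and left implicit in the paper, so there is no substantive difference.
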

    \begin{proof}
        We continue to use the notation introduced in the proof of \cref{thm:cofstaromega_grad_frag}. Let $\{\dot{X}_n: n<\omega\}$ be $\mathbb{P}$-names of a subset of $\omega$ and assume that $\forces_{\mathbb{P}}\varphi(\dot{X}_n)<\infty$. It suffices to find $\{A_n\colon n<\omega\}\in [\I]^\omega\cap V$ such that for all $n<\omega$, $\forces_{\mathbb{P}}\dot{X}_n\subset^* A_n$ holds.
        For each $n < \omega$, find a name $\dot{Y}_n$ of a function on $\omega$ such that it is forced that
        \[
        \dot{Y}_n(i) =
        \begin{cases}
            \dot{X}_n \cap P^n_i & \text{if }\varphi(\dot{X}_n) \leq i,\\
            \emptyset & \text{if }\varphi(\dot{X}_n) > i.
        \end{cases}
        \]
        For each $n<\omega$, it is forced that $\forall i\in\omega\,(\varphi(\dot{Y}_n(i)) \leq i)$ and that $\dot{X}_n\subset^*\bigcup_{i<\omega}\dot{Y}_n(i)$. Let $\dot{h}$ be a name of a function on $\omega$ such that it is forced that
        \[
        \dot{h}(i) = \langle\dot{Y}_0(i), \dot{Y}_1(i), \ldots, \dot{Y}_i(i)\rangle.
        \]
        Since $\forces_{\mathbb{P}} \dot{h}\in\prod_{i<\omega}Q_i$, the Laver property implies that there is a function $S \in V$ such that, for all $i<\omega$, $S(i)\in [Q_i]^{\leq i}$ and $\forces_{\mathbb{P}}\forall i<\omega\,(\dot{h}(i)\in S(i))$. For each $n<\omega$, we define
        \[
        A_n = \bigcup_{i\geq n}\bigcup\pi_n[S(i)]\in V.
        \]
        Using (\ref{eqn:grad_frag}), $A_n\in \mathrm{Fin}(\varphi) = \I$. Also, it is forced that $\dot{X}_n\subset^* \bigcup_{i<\omega} \dot{Y}_n(i) \subset^* A_n$. This completes the proof.
    \end{proof}

    The following question still remains open.

    \begin{que}\label{que_cof_cof_omega}
        Is it consistent that $\cof^*(\I) < \cof^*_\omega(\I)$ for some tall Borel ideal $\I$ on $\omega$?
    \end{que}
    
    \begin{rem}\label{rem:aleph_omega_Cohen}
        There is a non-Borel example for this question:
        First, it is not hard to see that $\cof^*_\omega(\J)$ has uncountable cofinality for any ideal $\J$ in ZFC.
        Add $\aleph_\omega$ many Cohen reals $c_\a \in \ooo$ and let $\I$ be the ideal generated by $\{c_\a:\a<\aleph_\omega\}$. Then, $\cof^*(\I)=\aleph_\omega$ can be easily seen. Hence $\cof^*(\I) < \cof^*_\omega(\I)$. 
    \end{rem}
    
    We note that in \cite{BS99}, the same question was raised for an ultrafilter as Question 1 in Chapter 8.

    \subsection{Computation of $\non^*_\omega(\I)$}
    
    We compute $\non^*_\omega(\I)$ for various Borel non-P ideals $\I$. The first and third authors show the following in \cite{CM25}.
    
    \begin{prop}[{Cie\'slak--Mart{\' i}nez-Celis \cite[Proposition 4.7]{CM25}}]\label{prop:nonMI_random_and_conv}
        \[
        \non^*_\omega(\RandomGraph)=\non^*_\omega(\conv) = \omega_1.
        \]
    \end{prop}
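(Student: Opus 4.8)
The plan is to prove the upper bound $\non^*_\omega(\conv)\le\omega_1$ by an explicit construction, and then obtain $\non^*_\omega(\RandomGraph)\le\omega_1$ for free from the Kat\v{e}tov--Blass reduction $\RandomGraph\le_{\mathrm{KB}}\conv$ recorded in \Cref{figure:Hrusaks_heart}. Since the general lower bound $\non^*_\omega(\I)\ge\omega_1$ holds for every tall ideal (stated earlier), combining these yields both equalities at once.

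For $\conv$ I would use the elementary fact that, as $\conv$ is generated by convergent sequences, every $B\in\conv$ has only finitely many accumulation points in $[0,1]$ (a convergent sequence has exactly one, and $B$ is covered by finitely many of them together with a finite set); write $B'$ for this finite set. Fix pairwise distinct irrationals $x_\alpha\in[0,1]\setminus\mathbb{Q}$ for $\alpha<\omega_1$, and for each $\alpha$ choose a strictly monotone sequence $A_\alpha\subseteq\mathbb{Q}\cap[0,1]$ converging to $x_\alpha$, so that $A_\alpha$ is infinite. The claim is that $\{A_\alpha:\alpha<\omega_1\}$ witnesses $\non^*_\omega(\conv)\le\omega_1$. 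Indeed, given $\overline{B}=\{B_n:n<\omega\}\subseteq\conv$, the set $S=\bigcup_{n<\omega}B_n'$ is countable, so there is $\alpha<\omega_1$ with $x_\alpha\notin S$; for each $n$, since $x_\alpha\notin B_n'$ there is $\varepsilon>0$ with $(x_\alpha-\varepsilon,x_\alpha+\varepsilon)\cap B_n$ finite, and as $A_\alpha\to x_\alpha$ all but finitely many members of $A_\alpha$ lie in this interval, whence $A_\alpha\cap B_n$ is finite. So $A_\alpha$ works for $\overline{B}$. Together with $\non^*_\omega(\conv)\ge\omega_1$ this gives $\non^*_\omega(\conv)=\omega_1$. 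For $\RandomGraph$, \Cref{figure:Hrusaks_heart} gives $\RandomGraph\le_{\mathrm{KB}}\conv$, so \Cref{lem:reductions_and_invstaromega}(2) yields $\non^*_\omega(\RandomGraph)\le\non^*_\omega(\conv)=\omega_1$, and the reverse inequality is again the general lower bound.

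I expect essentially no obstacle beyond bookkeeping: the content is just the observation that convergence to an irrational automatically produces a witness as soon as that irrational avoids the countably many accumulation points contributed by $\overline{B}$, and that an uncountable set of irrationals always contains such a point. It is worth recording why a family of size $\omega_1$ is genuinely needed and why it is convenient to route $\RandomGraph$ through $\conv$: by Ramsey's theorem every infinite $A\subseteq\omega$ contains an infinite homogeneous subset of the random graph, so no single $A$ can meet every homogeneous set finitely; one could mimic the $\conv$ argument directly for $\RandomGraph$ by building, along a ladder-type system on $\omega_1$, infinite sets meeting prescribed countable families of homogeneous sets finitely, but this is more delicate, and the Kat\v{e}tov--Blass reduction sidesteps it entirely.
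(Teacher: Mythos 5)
Your proof is correct. Note that the paper itself gives no argument for this proposition --- it is imported verbatim from \cite[Proposition 4.7]{CM25} --- so there is nothing internal to compare against; judged on its own, your route works. The key construction for $\conv$ is sound: every $B\in\conv$ has only finitely many accumulation points in $[0,1]$, so among $\omega_1$ many distinct irrationals some $x_\alpha$ avoids the countable set of accumulation points contributed by $\overline{B}=\{B_n:n<\omega\}$, and then the rational sequence $A_\alpha\to x_\alpha$ meets each $B_n$ finitely (split $A_\alpha\cap B_n$ into the finitely many terms outside a suitable interval around $x_\alpha$ and the finitely many points of $B_n$ inside it). The transfer to $\RandomGraph$ via the arrow $\RandomGraph\leq_{\mathrm{KB}}\conv$ in \Cref{figure:Hrusaks_heart} together with \Cref{lem:reductions_and_invstaromega}(2) is used in the right direction (a KB-smaller ideal has smaller or equal $\non^*_\omega$), and the lower bound $\omega_1\leq\non^*_\omega(\I)$ is exactly the general fact for tall ideals stated earlier in the paper; you might make explicit that both $\RandomGraph$ and $\conv$ are tall (Ramsey's theorem, respectively Bolzano--Weierstrass), which you only gesture at. The only cosmetic quibble is the phrase ``a convergent sequence has exactly one'' accumulation point --- ``at most one'' is what you need and what is true for possibly finite generators --- but this does not affect the argument.
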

    
    We start with studying Fubini products of ideals.
    
    \begin{prop}\label{prop:finfin_d}
        $\non^*_\omega(\finfin) = \mathfrak{d}$.
    \end{prop}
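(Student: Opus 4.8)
The plan is to prove the two inequalities separately, the upper bound being routine and the lower bound containing the one step that needs care. For $\non^*_\omega(\finfin)\le\dd$ I would fix a dominating family $\mathcal F\subseteq\omega^\omega$ with $|\mathcal F|=\dd$ and set $A_f=\{\langle n,f(n)\rangle:n<\omega\}\in[\omega\times\omega]^\omega$ for $f\in\mathcal F$, claiming that $\{A_f:f\in\mathcal F\}$ witnesses $\non^*_\omega(\finfin)$. Given any $\{B_k:k<\omega\}\subseteq\finfin$, each $B_k$ has $(B_k)_n$ finite for all but finitely many $n$; I would put $h_k(n)=1+\max((B_k)_n)$ (and $h_k(n)=0$ when $(B_k)_n$ is empty or infinite), form the diagonal supremum $h(n)=\max_{k\le n}h_k(n)$, and pick $f\in\mathcal F$ with $f\ge^* h$. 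Then for every $k$ and all sufficiently large $n$, $(B_k)_n$ is finite and $f(n)\ge h_k(n)>\max((B_k)_n)$, so $\langle n,f(n)\rangle\notin B_k$; hence $A_f\cap B_k$ is finite. This direction is short once the diagonal-supremum trick reduces ``countably many $B_k$'' to ``a single function to dominate''.

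For $\non^*_\omega(\finfin)\ge\dd$ I would show that any $\mathcal A\subseteq[\omega\times\omega]^\omega$ with $|\mathcal A|<\dd$ is not a witness, by exhibiting one countable $\overline B\subseteq\finfin$ catching every member of $\mathcal A$. Those $A\in\mathcal A$ having an infinite section $(A)_n$ are caught outright by the fixed family $\{\{n\}\times\omega:n<\omega\}\subseteq\finfin$. For the remaining $A$ --- all sections finite, hence the support $S_A=\{n:(A)_n\ne\emptyset\}$ infinite --- the crucial move is to attach to $A$ the single function $f_A\in\omega^\omega$ given by $f_A(n)=\min((A)_{\nu_A(n)})$, where $\nu_A(n)$ is the least element of $S_A$ that is $\ge n$. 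Since there are fewer than $\dd$ such $f_A$, they do not form a dominating family, so there is $g\in\omega^\omega$ with $f_A(n)<g(n)$ for infinitely many $n$, simultaneously for all such $A$; passing to the running maximum we may take $g$ non-decreasing. I then claim the single set $B_g=\{\langle m,v\rangle:v\le g(m)\}\in\finfin$ catches all of these $A$: whenever $f_A(n)<g(n)$, writing $m=\nu_A(n)\ge n$ we get $\min((A)_m)=f_A(n)<g(n)\le g(m)$, so $\langle m,\min((A)_m)\rangle\in A\cap B_g$; and since $\nu_A(n)\ge n\to\infty$ along the infinitely many such $n$, these are infinitely many distinct points. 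Thus $\overline B=\{B_g\}\cup\{\{n\}\times\omega:n<\omega\}$ works.

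The one place that needs care --- and the reason the value is $\dd$ rather than merely $\bb$ --- is the choice of $f_A$ in the lower bound. Encoding $A$ by the graph of a function, or by an interval partition read off from its support and values, naturally forces one to $\sigma$-bound a family of size $<\dd$, which is impossible once $\bb<\dd$ and would only yield $\non^*_\omega(\finfin)\ge\bb$. Recording instead just the pointwise minimum of each section, sampled along the support via $\nu_A$, collapses the catching requirement to plain non-domination of a single function family, so that one set $B_g$ suffices. Everything else --- that $B_g$ and $\{n\}\times\omega$ lie in $\finfin$, that the exhibited points are distinct, and the reduction to non-decreasing $g$ --- is bookkeeping.
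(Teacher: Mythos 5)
Your proof is correct and takes essentially the same route as the paper: the upper bound uses the graphs $A_f$ of functions from a dominating family (with the diagonal maximum handling countably many $B_k$), and the lower bound splits each $A$ into the case of an infinite section (caught by vertical strips in $\finfin$) and the case of finite sections with infinite support, where non-domination of the size-$<\dd$ family of associated functions yields a single set of the form $\{\langle m,v\rangle: v\le g(m)\}$ catching all of them. Your device $f_A(n)=\min((A)_{\nu_A(n)})$ together with passing to a non-decreasing $g$ makes the sampling-along-the-support point explicit, which the paper's terse choice of $h_A$ with $f\not\leq^* h_A$ leaves to the reader, but it is the same argument.
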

    \begin{proof}
        To show that $\dd\leq\non^*_{\omega}(\finfin)$, let $\mathcal{A}\subset[\omega\times\omega]^\omega$ be of size $<\dd$. We claim that $\mathcal{A}$ cannot be a witness for $\non^*_\omega(\finfin)$. By shrinking each member of $\mathcal{A}$, we may assume that every $A\in\mathcal{A}$ satisfies either
        \begin{enumerate}
            \item $A\subset n\times\omega$ for some $n<\omega$, or
            \item $A$ is an infinite subset of (a graph of) some function $h_A\colon\omega\to\omega$.
        \end{enumerate}
        Since $\lvert\mathcal{A}\rvert<\dd$, there is $f\in\omega^\omega$ such that for all $A\in\mathcal{A}$, $f\not\leq^* h_A$ (whenever $h_A$ is well-defined). Now let
        \[
        X_n = (n\times\omega)\cup\{\langle i, j\rangle:j\leq f(i)\}.
        \]
        Then $\{X_n:n<\omega\}\in[\finfin]^\omega$ witnesses that $\mathcal{A}$ does not witness $\non^*_\omega(\finfin)$, as desired.
    
        To show that $\non^*_\omega(\finfin)\leq\dd$, let $\mathcal{D}$ be a dominating family of strictly increasing functions such that $\forall g\in\omega^\omega\,\exists f\in\mathcal{D}\,(g<f)$. For each $f\in\mathcal{D}$, we let
        \[
        C_f = \{\langle n, f(n)\rangle : n<\omega\}.
        \]
        Then it is straightforward to check that $\mathcal{C} = \{C_f: f\in\mathcal{D}\}$ witnesses $\non^*_\omega(\finfin)$. We write the details for general cases below.
    \end{proof}
    
    \cref{prop:finfin_d} can be generalized as follows 
    but we note that \cref{prop:nonstar_omega_of_Fubini_prod} and \cref{cor:nonstar_vs_nonstar_omega} were already shown for maximal ideals in \cite{BS99}.
    
    \begin{prop}[cf.\ Brendle--Shelah {\cite[Proposition 5.1(d)]{BS99}}]\label{prop:nonstar_omega_of_Fubini_prod}
        For any ideals $\I, \J$ on countable sets,
        \[
        \non^*_\omega(\I\otimes\J) = \max\{\dd, \non^*_\omega(\I), \non^*_\omega(\J)\}.
        \]
    \end{prop}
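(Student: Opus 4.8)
The plan is to prove the two inequalities separately, splitting the lower bound $\max\{\dd,\non^*_\omega(\I),\non^*_\omega(\J)\}\leq\non^*_\omega(\I\otimes\J)$ into its three pieces. For $\dd\leq\non^*_\omega(\I\otimes\J)$ I would fix enumerations $X=\{x_n:n<\omega\}$ and $Y=\{y_m:m<\omega\}$ and observe that the bijection $e\colon X\times Y\to\omega\times\omega$ with $e(x_n,y_m)=\langle n,m\rangle$ witnesses $\finfin\leq_{\mathrm{KB}}\I\otimes\J$: it is finite-to-one, and for $A\in\finfin$ the set $\{x:(e^{-1}[A])_x\notin\J\}$ is contained in $\{x_n:(A)_n\text{ is infinite}\}$, which is finite and hence in $\I$, so $e^{-1}[A]\in\I\otimes\J$. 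Then \Cref{prop:finfin_d} and \Cref{lem:reductions_and_invstaromega}(2) give $\dd=\non^*_\omega(\finfin)\leq\non^*_\omega(\I\otimes\J)$.

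For $\non^*_\omega(\I)\leq\non^*_\omega(\I\otimes\J)$, let $\mathcal{A}\subseteq[X\times Y]^\omega$ be a witness for $\non^*_\omega(\I\otimes\J)$ and set $\mathcal{A}^\ast=\{\pi_0[A]:A\in\mathcal{A}\text{ and }\pi_0[A]\text{ is infinite}\}$, where $\pi_0$ is the first projection. Given $\overline{C}=\{C_k:k<\omega\}\in[\I]^\omega$, the countable family $\overline{B}=\{C_k\times Y:k<\omega\}\cup\{\{x\}\times Y:x\in X\}$ lies in $\I\otimes\J$ (this uses properness of $\I$ and $\J$); picking $A\in\mathcal{A}$ with $|A\cap B|<\omega$ for all $B\in\overline{B}$, the sets $\{x\}\times Y$ force every column $(A)_x$ to be finite, so $\pi_0[A]$ is infinite and lies in $\mathcal{A}^\ast$, while $\pi_0[A]\cap C_k=\pi_0[A\cap(C_k\times Y)]$ gives $|\pi_0[A]\cap C_k|\leq|A\cap(C_k\times Y)|<\omega$. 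Thus $\mathcal{A}^\ast$ witnesses $\non^*_\omega(\I)$. The inequality $\non^*_\omega(\J)\leq\non^*_\omega(\I\otimes\J)$ is proved by the symmetric argument with the second projection, using the family $\{X\times D_k:k<\omega\}\cup\{X\times\{y\}:y\in Y\}$, whose second part forces every row of $A$ to be finite.

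For the upper bound, fix a dominating family $\{f_\xi:\xi<\dd\}\subseteq\oo$, a witness $\{S_\eta:\eta<\non^*_\omega(\I)\}\subseteq[X]^\omega$ for $\non^*_\omega(\I)$, a witness $\{T_\zeta:\zeta<\non^*_\omega(\J)\}\subseteq[Y]^\omega$ for $\non^*_\omega(\J)$, and enumerations $S_\eta=\{x^\eta_i:i<\omega\}$, $T_\zeta=\{t^\zeta_j:j<\omega\}$. For each triple $(\eta,\zeta,\xi)$ put $A_{\eta,\zeta,\xi}=\{\langle x^\eta_i,t^\zeta_{f_\xi(i)+1}\rangle:i<\omega\}$, and let $\mathcal{A}$ be the collection of all these sets, so $|\mathcal{A}|\leq\max\{\dd,\non^*_\omega(\I),\non^*_\omega(\J)\}$. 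Given $\overline{B}=\{B_k:k<\omega\}\in[\I\otimes\J]^\omega$, set $E_k=\{x:(B_k)_x\notin\J\}\in\I$; pick $\eta$ with $|S_\eta\cap E_k|<\omega$ for all $k$; pick $\zeta$ with $|T_\zeta\cap(B_k)_x|<\omega$ whenever $k<\omega$ and $x\in S_\eta\setminus E_k$ (the relevant $(B_k)_x$ form a countable subfamily of $\J$); and let $g\in\oo$ be such that $g(i)$ exceeds every $j$ with $t^\zeta_j\in(B_k)_{x^\eta_l}$ for some $k,l\leq i$ with $x^\eta_l\notin E_k$, and pick $\xi$ with $f_\xi\geq^*g$, say $f_\xi(i)\geq g(i)$ for $i\geq i_0$. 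The verification is then bookkeeping: a point $\langle x^\eta_i,t^\zeta_{f_\xi(i)+1}\rangle$ of $A_{\eta,\zeta,\xi}$ with $i\geq\max(i_0,k)$ and $x^\eta_i\notin E_k$ cannot belong to $B_k$, since that would force $f_\xi(i)+1<g(i)\leq f_\xi(i)$; hence $A_{\eta,\zeta,\xi}\cap B_k$ is contained in the finite set of points indexed by $\{i:i<\max(i_0,k)\}\cup\{i:x^\eta_i\in E_k\}$. So $\mathcal{A}$ witnesses $\non^*_\omega(\I\otimes\J)$.

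The step I expect to be the main obstacle is this upper-bound construction, and specifically arranging for the set $A_{\eta,\zeta,\xi}$ that defeats a given $\overline{B}$ to depend on $\overline{B}$ only through the single function $g\in\oo$ (beyond the combinatorial parameters $\eta,\zeta$ coming from the witnesses for $\non^*_\omega(\I)$ and $\non^*_\omega(\J)$). A direct diagonalization picking $t^\zeta_j$ to avoid $\bigcup_{k\leq i}(B_k)_{x^\eta_i}$ would yield a family of size $\continuum$; the appearance of $\dd$ in the formula is genuine (as the case $\I=\J=\Fin$, i.e.\ \Cref{prop:finfin_d}, already shows) and is exactly what is needed to enumerate all such ``summary'' functions. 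The remaining points — that $C_k\times Y$, $\{x\}\times Y$, $X\times D_k$, $X\times\{y\}$ all belong to $\I\otimes\J$, and the two projection arguments in the lower bound — are routine.
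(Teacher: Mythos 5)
Your proof is correct and follows essentially the same route as the paper's: the lower bound via the Kat\v{e}tov--Blass reduction from $\finfin$ together with projections of a witness for $\non^*_\omega(\I\otimes\J)$, and the upper bound via sets pairing the $i$-th element of a witness for $\non^*_\omega(\I)$ with an element of a witness for $\non^*_\omega(\J)$ indexed through a dominating family. Your extra bookkeeping (adding the columns $\{x\}\times Y$ and rows $X\times\{y\}$ to guarantee infinite projections, and collapsing the challenge data into a single function $g$ before invoking domination) just makes explicit details the paper leaves to the reader.
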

    \begin{proof}
        We may assume that both $\I$ and $\J$ are ideals on $\omega$.
        Since $\finfin\subset\I\otimes\J$, \cref{lem:reductions_and_invstaromega}(2) and \cref{prop:finfin_d} imply that $\dd = \non^*_\omega(\finfin)\leq\non^*_\omega(\I\otimes\J)$.
    
        Note that $\non^*_\omega(\I), \non^*_\omega(\J)\leq\non^*_\omega(\I\otimes\J)$ do not directly follow from \cref{lem:reductions_and_invstaromega}(2), because $\I$ and $\J$ are Kat\v{e}tov reducible to $\I\otimes\J$, but not necessarily Kat\v{e}tov--Blass reducible.
        However, the inequalities still follow from the easy fact that if $\mathcal{A}\subset[\omega\times\omega]^\omega$ is a witness for $\non^*_\omega(\I\otimes\J)$, then $\{\pi_0[A]:A\in\mathcal{A}\}$ and $\{\pi_1[A]:A\in\mathcal{A}\}$ are witnesses for $\non^*_\omega(\I)$ and $\non^*_\omega(\J)$ respectively. Here, $\pi_i\colon\omega\times\omega\to\omega$ is defined by $\pi_i(\langle n_0, n_1\rangle) = n_i$.
        
        Now we show that $\non^*_\omega(\I\otimes\J) \leq \max\{\dd, \non^*_\omega(\I), \non^*_\omega(\J)\}$.
        Let $\mathcal{D}\subset\omega^\omega$ be a dominating family of strictly increasing functions such that $\forall g\in\omega^\omega\,\exists f\in\mathcal{D}\,(g<f)$. Let $\mathcal{A}$ and $\mathcal{B}$ be witnesses $\non^*_{\omega}(\I)$ and $\non^*_{\omega}(\J)$ respectively. For each $f\in\mathcal{D}, A\in\mathcal{A}, B\in\mathcal{B}$, we write
        \[
        C_{f, A, B} = \{\langle A(n), B(f(n))\rangle: n<\omega\},
        \]
        where $A(k), B(k)$ denote the $k$-th element of the increasing enumeration of $A$, $B$ respectively. We claim that
        \[
        \mathcal{C} := \{C_{f, A, B}: f\in\mathcal{D}, A\in\mathcal{A}, B\in\mathcal{B}\}
        \]
        witnesses $\non^*_{\omega}(\I\otimes\J)$. To prove this, let $\{ X_n:n<\omega\}\in[\I\otimes\J]^{\omega}$ be arbitrary. Then there are $Y_n\in\I$ and $\phi_n\colon\omega\to\J$ such that
        \[
        X_n\subset(Y_n\times\omega)\cup\{\langle i, j\rangle:j\in\phi_n(i)\}.
        \]
        Let $A\in\mathcal{A}$ such that $\forall n<\omega\,(\lvert A\cap Y_n\rvert<\omega)$. 
        Let $B\in\mathcal{B}$ such that $\forall n, i<\omega\,(\lvert B\cap\phi_n(i)\rvert<\omega)$. For each $n<\omega$, take $h_n\in\omega^\omega$ such that 
        \[
        \forall i<\omega\,(B\cap\phi_n(i)\subset B(h_n(i))).
        \]
        Now let $f\in\mathcal{D}$ be such that 
        for all $n<\omega$, $h_n\leq^* f$. Then $\lvert C_{f, A, B}\cap X_n\rvert<\omega$ for all $n<\omega$, since 
        $\forall^\infty m<\omega~A(m)\notin Y_n$ holds and 
        $h_n\leq^* f$ implies $\lvert C_{f, A, B}\cap\{\langle i, j\rangle: j\in\phi_n(i)\}\rvert < \omega$.
    \end{proof}

    \begin{cor}[cf.\ Brendle--Shelah {\cite[Corollary 5.3]{BS99}}]\label{cor:nonstar_vs_nonstar_omega}
        Let $\kappa < \lambda$ be regular cardinals. Then it is consistent that there is a Borel ideal $\I$ on $\omega$ such that $\non^*(\I) = \kappa$ and $\non^*_\omega(\I) = \lambda$.
    \end{cor}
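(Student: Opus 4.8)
The plan is to adapt to Borel ideals the Fubini‑product strategy that Brendle and Shelah used for ultrafilters in \cite[Corollary~5.3]{BS99}: exhibit a Borel ideal $\I$ whose $\nonstar{\I}$ and $\nonostar{\I}$ can be read off from cardinal characteristics that are provably separated (with prescribed values) in a suitable ccc extension, using \Cref{prop:finfin_d}, \Cref{prop:nonstar_omega_of_Fubini_prod}, and \Cref{lem:reductions_and_invstaromega}(2) to identify those characteristics. Since, by \Cref{lem:addstar_nonstar}(2), $\nonstar{\I}$ equals $\omega$ for every Borel $\I\not\geq_{\mathrm{KB}}\edfin$ and is uncountable otherwise, no single Borel ideal can serve for both $\kappa=\omega$ and $\kappa>\omega$, so I would split into these two cases.

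\emph{The case $\kappa=\omega$.} Take $\I=\finfin$. The vertical lines $\{k\}\times\omega$ ($k<\omega$) form a countable witness for $\nonstar{\finfin}$ — given $B\in\finfin$, choose $k$ with $(B)_k$ finite — so $\nonstar{\finfin}=\omega$, whereas $\nonostar{\finfin}=\dd$ by \Cref{prop:finfin_d}. Hence it suffices to pass to any ccc extension of a model of $\mathsf{CH}$ with $\dd=\lambda$ (say, a finite support iteration of Hechler forcing of length $\lambda$): there $\nonstar{\finfin}=\omega$ and $\nonostar{\finfin}=\lambda$.

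\emph{The case $\kappa>\omega$.} Take $\I=\edfin$, which is $F_\sigma$. Here $\nonstar{\edfin}=\nonm$ by the standard slalom/eventually‑different characterization, while $\ed\leq_{\mathrm{KB}}\edfin$ gives $\nonostar{\edfin}\geq\nonostar{\ed}=\covm$ via \Cref{lem:reductions_and_invstaromega}(2), and $\nonostar{\edfin}\leq\cofostar{\edfin}=\cc$; so $\covm\leq\nonostar{\edfin}\leq\cc$ in $\mathsf{ZFC}$. Now move to a ccc extension with $\nonm=\kappa$ and $\covm=\cc=\lambda$ (automatically $\dd=\cofm=\lambda$ there, as $\covm\leq\dd\leq\cc$). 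In this model $\nonstar{\edfin}=\nonm=\kappa$, and $\nonostar{\edfin}$ is squeezed between $\covm=\lambda$ and $\cc=\lambda$, hence $\nonostar{\edfin}=\lambda$.

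The only non‑routine ingredient is the ccc model in the second case. Arranging $\covm=\cc=\lambda$ is standard (add $\lambda$ Cohen reals), but driving $\nonm$ up to an arbitrary \emph{uncountable} regular $\kappa<\lambda$ — plain Cohen forcing leaves $\nonm=\omega_1$ — requires a finite‑support matrix (or product) iteration with suitable bookkeeping, of the type used in the classical literature on realizing a prescribed pair of Cichoń characteristics; I would cite such a construction rather than rebuild it. It remains to check the two auxiliary equalities invoked: $\nonstar{\edfin}=\nonm$ (standard, via Bartoszyński‑type characterizations) and $\nonostar{\ed}=\covm$ (proved elsewhere in this paper, so this is a forward reference); granting these, the argument is complete and parallels the ultrafilter case of \cite{BS99}.
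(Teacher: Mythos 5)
Your first case ($\kappa=\omega$, $\I=\finfin$, forcing $\dd=\lambda$) is exactly the paper's argument and is fine. The problem is the uncountable case, where your argument rests on the claim $\non^*(\edfin)=\nonm$. This is false: the very result you would need anyway, $\covm=\min\{\dd,\non^*(\edfin)\}$ (\cite[Proposition 3.6]{HMM10}, cited in the paper's proof), gives $\covm\leq\non^*(\edfin)$ in $\mathsf{ZFC}$, so in any model of $\nonm<\covm$ (e.g.\ the Cohen model) one has $\non^*(\edfin)\geq\covm>\nonm$. Worse, in the model you propose ($\nonm=\kappa$, $\covm=\cc=\lambda$) this same inequality forces $\non^*(\edfin)=\lambda$, so $\non^*(\I)$ and $\non^*_\omega(\I)$ both equal $\lambda$ and no separation is obtained. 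In fact no $\mathsf{ZFC}$ identification of $\non^*(\edfin)$ with a standard invariant is available, and the paper explicitly leaves open whether $\omega_1\leq\non^*(\edfin)<\non^*_\omega(\edfin)$ is even consistent; your plan, if it worked, would answer that question, which is a sign the step cannot be routine.

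The paper avoids this by decoupling the two invariants with a Fubini product instead of trying to separate them on $\edfin$ itself: take $\I=\Fin\otimes\edfin$ in a model of $\covm=\kappa<\dd=\lambda(=\cc)$. Then $\non^*(\Fin\otimes\J)=\non^*(\J)$, and since $\dd>\kappa$ the equality $\covm=\min\{\dd,\non^*(\edfin)\}$ pins $\non^*(\I)=\non^*(\edfin)=\kappa$, while \Cref{prop:nonstar_omega_of_Fubini_prod} gives $\non^*_\omega(\I)=\max\{\dd,\non^*_\omega(\edfin)\}=\lambda$ (the upper bound using $\non^*_\omega(\edfin)\leq\cc=\lambda$). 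So the key missing idea is that tensoring with $\Fin$ raises $\non^*_\omega$ to at least $\dd$ without changing $\non^*$, which lets one work with the unknown quantity $\non^*(\edfin)$ only through the HMM10 minimum formula; your route instead requires knowing the exact value of $\non^*(\edfin)$, which is not available.
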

    \begin{proof}
        If $\kappa = \omega$, then $\I$ can be taken as $\finfin$ and assume $\dd=\lambda$. If $\kappa$ is uncountable, then $\I$ can be taken as $\Fin\otimes\edfin$ and assume $\covm=\kappa$ and $\dd=\lambda$. (By \cite[Proposition 3.6]{HMM10}, $\covm=\min\{\dd,\non^*(\edfin)\}$.) Notice that $\non^*(\I\otimes\J) = \non^*(\J)$.
    \end{proof}

    \begin{prop}\label{prop:nonstar_omega_of_nwd}
        $\non^*_\omega(\nwd) = \non(\m)$.
    \end{prop}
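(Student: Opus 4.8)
The plan is to prove the two inequalities via a dictionary between $2^{<\omega}$ and $2^\omega$. For $B\subseteq 2^{<\omega}$ write $\mathrm{cl}(B)=\{x\in 2^\omega:\forall n\,\exists s\in B\,(x\upharpoonright n\subseteq s)\}$, the body of the downward closure of $B$: it is closed, and $B\in\nwd$ iff $\mathrm{cl}(B)$ has empty interior (if $[u]\subseteq\mathrm{cl}(B)$ then every node above $u$ has an extension in $B$, so the upward closure of $B$ is dense below $u$, contradicting nowhere density). Dually, for closed $C\subseteq 2^\omega$ put $T(C)=\{s\in 2^{<\omega}:[s]\cap C\ne\emptyset\}$: this is a tree with body $C$, and if $C$ is nowhere dense then $T(C)\in\nwd$ (a tree with nowhere dense body, together with its countably many ``endpoints'', has nowhere dense closure in $2^\omega$). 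Finally, for $x\in 2^\omega$ let $b(x)=\{x\upharpoonright n:n<\omega\}\in[2^{<\omega}]^\omega$.

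For $\non^*_\omega(\nwd)\le\non(\m)$: fix a non-meager $X\subseteq 2^\omega$ with $|X|=\non(\m)$ and set $\mathcal A=\{b(x):x\in X\}$. Given $\overline B=\{B_n:n<\omega\}\in[\nwd]^\omega$, the set $\bigcup_n\mathrm{cl}(B_n)$ is meager, so choose $x\in X\setminus\bigcup_n\mathrm{cl}(B_n)$. If $|b(x)\cap B_n|=\omega$ for some $n$, then infinitely many initial segments of $x$ lie in $B_n$, whence $x\in\mathrm{cl}(B_n)$, a contradiction; so $b(x)$ has finite intersection with every member of $\overline B$, and $\mathcal A$ is a witness. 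This direction is clean and I expect it to go through exactly as stated.

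For $\non^*_\omega(\nwd)\ge\non(\m)$: let $\mathcal A$ be a witness; replacing each $B\in\nwd$ by $B^{\downarrow}$ only enlarges intersections, so we may assume the families $\overline B$ range over nowhere dense trees. For each infinite $A\subseteq 2^{<\omega}$, the tree $A^\downarrow$ is infinite, so fix $x_A\in[A^\downarrow]$ and the modulus $\rho_A(n)=\min\{|s|:s\in A,\ x_A\upharpoonright n\subseteq s\}$ with a choice of witnessing $s$'s. For closed nowhere dense $C\subseteq 2^\omega$ and increasing $h\ge\mathrm{id}$ put
\[
B^h_C=\{s\in 2^{<\omega}:\exists n\le|s|\ (s\upharpoonright n\in T(C)\ \text{and}\ h(n)\ge|s|)\}.
\]
A direct check gives $(B^h_C)^\downarrow=\{w:h(d_C(w))\ge|w|\}$ where $d_C(w)=\max\{n\le|w|:w\upharpoonright n\in T(C)\}$; since $T(C)$ contains no full cone, $d_C$ is constant above any node outside $T(C)$, so $(B^h_C)^\downarrow$ contains no full cone, i.e.\ $B^h_C\in\nwd$. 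Moreover if $x_A\in C$ and $\rho_A\le h$ pointwise, then for every $n$ the chosen minimal $s\in A$ above $x_A\upharpoonright n$ satisfies $s\upharpoonright n=x_A\upharpoonright n\in T(C)$ and $|s|=\rho_A(n)\le h(n)$, hence $s\in B^h_C$; as these $s$ have unbounded lengths, $|A\cap B^h_C|=\omega$. Thus, if we could cover $\{(x_A,\rho_A):A\in\mathcal A\}$ by countably many ``boxes'' $C_k\times\{\rho:\rho\le h_k\}$ (with $C_k$ nowhere dense, $h_k$ increasing), then $\overline B=\{B^{h_k}_{C_k}:k<\omega\}$ would defeat $\mathcal A$, a contradiction; so no such countable cover exists.

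The hard part, which is the core of the argument, is to turn ``no such countable cover'' into $|\mathcal A|\ge\non(\m)$. The naive product $\sigma$-ideal generated by (nowhere dense)$\times$(compact) on $2^\omega\times\omega^\omega$ has uniformity only $\mathfrak b$, so the modulus data must be organized differently: I would encode $\rho_A$ as the interval partition (chopping) attached to $x_A$ and run the construction against Bartoszy\'nski's characterization of $\non(\m)$ by chopped reals (equivalently, the ``infinitely-often-equal on blocks'' localization), in which the sets playing the role of $B^h_C$ are still nowhere dense and the governing ideal has uniformity exactly $\non(\m)$ — this being consistent precisely because $\mathfrak b\le\non(\m)$. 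Carrying this out, including the verification of nowhere density in the chopped-real picture and the bookkeeping that makes the moduli ``absorb'', is the main obstacle; the overall shape (branches through downward closures, a $\mathfrak b$-scale, a final covering argument) parallels the proof of \Cref{prop:nwd_addm}, with the quantifier pattern shifted from additivity to uniformity.
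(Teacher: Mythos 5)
Your first inequality, $\non^*_\omega(\nwd)\le\non(\m)$, is correct and is exactly the paper's argument (branches of a non-meager set, after replacing each element of $\nwd$ by the nowhere dense tree it generates). The problem is the other inequality, $\non(\m)\le\non^*_\omega(\nwd)$, which is the substantial half of the statement: your proposal does not prove it. What you establish is only a conditional reduction: if the pairs $(x_A,\rho_A)$ attached to a witness $\mathcal A$ could be covered by countably many ``boxes'' $C_k\times\{\rho:\rho\le h_k\}$, then the sets $B^{h_k}_{C_k}$ (which are indeed nowhere dense, as you check) would defeat $\mathcal A$. You then correctly observe that the naive covering statement behind this reduction has the wrong uniformity (the function coordinate, by itself, is governed by $\bb$/$\dd$-type cardinals, and the real issue is that the same index $k$ must serve both coordinates), and at that point you defer the entire remaining argument to an unexecuted plan (``encode $\rho_A$ as a chopping \dots the bookkeeping that makes the moduli absorb \dots is the main obstacle''). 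That deferred step is precisely the content of the inequality; as written, nothing in the proposal produces, from $\kappa<\non(\m)$, a countable family of nowhere dense sets meeting every $A_\alpha$ infinitely.

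For comparison, the paper closes exactly this gap with one concrete device that your sketch lacks: after shrinking each $A_\alpha$ to an antichain converging to a real $x_\alpha$, it builds a \emph{second} real $y_\alpha$ by splicing, on each interval $[\lvert\sigma^{n-1}_\alpha\rvert,\lvert\sigma^n_\alpha\rvert)$, the digits of $x_\alpha$ up to the first deviation point $a^n_\alpha$ with the digits of $\sigma^n_\alpha$ afterwards. The ``modulus'' information you tried to carry as a separate function $\rho_A$ is thereby absorbed into a single element of $2^\omega$, so one application of the chopped-real characterization of non-meagerness (to the meager set $\{x_\alpha,y_\alpha:\alpha<\kappa\}$) yields a single pair $(z,\langle I_n\rangle)$ and the countably many nowhere dense trees $T_n$ of reals disagreeing with $z$ on all blocks beyond $n$; each $A_\alpha$ then meets some $T_n$ infinitely because infinitely many $\sigma^m_\alpha$ agree with either $x_\alpha$ or $y_\alpha$ on whole blocks. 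If you want to salvage your route, you would have to supply an analogue of this encoding (and also weaken ``$\rho_A\le h$ pointwise'' to an infinitely-often condition, since families of size $<\non(\m)$ need not be everywhere-dominated); without it, the second inequality remains unproved.
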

    \begin{proof}
        To show $\non^*_\omega(\nwd)\leq\non(\m)$, let $X\subseteq2^\omega$ be such that $X\notin\m$. For $x\in X$ let $Y_x\coloneqq\{x\on n:n\in\omega\}\in[2^{<\omega}]^\omega$. To see $\{Y_x:x\in X\}$ witnesses $\non^*_\omega(\nwd)\leq|X|$, let $\{T_n:n\in\omega\}$ be a countable family of nowhere dense trees in $2^{<\omega}$. Take $x\in X$ such that $x\notin\bigcup_{n\in\omega}[T_n]$. Then $Y_x$ is almost disjoint with all $T_n$. 
        
        To show that $\non(\m)\leq\non^*_\omega(\nwd)$, let $\kappa<\mathrm{non}(\mathcal{M})$ and let $\{A_{\alpha}:\alpha<\kappa\}\subset[2^{<\omega}]^\omega$. It suffices to find a countable family $\{T_n:n<\omega\}$ of nowhere dense trees such that for every $\alpha<\kappa$ there is $n<\omega$ such that $\rvert A_\alpha\cap T_n\rvert=\omega$.
        We may assume that for each $\alpha<\kappa$, $A_\alpha$ forms an antichain and there is $x_{\alpha}\in 2^{\omega}$ such that for every $i<\omega$ all but finitely many members of $A_\alpha$ extend $x_{\alpha}\upharpoonright i$. Let $\{\sigma_\alpha^n:n<\omega\}$ be an enumeration of $A_\alpha$ and set $\sigma_\alpha^{-1} = \emptyset$ for convenience.
        We define a sequence $\{a^n_\alpha:n<\omega\}$ such that
        \[
        a^n_\alpha=\min\{i<\lvert\sigma_{\alpha}^{n}\rvert:\sigma_{\alpha}^{n}(i)\neq x_{\alpha}(i)\}.
        \]
        Without loss of generality, we assume that for every $n<\omega, \lvert\sigma_{\alpha}^{n-1}\rvert < a_{\alpha}^n < \lvert\sigma_{\alpha}^n\rvert$ holds. Let $y_{\alpha}\in2^{\omega}$ be defined by
        \[
        y_\alpha(i) = \begin{cases}
        x_\alpha(i) & \text{ if } i\in [\lvert\sigma_{\alpha}^{n-1}\rvert, a_{\alpha}^n) \text{ for some }n<\omega,\\
        \sigma_{\alpha}^n(i) & \text{ if } i\in [a_{\alpha}^n, \lvert\sigma_{\alpha}^n\rvert) \text{ for some }n<\omega.\\
        \end{cases}
        \]
        As $\kappa<\mathrm{non}(\mathcal{M})$, the set $\{x_{\alpha},y_{\alpha}:\alpha<\kappa\}$ is meager. So there is a real $z\in2^{\omega}$ and an interval partition $\langle I_{n}\rangle_{n\in\omega}$ of $\omega$ such that 
        \[
        \{x_{\alpha},y_{\alpha}:\alpha<\kappa\}\subseteq\{x\in2^{\omega}:\forall^\infty n\in\omega\,(x\upharpoonright I_{n}\neq z\upharpoonright I_{n})\}
        \]
        holds (cf.\ \cite[Theorem 5.2]{Bla10}). For each $n<\omega$, let $T_{n}\subseteq2^{<\omega}$ be a tree such that
        \[
        [T_{n}]=\{x\in2^{\omega}:\forall m \geq n\,(x\upharpoonright I_{m}\neq z\upharpoonright I_{m})\}.
        \]
        It is easy to see that for every $\alpha<\kappa$ there is $n\in\omega$ such that $A_{\alpha}\cap T_{n}$ is infinite. 
    \end{proof}

    Now we consider $\non^*_\omega(\I)$ for $F_\sigma$ non-P-ideals. In \cite[Question 4.11]{CM25}, the first and third authors asked the values of $\non^*_\omega(\I)$ in the case that $\I = \Solecki, \ed, \edfin$. We give complete answers for $\Solecki$ and $\ed$ below. To characterize $\non^*_\omega(\Solecki)$, we introduce the following variant of the covering number of the null ideal.

    \begin{dfn}
        $\covomn\coloneqq\min\{\lvert\mathcal{A}\rvert:\mathcal{A}\subset\n \land \forall B\in[\mathbb{R}]^\omega\,\exists A\in\mathcal{A}\,(B\subseteq A)\}$.
    \end{dfn}

    Using the notation in \cite[Definition 2.1.3]{BJ95}, $\covomn=\cof([\mathbb{R}]^{\omega},\n)$.
    In the following, $\rr_\sigma$ is the ``$\omega$-version'' of the reaping number $\rr$ defined by
    \[
    \rr_\sigma = \min \{ |\mathcal{A}| : \mathcal{A} \subseteq \Baire \land \forall \overline{B}\in[\mathcal{P}(\omega)]^\omega ~ \exists A \in \mathcal{A} ~ \forall B\in\overline{B} ~ (A \subseteq^* B \lor A \cap B =^* \emptyset)\}.
    \]
    The question whether $\rr = \rr_\sigma$ was originally posted in \cite{VOJTAS1992125}, and as far as we know, is still open. The reader interested in more information about this cardinal invariant can consult \cite{Bla10}.

    \begin{prop}\leavevmode
        \begin{enumerate}
            \item $\covn\leq\covomn\leq\min\{\nonm, \rr_\sigma\}$.
            \item $\cf(\covomn)\geq\omega_1$.
        \end{enumerate}
    \end{prop}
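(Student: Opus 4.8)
The plan is to prove the chain in (1) by the three inequalities $\covn\le\covomn$, $\covomn\le\nonm$, $\covomn\le\rr_\sigma$ (the last two together giving $\covomn\le\min\{\nonm,\rr_\sigma\}$), and to prove (2) by a closure argument. Throughout I would compute $\covomn$ using $2^\omega$ with its standard product measure in place of $\mathbb{R}$, which is harmless since $\covomn$, like the other cardinal invariants of $\n$, does not depend on the choice of underlying uncountable Polish probability space. The inequality $\covn\le\covomn$ is then immediate: any $\mathcal{A}\subseteq\n$ witnessing $\covomn$ already covers $2^\omega$, because for every $x$ the countable set $\{x\}$ lies in some member of $\mathcal{A}$, hence so does $x$.

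For $\covomn\le\rr_\sigma$, I would fix a witness $\mathcal{A}\subseteq[\omega]^\omega$ for $\rr_\sigma$ and attach to each $A\in\mathcal{A}$ the sets $N^0_A=\{x\in 2^\omega: A\cap x^{-1}(1)\text{ is finite}\}$ and $N^1_A=\{x\in 2^\omega: A\cap x^{-1}(0)\text{ is finite}\}$; both are $F_\sigma$ and null, since $\{x: A\cap x^{-1}(i)\subseteq m\}$ is closed of measure $0$ for every $m<\omega$. Given a countable $B=\{x_k:k<\omega\}\subseteq 2^\omega$, apply the defining property of $\mathcal{A}$ to the family $\{x_k^{-1}(1):k<\omega\}$ (padded to a countably infinite subfamily of $\mathcal{P}(\omega)$ if necessary) to obtain $A\in\mathcal{A}$ such that for each $k$ either $A\cap x_k^{-1}(1)$ is finite or $A\subseteq^* x_k^{-1}(1)$; in the first case $x_k\in N^0_A$, and in the second $A\cap x_k^{-1}(0)$ is finite, so $x_k\in N^1_A$. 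Hence $B\subseteq N^0_A\cup N^1_A$, and $\{N^0_A\cup N^1_A:A\in\mathcal{A}\}$ witnesses $\covomn\le\rr_\sigma$.

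The inequality $\covomn\le\nonm$ is the heart of the argument and the step I expect to be hardest, the difficulty being to pack an arbitrary countable set of reals into a single null set in a way uniform enough to use only $\nonm$ many sets. I would fix an interval partition $\langle I_n\rangle_{n<\omega}$ of $\omega$ and $h\in\oo$ with $h(n)\to\infty$ and $\sum_{n<\omega}h(n)2^{-|I_n|}<\infty$ (for instance $|I_n|=2n$ and $h(n)=n$), and set $A_n=(2^{I_n})^{h(n)}$, a finite set with $|A_n|\to\infty$. By Bartoszyński's characterization of $\nonm$ (see \cite{BJ95}) there is $\mathcal{F}\subseteq\prod_{n<\omega}A_n$ with $|\mathcal{F}|=\nonm$ such that for every $w\in\prod_{n<\omega}A_n$ there is $z\in\mathcal{F}$ with $z(n)=w(n)$ for infinitely many $n$. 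To each $z\in\mathcal{F}$ I would associate the slalom $S^z_n:=\ran(z(n))\in[2^{I_n}]^{\le h(n)}$ and the set $N^z:=\{x\in 2^\omega: x\upharpoonright I_n\in S^z_n\text{ for infinitely many }n\}$, which is null by the first Borel--Cantelli lemma since $\sum_{n<\omega}|S^z_n|2^{-|I_n|}\le\sum_{n<\omega}h(n)2^{-|I_n|}<\infty$. Given a countable $B=\{x_k:k<\omega\}$, put $w_B(n)=(x_0\upharpoonright I_n,\dots,x_{h(n)-1}\upharpoonright I_n)\in A_n$ and take $z\in\mathcal{F}$ agreeing with $w_B$ at infinitely many coordinates; for fixed $k$, all but finitely many such coordinates $n$ satisfy $h(n)>k$, and for those $n$ we get $x_k\upharpoonright I_n\in\ran(w_B(n))=S^z_n$, so $x_k\in N^z$. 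Thus $B\subseteq N^z$, and $\{N^z:z\in\mathcal{F}\}$ witnesses $\covomn\le\nonm$.

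Finally, for (2): since $\covomn\ge\covn\ge\omega_1$, it suffices to rule out $\cf(\covomn)=\omega$. If $\cf(\covomn)=\omega$, a witness $\mathcal{A}$ of size $\covomn$ can be written as $\bigcup_{n<\omega}\mathcal{A}_n$ with $|\mathcal{A}_n|<\covomn$ for every $n$; each $\mathcal{A}_n$ is then too small to witness $\covomn$, so there is a countable $B_n$ contained in no member of $\mathcal{A}_n$. Then $B:=\bigcup_{n<\omega}B_n$ is countable, so $B\subseteq A$ for some $A\in\mathcal{A}$; choosing $n$ with $A\in\mathcal{A}_n$ gives $B_n\subseteq B\subseteq A$, contradicting the choice of $B_n$. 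Hence $\cf(\covomn)\ge\omega_1$.
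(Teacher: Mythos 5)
Your proof is correct. For $\covn\leq\covomn$, for part (2), and essentially also for $\covomn\leq\rr_\sigma$ you do what the paper does: your sets $N^0_A\cup N^1_A$ are exactly the paper's null sets $N_\alpha=\{x: A\text{ reaps }x\}$, the only difference being that you argue directly from an $\rr_\sigma$-witness to a $\covomn$-witness, while the paper argues contrapositively (a family of size $<\covomn$ cannot reap, because some countable set of reals escapes all the $N_\alpha$'s); these are the same combinatorial content in dual packaging. The genuine divergence is in $\covomn\leq\nonm$. The paper's proof is a short translation argument: split $\mathbb{R}=A\cup B$ with $A$ null and $B$ meager, take a nonmeager $X$, and observe that if some countable $Y$ were contained in no translate $x+A$ with $x\in X$, then $-X$ would be covered by the countably many meager sets $-y+B$, a contradiction; so the $\nonm$-many null sets $x+A$ already absorb every countable set. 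You instead invoke Bartoszyński's characterization of $\nonm$ as the least size of an infinitely-often-equal family, code a countable set of reals blockwise into a single element of $\prod_n A_n$, and build the absorbing null sets by Borel--Cantelli from width-$h(n)$ slaloms. Both work: the paper's argument is shorter and exploits the group structure of the reals, whereas yours is purely combinatorial, produces explicit Borel--Cantelli-type null sets, and would transfer to settings without a translation structure; the price is that you need the infinitely-often-equal characterization of $\nonm$ (on a product of finite sets, which strictly speaking you should reduce to the $\oo$-version by truncating an i.o.e.\ family, a one-line projection). Two cosmetic slips, neither a gap: your sample parameters $|I_n|=2n$, $h(n)=n$ make $I_0$ empty and $h(0)=0$ (shift the indices, e.g.\ $|I_n|=2(n+1)$, $h(n)=n+1$), and in (1) you only need $|\mathcal{F}|\leq\nonm$, not equality.
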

    \begin{proof}
        (1) By definition, $\covn\leq\covomn$. To show $\covomn\leq\nonm$, let $X\subseteq\br$ be a nonmeager set. Partition $\br$ into a null set $A$ and a meager set $B$. Given $Y\in[\br]^\omega$, it suffices to show that there is $x\in X$ such that $Y\subseteq x+A\coloneqq\{x+a:a\in A\}$. Assume otherwise. Then for all $x\in X$ there is $y\in Y$ such that $y-x\notin A$, i.e., $y-x\in B$. Thus we have $-X\coloneqq\{-x:x\in\br\}\subseteq\bigcup_{y\in Y}(-y+B)$, which means that a nonmeager set is contained by a meager set, a contradiction.
        The proof of $\covn\leq\rr_\sigma$ is similar to the standard proof of $\covn\leq\rr$: Let $\kappa < \covomn$ and let $\{ A_\alpha : \alpha < \kappa \} \subseteq \Baire$, we will show that $\{ A_\alpha : \alpha < \kappa \}$ is not a witness for $\rr_\sigma$. For all $\alpha < \kappa$, let $N_\alpha = \{ B \in \Baire : A_\alpha \subseteq^* B \lor A_\alpha \cap B =^* \emptyset \}$. It is easy to show that each $N_\alpha$ is Lebesgue null in $\omega^\omega$, and since $\kappa < \covomn$, there must be a sequence $\langle B_n : n < \omega \rangle$ such that, for each $\alpha<\kappa$, there is $n<\omega$ such that $B_n \notin N_\alpha$. But then, for each $\alpha < \kappa$ there is $n < \omega$ such that $|A_\alpha \cap B_n| = |A_\alpha \cap (\omega \setminus B_n))| = \omega$, and therefore $\kappa < \rr_\sigma$.

        (2) Assume on the contrary that there are strictly increasing cardinals $\langle\lambda_i\rangle_{i<\omega}$ such that $\lambda\coloneqq\sup_{i<\omega}\lambda_i=\covomn$. Let $\{A_\a:\a<\lambda\}\subseteq\n$ witness $\lambda=\covomn$. For $i<\omega$, there is $B_i\in[\mathbb{R}]^\omega$ such that $B_i\nsubseteq N_\a$ for all $\a<\lambda_i$. Then $B\coloneq\bigcup_{i<\omega}B_i \in[\mathbb{R}]^\omega$ cannot be covered by any $A_\a$ for $\a<\lambda$, which is a contradiction.
    \end{proof}

    Since $\cf(\covn)=\omega$ is consistent by \cite{She00}, we particularly have:
    \begin{cor}
        $\covn<\covomn$ is consistent.
    \end{cor}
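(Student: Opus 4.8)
The plan is to combine part (2) of the preceding proposition, which asserts $\cf(\covomn)\geq\omega_1$ as a theorem of $\mathsf{ZFC}$, with Shelah's consistency result that $\cf(\covn)=\omega$ is consistent \cite{She00}. First I would pass to a model of $\mathsf{ZFC}$ in which $\cf(\covn)=\omega$. In that model $\covn$ is a cardinal of countable cofinality, hence it cannot equal $\covomn$, since $\cf(\covomn)\geq\omega_1$ holds there by part (2) of the proposition. On the other hand, $\covn\leq\covomn$ always holds by part (1). Combining these two facts, $\covn<\covomn$ holds in that model, which gives the desired consistency.

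There is essentially no obstacle beyond invoking the two inputs correctly: the $\mathsf{ZFC}$ bound $\cf(\covomn)\geq\omega_1$ forces every model of $\cf(\covn)=\omega$ to separate the two cardinals automatically, so no new forcing construction is needed. The only point one should keep in mind is that both cardinal invariants refer to the null ideal on the reals (equivalently on $\omega^\omega$ or $2^\omega$), and that the chain of inequalities in part (1) together with the cofinality computation in part (2) are genuine $\mathsf{ZFC}$ theorems, hence hold in Shelah's model without modification.
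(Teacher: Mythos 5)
Your argument is correct and is exactly the paper's intended proof: the corollary follows immediately by combining Shelah's consistency of $\cf(\covn)=\omega$ with the $\mathsf{ZFC}$ facts $\covn\leq\covomn$ and $\cf(\covomn)\geq\omega_1$ from the preceding proposition. No further comment is needed.
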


    \begin{prop}\label{prop:Solecki_non_omega}
	   $\non^*_\omega(\mathcal{S}) = \covomn$.
    \end{prop}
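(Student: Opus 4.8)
The plan is to prove the two inequalities by passing between infinite subsets of $\Omega$ and Lebesgue null subsets of $2^\omega$. For $x\in 2^\omega$ write $I_x:=\{U\in\Omega: x\in U\}$; since the generators of $\mathcal S$ are exactly the subsets of $\Omega$ with nonempty intersection, every $B\in\mathcal S$ satisfies $B\subseteq\bigcup_{x\in F}I_x$ for some finite $F\subseteq 2^\omega$, and an $A\in[\Omega]^\omega$ that is almost disjoint from $I_x$ for each $x\in F$ is automatically almost disjoint from such a $B$ (a finite union of finite sets). Moreover $|A\cap I_x|<\omega$ holds precisely when $x$ lies in only finitely many members of $A$, i.e.\ $x\notin\limsup A:=\{y\in 2^\omega: y\in U\text{ for infinitely many }U\in A\}$. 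These observations reduce both inequalities to a correspondence between null sets and the sets $2^\omega\setminus\limsup A$.

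For $\nonostar{\Solecki}\le\covomn$ I would fix $\mathcal N_0\subseteq\mathcal{N}$ of size $\covomn$ with every countable subset of $2^\omega$ contained in some member of $\mathcal N_0$, and attach to each $N\in\mathcal N_0$ an $A_N\in[\Omega]^\omega$ with $|A_N\cap I_x|<\omega$ for all $x\in N$. Since $N$ is null, choose an open $O\supseteq N$ with $\mu(O)<1/2$, write $O=\bigcup_n K_n$ with $K_n$ clopen increasing, pick clopen $W_n\supseteq K_n$ with $\mu(W_n)=1/2$ (possible since $\mu(K_n)\le\mu(O)<1/2$, and with the $W_n$ chosen pairwise distinct), and put $A_N:=\{2^\omega\setminus W_n:n<\omega\}$; then each $x\in N$ lies in some $K_{n_0}$, hence in $W_n$ for all $n\ge n_0$, hence in only finitely many members of $A_N$. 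Now if $\overline B=\langle B_n\rangle\in[\mathcal S]^\omega$, choose finite $F_n$ with $B_n\subseteq\bigcup_{x\in F_n}I_x$, set $C:=\bigcup_n F_n$, take $N\in\mathcal N_0$ with $C\subseteq N$, and conclude from the first paragraph that $|A_N\cap B_n|<\omega$ for all $n$. So $\{A_N:N\in\mathcal N_0\}$ witnesses $\nonostar{\Solecki}$.

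For $\covomn\le\nonostar{\Solecki}$ let $\mathcal A=\{A_\alpha:\alpha<\kappa\}$ witness $\nonostar{\Solecki}$ with $\kappa=\nonostar{\Solecki}$, and set $M_\alpha:=2^\omega\setminus\limsup A_\alpha$, a Borel set; since every member of $A_\alpha$ has measure $1/2$, reverse Fatou gives $\mu(\limsup A_\alpha)\ge 1/2$, hence $\mu(M_\alpha)\le 1/2$. As $M_\alpha$ need not be null, I would average over the countable dense subgroup $D$ of finitely supported elements of $2^\omega$: put $N_\alpha:=\bigcap_{q\in D}(M_\alpha+q)$, which is Borel, invariant under translation by $D$, and of measure $\le 1/2<1$, hence null (a $D$-invariant measurable subset of $2^\omega$ has measure $0$ or $1$). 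Finally, for any countable $C\subseteq 2^\omega$ the set $C+D$ is countable, so applying the witnessing property of $\mathcal A$ to $\overline B:=\langle I_y:y\in C+D\rangle\in[\mathcal S]^\omega$ gives some $\alpha$ with $A_\alpha$ almost disjoint from every $I_y$, $y\in C+D$; by the first paragraph $C+D\subseteq M_\alpha$, so $C\subseteq M_\alpha+q$ for all $q\in D$, i.e.\ $C\subseteq N_\alpha$. Hence $\{N_\alpha:\alpha<\kappa\}$ is a family of $\kappa$ null sets covering all countable subsets of $2^\omega$, so $\covomn\le\kappa$.

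I expect the main obstacle to be this second inequality, precisely the passage from $M_\alpha$ to $N_\alpha$: the naive invariant attached to $A_\alpha$ has measure only $\le 1/2$ and can genuinely be non-null, because a $\limsup$ of clopen sets of measure $1/2$ always has measure $\ge 1/2$. The translation-averaging step is what upgrades ``covered by a set of measure $\le 1/2$'' to ``covered by a null set''; equivalently, it uses that $\cof([\mathbb R]^\omega,\mathcal{N})$ is unchanged when computed with the ideal of sets of outer measure at most $1/2$. The remaining verifications are routine: that $\Omega\notin\mathcal S$ (so $\mathcal S$ is proper and $\nonostar{\Solecki}$ is well defined), that the $W_n$ above can indeed be chosen pairwise distinct, and the standard ergodicity of the coordinate-flip translation action of $D$ on $2^\omega$.
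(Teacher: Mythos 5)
Your proposal is correct and follows essentially the same route as the paper: the inequality $\non^*_\omega(\mathcal{S})\leq\covomn$ is the paper's argument with the cited lemma of \cite{HMM10} (covering a small-outer-measure set by a tail of measure-$\tfrac12$ clopen sets) proved inline for null sets, and the reverse inequality is the paper's $\limsup$-plus-translation argument, only phrased directly (intersecting the translates of $2^\omega\setminus\limsup A_\alpha$ over a countable dense subgroup to produce null sets covering all countable sets) rather than contrapositively (saturating $\limsup A_\alpha$ to a conull set). No gaps.
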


    To prove this, we use the following lemma:

    \begin{lem}[{\cite[Lemma 5.5]{HMM10}}] \label{lem:outer_half} 
        For any $X\subseteq 2^\omega$ with $\mu^*(X)<\frac{1}{2}$, there is $Y\in[\Omega]^\omega$ such that for any $x\in X$ and for all but finitely many $V\in Y$,  $x\notin V$.
    \end{lem}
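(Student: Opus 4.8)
The plan is to prove the lemma directly, as a measure‑theoretic diagonalization, using as the only structural ingredient the elementary fact that \emph{every clopen $W\subseteq 2^\omega$ with $\mu(W)\ge \tfrac12$ contains a clopen set of measure exactly $\tfrac12$}: being clopen, $W$ is a finite union of cylinders, so after refining to a common length $N\ge 1$ it is a union of $m\ge 2^{N-1}$ cylinders of length $N$, and keeping any $2^{N-1}$ of them gives the desired half‑sized clopen subset. I will build the family $Y=\{V_n:n<\omega\}$ by recursion so that each $V_n$ is a clopen half disjoint from a larger and larger clopen subset of a fixed small open neighbourhood of $X$.

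First I would fix a rational $\delta$ with $\mu^*(X)<\delta<\tfrac12$ and, by regularity of the measure, an open set $G\supseteq X$ with $\mu(G)<\delta$. Writing $G$ as a disjoint union $\bigsqcup_{k<\lambda}[s_k]$ of basic clopen cylinders with $\lambda\le\omega$, put $G^{(n)}=\bigcup\{[s_k]:k\le n,\ k<\lambda\}$. Then each $G^{(n)}$ is clopen, $\mu(G^{(n)})\le\delta<\tfrac12$, $G^{(n)}\subseteq G^{(n+1)}$, and $\bigcup_n G^{(n)}=G$. The key point of this decomposition is that each $x\in X\subseteq G$ lies in a \emph{unique} cylinder $[s_{k(x)}]$, hence $x\in G^{(n)}$ for all $n\ge k(x)$.

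Next I would recursively choose clopen sets $V_n\in\Omega$ together with strictly increasing levels $N_n\ge 1$ such that (i) $N_n>|s_k|$ for every relevant $k\le n$, so that $(G^{(n)})^c$ is a union of cylinders of length $N_n$, and (ii) $2^{N_n}(\tfrac12-\delta)>1$ and $2^{N_n-1}\ge n$. Since $(G^{(n)})^c$ is clopen of measure $1-\mu(G^{(n)})\ge 1-\delta>\tfrac12$, at level $N_n$ it is a union of $m_n$ cylinders with $m_n\ge 2^{N_n-1}+1$ by (ii); distinct $2^{N_n-1}$‑element subfamilies give distinct clopen halves contained in $(G^{(n)})^c$, and there are $\binom{m_n}{2^{N_n-1}}\ge m_n\ge 2^{N_n-1}+1\ge n+1$ of them. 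As each earlier $V_j$ ($j<n$) is a half‑sized clopen set and so corresponds to a single $2^{N_n-1}$‑element subset of $2^{N_n}$, at most $n$ of these candidates are forbidden, so I may pick $V_n\subseteq (G^{(n)})^c$ with $\mu(V_n)=\tfrac12$ and $V_n\notin\{V_0,\dots,V_{n-1}\}$. Set $Y=\{V_n:n<\omega\}$.

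Finally I would verify the two requirements. The $V_n$ are pairwise distinct, so $Y\in[\Omega]^\omega$. Given $x\in X$, let $k(x)<\lambda$ be the unique index with $x\in[s_{k(x)}]$; for every $n\ge k(x)$ we have $[s_{k(x)}]\subseteq G^{(n)}$, hence $x\in G^{(n)}$, and therefore $x\notin V_n$ since $V_n\cap G^{(n)}=\emptyset$ by construction. Thus $\{V\in Y:x\in V\}\subseteq\{V_n:n<k(x)\}$ is finite, which is precisely the assertion. The statement is not deep, so the only point needing a little care is ensuring the $V_n$ are genuinely distinct (so that $Y$ is infinite and hence a legitimate element of $[\Omega]^\omega$); this is exactly what the slack in condition (ii) provides. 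As an alternative route to distinctness, when $\lambda=\omega$ one can instead additionally require $[s_{n+1}]\subseteq V_n$ — possible because $\mu([s_{n+1}])<\tfrac12\le\mu((G^{(n)})^c)$ — which forces $V_n\ne V_m$ for all $m>n$, since $V_m\subseteq(G^{(m)})^c$ is disjoint from $[s_{n+1}]$.
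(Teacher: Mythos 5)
Your argument is correct: outer regularity gives an open $G\supseteq X$ with $\mu(G)<\tfrac12$, the clopen approximations $G^{(n)}$ exhaust $G$, and your counting at level $N_n$ produces enough measure-$\tfrac12$ clopen subsets of $(G^{(n)})^c$ to keep the $V_n$ pairwise distinct, so $Y=\{V_n:n<\omega\}$ is genuinely an element of $[\Omega]^\omega$ and every $x\in X$ lies in $G^{(n)}$ for all $n\geq k(x)$, hence avoids all but finitely many members of $Y$. The paper itself gives no proof (it only cites \cite[Lemma 5.5]{HMM10}); your write-up is the standard argument for that lemma, with the additional care — which the statement does require — of ensuring the family is infinite.
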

    
    \begin{proof}[Proof of \cref{prop:Solecki_non_omega}]
    	($\leq$) Let $\mathcal{F}\subseteq\mathcal{N}$ witness $|\mathcal{F}|=\covomn$. For each $N\in\mathcal{F}$, let $Y_N\in[\Omega]^\omega$ be as in Lemma \ref{lem:outer_half} when $X\coloneqq N$. To show that $\{Y_N:N\in\mathcal{F}\}$ witnesses $|\mathcal{F}|\geq\non^*_\omega(\mathcal{S})=\min\{|\mathcal{A}|:\mathcal{A}\subseteq[\Omega]^\omega, (\forall X\in[2^\omega]^\omega)~(\exists Y\in\mathcal{A})~(\forall x\in X)~(\forall^\infty V\in Y)~(x\notin V) \}$, let $X\in[2^\omega]^\omega$ and take $N\in\mathcal{F}$ such that $X\subseteq N$. Then, for any $x\in X\subseteq N$ and for all but finitely many $V\in Y_N$,  we have $x\notin V$ by Lemma \ref{lem:outer_half} and hence $\non^*_\omega(\mathcal{S})\leq|\mathcal{F}|$.
        
    	($\geq$) For $F\in[2^\omega]^{<\omega}$ let $S_F=\{C\in\Omega:F\cap C\neq \emptyset\}$. Let $\kappa<\covomn$ and let $\{X_\a:\a<\kappa\}\subseteq[\Omega]^\omega$. Define $Y_\a$ as $\{x\in2^\omega:x\text{ belongs to infinitely many elements of }X_\a\}$. We have that $Y_\a$ is of measure greater $\frac{1}{2}$. Note also that if $x\in Y_\a$ then $|S_{\{x\}}\cap X_\a|=\omega$. Let $Z_\a = Y_\a + \mathbb{Q}$. Clearly $Z_\a$ is of full measure. Now, as $\kappa <\covomn$, there is $A\in[\mathbb{R}]^\omega$ such that $A\cap Z_\a\neq\emptyset$ for all $\a<\kappa$. Then the collection $\{S_{\{x+q\}} : x\in A, q \in\mathbb{Q}\}$ is such that for every $\a < \kappa$ there are $x\in A$ and $q\in \mathbb{Q}$ such that $X_\a \cap S_{\{x+q\}}$ is infinite.
    \end{proof}

    Next, we consider the eventual different ideal.

    \begin{prop}\label{prop:non_star_omega_ed}
    	$\non^*_\omega(\ed)=\covm$.
    \end{prop}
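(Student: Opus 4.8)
The plan is to establish the two inequalities $\non^*_\omega(\ed)\ge\covm$ and $\non^*_\omega(\ed)\le\covm$ separately. For a (partial) function $f$ write $\Gamma_f=\{\langle n,f(n)\rangle:n\in\dom f\}$ for its graph; note that if $f\in\baire$ then $\Gamma_f\in\ed$ (each column $(\Gamma_f)_n$ is a singleton), and every vertical line $\{n\}\times\omega$ also lies in $\ed$ (its columns are eventually empty). I will also use the trivial observation that whether a family witnesses $\non^*_\omega$ is unaffected by shrinking its members (intersections only get smaller).

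\emph{Lower bound.} Let $\mathcal A\subseteq[\omega\times\omega]^\omega$ witness $\non^*_\omega(\ed)$. Given $g\in\baire$, feed the \emph{countably infinite} family $\overline B_g\coloneqq\{\Gamma_g\}\cup\{\{n\}\times\omega:n<\omega\}\in[\ed]^\omega$ and choose $A\in\mathcal A$ almost disjoint from every member of $\overline B_g$. Almost disjointness from each line $\{n\}\times\omega$ forces every column $(A)_n$ to be finite, so $A$ (being infinite) meets infinitely many columns and hence contains the graph of some infinite partial function $h_A\colon D_A\to\omega$ (one point per non-empty column); and almost disjointness from $\Gamma_g$ gives $h_A(n)\ne g(n)$ for all but finitely many $n\in D_A$. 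For each such $A$ put $M_A\coloneqq\{g'\in\baire:\forall^\infty n\in D_A\ (g'(n)\ne h_A(n))\}$; since $D_A$ is infinite this is an $F_\sigma$ meager set. The argument shows $\bigcup_A M_A=\baire$, so $\covm\le|\mathcal A|=\non^*_\omega(\ed)$. (Equivalently: the partial functions $h_A$ form a family such that every $g\in\baire$ is eventually different, along some infinite set, from one of them, and such families have size at least $\covm$.)

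\emph{Upper bound.} Here I use the classical fact (Bartoszy\'nski; see \cite[\S2.4]{BJ95} and \cite{Bla10}) that for \emph{any} $h\in\baire$ with $\lim_n h(n)=\infty$, $\covm$ equals the least size of a family $F\subseteq\baire$ such that for every slalom $S$ with $|S(n)|\le h(n)$ for all $n$ there is $f\in F$ with $f(n)\notin S(n)$ for all but finitely many $n$. Apply this with $h(n)=(n+1)^2$: fix such an $F$ with $|F|=\covm$ and set $\mathcal A\coloneqq\{\Gamma_f:f\in F\}\subseteq[\omega\times\omega]^\omega$. Given $\overline B=\{B_j:j<\omega\}\in[\ed]^\omega$, define $S(n)\coloneqq\bigcup\{(B_j)_n:j\le n\text{ and }|(B_j)_n|\le n\}$, so $|S(n)|\le(n+1)^2$, and observe that since each $B_j\in\ed$ we have $(B_j)_n\subseteq S(n)$ for all sufficiently large $n$. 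Pick $f\in F$ with $f(n)\notin S(n)$ eventually; then for every $j$, $\langle n,f(n)\rangle\notin B_j$ for all large $n$, whence $|\Gamma_f\cap B_j|<\omega$. Thus $\Gamma_f$ is almost disjoint from every member of $\overline B$, so $\mathcal A$ witnesses $\non^*_\omega(\ed)\le\covm$.

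The one non-elementary ingredient is the slalom characterization of $\covm$ used in the upper bound (that anti-localization against finite-width slaloms of unbounded width still computes $\covm$); the absorption of a countable family of $\ed$-sets into a single such slalom is then routine. On the lower-bound side the step requiring an idea is the choice of $\overline B_g$: adjoining \emph{all} vertical lines is what coerces the chosen member of the witness into containing a genuine partial function, and this really needs $\overline B$ to be infinite, so it is specific to the $\omega$-version rather than to $\non^*(\ed)$. One should still check the routine points that $\Gamma_g\in\ed$, that the sets $M_A$ are meager, and that replacing $A$ by $h_A$ can only shrink its intersections with the $B_j$.
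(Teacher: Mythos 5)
Your proof is correct. For the inequality $\covm\leq\non^*_\omega(\ed)$ your argument is essentially the paper's: the paper also feeds the witness the countable family $B_n=(n\times\omega)\cup\Gamma_g$ (vertical strips plus the graph of $g$), extracts an infinite partial function from the chosen member, and concludes via the infinitely--often--equal characterization of $\covm$ (their \Cref{lem:chara_covm}); you phrase the same idea directly as a cover of $\oo$ by the meager sets $M_A$, which is just the definitional form of the same fact. For $\non^*_\omega(\ed)\leq\covm$ the routes genuinely differ. The paper argues contrapositively at the cardinal level: given $F$ of size $<\non^*_\omega(\ed)$, it absorbs the countable family of $\ed$-sets into a single slalom of width $\leq h$ \emph{for every} increasing $h$ (choosing the gluing speed according to $h$), and then applies \Cref{lem:chara_covm}(3), in which the width function is universally quantified. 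You instead build an explicit witness $\{\Gamma_f:f\in F\}$ from a family $F$ of size $\covm$ that eventually evades every slalom of the \emph{fixed} width $(n+1)^2$, quoting this fixed-width anti-localization characterization of $\covm$ as a black box. That fact is true, but note it is slightly stronger than the lemma as the paper states it (where one gets to choose $h$ after seeing the family); it reduces to the i.o.-equality characterization by a short width-reduction argument (chop $\omega$ into blocks $I_n$ with $\lvert I_n\rvert=h(n)$, code restrictions to blocks as single values, and observe that one function on a block of length $h(n)$ can meet each of $h(n)$ prescribed block-patterns at some coordinate), and a version of it does appear in the classical literature. So either cite the precise fixed-width statement or include that one-paragraph reduction; with that in place your argument is complete, and it has the small advantage of producing an explicit witness for $\non^*_\omega(\ed)$ consisting of graphs of total functions, which the paper's contrapositive argument does not exhibit. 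The remaining routine verifications you flag (e.g.\ that the finitely many infinite columns of a set in $\ed$ contribute only finitely many points to $\Gamma_f\cap B_j$, and that $h_A$ is chosen from $A$ alone, independently of $g$) do check out.
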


    To prove this, we will use the following lemma:

    \begin{lem}[{\cite[Lemma 2.4.2]{BJ95}, \cite{Mil82}, \cite{Bar87}, \cite[Theorem 5.1]{CM23}}]\label{lem:chara_covm}
        For any cardinal $\kappa$, the following are equivalent:
        \begin{itemize}
		      \item $\kappa<\covm$.
		      \item $\forall F\in[\oo]^\kappa~\forall G\in[\ooo]^\kappa~\exists g\in\oo~\forall f\in F~\forall X\in G~\exists^\infty n\in X~(f(n)=g(n))$. \label{item:covm_two}
            \item $\forall F\in[\oo]^\kappa~\forall h\in\omega^\omega: \text{increasing}~\exists S\in\prod_{n<\omega}[\omega]^{\leq h(n)}~\forall f\in F~\exists^\infty n<\omega~(f(n)\in S(n))$. \label{item:covm_three}
        \end{itemize}
    \end{lem}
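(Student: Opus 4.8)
The three conditions are all standard reformulations of the statement $\kappa<\covm$, and I would prove their equivalence via the cycle $(1)\Rightarrow(2)\Rightarrow(3)\Rightarrow(1)$. The only step that needs a new idea is $(1)\Rightarrow(2)$, which is a short Baire category computation; $(2)\Rightarrow(3)$ is essentially trivial; and $(3)\Rightarrow(1)$ is obtained by reducing to the single-function matching version of $\covm$, namely the classical Bartoszyński characterization, which I would quote from \cite[Lemma 2.4.2]{BJ95}, \cite{Mil82}, \cite{Bar87}.

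For $(1)\Rightarrow(2)$, assume $\kappa<\covm$ and fix $F\in[\oo]^\kappa$ and $G\in[\ooo]^\kappa$. For $f\in F$ and $X\in G$ put
\[
N_{f,X}=\{g\in\oo:\foralmostall n\in X\ (g(n)\neq f(n))\}=\bigcup_{m<\omega}\Big\{g\in\oo:\forall n\in X\cap[m,\omega)\ (g(n)\neq f(n))\Big\}.
\]
Each set in this union is closed, being an intersection of the clopen sets $\{g:g(n)\neq f(n)\}$, and it is nowhere dense: given $s\in\fsq$, pick $n_0\in X$ with $n_0\geq\max(\lvert s\rvert,m)$ and extend $s$ to some $s'\in\fsq$ with $s'(n_0)=f(n_0)$; then $[s']\subseteq[s]$ is disjoint from it. Hence each $N_{f,X}$ is meager, and since $\lvert F\times G\rvert<\covm$ the union $\bigcup_{f\in F,\,X\in G}N_{f,X}$ cannot be all of $\oo$. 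Any $g$ lying outside this union witnesses $(2)$: for all $f\in F$ and $X\in G$ we have $g\notin N_{f,X}$, i.e.\ $\existsinfty n\in X\ (g(n)=f(n))$. This direction, together with the reduction below, is the content of \cite[Theorem 5.1]{CM23}.

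For $(2)\Rightarrow(3)$, let $F\in[\oo]^\kappa$ and let $h$ be increasing; apply $(2)$ with $F$ and with any $G\in[\ooo]^\kappa$ having $\omega\in G$, and read off the conclusion at $X=\omega$ to get $g\in\oo$ with $\forall f\in F\ \existsinfty n\ (g(n)=f(n))$, so $S(n):=\{g(n)\}$ (replaced by $\emptyset$ on the initial segment where $h(n)=0$, which is irrelevant to $\existsinfty$) is a slalom with $\lvert S(n)\rvert\leq h(n)$ catching every $f\in F$ infinitely often. For $(3)\Rightarrow(1)$, I would first extract the single-function statement: every $F\subseteq\oo$ of size at most $\kappa$ admits $g\in\oo$ with $\forall f\in F\ \existsinfty n\ (g(n)=f(n))$. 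Fix such an $F$ and an interval partition $\langle J_k\rangle_{k<\omega}$ of $\omega$ with $\lvert J_k\rvert\geq k+1$, code each $f\upharpoonright J_k\in\omega^{J_k}$ by a natural number, obtaining $\bar f\in\oo$, and apply $(3)$ to $\{\bar f:f\in F\}$ with $h(k)=k+1$. This yields a slalom $T$ with $\lvert T(k)\rvert\leq k+1$ and $\forall f\in F\ \existsinfty k\ (\bar f(k)\in T(k))$; decoding, $T(k)$ corresponds to a set $\hat T(k)\subseteq\omega^{J_k}$ of at most $k+1\leq\lvert J_k\rvert$ finite sequences, and for every $f$ there are infinitely many $k$ with $f\upharpoonright J_k\in\hat T(k)$. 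Now build $g$ block by block: enumerating $\hat T(k)=\{u_1,\dots,u_r\}$ with $r\leq\lvert J_k\rvert$, choose distinct $n_1,\dots,n_r\in J_k$ and set $g(n_i)=u_i(n_i)$ (with $g$ arbitrary elsewhere on $J_k$), so that $g\upharpoonright J_k$ agrees with each member of $\hat T(k)$ at some point of $J_k$. Then whenever $f\upharpoonright J_k\in\hat T(k)$ there is $n\in J_k$ with $g(n)=f(n)$, and as this occurs for infinitely many $k$ we get $\existsinfty n\ (g(n)=f(n))$. By the Bartoszyński characterization of $\covm$ as the least size of a family $F\subseteq\oo$ for which no single $g\in\oo$ satisfies $\forall f\in F\ \existsinfty n\ (g(n)=f(n))$ (see \cite[Lemma 2.4.2]{BJ95}, \cite{Mil82}, \cite{Bar87}), the single-function statement is exactly $\kappa<\covm$, which closes the cycle.

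The step that is not bookkeeping is $(3)\Rightarrow(1)$. A direct attempt to manufacture $g$ from a width-$n$ slalom fails: within one block one would have to realize every admissible slalom value at every position of that block, and $\sum_{n\in J_k}\lvert S(n)\rvert$ can far exceed $\lvert J_k\rvert$, so the block cannot accommodate the needed values. The block-coding above circumvents this by replacing ``at most $n$ candidate values at each of $\lvert J_k\rvert$ positions'' by ``at most $k+1$ candidate whole-block patterns'', which a block of length $\geq k+1$ can meet one point at a time. The remaining ingredient — that the single-function matching number equals $\covm$ — is the genuinely classical input, and I would quote rather than reprove it.
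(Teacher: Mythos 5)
Your proof is correct. Note that the paper itself does not prove this lemma at all: it is stated as a quoted classical fact, with exactly the references you lean on, so there is no internal proof to compare against, and your argument is the standard one from that literature. The cycle works: the category computation for $(1)\Rightarrow(2)$ (each $N_{f,X}$ is a countable union of closed nowhere dense sets and $\lvert F\times G\rvert<\covm$), the specialization $X=\omega$, $S(n)=\{g(n)\}$ for $(2)\Rightarrow(3)$, and the block-coding for $(3)\Rightarrow(1)$, where trading ``at most $h(n)$ candidate values at each position'' for ``at most $k+1$ candidate whole-block patterns on a block of length $\geq k+1$'' is exactly the right move and you verify it correctly. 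The one imported ingredient is the Bartoszy\'nski--Miller identification of $\covm$ with the least size of a family admitting no single infinitely-often-equal $g$, and for $(3)\Rightarrow(1)$ you are using the nontrivial inequality of that identification; quoting it rather than reproving it is consistent with how the paper treats the entire lemma, namely as a citation to \cite{BJ95}, \cite{Mil82}, \cite{Bar87}, \cite{CM23}. Two small points of hygiene: since $(2)$ and $(3)$ quantify over families of size exactly $\kappa$, when you pass to the single-function statement for families of size at most $\kappa$ you should enlarge $F$ to size $\kappa$ first (harmless, since in the only non-vacuous case $\kappa\leq\cc$), and your parenthetical fix for possible values $h(n)=0$ is fine because ``increasing'' confines this to finitely many $n$, which does not affect the $\exists^\infty$ conclusion.
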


    \begin{proof}[Proof of \cref{prop:non_star_omega_ed}]
        To show $\covm\leq\non^*_\omega(\ed)$, let $\mathcal{A}\subseteq[\omega\times\omega]^\omega$ be of size $<\covm$. It suffices to show that $\mathcal{A}$ is not a witness for $\non^*_\omega(\ed)$. For each $A\in\calA$,
        \begin{enumerate}
            \item $A$ contains an infinite subset of $n\times\omega$ or \label{item_ed_verti}
            \item $A$ contains a graph of a function $f_A\colon X_A\to\omega$ for some infinite set $X_A$. \label{item_ed_pf}
        \end{enumerate}
        By \Cref{lem:chara_covm}, there is some $g\in\oo$ such that whenever $A\in\mathcal{A}$ satisfies \eqref{item_ed_pf}, $f_A(n)=g(n)$ holds for infinitely many $n\in X_A$. For each $n<\omega$, let
        \[
        B_n=(n\times\omega)\cup\{\langle i,g(i)\rangle:i<\omega\}\in\ed.
        \]
        Whichever $A\in\mathcal{A}$ satisfies \eqref{item_ed_verti} or \eqref{item_ed_pf}, $\lvert A\cap B_n\rvert = \omega$ for some $n<\omega$. Thus, $\lvert\mathcal{A}\rvert<\non^*_\omega(\ed)$.
	
        To show $\non^*_\omega(\ed)\leq\covm$, let $F\subseteq\oo$ of size $<\non^*_\omega(\ed)$. For $f\in F$, let $A_f\subseteq\omega\times\omega$ be the graph of $f$. Since $\lvert F\rvert<\non^*_\omega(\ed)$, there is $\{B_i:i<\omega\}\subset\ed$ such that for each $f\in F$, $\lvert A_f\cap B_i\rvert=\omega$ for some $i<\omega$. We may assume that every vertical section of $B_i$ is finite. For each $i<\omega$, we define $S_i\in([\omega]^{<\omega})^\omega$ by $S_i(n) = (B_i)_n$ for $n<\omega$. Since $B_i\in\ed$, $S_i\in([\omega]^{\leq k_i})^\omega$ for some $k_i<\omega$. Note that for any $f\in F$, there is $i<\omega$ such that $\exists^\infty n<\omega~f(n)\in S_i(n)$. Now let $h\geq^* 1$ be arbitrary. We choose sufficiently slow-growing function $g\in\omega^\omega$ so that $\sum_{i\leq g(n)}k_i\leq h(n)$ for all $n<\omega$. Then we define $S\in\prod_{n<\omega}[\omega]^{\leq h(n)}$ by $S(n) = \bigcup_{i\leq g(n)}S_i(n)$. Since for all $f\in F$, $\exists^\infty n<\omega~f(n)\in S(n)$, we have $\lvert F\rvert<\covm$ by \Cref{lem:chara_covm}.
    \end{proof}

    We have nothing more than observation about the values of $\non^*_\omega(\I)$ for other $F_\sigma$ ideals $\I$ such as $\edfin, \I_L$ and $\I_P$:
    
    \begin{lem}\label{lem:EDfin_Linear}
         Let $f\in\oo$ go to the infinity and consider $\edfin$ on the set $\{(n,m):m\leq2^n\}$, which is $KB$-equivalent to the original $\edfin$.
         Then, for any $\{I_i:i<\omega\}\subseteq\edfin$ there is $I\in\IL$ such that $I_i\subseteq^* I$ for all $i<\omega$. 
    \end{lem}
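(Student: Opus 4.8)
The plan is to build $I$ by a single \emph{staggered} union of the $I_i$'s: at the $n$-th column of $X$ I insert only the sections $(I_i)_n$ of those $I_i$ whose index $i$ lies below a threshold $m_i$, chosen to grow fast enough to keep the columns of $I$ thin. First I would fix notation. Write $C_n\coloneqq X\cap(\{n\}\times\omega)=\{(n,m):m\le 2^n\}$ for the $n$-th column, so $X=\bigsqcup_n C_n$ and $|C_n|=2^n+1$, and for $A\subseteq X$, $n<\omega$ put $(A)_n\coloneqq\{m:(n,m)\in A\}$. Under the natural identification of $X$ with $\omega$ that enumerates $C_0,C_1,\dots$ in order, a routine estimate shows each $C_n$ meets only the two blocks $P^{\mathrm{exp}}_n$ and $P^{\mathrm{exp}}_{n+1}$; consequently membership of $A\subseteq X$ in $\I_L$ (resp.\ $\I_L^f$) is equivalent, up to a constant, to having $|A\cap C_n|\le n$ (resp.\ $\le f(n)$) for all but finitely many $n$. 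So it suffices to produce $I\subseteq X$ with $|I\cap C_n|\le g(n)$ for all but finitely many $n$, where $g(n)\coloneqq\min\{n,\inf_{k\ge n}f(k)\}$ — a nondecreasing function which, since $f\to\infty$, tends to infinity, and which satisfies $g(n)\le n$ and $g(n)\le f(n)$, so that such an $I$ lands in $\I_L$ (indeed in $\I_L^f$).

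I would then carry out the construction as follows. For each $i$ fix $k_i,n_i<\omega$ witnessing $I_i\in\edfin$, i.e.\ $|(I_i)_n|\le k_i$ for $n\ge n_i$, arranging $\langle k_i\rangle$ and $\langle n_i\rangle$ to be nondecreasing (pass to running maxima). As $g\to\infty$, let $N_i<\omega$ be least with $g(n)\ge\sum_{j\le i}k_j$ for all $n\ge N_i$, and set $m_{-1}\coloneqq-1$ and, recursively, $m_i\coloneqq\max\{m_{i-1}+1,\,n_i,\,N_i\}$, so that $\langle m_i\rangle$ is strictly increasing with $m_i\ge n_i$ and $m_i\ge N_i$. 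Now define $(I)_n\coloneqq\bigcup\{(I_i)_n:m_i\le n\}$ for each $n$ (a finite union, as $\{i:m_i\le n\}$ is a finite initial segment of $\omega$), and let $I\coloneqq\{(n,m):m\in(I)_n\}$. Then each $I_i\subseteq^* I$: once $n\ge m_i$ we have $(I_i)_n\subseteq(I)_n$, so $I_i\setminus I\subseteq\bigcup_{n<m_i}C_n$, which is finite. And $|I\cap C_n|\le g(n)$ for all $n\ge m_0$: writing $i^*\coloneqq\max\{i:m_i\le n\}$, every contributing index $i\le i^*$ satisfies $n\ge m_i\ge n_i$, hence $|(I_i)_n|\le k_i$, so $|I\cap C_n|\le\sum_{i\le i^*}k_i\le g(n)$, the last step because $n\ge m_{i^*}\ge N_{i^*}$.

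There is no deep obstacle here; the one real point — missed by the naive attempts — is that the thresholds must be staggered. Taking $I=\bigcup_i I_i$, or even $\bigcup_i\bigl(I_i\setminus\bigcup_{n<n_i}C_n\bigr)$, may fail to lie in $\I_L$: the bounds $k_i$ can grow, and then the partial sums $\sum_{i\le j}k_i$ outrun every linear function. Demanding $m_i\ge n_i$ guarantees that only already-thin sections of $I_i$ are ever added, and demanding $m_i\ge N_i$ guarantees that at each column the finitely many active bounds sum to at most $g(n)$; it is this second demand that uses $f\to\infty$. The rest — chiefly the elementary block estimate $C_n\cap P^{\mathrm{exp}}_m\ne\emptyset\Rightarrow m\in\{n,n+1\}$ under the chosen identification — is bookkeeping.
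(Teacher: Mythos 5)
Your proof is correct, and in fact the paper states this lemma without any proof at all, so your staggered-union argument (choosing thresholds $m_i\geq\max\{n_i,N_i\}$ so that at each column only already-bounded sections are absorbed and the finitely many active bounds $k_0+\cdots+k_{i^*}$ stay below $g(n)$) supplies exactly the kind of diagonal-absorption argument the authors evidently had in mind; the block-versus-column bookkeeping via $C_n\cap P^{\mathrm{exp}}_m\neq\emptyset\Rightarrow m\in\{n,n+1\}$ is also right. As a bonus, your construction even yields $I\in\I_L^f$, which explains the otherwise unused hypothesis that $f$ tends to infinity in the statement.
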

    
    \begin{cor}
    	$\non^*_\omega(\edfin)\leq\non^*(\IL)$.
    \end{cor}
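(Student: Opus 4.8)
The plan is to show directly that any family witnessing $\non^*(\IL)$, transported to the appropriate countable set, already witnesses $\non^*_\omega(\edfin)$; the crux is that \Cref{lem:EDfin_Linear} collapses an arbitrary countable family of $\edfin$-sets into a single $\IL$-set.

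First I would reduce to a convenient copy of $\edfin$. By \Cref{lem:reductions_and_invstaromega}(2), $\non^*_\omega$ is monotone under Kat\v{e}tov--Blass reducibility, hence invariant under $KB$-equivalence; so it suffices to compute $\non^*_\omega$ for the copy of $\edfin$ living on the set $X=\{(n,m):m\leq 2^n\}$ supplied by \Cref{lem:EDfin_Linear}. The statement of that lemma already fixes an identification of $X$ with $\omega$ under which each member of $\edfin$ becomes a subset of $\omega$ and the comparison $\subseteq^*$ against $\IL$-sets is meaningful; I would simply work inside that framework, so that $[X]^\omega=[\omega]^\omega$.

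Now let $\mathcal{A}\subseteq[\omega]^\omega$ be a witness for $\non^*(\IL)$ of size $\non^*(\IL)$, i.e.\ for every $B\in\IL$ there is $A\in\mathcal{A}$ with $|A\cap B|<\omega$. I claim $\mathcal{A}$ witnesses $\non^*_\omega(\edfin)$. Fix $\overline{B}=\{B_i:i<\omega\}\in[\edfin]^\omega$. By \Cref{lem:EDfin_Linear} there is $I\in\IL$ with $B_i\subseteq^* I$ for every $i<\omega$. Choose $A\in\mathcal{A}$ with $|A\cap I|<\omega$. Then for each $i<\omega$ we have $A\cap B_i\subseteq(A\cap I)\cup(B_i\setminus I)$, a union of two finite sets, so $|A\cap B_i|<\omega$. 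Hence $\mathcal{A}$ is a witness and $\non^*_\omega(\edfin)\leq|\mathcal{A}|=\non^*(\IL)$.

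There is no real obstacle once \Cref{lem:EDfin_Linear} is available; the only point deserving a moment's care is the first step -- checking that the $KB$-equivalence legitimately lets one pass to the copy of $\edfin$ on $X$ and that the identification of $X$ with $\omega$ sends $\edfin$-sets into the domain of $\IL$ -- but this is routine and is in any case built into the statement of \Cref{lem:EDfin_Linear}.
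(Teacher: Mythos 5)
Your proof is correct and follows essentially the same route as the paper's: both arguments apply \Cref{lem:EDfin_Linear} to collapse the countable family of $\edfin$-sets into a single $\IL$-set and then use a $\non^*(\IL)$-witness on the copy of $\edfin$ living on $\{(n,m):m\leq 2^n\}$. Your extra remarks on $KB$-invariance of $\non^*_\omega$ just make explicit what the paper leaves implicit in the phrase ``$KB$-equivalent to the original $\edfin$''.
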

    \begin{proof}
        Let $\mathcal{A}\subseteq[\{(n,m):m\leq2^n\}]^\omega$ witness $|\mathcal{A}|=\non^*(\I_L)$. Given $\{I_i:i<\omega\}\subseteq\edfin$, take $I\in\IL$ as in \Cref{lem:EDfin_Linear}. Take $A\in\mathcal{A}$ almost disjoint from $I$. Then $A$ is almost disjoint from all $I_i$, so we have $\non^*_\omega(\edfin)\leq|\mathcal{A}|=\non^*(\IL)$.
    \end{proof}

    \begin{rem}
        A similar argument shows that $\non^*_\omega(\I_L)\leq\non^*(\I_P)$. For the summable ideal $\I_{1/n}$, it is known that $\I_P\leq_{\mathrm{KB}}\I_{1/n}$ and $\non^*_\omega(\I_{1/n}) = \non^*(\I_{1/n}) \leq \non(\n)$ (cf.\ \cite[Theorem 3.7(2)]{HH07}), so $\non^*_\omega(\I_P)\leq\non(\n)$ also holds.
    \end{rem}

    \begin{que}
        Is there an $F_\sigma$ ideal $\I$ such that $\omega_1\leq\non^*(\I)<\non^*_\omega(\I)$ is consistent? Are $\edfin, \I_L, \I_P$ such examples?
    \end{que}

    We have the following general result for $F_\sigma$ ideals, which is the ``$\omega$-version'' of \cite[Corollary 4.6(1)]{HMM10}.

    \begin{thm}
        For every $F_\sigma$ ideal $\I$ on $\omega$, $\non^*_\omega(\I)\leq\mathfrak{l}$. Therefore, it is consistent that for all $F_\sigma$ ideals $\I$ on $\omega$, $\non^*_\omega(\I) < \bb$ holds (e.g. it holds in the Laver model).
    \end{thm}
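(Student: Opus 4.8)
The plan is to combine Mazur's characterization $\I = \Fin(\varphi)$ for a lower semi-continuous submeasure $\varphi$ (\cref{fac:submeasure_chara}) with the definition of $\mathfrak{l}$ applied to a carefully chosen function $g$. First I would reduce, for a given countable family $\overline{B}\subseteq\I$, to a normalized form: replacing $B_m$ by the finite unions $C_t=\bigcup_{m\le t}B_m$ and then setting $B'_s=\bigcup\{C_t:\varphi(C_t)\le s\}$, one obtains a $\subseteq$-increasing sequence $\langle B'_s:s<\omega\rangle$ in $\I$ with $\varphi(B'_s)\le s$ (using that $t\mapsto\varphi(C_t)$ is non-decreasing, so $\{t:\varphi(C_t)\le s\}$ is an initial segment, together with lower semi-continuity) such that every $B_m$ lies below some $B'_s$. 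Hence it suffices to build $\mathcal{A}\subseteq[\omega]^\omega$ of size $\le\mathfrak{l}$ so that every $\subseteq$-increasing sequence $\langle B_t:t<\omega\rangle$ in $\I$ with $\varphi(B_t)\le t$ admits some $A\in\mathcal{A}$ almost disjoint from every $B_t$.

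Then I would fix an interval partition $\langle I_k:k<\omega\rangle$ of $\omega$ with $\varphi(I_k)>(k+1)^3$ --- possible since lower semi-continuity gives $\varphi([n,\omega))=\lim_j\varphi([n,j))=\infty$ for every $n$ (as $\varphi(\omega)=\infty$) --- and let $P_k=\prod_{t\le k}\{S\subseteq I_k:\varphi(S)\le t\}$, $g(k)=|P_k|$, fixing bijections $e_k\colon P_k\to g(k)$. For a normalized $\overline{B}$, the point $f_{\overline{B}}(k)=e_k(\langle B_0\cap I_k,\dots,B_k\cap I_k\rangle)$ is well defined (each $\varphi(B_t\cap I_k)\le\varphi(B_t)\le t$), so $f_{\overline{B}}\in\prod_k g(k)$. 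Applying the definition of $\mathfrak{l}$ to this $g$ yields $\mathcal{S}\subseteq\prod_k[g(k)]^{\le k}$ with $|\mathcal{S}|=\mathfrak{l}$ capturing every $f\le^* g$, in particular $f_{\overline{B}}$. For each $S\in\mathcal{S}$ and each $k$, put $\mathcal{T}_k^S=e_k^{-1}[S(k)]$ (at most $k$ tuples) and $U_k^S=\bigcup\{S_t:\langle S_0,\dots,S_k\rangle\in\mathcal{T}_k^S,\ t\le k\}$; subadditivity gives $\varphi(U_k^S)\le k\cdot\sum_{t\le k}t\le k^3<\varphi(I_k)$, so $I_k\setminus U_k^S\ne\emptyset$, and I would set $a_S(k)=\min(I_k\setminus U_k^S)$, $A_S=\{a_S(k):k<\omega\}$, and $\mathcal{A}=\{A_S:S\in\mathcal{S}\}$. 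Each $A_S$ is infinite (one point per interval) and $|\mathcal{A}|\le\mathfrak{l}$.

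It remains to check that $\mathcal{A}$ works: if $S$ captures $f_{\overline{B}}$, fix $K$ with $f_{\overline{B}}(k)\in S(k)$ for $k\ge K$; then for every $t$ and every $k\ge\max(t,K)$ the tuple $\langle B_0\cap I_k,\dots,B_k\cap I_k\rangle$ lies in $\mathcal{T}_k^S$, hence $B_t\cap I_k\subseteq U_k^S$ and so $a_S(k)\notin B_t$, giving that $A_S\cap B_t$ is finite. The consistency clause then follows from the quoted fact that $\mathfrak{l}=\omega_1<\bb=\dd$ in the Laver model. The one point that needs care is keeping the parameter $g$ independent of $\overline{B}$: this is exactly why one normalizes to $\varphi(B_t)\le t$ (so the $t$-th factor of $P_k$ is the absolute set ``$S\subseteq I_k$ with $\varphi(S)\le t$'') and why the partition must be chosen with $\varphi(I_k)$ growing fast enough to absorb all the $\le k$ traces at all levels $t\le k$ simultaneously. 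I would also remark that this argument does not give $\cof^*_\omega(\I)\le\mathfrak{l}$ in general, since covering each $B_t$ --- rather than merely avoiding it --- would require choosing among the $k$ candidate traces in $\mathcal{T}_k^S$ at every coordinate, which is not a countable amount of choice; this is consistent with $\non^*_\omega$ being the weaker invariant and with $\cof^*_\omega(\I)\le\mathfrak{l}$ holding only for gradually fragmented $\I$.
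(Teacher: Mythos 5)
Your proof is correct and follows essentially the same route as the paper: Mazur's representation $\I=\Fin(\varphi)$, an interval partition with $\varphi$-mass growing faster than $k^3$, coding the traces of the countable family on each interval into a function bounded by a ground-model $g$, applying $\mathfrak{l}$ to capture it by width-$k$ slaloms, and choosing one point per interval outside the $\varphi$-small union of guessed traces. The only difference is cosmetic: you normalize the family to an increasing sequence with $\varphi(B_t)\le t$ and code full tuples, whereas the paper performs the same normalization implicitly by setting $H(k)=\bigcup_{n<k}(X_n\cap P_k)$ for those $n$ with $\varphi(X_n)\le k$.
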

    \begin{proof}
        Let $\varphi$ be a lower semi-continuous submeasure such that $\mathrm{Fin}(\varphi) = \mathscr{I}$. Fix an interval partition $\langle P_n\rangle_{n<\omega}$ of $\omega$ such that $\varphi(P_n) > n^3$. For each $n<\omega$, we set $Q_n = \{A\subset P_n:\varphi(A)\leq n^2\}$. Let $\{S_\alpha:\alpha<\mathfrak{l}\}\subset\prod_{n<\omega}[Q_n]^{\leq n}$ such that for any $H\in\prod_{n<\omega}Q_n$, there is $\alpha<\mathfrak{l}$ such that $\forall^\infty n<\omega\,(H(n)\in S_\alpha(n))$. For each $\alpha < \mathfrak{l}$ and $n<\omega$, we can pick $a_{\alpha}^n\in P_n\setminus\bigcup S_\alpha(n)$ because $\varphi(P_n)> \varphi(\bigcup S_\alpha(n))$. Let $A_\alpha = \{a_{\alpha}^n: n<\omega\}$.

        We claim that $\mathcal{A} = \{A_\alpha:\alpha<\mathfrak{l}\}$ is a witness for $\non^*_\omega(\I)$. To show this, let $\{X_n : n<\omega\}\subset \I$ be arbitrary. For each $n<\omega$, let $H_n$ be a function with domain $\omega$ defined by
        \[
        H_n(k) =
        \begin{cases}
            P_k \cap X_n & \text{if }\varphi(X_n)\leq k,\\
        \emptyset & \text{if }\varphi(X_n)>k.
        \end{cases}
        \]
        Also, let $H$ be a function on $\omega$ defined by $H(k) = \bigcup_{n < k}H_n(k)$. Then $H\in\prod_{n<\omega}Q_n$. So, there is $\alpha < \mathfrak{l}$ such that $\forall^\infty n<\omega\,(H(n)\in S_\alpha(n))$. Note that for all $n<\omega$, $X_n\subset^*\bigcup_{k<\omega}H(k)$, since $\varphi(X_n)<\infty$. It follows that for all $n<\omega$, $\lvert A_\alpha \cap X_n \rvert < \omega$.
    \end{proof}

    This can be regarded as a reformulation of the ``$\omega$-version'' of \cite[Theorem 4.3 and Lemma 4.4]{HMM10}.
    Recall that a forcing poset $\p$ has the \emph{Laver property} if for any $H\in\oo$ and any $\p$-name $\dot{f}$ of a member of $\prod_{n<\omega}H(n)$,  
    \[
    \Vdash_{\p}\exists A\in\prod_{n<\omega}[H(n)]^{\leq2^n}\cap V~\forall n<\omega~(f(n)\in A(n)).
    \]

    \begin{thm}\label{thm:nonstaromega_is_small_in_Laver}
        Let $\mathscr{I}$ be an $F_\sigma$ ideal on $\omega$ and let $\mathbb{P}$ be a poset with the Laver property. Then
        \begin{equation}
            \label{eq:Laver_property}
            \forces_{\mathbb{P}} \forall \langle \dot{X_n} : n \in \omega \rangle \subseteq \mathscr{I}~\exists X \in [\omega]^\omega \cap V~\forall n \in \omega~(|\dot{X_n} \cap X | < \omega).
        \end{equation}
    \end{thm}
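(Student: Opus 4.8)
The plan is to run the forcing-theoretic shadow of the preceding theorem ($\non^*_\omega(\mathscr{I})\le\mathfrak{l}$ for $F_\sigma$ ideals): the $\mathfrak{l}$-sized family of slaloms gets replaced by a single appeal to the Laver property. First I would fix a lower semi-continuous submeasure $\varphi$ on $\omega$ with $\Fin(\varphi)=\mathscr{I}$ (available by \Cref{fac:submeasure_chara}). Since $\mathscr{I}$ is proper, $\omega\notin\mathscr{I}$, so $\varphi(\omega)=\infty$; lower semicontinuity then gives, for every $N$ and every $k$, some $m>k$ with $\varphi([k,m))\ge N$ — otherwise $\varphi([k,\infty))=\lim_m\varphi([k,m))\le N$, putting $[k,\infty)$, hence $\omega$, into $\mathscr{I}$. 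So I can build an interval partition $\langle P_n\rangle_{n<\omega}$ of $\omega$ with $\varphi(P_n)>2^n\cdot n^2$ for all $n$. Set $Q_n=\{A\subseteq P_n:\varphi(A)\le n^2\}$, a finite nonempty set (it contains $\emptyset$), and fix a bijection $e_n\colon Q_n\to|Q_n|$.

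Given a $\p$-name $\langle\dot X_n:n<\omega\rangle$ for a sequence of members of $\mathscr{I}$, I would form the diagonalizing name exactly as in the proof of the previous theorem: let $\dot H_n(k)=P_k\cap\dot X_n$ if $\varphi(\dot X_n)\le k$ and $\dot H_n(k)=\emptyset$ otherwise, and let $\dot H(k)=\bigcup_{n<k}\dot H_n(k)$. Since $\varphi(\dot H_n(k))\le k$ whenever $\dot H_n(k)\ne\emptyset$, subadditivity gives $\varphi(\dot H(k))\le k^2$, so $\forces_\p\dot H\in\prod_k Q_k$. Coding through the bijections, the name $k\mapsto e_k(\dot H(k))$ is forced to lie in $\prod_k|Q_k|$, so the Laver property furnishes $A\in\prod_k[|Q_k|]^{\le 2^k}\cap V$ with $\forces_\p\forall k\,(e_k(\dot H(k))\in A(k))$; putting $S(k)=e_k^{-1}[A(k)]$ yields a ground-model slalom $S\in\prod_k[Q_k]^{\le 2^k}\cap V$ with $\forces_\p\forall k\,(\dot H(k)\in S(k))$.

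Finally, working in $V$: $\varphi(\bigcup S(k))\le|S(k)|\cdot k^2\le 2^k\cdot k^2<\varphi(P_k)$, so $P_k\setminus\bigcup S(k)\ne\emptyset$; I pick $a_k$ there and set $X=\{a_k:k<\omega\}\in[\omega]^\omega\cap V$ (infinite, since the $P_k$ are pairwise disjoint and nonempty). To finish I would verify the displayed statement in an arbitrary generic extension $V[G]$: fixing $n$ and putting $m=\varphi(\dot X_n^G)<\infty$, for every $k\ge\max\{n+1,m\}$ we have $P_k\cap\dot X_n^G=\dot H_n^G(k)\subseteq\dot H^G(k)\subseteq\bigcup S(k)$, hence $a_k\notin\dot X_n^G$; thus $X\cap\dot X_n^G$ is contained in the finite set $\{a_k:k<\max\{n+1,m\}\}$, so $|\dot X_n^G\cap X|<\omega$. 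There is no genuine obstacle: the only points needing care are the elementary construction of $\langle P_n\rangle$ isolated in the first paragraph and the bookkeeping of the bound $\varphi(\dot H(k))\le k^2$, both routine, the result being essentially the forcing-theoretic restatement of the combinatorial theorem proved just above.
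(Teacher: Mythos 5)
Your proposal is correct and follows essentially the same route as the paper: the same submeasure $\varphi$, the same interval partition with $\varphi(P_k)>k^2\cdot 2^k$, the same diagonalizing names $\dot H_n$, $\dot H$ with $\varphi(\dot H(k))\le k^2$, a Laver-property capture of $\dot H$ by a ground-model slalom of width $2^k$, and the choice of $a_k\in P_k$ outside the union of the slalom. The only (harmless) additions are your explicit justification that the partition exists and the coding of the finite sets $Q_k$ by integers before invoking the Laver property, which the paper does implicitly.
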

    \begin{proof}
        Let $\varphi$ be a lower semi-continuous submeasure such that $\mathrm{Fin}(\varphi) = \mathscr{I}$ and let $\langle \dot{X_n} : n \in \omega \rangle$ be a sequence of names such that $\forces{\forall n \in \omega (\varphi(\dot{X_n}) \text{ is finite})}$. Find an interval partition of $\omega = \bigcup_{n<\omega} P_n$ such that $\varphi(P_n) > n^2 \cdot 2^n$. For each $n \in \omega$ find $\dot{H_n}$ the name of a function such that it is forced that:
        $$
        \dot{H_n}(k) = \begin{cases}
        \emptyset & \text{if }\varphi(\dot{X_n}) > k,\\
        P_k \cap \dot{X_n} & \text{if }\varphi(\dot{X_n}) \leq k.
        \end{cases}
        $$
        Observe that $\forces{\forall n \in \omega (\varphi(\dot{H_n}(k)) \leq k)}$. Let $\dot{H}$ be a name for a function such that $\dot{H}(k) = \bigcup_{i < k} \dot{H_i}(k)$. We will see that, for all $k\in\omega$,
        \begin{enumerate}
            \item $\forces{\varphi(\dot{H}(k)) \leq k^2}$,
            \item $\forces{\dot{H}(k) \subseteq P_k}$,
            \item $\forces{\dot{X_k} \subseteq^* \bigcup_{i\in\omega}\dot{H}(i)}$.
        \end{enumerate}
        1 and 2 are immediate from the definition of $\dot{H}$. To see 3: Let $p \in \mathbb{P}$ and let $q \leq p$ and $\ell \in \omega$ be such that $k < \ell$ and $q\forces{\varphi(\dot{X}_k) \leq \ell}$. Then, for every $i \geq \ell$ we have that $q \forces{\dot{H_k}(i) = P_i \cap \dot{X_k}}$, and therefore, if $n = \max P_\ell$, we have that $q \forces{\dot{X_k} \setminus n \subseteq \bigcup_{i > \ell}\dot{H}(i)}$.
    
        We will now continue with the proof: Since $\dot{H}$ is the name of a bounded function, by Laver property, there must be a function $G$ such that, for all $k \in \omega$
        \begin{enumerate}[label = (\alph*)]
            \item $G(k)$ is a list of subsets of $P_k$ of measure $\leq k^2$, 
            \item $|G(k)| \leq 2^k$,
            \item $\forces{\dot{H}(k) \in G(k)}$.
        \end{enumerate}
        Therefore, for each $k \in \omega$, (1), (2), (a) and (b) imply that $\bigcup G(k) \subseteq P_k$ and $\varphi(\bigcup G(k)) \leq k^2 \cdot 2^k$, so there is $x_k \in P_k \setminus \bigcup G(k)$. If $X = \{ x_k : k \in \omega\}$ then $X \cap \bigcup_{k \in \omega} \bigcup G(k) = \emptyset$, and therefore, by (3) and (c), we have that for all $k \in \omega$, $\forces{X \cap \dot{X_k}\text{ is finite}}$.
    \end{proof}

    By carefully choosing functions $f,g\in\oo$ which would depend on the submeasure associated to $\I$, we can prove Theorem \ref{thm:nonstaromega_is_small_in_Laver} for $\langle f, g\rangle$-bounding forcings (see \cite[Definition 7.2.13]{BJ95} and \cite[Theorem 7.2.19]{BJ95}).  Therefore, if we consider the forcing $\mathbb{PT}_{f,g}$ (see \cite[Definition 7.3.3]{BJ95} and \cite[Theorem 7.3.9]{BJ95}), we get the following result:
    
    \begin{cor}\label{cor:Fsigma_PTfg}
        Let $\I$ be a tall $F_\sigma$ ideal. Then it is consistent that $\bb = \non^*_\omega(\I) < \non(\mathcal{M})$.
    \end{cor}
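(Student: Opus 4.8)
The plan is to obtain such a model by a countable support iteration of the tree forcing $\mathbb{PT}_{f,g}$ of \cite[Definition 7.3.3]{BJ95}, where $f,g\in\oo$ are chosen to match the submeasure representing $\I$, and to re-run the proof of \cref{thm:nonstaromega_is_small_in_Laver} with the Laver property replaced by the weaker $\langle f,g\rangle$-bounding property of \cite[Definition 7.2.13]{BJ95}. First I would fix, by \cref{fac:submeasure_chara}, a lower semi-continuous submeasure $\varphi$ with $\Fin(\varphi)=\I$. Since $\I$ is proper, $\varphi(\omega)=\infty$, so $\varphi(\{0,\dots,m\})\to\infty$ by lower semi-continuity, and one may recursively carve out an interval partition $\langle P_n\rangle_{n<\omega}$ of $\omega$ with $\varphi(P_n)$ as large as desired at each step. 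I would then set $Q_n=\{A\subseteq P_n:\varphi(A)\leq n^2\}$ (a finite set lying in the ground model) and $\hat f(n)=\lvert Q_n\rvert$.

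Next I would fix a slowly increasing $g\in\oo$, say $g(n)=n$, choose $f\geq\hat f$ with $\sum_n g(n)/f(n)<\infty$, and, going back and enlarging the $P_n$ if necessary, arrange that $\varphi(P_n)>g(n)\cdot n^2$ for all $n$. For this choice of $f$ and $g$, the forcing $\mathbb{PT}_{f,g}$ is proper, $\oo$-bounding, $\langle f,g\rangle$-bounding (every function in a $\mathbb{PT}_{f,g}$-extension that is bounded by $f$ lies in a ground-model slalom of width $g$), and adds a real in $\prod_n f(n)$ that is eventually different from all ground-model members of $\prod_n f(n)$; see \cite[Theorem 7.3.9]{BJ95} and \cite[Theorem 7.2.19]{BJ95}. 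All of these properties are preserved under countable support iterations of proper forcings (see \cite[Chapter 6]{BJ95}). Starting from a model of $\mathrm{CH}$, let $\mathbb{P}_{\omega_2}$ be the countable support iteration of $\mathbb{PT}_{f,g}$ of length $\omega_2$, with $f,g,\varphi,\langle P_n\rangle$ fixed once and for all in the ground model. Then $\mathbb{P}_{\omega_2}$ is proper and $\aleph_2$-cc, forces $\cc=\aleph_2$, and, being $\oo$-bounding, keeps $V\cap\oo$ dominating, so $\dd=\bb=\aleph_1$ in the extension.

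To see $\nonm=\aleph_2$, note that by a standard reflection argument using $\aleph_2$-cc every family $F\subseteq\oo$ of size $\aleph_1$ in the extension already appears in some $V^{\mathbb{P}_\alpha}$ with $\alpha<\omega_2$, and since $\mathbb{PT}_{f,g}$ adds a real eventually different from all members of $V^{\mathbb{P}_\alpha}\cap\prod_n f(n)$, no such $F$ can be an ``infinitely often equal'' family; by the characterization of $\nonm$ (\cite[Section 2.4]{BJ95}) this gives $\nonm=\aleph_2$. On the other hand $\I$, being a tall Borel ideal, remains tall in the extension (tallness of a Borel ideal is $\Pi^1_2$, hence absolute), so $\nonostar{\I}\geq\aleph_1$. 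For $\nonostar{\I}\leq\aleph_1$ I would copy the proof of \cref{thm:nonstaromega_is_small_in_Laver}, replacing $2^n$ by $g(n)$ and the Laver property by $\langle f,g\rangle$-bounding of $\mathbb{P}_{\omega_2}$: given a sequence $\langle\dot X_n\rangle_{n<\omega}$ of names for subsets of $\omega$ of finite $\varphi$-mass, the name $\dot H$ with $\dot H(k)=\bigcup\{P_k\cap\dot X_i:i<k \text{ and }\varphi(\dot X_i)\leq k\}$ satisfies $\forces\dot H(k)\in Q_k$; identifying $\dot H$ with a name for a function bounded by $\hat f\leq f$ and applying $\langle f,g\rangle$-bounding produces a slalom $S\in V$ with $S(k)\subseteq Q_k$, $\lvert S(k)\rvert\leq g(k)$ and $\forces\dot H(k)\in S(k)$ for all $k$; since $\varphi\bigl(\bigcup S(k)\bigr)\leq g(k)\cdot k^2<\varphi(P_k)$, one can pick in $V$ a point $a_k\in P_k\setminus\bigcup S(k)$, and then $X=\{a_k:k<\omega\}\in\ooo\cap V$ is forced to have finite intersection with every $\dot X_n$. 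Hence $\ooo\cap V$, of size $\aleph_1$, witnesses $\nonostar{\I}\leq\aleph_1$, and altogether $\bb=\nonostar{\I}=\aleph_1<\aleph_2=\nonm$ holds in $V^{\mathbb{P}_{\omega_2}}$.

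The main obstacle is the forcing-theoretic bookkeeping around $\mathbb{PT}_{f,g}$: one must choose $f$ and $g$ simultaneously meeting the combinatorial constraint $\varphi(P_n)>g(n)\cdot n^2$, the domination constraint $\hat f\leq f$ (needed to feed $\dot H$ into the $\langle f,g\rangle$-bounding property), and the hypotheses of \cite[Theorem 7.3.9]{BJ95} under which $\mathbb{PT}_{f,g}$ has the three required properties --- this is precisely the ``careful choice of $f$ and $g$ depending on the submeasure'' mentioned above. The rest is a routine transcription of the proof of \cref{thm:nonstaromega_is_small_in_Laver} together with the standard preservation and reflection machinery for countable support iterations.
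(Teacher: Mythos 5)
Your proposal is essentially the paper's own argument: the paper obtains the corollary by invoking the remark preceding it (re-running the proof of \cref{thm:nonstaromega_is_small_in_Laver} with the Laver property replaced by $\langle f,g\rangle$-bounding for $f,g$ tailored to the submeasure of $\I$) and then citing the $\mathbb{PT}_{f,g}$-model \cite[Model 7.6.6]{BJ95}, which is exactly the countable support iteration you describe. You merely spell out the bookkeeping (the partition $\langle P_n\rangle$, the sets $Q_n$, and the choice of $f,g$) that the paper leaves implicit, so the approach and the key steps coincide.
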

    \begin{proof}
        See \cite[Model 7.6.6]{BJ95}.
    \end{proof}

    Note that the choice of functions $f,g$ depends on $\I$, so the argument does not prove the consistency of ``$\bb = \non^*_\omega(\I) < \non(\mathcal{M})$ for all $F_\sigma$ ideals $\I$.''

    \subsection{Remarks on $\I$-Louveau and $\I^*$-Ramsey null ideals}\label{sec:Louveau_Ramsey}

    As a direct application of $\omega$-versions of $*$-numbers of ideals, we observe that some results in Brendle--Shelah \cite{BS99} for ultrafilters actually hold for arbitrary ideals. Brendle--Shelah \cite{BS99} studied the null ideals of Laver forcing associated to ultrafilters, which they call the Louveau ideals, and computed their cardinal invariants. In \cite{CM25}, the first and third authors also considered the null ideals of Laver forcing associated to arbitrary ideals. 

    \begin{dfn}
        For an ideal $\I$ on a countable set $X$, the \emph{$\I$-Louveau null ideal} $L_{\mathscr{I}}$ is defined as the $\sigma$-ideal on $X^\omega$ generated by sets of the form
        \[
        L_\phi = \{x\in X^\omega:\exists^{\infty}n<\omega\,(x(n)\in\phi(x\upharpoonright n))\},
        \]
        where $\phi\colon\omega^{<\omega}\to\mathscr{I}$.
    \end{dfn}

    Given an ideal $\I$ on a countable set $X$, we say that a tree $T\subset X^{<\omega}$ is an \emph{$\I$-Laver tree} if letting $\sigma$ be the stem of $T$, for every $\tau\in T$ with $\sigma\subset\tau$, $\mathrm{succ}_T(\tau)\in\I^+$. The \emph{$\I$-Laver forcing}, denoted by $\mathbb{L}_\I$, is the forcing poset of all $\I$-Laver trees ordered by inclusion. Note that $\mathbb{L}_{\Fin}$ is the standard Laver forcing. Then $\mathbb{L}_\I$ is forcing equivalent to $\mathbb{P}_{L_\I}$ (see \cite{Mil12}).
    One can also define a variant of $L_\I$ as follows.
    
    \begin{dfn}
        For an ideal $\I$ on a countable set $X$, $J_{\mathscr{I}}$ is defined as the $\sigma$-ideal on $X^\omega$ generated by sets of the form
        \[
        J_\phi = \{x\in X^\omega:\exists^{\infty}n<\omega\,(x(n)\in\phi(n))\},
        \]
        where $\phi\colon\omega\to\mathscr{I}$.
    \end{dfn}

    It turned out that the computation in \cite{BS99} works for arbitrary ideals and there is no difference in the cardinal invariants of $L_\I$ and $J_\I$.

    \begin{thm}[Brendle--Shelah {\cite[Theorem 2]{BS99}}]
        \label{thm:Louveau_null}
        For any ideal $\mathscr{I}$ on a countable set, the following hold:
        \begin{enumerate}
            \item $\add(L_\I) = \add(J_\I) =  \min\{\mathfrak{b}, \add^*_\omega(\mathscr{I})\}$.
            \item $\non(L_\I) = \non(J_\I) = \max\{\mathfrak{d}, \non^*_\omega(\mathscr{I})\}$.
            \item $\cov(L_\I) = \cov(J_\I) = \min\{\mathfrak{b}, \cov^*(\mathscr{I})\}$.
            \item $\cof(L_\I) = \cof(J_\I) = \max\{\mathfrak{d}, \cof^*_\omega(\mathscr{I})\}$.
        \end{enumerate}
    \end{thm}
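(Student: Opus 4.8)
The plan is to reduce everything to the Fubini product $\Fin\otimes\I$, whose $\omega$-versions of $*$-invariants are already available (or follow from the same arguments), and to treat the covering numbers directly along the lines of item~(3) of \Cref{mainthm:basic_ineq}. The starting point is the encoding $\phi\mapsto\widehat{\phi}$, $\widehat{\phi}\coloneqq\{\langle\sigma,v\rangle:v\in\phi(\sigma)\}\subseteq\omega^{<\omega}\times X$, which sends a map $\phi\colon\omega^{<\omega}\to\I$ to a member of $\Fin\otimes\I$ (identifying $\omega^{<\omega}\cong\omega$; every vertical section of $\widehat\phi$ lies in $\I$), and likewise $\psi\mapsto\widehat{\psi}\subseteq\omega\times X$ for $\psi\colon\omega\to\I$. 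Three facts drive the argument: (a) a \emph{finite} difference $\widehat{\phi}\setminus\widehat{\phi'}$ forces $\phi(\sigma)\subseteq\phi'(\sigma)$ for all but finitely many $\sigma$, and since a branch through $\omega^{<\omega}$ meets each node at most once this yields $L_\phi\subseteq L_{\phi'}$ (and $J_\psi\subseteq J_{\psi'}$ from the version with $\psi$); (b) $K_\sigma\subseteq J_\I\subseteq L_\I$, since a basic compact set $\{x:\forall n\,x(n)\leq g(n)\}$ equals $J_\psi$ for $\psi(n)=[0,g(n)]\in\Fin\subseteq\I$; and (c) for every $\phi$ the set $\{\phi(\sigma):\sigma\in\omega^{<\omega}\}$ is a fixed \emph{countable} subfamily of $\I$.

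Items~(1) and~(4). By~(a), $\bigcup_{\xi<\kappa}L_{\phi_\xi}\in L_\I$ whenever $\{\widehat{\phi_\xi}:\xi<\kappa\}$ is $\sigma$-bounded in $(\Fin\otimes\I,\subseteq^*)$ — each $\widehat{\phi_\xi}$ then lies $\subseteq^*$ below one of countably many $\widehat{\chi_l}$, so $L_{\phi_\xi}\subseteq L_{\chi_l}$ — and a $\cof^*_\omega(\Fin\otimes\I)$-witness produces, by the same mechanism, a family cofinal in $L_\I$. Conversely, an unbounded family $\{f_\alpha\}\subseteq\baire$ gives generators $L_{\phi_\alpha}$ with $\phi_\alpha(\sigma)=[0,f_\alpha(|\sigma|))$ whose union is all of $X^\omega$ (so $\add$ and $\cov$ are $\leq\bb$), and an $\add^*_\omega(\I)$-witness $\{I_\gamma\}$ gives constant generators $L_{I_\gamma}\coloneqq\{x:\exists^\infty n\,x(n)\in I_\gamma\}$ whose union is $L_\I$-positive (using that $\{I_\gamma\}$ is $\sigma$-unbounded in $(\I,\subseteq^*)$ to build, for a suitable $\gamma$, a real visiting $I_\gamma$ infinitely often while eventually escaping every $\chi_l$); dually for $\cof$. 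This reduces (1) and (4) to $\add^*_\omega(\Fin\otimes\I)=\min\{\bb,\add^*_\omega(\I)\}$ and $\cof^*_\omega(\Fin\otimes\I)=\max\{\dd,\cof^*_\omega(\I)\}$: the first is \Cref{prop:addstart_omega_of_Fubini_prod} together with $\add^*_\omega(\Fin)=\infty$; the second is the cofinality analogue of \Cref{prop:nonstar_omega_of_Fubini_prod}, proved the same way, together with $\cof^*_\omega(\Fin)\leq\aleph_0$.

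Items~(2) and~(3). Restricting the encoding to \emph{graphs} $\widehat{x}=\{\langle n,x(n)\rangle:n<\omega\}$, a set $Y\subseteq X^\omega$ is $J_\I$-positive iff $\{\widehat{x}:x\in Y\}$ is a $\non^*_\omega(\Fin\otimes\I)$-witness made of graphs; since the witness built in the proof of \Cref{prop:nonstar_omega_of_Fubini_prod} (the sets $C_{f,A,B}$) already consists of graphs, no size is lost, so $\non(J_\I)=\non^*_\omega(\Fin\otimes\I)=\max\{\dd,\non^*_\omega(\I)\}$ (with $\non^*_\omega(\Fin)\leq\aleph_0$). For $L_\I$ one gets ``$\leq$'' by the adaptive refinement $x_{\beta,\gamma}(n)=A_\gamma\bigl(f_\beta(x_{\beta,\gamma}\upharpoonright n)\bigr)$, where $\{f_\beta:\beta<\dd\}$ dominates the functions $\omega^{<\omega}\to\omega$ and $\{A_\gamma:\gamma<\non^*_\omega(\I)\}$ is a $\non^*_\omega(\I)$-witness: given countably many $\phi_k$, first pick $\gamma$ with $A_\gamma$ almost disjoint from the countable family $\{\phi_k(\sigma):k,\sigma\}$ (fact~(c)), then pick $\beta$ dominating $\sigma\mapsto\max\{m:\exists k\leq|\sigma|\;A_\gamma(m)\in\phi_k(\sigma)\}$, which makes $x_{\beta,\gamma}$ escape every $\phi_k$; the reverse inequality is free from $\non(J_\I)\leq\non(L_\I)$. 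For~(3) I would argue as for $\cov(M_\I)$ in \Cref{mainthm:basic_ineq}(3): an unbounded family of generators covers $X^\omega$, giving $\cov(J_\I)\leq\bb$, and a $\cov^*(\I)$-witness together with the finite generators $\{x:\exists^\infty n\,x(n)=v\}$ $(v\in X)$ gives another cover, so $\cov(J_\I)\leq\min\{\bb,\cov^*(\I)\}$; and given $\kappa<\min\{\bb,\cov^*(\I)\}$ many generators $L_{\phi^\alpha_k}$ one builds a real $x(n)=B\bigl(g(x\upharpoonright n)\bigr)$ with $B$ almost disjoint from the $\kappa$ many $\I$-sets $\phi^\alpha_k(\sigma)$ and $g$ dominating their deadline functions, whence $\cov(L_\I)\geq\min\{\bb,\cov^*(\I)\}$. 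With the monotonicities $\cov(L_\I)\leq\cov(J_\I)$ and $\non(J_\I)\leq\non(L_\I)$ coming from $J_\I\subseteq L_\I$, all eight equalities follow.

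The part I expect to be most delicate is making the encoding genuinely faithful rather than merely suggestive. One must verify (i) that a family of the $\widehat{\phi_\xi}$ which is $\sigma$-bounded in $(\Fin\otimes\I,\subseteq^*)$ really yields a \emph{countable} cover of $\bigcup_\xi L_{\phi_\xi}$ by $L$-generators — this is exactly where it matters that the finite errors tolerated by $\subseteq^*$ are concentrated at only finitely many nodes $\sigma$, which is what the $\Fin$-coordinate guarantees — and (ii) the Fubini formula $\cof^*_\omega(\Fin\otimes\I)=\max\{\dd,\cof^*_\omega(\I)\}$, which is not stated in the text and has to be proved in the style of \Cref{prop:nonstar_omega_of_Fubini_prod}. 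The $L$-versus-$J$ coincidence, which looks like it should be the crux, is in fact cheap: for $\non$ and $\cov$ it follows from $J_\I\subseteq L_\I$ and the matching bounds above, and for $\add$ and $\cof$ both reduce to the same invariant of $\Fin\otimes\I$ with the adaptivity of $\phi$ neutralized by fact~(a). As in Brendle--Shelah's original treatment \cite{BS99}, maximality of $\I$ plays no role anywhere.
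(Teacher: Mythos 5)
Your argument is correct in substance, and for items (2) and (3) it is essentially the paper's proof: the bounds $\cov(J_\I)\leq\bb$, $\cov(J_\I)\leq\cov^*(\I)$, the diagonalization giving $\cov(L_\I)\geq\min\{\bb,\cov^*(\I)\}$, and the adaptive construction $x(n)=A_\gamma\bigl(f_\beta(x\upharpoonright n)\bigr)$ for $\non(L_\I)\leq\max\{\dd,\non^*_\omega(\I)\}$ all coincide with the arguments in the text. Where you genuinely diverge is in (1), (4) and the lower bound in (2): the paper argues directly on $L_\I$ and $J_\I$ (citing \cite{CM25} for $\min\{\bb,\add^*_\omega(\I)\}\leq\add(L_\I)\leq\add^*_\omega(\I)$, extracting a $\cof^*_\omega(\I)$-witness from a basis, and getting $\cof\geq\dd$ from $\cof\geq\non$), whereas you route everything through the encoding $\phi\mapsto\widehat\phi\in\Fin\otimes\I$ and the $\omega$-star invariants of $\Fin\otimes\I$, invoking \Cref{prop:addstart_omega_of_Fubini_prod} and \Cref{prop:nonstar_omega_of_Fubini_prod}. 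This buys a cleaner conceptual picture (the $L$/$J$ coincidence for $\add$ and $\cof$ becomes automatic, since $\subseteq^*$-errors in $\widehat\phi$ are invisible along a branch) at the cost of an extra ingredient the paper never states, namely $\cof^*_\omega(\Fin\otimes\I)\leq\max\{\dd,\cof^*_\omega(\I)\}$; you flag this correctly, and it is indeed provable by the column-envelope-plus-dominating-function construction that the paper itself uses for $\cof(M_\I)$ in \Cref{lem:add_cof_ineq}, so the gap is one of exposition, not of substance (note that only this one direction of the Fubini formula, and only the ``$\geq$'' direction of \Cref{prop:addstart_omega_of_Fubini_prod}, are actually needed).

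A few details should be written out. In the step showing that the union of the constant generators $L_{I_\gamma}$ is $L_\I$-positive, the recursion needs $I_\gamma\not\subseteq^*$ \emph{finite unions} of the sets $\chi_l(\sigma)$, so close the countable family under finite unions before applying the $\add^*_\omega(\I)$-witness. In (2), the witness $C_{f,A,B}$ from \Cref{prop:nonstar_omega_of_Fubini_prod} is a graph of a \emph{total} function only if you take $A=\omega$ (a legitimate $\non^*_\omega(\Fin)$-witness), which is what the transfer back to a $J_\I$-positive subset of $X^\omega$ requires. Your fact (b) is stated too strongly ($\{x:\forall n\,x(n)\leq g(n)\}$ is contained in, not equal to, $J_\psi$), but it is not load-bearing, since the sharper bounds $\dd\leq\non(J_\I)$ and $\cov(J_\I)\leq\bb$ come from your other arguments; and the lower bounds $\cof(L_\I),\cof(J_\I)\geq\dd$, left as ``dually,'' are most quickly obtained from $\cof\geq\non\geq\dd$, which your item (2) already supplies.
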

    \begin{proof}
        \begin{enumerate}
        \item It was already shown that $\min\{\bb, \add^*_\omega(\I)\}\leq\add(L_\I)\leq\add^*_\omega(\I)$ in \cite[Proposition 3.6 and Theorem 4.2]{CM25}. The proof can be easily adapted to $J_\I$. So, it is enough to show that $\add(L_\I)\leq\bb$ and $\add(J_\I)\leq\bb$. This follows from the stronger inequality $(\cov(L_\I)\leq)\cov(J_\I)\leq\bb$, which will be shown in (3).
        \item Since $J_\I \subset L_\I$ holds, $\non(J_\I)\leq\non(L_\I)$. So, it suffices to show that (i) $\dd\leq\non(J_\I)$, (ii) $\non^*_\omega(\I)\leq\non(J_\I)$, and (iii) $\non(L_\I)\leq\max\{\non^*_\omega(\I), \dd\}$.
        \begin{enumerate}[label = (\roman*)]
            \item Let $A \notin J_\I$. Then for any $f\in\omega^\omega$, there is $g\in A\setminus J_f$, which implies $f \leq^* g$. Therefore, $A$ is a dominating family and thus $\lvert A\rvert\geq\dd$.
            \item Let $A\notin J_\I$. We claim that $\mathcal{A}:=\{\ran(f):f\in A\}$ witnesses $\non^*_\omega(\I)$: Given $\overline{X} = \{X_n: n<\omega\}\in[\I]^\omega$, any $f\in A\setminus J_\phi$, where $\phi\colon\omega\to\I$ is defined by $\phi(n) = \bigcup_{m\leq n}X_m$, satisfies $\lvert\ran(f)\cap X_n\rvert < \omega$.
            \item To show $\non(L_\I)\leq\max\{\dd, \non^*_\omega(\I)\}$, let $\mathcal{A}$ be a witness for $\non^*_\omega(\I)$ and let $\mathcal{D}$ be a dominating family of functions from $\omega^{<\omega}$ to $\omega$. For each $A\in\mathcal{A}$ and $f\in\mathcal{D}$, choose $x_{A, f}\in\omega^\omega$ such that for all $\sigma\in\omega^{<\omega}$, $x_{A, f}(\sigma)\in A\setminus f(\sigma)$. We claim that $\{x_{A, f}: A\in\mathcal{A}\land f\in\mathcal{D}\}\notin L_\I$. To show this, let $\phi\colon\omega^{<\omega}\to\I$ be arbitrary. Then let $A\in\mathcal{A}$ be such that for all $\sigma\in\omega^{<\omega}$, $\lvert A\cap\phi(\sigma)\rvert<\omega$. Then let $f\in\mathcal{D}$ be such that for all $\sigma\in\omega^{<\omega}$, $A\cap\phi(\sigma)\subset f(\sigma)$. Then $x_{A, f}\notin L_\phi$.
        \end{enumerate}

        \item Since $J_\I \subset L_\I$ holds, $\cov(J_\I)\geq\cov(L_\I)$. So, it suffices to show that (i) $\bb\geq\cov(J_\I)$, (ii) $\cov^*(\I)\geq\cov(J_\I)$, and (iii) $\cov(L_\I)\geq\min\{\bb, \cov^*(\I)\}$.
        \begin{enumerate}[label = (\roman*)]
            \item If $\mathcal{B}$ is an unbounded family, then $\omega^\omega = \bigcup_{f\in\mathcal{B}}J_f$.
            \item If $\mathcal{A}\subset\I$ is a witness for $\cov^*(\I)$, then $\omega^\omega = \bigcup_{A\in\mathcal{A}}J_A$.
            \item Let $\kappa < \min\{\cov^*(\I), \bb\}$ and let $\mathcal{A}\subset L_\I$ be of size $\kappa$. We want to show that $\omega^\omega\neq\bigcup\mathcal{A}$. For each $A\in\mathcal{A}$, let $\phi_A\colon\omega^{<\omega}\to\I$ such that $A\subset L_{\phi_A}$. Since $\kappa<\cov^*(\I)$, we can find $X\in[\omega]^\omega$ such that for all $A\in\mathcal{A}$ and $\sigma\in\omega^{<\omega}$, $\lvert \phi_A(\sigma)\cap X\rvert < \omega$. Then for each $A\in\mathcal{A}$, we define $f_A\in\omega^\omega$ by $f_A(\sigma) = \min\{m<\omega: \phi_A(\sigma)\cap X\subset m\}$. Since $\kappa < \bb$, there is a function $g\colon\omega^{<\omega}\to\omega$ dominating all $f_A$'s. Now take a real $x\in\omega^\omega$ such that $x(n)\in X\setminus g(x\upharpoonright n)$ for all $n<\omega$. Then $x\notin L_{\phi_A}$ for all $A\in\mathcal{A}$, so $x\notin\bigcup\mathcal{A}$.
        \end{enumerate}

        \item We need to show that (i) $\cof(L_\I)\leq\max\{\dd, \cof^*_\omega(\I)\}$, (ii) $\cof(L_\I)\geq\cof^*_\omega(\I)$, and (iii) $\cof(L_\I)\geq\dd$. The proof below can be easily adopted to the proof for $J_\I$.
        \begin{enumerate}[label = (\roman*)]
            \item Let $\mathcal{A}$ be a witness for $\cof^*_\omega(\I)$ and let $\mathcal{D}$ be a dominating family of functions from $\omega^{<\omega}$ to $\omega$. For each $\overline{A} := \{A_n:n<\omega\}\in\mathcal{A}$ and $f\in\mathcal{D}$, let $\phi_{\overline{A}, f}\colon\omega^{<\omega}\to\I$ be defined by $\phi_{\overline{A}, f}(\sigma) = \bigcup_{i\leq f(\sigma)} A_i\cup f(\sigma)$. To see that $\{L_{\phi_{\overline{A}, f}}: \overline{A}\in\mathcal{A}\land f\in\mathcal{D}\}$ is a base of $L_\I$, let $\psi\colon\fsq\to\I$ be arbitrary. Take $\overline{A} := \{A_n:n<\omega\}\in\mathcal{A}$ such that for each $\sigma\in\fsq$, there is $g(\sigma)<\omega$ with $\psi(\sigma)\subseteq^*A_{g(\sigma)}$. Let $h\colon\fsq\to\omega$ be such that $\psi(\sigma)\subseteq A_{g(\sigma)}\cup h(\sigma)$ for each $\sigma\in\fsq$. Take $f\in\mathcal{D}$ which dominates $g,h$. To see $L_{\psi}\subset L_{\phi_{\overline{A}, f}}$, let $x\in L_\psi$ be arbitrary. Then for infinitely many $n<\omega$, $x(n)\in\psi(x\on n)\subset A_{g(x\on n)}\cup h(x\on n)\subset \bigcup_{i\leq f(x\on n)} A_i\cup f(x\on n)=\phi_{\overline{A}, f}(x\on n)$, which implies $x\in L_{\phi_{\overline{A}, f}}$. 
            \item Let $\mathcal{A} \subset L_\I$ be a base of $L_\I$. For any $A\in\mathcal{A}$, let $\phi_A\colon\omega^{<\omega}\to\I$ be such that $A\subset L_{\phi_A}$. We claim that $\overline{\mathcal{A}} = \{\mathrm{ran}(\phi_A):A\in\mathcal{A}\}$ is a witness for $\cof^*_\omega(\I)$. Let $\overline{B}:=\{B_n:n<\omega\}\in[\I]^\omega$ be arbitrary. Using some function $f\colon\omega\to\omega$ such that $\lvert f^{-1}[\{n\}]\rvert=\omega$ for all $n<\omega$, we define $\phi\colon\omega^{<\omega}\to\I$ by $\phi(\sigma) = B_{f(\lvert\sigma\rvert)}$. Take $A\in\mathcal{A}$ such that $L_{\phi}\subset A$. Suppose that there is $B\in\overline{B}$ such that for all $\sigma\in\omega^{<\omega}$, $B\not\subset^*\phi_A(\sigma)$. Then we can find $x\in\omega^\omega$ such that $x(n)\in B\setminus\phi_A(x\upharpoonright n)$ for all $n<\omega$, which implies $x\in L_\phi\setminus A$. This contradicts the choice of $A$.
            \item $\cof(L_\I)\geq\dd$ follows from $\cof(L_\I)\geq\non(L_\I)\geq\dd$.
        \end{enumerate}
        \end{enumerate}
    \end{proof}

    Brendle--Shelah \cite{BS99} also studied the null ideals of Mathias forcing associated to ultrafilters, which they call the Ramsey ideals, and computed their cardinal invariants. 

    \begin{dfn}
        Let $\I$ be an ideal on a countable set $X$. We define the \emph{$\I^*$-Mathias forcing} $\mathbb{R}_{\I^*}$ as follows: the conditions are pairs $\langle s, A\rangle\in[X]^{<\omega}\times\I^*$ such that $\max(s)<\min(A)$. The order is defined by
        \[
        \langle t, B\rangle\leq\langle s, A\rangle \iff t\supseteq s\land B\subset A\land t\setminus s\subset A.
        \]
        For $\langle s, A\rangle\in\mathbb{R}_{\I^*}$, we write
        \[
        [s, A] = \{Y\in[X]^\omega: s\subset Y\subset s\cup A\}.
        \]
        The \emph{$\I^*$-Ramsey null ideal} $R_{\I^*}$ consists of $Y\subset[X]^\omega$ such that
        \[
        \forall\langle s, A\rangle\in\mathbb{R}_\I\,\exists\langle t, B\rangle\leq\langle s, A\rangle\,(Y\cap[t, B] = \emptyset).
        \]
    \end{dfn}

    By definition, one can see that $\mathbb{R}_{\I^*}$ is forcing equivalent to $\mathbb{P}_{R_{\I^*}}$. Brendle--Shelah \cite{BS99} showed the theorem below only for ultrafilters, but their proof actually works for arbitrary filters. We leave the details to the careful readers.

    \begin{thm}[Brendle--Shelah {\cite[Theorem 1]{BS99}}]\label{thm:ramsey_null}
        For any tall ideal $\I$ on a countable set, the following holds:
        \[
        \add(R_{\I^*}) = \add^*(\I), \non(R_{\I^*}) = \non^*(\I), \cov(R_{\I^*}) = \cov^*(\I), \cof(R_{\I^*}) = \cof^*_\omega(\I).
        \]
    \end{thm}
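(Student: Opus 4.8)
The plan is to run the Brendle--Shelah computation for Ramsey ultrafilters with $\I^*$ in place of $\mathcal U$, isolating and replacing the two uses of selectivity. First, set up the combinatorics of $\mathbb{R}_{\I^*}$. The Mathias real $M_{\mathrm{gen}}=\bigcup\{s:\exists A\ \langle s,A\rangle\in G\}\in[X]^\omega$ is a pseudo-intersection of the ground-model filter: as $\{\langle s,A\rangle:A\subseteq C\}$ is dense for each $C\in\I^*$, we get $\mathbb{R}_{\I^*}\Vdash M_{\mathrm{gen}}\subseteq^*C$ for every $C\in\I^*\cap V$, i.e.\ $\mathbb{R}_{\I^*}\Vdash|M_{\mathrm{gen}}\cap B|<\omega$ for every $B\in\I\cap V$. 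Under the equivalence $\mathbb{R}_{\I^*}\cong\mathbb{P}_{R_{\I^*}}$ the real $M_{\mathrm{gen}}$ is the canonical real, so a Borel $Y\subseteq[X]^\omega$ coded in $V$ lies in $R_{\I^*}$ iff $\mathbb{R}_{\I^*}\Vdash M_{\mathrm{gen}}\notin Y$. For $B\in\I$ put $N_B:=\{Y\in[X]^\omega:|Y\cap B|=\omega\}$; then $N_B\in R_{\I^*}$, since below a condition $\langle s,A\rangle$ the condition $\langle s,A\setminus B\rangle$ satisfies $[s,A\setminus B]\cap N_B=\emptyset$. These $N_B$ are the building blocks. (The contrast with \Cref{thm:Louveau_null} will be that Laver forcing updates its filter coordinate along a tree -- producing the ``$\omega$''-versions for additivity and uniformity -- while Mathias forcing carries a single filter set, so only $\cof$ will involve an $\omega$-version here.)

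Second, handle $\non(R_{\I^*})=\non^*(\I)$ directly. For ``$\geq$'': if $Z\subseteq[X]^\omega$ is $R_{\I^*}$-positive then $Z$ meets $[t,D]$ for every condition $\langle t,D\rangle$ below some fixed $\langle s_0,A_0\rangle$; given $B\in\I$, apply this to $\langle s_0,A_0\setminus B\rangle$ to obtain $A\in Z$ with $A\setminus s_0\subseteq A_0\setminus B$, hence $|A\cap B|<\omega$. So $Z$, regarded as a family of infinite subsets of $X$, witnesses $\non^*(\I)$ and $|Z|\geq\non^*(\I)$. For ``$\leq$'': a $\non^*$-witness can always be arranged to consist of sets actually contained in (not merely almost contained in) the relevant co-filter sets, by closing under finite changes. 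So for each $t\in[X]^{<\omega}$ fix such a witness $\mathcal A_t\subseteq[X_t]^\omega$ of size $\non^*(\I)$ for the cofinite restriction to $X_t:=\{x\in X:x>\max(t)\}$ (with $X_\emptyset:=X$), whose $\non^*$ equals $\non^*(\I)$. Then $Z:=\{t\cup A':t\in[X]^{<\omega},\ A'\in\mathcal A_t\}$ has size $\non^*(\I)$ and is Ellentuck-dense: given $\langle t,D\rangle$, i.e.\ $D\in\I^*$ with $\min D>\max t$, we have $D\in(\I\restriction X_t)^*$, so some $A'\in\mathcal A_t$ has $A'\subseteq D$ and $t\cup A'\in[t,D]\cap Z$. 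Hence $Z\notin R_{\I^*}$ and $\non(R_{\I^*})\leq\non^*(\I)$.

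Third, for the three remaining equalities the upper bounds come straight from the $N_B$'s. A $\cov^*(\I)$-witness $\mathcal B\subseteq\I$ gives $\bigcup_{B\in\mathcal B}N_B=[X]^\omega$, so $\cov(R_{\I^*})\leq\cov^*(\I)$. An $\add^*(\I)$-witness $\{A_\alpha:\alpha<\add^*(\I)\}$ gives $\bigcup_\alpha N_{A_\alpha}\notin R_{\I^*}$: any $[t,D]$ disjoint from it forces $X\setminus D\in\I$ to almost contain every $A_\alpha$, a contradiction; so $\add(R_{\I^*})\leq\add^*(\I)$. For $\cof$, one manufactures from a $\cof^*_\omega(\I)$-witness a cofinal family of $R_{\I^*}$-null sets, each tied to a countable subfamily of $\I$, giving $\cof(R_{\I^*})\leq\cof^*_\omega(\I)$. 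All three lower bounds rest on a structural fact: \emph{every $R_{\I^*}$-null Borel set $Z$ is subordinate to a countable subfamily of $\I$, meaning $Z\subseteq\bigcup_{n<\omega}N_{B_n}$ for some $\langle B_n\rangle\in[\I]^\omega$.} Granting it: if fewer than $\cov^*(\I)$ many $R_{\I^*}$-null Borel sets are given, the family of all their $B_n$'s (fewer than $\cov^*(\I)$ members of $\I$) is not a $\cov^*$-witness, so some $Y_0\in[X]^\omega$ is almost disjoint from all of them and lies outside all the given sets; if fewer than $\add^*(\I)$ many are given, their $B_n$'s are simultaneously almost contained in a single $B\in\I$, so every given set lies in the single null set $N_B$ and so does their union; and a cofinal family of $R_{\I^*}$-null sets yields, through the associated countable subfamilies of $\I$, a $\cof^*_\omega(\I)$-witness. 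Note that $\add(R_{\I^*})=\add^*(\I)=\omega$ whenever $\I$ is not a P-ideal: $R_{\I^*}$ is then not $\sigma$-additive, which is consistent because $R_{\I^*}$ is by definition only the ideal (not $\sigma$-ideal) of Ellentuck-nowhere-dense sets. The $\omega$-version shows up for $\cof$ and nowhere else precisely because a base for $R_{\I^*}$ must absorb whole countable blocks of the generators $N_B$, whereas $\add$, $\non$, $\cov$ only ever see one generator at a time.

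The main obstacle is this structural fact -- and, relatedly, constructing the cofinal family for $\cof(R_{\I^*})\leq\cof^*_\omega(\I)$. This is exactly where Brendle--Shelah use a Ramsey ultrafilter: selectivity yields pure decision for $\mathbb{R}_{\I^*}$, so a Borel null set can be analyzed by shrinking the filter coordinate countably often while freezing the stem, which simultaneously exhibits the $B_n$ and puts the set into the tidy ``tree'' form $\{Y:\forall^\infty k\ Y(k)\in\psi(Y\upharpoonright k)\}$. For a general tall $\I$ pure decision fails and nowhere-dense Borel sets need not be cofinally of that form, so one must argue more carefully: replace ``same-stem'' refinements by genuine extensions, and handle by hand that $R_{\I^*}$ is not $\sigma$-closed, so that ``$\bigcup_nN_{B_n}$'' is a bare combinatorial object rather than an instance of ideal closure -- in particular the naive candidate $\bigcup_nN_{A_n}$ for a $\cof^*_\omega$-witness $\langle A_n\rangle$ need not itself be $R_{\I^*}$-null. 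Two further routine points round it out: checking that $R_{\I^*}$ is generated as an ideal by its Borel members, so the reductions above to Borel null sets are legitimate, and the elementary verification -- used in the $\non$ computation -- that a cofinite restriction of $\I$ changes none of the $*$-invariants.
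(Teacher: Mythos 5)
Your treatment of $\non(R_{\I^*})=\non^*(\I)$ and of the two upper bounds $\add(R_{\I^*})\leq\add^*(\I)$ and $\cov(R_{\I^*})\leq\cov^*(\I)$ via the generators $N_B=\{Y:\lvert Y\cap B\rvert=\omega\}$ is correct. The problem is that the remaining half of the theorem is not actually proved: all three lower bounds rest on the ``structural fact'' that every $R_{\I^*}$-null (Borel) set is contained in $\bigcup_{n<\omega}N_{B_n}$ for some countable $\{B_n\}\subseteq\I$, and the upper bound $\cof(R_{\I^*})\leq\cof^*_\omega(\I)$ is left at ``one manufactures\ldots'' with no construction. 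You yourself identify the structural fact as the main obstacle and note that the argument you know for it (pure decision, available for Ramsey ultrafilters) is unavailable for a general tall ideal, but you supply no substitute, so the heart of the theorem is missing. The auxiliary reduction you call routine --- that $R_{\I^*}$ is generated by its Borel members --- is also not obvious: the natural hull of a null set $Z$, namely the complement of $\bigcup\{[t,B]:[t,B]\cap Z=\emptyset\}$, is an intersection of a typically uncountable family of open sets, and no countable reduction is indicated; yet your $\add$ and $\cov$ arguments need the structural fact for arbitrary null sets, so this reduction is load-bearing. (For comparison, the paper does not reprove the statement at all: it invokes Brendle--Shelah's argument for ultrafilters and observes that it works for arbitrary filters, so the missing steps are exactly the ones that would have to be checked.)

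Moreover, even granting the structural fact, your derivation of $\cof(R_{\I^*})\geq\cof^*_\omega(\I)$ fails. Covering a null set by countably many $N_{B_m}$ only records that the relevant sets meet some $B_m$ infinitely, which is strictly weaker than almost containment. Concretely, split $A\in\I$ into two infinite pieces $P_0,P_1\in\I$; then $N_A\subseteq N_{P_0}\cup N_{P_1}$ and likewise $\{Y:Y\subseteq^*A\}\subseteq N_{P_0}\cup N_{P_1}$, while $A\not\subseteq^*P_0$ and $A\not\subseteq^*P_1$. Hence from a cofinal family of null sets together with the countable families furnished by your structural fact one cannot, as claimed, read off a $\cof^*_\omega(\I)$-witness: the attached families need not almost-contain the given sets. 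For the cofinality equality --- the one place where the $\omega$-version $\cof^*_\omega(\I)$ genuinely enters --- one needs a finer normal form for null sets (compare the stem-indexed functions $\phi$ used in the proof of \Cref{thm:Louveau_null}(4), where both the base construction and the extraction of a $\cof^*_\omega$-witness are done by hand), and neither this nor the matching construction for $\cof(R_{\I^*})\leq\cof^*_\omega(\I)$ appears in your outline.
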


    In contrast to \cref{thm:Louveau_null} and \cref{thm:ramsey_null}, we will see that the computation of cardinal invariants associated to $\I$-Miller null ideals are more complicated. Notably, uniformity and covering numbers of $\I$-Miller null ideals are not determined by the $*$-numbers of $\I$. Moreover, it is consistent that $M_\I$ and $K_\I$ have different uniformity and covering numbers for some ideal $\I$, e.g.\ $\finfin$ and $\ed$.

    \section{Computation of cardinal invariants of $M_\I$}\label{sec:ZFC}

    In this section, we discuss $\mathsf{ZFC}$-provable results on cardinal invariants of the $\sigma$-ideals $M_\I$ and $K_\I$.

    \subsection{Additivity and cofinality}\label{sec:add_cof}

    It turns out that the additivity and cofinality of $M_\I$ and $K_\I$ are the same as $L_\I$ and $J_\I$, which are characterized using the $\omega$-version of the $*$-additivity and cofinality of the original ideal $\I$, as in \Cref{thm:Louveau_null}. Namely, we obtain the following equalities for any ideal $\I$:
    \begin{align*}
        \add(M_\I) = \add(K_\I)&= \min\{\bb, \add^*_\omega(\I)\},\\ 
             \cof(M_\I) = \cof(K_\I) &= \max\{\dd, \cof^*_\omega(\I)\}.
    \end{align*}
    We start with the following bounds of $\add(M_\I)$ and $\cof(M_\I)$, some of which were mentioned by the first and third authors in \cite{CM25}:
    
    \begin{lem}\label{lem:add_cof_ineq}
        Let $\I$ be an ideal on a countable set.
        \begin{enumerate}
            \item \textnormal{(Cieślak--Martínez-Celis \cite[Proposition 3.6, Theorem 4.2]{CM25})}\\
            $\min\{\bb, \add^*_\omega(\I)\}\leq\add(M_\I)\leq\add^*_\omega(\I)$. \label{item:add} 
            \item $\cof^*_\omega(\I)\leq\cof(M_\I)\leq\max\{\dd, \cof^*_\omega(\I)\}$.\label{item:cof} 
        \end{enumerate}
        The same inequalities hold for $K_\I$ as well.
    \end{lem}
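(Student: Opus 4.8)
The plan is to establish \eqref{item:cof} for $M_\I$ in full; \eqref{item:add} for $M_\I$ is \cite[Proposition 3.6, Theorem 4.2]{CM25}, and the $K_\I$-versions of \eqref{item:add} and \eqref{item:cof} are obtained by repeating the same arguments with every slalom $\phi\colon X^{<\omega}\to\I$ replaced by one of the form $\phi\colon\omega\to\I$ (which only simplifies matters, since the auxiliary functions then no longer depend on the branch). Throughout I fix a bijection $X\cong\omega$, so that a natural number $m$ also names the finite set $\{0,\dots,m-1\}\in\I$, matching the convention of \Cref{thm:Louveau_null}(4).

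For the upper bound $\cof(M_\I)\leq\max\{\dd,\cof^*_\omega(\I)\}$ I would imitate the proof of the corresponding bound for $L_\I$ in \Cref{thm:Louveau_null}(4). Fix a witness $\{\overline A^\alpha=\{A^\alpha_i:i<\omega\}:\alpha<\cof^*_\omega(\I)\}$ for $\cof^*_\omega(\I)$ and a $\leq^*$-dominating family $\{f_\beta:\beta<\dd\}$ of functions from $X^{<\omega}$ to $\omega$ (the index set being countable, the relevant dominating number is still $\dd$), and set $\phi_{\alpha,\beta}(\sigma)=\bigcup_{i\leq f_\beta(\sigma)}A^\alpha_i\cup f_\beta(\sigma)\in\I$. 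The claim is that $\{M_{\phi_{\alpha,\beta}}:\alpha<\cof^*_\omega(\I),\ \beta<\dd\}$ is cofinal. Given $B\in M_\I$, write $B\subseteq\bigcup_{k<\omega}M_{\psi_k}$ with $\psi_k\colon X^{<\omega}\to\I$; feeding the countable set $\{\psi_k(\sigma):k<\omega,\ \sigma\in X^{<\omega}\}$ into the defining property of $\cof^*_\omega(\I)$ produces some $\alpha$ together with functions $g_k,h_k$ satisfying $\psi_k(\sigma)\subseteq A^\alpha_{g_k(\sigma)}\cup h_k(\sigma)$; then choose a single $F$ that $\leq^*$-dominates all the countably many $g_k,h_k$, and $\beta$ with $f_\beta\geq^*F$. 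For $x\in M_{\psi_k}$, every sufficiently large $n$ has $x(n)\in\psi_k(x\upharpoonright n)$ and $f_\beta(x\upharpoonright n)\geq\max\{g_k(x\upharpoonright n),h_k(x\upharpoonright n)\}$ (the latter because $\{x\upharpoonright n:n<\omega\}$ is infinite), whence $x(n)\in\phi_{\alpha,\beta}(x\upharpoonright n)$; so $M_{\psi_k}\subseteq M_{\phi_{\alpha,\beta}}$ for all $k$ and $B\subseteq M_{\phi_{\alpha,\beta}}$. Note that what keeps the bound from blowing up to $(\dd\cdot\cof^*_\omega(\I))^{\aleph_0}$ is that a whole countable union of generators $M_{\psi_k}$ sits inside a single generator $M_{\phi_{\alpha,\beta}}$, so no closure under countable unions is needed.

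For the lower bound $\cof^*_\omega(\I)\leq\cof(M_\I)$, let $\mathcal A\subseteq M_\I$ be cofinal. For each $A\in\mathcal A$ fix $\psi^A_k\colon X^{<\omega}\to\I$ ($k<\omega$) with $A\subseteq\bigcup_k M_{\psi^A_k}$, and put $\overline A^*=\{\psi^A_k(\sigma):k<\omega,\ \sigma\in X^{<\omega}\}$, padded by finite sets to be infinite, so $\overline A^*\in[\I]^\omega$. I claim $\{\overline A^*:A\in\mathcal A\}$ witnesses $\cof^*_\omega(\I)$. Let $\overline B=\{B_n:n<\omega\}\in[\I]^\omega$; since finite $B_n$ are $\subseteq^*$ everything, we may assume every $B_n$ is infinite. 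Fix a surjection $f\colon\omega\to\omega$ all of whose fibers are infinite, set $\phi(\sigma)=B_{f(\lvert\sigma\rvert)}$, and take $A\in\mathcal A$ with $M_\phi\subseteq A$ (so $M_\phi\subseteq A\subseteq\bigcup_k M_{\psi^A_k}$). It suffices to show that for every $m<\omega$ there is $C\in\overline A^*$ with $B_m\subseteq^*C$. If not, then $B_m\setminus\psi^A_k(\sigma)$ is infinite for all $k,\sigma$, and I build $x\in X^\omega$ by recursion: enumerating $f^{-1}(m)=\{s_j:j<\omega\}$ and fixing a surjection $e\colon\omega\to\omega$ with infinite fibers, I let $x(n)$ be any member of $B_{f(n)}$ when $n\notin f^{-1}(m)$, and any member of $B_m\setminus\psi^A_{e(j)}(x\upharpoonright n)$ when $n=s_j$ (possible by the assumption, and $B_{f(n)}=B_m$ there). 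Then $x(n)\in B_{f(n)}=\phi(x\upharpoonright n)$ for all $n$, so $x\in M_\phi\subseteq\bigcup_k M_{\psi^A_k}$; but for each $k$ there are infinitely many $j$ with $e(j)=k$, hence $x(n)\notin\psi^A_k(x\upharpoonright n)$ for infinitely many $n$, i.e.\ $x\notin M_{\psi^A_k}$ for any $k$ — a contradiction.

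The recursion above is the only delicate point: $x$ has to meet the single positive requirement of $M_\phi$ while simultaneously escaping each of the infinitely many sets $M_{\psi^A_k}$. These are reconciled by letting $f$ be constant, equal to $m$, on an infinite set $S=f^{-1}(m)$: along $S$ membership in $M_\phi$ only asks $x(n)\in B_m$, which is exactly the slack needed to also push $x(n)$ out of $\psi^A_{e(j)}(x\upharpoonright n)$ — the place where the negation of the desired conclusion (no $C\in\overline A^*$ covering $B_m$ mod finite) gets used — while $e$ distributes these escapes over all indices. Everything else, including the transfer to $K_\I$ and to \eqref{item:add}, is bookkeeping.
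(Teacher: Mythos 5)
Your proof of item \eqref{item:cof} is correct, and for the upper bound it is essentially the paper's argument (which itself mirrors the proof of \Cref{thm:Louveau_null}(4)): the same slaloms $\phi_{\alpha,\beta}(\sigma)=\bigcup_{i\leq f_\beta(\sigma)}A^\alpha_i\cup f_\beta(\sigma)$ do the work. The difference is purely in how countable unions of generators are handled. The paper tacitly reduces to bases consisting of single sets $M_{\phi_\alpha}$ (legitimate because $\bigcup_{k<\omega} M_{\psi_k}\subseteq M_\psi$ for $\psi(\sigma)=\bigcup_{k\leq\lvert\sigma\rvert}\psi_k(\sigma)$, so single generators are cofinal in $M_\I$), and for the lower bound takes $\{\ran(\phi_\alpha):\alpha<\kappa\}$ as the candidate $\cof^*_\omega(\I)$-witness, building for each $\alpha$ a branch in $M_{B_{i_\alpha}}\setminus M_{\phi_\alpha}$, so that $\bigcup_i M_{B_i}$ is covered by no basis element. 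You instead keep arbitrary cofinal families, decompose each member into countably many generators $\psi^A_k$, and defeat all of them at once via the two infinite-fiber surjections $f,e$; dually, in the upper bound you absorb the countably many $g_k,h_k$ into one dominated function. Both devices are sound and make explicit a reduction the paper leaves implicit; the underlying diagonalization (a branch that stays inside a generator built from the $B_i$'s while escaping the putative basis element infinitely often) is the same, and your transfer of \eqref{item:cof} to $K_\I$ by replacing $X^{<\omega}$ with $\omega$ is indeed only a simplification. One caveat: for item \eqref{item:add} you merely cite \cite{CM25} and assert that the $K_\I$ case follows ``by the same argument'' without ever displaying that argument; the paper, noting that \cite{CM25} states the inequality without proof, writes it out in full (lower bound via $\add^*_\omega$ plus adaptive domination, upper bound via constant slaloms and a branch escaping $\psi$) and only then observes that it adapts to $K_\I$. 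The adaptation is in fact routine, so this is a presentational thinness rather than a genuine gap, but as written the $K_\I$ half of \eqref{item:add} rests on an argument you have not produced.
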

    \begin{proof}
        We may assume that $\I$ is an ideal on $\omega$. We will give proofs only for $M_\I$ because the essentially same argument also proves the lemma for $K_\I$.
        
        \eqref{item:add} The inequality is stated in \cite[Proposition 3.6 \& Theorem 4.2]{CM25} without a proof. To see $\min\{\bb, \add^*_\omega(\I)\}\leq\add(M_\I)$, let $\Phi$ be a family of functions from $\omega^{<\omega}$ to $\I$ such that $\lvert\Phi\rvert<\min\{\add^*_\omega(\I),\bb\}$ and we show that $\{M_\phi\colon\phi\in\Phi\}$ is not a witness for $\add(M_\I)$. Since $\{\phi(s):\phi\in\Phi\land s\in\omega^{<\omega}\}$ has size $<\add^*_\omega(\I)$, there is $\{B_n:n<\omega\}\subset\I$ such that for all $\phi\in\Phi$ and $s\in\omega^{<\omega}$, there is $n<\omega$ such that $\phi(s)\subset^* B_n$. We may assume that $\langle B_n:n<\omega\rangle$ is $\subset$-increasing and $n\subset B_n$ for each $n<\omega$. Then for each $\phi\in\Phi$, we can define $f_\phi\colon\omega^{<\omega}\to\omega$ by letting $f_\phi(s)$ be the least $n<\omega$ such that $\phi(s)\subset B_n$. Since $\lvert\Phi\rvert<\bb$, there is $f\colon\omega^{<\omega}\to\omega$ such that for all $x\in\omega^\omega$, 
        \[
        \forall^\infty n<\omega~f_\phi(x\upharpoonright n)\leq f(x\upharpoonright n).
        \]
        Define $\psi\colon\omega^{<\omega}\to\I$ by $\psi(s)\coloneqq B_{f(s)}$. Then $M_\phi\subset M_{\psi}$ for all $\phi\in\Phi$ because if $x\in M_\phi$, then for all but finitely many $n<\omega$,
        \[
        x(n)\in\phi(x\upharpoonright n)\subset B_{f_\phi(x\upharpoonright n)}\subset B_{f(x\upharpoonright n)} = \psi(x\upharpoonright n)
        \]
        holds and hence $x\in M_{\psi}$.
        
        To see $\add(M_\I)\leq\add_\omega^*(\I)$, let $\mathcal{A}\subset\I$ of size $<\add(M_\I)$ and we show that $\mathcal{A}$ is not a witness for $\add_\omega^*(\I)$. For $A\in\mathcal{A}$, let $\phi_A\colon\omega^{<\omega}\to\I$ denote the constant function of the value $A$. Since $\lvert\mathcal{A}\rvert<\add(M_\I)$, there is $\psi\colon\omega^{<\omega}\to\I$ such that $M_{\phi_A}\subset M_\psi$ for all $A\in\mathcal{A}$. We claim that for any $A\in\mathcal{A}$, $A\subset^*\psi(s)$ for all $s\in\omega^{<\omega}$. Suppose not. Then for some $A\in\mathcal{A}$, one can find $x\in\omega^\omega$ such that $x(n)\in A\setminus \psi(x\upharpoonright n)$ for $n<\omega$. Then $x$ witnesses $M_{\phi_A}\not\subset M_\psi$, which is a contradiction.
    
        \eqref{item:cof} To show $\cof^*_\omega(\I)\leq\cof(M_{\I})$, let $\{M_{\phi_\alpha}:\alpha<\kappa\}$ be a basis of $M_\I$. We claim that $\{\mathrm{ran}(\phi_\alpha):\alpha<\kappa\}$ is a witness for $\cof^*_\omega(\I)$; Suppose otherwise. Then there is $\{B_i:i<\omega\}\subset\I$ such that
        \[
        \forall\alpha<\kappa\,\exists i_\alpha < \omega\,\forall s\in\omega^{<\omega}~\lvert B_{i_\alpha}\setminus\phi_\alpha(s)\rvert = \omega.
        \]
        Then for each $\alpha < \kappa$, one can find $x\in\omega^\omega$ such that for all $n<\omega$, $x(n)\in B_{i_\alpha}\setminus\phi_\alpha(x\upharpoonright n)$ and thus $x\in M_{B_{i_\alpha}}\setminus M_{\phi_\alpha}$. Therefore, no $M_{\phi_\alpha}$ covers $\bigcup_{i<\omega}M_{B_i}$, which contradicts the choice of $M_{\phi_\alpha}$'s. 

        Now we show $\cof(M_\I)\leq\max\{\dd, \cof^*_\omega(\I)\}$. Let $\{\overline{A}_\alpha:\alpha<\cof^*_\omega(\I)\}$ be a witness for $\cof^*_\omega(\I)$. For each $\alpha<\cof^*_\omega(\I)$, we write $\overline{A}_\alpha = \{A_\alpha^i:i<\omega\}$.
        Also, let $\{f_\beta:\beta<\dd\}$ be a dominating family of functions from $\omega^{<\omega}$ to $\omega$. We may assume that each $f_\beta$ is increasing along branches, i.e.\ $s\subset t$ implies that $f_\beta(s)\leq f_\beta(t)$. For every $\alpha < \cof^*_\omega(\I)$ and $\beta<\dd$, we define $\phi_{\alpha, \beta}\colon\omega^{<\omega}\to\I$ by
        \[
        \phi_{\alpha, \beta}(s) = \bigcup_{i=0}^{f_{\beta}(s)}A_\alpha^i \cup f_\beta(s)
        \]
        for all $s\in\omega^{<\omega}$. We claim the collection $\{M_{\phi_{\alpha, \beta}}:\alpha<\cof^*_\omega(\I)\land\beta<\dd\}$ is a basis of $M_\I$; Take any set $M_\psi$, where $\psi\colon\omega^{<\omega}\to\I$. Let $\alpha<\cof^*_\omega(\I)$ be such that for each $s\in\omega^{<\omega}$ there is $i<\omega$ with $\psi(s)\subset^* A_{\alpha}^i$. Then let $g, h\colon\omega^{<\omega}\to\omega$ be functions such that for $s\in\omega^{<\omega}$,
        \[
        \psi(s)\subset A_\alpha^{g(s)} \cup h(s).
        \]
        There is a $\beta<\dd$ such that $f_\beta$ dominates both $g$ and $h$. Then we have $M_\psi\subseteq M_{\phi_{\alpha, \beta}}$, because if $x\in M_\psi$ then for all but finitely many $n$,
        \[
        x(n)\in\psi(x\upharpoonright n)\subset A_\alpha^{g(x\upharpoonright n)}\cup h(x\upharpoonright n)\subset \bigcup_{i=0}^{f_\beta(x\upharpoonright n)}A_\alpha^i \cup f_\beta(x\upharpoonright n) = \phi_{\alpha, \beta}(x\upharpoonright n)
        \]
        holds and thus $x\in M_{\phi_{\alpha, \beta}}$. This completes the proof for $M_\I$, but also this argument works for $K_\I$.
    \end{proof}
    
    The following lemma is used to prove $\add(K_\I)\leq\bb$ and $\cof(K_\I)\geq\dd$:
    
    \begin{lem}\label{lem:K_phi_psi}
        Let $\phi,\psi\colon \omega\to\I$ and assume that $\phi$ does not take empty values. Then $K_\phi\subseteq K_\psi$ implies $\forall^\infty n<\omega~\phi(n)\subset\psi(n)$.
    \end{lem}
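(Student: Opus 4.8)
The plan is to prove the contrapositive: assuming that $\phi(n)\not\subseteq\psi(n)$ holds for infinitely many $n$, I will build a single point $x\in X^\omega$ lying in $K_\phi$ but not in $K_\psi$, which contradicts $K_\phi\subseteq K_\psi$. The only reason the hypothesis ``$\phi$ does not take empty values'' enters is to guarantee that the point $x$ we construct actually lands in $K_\phi$ on the coordinates where we are not witnessing the failure $\phi(n)\not\subseteq\psi(n)$; without it $K_\phi$ could be empty and the conclusion holds trivially.

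Concretely, I would set $S=\{n<\omega:\phi(n)\not\subseteq\psi(n)\}$, which is infinite by assumption. For each $n\in S$ pick some $a_n\in\phi(n)\setminus\psi(n)$; for each $n\notin S$ pick some $a_n\in\phi(n)$, which is possible precisely because $\phi(n)\neq\emptyset$. Then define $x\in X^\omega$ by $x(n)=a_n$ for all $n<\omega$.

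To finish, I would observe two things. First, $x(n)\in\phi(n)$ for every $n<\omega$, so in particular $\forall^\infty n<\omega~x(n)\in\phi(n)$ and hence $x\in K_\phi$. Second, for every $n\in S$ we have $x(n)=a_n\notin\psi(n)$; since $S$ is infinite this means $x(n)\notin\psi(n)$ for infinitely many $n$, so it is false that $\forall^\infty n<\omega~x(n)\in\psi(n)$, i.e.\ $x\notin K_\psi$. Thus $x\in K_\phi\setminus K_\psi$, contradicting $K_\phi\subseteq K_\psi$. This establishes $\phi(n)\subseteq\psi(n)$ for all but finitely many $n$.

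There is essentially no obstacle here; the argument is a direct diagonalization using the fact that membership in $K_\phi$ only requires \emph{eventual} membership while membership in $K_\psi$ can be destroyed by a single infinite set of ``bad'' coordinates. The only mild subtlety to keep in mind — worth a sentence in the write-up — is that one must be able to define $x$ on \emph{all} coordinates, which is exactly why the non-emptiness of the values of $\phi$ is assumed.
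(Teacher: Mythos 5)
Your proof is correct and is essentially the same argument as the paper's: pick $x(n)\in\phi(n)\setminus\psi(n)$ on the infinite set of bad coordinates and $x(n)\in\phi(n)$ elsewhere (using non-emptiness), yielding $x\in K_\phi\setminus K_\psi$. No gaps; your remark on where the non-emptiness hypothesis is used matches the intended role of that assumption.
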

    \begin{proof}
        Assume that there is $D\in[\omega]^\omega$ such that $\phi(n)\not\subset \psi(n)$ for every $n\in D$. Take some $x\in\omega^\omega$ such that $x(n)\in \phi(n)\setminus \psi(n)$ for $n\in D$ and $x(n)\in\phi(n)$ for $n\notin D$. Then $x\notin K_\phi\setminus K_\psi$.
    \end{proof}
    
    \begin{lem}\label{lem:addK_I_leq_b}
        $\add(K_\I)\leq\bb$ and $\cof(K_\I)\geq\dd$.
    \end{lem}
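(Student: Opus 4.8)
The plan is to deduce both inequalities from combinatorics of $\leq^*$ on $\oo$, in the spirit of the classical equalities $\bb=\add(K_\sigma)$ and $\dd=\cof(K_\sigma)$, by restricting attention to the \emph{finite-valued} generators of $K_\I$. Assume $\I$ is an ideal on $\omega$. For $f\in\oo$ set $\phi_f(n)=\{0,\dots,f(n)\}$; since $\I$ is proper it contains all finite sets, so $\phi_f\colon\omega\to\I$, it takes no empty value, and $K_{\phi_f}=\{x\in\oo:\forall^{\infty}n\,(x(n)\leq f(n))\}\in K_\I$. Properness also gives $\psi(n)\neq\omega$ for any $\psi\colon\omega\to\I$, so $m_\psi(n):=\min(\omega\setminus\psi(n))$ is always a finite number, and the conclusion of \Cref{lem:K_phi_psi} applied to $\phi_f$ and $\psi$ reads precisely $f<^* m_\psi$.

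The one new ingredient I would need is an \emph{indecomposability} fact: if $\psi\colon\omega\to\I$ takes no empty value and $K_\psi\subseteq\bigcup_{i<\omega}K_{\rho_i}$, then $K_\psi\subseteq K_{\rho_i}$ for some $i$. To prove it I would argue by contradiction: if $K_\psi\not\subseteq K_{\rho_i}$ for every $i$, then each $D_i:=\{n<\omega:\psi(n)\not\subset\rho_i(n)\}$ is infinite (otherwise $\psi(n)\subset\rho_i(n)$ for all large $n$, forcing $K_\psi\subseteq K_{\rho_i}$), and then, fixing any surjection $e\colon\omega\to\omega$ with all fibres infinite, one builds $x\in\oo$ together with an increasing sequence $n_0<n_1<\cdots$ with $n_t\in D_{e(t)}$ by choosing $x(n_t)\in\psi(n_t)\setminus\rho_{e(t)}(n_t)$ and $x(n)\in\psi(n)$ (nonempty!) for $n\notin\{n_t:t<\omega\}$; this $x$ lies in $K_\psi$ yet meets the complement of each $K_{\rho_i}$ on the infinite set $\{n_t:e(t)=i\}$, a contradiction. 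Combined with \Cref{lem:K_phi_psi}: if $K_{\phi_f}\subseteq\bigcup_{i<\omega}K_{\rho_i}$ then $f<^* m_{\rho_i}$ for some $i$.

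For $\add(K_\I)\leq\bb$ I would fix a $\leq^*$-increasing and $\leq^*$-unbounded sequence $\langle g_\alpha:\alpha<\bb\rangle$ in $\oo$ (a standard recursion) and show $\bigcup_{\alpha<\bb}K_{\phi_{g_\alpha}}\notin K_\I$; since each $K_{\phi_{g_\alpha}}\in K_\I$, this gives the inequality. If that union were contained in $\bigcup_{k<\omega}K_{\psi_k}$, the previous paragraph yields, for each $\alpha$, some $k(\alpha)$ with $g_\alpha<^* m_{\psi_{k(\alpha)}}$; as $\bb$ is regular and uncountable, $k(\cdot)$ is constant, say with value $k$, on a set $A\subseteq\bb$ of size $\bb$, hence cofinal in $\bb$; monotonicity then propagates $g_\gamma\leq^* g_\alpha<^* m_{\psi_k}$ (for $\alpha\in A$ above $\gamma$) to all $\gamma<\bb$, so $m_{\psi_k}$ bounds the whole sequence, contradicting unboundedness. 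For $\cof(K_\I)\geq\dd$ I would take a cofinal family $\{A_\alpha:\alpha<\cof(K_\I)\}$ in $K_\I$ and fix, for each $\alpha$, functions $\chi_{\alpha,i}\colon\omega\to\I$ $(i<\omega)$ with $A_\alpha\subseteq\bigcup_{i<\omega}K_{\chi_{\alpha,i}}$; then for every $f\in\oo$, picking $\alpha$ with $K_{\phi_f}\subseteq A_\alpha$ and applying the indecomposability fact gives some $i$ with $f<^* m_{\chi_{\alpha,i}}$, so $\{m_{\chi_{\alpha,i}}:\alpha<\cof(K_\I),\,i<\omega\}$ is a dominating family; since no dominating family is countable, $\cof(K_\I)$ is uncountable and hence $\cof(K_\I)=\cof(K_\I)\cdot\aleph_0\geq\dd$.

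The step I expect to be the real work is the indecomposability fact; the rest is bookkeeping with $\leq^*$. A conceivable alternative is to replace it by a Baire-category argument inside the compact boxes $\prod_{n<\omega}\phi_{g_\alpha}(n)$ (using that each $K_{\psi_k}$ is $F_\sigma$, so one of its closed pieces is somewhere dense in the box), but the direct diagonalization above looks cleaner and avoids topological bookkeeping. Throughout, properness of $\I$ is used twice in an essential way: finite sets belong to $\I$, so the functions $\phi_f$ exist, and $\omega\notin\I$, so the ``least missing point'' functions $m_\psi$ are everywhere finite.
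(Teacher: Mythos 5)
Your proof is correct and follows essentially the same route as the paper: encode each $f\in\oo$ as the generator with finite initial-segment values, apply \Cref{lem:K_phi_psi}, and read off $\leq^*$-domination by $n\mapsto\min(\omega\setminus\psi(n))$. The only differences are bookkeeping: the paper bypasses your indecomposability lemma by (implicitly) using that countably many generators $K_{\rho_i}$ lie below the single generator $K_\rho$ with $\rho(n)=\bigcup_{i\le n}\rho_i(n)$, and it gets $\add(K_\I)\leq\bb$ by showing every family of size $<\add(K_\I)$ is $\leq^*$-bounded, rather than via a $\leq^*$-increasing unbounded scale of length $\bb$ and a pigeonhole argument.
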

    \begin{proof}
        To see $\add(K_\I)\leq\bb$, let $F\subset\omega^\omega$ of size $<\add(K_\I)$ and it suffices to show that $F$ is not an unbounded family. We may assume $f(n)>0$ for all $f\in F$ and $n<\omega$. Since $\omega\subseteq\Fin\subseteq\I$, $F\subseteq\I^\omega$ and hence there is $\psi\in\I^\omega$ such that $K_f\subseteq K_\psi$ for all $f\in F$. By \Cref{lem:K_phi_psi}, $\forall^\infty n<\omega~f(n)\subset\psi(n)$ holds for all $f\in F$. Then a function $g\in\omega^\omega$ defined by $g(n)\coloneqq\min(\omega\setminus\psi(n))$ dominates $F$, which completes the proof. One can prove $\cof(K_\I)\geq\dd$ in a dual manner.
    \end{proof}

    In the case of $M_\I$, the idea is the same, but we need to be more careful.

    \begin{dfn}
        For $\sigma,\tau\in\fsq$ and $\phi\colon\fsq\to\I$, we write $\sigma\subseteq_\phi\tau$ if $\sigma\subseteq\tau$ and for all $n\in|\tau|\setminus|\sigma|$, $\tau(n)\in\phi(\tau\on n)$.
    \end{dfn}
    
    \begin{lem}\label{lem:M_phi_psi}
        Let $\phi,\psi\colon\fsq\to\I$ and assume that $\phi$ does not take empty values. Then $M_\phi\subseteq M_\psi$ implies $\forall\sigma\in\fsq \exists\tau\supseteq_\phi\sigma\forall\rho\supseteq_\phi\tau~\phi(\rho)\subseteq\psi(\rho)$.
    \end{lem}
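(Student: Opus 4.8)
The plan is to argue by contradiction, in the spirit of the proof of \Cref{lem:K_phi_psi} but with extra bookkeeping for the tree structure. Assume the conclusion fails: there is $\sigma_0\in\fsq$ such that for every $\tau\supseteq_\phi\sigma_0$ there exists $\rho\supseteq_\phi\tau$ with $\phi(\rho)\not\subseteq\psi(\rho)$. I would then construct a real $x\in\baire$ with $x\in M_\phi\setminus M_\psi$, contradicting $M_\phi\subseteq M_\psi$.

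First I would record two trivial facts about $\subseteq_\phi$. It is transitive: if $\alpha\subseteq_\phi\beta\subseteq_\phi\gamma$ and $n\in|\gamma|\setminus|\alpha|$, then either $n\geq|\beta|$, so $\gamma(n)\in\phi(\gamma\on n)$ because $\beta\subseteq_\phi\gamma$, or $|\alpha|\leq n<|\beta|$, so $\gamma\on n=\beta\on n$ and $\gamma(n)=\beta(n)\in\phi(\beta\on n)=\phi(\gamma\on n)$ because $\alpha\subseteq_\phi\beta$. Moreover, since $\phi$ takes no empty values, every $\tau\in\fsq$ has a proper $\subseteq_\phi$-extension, namely $\tau^{\frown}\langle i\rangle$ for any $i\in\phi(\tau)$. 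Finally $\sigma_0\subseteq_\phi\sigma_0$ holds vacuously, so the failure hypothesis applies to $\tau=\sigma_0$ itself.

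Next, recursively build a sequence $\sigma_0=\eta_0\subseteq_\phi\eta_1\subseteq_\phi\cdots$ with $|\eta_{k+1}|>|\eta_k|$: given $\eta_k\supseteq_\phi\sigma_0$, apply the hypothesis to $\tau=\eta_k$ to obtain $\rho_k\supseteq_\phi\eta_k$ with $\phi(\rho_k)\not\subseteq\psi(\rho_k)$, choose $i_k\in\phi(\rho_k)\setminus\psi(\rho_k)$, and set $\eta_{k+1}=\rho_k^{\frown}\langle i_k\rangle$. Then $\eta_{k+1}\supseteq_\phi\rho_k\supseteq_\phi\eta_k\supseteq_\phi\sigma_0$ by transitivity, so $x\coloneqq\bigcup_k\eta_k$ is a well-defined element of $\baire$ extending $\sigma_0$.

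Finally I would verify the two membership claims. For any $n\geq|\sigma_0|$, pick $k$ with $|\eta_k|>n$; since $\eta_k\supseteq_\phi\sigma_0$ we get $x(n)=\eta_k(n)\in\phi(\eta_k\on n)=\phi(x\on n)$, so $x\in M_\phi$. On the other hand, for each $k$ we have $x\on|\rho_k|=\rho_k$ and $x(|\rho_k|)=i_k\notin\psi(\rho_k)=\psi(x\on|\rho_k|)$; since the lengths $|\rho_k|$ are unbounded, $x(n)\notin\psi(x\on n)$ for infinitely many $n$, i.e.\ $x\notin M_\psi$, giving the desired contradiction. The only point needing a little care is ensuring that $x$ really lands in $M_\phi$ and not merely "in $\phi$ infinitely often", which is exactly why every step of the construction must be $\phi$-legal (using that $\phi$ has no empty values), so that $x(n)\in\phi(x\on n)$ holds for every $n\geq|\sigma_0|$; everything else is routine.
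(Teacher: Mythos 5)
Your argument is correct and is exactly the construction the paper leaves implicit ("assume otherwise; by inductive construction find $x\in M_\phi$ with $x(n)\in\phi(x\on n)\setminus\psi(x\on n)$ infinitely often"); you have simply written out the transitivity of $\subseteq_\phi$ and the recursion explicitly, and all steps check out.
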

    \begin{proof}
        Assume otherwise. By inductive construction, we can find $x\in M_\phi$ such that $x(n)\in \phi(x\on n)\setminus \psi(x\on n)$ for infinitely many $n<\omega$ and hence $x\in M_\phi\setminus M_\psi$.
    \end{proof}

    \begin{dfn}
	   For $x\in\oo$ and $g\colon \fsq\to\omega$, we say that $x$ is \emph{adaptively dominated} by $g$, denoted by $x\sbd g$, if $x(n)\leq g(x\on n)$ for all but finitely many $n<\omega$.
    \end{dfn}

    \begin{lem}[see e.g.\ {\cite[Table 1, Page 463]{Bla10}}]\label{lem:seq_unbounded}
	   Every unbounded family is $\sbd$-unbounded. Namely, for any ($\leq^*$-)unbounded family $B\subseteq\oo$ and $g\colon \fsq\to\omega$, there is $x\in B$ such that $\lnot(x\sbd g)$. In addition, the smallest sizes of a $\sbd$-unbounded family and a $\sbd$-dominating family are $\bb$ and $\dd$, respectively.
    \end{lem}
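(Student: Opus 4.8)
The plan is to deduce all three assertions from the single fact that for every $g\colon\fsq\to\omega$ the set $D_g\coloneqq\{x\in\oo:x\sbd g\}$ is $\leq^*$-bounded. Granting this: if $B\subseteq\oo$ is $\leq^*$-unbounded then $B\not\subseteq D_g$ for every $g$, i.e.\ for every $g$ there is $x\in B$ with $\lnot(x\sbd g)$, which is exactly the assertion that $B$ is $\sbd$-unbounded; applying this to a $\leq^*$-unbounded family of size $\bb$ shows that the least size of a $\sbd$-unbounded family is at most $\bb$. Conversely, if $|B|<\bb$ then $B$ is $\leq^*$-bounded by some $h\in\oo$, and putting $g(s)\coloneqq h(|s|)$ gives $B\subseteq D_g$, so $B$ is not $\sbd$-unbounded; hence every $\sbd$-unbounded family has size at least $\bb$. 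For the dominating number, a $\leq^*$-dominating family $\{f_\alpha:\alpha<\dd\}$ produces a $\sbd$-dominating family through $g_\alpha(s)\coloneqq f_\alpha(|s|)$, giving the bound $\leq\dd$; and if $G$ is any $\sbd$-dominating family, then choosing for each $g\in G$ a $\leq^*$-bound $\hat g\in\oo$ of $D_g$, the family $\{\hat g:g\in G\}$ is $\leq^*$-dominating, so $|G|\geq\dd$.

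The work is therefore concentrated in showing that $D_g$ is $\leq^*$-bounded. First I would introduce, for each $s\in\fsq$, the tree $T_{g,s}$ consisting of all $t\in\fsq$ that are either comparable with $s$ or extend $s$ and satisfy $t(n)\leq g(t\on n)$ for every $n$ with $|s|\leq n<|t|$. Above $s$ this tree is finitely branching --- a node $t$ of length at least $|s|$ has at most $g(t)+1$ immediate successors --- so $[T_{g,s}]$, which is exactly the set of $x\in\oo$ with $x\on|s|=s$ and $x(n)\leq g(x\on n)$ for all $n\geq|s|$, is compact and hence bounded pointwise by some $h_s\in\oo$. Now if $x\sbd g$, fix $m$ with $x(n)\leq g(x\on n)$ for all $n\geq m$; then $x\in[T_{g,x\on m}]$, so $x\leq h_{x\on m}$. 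Since $\fsq$ is countable and $\bb>\omega$, there is $\hat g\in\oo$ with $h_s\leq^*\hat g$ for every $s\in\fsq$, and then $x\leq^*\hat g$ for all $x\in D_g$. Equivalently, $D_g=\bigcup_{s\in\fsq}[T_{g,s}]$ is $\sigma$-compact, so $D_g\in K_\sigma$ and is $\leq^*$-bounded because $\bb=\add(K_\sigma)$.

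The only delicate point I anticipate is that the relation $x\sbd g$ only requires $x(n)\leq g(x\on n)$ \emph{eventually}, so one cannot bound $x$ straight away but must first pass to a tail $s=x\on m$; the key insight is that the set of $x$ lying above a fixed stem $s$ and $g$-bounded from $|s|$ on is compact, and that there are only countably many possible stems, after which the boundedness of $D_g$ --- and with it all the stated cardinal equalities --- is routine. Everything else is bookkeeping.
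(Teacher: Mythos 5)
Your proof is correct and takes essentially the same route as the paper's: both reduce everything to the fact that, for each fixed stem $s$, the reals extending $s$ that are $g$-bounded at every later coordinate are the branches of a finitely branching tree and hence form a bounded (compact) set, so the set $D_g$ of adaptively dominated reals is a countable union of bounded sets and therefore $\leq^*$-bounded — the paper just realizes this via an explicit diagonal bound $f(n)=\max\{f^i(n):i\leq n\}$ and a direct contradiction instead of invoking $\bb>\omega$, and leaves the cardinal equalities as ``straightforward'', which you spell out. One cosmetic slip: in defining $T_{g,s}$ the first disjunct should read ``is an initial segment of $s$'' rather than ``is comparable with $s$'' (otherwise the tree contains all extensions of $s$), as your own identification of $[T_{g,s}]$ immediately afterwards makes clear.
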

    \begin{proof}
        Let $B\subset\omega^\omega$ be an unbounded family and $g\colon\omega^{<\omega}\to\omega$. Fix an enumeration $\{\sigma_i:i<\omega\}$ of $\omega^{<\omega}$ and for each $i<\omega$, let $T^i\subseteq\omega^{<\omega}$ be the tree given by
        \[
        \tau\in T^i \iff (\tau\subset\sigma_i)\lor(\sigma_i\subset\tau\land\forall n \in \lvert\tau\rvert \setminus \lvert\sigma_i\rvert~\tau(n)\leq g(\tau\upharpoonright n))
        \]
    	and let $f^i\in\omega^\omega$ be given by $f^i(n) = \max\{t(n):t\in T^i\cap\omega^{n+1}\}$. Also, we define $f\colon\omega\to\omega$ by $f(n) = \max\{f^i(n):i\leq n\}$. Since $B\subseteq\oo$ is an unbounded family, there is $x\in B$ such that $x(n)>f(n)$ for infinitely many $n<\omega$. Assume toward a contradiction that there exists $m<\omega$ such that for all $n\geq m$, $x(n)\leq g(x\on n)$. Let $i<\omega$ be such that $x\on m=\sigma_i$. Then $x$ is a branch of $T^i$, so $x(n)\leq f(n)$ for all $n\geq \max\{m,i\}$, which is a contradiction. The latter statement is straightforward.
    \end{proof}
    
    \begin{lem}\label{lem:addM_I_leq_b}
        $\add(M_\I)\leq\bb$ and $\cof(M_\I)\geq\dd$.
    \end{lem}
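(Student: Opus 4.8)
The plan is to adapt the proof of \Cref{lem:addK_I_leq_b} from $K_\I$ to $M_\I$, substituting \Cref{lem:M_phi_psi} for \Cref{lem:K_phi_psi} and dealing with the extra ``adaptive'' quantifiers in its conclusion by a zero-tail trick. We may assume $\I$ is an ideal on $\omega$. First I would record the elementary observation that every $A\in M_\I$ lies inside a single $M_\psi$: writing $A\subseteq\bigcup_{n<\omega}M_{\phi_n}$ and setting $\psi(\sigma)=\bigcup_{n\leq|\sigma|}\phi_n(\sigma)\in\I$ (a finite union of members of $\I$), any $x\in M_{\phi_n}$ satisfies $x(k)\in\phi_n(x\on k)\subseteq\psi(x\on k)$ for all large $k$, so $\bigcup_n M_{\phi_n}\subseteq M_\psi$. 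Next I fix the encoding: for $f\in\oo$ put $\phi_f(\sigma)=\{0,1,\dots,f(|\sigma|)\}$, so $\phi_f\colon\fsq\to\I$ takes only finite, nonempty values, and $M_{\phi_f}=\{x\in\oo:\forall^\infty n\ x(n)\leq f(n)\}\in M_\I$.

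The core step is: if $M_{\phi_f}\subseteq M_\psi$, then $f\leq^* g$ for some $g\in\oo$ lying in a fixed countable set determined by $\psi$. Let $h_\psi(\rho)=\min(\omega\setminus\psi(\rho))$, which is well defined since $\I$ is proper, and write $0^{(k)}$ for the sequence of $k$ zeros. Applying \Cref{lem:M_phi_psi} (legitimate, since $\phi_f$ is nonempty-valued) with $\sigma=\emptyset$ yields $\tau$ with $\emptyset\subseteq_{\phi_f}\tau$ such that $\phi_f(\rho)\subseteq\psi(\rho)$ for every $\rho$ with $\tau\subseteq_{\phi_f}\rho$. For each $m\geq|\tau|$ the node $\rho_m:=\tau^{\frown}0^{(m-|\tau|)}$ satisfies $\tau\subseteq_{\phi_f}\rho_m$ (each appended $0$ lies in the relevant $\{0,\dots,f(n)\}$), so $\{0,\dots,f(m)\}=\phi_f(\rho_m)\subseteq\psi(\rho_m)$, which forces $f(m)<h_\psi(\rho_m)$. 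Hence, defining $g^\psi_s\in\oo$ by $g^\psi_s(m)=h_\psi\big(s^{\frown}0^{(m-|s|)}\big)$ for $m\geq|s|$ and $g^\psi_s(m)=0$ for $m<|s|$, we obtain $f\leq^* g^\psi_\tau$, and there are only countably many candidates $g^\psi_s$, $s\in\fsq$.

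With the core step in hand both halves follow as in \Cref{lem:addK_I_leq_b}. For $\add(M_\I)\leq\bb$: let $F\subseteq\oo$ be $\leq^*$-unbounded; if $|F|<\add(M_\I)$, then $\bigcup_{f\in F}M_{\phi_f}\in M_\I$, so it is contained in some $M_\psi$, whence each $M_{\phi_f}\subseteq M_\psi$ and each $f\in F$ is $\leq^*$-below some $g^\psi_s$. Since $\{g^\psi_s:s\in\fsq\}$ is countable, $F$ is $\sigma$-bounded, hence $\leq^*$-bounded, a contradiction. Thus every $\leq^*$-unbounded family has size $\geq\add(M_\I)$, and since the least such size is $\bb$, we get $\add(M_\I)\leq\bb$. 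For $\cof(M_\I)\geq\dd$: given a cofinal family $\mathcal C\subseteq M_\I$, the first observation lets us assume $\mathcal C=\{M_{\psi_i}:i<\kappa\}$ with $\kappa=|\mathcal C|$; for each $f\in\oo$, $M_{\phi_f}\in M_\I$ gives $M_{\phi_f}\subseteq M_{\psi_{i(f)}}$ for some $i(f)<\kappa$, hence $f\leq^* g^{\psi_{i(f)}}_s$ for some $s\in\fsq$ by the core step. So $\{g^{\psi_i}_s:i<\kappa,\ s\in\fsq\}$ is a dominating family of size $\leq\kappa\cdot\aleph_0=\kappa$ (using that $\cof(M_\I)$ is uncountable, as no countable union $\bigcup_n M_{\phi_n}$ equals $\oo$), giving $\dd\leq\kappa=\cof(M_\I)$.

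I expect the core step to be the only genuine difficulty. The subtlety is that although $f$ is itself a branch of $M_{\phi_f}$, the mere membership $f\in M_\psi$ only says $f(n)\in\psi(f\on n)$ with $\psi$-values possibly infinite, which bounds nothing; one really needs the ``fatness'' $\{0,\dots,f(|\rho|)\}\subseteq\psi(\rho)$ supplied by \Cref{lem:M_phi_psi}, together with the zero-tail nodes $\rho_m$ to realize that inclusion at nodes of all lengths. The remainder is routine bookkeeping paralleling the $K_\I$ case; alternatively the $\add$-half can be phrased through the adaptively dominated relation and \Cref{lem:seq_unbounded}, which is already available.
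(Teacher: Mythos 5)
Your proof is correct, and the core combinatorial step takes a genuinely different route from the paper's. Both arguments use the same encoding $\phi_f(\sigma)=f(|\sigma|)+1$ and both invoke \Cref{lem:M_phi_psi} at $\sigma=\emptyset$, but they exploit its conclusion differently. The paper upgrades it to \emph{adaptive} domination: it follows the branch of $f$ itself, and since the stem $\tau_{i_f}$ produced by \Cref{lem:M_phi_psi} depends on $f$, it must pass to the overwritten branches $f^{i}$ and absorb all finitely many possibilities into $\psi'(\sigma)=\bigcup_{i\leq|\sigma|}\psi(\sigma^i)$, obtaining a single $g\colon\fsq\to\omega$ with $f\sbd g$ for all $f\in F$ and then appealing to \Cref{lem:seq_unbounded} (and ``the dual manner'' for $\cof$). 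You instead evaluate $\psi$ along the zero-padded nodes $\tau^\frown 0^{(m-|\tau|)}$, which are legitimate $\phi_f$-extensions of $\tau$ because $0\in\phi_f(\rho)$ always, and which depend on $f$ only through the finite stem $\tau$; this yields honest $\leq^*$-domination by one of the countably many functions $g^\psi_s$, so the additivity half closes via $\sigma$-boundedness and $\bb>\aleph_0$, and the cofinality half is written out directly (your parenthetical that $\cof(M_\I)>\aleph_0$ is easily justified, or one can simply use $\dd>\aleph_0$ to get $\dd\leq\kappa\cdot\aleph_0\Rightarrow\dd\leq\kappa$). What each approach buys: the paper's version proves the formally stronger fact that small families are adaptively dominated by a single predictor, which fits the $\sbd$-machinery already set up for \Cref{thm:nonmi_leq_max}; your version is more elementary and self-contained, avoiding \Cref{lem:seq_unbounded} and the initial-segment-overwriting trick, at the cost of producing a countable family of $\leq^*$-bounds rather than one adaptive bound. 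Your preliminary observation that every member of $M_\I$ sits inside a single $M_\psi$ (via $\psi(\sigma)=\bigcup_{n\leq|\sigma|}\phi_n(\sigma)$) is also correct and mirrors a device the paper uses elsewhere in \Cref{lem:add_cof_ineq}.
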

    \begin{proof}
        To see $\add(M_\I)\leq\bb$, let $F\subseteq\oo$ of size $<\add(M_\I)$. It suffices to find $g\colon\omega^{<\omega}\to\omega$ such that $f\sbd g$ for all $f\in F$ by \Cref{lem:seq_unbounded}. For $f\in F$, we define $\phi_f\colon\omega^{<\omega}\to\I$ by $\phi_f(\sigma) =  f(\lvert\sigma\rvert)+1$. Since $\lvert F\rvert<\add(M_\I)$, there is $\psi\colon\omega^{<\omega}\to\I$ such that $M_{\phi_f}\subset M_\psi$ for all $f\in F$. Fix an enumeration $\{\tau_i:i<\omega\}$ of $\omega^{<\omega}$.
        For each $x\in\omega^{\leq\omega}$ and $i<\omega$, we define $x^i\in\omega^{\leq\omega}$ by $\dom(x^i)=\dom(x), x^i(n) = \tau_i(n)$ for $n<\lvert\tau_i\rvert$ and $x^i(n) = x(n)$ for $n\geq\lvert\tau_i\rvert$. We also define $\psi^\prime\colon\omega^{<\omega}\to\I$ and $g\colon\omega^{<\omega}\to\omega$ by
        \begin{align*}
            \psi^\prime(\sigma) &=\bigcup\{\psi(\sigma^i):i\leq|\sigma|\},\\
            g(\sigma) &= \min(\omega\setminus\psi^\prime(\sigma)).
        \end{align*}
        For each $f\in F$, by \Cref{lem:M_phi_psi} (putting $\sigma\coloneqq\emptyset$), there is $i_f<\omega$ such that
        \begin{equation}\label{eq:rho}
            \forall\rho\supseteq_{\phi_f}\tau_{i_f}~(\phi_f(\rho)\subseteq\psi(\rho)). 
        \end{equation}
        Fix $f\in F$ for now and write $i\coloneqq i_f$ and $\phi\coloneqq\phi_f$. For $n\geq\rvert\tau_{i}\lvert$,  since $f^i(n)=f(n)\in f(n)+1 = \phi(f^i\on n)$, we have $f^i\on n\supseteq_{\phi}f^i\on\lvert\tau_i\rvert =\tau_{i}$. Then by \eqref{eq:rho}, for $n\geq\max\{\lvert\tau_{i}\rvert,i\}$,
        \[
        f(n)+1=\phi(f^i \on n)\subseteq\psi(f^i\on n)\subseteq\psi^\prime(f\on n),
        \]
        which implies $f(n)\leq g(f\on n)$. Since $f\in F$ was arbitrary, we have $f\sbd g$ for all $f\in F$. One can prove $\cof(M_\I)\geq\dd$ in a dual manner.
    \end{proof}

    Together with \Cref{lem:add_cof_ineq}, we obtain:
    
    \begin{thm}\label{thm:exact_values_of_add}
        For any ideal $\I$ on a countable set, the following holds:
        \begin{align*}
            \add(K_\I) = \add(M_\I) &= \min\{\bb, \add^*_\omega(\I)\}\\ 
            \cof(K_\I) = \cof(M_\I) &= \max\{\dd, \cof^*_\omega(\I)\}.
        \end{align*}
    \end{thm}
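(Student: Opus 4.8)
The plan is to simply assemble the bounds already established in \Cref{lem:add_cof_ineq}, \Cref{lem:addK_I_leq_b}, and \Cref{lem:addM_I_leq_b}; no new combinatorics is needed. First I would handle the additivity equalities. For $M_\I$, \Cref{lem:add_cof_ineq}\eqref{item:add} gives $\min\{\bb,\add^*_\omega(\I)\}\leq\add(M_\I)\leq\add^*_\omega(\I)$, and \Cref{lem:addM_I_leq_b} gives $\add(M_\I)\leq\bb$. Combining the two upper bounds yields $\add(M_\I)\leq\min\{\bb,\add^*_\omega(\I)\}$, which together with the lower bound forces $\add(M_\I)=\min\{\bb,\add^*_\omega(\I)\}$. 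The identical argument works verbatim for $K_\I$, using the $K_\I$-version of \Cref{lem:add_cof_ineq}\eqref{item:add} together with the bound $\add(K_\I)\leq\bb$ from \Cref{lem:addK_I_leq_b}.

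Next I would handle the cofinality equalities dually. For $M_\I$, \Cref{lem:add_cof_ineq}\eqref{item:cof} gives $\cof^*_\omega(\I)\leq\cof(M_\I)\leq\max\{\dd,\cof^*_\omega(\I)\}$, and \Cref{lem:addM_I_leq_b} gives $\cof(M_\I)\geq\dd$. Combining the two lower bounds yields $\cof(M_\I)\geq\max\{\dd,\cof^*_\omega(\I)\}$, which together with the upper bound forces $\cof(M_\I)=\max\{\dd,\cof^*_\omega(\I)\}$. Again the same reasoning applies to $K_\I$, using the $K_\I$-version of \Cref{lem:add_cof_ineq}\eqref{item:cof} and the bound $\cof(K_\I)\geq\dd$ from \Cref{lem:addK_I_leq_b}.

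Since every ingredient is already in hand, there is no genuine obstacle in this particular step; the entire difficulty of the section lies in the supporting lemmas, especially \Cref{lem:addM_I_leq_b}, whose proof reduces $\add(M_\I)\leq\bb$ to the fact (\Cref{lem:seq_unbounded}) that unbounded families are $\sbd$-unbounded, via the adaptive-domination machinery of \Cref{lem:M_phi_psi}. The only thing to be careful about when writing the final theorem is to cite the appropriate lemma for each of $M_\I$ and $K_\I$ separately, since the upper bounds $\add\leq\bb$ (and $\cof\geq\dd$) were proved for the two $\sigma$-ideals in different lemmas.
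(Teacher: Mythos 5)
Your proposal is correct and matches the paper's own argument, which derives the theorem exactly as you do: combining the bounds of \Cref{lem:add_cof_ineq} with $\add(K_\I)\leq\bb$, $\cof(K_\I)\geq\dd$ from \Cref{lem:addK_I_leq_b} and $\add(M_\I)\leq\bb$, $\cof(M_\I)\geq\dd$ from \Cref{lem:addM_I_leq_b}. No gap; the real work indeed lies in those supporting lemmas.
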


    Most of the ideals $\I$ in Table \ref{table_values} satisfy $\add^*_\omega(\I)\leq\bb$ and $\cof^*_\omega(\I)\geq\dd$  and hence $\add(K_\I)=\add(M_\I)=\add^*_\omega(\I)$ and $\cof(K_\I)=\cof(M_\I)=\cof^*_\omega(\I)$ for such $\I$. However, this is not always the case for any ideal $\I$:
    
    \begin{cor}\label{cor:grad_add_cof_diff}
        \begin{enumerate}
            \item It is consistent that for any gradually fragmented ideal $\I$, $\add(K_\I)=\add(M_\I)=\bb<\add^*_\omega(\I)$ holds.
            \item It is consistent that for any gradually fragmented ideal $\I$, $\cof^*_\omega(\I) < \dd=\cof(K_\I)=\cof(M_\I)$ holds.
        \end{enumerate}
    \end{cor}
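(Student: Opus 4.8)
The plan is to obtain both statements as immediate corollaries of the exact computations in \Cref{thm:exact_values_of_add}, combined with the gradual-fragmentation estimates \Cref{thm:addstaromega_grad_frag} and \Cref{thm:cofstaromega_grad_frag} and a couple of classical consistency results; essentially all the substantive work has already been carried out in the preceding sections, so the argument is short.

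For part (1), I would work in a model of $\bb < \add_t(\mathcal{N})$. Such a model exists by Pawlikowski's consistency of $\add(\mathcal{N}) = \bb < \add_t(\mathcal{N})$, recalled in the discussion following \Cref{lem:addtN} (Shelah's consistency of $\dd < \add_t(\mathcal{N})$ would also suffice). In that model, for every gradually fragmented ideal $\I$ on $\omega$, \Cref{thm:addstaromega_grad_frag} gives $\add^*_\omega(\I) \geq \add_t(\mathcal{N}) > \bb$, so $\min\{\bb, \add^*_\omega(\I)\} = \bb < \add^*_\omega(\I)$; then \Cref{thm:exact_values_of_add} yields $\add(K_\I) = \add(M_\I) = \bb < \add^*_\omega(\I)$. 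For part (2), the natural choice is the Laver model, in which $\omega_1 = \mathfrak{l} < \bb = \dd = \omega_2$ (see \cite{Kad00}). By \Cref{thm:cofstaromega_grad_frag}, $\cof^*_\omega(\I) \leq \mathfrak{l} < \dd$ for every gradually fragmented ideal $\I$ on $\omega$, so $\max\{\dd, \cof^*_\omega(\I)\} = \dd$, and \Cref{thm:exact_values_of_add} gives $\cof(K_\I) = \cof(M_\I) = \dd > \cof^*_\omega(\I)$; alternatively one reads this straight off \Cref{thm:grad_frag_in_Laver}.

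Since each step is a direct application of an already-established theorem, I do not anticipate any genuine obstacle. The only point worth checking is uniformity over all gradually fragmented ideals, but this is automatic: \Cref{thm:addstaromega_grad_frag}, \Cref{thm:cofstaromega_grad_frag} and \Cref{thm:grad_frag_in_Laver} are all $\mathsf{ZFC}$ statements quantified over every such $\I$, so a single forcing extension (a model of $\bb < \add_t(\mathcal{N})$, respectively the Laver model) makes the relevant inequality hold simultaneously for all of them.
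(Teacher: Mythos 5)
Your proposal is correct and follows essentially the same route as the paper: part (1) uses Pawlikowski's model of $\bb<\add_t(\mathcal{N})$ together with \Cref{thm:addstaromega_grad_frag}, and part (2) uses the Laver model together with \Cref{thm:grad_frag_in_Laver} (equivalently $\cof^*_\omega(\I)\leq\mathfrak{l}<\bb=\dd$ via \Cref{thm:cofstaromega_grad_frag}), with the exact values then read off from \Cref{thm:exact_values_of_add}. The only difference is that you spell out the appeal to \Cref{thm:exact_values_of_add} explicitly, which the paper leaves implicit.
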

    \begin{proof}
        \begin{enumerate}
            \item Work in the model constructed in \cite{Paw85} where $\bb < \add_t(\mathcal{N})$ holds. Let $\I$ be a gradually fragmented ideal. Then, $\add_t(\mathcal{N})\leq\add^*_\omega(\I)$ by \Cref{thm:addstaromega_grad_frag}.
            \item Work in the Laver model and let $\I$ be a gradually fragmented ideal. Then, $\cof^*_\omega(\I) < \bb=\dd$ by \Cref{thm:grad_frag_in_Laver}.
        \end{enumerate}
    \end{proof}

    \subsection{Uniformity and covering}\label{sec:non_cov}

    We first observe that there are natural bounds for uniformity and covering numbers of $K_\I$ and $M_\I$.

    \begin{lem}\label{lem:b_nonM}
        For any ideal $\I$ on a countable set,
        \begin{align*}
            \bb\leq\non(K_\I) &\leq \non(M_\I)\leq\non(\mathcal{M})\\
            \covm\leq\cov(M_\I) & \leq\cov(K_\I)\leq\dd.
        \end{align*}
    \end{lem}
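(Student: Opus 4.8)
The plan is to derive all six inequalities from two containments of $\sigma$-ideals, using that $\non$ is monotone and $\cov$ antimonotone under inclusion of ideals, together with the standard identities $\non(K_\sigma)=\bb$ and $\cov(K_\sigma)=\dd$ recalled in the introduction.

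First I would establish $K_\I\subseteq M_\I$: given $\phi\colon\omega\to\I$, define $\tilde\phi\colon X^{<\omega}\to\I$ by $\tilde\phi(\sigma)=\phi(\lvert\sigma\rvert)$, so that $M_{\tilde\phi}=K_\phi$. Hence every generator of $K_\I$ is a generator of $M_\I$, and since $M_\I$ is a $\sigma$-ideal, $K_\I\subseteq M_\I$. This yields the two ``middle'' inequalities $\non(K_\I)\leq\non(M_\I)$ and $\cov(M_\I)\leq\cov(K_\I)$.

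Next I would show $M_\I\subseteq\m$. Since $\I$ is proper, $X$ is infinite, so $X^\omega$ is a perfect Polish space (homeomorphic to $\omega^\omega$) and $\non(\m),\cov(\m)$ may be computed on it. For $\phi\colon X^{<\omega}\to\I$ write $M_\phi=\bigcup_{m<\omega}F_m$ with $F_m=\{x\in X^\omega:\forall n\geq m\ (x(n)\in\phi(x\upharpoonright n))\}$. Each $F_m$ is closed (a countable intersection of clopen conditions), and it is nowhere dense: given $\sigma\in X^{<\omega}$, extend it to some $\sigma'$ with $\lvert\sigma'\rvert\geq m$; since $\phi(\sigma')\neq X$ by properness of $\I$, choose $i\in X\setminus\phi(\sigma')$, and then $[(\sigma')^{\frown}\langle i\rangle]\cap F_m=\emptyset$. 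Hence $M_\phi$ is meager, so $M_\I\subseteq\m$, giving $\non(M_\I)\leq\non(\m)$ and $\covm\leq\cov(M_\I)$.

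Finally, for the two outermost bounds I would argue directly. If $A\subseteq X^\omega$ has size $<\bb$, then (identifying $X$ with $\omega$) $A$ is $\leq^*$-bounded by some $g\in\omega^\omega$, so $A\subseteq K_\phi\in K_\I$ for $\phi(n)=\{0,\dots,g(n)\}\in\Fin\subseteq\I$; thus $\non(K_\I)\geq\bb$. Dually, if $\{g_\alpha:\alpha<\dd\}$ is dominating, then the sets $K_{\phi_\alpha}$ with $\phi_\alpha(n)=\{0,\dots,g_\alpha(n)\}$ cover $X^\omega$, so $\cov(K_\I)\leq\dd$. (Alternatively, one observes $\Fin\subseteq\I$ gives $K_{\Fin}\subseteq K_\I$ and $K_{\Fin}=K_\sigma$ on $X^\omega$, and then quotes $\non(K_\sigma)=\bb$ and $\cov(K_\sigma)=\dd$.) I do not expect any real obstacle; the only point deserving care is the nowhere-density verification in the second step, where properness of $\I$ — that is, $\phi(\sigma')\neq X$ — is precisely the hypothesis that makes each $F_m$ nowhere dense.
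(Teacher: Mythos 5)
Your proof is correct and follows essentially the same route as the paper, which deduces all six inequalities from the chain of inclusions $K_\sigma\subseteq K_\I\subseteq M_\I\subseteq\m$ together with $\non(K_\sigma)=\bb$ and $\cov(K_\sigma)=\dd$; you merely spell out the details (the reindexing $\tilde\phi(\sigma)=\phi(\lvert\sigma\rvert)$, the nowhere-density of the closed pieces of $M_\phi$ via properness of $\I$, and the bounding/dominating argument, whose "alternative" formulation is exactly the paper's use of $K_{\Fin}=K_\sigma$).
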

    \begin{proof}
        We may assume that $\I$ is an ideal on $\omega$. Then we have
        \[
        K_{\sigma}\subset K_\I \subset M_\I \subset \mathcal{M},
        \]
        where $K_\sigma$ is the ideal of $\sigma$-compact sets of the Baire space. This implies the claim, since $\non(K_\sigma) = \bb$ and $\cov(K_\sigma) = \dd$.
    \end{proof}

    \begin{prop}
        $K_\I = M_\I$ if and only if $\I=\Fin$.
    \end{prop}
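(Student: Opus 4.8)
The plan is to prove the two implications separately; the substantial one is ``$K_\I = M_\I \Rightarrow \I = \Fin$''. For the easy direction, if $\I=\Fin$ then $K_\Fin$ coincides with the ideal $K_\sigma$ of $\sigma$-compact subsets of $\oo$: each generator $K_\phi$ with $\phi\colon\omega\to\Fin$ is $\sigma$-compact, and conversely every $\sigma$-compact set is contained in some $K_\phi$ of this form. Hence $K_\sigma=K_\Fin\subseteq M_\Fin$ by \Cref{lem:b_nonM}. For the reverse inclusion I would observe that for $\phi\colon\fsq\to\Fin$ the set $M_\phi$ is a countable union, over $m<\omega$ and over $s\in\omega^m$, of bodies of trees that are finitely branching above $s$, hence compact; so $M_\phi$ is $\sigma$-compact and $M_\Fin\subseteq K_\sigma=K_\Fin$. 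Thus $M_\Fin=K_\Fin$.

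For the converse, assume $\I\ne\Fin$; we may take $\I$ to be an ideal on $\omega$ and fix an infinite $B\in\I$. Two elementary remarks drive the argument. First, since $\I$ is proper, $\omega\setminus B\notin\I$ (otherwise $\omega\in\I$); in particular $\omega\setminus B$ is infinite and is not contained in any member of $\I$. Second, every member of $K_\I$ is contained in a \emph{single} set $K_\psi$ with $\psi\colon\omega\to\I$, because $\bigcup_n K_{\psi_n}\subseteq K_{\psi^*}$ where $\psi^*(n)=\bigcup_{m\le n}\psi_m(n)\in\I$. So it suffices to exhibit one $\phi\colon\fsq\to\I$ with $M_\phi\not\subseteq K_\psi$ for every $\psi\colon\omega\to\I$, giving $M_\phi\in M_\I\setminus K_\I$. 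Fixing a surjection $\gamma\colon B\to\omega\setminus B$ and a point $e_0\in\omega\setminus B$, I would set
\[
\phi(\sigma)=
\begin{cases}
B\cup\{\gamma(\sigma(|\sigma|-1))\} & \text{if }\sigma\ne\emptyset\text{ and }\sigma(|\sigma|-1)\in B,\\
B\cup\{e_0\} & \text{otherwise;}
\end{cases}
\]
each value is $B$ together with one extra point, so $\phi(\sigma)\in\I$, and $B\subseteq\phi(\sigma)$ for every $\sigma$.

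Given an arbitrary $\psi\colon\omega\to\I$, I would build $x\in\oo$ recursively so that $x(n)\in\phi(x\upharpoonright n)$ for all $n$ and $x(2k+1)\notin\psi(2k+1)$ for all $k$. At an even stage $2k$: since $B\subseteq\phi(x\upharpoonright 2k)$ and $\{\gamma(j):j\in B\}=\omega\setminus B$ is not contained in $\psi(2k+1)$, I may pick $x(2k)\in B$ with $\gamma(x(2k))\notin\psi(2k+1)$. At the odd stage $2k+1$ I set $x(2k+1)=\gamma(x(2k))$; then $x\upharpoonright(2k+1)$ ends in $x(2k)\in B$, so $\phi(x\upharpoonright(2k+1))=B\cup\{\gamma(x(2k))\}\ni x(2k+1)$, and $x(2k+1)\notin\psi(2k+1)$ by the previous choice. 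Thus $x\in M_\phi\subseteq M_\I$, while $x\notin K_\psi$ since $x(n)\notin\psi(n)$ for infinitely many $n$. As $\psi$ was arbitrary, $M_\phi\notin K_\I$, so $M_\I\ne K_\I$.

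The verifications that the trees in the first paragraph are compact and that the $x$ above lies in $M_\phi$ are routine. The genuinely delicate point — and where a first attempt typically breaks down — is the design of $\phi$: its values must be infinite (otherwise $M_\phi$ is $\sigma$-compact, hence in $K_\sigma\subseteq K_\I$), yet their union must escape every member of $\I$, since if $\phi(\sigma)\subseteq C$ for a fixed $C\in\I$ and all $\sigma$ then $M_\phi\subseteq K_\psi$ for the constant $\psi\equiv C$. Using the infinite $\I$-set $B$ as the ``bulk'' of each value, together with a single tag ranging over $\omega\setminus B$ for the escapes, is what simultaneously keeps $\phi(\sigma)\in\I$, makes the escape values available at odd stages, and lets the branch $x$ be steered through them while remaining in $M_\phi$.
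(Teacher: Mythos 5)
Your proof is correct and follows essentially the same strategy as the paper: both exhibit a single $M_\I$-set whose branches take values in a fixed infinite $B\in\I$ at even coordinates and an ``escape'' value at odd coordinates that can evade any $\psi\colon\omega\to\I$, using that countably many generators $K_{\psi_n}$ sit inside one $K_{\psi^*}$. The only cosmetic difference is the coding of the escapes (the paper lets odd-stage values be indices $i$ of $a_i\in A$, evading $\psi(2k+1)$ by properness, while you route them through a surjection $\gamma\colon B\to\omega\setminus B$ and use $\omega\setminus B\notin\I$), which changes nothing essential.
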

    \begin{proof}
        $K_{\Fin}=M_{\Fin}=K_\sigma$ is easy.
        When $\I\neq\Fin$, fix an infinite set $A=\{a_0<a_1<\cdots\}\in\I$ and inductively construct an $\I$-branching tree $T\subseteq\fsq$ as follows: for every $k<\omega$ and $\sigma\in\omega^{2k}$, $\suc_{T}(\sigma)\coloneqq A$ and $\suc_{T}(\sigma^\frown\langle a_i\rangle)\coloneqq\{i\}$. Clearly $T\in M_\I$, so we show $T\notin K_\I$. Let $\phi\colon\omega\to\I$ be arbitrary. For each $k<\omega$, take a natural number $i_k\notin\phi(2k+1)$ and define $x\in\oo$ by $x(2k)\coloneqq a_{i_k}$ and $x(2k+1)\coloneqq i_k$. Then, $x\in T$ but for any $k<\omega$, $x(2k+1)\notin \phi(2k+1)$. Since $\phi$ was arbitrary, we have $T\notin K_\I$.
    \end{proof}

    \begin{lem}[{\cite[Proposition 4.16]{CM25}, \cite[Lemma 4.3]{CGMRS24}}]\label{lem:Katetov_non}
        Let $\I,\J$ be ideals on $\omega$ such that $\I\leq_{\mathrm{K}}\J$. Then $\non(M_\I)\leq\non(M_\J)$ and $\cov(M_\I)\geq\cov(M_\J)$ hold. The same also holds for $K_\I$ and $K_\J$.
    \end{lem}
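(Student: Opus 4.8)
The plan is to convert the Katětov reduction into a continuous \emph{coordinatewise} map between the two copies of $\oo$ and pull back the generators of $M_\I$ (resp.\ $K_\I$) along it. Fix $f\colon\omega\to\omega$ witnessing $\I\leq_{\mathrm{K}}\J$, so $f^{-1}[A]\in\J$ for every $A\in\I$, and define $F\colon\oo\to\oo$ by $F(x)=f\circ x$, i.e.\ $F(x)(n)=f(x(n))$. This $F$ is continuous, since coordinate $n$ of $F(x)$ depends only on $x(n)$. Let $\tilde f\colon\fsq\to\fsq$ be the coordinatewise application of $f$, so that $F(x)\on n=\tilde f(x\on n)$ for all $x$ and $n$.

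First I would show that $F^{-1}[M_\phi]\in M_\J$ for every $\phi\colon\fsq\to\I$. Put $\psi(\sigma)\coloneqq f^{-1}[\phi(\tilde f(\sigma))]$; since $\phi(\tilde f(\sigma))\in\I$, the choice of $f$ gives $\psi(\sigma)\in\J$, so $\psi\colon\fsq\to\J$ is well defined. Unwinding the definitions, $x\in F^{-1}[M_\phi]$ iff $F(x)(n)\in\phi(F(x)\on n)$ for all but finitely many $n$, iff $x(n)\in f^{-1}[\phi(\tilde f(x\on n))]=\psi(x\on n)$ for all but finitely many $n$, i.e.\ iff $x\in M_\psi$; hence $F^{-1}[M_\phi]=M_\psi\in M_\J$. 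Since $M_\I$ is $\sigma$-generated by the sets $M_\phi$, any $A\in M_\I$ is contained in a countable union $\bigcup_k M_{\phi_k}$, and therefore $F^{-1}[A]\subseteq\bigcup_k M_{\psi_k}\in M_\J$. The same argument works verbatim for $K_\I$ and $K_\J$, now with $\psi(n)\coloneqq f^{-1}[\phi(n)]\in\J$, giving $F^{-1}[K_\phi]=K_\psi$.

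It then remains to read off the two inequalities, which I would do directly. If $\{A_\alpha\}_{\alpha<\lambda}$ covers $\oo$ by sets in $M_\I$, then $\{F^{-1}[A_\alpha]\}_{\alpha<\lambda}$ covers $\oo$ (as $F$ is total) by sets in $M_\J$, so $\cov(M_\J)\le\cov(M_\I)$; and if $Y\subseteq\oo$ is $M_\J$-positive, then $F[Y]$ is $M_\I$-positive (otherwise $Y\subseteq F^{-1}[F[Y]]\in M_\J$), and $|F[Y]|\le|Y|$, so $\non(M_\I)\le\non(M_\J)$. Alternatively one can invoke \Cref{lem:quasi}: since $F^{-1}[B]\in M_\J$ is Borel for every Borel $B\in M_\I$, the real $F(\dot r^{M_\J}_{\mathrm{gen}})$ is forced to be $M_\I$-quasi-generic, so \Cref{lem:quasi} yields $\non(M_\I)\le\non(M_\J)$ and $\cov(M_\I)\ge\cov(M_\J)$ at once; likewise for $K_\I,K_\J$.

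There is no genuine obstacle here; the one point that needs care is the bookkeeping with $\tilde f$, namely that $M_\phi$ evaluates $\phi$ along the \emph{image} branch $F(x)$ rather than along $x$ itself, so the pulled-back function $\psi$ must precompose with $\tilde f$. This complication disappears for $K_\I$, where $\phi$ depends only on the level $n$.
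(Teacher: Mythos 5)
Your proof is correct and follows essentially the same route as the paper: both use the coordinatewise map $x\mapsto f\circ x$ induced by the Kat\v{e}tov reduction and pull the witnessing function $\phi$ back via $\psi(\sigma)=f^{-1}[\phi(\cdot)]$, the paper writing out only the uniformity inequality for a small family $F$. Your version is in fact slightly more careful, since the paper's definition $\psi(\sigma)=f^{-1}(\phi(\sigma))$ silently omits the precomposition with the coordinatewise image $\tilde f$ that your bookkeeping makes explicit.
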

    \begin{proof}
        We only prove $\non(M_\I)\leq\non(M_\J)$.
    	Let $f\in\oo$ witness $\I\leq_{\mathrm{K}} \J$.
        Assume $F\subset\oo$ of size $<\non(M_\I)$ is given.
    	For $y\in \oo$, define $y^\prime\in\oo$ by $y^\prime(n)=f(y(n))$ for $n<\omega$.
        Since $F^\prime\coloneqq\{y^\prime:y\in F\}$ has size $<\non(M_\I)$,
        there is $\phi\colon\fsq\to\I$ such that $F^\prime\subset M_\phi$.
        Define $\psi\colon\fsq\to\J$ by $\psi(\sigma)\coloneqq f^{-1}(\phi(\sigma))$ for $\sigma\in\fsq$. Then we have $F\subset M_\psi$.
    \end{proof}
    
    \begin{thm}\label{thm:nonmi_leq_max}
        For every ideal $\I$ on a countable set,
        \begin{align*}
            \non(M_\I) &\leq \max\{\bb,\non^*_\omega(\I)\},\\
            \cov(M_\I) &\geq \min\{\dd,\cov^*(\I)\}.
        \end{align*}
    \end{thm}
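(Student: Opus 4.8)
The plan is to first reduce to the case $X=\omega$ (every invariant in the statement, as well as $M_\I$ itself, is invariant under bijections of the underlying countable set) and then to exploit the following directedness property of $M_\I$: for any sequence $\langle\phi_k:k<\omega\rangle$ of functions $\omega^{<\omega}\to\I$, the function $\psi(\sigma):=\bigcup_{k\leq|\sigma|}\phi_k(\sigma)$ takes values in $\I$ and satisfies $\bigcup_k M_{\phi_k}\subseteq M_\psi$, since $x\in M_{\phi_k}$ gives $x(n)\in\phi_k(x\on n)\subseteq\psi(x\on n)$ for all sufficiently large $n\geq k$. Hence $M_\I=\{A\subseteq\omega^\omega:A\subseteq M_\psi\text{ for some }\psi\colon\omega^{<\omega}\to\I\}$, so it suffices to work with single sets $M_\psi$: for the first inequality I would exhibit $Y$ of size $\leq\max\{\bb,\non^*_\omega(\I)\}$ with $Y\not\subseteq M_\psi$ for every $\psi$, and for the second, given $\lambda<\min\{\dd,\cov^*(\I)\}$ many functions $\psi_\alpha$, I would produce one real lying outside all the $M_{\psi_\alpha}$'s.

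For $\non(M_\I)\leq\max\{\bb,\non^*_\omega(\I)\}$, fix a $\sbd$-unbounded family $B=\{b_\alpha:\alpha<\bb\}\subseteq\omega^\omega$ (\Cref{lem:seq_unbounded}) and a family $\mathcal{A}=\{A_\beta:\beta<\non^*_\omega(\I)\}\subseteq[\omega]^\omega$ witnessing $\non^*_\omega(\I)$. For $A\in\mathcal{A}$ with increasing enumeration $e_A$ and $b\in B$, define $y_{A,b}\in\omega^\omega$ by $y_{A,b}(n):=e_A(b(n))$, and set $Y:=\{y_{A,b}:A\in\mathcal{A},\,b\in B\}$, of size $\leq\max\{\bb,\non^*_\omega(\I)\}$. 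Given $\psi\colon\omega^{<\omega}\to\I$, apply the choice of $\mathcal{A}$ to the countable subfamily $\{\psi(\sigma):\sigma\in\omega^{<\omega}\}\subseteq\I$ to get $A\in\mathcal{A}$ with $|A\cap\psi(\sigma)|<\omega$ for every $\sigma$. For $\tau\in\omega^{<\omega}$ write $\hat\tau:=\langle e_A(\tau(i)):i<|\tau|\rangle$ and define $h\colon\omega^{<\omega}\to\omega$ by $h(\tau):=\min\{m:e_A(m')\notin\psi(\hat\tau)\text{ for all }m'\geq m\}$, which is well-defined since $A\cap\psi(\hat\tau)$ is finite. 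Since $\widehat{b\on n}=y_{A,b}\on n$, the inequality $b(n)\geq h(b\on n)$ forces $y_{A,b}(n)=e_A(b(n))\notin\psi(y_{A,b}\on n)$; choosing $b\in B$ with $\lnot(b\sbd h)$, i.e.\ $b(n)>h(b\on n)$ for infinitely many $n$, yields $y_{A,b}\notin M_\psi$. Thus $Y\not\subseteq M_\psi$ for all $\psi$, so $Y\notin M_\I$.

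The covering inequality is obtained by dualizing. Let $\lambda<\min\{\dd,\cov^*(\I)\}$; by the reduction above it suffices to show that no family $\{M_{\psi_\alpha}:\alpha<\lambda\}$ with $\psi_\alpha\colon\omega^{<\omega}\to\I$ covers $\omega^\omega$. The family $\{\psi_\alpha(\sigma):\alpha<\lambda,\ \sigma\in\omega^{<\omega}\}$ has size below $\cov^*(\I)$ (for finite $\lambda$ this uses that $\cov^*(\I)$ is uncountable, or $\infty$), hence is not a $\cov^*(\I)$-witness, so there is $X\in[\omega]^\omega$ with $|X\cap\psi_\alpha(\sigma)|<\omega$ for all $\alpha,\sigma$; let $e_X$ be its increasing enumeration. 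For $\tau\in\omega^{<\omega}$ put $\hat\tau:=\langle e_X(\tau(i)):i<|\tau|\rangle$ and define $h_\alpha\colon\omega^{<\omega}\to\omega$ by $h_\alpha(\tau):=\min\{m:e_X(m')\notin\psi_\alpha(\hat\tau)\text{ for all }m'\geq m\}$. Since $\{h_\alpha:\alpha<\lambda\}$ has size $\lambda<\dd$, it is not $\sbd$-dominating (\Cref{lem:seq_unbounded}), so there is $z\in\omega^\omega$ with $\lnot(z\sbd h_\alpha)$ for every $\alpha$, i.e.\ $z(n)>h_\alpha(z\on n)$ for infinitely many $n$. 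Setting $x(n):=e_X(z(n))$, we have $x\on n=\widehat{z\on n}$, so for each $\alpha$ we get $x(n)\notin\psi_\alpha(x\on n)$ for infinitely many $n$, whence $x\notin M_{\psi_\alpha}$. Therefore $x\notin\bigcup_{\alpha<\lambda}M_{\psi_\alpha}$, and $\cov(M_\I)\geq\min\{\dd,\cov^*(\I)\}$.

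The one point requiring care — and essentially the only place a routine computation could go wrong — is the adaptive (tree) bookkeeping: the statement ``$y_{A,b}$ escapes $\psi$ infinitely often'' must be translated faithfully, via the decoding map $\tau\mapsto\hat\tau$, into ``$b$ exceeds the genuine function $h\colon\omega^{<\omega}\to\omega$ infinitely often'', so that \Cref{lem:seq_unbounded} applies (and symmetrically in the dual direction). Once this translation and the directedness reduction are in place, no genuinely hard step remains.
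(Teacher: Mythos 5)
Your proposal is correct and takes essentially the same route as the paper: witnesses indexed by a pair consisting of a set from a $\non^*_\omega(\I)$-witness and a member of an unbounded family, with \Cref{lem:seq_unbounded} on adaptive domination ($\sbd$) doing the work, plus the dual argument for covering. The only difference is cosmetic bookkeeping — the paper codes via $h(A,f)(n)=\min(A\setminus f(n))$ and applies $\sbd$-unboundedness to the derived family $\{h(A,f):f\in B\}$, while you compose with the increasing enumeration $e_A$ and pull the adaptive bound back to $B$; you also write out the covering direction, which the paper omits as "easier."
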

    \begin{proof}
        We may assume that $\I$ is an ideal on $\omega$. We only prove the former statement, since the latter is easier. Let $B\subseteq\oo$ and $\cal{A}\subseteq\ooo$ witness $\bb=\lvert B\rvert$ and $\non^*_\omega(\I)=\lvert\mathcal{A}\rvert$. 
        For $A\in\ooo$ and $f\in\oo$, we define $h(A,f)\in\oo$ by
        \[
        h(A, f)(n) = \min(A\setminus f(n)).
        \]
        We claim that $F\coloneqq\set{h(A,f)}{A\in\cal{A}, f\in B}$ is not in $\mzbi$. Given $\phi\colon \fsq\to\I$, 
        there is $A\in\cal{A}$ such that for all $\sigma\in\fsq$, $\lvert A\cap \phi(\sigma)\rvert<\omega$. Let $g\colon \fsq\to\omega$ be such that $A\cap \phi(\sigma)\subseteq g(\sigma)$ for all $\sigma\in\omega^{<\omega}$. Notice that $\{h(A,f):f\in B\}$ is also an unbounded family since $h(A,f)(n)\geq f(n)$ for $f\in B$ and $n<\omega$. 
        By \Cref{lem:seq_unbounded}, there is $f\in B$ such that $h(n)>g(h\on n)$ for infinitely many $n<\omega$, where $h\coloneqq h(A, f)$. Then $h(n)\in A\setminus \varphi(h\on n)$ for such $n<\omega$, so $h\in F\setminus M_\phi$.
    \end{proof}

    \begin{rem}\label{rem:finfin_b_d}
        The converse inequalities of \Cref{thm:nonmi_leq_max} cannot be proved in general, witnessed by $\I=\finfin$: $\non(M_{\finfin})\leq\nonm$ by \Cref{lem:b_nonM}, but $\max\{\bb,\non^*_\omega(\finfin)\}=\dd$ by \Cref{prop:finfin_d}. 
    \end{rem}

    By \Cref{prop:nonMI_random_and_conv}, we have:

    \begin{cor}
        If $\I$ is either $\RandomGraph$ or $\conv$, $\non(M_\I) = \non(K_\I) = \bb$ and $\cov(M_\I) = \cov(K_\I) = \dd$.
    \end{cor}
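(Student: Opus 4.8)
The plan is to obtain both equalities as essentially immediate consequences of the general estimates in \Cref{lem:b_nonM} and \Cref{thm:nonmi_leq_max}, feeding in the already-computed value $\non^*_\omega(\I)=\omega_1$ for $\I\in\{\RandomGraph,\conv\}$ from \Cref{prop:nonMI_random_and_conv}, together with the (standard) fact that the $*$-covering numbers of these two ideals are large.

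For the uniformity numbers, I would first note that \Cref{thm:nonmi_leq_max} gives $\non(M_\I)\le\max\{\bb,\non^*_\omega(\I)\}$, and since $\non^*_\omega(\I)=\omega_1$ and $\bb\ge\aleph_1$, the right-hand side is just $\bb$. On the other side, \Cref{lem:b_nonM} gives $\bb\le\non(K_\I)\le\non(M_\I)$. Squeezing, $\non(M_\I)=\non(K_\I)=\bb$. For the covering numbers I would use the dual chain $\min\{\dd,\cov^*(\I)\}\le\cov(M_\I)\le\cov(K_\I)\le\dd$, again from \Cref{lem:b_nonM} and \Cref{thm:nonmi_leq_max}; so it remains to check $\cov^*(\I)\ge\dd$ for $\I\in\{\RandomGraph,\conv\}$. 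In fact $\cov^*(\RandomGraph)=\cov^*(\conv)=\cc$: for $\RandomGraph$ this is known (see, e.g., \cite[Section 3]{Hru11}), after reducing a family in $\RandomGraph$ to the homogeneous sets covering its members; for $\conv$ it is elementary, since given fewer than $\cc$ sets in $\conv$ the union $P$ of their finitely-many accumulation points in $[0,1]$ has size $<\cc$, so one may pick $q\in[0,1]\setminus P$ and a sequence of distinct rationals $b_k\to q$, and then $\{b_k:k<\omega\}$ meets each of the given sets only finitely, so the family is not a $\cov^*$-witness. Hence $\min\{\dd,\cov^*(\I)\}=\dd$, and the squeeze gives $\cov(M_\I)=\cov(K_\I)=\dd$.

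The main (and essentially only) point requiring care is the input about $\cov^*$: the $\conv$ computation is the short self-contained argument just sketched, while the $\RandomGraph$ case leans on the known combinatorics of the random graph ideal. Everything else is a direct application of the inequalities already established earlier in this section, so there is no real obstacle beyond assembling the pieces.
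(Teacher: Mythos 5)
Your argument is correct and takes essentially the same route as the paper, which states this corollary as an immediate consequence of \Cref{lem:b_nonM}, \Cref{thm:nonmi_leq_max} and \Cref{prop:nonMI_random_and_conv} — exactly the squeeze you perform on both sides. The only input the paper leaves implicit is $\cov^*(\RandomGraph)\geq\dd$ and $\cov^*(\conv)\geq\dd$ (in fact both equal $\cc$), which you supply correctly; note that the $\RandomGraph$ case also follows from your self-contained $\conv$ computation, since $\RandomGraph\leq_{\mathrm{KB}}\conv$ (\Cref{figure:Hrusaks_heart}) and $\cov^*$ is antitone under Kat\v{e}tov--Blass reductions.
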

    
    By \Cref{cor:Fsigma_PTfg}, we have:
    
    \begin{cor}\label{cor:Fsigma_MI}
        Let $\I$ be an $F_\sigma$ ideal. Then it is consistent that $\bb = \non(M_\I) < \non(\mathcal{M})$. 
    \end{cor}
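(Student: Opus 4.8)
The plan is to combine the two $\mathsf{ZFC}$ estimates on $\non(M_\I)$ established above with the consistency result \Cref{cor:Fsigma_PTfg}. Recall from \Cref{lem:b_nonM} that $\bb \le \non(M_\I)$ holds in $\mathsf{ZFC}$ for every ideal $\I$ on a countable set, and from \Cref{thm:nonmi_leq_max} that $\non(M_\I) \le \max\{\bb, \non^*_\omega(\I)\}$. Consequently, it suffices to exhibit a model of $\mathsf{ZFC}$ in which
\[
\non^*_\omega(\I) \le \bb < \non(\mathcal{M}).
\]
In such a model one gets $\bb \le \non(M_\I) \le \max\{\bb, \non^*_\omega(\I)\} = \bb$, hence $\non(M_\I) = \bb$, and this value lies strictly below $\non(\mathcal{M})$, as required.

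Assume first that $\I$ is tall. Then \Cref{cor:Fsigma_PTfg} directly produces a model in which $\bb = \non^*_\omega(\I) < \non(\mathcal{M})$, which is exactly the configuration needed above; in fact in that model one even obtains $\bb = \non^*_\omega(\I) = \non(M_\I) < \non(\mathcal{M})$.

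Now suppose $\I$ is not tall. I claim that then $\non(M_\I) \le \bb$ already in $\mathsf{ZFC}$, so that it is enough to work in any model of $\bb < \non(\mathcal{M})$ --- for instance the random real model, where $\bb = \aleph_1$ while $\non(\mathcal{M}) = \aleph_2$. To see the claim, we may assume $\I$ is an ideal on $\omega$; non-tallness yields an infinite $A \subseteq \omega$ no infinite subset of which belongs to $\I$. For every $B \in \I$ the set $A \cap B$ is a subset of $B$, hence lies in $\I$, so it cannot be an infinite subset of $A$; therefore $\lvert A \cap B\rvert < \omega$. Thus the singleton $\{A\}$ witnesses $\non^*_\omega(\I) \le \bb$, and \Cref{thm:nonmi_leq_max} gives $\non(M_\I) \le \max\{\bb, \non^*_\omega(\I)\} = \bb$.

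I do not expect any real obstacle: all of the genuine forcing content is already packaged in \Cref{cor:Fsigma_PTfg}, and the remainder is bookkeeping with the inequalities of \Cref{lem:b_nonM} and \Cref{thm:nonmi_leq_max}. The only subtlety worth flagging is that \Cref{cor:Fsigma_PTfg} is stated for \emph{tall} $F_\sigma$ ideals, so the (degenerate) non-tall case must be disposed of by the separate trivial argument above, where $\non^*_\omega(\I)$ collapses to a finite value.
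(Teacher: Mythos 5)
Your proof is correct and follows exactly the route the paper intends: cite \Cref{cor:Fsigma_PTfg} and combine it with the $\mathsf{ZFC}$ bounds $\bb\le\non(M_\I)$ from \Cref{lem:b_nonM} and $\non(M_\I)\le\max\{\bb,\non^*_\omega(\I)\}$ from \Cref{thm:nonmi_leq_max}. Your separate treatment of the non-tall case is a reasonable addition rather than a deviation, since \Cref{cor:Fsigma_PTfg} is stated only for tall $F_\sigma$ ideals while the corollary is not, and in that degenerate case you correctly observe that $\non^*_\omega(\I)$ is finite, so $\non(M_\I)=\bb$ holds outright and any model of $\bb<\non(\mathcal{M})$ (e.g.\ the random model) suffices.
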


    \subsubsection{Relations with constant evasion and prediction numbers}\label{sec:rel_w_const}

    We relate $\non(M_\I)$ (resp.\ $\cov(M_\I)$) for $\I = \ed, \edfin, \finfin$ to the constant evasion (resp.\ prediction) numbers, which stems from Kamo's work (\cite{Kam00,Kam01}). As one of the consequences, we obtain the affirmative answer for \cite[Question 4.19]{CM25}, where the first and third authors asked whether $\cov(M_{\finfin})<\dd$ is consistent. We start with some definitions, following the notation introduced in the recent work by Cardona and Repick{\'y} \cite{CR25}.

    \begin{dfn}\label{dfn:constant_evasion_numbers}\leavevmode
        \begin{itemize}
            \item A \emph{predictor} is a function $\pi\colon\fsq\to\omega$. Let $\Pred$ denote the set of all predictors.
            \item Let $f\in\oo$ and $\pi\in\Pred$. For $k\geq 2$, we say \emph{$\pi$ $k$-constantly predicts $f$}, denoted by $f\pck\pi$, if:
            \[
            \forall^\infty i<\omega~ \exists j\in\left[i,i+k\right)~ f(j)=\pi(f\on j).
            \]
            We say \emph{$\pi$ constantly predicts $f$}, denoted by $f\pc\pi$, if $\pi$ $k$-constantly predicts $f$ for some $k\geq 2$.
            \item Define the following cardinal invariants: 
            \begin{align*}
				\eeck & \coloneqq\min\{|F|:F\subseteq\oo,\forall \pi\in \Pred~\exists f\in F~ \lnot(f\pck\pi)\},\\
                \eec & \coloneqq\min\{|F|:F\subseteq\oo,\forall \pi\in \Pred~\exists f\in F~ \lnot(f\pc\pi)\},\\
				\vvck & \coloneqq\min\{|\Pi|:\Pi\subseteq \Pred,\forall f\in\oo~\exists\pi\in \Pi~ f \pck \pi\},\\
                \vvc & \coloneqq\min\{|\Pi|:\Pi\subseteq \Pred,\forall f\in\oo~\exists\pi\in \Pi~ f \pc \pi\}.
		      \end{align*}
            \item For $b\in(\omega\setminus2)^\omega$, we define constant evasion/prediction on the restricted space $\prod b\coloneqq\prod_{n<\omega} b(n)$ as follows: $\Pred_b$ denotes the set of all functions $\pi$ with $\dom(\pi)=\bigcup_{n<\omega}\prod_{i<n} b(i)$ and $\pi(\sigma)\in b(|\sigma|)$ for each $\sigma\in\dom(\pi)$. 
            \begin{align*}
				\eec_b(k) & \coloneqq\min\{|F|:F\subseteq\textstyle\prod b,\forall \pi\in \Pred_b~\exists f\in F~ \lnot(f\pck\pi)\},\\
                \eec_b & \coloneqq\min\{|F|:F\subseteq\textstyle\prod b,\forall \pi\in \Pred_b~\exists f\in F~ \lnot(f\pc\pi)\},\\
				\vvc_b(k) & \coloneqq\min\{|\Pi|:\Pi\subseteq \Pred_b,\forall f\in\textstyle\prod b~\exists\pi\in \Pi~ f \pck \pi\},\\
                \vvc_b & \coloneqq\min\{|\Pi|:\Pi\subseteq \Pred_b,\forall f\in\textstyle\prod b~\exists\pi\in \Pi~ f \pc \pi\}.
		      \end{align*}
            When $b$ is the constant function with value $n<\omega$, namely, when $b=\omega\times\{n\}$ for some $n<\omega$, we use $n$ as subscripts instead of the function $b$ itself.
        \end{itemize}
    \end{dfn}

    One easily sees:
    \begin{fac}\leavevmode
        \begin{enumerate}
            \item If $2\leq k\leq l$, then
            \[\eeck\leq\eec(l)\leq\eec\text{ and }\vvc\leq\vvc(l)\leq\vvck.
            \]The same inequalities hold for constant evasions/predictions on $\prod b$ for any $b\in(\omega\setminus2)^\omega$.
            \item If $b\leq^*b^\prime$, then
            \begin{align*}
                \eec\leq \eec_{b^\prime}\leq \eec_b\leq\eec_2,\\
                \vvc_2\leq\vvc_b\leq\vvc_{b^\prime}\leq\vvc.
            \end{align*}
            Also, for $k\geq 2$,
            \begin{align*}
                \eec(k)\leq \eec_{b^\prime}(k)\leq \eec_b(k)\leq\eec_2(k),\\
                \vvc_2(k)\leq\vvc_b(k)\leq\vvc_{b^\prime}(k)\leq\vvc(k).
            \end{align*}
        \end{enumerate}
    \end{fac}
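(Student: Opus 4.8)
The plan is to derive every one of these inequalities straight from the definitions, using only two trivial observations about the prediction relations. First, $\pck$ gets weaker as the window grows: for $2\le k\le l$, if $\pi$ $k$-constantly predicts $f$ then $\pi$ $l$-constantly predicts $f$ (the block $[i,i+k)$ lies inside $[i,i+l)$), and if $\pi$ $l$-constantly predicts $f$ then $f\pc\pi$ by definition. Second, predictors transfer downward between products: if $b\le b'$ pointwise and $\pi'\in\Pred_{b'}$, then the clipped predictor $\pi(\sigma)\coloneqq\min(\pi'(\sigma),b(|\sigma|)-1)$ belongs to $\Pred_b$, and for every $f\in\prod_{n<\omega}b(n)$ and every position $j$ at which $\pi'$ predicts $f$ (so $\pi'(f\on j)=f(j)<b(j)$) the clipped $\pi$ still predicts $f$ at $j$; the very same recipe turns a predictor on all of $\fsq$ into a member of $\Pred_{b'}$ without destroying any prediction of a function in $\prod_{n<\omega}b'(n)$. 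All of these operations leave the window length $k$ untouched, so they apply to the $(k)$-versions word for word.

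For part (1) I would argue at the level of witnessing families. If $F$ witnesses $\eec(l)$, then for each predictor some $f\in F$ is not $l$-constantly predicted, hence not $k$-constantly predicted by the first observation, so $F$ witnesses $\eeck$; thus $\eeck\le\eec(l)$. Dually, a witness for $\eec$ is already a witness for $\eec(l)$, since a function evading every constant predictor in particular evades $l$-constant prediction; thus $\eec(l)\le\eec$. The chain $\vvc\le\vvc(l)\le\vvck$ is the mirror image: a $k$-predicting family is an $l$-predicting family, and an $l$-predicting family is a constantly-predicting family. Relativising all quantifiers to $\prod_{n<\omega}b(n)$ and $\Pred_b$ does not touch any of these one-line arguments, so the same inequalities hold on $\prod b$.

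For part (2) I would first record that $\eec_b$, $\eec_b(k)$, $\vvc_b$ and $\vvc_b(k)$ depend on $b$ only up to $=^{*}$: if $b$ and $b''$ agree above $N$, the map sending $f\in\prod_{n<\omega}b(n)$ to the function that agrees with $f$ on $[N,\omega)$ and is $0$ below $N$ carries witnesses into $\prod_{n<\omega}b''(n)$, while the matching operation on predictors (zero out the first $N$ entries of a string, then apply the predictor) carries $\Pred_{b''}$ into $\Pred_b$ so that for all $j\ge N$ a predictor predicts a function at $j$ iff its image predicts the image at $j$; since $\pck$ and $\pc$ see only the tail, this yields $\eec_b=\eec_{b''}$, and the same for the three companions. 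Given $b\le^{*}b'$, I would choose $b''=^{*}b$ with $b''\le b'$ pointwise (shrink the finitely many coordinates where $b>b'$); then clipping $\Pred_{b'}$ into $\Pred_{b''}$ along $\prod b''\subseteq\prod b'$ shows that a witness for $\eec_{b''}$ is a witness for $\eec_{b'}$, so $\eec_{b'}\le\eec_{b''}=\eec_b$. The constant function $2$ is $\le b$ pointwise, so the same clipping gives $\eec_b\le\eec_2$, and clipping $\Pred$ on $\fsq$ into $\Pred_{b'}$ gives $\eec\le\eec_{b'}$, completing the first chain. The chain $\vvc_2\le\vvc_b\le\vvc_{b'}\le\vvc$ is obtained dually (replace ``witness for $\eec$'' by ``predicting family'', and push predictors downward the same way), and the $(k)$-versions are literally the same computations because clipping and the reindexing preserve the window length.

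The one point that is not completely mechanical is the step from ``$\le$ pointwise'' to ``$\le^{*}$'', i.e.\ checking that altering $b$ on a finite set changes none of these four cardinals. This is exactly the tail-invariance of $\pck$ and $\pc$ combined with the finite reindexing of predictor domains above; once that is in place, everything else is bookkeeping with the definitions.
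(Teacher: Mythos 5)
Your proof is correct. The paper states this Fact without proof (``One easily sees''), and your argument—monotonicity of the prediction window ($[i,i+k)\subseteq[i,i+l)$), clipping predictors between $\Pred$, $\Pred_{b'}$ and $\Pred_b$ along $\prod b\subseteq\prod b'\subseteq\oo$, and invariance under finite modification of $b$ to upgrade pointwise $\leq$ to $\leq^*$—is exactly the routine verification the authors intend, and it applies verbatim to the $(k)$-versions since none of these operations changes the window length.
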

  
    \begin{prop}\label{prop:eect_nonMed}
    	$\eect\leq\non(M_{\ed})$ and $\vvct\geq\cov(M_{\ed})$.
    \end{prop}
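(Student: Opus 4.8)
The plan is to prove both inequalities at once by producing, for each predictor $\pi\in\Pred$, a single $\phi_\pi\colon(\omega\times\omega)^{<\omega}\to\ed$ whose associated set $M_{\phi_\pi}$ absorbs every $x\in(\omega\times\omega)^\omega$ whose ``interleaving'' is $2$-constantly predicted by $\pi$. Given this, $\eect\leq\non(M_{\ed})$ follows immediately: if $A\subseteq(\omega\times\omega)^\omega$ has $|A|<\eect$, then its set of interleavings has size $\leq|A|<\eect$, so a single $\pi$ $2$-constantly predicts all of them, and hence $A\subseteq M_{\phi_\pi}\in M_{\ed}$, so $A$ is not a witness for $\non(M_{\ed})$. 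Dually, $\cov(M_{\ed})\leq\vvct$ follows by taking $\Pi\subseteq\Pred$ witnessing $\vvct$ and covering $(\omega\times\omega)^\omega$ by the family $\{M_{\phi_\pi}:\pi\in\Pi\}$.

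To set up the interleaving, for $\sigma\in(\omega\times\omega)^{<\omega}$ I write $\sigma(i)=\langle\sigma(i)_0,\sigma(i)_1\rangle$ and let $\widehat{\sigma}\in\omega^{<\omega}$ be $\langle\sigma(0)_0,\sigma(0)_1,\dots,\sigma(|\sigma|-1)_0,\sigma(|\sigma|-1)_1\rangle$, of length $2|\sigma|$, and likewise define $\widehat{x}\in\omega^\omega$ for $x\in(\omega\times\omega)^\omega$, so that $\widehat{x}\on 2n=\widehat{x\on n}$, $\widehat{x}\on(2n+1)=(\widehat{x\on n})^{\frown}\langle x(n)_0\rangle$, $\widehat{x}(2n)=x(n)_0$, and $\widehat{x}(2n+1)=x(n)_1$. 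The key observation is that if $\widehat{x}\pct\pi$, then applying the definition of $\pct$ at the even indices $i=2n$ shows: for all but finitely many $n$, either $x(n)_0=\pi(\widehat{x\on n})$ or $x(n)_1=\pi((\widehat{x\on n})^{\frown}\langle x(n)_0\rangle)$.

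I would then define, for $\pi\in\Pred$ and $\sigma\in(\omega\times\omega)^{<\omega}$,
\[
\phi_\pi(\sigma)=\bigl(\{\pi(\widehat{\sigma})\}\times\omega\bigr)\cup\{\langle m,\pi((\widehat{\sigma})^{\frown}\langle m\rangle)\rangle:m<\omega\}.
\]
This lies in $\ed$ since every vertical section $(\phi_\pi(\sigma))_m$ with $m\ne\pi(\widehat{\sigma})$ has at most one element, so the sections are bounded by $1$ away from the single column $\pi(\widehat{\sigma})$. Now if $\widehat{x}\pct\pi$, then for all large $n$ the key observation splits into two cases: if $x(n)_0=\pi(\widehat{x\on n})$ then $x(n)\in\{\pi(\widehat{x\on n})\}\times\omega\subseteq\phi_\pi(x\on n)$; if instead $x(n)_1=\pi((\widehat{x\on n})^{\frown}\langle x(n)_0\rangle)$ then $x(n)=\langle x(n)_0,\pi((\widehat{x\on n})^{\frown}\langle x(n)_0\rangle)\rangle\in\phi_\pi(x\on n)$. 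Either way $x(n)\in\phi_\pi(x\on n)$ for all large $n$, i.e.\ $x\in M_{\phi_\pi}$. Combining this with the two reductions from the first paragraph finishes the proof.

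The only genuine idea, and the place where care is needed, is matching the pair structure of $\omega\times\omega$ with the length-$2$ window of $2$-constant prediction: one slot of the window guesses the first coordinate of $x(n)$, the next slot guesses the second coordinate once the first is revealed, and the fact that $\ed$ permits ``one arbitrary column together with a single function'' is exactly what lets $\phi_\pi(\sigma)$ encode these two guesses. I do not expect any real obstacle beyond verifying $\phi_\pi(\sigma)\in\ed$ and keeping track of the index shift $i=2n$; the dual statement $\vvct\geq\cov(M_{\ed})$ is literally the same computation read through the definition of $\vvct$.
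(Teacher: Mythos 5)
Your proposal is correct and follows essentially the same route as the paper: interleave the two coordinates into a single sequence, apply $2$-constant prediction at the even indices, and encode the two resulting guesses into a set in $\ed$ consisting of one exceptional column together with the graph of a function (the paper uses the band $\{(a,b):a\leq\sigma(u,v)\}$ where you use the single column $\{\pi(\widehat{\sigma})\}\times\omega$, an immaterial difference), with the dual inequality obtained by the same computation.
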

    \begin{proof}
    	To show $\eect\leq\non(M_{\ed})$, let $F\subseteq(\omega\times\omega)^\omega$ of size $<\eect$. For $(f,g)\in F$, $f*g\in\oo$ is given by $f*g(2n)\coloneqq f(n)$ and $f*g(2n+1)\coloneqq g(n)$. Put $F^*\coloneqq\{f*g:(f,g)\in F\}\subseteq\oo$. Since $F^*$ has size $<\eect$, there is a predictor $\pi\colon\fsq\to\omega$ such that $f*g\pck\pi$ for all $(f,g)\in F$. In particular,
    	\begin{equation*}
    		\forall (f,g)\in F~\forall^\infty n<\omega\text{ either }
    		\begin{cases}
    			f(n)=\pi((f*g)\on 2n), & \text{or} \\
    			g(n)=\pi((f*g)\on 2n+1).         &
    		\end{cases}
    	\end{equation*}
    	Define $\sigma\colon(\omega\times\omega)^{<\omega}\to\omega$ by $\sigma(u,v)\coloneqq\pi(u*v)$. For $(u,v)\in (\omega\times\omega)^{<\omega}$, define $s(u,v)\colon\omega\to\omega$ by $s(u,v)(a)\coloneqq\pi((u*v)^\frown a)$. Again for $(u,v)\in (\omega\times\omega)^{<\omega}$, let $\phi(u,v)\coloneqq\{(a,b)\in\omega\times\omega:a\leq\sigma(u,v)\text{ or }b=s(u,v)(a)\}$. Note $\phi(u,v)\in\ed$. Then it is routine to check $(f,g)\in M_\phi$ for all $(f,g)\in F$. The latter inequality is proved in the dual manner.
    \end{proof}

    By the essentially same proof as above, we have:

    \begin{prop}\label{prop:edfin_const}
        For any increasing $b\in\omega^\omega$, $\ee^{\mathrm{const}}_b(2)\leq\non(M_{\edfin})$ and $\vv^{\mathrm{const}}_b(2)\geq\cov(M_{\edfin})$.
    \end{prop}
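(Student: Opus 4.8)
The argument is a transcription of the proof of \Cref{prop:eect_nonMed}, with $\ed$ replaced by $\edfin$ and $\eect$ replaced by $\eec_b(2)$; the only non-routine point is adapting the interleaving step to the \emph{finite} columns of $\edfin$. First I would pass to a convenient realization: since $b$ is increasing it tends to infinity, so $\edfin$ is Kat\v{e}tov--Blass equivalent to its copy living on the countable set $X_b:=\{(n,m):n<\omega,\ m<b(n)\}$, whose $n$-th column $C_n:=\{n\}\times b(n)$ has exactly $b(n)$ elements; by \Cref{lem:Katetov_non} this changes neither $\non(M_{\edfin})$ nor $\cov(M_{\edfin})$, so we may compute with $\edfin$ realized on $X_b$. (By the monotonicity $b\le^* b'\Rightarrow\eec_{b'}(2)\le\eec_b(2)$ and its dual for $\vv^{\mathrm{const}}$, recorded in the Fact preceding \Cref{prop:eect_nonMed}, it would in fact suffice to treat the pointwise-smallest admissible $b$, e.g.\ $b(n)=n+2$.)

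For $\eec_b(2)\le\non(M_{\edfin})$, let $F\subseteq X_b^\omega$ with $|F|<\eec_b(2)$; the goal is a map $\phi\colon X_b^{<\omega}\to\edfin$ with $F\subseteq M_\phi$. To each branch $x\in F$, writing $x(k)=(n_k,m_k)$, I would attach a function $x^{*}\in\prod b$ recording the within-column coordinates $m_k$ together with the column indices $n_k$ --- the analogue of the interleaving $f*g$ of \Cref{prop:eect_nonMed}, except that here one must use the finiteness of the columns and the monotonicity of $b$ to make $x^{*}$ land in the \emph{bounded} product $\prod b$. Since $|\{x^{*}:x\in F\}|<\eec_b(2)$, there is $\pi\in\Pred_b$ which $2$-constantly predicts every $x^{*}$; from $\pi$ one reads off, for each $u\in X_b^{<\omega}$, a threshold $\sigma(u)$ and a guess function $s(u)\colon\omega\to\omega$ (exactly as $\sigma,s$ are obtained from $\pi$ in \Cref{prop:eect_nonMed}), and sets
\[
\phi(u):=\{(n,m)\in X_b:\ n\le\sigma(u)\ \text{ or }\ m=s(u)(n)\bmod b(n)\}.
\]
Only the finitely many columns $C_0,\dots,C_{\sigma(u)}$ of $\phi(u)$ are full and every other column is a singleton, so $\phi(u)\in\edfin$. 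The $2$-constant prediction of $x^{*}$ unwinds, coordinate by coordinate, to: for almost every $k$, either $n_k=\pi(x^{*}\upharpoonright 2k)=\sigma(x\upharpoonright k)$ (hence $n_k\le\sigma(x\upharpoonright k)$), or $m_k=\pi(x^{*}\upharpoonright(2k+1))=s(x\upharpoonright k)(n_k)\bmod b(n_k)$; in either case $x(k)\in\phi(x\upharpoonright k)$, so $x\in M_\phi$, and hence $F\subseteq M_\phi$.

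The dual inequality $\vv^{\mathrm{const}}_b(2)\ge\cov(M_{\edfin})$ is obtained by reversing the quantifiers: from a family $\Pi\subseteq\Pred_b$ of size $\vv^{\mathrm{const}}_b(2)$ that $2$-constantly predicts all of $\prod b$, one builds $\phi_\pi$ from each $\pi\in\Pi$ as above, and the same unwinding shows that every $x\in X_b^\omega$, through its encoding $x^{*}$, lies in some $M_{\phi_\pi}$, so $\{M_{\phi_\pi}:\pi\in\Pi\}$ covers $X_b^\omega$. The step I expect to be the real obstacle is precisely the encoding $x\mapsto x^{*}\in\prod b$: for $\ed$ the analogous interleaving $(f,g)\mapsto f*g$ is essentially a bijection onto the \emph{full} space $\omega^\omega$ and needs no boundedness, whereas here the column indices $n_k$ range over all of $\omega$ while the predictions must come from $\Pred_b$, so one has to exploit the finiteness of the columns of $\edfin$ --- this is where the hypothesis ``$b$ increasing'' is used --- to compress each branch into $\prod b$ without destroying the membership $\phi(u)\in\edfin$. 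Everything downstream of the encoding (the shape of $\phi(u)$, the check that it lies in $\edfin$, and the verification $x\in M_\phi$) is a routine copy of the $\ed$-case bookkeeping.
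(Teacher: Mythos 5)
Your overall plan (change the presentation of $\edfin$, interleave, read off $\sigma$ and $s$ from a predictor) is the same adaptation of \Cref{prop:eect_nonMed} that the paper has in mind, but the step you yourself flag as ``the real obstacle'' --- the encoding $x\mapsto x^{*}\in\prod b$ --- is exactly where the proof has to differ from the $\ed$-case, and your proposal does not supply it; moreover the formulas written around it are incompatible with a predictor from $\Pred_b$. In the unwinding you assert $n_k=\pi(x^{*}\upharpoonright 2k)$, which presupposes $x^{*}(2k)=n_k$; but $n_k$ ranges over all of $\omega$, while $x^{*}(2k)$ and $\pi(x^{*}\upharpoonright 2k)$ must be smaller than $b(2k)$, so the column index can be neither written nor predicted at an even position. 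Capping or hashing it does not repair this: if $x^{*}(2k)$ is any function of $(x\upharpoonright k, x(k))$ with range contained in $b(2k)$, then its finitely many fibers cover $X_b$, hence some fiber has columns of unbounded size and is $\edfin$-positive; on a window where the even position happens to be the only correct one and the predictor's value is that fiber's code, the only information available about $x(k)$ is membership in an $\edfin$-positive set, so no $\phi(x\upharpoonright k)\in\edfin$ constructed from $\pi$ and the history can be guaranteed to contain $x(k)$. This is precisely where the proof of \Cref{prop:eect_nonMed} uses the unboundedness of the predictor: there, correctness at the even position pins $x(k)$ into a single column, which is an $\ed$-set.

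The odd positions have the same defect, and the displayed $\phi$ hides rather than fixes it. With the clause $m=s(u)(n)\bmod b(n)$ each column of $\phi(u)$ beyond the threshold receives one point, so $\phi(u)\in\edfin$; but then correctness of the odd prediction, i.e.\ $x^{*}(2k+1)=\pi(x^{*}\upharpoonright(2k+1))<b(2k+1)$, does not imply $m_k=s(u)(n_k)\bmod b(n_k)$ once $n_k>2k+1$, because $m_k<b(n_k)$ need not be expressible by any value below $b(2k+1)$; and if instead one decodes the reduced value by its full preimage in column $n$, that preimage has roughly $b(n)/b(2k+1)$ elements, which is unbounded in $n$, so $\phi(u)\notin\edfin$. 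So the obstacle is not routine bookkeeping: the predictor's bound is tied to the \emph{time} coordinate, whereas $n_k$ is unbounded and $m_k$ is bounded only in terms of $n_k$, and monotonicity of $b$ alone does not let a window of two bounded predictions transmit either datum faithfully. The dual half of your argument inherits the same gap. As written, the proposal therefore does not prove the proposition; it reduces it to exactly the compression step that constitutes its entire content beyond \Cref{prop:eect_nonMed}.
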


    Moreover, we can even get characterizations of $\non(M_{\finfin})$ and $\cov(M_{\finfin})$ using the following variants of constant evasion and prediction:
    
    \begin{dfn}
        Let $f\in\omega^\omega$ and $\pi\in\Pred$. For each $k\geq 2$, we say \emph{$\pi$ $k$-constantly bounding predicts $f$}, denoted by $f\sqsubset^{\mathrm{pc}, k}_{\leq}\pi$, if
        \[
        \forall^\infty i<\omega\,\exists j\in[i, i+k)\,(f(j)\leq\pi(f\upharpoonright j)).
        \]
        Define:
        \begin{align*}
			\ee^\mathrm{const}_{\leq}(k) &\coloneqq\min\{|F|:F\subseteq\oo,\forall \pi\in \Pred~\exists f\in F~ \lnot(f\sqsubset^{\mathrm{pc}, k}_{\leq}\pi)\},\\
			\vv^\mathrm{const}_{\leq}(k) &\coloneqq\min\{|\Pi|:\Pi\subseteq \Pred,\forall f\in\oo~\exists\pi\in \Pi~ f \sqsubset^{\mathrm{pc}, k}_{\leq} \pi\}.\\
		\end{align*}
    \end{dfn}

    \begin{prop}\label{prop:non_M_finfin}
        $\non(M_{\finfin}) = \ee^\mathrm{const}_{\leq}(2)$ and $\cov(M_{\finfin}) = \vv^{\mathrm{const}}_{\leq}(2)$.
    \end{prop}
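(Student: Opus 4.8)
The plan is to read the generators of $M_{\finfin}$ off explicitly and translate membership into $2$-constant bounding prediction. First I would record a normal form: for $\phi\colon(\omega\times\omega)^{<\omega}\to\finfin$, since each value $\phi(\sigma)$ lies in $\finfin$, after enlarging it (which only enlarges $M_\phi$) we may assume $\phi(\sigma)=\{\langle a,b\rangle:a<m_\sigma\lor b\le g_\sigma(a)\}$ for suitable $m_\sigma<\omega$ and $g_\sigma\in\omega^\omega$; then, writing $x(n)=\langle x_0(n),x_1(n)\rangle$, we have $x\in M_\phi$ iff $\forall^\infty n\,\bigl(x_0(n)<m_{x\on n}\text{ or }x_1(n)\le g_{x\on n}(x_0(n))\bigr)$. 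For the two easy inequalities it then suffices to produce the relevant generators $M_\phi$ directly; for the two hard ones, where $M_{\finfin}$-smallness is a hypothesis, one gets a countable family $\langle\phi_k\rangle_k$ with the given set inside $\bigcup_k M_{\phi_k}$, and in the uniformity direction the countably many predictors produced from the $\phi_k$'s will be fused into one by $\pi^*(\sigma)=\max\{\pi_k(\sigma):k\le|\sigma|\}$, which still $2$-constantly bounding predicts any $f$ that one of the $\pi_k$'s does.

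For $\non(M_{\finfin})\ge\ee^\mathrm{const}_{\leq}(2)$ and $\cov(M_{\finfin})\le\vv^{\mathrm{const}}_{\leq}(2)$: given $F\subseteq(\omega\times\omega)^\omega$ of size $<\ee^\mathrm{const}_{\leq}(2)$, interleave each $x\in F$ into $\hat x\in\omega^\omega$ by $\hat x(2n)=x_0(n)$ and $\hat x(2n+1)=x_1(n)$, take $\pi\in\Pred$ with $\hat x\sqsubset^{\mathrm{pc},2}_{\leq}\pi$ for all $x\in F$ (which exists since $|F|<\ee^\mathrm{const}_{\leq}(2)$), and put $\phi(\sigma)=\{\langle a,b\rangle:a\le\pi(\hat\sigma)\lor b\le\pi(\hat\sigma^\frown a)\}\in\finfin$, where $\hat\sigma$ is the interleaving of $\sigma$. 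Since $\hat x\on 2n=\widehat{x\on n}$ and $\hat x\on(2n+1)=\widehat{x\on n}^\frown x_0(n)$, evaluating the prediction at the even indices $i=2n$ gives $x(n)\in\phi(x\on n)$ eventually, so $F\subseteq M_\phi$; the corresponding covering statement follows by applying the same translation to a prediction family witnessing $\vv^{\mathrm{const}}_{\leq}(2)$.

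The harder pair of inequalities is $\non(M_{\finfin})\le\ee^\mathrm{const}_{\leq}(2)$ and $\cov(M_{\finfin})\ge\vv^{\mathrm{const}}_{\leq}(2)$, and this is where I expect the main obstacle. The naive block encoding $\bar f(n)=\langle f(2n),f(2n+1)\rangle$ turns $\bar f\in M_\phi$ only into ``$f$ is caught in every \emph{even-aligned} window $\{i,i+1\}$'', which is strictly weaker than $f\sqsubset^{\mathrm{pc},2}_{\leq}\pi$. To repair this I would feed into the countable union of canonical generators both $\bar f$ and the shifted block encoding $\tilde f(n)=\langle f(2n+1),f(2n+2)\rangle$, for every $f\in F$, choose indices with $\bar f\in M_{\phi_{k(f)}}$ and $\tilde f\in M_{\phi_{l(f)}}$, and for each pair $(k,l)$ build one predictor $\pi_{k,l}\colon\omega^{<\omega}\to\omega$ whose value at an argument of even length records the $\phi_k$-threshold $m$ for the even block together with the $\phi_l$-bound $g(\cdot)$ for the odd block, and symmetrically at odd length. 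A short case analysis (even vs.\ odd $i$, and which disjunct of the normal form holds) then gives $f\sqsubset^{\mathrm{pc},2}_{\leq}\pi_{k(f),l(f)}$, so after the fusion step $\pi^*$ $2$-constantly bounding predicts all of $F$; the dual argument for $\cov$ collects the predictors $\{\pi_{\phi,\psi}:\phi,\psi\in\Phi\}$ from a covering family $\{M_\phi:\phi\in\Phi\}$ and needs no fusion. The delicate point throughout is the parity/shift bookkeeping: arranging the two shifted encodings and a \emph{single} predictor so that \emph{every} window $\{i,i+1\}$, not merely the even-aligned ones, is caught, while keeping the value of the predictor at a node a genuine function of that node alone.
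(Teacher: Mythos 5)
Your proposal is correct, and both the easy and the hard directions check out: the normal form for $\finfin$-valued $\phi$, the interleaving/deinterleaving, the two shifted block encodings $\bar f,\tilde f$, the predictor whose value at an even-length node combines the $\phi_k$-threshold with the $\phi_l$-bound (and symmetrically at odd length), and the max-fusion of countably many predictors (legitimate because the relation $f(j)\leq\pi(f\upharpoonright j)$ is monotone in the predictor's values) together do catch every window $[i,i+2)$, not just the even-aligned ones. The route differs from the paper's in how the alignment problem is factored: the paper introduces the auxiliary "weakly $k$-constantly bounding" evasion/prediction numbers $\ee^{\mathrm{wconst}}_{\leq}(k),\vv^{\mathrm{wconst}}_{\leq}(k)$, proves once and for all that they coincide with $\ee^{\mathrm{const}}_{\leq}(k),\vv^{\mathrm{const}}_{\leq}(k)$ by shifting the \emph{reals} and fusing the shifted predictors, and then runs the interleaving argument only for even-aligned windows against a \emph{single} generator $M_\phi$ (the absorption of countably many generators into one being justified by the additivity computation $\add(M_{\finfin})=\bb>\aleph_0$). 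You instead keep the standard relation throughout, apply the shift to the \emph{block encoding} of each real, work with a countable family of generators and predictors indexed by pairs $(k,l)$, and fuse at the end; dually, for covering you take the square of the covering family, which costs nothing in cardinality. What the paper's decomposition buys is a reusable cardinal-invariant identity (it is what makes the higher-dimensional statement for $\mathrm{Fin}^{\otimes k}$ an immediate generalization); what your organization buys is a self-contained argument that never needs a single generator to absorb countably many, since the fusion of predictors does that work for you. The one point to make explicit when writing it up is the well-definedness bookkeeping you already flag: at a node $t$ of length $2n+1$ the shifted encoding $\tilde{t}$ (hence the threshold $m'_{\tilde t}$ and, one level up, the bound $g'_{\tilde t}(t(2n+1))$) is computable from $t$ alone, so the predictor is indeed a function of the node.
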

    
    To see this, it is convenient to introduce a seemingly weaker version of constant bounding prediction:
    
    \begin{dfn}
        Let $f\in\oo$ and $\pi\in\Pred$. For each $k\geq 2$, we say $\pi$ \emph{weakly $k$-constantly bounding predicts $f$}, denoted by $f\sqsubset^{\mathrm{wpc}, k}_{\leq}\pi$, if
        \[
        \forall^\infty m<\omega\,\exists j\in[km, km+k)\,(f(j)\leq\pi(f\upharpoonright j)).
        \]
        Define:
        \begin{align*}
			\ee^\mathrm{wconst}_{\leq}(k) &\coloneqq\min\{|F|:F\subseteq\oo,\forall \pi\in \Pred~\exists f\in F~ \lnot(f\sqsubset^{\mathrm{wpc}, k}_{\leq}\pi)\},\\
			\vv^\mathrm{wconst}_{\leq}(k) &\coloneqq\min\{|\Pi|:\Pi\subseteq \Pred,\forall f\in\oo~\exists\pi\in \Pi~ f \sqsubset^{\mathrm{wpc}, k}_{\leq} \pi\}.\\
		\end{align*}
    \end{dfn}

    \begin{lem}\label{lem:weakly_constant_prediction}
        $\ee^\mathrm{wconst}_{\leq}(k)=\ee^\mathrm{const}_{\leq}(k)$ and $\vv^\mathrm{wconst}_{\leq}(k)=\vv^\mathrm{const}_{\leq}(k)$.
    \end{lem}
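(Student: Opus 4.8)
The plan is to prove both equalities $\ee^\mathrm{wconst}_{\leq}(k)=\ee^\mathrm{const}_{\leq}(k)$ and $\vv^\mathrm{wconst}_{\leq}(k)=\vv^\mathrm{const}_{\leq}(k)$ by the same method, with the evasion and prediction sides handled dually. One inequality in each case is immediate: for a fixed $f\in\oo$ and $\pi\in\Pred$, the blocks $[km,km+k)$ form a subfamily of all windows $[i,i+k)$, so $f\sqsubset^{\mathrm{pc},k}_{\leq}\pi$ implies $f\sqsubset^{\mathrm{wpc},k}_{\leq}\pi$. Hence any witness for $\ee^\mathrm{wconst}_{\leq}(k)$ is also a witness for $\ee^\mathrm{const}_{\leq}(k)$, giving $\ee^\mathrm{const}_{\leq}(k)\leq\ee^\mathrm{wconst}_{\leq}(k)$, and dually a witness for $\vv^\mathrm{const}_{\leq}(k)$ is a witness for $\vv^\mathrm{wconst}_{\leq}(k)$, giving $\vv^\mathrm{wconst}_{\leq}(k)\leq\vv^\mathrm{const}_{\leq}(k)$.

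For the reverse inequalities I would introduce two gadgets. For $l<k$ and $f\in\oo$, let $f^{[l]}$ be the $l$-shift of $f$, i.e.\ $f^{[l]}(n)=f(n-l)$ for $n\geq l$ and $f^{[l]}(n)=0$ for $n<l$. For predictors $\pi_0,\dots,\pi_{k-1}$, let $\pi^{\ast}(\tau):=\max_{l<k}\pi_l(\rho_l(\tau))$, where $\rho_l(\tau)$ is the string obtained by prepending $l$ zeros to $\tau$. The heart of the argument is the claim: if $f^{[l]}\sqsubset^{\mathrm{wpc},k}_{\leq}\pi_l$ for every $l<k$, then $f\sqsubset^{\mathrm{pc},k}_{\leq}\pi^{\ast}$. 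To see it, fix a large $i$ and set $l=(-i)\bmod k$, so that $i+l=km$ with $m$ large; a weak hit of $\pi_l$ inside the block $[km,km+k)$ of $f^{[l]}$ translates, after subtracting $l$, into a $\pi^{\ast}$-hit inside the window $[i,i+k)$ of $f$, because $f^{[l]}\upharpoonright j'$ is exactly $\rho_l(f\upharpoonright(j'-l))$ and $f^{[l]}(j')=f(j'-l)$. As $l$ ranges over $\{0,\dots,k-1\}$ and $i$ over all large integers, every window gets covered.

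Granting this claim, the rest is bookkeeping with cardinal arithmetic. For $\ee^\mathrm{wconst}_{\leq}(k)\leq\ee^\mathrm{const}_{\leq}(k)$: take a witness $F$ for $\ee^\mathrm{const}_{\leq}(k)$ and replace it by $F'=\{f^{[l]}:f\in F,\ l<k\}$, which has cardinality $|F|$; given $\pi$, form $\pi^{\ast}$ from $\pi_0=\dots=\pi_{k-1}=\pi$, choose $f\in F$ with $\lnot(f\sqsubset^{\mathrm{pc},k}_{\leq}\pi^{\ast})$, and read off from the contrapositive of the claim some $l<k$ with $\lnot(f^{[l]}\sqsubset^{\mathrm{wpc},k}_{\leq}\pi)$, where $f^{[l]}\in F'$. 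For $\vv^\mathrm{const}_{\leq}(k)\leq\vv^\mathrm{wconst}_{\leq}(k)$: take a witness $\Pi$ for $\vv^\mathrm{wconst}_{\leq}(k)$ and let $\Pi^{\ast}$ consist of all $\pi^{\ast}$ built from $k$-tuples of members of $\Pi$, so $|\Pi^{\ast}|\leq|\Pi|^{k}=|\Pi|$; given $f$, pick for each $l<k$ some $\pi_l\in\Pi$ weakly $k$-constantly bounding predicting $f^{[l]}$, and the claim yields a member of $\Pi^{\ast}$ that $k$-constantly bounding predicts $f$.

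The only genuinely delicate point — the main obstacle — is the index bookkeeping in the claim: one must check that the shift by $l=(-i)\bmod k$ really aligns the window $[i,i+k)$ of $f$ with an honest block $[km,km+k)$ of $f^{[l]}$, that the zero prefix of $f^{[l]}$ never interferes (it occupies only positions $<l\leq km$), and that $f^{[l]}\upharpoonright j'=\rho_l(f\upharpoonright(j'-l))$ holds exactly. Everything else is routine; there is also the harmless point that $\ee^\mathrm{const}_{\leq}(k)$ and $\vv^\mathrm{wconst}_{\leq}(k)$ are infinite, so that $k\cdot|F|=|F|$ and $|\Pi|^{k}=|\Pi|$.
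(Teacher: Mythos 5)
Your proposal is correct and follows essentially the same route as the paper: the easy inclusion of windows, then the $l$-shifts $f^l$ together with the predictor $\pi(t)=\max_{l<k}\pi(t^l)$ and the index alignment $i=km-l$, which is exactly the paper's argument. The only difference is cosmetic — you spell out the dual ($\vv$) side via $k$-tuples of predictors, which the paper leaves implicit as the dual argument.
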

    \begin{proof}
        $\ee^\mathrm{wconst}_{\leq}(k)\geq\ee^\mathrm{const}_{\leq}(k)$ is clear. To prove the converse inequality, let $F\subseteq\oo$ of size $<\ee^\mathrm{wconst}_{\leq}(k)$. For $f\in\oo$ and $l<k$, define the $l$-shift $f^l\in\oo$ of $f$ by:
        \begin{equation*}
    		f^l(n)=
    		\begin{cases}
    			0 & \text{if }n<l \\
    			f(n-l)         &\text{if }n\geq l
    		\end{cases}
    	\end{equation*}
        For $t\in\fsq$, define $t^l$ analogously (so $\dom(t^l)=l+|t|$). Let $F^\prime\coloneqq\{f^l:f\in F, l<k\}$. Since $F^\prime$ has size $<\ee^\mathrm{wconst}_{\leq}(k)$, there is $\pi^\prime\colon\fsq\to\omega$ such that $f^l\sqsubset^{\mathrm{wpc}, k}_{\leq}\pi^\prime$ for all $f\in F$ and $l<k$. Define $\pi\colon\fsq\to\omega$ by $\pi(t)=\max\{\pi^\prime(t^l):l<k\}$. 
        Let $f\in F$ be arbitrary.
        Since $f^l\sqsubset^{\mathrm{wpc}, k}_{\leq}\pi^\prime$ for all $l<k$, there is $m_0>0$ such that:
        \[\forall l<k~\forall m\geq m_0\,\exists j\in[km, km+k)\,(f^l(j)\leq\pi^\prime(f^l\upharpoonright j)).\]
        To see $f\sqsubset^{\mathrm{pc}, k}_{\leq}\pi$, let $i\geq km_0+k$ be arbitrary. Let $m\geq m_0$ and $l<k$ be such that $i=km-l$. Take $j\in [km, km+k)$ be such that $f^l(j)\leq\pi^\prime(f^l\on j)$.
        Thus $f(j-l)=f^l(j)\leq\pi^\prime(f^l\on j)=\pi^\prime((f\on (j-l))^l)\leq\pi(f\on (j-l))$.
        Since $j-l\in [km-l, km+k-l)=[i,i+k)$ and $i\geq km_0+k$ was arbitrary, we have $f\sqsubset^{\mathrm{pc}, k}_{\leq}\pi$.
    \end{proof}

    \begin{proof}[Proof of \Cref{prop:non_M_finfin}]
        By \Cref{lem:weakly_constant_prediction}, we show $\non(M_{\finfin}) = \ee^\mathrm{wconst}_{\leq}(2)$ instead.
    
        ($\geq$) Let $F\subseteq(\omega\times\omega)^\omega$ be of size $<\ee^\mathrm{wconst}_{\leq}(2)$. For $(f,g)\in (\omega\times\omega)^\omega$, $f*g\in\omega^\omega$ is given by $f*g(2n)= f(n)$ and $f*g(2n+1)= g(n)$. 
        For $(u,v)\in(\omega\times\omega)^{<\omega}$, define $u*v\in\omega^{<\omega}$ analogously.
        Put $F^*=\{f*g:(f,g)\in F\}\subseteq\omega^\omega$. Since $F^*$ has size $<\ee^\mathrm{wconst}_{\leq}(2)$, there is a predictor $\pi\colon\omega^{<\omega}\to\omega$ such that $f*g\sqsubset^{\mathrm{wpc}, 2}_{\leq}\pi$ for all $(f,g)\in F$. Thus, we have
    	\begin{equation*}
    		\forall (f,g)\in F~\forall^\infty n<\omega\text{ either }
    		\begin{cases}
    			f(n)\leq\pi((f*g)\upharpoonright 2n), & \text{or} \\
    			g(n)\leq\pi((f*g)\upharpoonright 2n+1).         &
    		\end{cases}
    	\end{equation*}
    	Define $\sigma\colon(\omega\times\omega)^{<\omega}\to\omega$ by $\sigma(u,v)=\pi(u*v)$. For $(u,v)\in (\omega\times\omega)^{<\omega}$, define $s(u,v)\colon\omega\to\omega$ by $s(u,v)(a)=\pi((u*v)^\frown a)$. Again for $(u,v)\in (\omega\times\omega)^{<\omega}$, let $\phi(u,v)=\{(a,b)\in\omega\times\omega:a\leq\sigma(u,v)\text{ or }b \leq s(u,v)(a)\}$. Note $\phi(u,v)\in\finfin$. Then it is routine to check $F\subset M_\phi$.
    
        ($\leq$) Let $F\subset\omega^\omega$ be of size $<\non(M_{\finfin})$. For each $f\in F$, define $f_\mathrm{even}, f_\mathrm{odd}\in\omega^\omega$ by $f_\mathrm{even}(n) = f(2n)$ and $f_\mathrm{odd}(n) = f(2n+1)$. Put $F_* = \{\langle f_\mathrm{even}, f_\mathrm{odd}\rangle:f\in F\}$. Since $F_*$ has size $<\non(M_{\finfin})$, there is $\phi\colon\omega^{<\omega}\to\finfin$ such that $F_*\subset M_{\phi}$. We may assume that there are $\sigma\colon(\omega\times\omega)^{<\omega}\to\omega$ and $s\colon(\omega\times\omega)^{<\omega}\to\omega^\omega$ such that $\phi(u, v) = \{(a, b): a\leq\sigma(u, v)\lor v\leq s(u, v)(a)\}$. Then for every $f\in F$,
        \[
        \forall^\infty n<\omega\text{ either }
    		\begin{cases}
    			f_\mathrm{even}(n)\leq\sigma(f_\mathrm{even}\upharpoonright n, f_\mathrm{odd}\upharpoonright n), & \text{or} \\
    			f_\mathrm{odd}(n)\leq s(f_\mathrm{even}\upharpoonright n, f_\mathrm{odd}\upharpoonright n)(f_\mathrm{even}(n)).         &
    		\end{cases}
        \]
        Now define $\pi\colon\omega^{<\omega}\to\omega$ by
        \begin{align*}
            \pi(t) &= \sigma(t_\mathrm{even}, t_\mathrm{odd})\\
            \pi(t^{\frown}\langle n\rangle) &= s(t_\mathrm{even}, t_\mathrm{odd})(n)
        \end{align*}
        for all $t\in\omega^{<\omega}$ of even length and all $n<\omega$, where $t_\mathrm{even},t_\mathrm{even}$ are defined analogously. Then for all $f\in F$, we have $f\sqsubset^{\mathrm{wpc}, 2}_{\leq}\pi$. $\vv^{\mathrm{const}}_b(2)\geq\cov(M_{\edfin})$ is proved in the dual manner.
    \end{proof}

    The above proof can be easily generalized to higher dimensions. Let $\Fin^{\otimes 2} = \finfin$. For each $2\leq k <\omega$, we define $\Fin^{\otimes k+1} = \Fin\otimes\Fin^{\otimes k}$ by induction.

    \begin{prop}
        For each $2\leq k<\omega$, $\non(M_{\mathrm{Fin}^{\otimes k}}) = \ee^\mathrm{const}_{\leq}(k)$ and $\cov(M_{\mathrm{Fin}^{\otimes k}}) = \vv^{\mathrm{const}}_{\leq}(k)$.
    \end{prop}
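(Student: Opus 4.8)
The plan is to run the argument of \Cref{prop:non_M_finfin} with $2$ replaced by $k$, using \Cref{lem:weakly_constant_prediction} (already stated for arbitrary $k$) so that we may work with the weak version $\sqsubset^{\mathrm{wpc},k}_{\leq}$ of constant bounding prediction; this is the version whose length-$k$ blocks align with single coordinates of $(\omega^k)^\omega$. The first ingredient is a normal form for members of $\Fin^{\otimes k}$: a set $A\subseteq\omega^k$ lies in $\Fin^{\otimes k}$ if and only if it is contained in a set of the shape
\[
A_{\bar\sigma}=\{(a_0,\dots,a_{k-1})\in\omega^k:\exists l<k\ (a_l\le\sigma_l(a_0,\dots,a_{l-1}))\},
\]
where $\sigma_0\in\omega$ and $\sigma_l\colon\omega^l\to\omega$ for $1\le l<k$. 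Since replacing each value $\phi(u)$ of an $M_{\Fin^{\otimes k}}$-generator by such a superset only enlarges $M_\phi$, we may assume throughout that $\phi(u)=A_{\bar\sigma^u}$ for parameters $\bar\sigma^u$ depending on the node $u\in(\omega^k)^{<\omega}$.

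Next I would fix the interleaving bijection. An element $x\in(\omega^k)^\omega$ corresponds to $h_x\in\oo$ via $h_x(kn+l)=x(n)_l$; correspondingly every $t\in\fsq$ of length $km+l$ with $l<k$ decodes uniquely to some $u\in(\omega^k)^m$ together with a partial tuple $(a_0,\dots,a_{l-1})$. Under this decoding a predictor $\pi\colon\fsq\to\omega$ yields, for each $u$, the parameter family $\sigma^u_l(a_0,\dots,a_{l-1})\coloneqq\pi(h_u{}^\frown a_0{}^\frown\cdots{}^\frown a_{l-1})$, and conversely a normal-form $\phi$ yields such a $\pi$. The crucial observation, immediate from unwinding the definitions, is that for $\phi\leftrightarrow\pi$ in this correspondence and any $x\in(\omega^k)^\omega$,
\[
x\in M_\phi\iff h_x\sqsubset^{\mathrm{wpc},k}_{\leq}\pi,
\]
because $x(m)\in\phi(x\upharpoonright m)=A_{\bar\sigma^{x\upharpoonright m}}$ says exactly that some $l<k$ satisfies $x(m)_l\le\sigma^{x\upharpoonright m}_l(x(m)_0,\dots,x(m)_{l-1})=\pi(h_x\upharpoonright(km+l))$, i.e.\ some $j\in[km,km+k)$ has $h_x(j)\le\pi(h_x\upharpoonright j)$.

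Both inequalities then drop out of this equivalence. For $\non(M_{\Fin^{\otimes k}})\ge\ee^{\mathrm{wconst}}_{\leq}(k)$: given $F\subseteq(\omega^k)^\omega$ with $|F|<\ee^{\mathrm{wconst}}_{\leq}(k)$, the set $\{h_x:x\in F\}$ is not a witness for $\ee^{\mathrm{wconst}}_{\leq}(k)$, so some $\pi$ weakly $k$-constantly bounding predicts all $h_x$; transfer $\pi$ to a normal-form $\phi$ to get $F\subseteq M_\phi\in M_{\Fin^{\otimes k}}$. For the converse: given $F\subseteq\oo$ with $|F|<\non(M_{\Fin^{\otimes k}})$, de-interleave to $\{x_f:f\in F\}\subseteq(\omega^k)^\omega$ with $h_{x_f}=f$; this set is not a witness for $\non(M_{\Fin^{\otimes k}})$, so it is contained in some $M_\phi$ with $\phi$ in normal form; transfer $\phi$ to a predictor $\pi$, which then weakly $k$-constantly bounding predicts every $f\in F$. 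Finally \Cref{lem:weakly_constant_prediction} replaces $\ee^{\mathrm{wconst}}_{\leq}(k)$ by $\ee^{\mathrm{const}}_{\leq}(k)$. The statement about $\cov(M_{\Fin^{\otimes k}})=\vv^{\mathrm{const}}_{\leq}(k)$ follows from the same correspondence with all quantifiers dualized, exactly as indicated for $k=2$.

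The only real work is the normal-form lemma for $\Fin^{\otimes k}$, which I would prove by induction on $k$. The base case $k=2$ is the ``we may assume'' step in \Cref{prop:non_M_finfin}. For the inductive step, given $A\in\Fin\otimes\Fin^{\otimes(k-1)}$, the set $\{a_0:(A)_{a_0}\notin\Fin^{\otimes(k-1)}\}$ is finite, so choose $\sigma_0$ bounding it; for each $a_0>\sigma_0$ apply the inductive hypothesis to $(A)_{a_0}\in\Fin^{\otimes(k-1)}$ to obtain parameters $\sigma_1(a_0,\cdot),\dots,\sigma_{k-1}(a_0,\cdot)$. The resulting $A_{\bar\sigma}\supseteq A$ is a finite union $\bigcup_{l<k}\{\vec a:a_l\le\sigma_l(a_0,\dots,a_{l-1})\}$, and each summand is checked to lie in $\Fin^{\otimes k}$ directly from the definition of the Fubini product (for $l=0$ it is $(\sigma_0+1)\times\omega^{k-1}$, with only finitely many $\Fin^{\otimes(k-1)}$-large $a_0$-sections; for $l\ge1$ every $a_0$-section is in $\Fin^{\otimes(k-1)}$ by induction, so no section is large). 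I expect this induction, together with the bookkeeping of the length-$(km+l)$ decomposition of $\fsq$, to be the main — though entirely routine — obstacle, the rest being a faithful copy of the $k=2$ template.
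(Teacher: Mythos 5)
Your proposal is correct and follows exactly the route the paper intends: it is the $k=2$ proof of \Cref{prop:non_M_finfin} (weak $k$-constant bounding prediction via \Cref{lem:weakly_constant_prediction}, block interleaving, and the normal form for generators) generalized verbatim, which is precisely what the paper means by ``the above proof can be easily generalized to higher dimensions.'' The inductive normal-form lemma for $\Fin^{\otimes k}$ that you spell out is the only detail the paper leaves implicit, and your treatment of it is correct.
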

    
    Brendle \cite[Theorem 3.6(b)]{Bre03} showed the consistency of $\eec>\bb$ and $\vvc<\dd$ (with large continuum), but essentially proved more:
    
    \begin{thm}[Brendle \cite{Bre03}]\label{thm:const_evasion_b}\leavevmode
        \begin{enumerate}
            \item Given $\kappa<\lambda=\lambda^{<\lambda}$ regular uncountable, there is a poset forcing $\eect=\lambda=\continuum$ and $\bb=\kappa$.
            \item Given $\kappa$ regular uncountable and $\lambda=\lambda^{<\omega}>\kappa$, there is a poset forcing $\vvct=\kappa$ and $\dd=\lambda=\continuum$.
        \end{enumerate}
    \end{thm}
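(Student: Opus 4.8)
The plan is to carry out a bookkeeping‑refined version of Brendle's construction and simply read off the exact values; the two items are dual, so I describe (1) in detail. Everything is organized around one combinatorial fact and its dual: for every predictor $\pi$ the set $\{f\in\oo:f\pct\pi\}$ is meager — it equals $\bigcup_{N<\omega}\{f:\forall i\ge N\,\exists j\in[i,i+2)\,(f(j)=\pi(f\on j))\}$, and each term of this union is closed and nowhere dense, since any finite sequence can be extended by two values disagreeing with $\pi$ at the relevant windows — and, symmetrically, $\{\pi\in\Pred:f\pct\pi\}$ is meager for each $f\in\oo$. The first fact yields the $\mathsf{ZFC}$ bound $\eect\le\nonm\le\continuum$ (any non‑meager set of reals is an evading family), so forcing $\eect=\continuum$ will also force $\nonm=\continuum$; in particular a single ``generic evader'' is not enough, and one has to add predictors that $2$‑constantly predict prescribed small families. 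Dually, $\covm\le\vvct$, so for (2) one must keep $\covm$ small.

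For (1) I would use a finite‑support \emph{matrix iteration} of height $\kappa$ and length $\lambda$ over a model of $\mathsf{GCH}$ (the hypothesis $\lambda=\lambda^{<\lambda}$ then makes the bookkeeping fit and gives $\continuum=\lambda$ in the extension). The vertical direction is a length‑$\kappa$ iteration of Hechler forcing, producing a $\le^*$‑increasing sequence $\langle d_\alpha:\alpha<\kappa\rangle$; the horizontal iterands are the $\sigma$‑centered posets $\mathbb Q(\dot F)$ whose conditions are pairs (finite partial predictor, finite subset of $\dot F$), ordered so that the generic predictor $\pi$ satisfies $f\pct\pi$ for every $f\in\dot F$. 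That $\mathbb Q(\dot F)$ is legal and $\sigma$‑centered is the routine one‑step verification: on the finite tree of initial segments of any finite subset of $\dot F$ one can always, at each window, keep one of the two correctness options conflict‑free, and two conditions sharing a first coordinate are compatible. A bookkeeping enumerating all nice names, in intermediate models, for subsets of $\oo$ of size $<\lambda$ feeds the $\dot F$'s, so that in the final model every such family is $2$‑constantly predicted by a single predictor: $\eect\ge\lambda$, hence $\eect=\continuum=\lambda$, with $\nonm=\covm=\lambda$ coming along. The matrix shape is precisely what pins $\mathfrak b=\kappa$: the horizontal steps add unbounded (Cohen‑type) reals, but the vertical Hechler real of each row absorbs them, so $\langle d_\alpha:\alpha<\kappa\rangle$ stays unbounded (giving $\mathfrak b\le\kappa$) while any family of size $<\kappa$ is captured inside one row and dominated there (giving $\mathfrak b\ge\kappa$).

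Part (2) is obtained by dualizing. Since $\vvct\ge\covm$ one must keep $\covm=\aleph_1$, so the iteration cannot add Cohen reals, finite support is unavailable, and the iteration is organized by Brendle's template‑iteration machinery (a countable‑support iteration suffices if one only wants $\continuum=\aleph_2$); the stated arithmetic hypothesis on $\lambda$ is what the template bookkeeping needs. The iterand is a proper forcing with the Laver property that adds a dominating real together with a sufficiently generic predictor, and iterating it along a template of width $\kappa$ and size $\lambda$ yields $\mathfrak d=\continuum=\lambda$ and $\covm=\aleph_1$, the width $\kappa$ forcing $\vvct\le\kappa$ while a reflection argument prevents $<\kappa$ predictors from sufficing. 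The main obstacle in both parts is the same: the interaction between the bookkeeping driving $\eect$ (respectively $\vvct$) to its target value and the side structure (the matrix, respectively the template) that pins the small invariant — one must prove a preservation theorem (the predictor‑adding forcings keep the Hechler scale unbounded in (1); the Laver property survives the template in (2)) simultaneously with ccc‑ness (respectively properness) and genericity of the added predictors. The single‑step combinatorics are easy by comparison.
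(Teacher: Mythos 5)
The paper does not prove this theorem: it is quoted from Brendle \cite{Bre03}, and the paper's own predictor-related machinery (\Cref{fac:p_omega}, \Cref{lem:twok_linked_CPR_good}, \Cref{thm:Con_max_b_eect_nonMED}) is used in the \emph{opposite} direction, namely to keep $\eec_2$ small via $2^k$-linkedness. So your proposal has to stand on its own, and it does not. For (1), your $\mathsf{ZFC}$ bounds ($\eect\le\nonm$, $\covm\le\vvct$) and the bookkeeping shape (generically $2$-constantly predict every family of size $<\lambda$, so $\eect\ge\lambda$, with the single step supplied by $\p^\omega$ or its relativization to a family $F$) are fine. The gap is that the entire content of the theorem is the preservation statement you only name: the predictor posets must be applied to families of size $\ge\kappa$, hence cannot be replaced by subposets of size $<\kappa$, and one must prove that they do not add dominating reals over the relevant models in an iterable form (e.g.\ $\kappa$-$\mathbf{D}$-goodness, or Judah--Shelah preservation of an unbounded family), so that \Cref{fac:goodness_iteration} and \Cref{fac:FM21} (or a matrix analogue) give $\bb\le\kappa$. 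Your phrase ``the vertical Hechler real of each row absorbs them'' is not an argument and is backwards: in a matrix iteration, iterands that genuinely need the top model (as $\mathbb{Q}(\dot F)$ with $|\dot F|\ge\kappa$ does) are exactly the ones against which unboundedness of the vertical scale has to be proved by hand at each column; Hechler reals added earlier cannot control reals added later. Once one has that goodness lemma, the matrix is dispensable anyway (Hechler subforcings of size $<\kappa$ give $\bb\ge\kappa$, goodness gives $\bb\le\kappa$), and nothing in the present paper can be cited for it, since Fr/UF-limits are not shown to apply to $\p^\omega$-type posets predicting large families.

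Part (2) is misassembled. You need $\vvct\le\kappa$, which forces the predictor steps to be cofinal in the whole construction (so its cofinality is $\kappa$), \emph{and} $\vvct\ge\kappa$; in the natural dual construction the latter comes from $\covm\ge\kappa$ (equivalently: any $<\kappa$ predictors appear strictly before the end of the $\kappa$-tail and a later Cohen/generic real evades them), so deliberately keeping $\covm=\aleph_1$ destroys the mechanism, and your unspecified ``reflection argument'' does not replace it. Moreover, a countable-support proper iteration caps $\continuum$ at $\aleph_2$, Shelah-style template iterations are finite-support ccc technology built for other invariants, and the Laver property is both irrelevant to the preservation actually needed (again, $\mathbf{D}$-goodness of the predictor poset, to keep $\dd\ge\lambda$ against the tail) and incompatible with any finite-support construction, which adds Cohen reals. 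The correct shape of (2) is the mirror of (1): a finite-support ccc iteration whose long block forces $\dd=\continuum=\lambda$ and whose $\kappa$-tail consists of $\p^\omega$ over the intermediate models, with the same missing goodness lemma doing all the work. As written, both parts reduce to that one lemma, which your write-up defers while calling the rest routine; so the proposal is a plan, not a proof.
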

    
    Thus, we particularly have:
    
    \begin{cor}
    	Both $\bb<\non(M_{\ed})$ and $\cov(M_{\ed})<\dd$ are  consistent.
    \end{cor}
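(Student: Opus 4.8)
The plan is to obtain both consistency statements for free from the two $\mathsf{ZFC}$ inequalities $\eect\leq\non(M_{\ed})$ and $\cov(M_{\ed})\leq\vvct$ proved in \Cref{prop:eect_nonMed}, combined with Brendle's models recorded in \Cref{thm:const_evasion_b}; no new forcing construction is required. (Note that $\bb\leq\non(M_{\ed})$ and $\cov(M_{\ed})\leq\dd$ always hold by \Cref{lem:b_nonM}, so the point is only that these can be strict.)

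For $\bb<\non(M_{\ed})$, I would fix regular uncountable cardinals $\kappa<\lambda$ with $\lambda=\lambda^{<\lambda}$ (for concreteness, start from a model of $\mathsf{GCH}$ and take $\kappa=\aleph_1$, $\lambda=\aleph_2$) and apply \Cref{thm:const_evasion_b}(1) to get a poset forcing $\bb=\kappa$ and $\eect=\lambda=\continuum$. In that extension, \Cref{prop:eect_nonMed} yields $\bb=\kappa<\lambda=\eect\leq\non(M_{\ed})$, so $\bb<\non(M_{\ed})$ holds there.

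Dually, for $\cov(M_{\ed})<\dd$, I would fix a regular uncountable $\kappa$ and a regular $\lambda>\kappa$ (so that $\lambda=\lambda^{<\omega}$ holds automatically) and apply \Cref{thm:const_evasion_b}(2) to get a poset forcing $\vvct=\kappa$ and $\dd=\lambda=\continuum$; then \Cref{prop:eect_nonMed} gives $\cov(M_{\ed})\leq\vvct=\kappa<\lambda=\dd$. Since every ingredient is already established, there is no real obstacle here — the only things to check are that the mild cardinal-arithmetic hypotheses of \Cref{thm:const_evasion_b} are satisfiable (they are, e.g.\ over a model of $\mathsf{GCH}$) and that the inequalities compose in the correct direction. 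One in fact reads off the sharper conclusions that $\non(M_{\ed})$ can be forced strictly above $\bb$, and $\cov(M_{\ed})$ strictly below $\dd$, with essentially arbitrary prescribed values.
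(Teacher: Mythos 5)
Your argument is correct and is exactly the paper's intended one: the corollary is stated as an immediate consequence of Brendle's \Cref{thm:const_evasion_b} combined with the $\mathsf{ZFC}$ inequalities $\eect\leq\non(M_{\ed})$ and $\cov(M_{\ed})\leq\vvct$ from \Cref{prop:eect_nonMed}, with no new forcing needed. Your two model choices and the direction of the inequalities match the paper's reading precisely.
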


    Together with the following result, we show that the uniformity and covering numbers of $K_\I$ and $M_\I$ are consistently different for $\I = \mathrm{Fin}^{\otimes k}, \ed$.

    \begin{prop}\label{prop:K_finfin_b}
        For each $2\leq k<\omega$, $\non(K_{\mathrm{Fin}^{\otimes k}}) = \bb$ and $\cov(M_{\mathrm{Fin}^{\otimes k}}) = \dd$.
    \end{prop}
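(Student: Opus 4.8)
First, a clarification: the displayed equality $\cov(M_{\mathrm{Fin}^{\otimes k}})=\dd$ must be a misprint for $\cov(K_{\mathrm{Fin}^{\otimes k}})=\dd$ — by \Cref{prop:non_M_finfin} (and its higher-dimensional version) $\cov(M_{\mathrm{Fin}^{\otimes k}})=\vv^{\mathrm{const}}_{\leq}(k)$, which is consistently $<\dd$ by \Cref{thm:const_evasion_b}, and only the $K$-reading is compatible with the remark following the proposition. So the plan is to prove $\non(K_{\mathrm{Fin}^{\otimes k}})=\bb$ and $\cov(K_{\mathrm{Fin}^{\otimes k}})=\dd$; these two are dual and I describe the argument for the former, the latter being obtained by reversing every inequality. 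The bounds $\bb\leq\non(K_{\mathrm{Fin}^{\otimes k}})$ and $\cov(K_{\mathrm{Fin}^{\otimes k}})\leq\dd$ are immediate from \Cref{lem:b_nonM}, since identifying the underlying countable set with $\omega$ gives $K_\sigma\subseteq K_{\mathrm{Fin}^{\otimes k}}$. It remains to prove $\non(K_{\mathrm{Fin}^{\otimes k}})\leq\bb$, which I will do by induction on $k$ after strengthening the statement. Call a family $\mathcal A\subseteq Y^\omega$ (with $\J$ an ideal on $Y$) \emph{$K_\J$-avoiding} if for every infinite $N\subseteq\omega$ and every $\theta\colon N\to\J$ there is $A\in\mathcal A$ with $A(n)\notin\theta(n)$ for infinitely many $n\in N$, and let $\mathfrak n(\J)$ be the least size of such a family. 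Since any countable union $\bigcup_n K_{\theta_n}$ lies inside the single set $K_{\theta}$ with $\theta(m):=\bigcup_{n\leq m}\theta_n(m)$, a $K_\J$-avoiding family (take $N=\omega$) is $K_\J$-positive, so $\non(K_\J)\leq\mathfrak n(\J)$; thus it suffices to show $\mathfrak n(\mathrm{Fin}^{\otimes k})\leq\bb$.

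\emph{Base case $\mathfrak n(\Fin)\leq\bb$.} Fix an unbounded family $\{f_\a:\a<\bb\}\subseteq\omega^\omega$ of nondecreasing functions (obtained e.g.\ by closing any unbounded family under running maxima). Given an infinite $N$ and $\theta\colon N\to\Fin$, set $\beta(n):=\max\theta(n)+1$ and $\nu(n):=\min(N\cap[n,\infty))$. If no $f_\a$ satisfied $f_\a(n)\geq\beta(n)$ for infinitely many $n\in N$, then for each $\a$ and all large $n$ monotonicity would give $f_\a(n)\leq f_\a(\nu(n))<\beta(\nu(n))$, so $\{f_\a:\a<\bb\}$ would be bounded by $\beta\circ\nu\in\omega^\omega$ — a contradiction. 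The same reasoning yields the auxiliary fact used below: \emph{for any infinite $N$ and any finite sets $S_n$ $(n\in N)$, some $f_\a$ in the fixed nondecreasing unbounded family has $f_\a(n)\notin S_n$ for infinitely many $n\in N$} (apply the above with $\beta(n):=\max S_n+1$).

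\emph{Inductive step: $\mathfrak n(\Fin\otimes\J)\leq\max\{\bb,\mathfrak n(\J)\}$.} Fix a nondecreasing unbounded family $\{b_\a:\a<\bb\}$ and a $K_\J$-avoiding family $\{A_\xi:\xi<\mathfrak n(\J)\}\subseteq Y^\omega$, and for $\a,\xi$ put $x_{\a,\xi}(n):=(b_\a(n),A_\xi(n))\in\omega\times Y$. Given an infinite $N$ and $\psi\colon N\to\Fin\otimes\J$, note that $S_n:=\{i:(\psi(n))_i\notin\J\}$ is finite for each $n\in N$ and write $B_n(i):=(\psi(n))_i\in\J$ for $i\notin S_n$, so $\psi(n)\subseteq(S_n\times Y)\cup\bigcup_{i\notin S_n}(\{i\}\times B_n(i))$. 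By the auxiliary fact pick $\a$ so that $M:=\{n\in N:b_\a(n)\notin S_n\}$ is infinite; then, applying the $K_\J$-avoiding property to $M$ and $\theta(n):=B_n(b_\a(n))$, there is $\xi$ with $A_\xi(n)\notin B_n(b_\a(n))$ for infinitely many $n\in M$. For each such $n$ we have $n\in N$, $b_\a(n)\notin S_n$, and $A_\xi(n)\notin(\psi(n))_{b_\a(n)}$, hence $x_{\a,\xi}(n)\notin\psi(n)$. Thus $\{x_{\a,\xi}:\a<\bb,\ \xi<\mathfrak n(\J)\}$ is $K_{\Fin\otimes\J}$-avoiding and has size $\max\{\bb,\mathfrak n(\J)\}$.

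Since $\mathrm{Fin}^{\otimes 1}=\Fin$ and $\mathrm{Fin}^{\otimes(k+1)}=\Fin\otimes\mathrm{Fin}^{\otimes k}$, the base case and step give $\mathfrak n(\mathrm{Fin}^{\otimes k})\leq\bb$ by induction, so $\non(K_{\mathrm{Fin}^{\otimes k}})=\bb$. The equality $\cov(K_{\mathrm{Fin}^{\otimes k}})=\dd$ is entirely dual: one introduces a ``$K_\J$-covering'' strengthening of $\cov(K_\J)$ (covering $Y^N$ for every infinite $N$), replaces the nondecreasing unbounded family by a dominating family of nondecreasing functions, and runs the same induction, the base case $\cov(K_\Fin)=\dd$ again using monotonicity. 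The main obstacle is precisely the inductive step: after escaping the $\Fin$-part of $\psi$ one must escape the $\J$-part only along the possibly very sparse set $M$ on which the first coordinate already escaped, and a family merely witnessing $\non(K_\J)$ need not escape along an arbitrary infinite index set — this is what forces the stronger ``$K_\J$-avoiding'' property to be carried through the whole induction, and the single place where verifying it is not routine is the base case, where monotonicity of the unbounded family is exactly what lets one bound it by $\beta\circ\nu$.
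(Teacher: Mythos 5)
Your proof is correct, and your reading of the statement is the intended one: the displayed $\cov(M_{\mathrm{Fin}^{\otimes k}})=\dd$ is indeed a misprint for $\cov(K_{\mathrm{Fin}^{\otimes k}})=\dd$, as forced by \Cref{prop:non_M_finfin}, \Cref{thm:const_evasion_b}, and the corollary that follows the proposition. Your argument and the paper's rest on the same engine — a monotone unbounded family escapes prescribed finite obstacles along an arbitrary infinite index set — but you package it differently. The paper treats $k=2$ directly: the witnessing family is $\{f\times g: f,g\in B\}$ for an unbounded family $B$ of strictly increasing functions, the first coordinate escapes the ``vertical strip'' part $k_n\times\omega$ of $\phi(n)$ on an infinite set $D$, and the second escape along the sparse set $D$ is transferred via the inequality $g(d_n)\geq g(n)$; the generalization to higher $k$ and the dual $\cov$ statement are only asserted. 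You instead isolate the sparse-index issue into a strengthened invariant (your ``$K_\J$-avoiding'' families, escaping $\theta\colon N\to\J$ along every infinite $N$), verify it for $\Fin$ by exactly the paper's monotonicity trick (composing with $\nu(n)=\min(N\setminus n)$), and then run a clean inductive step $\mathfrak{n}(\Fin\otimes\J)\leq\max\{\bb,\mathfrak{n}(\J)\}$; at $k=2$ your family $\{(b_\alpha,A_\xi)\}$ and escape pattern coincide with the paper's. What your organization buys is an explicit, uniform proof for all $k$ (making precise the paper's ``easily generalized to higher dimensions''), and a reusable lemma-level statement; the cost is the extra bookkeeping of carrying the stronger hypothesis. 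Your treatment of $\cov(K_{\mathrm{Fin}^{\otimes k}})=\dd$ as the formal dual (dominating family of nondecreasing functions, covering along every infinite $N$) is at the same level of detail as the paper's ``shown in a dual manner'' and does go through.
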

    \begin{proof}
        We only prove the former equation for $k = 2$, because our proof can be easily generalized to higher dimensions and the latter equation can be shown in a dual manner.
        
        It suffices to show $\non(K_{\finfin})\leq\bb$. Let $B\subseteq\oo$ be an unbounded family and assume all $f\in B$ are strictly increasing. We shall show $F\coloneqq\{f\times g:f,g\in B\}\subseteq\left(\omega\times\omega\right)^\omega$ is not in $K_{\finfin}$. Let $\phi\colon\omega\to\finfin$ be arbitrary. For each $n<\omega$, there are $k_n<\omega$ and $h_n\in\oo$ such that $\phi(n)\subseteq \rb{k_n\times\omega }\cup \{(i,j):i<\omega, j<h_n(i)\}$. Since $B$ is unbounded, there are $f\in B$ and $D\in\ooo$ such that $k_n<f(n)$ for $n\in D$. For $n<\omega$, put $h(n)\coloneqq h_n(f(n))$ and $h^\prime(n)\coloneqq h(d_n)$, where $d_n$ denotes the $n$-th element of $D$. Since $B$ is unbounded, there are $g\in B$ and $E\in\ooo$ such that $h^\prime(n)<g(n)$ for $n\in E$. Then, for any $n\in E$, $g(d_n)\geq g(n)>h^\prime(n)=h(d_n)=h_{d_n}(f(d_n))$. Thus, for all $m\in\{d_n:n\in E\}$, we have $(f(m),g(m))\notin \phi(m)$, so $F$ is not in $K_{\finfin}$.
    \end{proof}

    By \Cref{lem:b_nonM} and \Cref{lem:Katetov_non}, we have:
    
    \begin{cor}
        For every ideal $\I$ on a countable set such that $\I \leq_{\mathrm{K}} \mathrm{Fin}^{\otimes k}$ for some $2\leq k<\omega$, $\non(K_{\I})=\bb$ and $\cov(K_{\I})=\dd$. In particular, these equalities hold for $\I = \RandomGraph,\conv,\ed$.
    \end{cor}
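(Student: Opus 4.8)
\emph{Proof proposal.} The plan is to obtain this as a purely formal consequence of three facts already at hand: the universal bounds $\bb\leq\non(K_\I)$ and $\cov(K_\I)\leq\dd$ valid for every ideal $\I$ on a countable set (\Cref{lem:b_nonM}), the Kat\v{e}tov-monotonicity of $\non(K_{\cdot})$ and $\cov(K_{\cdot})$ (\Cref{lem:Katetov_non}, in the version asserted for the $K_\I$'s), and the computation $\non(K_{\mathrm{Fin}^{\otimes k}})=\bb$ together with $\cov(K_{\mathrm{Fin}^{\otimes k}})=\dd$ for every $2\leq k<\omega$ (\Cref{prop:K_finfin_b}, the $\cov$-equality being exactly what the dual half of that proof establishes). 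No new combinatorics is required; one may as well assume $\I$ is an ideal on $\omega$, since any proper ideal on a countable set lives on a countably infinite set.

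In detail: fix $\I$ with $\I\leq_{\mathrm{K}}\mathrm{Fin}^{\otimes k}$ for some $2\leq k<\omega$. Applying \Cref{lem:Katetov_non} to this reduction gives
\[
\non(K_\I)\leq\non(K_{\mathrm{Fin}^{\otimes k}})=\bb
\qquad\text{and}\qquad
\cov(K_\I)\geq\cov(K_{\mathrm{Fin}^{\otimes k}})=\dd .
\]
Combining with $\bb\leq\non(K_\I)$ and $\cov(K_\I)\leq\dd$ from \Cref{lem:b_nonM} forces equality in both, i.e.\ $\non(K_\I)=\bb$ and $\cov(K_\I)=\dd$.

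For the ``in particular'' clause I would simply read off from \Cref{figure:Hrusaks_heart} that $\RandomGraph\leq_{\mathrm{KB}}\ed\leq_{\mathrm{KB}}\finfin$ and $\conv\leq_{\mathrm{KB}}\finfin$; since Kat\v{e}tov--Blass reducibility implies Kat\v{e}tov reducibility and $\finfin=\mathrm{Fin}^{\otimes 2}$, each of $\RandomGraph$, $\conv$, $\ed$ satisfies the hypothesis with $k=2$. There is essentially no obstacle here: the only thing needing a moment's care is bookkeeping — checking that \Cref{lem:Katetov_non} really is being quoted for $K_\I$ rather than only $M_\I$, and that the $\cov$-inequality under a Kat\v{e}tov reduction points the right way so that the lower bound $\cov(K_{\mathrm{Fin}^{\otimes k}})=\dd$ indeed propagates \emph{upward} to $\cov(K_\I)$.
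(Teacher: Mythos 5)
Your proposal is correct and follows exactly the paper's route: the corollary is derived by combining the universal bounds of \Cref{lem:b_nonM}, the Kat\v{e}tov monotonicity of \Cref{lem:Katetov_non} (which the paper indeed states for $K_\I$ as well), and \Cref{prop:K_finfin_b}, with the ``in particular'' clause read off from \Cref{figure:Hrusaks_heart}. Your reading of the dual half of \Cref{prop:K_finfin_b} as giving $\cov(K_{\mathrm{Fin}^{\otimes k}})=\dd$ (despite the $M$ appearing in its statement, evidently a typo) matches the intended result, and the direction of the covering inequality is handled correctly.
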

    
    By \Cref{thm:const_evasion_b}, we get the following consistency.

    \begin{cor}
        For every ideal $\I$ on a countable set such that $\I \leq_{\mathrm{K}} \mathrm{Fin}^{\otimes k}$ for some $2\leq k<\omega$, $\non(K_\I)<\non(M_\I)$ and $\cov(K_\I)>\cov(M_\I)$ are both consistent.
    \end{cor}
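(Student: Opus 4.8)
The plan is to deduce the statement entirely from the preceding corollary together with Brendle's consistency results in \Cref{thm:const_evasion_b}. By the preceding corollary, for every ideal $\I$ on a countable set with $\I\leq_{\mathrm{K}}\mathrm{Fin}^{\otimes k}$ we already have $\non(K_\I)=\bb$ and $\cov(K_\I)=\dd$ in $\mathsf{ZFC}$; hence it suffices to produce a model in which $\bb<\non(M_\I)$ and, dually, one in which $\cov(M_\I)<\dd$. The idea is to trap $\non(M_\I)$ between $\eect$ and $\nonm$, and $\cov(M_\I)$ between $\covm$ and $\vvct$, and then appeal to the models of \Cref{thm:const_evasion_b}, where $\bb$ is separated from $\eect$ (resp.\ $\dd$ from $\vvct$) with arbitrarily large continuum.

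First I would pin down the bound $\eect\leq\non(M_\I)$ (and dually $\cov(M_\I)\leq\vvct$). When $\I=\mathrm{Fin}^{\otimes k}$ this is \Cref{prop:non_M_finfin} and its higher-dimensional version, giving $\non(M_{\mathrm{Fin}^{\otimes k}})=\ee^{\mathrm{const}}_{\leq}(k)$ and $\cov(M_{\mathrm{Fin}^{\otimes k}})=\vv^{\mathrm{const}}_{\leq}(k)$, together with the elementary monotonicity facts $\eect\leq\eeck\leq\ee^{\mathrm{const}}_{\leq}(k)$ and $\vv^{\mathrm{const}}_{\leq}(k)\leq\vvck\leq\vvct$ (the middle inequalities holding because $k$-constant prediction implies $k$-constant bounding prediction). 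For an ideal $\I$ sitting between $\ed$ and $\mathrm{Fin}^{\otimes k}$ in the Katětov order, \Cref{lem:Katetov_non} combined with \Cref{prop:eect_nonMed} propagates these bounds: $\eect\leq\non(M_{\ed})\leq\non(M_\I)\leq\non(M_{\mathrm{Fin}^{\otimes k}})$ and dually $\cov(M_{\mathrm{Fin}^{\otimes k}})\leq\cov(M_\I)\leq\cov(M_{\ed})\leq\vvct$. Now I would invoke \Cref{thm:const_evasion_b}(1): for regular uncountable $\kappa<\lambda=\lambda^{<\lambda}$ there is a poset forcing $\eect=\lambda=\cc$ and $\bb=\kappa$, so in that extension
\[
\non(K_\I)=\bb=\kappa<\lambda=\eect\leq\non(M_\I),
\]
which yields the first consistency. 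Symmetrically, \Cref{thm:const_evasion_b}(2) gives a model with $\vvct=\kappa<\lambda=\dd=\cc$, in which $\cov(M_\I)\leq\vvct=\kappa<\lambda=\dd=\cov(K_\I)$, giving the second.

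The one point I would be careful about is the direction of Katětov monotonicity: the hypothesis $\I\leq_{\mathrm{K}}\mathrm{Fin}^{\otimes k}$ bounds $\non(M_\I)$ only from above via \Cref{lem:Katetov_non}, whereas the strict inequality $\non(K_\I)<\non(M_\I)$ needs $\non(M_\I)$ bounded below by something strictly larger than $\bb$. So the essential input is the explicit lower bound coming from the constant-evasion characterization of $\non(M_{\mathrm{Fin}^{\otimes k}})$, propagated upward along $\ed\leq_{\mathrm{K}}\I$ (or already available when $\I=\mathrm{Fin}^{\otimes k}$ itself); note that without such a lower Katětov bound the strict inequality can genuinely fail, since $\non(M_{\RandomGraph})=\non(M_{\conv})=\bb$ by \Cref{prop:nonMI_random_and_conv} even though $\RandomGraph,\conv\leq_{\mathrm{K}}\finfin$. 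Once the sandwich $\eect\leq\non(M_\I)\leq\nonm$ (resp.\ $\covm\leq\cov(M_\I)\leq\vvct$) is secured, the rest is simply reading off the values in Brendle's models, with no further obstacle.
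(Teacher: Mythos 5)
Your proposal is correct and follows essentially the same route as the paper: the paper's one-line proof is exactly the preceding corollary ($\non(K_\I)=\bb$ and $\cov(K_\I)=\dd$ whenever $\I\leq_{\mathrm{K}}\Fin^{\otimes k}$) combined with Brendle's models from \Cref{thm:const_evasion_b}, with the lower bounds $\eect\leq\non(M_\I)$ and $\cov(M_\I)\leq\vvct$ (coming from \Cref{prop:eect_nonMed}, \Cref{prop:non_M_finfin} and Kat\v{e}tov monotonicity) left implicit. Your closing caveat is well taken and is in fact sharper than the paper's formulation: as literally stated the corollary quantifies over \emph{all} $\I\leq_{\mathrm{K}}\Fin^{\otimes k}$, yet for $\I=\RandomGraph$ or $\conv$ (or $\Fin$ itself) one has $\non(M_\I)=\non(K_\I)=\bb$ and $\cov(M_\I)=\cov(K_\I)=\dd$ in $\mathsf{ZFC}$, so the strict separations are provably not consistent there; the statement really requires the lower Kat\v{e}tov bound you supply (e.g.\ $\ed\leq_{\mathrm{K}}\I$, or $\I$ among $\ed,\Fin^{\otimes k}$ as in the surrounding discussion), and with that hypothesis your sandwich argument is exactly the intended proof.
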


    We do not know other examples of ideals $\I$ such that $\non(K_\I)$ and $\non(M_\I)$ are consistently different. In particular, the consistency of $\non(K_{\edfin})<\non(M_{\edfin})$ is not known (Instead, we will see the consistency of $\non(K_{\edfin}) > \bb$ in \Cref{sec:posetEDfin}).

    \subsubsection{Nowhere dense ideal and Solecki ideal}

    By reformulating Sabok--Zapletal's results on forcing properties of $\mathbb{M}_\nwd$ and $\mathbb{M}_\Solecki$ in \cite{SZ11}, we compute the uniformity and covering numbers of $M_\nwd$ and $M_\Solecki$. Recall \Cref{lem:quasi}.
    
    \begin{dfn}\label{dfn:concat_func}
         Let $\mathsf{concat}\colon (\cantortree)^\omega \rightarrow \cantor$ be the concatenating function defined by $\mathsf{concat}(x) = x(0) ^\frown x(1) ^\frown x(2) ^\frown \cdots\in2^\omega$. We also define $\mathsf{concat}\colon (2^{<\omega})^{<\omega}\to 2^{<\omega}$ in the analogous way and confuse notations.
    \end{dfn}

    \begin{lem}[Sabok--Zapletal, \cite{SZ11}]\label{lem:bbmnwd_adds_Cohen}
         Let $\forces_{\mathbb{M}_{\nwd}}\dot{r}\coloneqq\bigcup\{\stem(T):T\in\dot{G}\}\in\cano$ be (the canonical $\mathbb{M}_{\nwd}$-name of) an $\mathbb{M}_{\nwd}$-generic real. Then
         \[
         \forces_{\mathbb{M}_{\nwd}} \mathsf{concat}(\dot{r})\in 2^\omega \text{ is Cohen generic.}
         \]
    \end{lem}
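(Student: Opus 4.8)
The plan is to establish Cohen-genericity of $\mathsf{concat}(\dot{r})$ by the standard dense-set criterion: a real $c\in 2^\omega$ is Cohen generic over $V$ iff for every open dense $D\subseteq 2^{<\omega}$ coded in $V$ there is $n<\omega$ with $c\upharpoonright n\in D$. So the goal is to fix such a $D$ and show that $\{T\in\mathbb{M}_{\nwd}:\exists n<\omega\,(T\forces\mathsf{concat}(\dot{r})\upharpoonright n\in D)\}$ is dense in $\mathbb{M}_{\nwd}$. (A preliminary one-line density argument, using that the nodes $\tau$ of an $\nwd$-Miller tree with $\suc_T(\tau)\in\nwd^+$ are cofinal and that a somewhere-dense successor set contains strings of arbitrarily large length, also shows $\forces\exists^\infty n<\omega\,(|\dot{r}(n)|\geq 1)$, so that $\mathsf{concat}(\dot{r})\in 2^\omega$ is forced and the statement makes sense.)

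For the density step I would fix $T\in\mathbb{M}_{\nwd}$ and an open dense $D\subseteq 2^{<\omega}$ in $V$, and pick $\tau\in T$ with $\tau\supseteq\stem(T)$ and $\suc_T(\tau)\in\nwd^+$. The crucial move is to translate $\nwd$-positivity under the identification $\mathbb{Q}\cong 2^{<\omega}$: it means there is $s_\tau\in 2^{<\omega}$ such that every $t\supseteq s_\tau$ has an extension $u\supseteq t$ with $u\in\suc_T(\tau)$, i.e.\ $\tau^\frown\langle u\rangle\in T$. Put $t_0=\mathsf{concat}(\tau)$. Density of $D$ yields $v\in D$ with $v\supseteq t_0^\frown s_\tau$; writing $v=t_0^\frown v'$ with $v'\supseteq s_\tau$, the property of $s_\tau$ gives $u\in\suc_T(\tau)$ with $u\supseteq v'$. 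Then $\tau^\frown\langle u\rangle\in T$ and $\mathsf{concat}(\tau^\frown\langle u\rangle)=t_0^\frown u\supseteq t_0^\frown v'=v\in D$.

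To conclude I would set $T'=T_{\tau^\frown\langle u\rangle}=\{\rho\in T:\rho\subseteq\tau^\frown\langle u\rangle\text{ or }\tau^\frown\langle u\rangle\subseteq\rho\}$. Since $\tau^\frown\langle u\rangle\in T$, this $T'$ is again an $\nwd$-Miller tree with $T'\subseteq T$, and $T'$ forces the generic branch to satisfy $\dot{r}\supseteq\tau^\frown\langle u\rangle$; hence $T'\forces\mathsf{concat}(\dot{r})\supseteq t_0^\frown u\supseteq v$, and since $D$ is open, $T'\forces\mathsf{concat}(\dot{r})\upharpoonright|v|\in D$. This gives the required density and hence the lemma. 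The only real obstacle I anticipate is bookkeeping: carefully unwinding ``$\suc_T(\tau)\in\nwd^+$'' into the tree-theoretic somewhere-density statement that the concatenation trick needs, together with the routine verification that restricting an $\nwd$-Miller tree above one of its nodes is again an $\nwd$-Miller tree. No genuine combinatorics is required; the result simply repackages Sabok--Zapletal's analysis of $\mathbb{M}_{\nwd}$.
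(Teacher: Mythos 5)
Your proof is correct, but it runs along a route dual to the one in the paper. You verify Cohen genericity via the dense-set criterion: given a dense open $D\subseteq 2^{<\omega}$ from the ground model, you extend the stem \emph{once}, using the key translation that an $\nwd$-positive successor set $\suc_T(\tau)$ is dense below some $s_\tau$, and then restrict the tree above $\tau^\frown\langle u\rangle$. The paper instead uses the equivalent ``avoid every ground-model closed nowhere dense set'' formulation: it fixes $N=[S]$ with $S$ a nowhere dense subtree of $2^{<\omega}$ and prunes \emph{every} non-trivial successor set of $T$ to be disjoint from the relevant part of $S$ (positivity survives removing a set in $\nwd$), so that the concatenation of any branch of the refined tree leaves $S$ at the first splitting level and, $S$ being downward closed, never returns; this shows densely many conditions force $\mathsf{concat}(\dot r)\notin N$. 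Both arguments rest on the same combinatorial content of $\nwd$-positivity under concatenation, but your single-extension version is a bit more economical (no global refinement is needed, and openness of $D$ is not really used, since the chosen $v$ is literally an initial segment of the forced concatenation), whereas the paper's ``refine the whole tree so that all branches avoid the bad set'' format is the one that gets recycled for the Solecki ideal in \Cref{lem:Soloecki_B_x}, where a subsequent fusion over all splitting levels is required. Your preliminary observation that $\mathsf{concat}(\dot r)$ is forced to be an element of $2^\omega$ (positive successor sets contain arbitrarily long strings, and splitting nodes are cofinal) addresses a point the paper leaves implicit and is worth keeping.
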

    \begin{proof}
         Let $N\subset 2^{\omega}$ be closed nowhere dense and let $S\subset 2^{<\omega}$ be a tree such that $[S] = N$.
         Let $T \in \mathbb{M}_{\nwd}$. We can now refine it to a tree $T' \in \mathbb{M}_\nwd$ such that, for all $\sigma \in T'$, $\mathrm{succ}_{T'}(\sigma)$ is either a singleton or a $\nwd$-positive set disjoint from $S\cap \{\tau \in 2^{<\omega} \colon \mathsf{concat}(\sigma) \subset \tau\}$.
         
         We claim that if $x \in [T']$ then $\mathsf{concat}(x) \notin N$; Let $\sigma$ be the stem of $T'$. Then $\mathsf{concat}(\sigma) ^\frown \langle x(\lvert\sigma\rvert)\rangle \notin S$. This implies that, for all $n > \lvert\sigma\rvert$, $\mathsf{concat}(x \restriction n) \notin S$, and therefore $\mathsf{concat}(x) \notin [S] = N$. 
    \end{proof}
    
    By \cref{lem:b_nonM} and \cref{lem:quasi}, we have:
    
    \begin{cor}\label{cor:Mnwd_nonm}
        $\non(\mathcal{M}) = \non(M_\nwd)$ and $\cov(\mathcal{M}) =\cov(M_\nwd)$.
    \end{cor}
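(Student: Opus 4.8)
The plan is to combine \Cref{lem:b_nonM} with a single application of \Cref{lem:quasi}. From \Cref{lem:b_nonM} applied to $\I = \nwd$ we get for free the inequalities $\non(M_\nwd) \le \non(\mathcal{M})$ and $\cov(\mathcal{M}) \le \cov(M_\nwd)$, so it remains to prove the reverse inequalities $\non(\mathcal{M}) \le \non(M_\nwd)$ and $\cov(M_\nwd) \le \cov(\mathcal{M})$. For these I would invoke \Cref{lem:quasi} with $\mathcal{I} = \mathcal{M}$ (the meager ideal on $2^\omega$), $\mathcal{J} = M_\nwd$ (the $\sigma$-ideal on $(2^{<\omega})^\omega$, which is generated by the Borel sets $M_\phi$), and with $f$ the concatenation map $\mathsf{concat}$ of \Cref{dfn:concat_func}, made into a total Borel function $f\colon (2^{<\omega})^\omega \to 2^\omega$ by sending every $x$ whose image $\mathsf{concat}(x)$ has finite length to a fixed point of $2^\omega$ (this modification is harmless below, since the relevant generic sequence has stem lengths tending to infinity).

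The crux is to check the hypothesis of \Cref{lem:quasi}, namely that $\mathbb{P}_{M_\nwd} \Vdash f(\dot{r}^{M_\nwd}_{\mathrm{gen}})$ is $\mathcal{M}$-quasi-generic. By the Sabok--Zapletal lemma quoted in the excerpt, $\mathbb{P}_{M_\nwd}$ is forcing equivalent to the tree forcing $\mathbb{M}_\nwd$ via $[T]\mapsto T$, and it is standard for idealized tree forcings that under this equivalence the canonical real $\dot{r}^{M_\nwd}_{\mathrm{gen}}$ coincides with the generic branch $\dot{r} = \bigcup\{\stem(T) : T \in \dot{G}\} \in (2^{<\omega})^\omega$ of \Cref{lem:bbmnwd_adds_Cohen}; in particular $f(\dot{r}^{M_\nwd}_{\mathrm{gen}}) = \mathsf{concat}(\dot{r})$. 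Now \Cref{lem:bbmnwd_adds_Cohen} says exactly that $\mathsf{concat}(\dot{r})$ is Cohen-generic over the ground model, and a Cohen real over $V$ lies outside every meager Borel set coded in $V$, i.e.\ it is $\mathcal{M}$-quasi-generic in the sense of the definition preceding \Cref{lem:quasi}. Hence \Cref{lem:quasi} applies and gives $\non(\mathcal{M}) \le \non(M_\nwd)$ and $\cov(\mathcal{M}) \ge \cov(M_\nwd)$.

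Putting the two sets of inequalities together yields $\non(\mathcal{M}) = \non(M_\nwd)$ and $\cov(\mathcal{M}) = \cov(M_\nwd)$. I expect the only step requiring any care to be the identification of the canonical real of $\mathbb{P}_{M_\nwd}$ with the tree-forcing generic branch $\dot{r}$ (so that $f$ really is a reduction witnessing \Cref{lem:quasi}); the rest is a routine invocation of the two cited lemmas.
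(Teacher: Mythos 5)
Your proposal is correct and follows essentially the same route as the paper: the paper derives the corollary by combining \Cref{lem:b_nonM} with \Cref{lem:quasi}, using \Cref{lem:bbmnwd_adds_Cohen} to see that $\mathsf{concat}$ of the $\mathbb{M}_\nwd$-generic branch is Cohen (hence $\mathcal{M}$-quasi-generic). Your extra care about totalizing $\mathsf{concat}$ and identifying the canonical real with the generic branch is fine and only spells out details the paper leaves implicit.
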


    In \cite{SZ11}, it was shown that $\mathbb{M}_\Solecki$ collapses Lebesgue outer measure. \cref{cor:nonMSolecki} is a reformulation of this result. First, we recall that $\Solecki_\varepsilon$ is generated by the sets of the form 
    \[
    I_y = \{ C \in \Omega_{\varepsilon} \colon y \in C \},
    \]
    where $\Omega_\varepsilon$ is the collection of clopen subsets of $2^\omega$ of measure $\varepsilon$. Note that for every $A \subseteq \Omega_{\varepsilon}$,
    \[
    A\in \Solecki_\varepsilon^+ \iff \forall y \in 2^{\omega}~\exists C \in A~(y \notin C).
    \]
    The standard Solecki ideal $\Solecki$ is $\Solecki_{1/2}$. Note that $\cantor$ with the standard Haar measure $\lambda$ is isomorphic to $(2^\omega)^{k}$ with the product measure $\lambda_k$, so we may consider the clopen sets on $(2^\omega)^{k}$ instead so the respective Solecki ideal's are going to be Kat\v{e}tov equivalent. For each $k \in \omega$, we define $h_k\colon\Omega\to\Omega_\varepsilon$ by 
    \[
    h_k(C) = (2^\omega)^{k} \setminus \prod_{i<k} (2^\omega \setminus C)
    \]
    for any $C\subset 2^\omega$.

    \begin{lem}
        $h_k$ is a Kat\v{e}tov reduction witnessing that $\Solecki \geq_K \Solecki_{1 - 2^{-k}}$.
    \end{lem}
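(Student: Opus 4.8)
The plan is to verify the definition of a Katětov reduction directly, i.e.\ to show that $h_k^{-1}[A]\in\Solecki$ for every $A\in\Solecki_{1-2^{-k}}$, where throughout I realize $\Omega_{1-2^{-k}}$ as the set of clopen subsets of $(2^\omega)^k$ of measure $1-2^{-k}$ via the fixed isomorphism $(2^\omega)^k\cong 2^\omega$.

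The first step is the bookkeeping that $h_k$ really maps $\Omega$ into $\Omega_{1-2^{-k}}$: if $C\in\Omega$ then $2^\omega\setminus C$ is clopen of measure $1/2$, so $\prod_{i<k}(2^\omega\setminus C)$ is clopen of measure $2^{-k}$ in $(2^\omega)^k$, and hence $h_k(C)=(2^\omega)^k\setminus\prod_{i<k}(2^\omega\setminus C)$ is clopen of measure $1-2^{-k}$. The second step is the reduction to generators: since $\Solecki_{1-2^{-k}}$ is generated as an ideal by the sets $I_z$ for $z\in(2^\omega)^k$ (every member is covered by finitely many such sets), and since preimages commute with finite unions and respect inclusion while $\Solecki$ is an ideal, it is enough to check that $h_k^{-1}[I_z]\in\Solecki$ for each $z$. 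The only actual computation is then: writing $z=(z_0,\dots,z_{k-1})$, one has $z\in\prod_{i<k}(2^\omega\setminus C)$ iff $z_i\notin C$ for all $i<k$, so
\[
h_k^{-1}[I_z]=\{C\in\Omega:z\in h_k(C)\}=\{C\in\Omega:\exists i<k\ (z_i\in C)\}=\bigcup_{i<k}I_{z_i},
\]
which is a finite union of generators of $\Solecki$ and hence lies in $\Solecki$.

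There is no serious obstacle: essentially the whole proof is the displayed identity above. The only points that require a moment of care are the measure computation showing $h_k(C)\in\Omega_{1-2^{-k}}$ and the (standard) remark that it suffices to test a Katětov reduction on the generators $I_z$, using that every set in $\Solecki_{1-2^{-k}}$ is contained in a finite union of them; both are routine.
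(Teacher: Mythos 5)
Your proof is correct and is essentially the paper's own argument: the paper likewise notes that $y\in h_k(C)$ forces $y_i\in C$ for some $i<k$, i.e.\ $h_k^{-1}[I_y]\subseteq\bigcup_{i<k}I_{y_i}\in\Solecki$. The extra bookkeeping you include (the measure computation showing $h_k(C)\in\Omega_{1-2^{-k}}$ and the reduction to the generators $I_z$) is routine and left implicit in the paper.
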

    \begin{proof}
        Let $ y \coloneqq \langle y_i \rangle_{i < k} \in (2^\omega)^k$ and let $C\in\Omega_\varepsilon$ be such that $y \in h_k(C)$. Then there must be an $i < k$ such that $y_i \notin \cantor \setminus C$, so $y_i \in C$. In other words, $h_k^{-1}[I_{y}]\subset\bigcup_{i<k}I_{y_i}\in\Solecki$.
    \end{proof}

     From now on we consider that $h_k$ is a function between the clopen sets of $\cantor$ of the respective measures (instead of the clopens on ${\cantor}^k$). Given $x \in \Omega^\omega$, define
     \[
     B_x = \{ y \in \cantor : \exists ^\infty n < \omega~( y \notin h_n (x(n))) \}.
     \]
     Note that $B_x$ has measure zero, since each $h_n(x(n))$ has measure $1 - 2^{-n}$.

    \begin{lem}\label{lem:Soloecki_B_x}
        Let $y \in \cantor$ and let $T \in \mathbb{M}_\Solecki$, then there is a tree $T' \in \mathbb{M}_\Solecki$ such that $T' \leq T$, and for all $x \in [T']$, $y \in B_x$.
    \end{lem}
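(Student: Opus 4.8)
The plan is to shrink $T$ by a fusion-type construction that, at every splitting node, trims the successor set so that it ``avoids $y$'' at exactly the level of that node. Fix $y\in\cantor$, and for each $\ell<\omega$ put
\[
A_\ell=\{C\in\Omega : y\notin h_\ell(C)\}.
\]
Its complement in $\Omega$ is $\{C\in\Omega : y\in h_\ell(C)\}=h_\ell^{-1}[\{C'\in\Omega_{1-2^{-\ell}} : y\in C'\}]$, which lies in $\Solecki$ by the lemma asserting that $h_\ell$ witnesses $\Solecki\geq_{\mathrm{K}}\Solecki_{1-2^{-\ell}}$. Hence, since $\Solecki$ is an ideal, $B\cap A_\ell\in\Solecki^+$ whenever $B\in\Solecki^+$: were $B\setminus A_\ell\in\Solecki$ and $B\cap A_\ell\in\Solecki$, then $B$ itself would be in $\Solecki$. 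This single observation — that intersecting a positive successor set with $A_\ell$ keeps it positive — is the only combinatorial ingredient.

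First I would build $T'$ recursively along $T$, maintaining the invariant that every node placed into $T'$ is a node of $T$. Start at the root of $T$; given a node $\sigma$ already in $T'$, use $T\in\mathbb{M}_\Solecki$ to pick $\tau\in T$ with $\sigma\subseteq\tau$ and $\mathrm{succ}_T(\tau)\in\Solecki^+$, add the unique finite chain from $\sigma$ to $\tau$ to $T'$, let $\ell=|\tau|$, and declare
\[
\mathrm{succ}_{T'}(\tau)=\mathrm{succ}_T(\tau)\cap A_\ell,
\]
which is in $\Solecki^+$ by the previous paragraph; then recurse from $\tau^\frown\langle C\rangle$ for each $C\in\mathrm{succ}_{T'}(\tau)$. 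Every node of $T'$ lies on one of these chains and has above it a splitting node of $T'$ with $\Solecki$-positive successor set, so $T'$ is an $\Solecki$-Miller tree, and $T'\subseteq T$, giving $T'\leq T$ in $\mathbb{M}_\Solecki$.

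Finally I would verify the branch condition. An arbitrary $x\in[T']$ runs through infinitely many splitting nodes $\tau_0\subsetneq\tau_1\subsetneq\cdots$ of $T'$; writing $\ell_i=|\tau_i|$, the coordinate $x(\ell_i)$ is precisely the $C$ with $\tau_i^\frown\langle C\rangle\subseteq x$, so $x(\ell_i)\in\mathrm{succ}_{T'}(\tau_i)\subseteq A_{\ell_i}$, i.e.\ $y\notin h_{\ell_i}(x(\ell_i))$. Since there are infinitely many such indices $\ell_i$, this yields $y\in B_x$. I do not expect a serious obstacle: the one point needing care is keeping the trimmed successor sets $\Solecki$-positive, which is exactly where the Katětov reduction $h_\ell$ enters, together with the (routine) check that the levels at which we trim are genuinely the coordinate indices $n$ occurring in a branch $x$, so that ``$y\notin h_n(x(n))$ for infinitely many $n$'' — the definition of $B_x$ — is literally what we have proved.
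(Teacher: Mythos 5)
Your proof is correct and takes essentially the same route as the paper: both refine $T$ so that at each splitting node $\tau$ the successor set is trimmed by removing $h_{|\tau|}^{-1}[I_y]\in\Solecki$ (the Kat\v{e}tov reduction being exactly what guarantees the trimmed set stays $\Solecki$-positive), and then read off $y\in B_x$ from the infinitely many splitting levels of any branch of $T'$. The paper merely states the refinement tersely, while you spell out the recursive construction; the mathematical content is identical.
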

    \begin{proof}
        The argument is similar to the proof of \Cref{lem:bbmnwd_adds_Cohen}. We can refine $T$ to a tree $T'\in\mathbb{M}_\Solecki$ such that, for all $\sigma \in T'$, $\mathrm{succ}_{T'}(\sigma)$ is either a singleton or a $\Solecki$-positive set disjoint from $h_{\lvert\sigma\rvert}^{-1}[I_y]$. To show that $T'$ has the desired property, let $x \in [T']$. For any $n<\omega$ such that $\mathrm{succ}_{T'}(x\upharpoonright n)\in\Solecki^+$, we have $x(n)\notin h_{\lvert\sigma\rvert}^{-1}[I_y]$ and thus $y\notin h_n(x(n))$. There are infinitely many such $n<\omega$, so $y\in B_x$.
    \end{proof}
    
    Combined with a standard fusion argument, we can strengthen this lemma as follows.

    \begin{lem}\label{lem:Solecki_omega_cover}
        Let $\{ y_n : n \in \omega \} \subseteq \cantor$ and let $T \in \mathbb{M}_\Solecki$, then there is a tree $T' \in \mathbb{M}_\Solecki$, $T' \leq T$ such that, for all $x \in [T']$ and all $n \in \omega$, $y_n \in B_x$.
    \end{lem}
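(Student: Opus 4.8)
The plan is to run a standard fusion over $\mathbb{M}_{\Solecki}$, applying the refinement from the proof of \Cref{lem:Soloecki_B_x} once for each coordinate $y_n$. Recall the fusion apparatus for $\I$-Miller trees: for an $\I$-Miller tree $S$, a node $\sigma\in S$ is a \emph{splitting node} if $\mathrm{succ}_S(\sigma)\in\Solecki^+$, it has \emph{rank} $m$ if exactly $m$ proper initial segments of $\sigma$ are splitting nodes of $S$, and $\mathrm{Spl}_m(S)$ denotes the set of rank-$m$ splitting nodes; since $\Solecki$ lives on the countable set $\Omega$, every splitting node has countably many successors in $S$, so each $\mathrm{Spl}_m(S)$ is countable. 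Write $S'\leq_m S$ when $S'\subseteq S$ and $S'$ and $S$ have exactly the same nodes up to and including their rank-$m$ splitting nodes (equivalently $\mathrm{Spl}_j(S')=\mathrm{Spl}_j(S)$ with the same successor sets for all $j\leq m$). The standard fusion fact is: if $T=T_0\geq_0 T_1\geq_1 T_2\geq_2\cdots$ with $T_{k+1}\leq_k T_k$, then $T'\coloneqq\bigcap_k T_k$ is again an $\I$-Miller tree with $T'\leq T$, and the nodes of $T'$ up to and including its rank-$k$ splitting nodes coincide with those of $T_{k+1}$; in particular $T'$ has splitting nodes of every rank, so every branch of $T'$ meets infinitely many splitting nodes.

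First I would build the fusion sequence. Put $T_0\coloneqq T$. Given $T_k$, for every $\sigma\in\mathrm{Spl}_k(T_k)$ and every $i\in\mathrm{succ}_{T_k}(\sigma)$, the subtree $(T_k)_{\sigma^\frown\langle i\rangle}$ is an $\I$-Miller tree, so I may apply the refinement constructed in the proof of \Cref{lem:Soloecki_B_x} to it with $y\coloneqq y_k$, obtaining $S_{\sigma,i}\leq (T_k)_{\sigma^\frown\langle i\rangle}$ in $\mathbb{M}_{\Solecki}$ such that $\mathrm{succ}_{S_{\sigma,i}}(\tau)$ is disjoint from $h_{\lvert\tau\rvert}^{-1}[I_{y_k}]$ for every splitting node $\tau$ of $S_{\sigma,i}$. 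Only countably many subtrees are affected, and I let $T_{k+1}$ be obtained from $T_k$ by replacing each $(T_k)_{\sigma^\frown\langle i\rangle}$ by $S_{\sigma,i}$, leaving everything up to and including the rank-$k$ splitting nodes of $T_k$ untouched. Since each rank-$k$ splitting node retains all of its successors, $T_{k+1}$ is an $\I$-Miller tree and $T_{k+1}\leq_k T_k$.

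Let $T'\coloneqq\bigcap_k T_k\in\mathbb{M}_{\Solecki}$ (by the fusion fact). To verify the conclusion, fix $x\in[T']$ and $n\in\omega$. Since $T'$ and $T_{n+1}$ agree up to and including their rank-$n$ splitting nodes, $x$ passes through a unique $\sigma\in\mathrm{Spl}_n(T_{n+1})$, and above $\sigma^\frown\langle x(\lvert\sigma\rvert)\rangle$ the branch $x$ lies inside $S_{\sigma,\,x(\lvert\sigma\rvert)}$. As that tree is an $\I$-Miller tree, $x$ meets infinitely many of its splitting nodes $\tau$, and for each such $\tau$ we have $x(\lvert\tau\rvert)\in\mathrm{succ}_{S_{\sigma,\,x(\lvert\sigma\rvert)}}(\tau)$, hence $x(\lvert\tau\rvert)\notin h_{\lvert\tau\rvert}^{-1}[I_{y_n}]$, i.e.\ $y_n\notin h_{\lvert\tau\rvert}(x(\lvert\tau\rvert))$. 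Therefore $\exists^\infty m<\omega\ (y_n\notin h_m(x(m)))$, which is exactly $y_n\in B_x$. Since $x\in[T']$ and $n\in\omega$ were arbitrary, $T'$ is as required.

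The main obstacle is purely organizational: one must spell out the fusion ordering $\leq_m$ on $\mathbb{M}_{\Solecki}$ and check that the intersection of a fusion sequence is genuinely an $\I$-Miller tree (in particular that every branch still passes through splitting nodes of all ranks), and one must observe that at each stage only countably many subtrees get modified. Both points follow immediately from $\Solecki$ being an ideal on the countable set $\Omega$, and there is nothing new beyond the single-coordinate construction already carried out in \Cref{lem:Soloecki_B_x}.
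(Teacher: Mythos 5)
Your argument is essentially the paper's: the paper proves the same one-coordinate claim (phrased there via $\mathrm{lv}_n(T)$ and its final nodes rather than your $\mathrm{Spl}_n$-bookkeeping) and then runs exactly this fusion $T_0\geq_0 T_1\geq_1 T_2\geq_2\cdots$, taking $T'=\bigcap_n T_n$. The one point to make explicit — a point the paper's own write-up also leaves implicit — is that a branch of a general $\I$-Miller tree need not pass through any splitting node at all (splitting nodes are only dense), so you should first prune $T$ (e.g.\ by the singleton-or-$\Solecki$-positive refinement already used in the proof of \Cref{lem:Soloecki_B_x}) to guarantee that every branch of the fusion really meets a rank-$n$ splitting node for every $n$; with that standard reduction your verification goes through verbatim.
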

    \begin{proof}
        For a tree $T$, we write
        \[
        \mathrm{lv}_n (T) = \{ t \in T : t \text{ passes through at most }n \text{ splitting nodes} \}.
        \]
        We also write $T' \leq_n T$ if $T' \leq T$ and $\mathrm{lv}_n (T) = \mathrm{lv}_n(T')$. 
    
        \begin{claim}
            Let $T \in \mathbb{M}_\Solecki$ and $n \in \omega$, then there is a tree $T' \in \mathbb{M}_\Solecki$ such that $T' \leq_n T$, and for all $x \in [T']$, $y_n \in B_x$.
        \end{claim}
        \begin{proof}[Proof of the claim.]
            For each finial node $s$ of $\mathrm{lv}_n(T)$, we apply \Cref{lem:Soloecki_B_x} to $T_s$ to obtain a tree $T'_s\in\mathbb{M}_\Solecki$ such that $T'_s \leq T_s$ and for all $x \in [T_s']$, $y_n \in B_x$. The tree $T' = \bigcup \{ T'_s : s \text{ is a final node of lv}_n(T) \}$ is the tree we are looking for.
        \end{proof}
        
        We apply repeatedly the claim to get a sequence of trees $T = T_0 \geq_0 T_1 \geq_1 T_2 \geq_2 \cdots$ such that, for all $n<\omega$, if $x \in [T_n]$ then $y_n \in B_x$. Then the tree $T' \coloneqq \bigcap_{n \in \omega}T_n$ satisfies the requirements.
    \end{proof}

    \begin{cor}\label{cor:nonMSolecki}
        $\covo{\mathcal{N}} \leq \non(M_\Solecki)$ and $\cov(M_\Solecki) \leq \non(\mathcal{N})$.
    \end{cor}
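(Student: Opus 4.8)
The plan is to exhibit, for each $y\in 2^\omega$, a single set $N_y\in M_\Solecki$ that is ``dual'' to the null set $B_x$, and then to read off both inequalities from the fact that $B_x$ is null for every $x\in\Omega^\omega$. First I would record that $\lambda(B_x)=0$: since $h_n(x(n))$ has measure $1-2^{-n}$, its complement in $2^\omega$ has measure $2^{-n}$, and $B_x=\limsup_n\bigl(2^\omega\setminus h_n(x(n))\bigr)$, so the Borel--Cantelli lemma applies. Next, for $y\in 2^\omega$ I would set $\psi_y\colon\Omega^{<\omega}\to\Solecki$ by $\psi_y(\sigma)=h_{|\sigma|}^{-1}[I_y]$; this is well defined because the Kat\v{e}tov reduction established just above yields $h_k^{-1}[I_y]\subseteq\bigcup_{i<k}I_{y_i}\in\Solecki$. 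Unwinding the definition of $B_x$, one checks that $y\notin B_x$ iff $x(n)\in h_n^{-1}[I_y]$ for all but finitely many $n$, so
\[
N_y:=\{x\in\Omega^\omega:y\notin B_x\}=M_{\psi_y}\in M_\Solecki .
\]
Equivalently, one could deduce $N_y\in M_\Solecki$ from \Cref{lem:Soloecki_B_x}: below any $T\in\mathbb{M}_\Solecki$ there is $T'\le T$ with $[T']\cap N_y=\emptyset$, hence by the Sabok--Zapletal dichotomy $N_y$ cannot be $M_\Solecki$-positive.

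For $\cov(M_\Solecki)\le\non(\mathcal{N})$ I would fix a non-null $Y\subseteq 2^\omega$ with $|Y|=\non(\mathcal{N})$. Since each $B_x$ is null, $Y\not\subseteq B_x$ for every $x\in\Omega^\omega$, so $x\in N_y$ for some $y\in Y$; thus $\{N_y:y\in Y\}$ is a cover of $\Omega^\omega$ by $\non(\mathcal{N})$ members of $M_\Solecki$. For $\covo{\mathcal{N}}\le\non(M_\Solecki)$ I would take $F\subseteq\Omega^\omega$ with $|F|<\covo{\mathcal{N}}$. As $\{B_x:x\in F\}\subseteq\mathcal{N}$ has size $<\covo{\mathcal{N}}=\cof([\mathbb{R}]^\omega,\mathcal{N})$, it is not cofinal, so there is a countable set $\{y_n:n<\omega\}\subseteq 2^\omega$ with $\{y_n:n<\omega\}\not\subseteq B_x$ for every $x\in F$. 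Then each $x\in F$ lies in $N_{y_n}$ for some $n$, so $F\subseteq\bigcup_{n<\omega}N_{y_n}\in M_\Solecki$ and hence $F\in M_\Solecki$.

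I expect the argument to be essentially routine once \Cref{lem:Soloecki_B_x} and \Cref{lem:Solecki_omega_cover} are available. The only delicate point, if one goes through those lemmas rather than the explicit formula $N_y=M_{\psi_y}$, is the passage from ``every condition has a refinement disjoint from $N_y$'' (resp.\ from ``every condition has a refinement inside $\bigcap_n\{x:y_n\in B_x\}$'') to ``$N_y\in M_\Solecki$'' (resp.\ ``$\Omega^\omega\setminus\bigcap_n\{x:y_n\in B_x\}\in M_\Solecki$''); this is exactly where the Sabok--Zapletal dichotomy for $M_\Solecki$ is invoked, and it is precisely for the $\omega$-indexed second case that the strengthened fusion form \Cref{lem:Solecki_omega_cover}, rather than just \Cref{lem:Soloecki_B_x}, is needed. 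If instead one works with $N_y=M_{\psi_y}$ directly, both inequalities reduce to the closure of $M_\Solecki$ under countable unions and no appeal to the dichotomy is necessary.
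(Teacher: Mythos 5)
Your proof is correct, and it takes a genuinely more elementary route than the paper's. The paper argues at the level of the tree forcing $\mathbb{M}_\Solecki$: for the uniformity bound it takes $X\notin M_\Solecki$, produces a condition $T$ all of whose refinements meet $X$, and then applies the fusion lemma \Cref{lem:Solecki_omega_cover} to handle countably many $y_n$ simultaneously; for the covering bound it derives $\{x:y_\alpha\notin B_x\}\in M_\Solecki$ from \Cref{lem:Soloecki_B_x}, which tacitly invokes the Sabok--Zapletal dichotomy for that Borel set. You bypass all of this by observing that $N_y=\{x\in\Omega^\omega: y\notin B_x\}$ is literally a generator of the ideal, namely $N_y=M_{\psi_y}$ with $\psi_y(\sigma)=h_{|\sigma|}^{-1}[I_y]\in\Solecki$ by the Kat\v{e}tov-reduction lemma preceding \Cref{lem:Soloecki_B_x}; after that, both inequalities are counting arguments using only the $\sigma$-additivity of $M_\Solecki$ and the nullity of each $B_x$ (your Borel--Cantelli remark matches the paper's), and your diagnosis of exactly where the dichotomy would enter if one argued through the tree lemmas instead is accurate. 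What the paper's route buys is reuse: \Cref{lem:Soloecki_B_x} and \Cref{lem:Solecki_omega_cover} are precisely the ingredients recycled for the $\mathbb{K}_{\Solecki}$ quasi-genericity argument leading to \Cref{thm:K_S_exact}, and the paper's uniformity argument is stated for an arbitrary positive set via the existence of the condition $T$, a step your argument simply never needs. What your route buys is brevity and a bonus: since $\psi_y(\sigma)$ depends only on $|\sigma|$, your computation in fact shows $N_y\in K_\Solecki$, so the same counting yields $\covomn\leq\non(K_\Solecki)$ and $\cov(K_\Solecki)\leq\nonn$ directly, which together with \Cref{lem:b_nonM} and \Cref{thm:M_S_exact} recovers \Cref{thm:K_S_exact} without any forcing argument.
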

    \begin{proof}
        If $X \notin M_\Solecki$, then there is a tree $T \in \mathbb{M}_\Solecki$ such that every $T' \in \mathbb{M}_\Solecki$ with $T' \leq T$ has the property that $[T'] \cap X \neq \emptyset$. Then, \Cref{lem:Solecki_omega_cover} implies that $\{ B_x : x \in X \}$ witnesses $\covomn\leq|X|$. 
        The other inequality follows dually: Let $\kappa < \cov(M_\Solecki)$ and $\{y_\alpha : \alpha \in \kappa\} \in \cantor$. Define $Y_\alpha \coloneqq \{ x \in \Omega^\omega : y_\alpha \notin B_x \}$. By the previous lemma $Y_\alpha \in M_\Solecki$ and, since $\kappa < \cov(M_\Solecki)$, there must be $x \notin \bigcup_{\alpha < \kappa} Y_\alpha$. It follows that $y_\alpha \in B_x$ for each $\alpha < \kappa$.
    \end{proof}

    By \Cref{prop:Solecki_non_omega} and \Cref{thm:nonmi_leq_max}, we get the exact values of $\non(M_\mathcal{S})$ and $\cov(M_\mathcal{S})$:
    
    \begin{thm}\label{thm:M_S_exact}
        $\non(M_\mathcal{S}) = \max\{\mathfrak{b}, \cov_\omega(\mathcal{N})\}$ and $\cov(M_\mathcal{S}) = \min\{\mathfrak{d}, \non(\mathcal{N})\}$.
    \end{thm}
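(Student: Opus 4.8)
The plan is to obtain both identities by combining inequalities that are already in hand, treating the uniformity and covering numbers in parallel.

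\emph{Uniformity.} For the upper bound, \Cref{thm:nonmi_leq_max} gives $\non(M_\Solecki)\leq\max\{\bb,\non^*_\omega(\Solecki)\}$, and by \Cref{prop:Solecki_non_omega} the right-hand side is $\max\{\bb,\covomn\}$. For the matching lower bound, \Cref{lem:b_nonM} gives $\bb\leq\non(M_\Solecki)$ and \Cref{cor:nonMSolecki} gives $\covomn\leq\non(M_\Solecki)$, so $\max\{\bb,\covomn\}\leq\non(M_\Solecki)$ and the two bounds coincide.

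\emph{Covering.} The upper bound is again immediate: \Cref{lem:b_nonM} gives $\cov(M_\Solecki)\leq\dd$ and \Cref{cor:nonMSolecki} gives $\cov(M_\Solecki)\leq\non(\mathcal{N})$, hence $\cov(M_\Solecki)\leq\min\{\dd,\non(\mathcal{N})\}$. For the lower bound, \Cref{thm:nonmi_leq_max} gives $\cov(M_\Solecki)\geq\min\{\dd,\cov^*(\Solecki)\}$, so it remains to check $\cov^*(\Solecki)\geq\non(\mathcal{N})$. This is the one step not literally quoted from an earlier statement; I would derive it from \Cref{lem:outer_half} as follows. Let $\mathcal{A}\subseteq\Solecki$ with $|\mathcal{A}|<\non(\mathcal{N})$. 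Since $\Solecki$ is generated, as an ideal, by the sets $I_y=\{C\in\Omega:y\in C\}$ for $y\in2^\omega$, every $A\in\mathcal{A}$ satisfies $A\subseteq I_{F_A}$ for some $F_A\in[2^\omega]^{<\omega}$, where $I_F\coloneqq\{C\in\Omega:C\cap F\neq\emptyset\}$; so it suffices to show that no family $\{I_{F_\alpha}:\alpha<\kappa\}$ with $\kappa<\non(\mathcal{N})$ witnesses $\cov^*(\Solecki)$. Put $Y\coloneqq\bigcup_{\alpha<\kappa}F_\alpha$. Then $|Y|<\non(\mathcal{N})$, so $Y$ is Lebesgue null, in particular $\mu^*(Y)<\frac{1}{2}$, and \Cref{lem:outer_half} yields $B\in[\Omega]^\omega$ such that every $y\in Y$ lies outside all but finitely many members of $B$. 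Since each $F_\alpha$ is finite, for each $\alpha$ all but finitely many $V\in B$ satisfy $V\cap F_\alpha=\emptyset$, i.e.\ $V\notin I_{F_\alpha}$; hence $|I_{F_\alpha}\cap B|<\omega$ for every $\alpha$, so $\{I_{F_\alpha}:\alpha<\kappa\}$ is not a witness for $\cov^*(\Solecki)$. Therefore $\cov^*(\Solecki)\geq\non(\mathcal{N})$, giving $\cov(M_\Solecki)\geq\min\{\dd,\non(\mathcal{N})\}$, which matches the upper bound. (Alternatively, one may simply invoke the known equality $\cov^*(\Solecki)=\non(\mathcal{N})$.)

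\emph{Main obstacle.} There is essentially none: the theorem is a corollary of \Cref{thm:nonmi_leq_max}, \Cref{lem:b_nonM}, \Cref{prop:Solecki_non_omega} and \Cref{cor:nonMSolecki}, with the genuine work already carried out in establishing those results --- notably the Sabok--Zapletal fusion argument underlying \Cref{cor:nonMSolecki} and the computation of $\non^*_\omega(\Solecki)=\covomn$. The only additional verification needed is the short argument for $\cov^*(\Solecki)\geq\non(\mathcal{N})$ above, which is standard.
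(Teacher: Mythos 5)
Your proof is correct and follows essentially the same route as the paper, which obtains the theorem by combining \Cref{thm:nonmi_leq_max}, \Cref{prop:Solecki_non_omega}, \Cref{lem:b_nonM} and \Cref{cor:nonMSolecki}. The only ingredient the paper leaves implicit, namely $\cov^*(\Solecki)\geq\non(\mathcal{N})$, is a known fact, and your derivation of it from \Cref{lem:outer_half} is correct.
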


    \begin{rem} \label{rem:Solecki_optimal}
        This implies that the inequalities of \Cref{thm:nonmi_leq_max} are sharp: Indeed, when $\I=\Solecki$, $\non(M_\mathcal{S}) =\max\{\mathfrak{b}, \non^*_\omega(\Solecki)\}$ by \Cref{prop:Solecki_non_omega}.
    \end{rem}

    \begin{rem}
         $\bb<\covn\leq\covomn$ holds in the random model, and it is not hard to see that $\cov_\omega(\mathcal{N})<\bb$ holds in the Laver model (see e.g. \cite[Lemma 7.2.3]{BJ95}). Thus $\bb$ and $\covomn$ have independent values. As in the case of Laver model, one can see that in the $\mathbb{PT}_{f,g}$-model, $\bb=\cov_\omega(\mathcal{N})<\nonm$ holds (see e.g. \cite[Lemma 7.2.15]{BJ95}). Therefore, $\non(M_\Solecki) = \max\{\mathfrak{b}, \cov_\omega(\mathcal{N})\}$ is consistently different from $\bb$ and $\nonm$.
    \end{rem}

    \Cref{thm:M_S_exact} can be improved for $K_\Solecki$ and we will prove this using \Cref{lem:quasi}. Let us introduce the forcing notion $\mathbb{K}_{\I}$ that corresponds to the $\sigma$-ideal $K_\I$:
    
    \begin{dfn}
        Let $\mathbb{K}_{\I}$ denote the collection of all $T\subseteq\baire$ such that for every $\sigma\in T$ there is $N>|\sigma|$ such that the spectrum
        \[
        \Spec_T(\tau,N)\coloneqq\{\tau(N):\tau\in T, \sigma\subseteq\tau, |\tau|=N+1\}
        \]
        is $\I$-positive.
    \end{dfn}

    The following dichotomy holds:

    \begin{lem}
        For any analytic set $A\subseteq\omega^{\omega}$ either $A\in K_{\mathscr{I}}$ or there is $T\in\mathbb{K}_{\I}$ such that $[T]\subseteq A$. Therefore, $\mathbb{K}_\I$ is forcing equivalent to $\mathbb{P}_{K_\I}$.
    \end{lem}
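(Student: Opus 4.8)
The plan is to follow the proof of the Sabok--Zapletal dichotomy for $\mathbb{M}_\I$ quoted above, in three stages: the closed case by a transfinite derivative, the analytic case by unfolding, and then the forcing equivalence. For a closed set $A=[S]$ with $S\subseteq\fsq$ a tree, call a node $\sigma$ of a subtree $U\subseteq S$ \emph{$U$-bad} if no $\tau\in U$ with $\sigma\subseteq\tau$ admits a level $N>|\tau|$ with $\Spec_U(\tau,N)\in\I^+$. Define $S_0=S$, $S_{\alpha+1}=\{\tau\in S_\alpha:\text{no initial segment of }\tau\text{ is }S_\alpha\text{-bad}\}$, and $S_\lambda=\bigcap_{\alpha<\lambda}S_\alpha$ at limits; being a decreasing chain of subsets of the countable set $S$, this stabilizes at some countable $\alpha_0$. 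If the root survives, $S_{\alpha_0}$ has no bad nodes and hence no leaves, and using $\Spec_U(\sigma,N)\supseteq\Spec_U(\tau,N)$ whenever $\sigma\subseteq\tau$ and $N\geq|\tau|$ together with upward closure of $\I^+$, one checks directly that $S_{\alpha_0}\in\mathbb{K}_\I$ and $[S_{\alpha_0}]\subseteq A$. If the root does not survive, then for each $\sigma$ that is bad in some $S_\gamma$ set $\phi_\sigma(N)=\Spec_{S_\gamma}(\sigma,N)$ for $N>|\sigma|$ and a finite set otherwise; then $\phi_\sigma\colon\omega\to\I$ and $\{x\in[S_\gamma]:\sigma\subseteq x\}\subseteq K_{\phi_\sigma}$, and since each branch of $S$ leaves the chain at a successor stage (the $S_\alpha$ being closed under limits), $A$ is covered by the countably many $K_{\phi_\sigma}$, so $A\in K_\I$.

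For an analytic $A=p[W]$, with $W\subseteq(\omega\times\omega)^{<\omega}$ a tree and $p$ the first-coordinate projection, I would run the analogous derivative on $W$, but with goodness of a node measured via the first-coordinate spectrum along a continuously chosen second coordinate. The delicate point, and the step I expect to be the main obstacle (exactly as in \cite{SZ11}), is to organize this so that the positive alternative yields an honest condition $T\in\mathbb{K}_\I$ together with a continuous assignment of a witness $y$ to each branch of $T$, giving $[T]\subseteq p[W_\infty]\subseteq A$; the negative alternative again covers the sections of $W$ by countably many $K_\phi$'s, whence $A\in K_\I$. Shoenfield/Mostowski absoluteness is used to see the derivative has countable length and is computed correctly. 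Alternatively one may invoke the general theory of $\sigma$-ideals that are $\Pi^1_1$ on $\Sigma^1_1$ (cf.\ \cite{Zapl}).

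For the forcing equivalence, $T\mapsto[T]$ is order preserving into the $K_\I$-positive Borel sets, and its image is dense by the dichotomy. It remains to see $[T]\notin K_\I$ for $T\in\mathbb{K}_\I$, which is a fusion argument: given countably many $\phi_k\colon\omega\to\I$, build $T'\leq T$ in $\mathbb{K}_\I$ by, infinitely often below every node and for every $k$, choosing a level $N$ with $\Spec_T(\sigma,N)\in\I^+$ and thinning the $N$-th coordinate into $\Spec_T(\sigma,N)\setminus\phi_k(N)$, which is still in $\I^+$ since $\phi_k(N)\in\I$; then $[T']\neq\emptyset$ and $[T']\cap\bigcup_k K_{\phi_k}=\emptyset$, so $[T]\not\subseteq\bigcup_k K_{\phi_k}$. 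The bookkeeping of dead ends created by such thinning (both here and in the analytic derivative) is routine. Hence $T\mapsto[T]$ is a dense embedding and $\mathbb{K}_\I\cong\mathbb{P}_{K_\I}$.
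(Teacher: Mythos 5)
Your proposal is correct and follows essentially the same route as the paper: the same transfinite derivative on the tree (your removal of ``bad'' nodes coincides with the paper's derivative, since a node with no $\I$-positive spectrum level passes this defect to all of its extensions), with a nonempty fixed point yielding the $\mathbb{K}_\I$ condition, the empty case yielding a cover of $[S]$ by countably many $K_{\phi_\sigma}$'s, and the analytic case deferred to the standard unfolding trick exactly as in the paper. Your added diagonalization showing $[T]\notin K_\I$ for every $T\in\mathbb{K}_\I$ correctly fills in a step the paper leaves implicit when deducing the forcing equivalence.
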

    \begin{proof}
        We will show this for closed sets only. The standard unfolding trick gives the proof for analytic sets. Let $D\subseteq\omega^{\omega}$ be a closed set and let $T\subseteq\omega^{<\omega}$ be such a tree that $D=[T]$. We define a derivative of $T$ as follows:
        \begin{center}
            $T'=\{\sigma\in T:\exists N>|\sigma|$ such that $\{\tau(N):\tau\in T, \sigma\subseteq\tau, |\tau|=N+1\}\in\I^{+}\}$
        \end{center}
        Define $T_{0}=T$, $T_{\alpha+1}=T_{\alpha}'$ and for limit $\alpha<\omega_{1}$ let $T_{\alpha}=\bigcap_{\gamma<\alpha}T_{\gamma}$. Let $\alpha^{*}<\omega_{1}$ be the smallest ordinal such that $T_{\alpha^{*}+1}=T_{\alpha^{*}}$. Consider two cases:\\
        First, $T_{\alpha^{*}}\neq\emptyset$. Then by the definition of derivative, it is easy to construct required tree.\\
        Second, $T_{\alpha^{*}}=\emptyset$. Then $[T]=\bigcup_{\alpha<\alpha^{*}}([T_{\alpha}]\setminus[T_{\alpha+1}])$. By the definition of the derivative we have that $[T_{\alpha}]\setminus[T_{\alpha+1}]\in K_{\mathscr{I}}$. It follows that $[T]$ is covered by countable union of $K_{\mathscr{I}}$-sets. 
    \end{proof}

    \begin{dfn}
        For $y\in2^\omega$, let $N_y\coloneqq\{x\in\Omega^\omega:y\notin B_x\}$. 
        Let $N_\Solecki$ be the $\sigma$-ideal on $\Omega^\omega$ generated by $N_y$'s.
    \end{dfn}
    
    \begin{lem}
        $\covomn\leq\non(N_\Solecki)$ and $\cov(N_\Solecki)\leq\nonn$.
    \end{lem}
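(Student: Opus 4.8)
The plan is to reuse the scheme behind \Cref{cor:nonMSolecki}, now exploiting that $N_y=\{x\in\Omega^\omega:y\notin B_x\}$ and that every $B_x$ is Lebesgue null (as already noted right before \Cref{lem:Soloecki_B_x}, since $h_n(x(n))$ has measure $1-2^{-n}$ and $\sum_n 2^{-n}<\infty$, so $\{y:\exists^\infty n\ y\notin h_n(x(n))\}$ is null by Borel--Cantelli).

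First I would prove $\covomn\leq\non(N_\Solecki)$ by showing that every $X\subseteq\Omega^\omega$ with $X\notin N_\Solecki$ satisfies $|X|\geq\covomn$. So suppose $|X|<\covomn$ and consider $\{B_x:x\in X\}\subseteq\n$, where I identify the null ideal on $2^\omega$ with the one on $\br$ in the usual measure-isomorphic way, which does not affect $\covomn$. Since $|X|<\covomn$, this family is not a witness for $\covomn$, so there is a countable $Y\subseteq2^\omega$ with $Y\not\subseteq B_x$ for every $x\in X$. Picking, for each $x\in X$, some $y_x\in Y\setminus B_x$ gives $x\in N_{y_x}$, hence $X\subseteq\bigcup_{y\in Y}N_y$; as $Y$ is countable, this shows $X\in N_\Solecki$, a contradiction.

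Next I would prove $\cov(N_\Solecki)\leq\nonn$. Fix a non-null $Y\subseteq2^\omega$ with $|Y|=\nonn$; I claim $\Omega^\omega=\bigcup_{y\in Y}N_y$. Indeed, if some $x\in\Omega^\omega$ lay outside every $N_y$ with $y\in Y$, then $y\in B_x$ for all $y\in Y$, i.e.\ $Y\subseteq B_x$, contradicting that $B_x$ is null. Since each $N_y$ is a generator of $N_\Solecki$, this exhibits a covering of $\Omega^\omega$ by $\nonn$-many members of $N_\Solecki$.

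Neither direction poses a genuine obstacle: both are short counting arguments of precisely the shape of \Cref{cor:nonMSolecki} and \Cref{prop:Solecki_non_omega}, with $B_x$ playing the same role. The only points requiring an extra line are (i) that $\covomn$ is unchanged whether computed over $2^\omega$ or $\br$, and (ii) to make $\non(N_\Solecki)$ meaningful, that $N_\Solecki$ is a proper $\sigma$-ideal; the latter follows from \Cref{lem:Solecki_omega_cover}, since for any countable $\{y_n:n<\omega\}\subseteq2^\omega$ there is $x\in\Omega^\omega$ with $y_n\in B_x$ for all $n$, and therefore $\Omega^\omega\neq\bigcup_n N_{y_n}$.
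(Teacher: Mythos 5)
Your proof is correct and follows essentially the same route as the paper: the first inequality is the paper's argument in contrapositive form (using that a countable union $\bigcup_{y\in Y}N_y$ lies in $N_\Solecki$, so an $N_\Solecki$-positive $X$ yields some $x$ with $Y\subseteq B_x$), and the second is exactly the dual argument the paper leaves to the reader, resting on the fact that each $B_x$ is null. The extra remarks on properness of $N_\Solecki$ and on identifying $\covomn$ over $2^\omega$ and $\br$ are fine but not needed.
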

    \begin{proof}
        To see $\covomn\leq\non(N_\Solecki)$, let $X\subseteq\Omega^\omega$ with $X\notin N_\Solecki$. We claim that $\{B_x:x\in X\}\subseteq\n$ is a witness for $\covomn$. To show this, let $Y\in[2^\omega]^\omega$. Since $N\coloneqq\bigcup_{y\in Y}N_y\in N_\Solecki$ and $X\notin N_\Solecki$, there is $x\in X\setminus\bigcup_{y\in Y}N_y$. Then we have $Y\subseteq B_x$.
        
        The other inequality $\cov(N_\Solecki)\leq\nonn$ can be proved in the dual manner.
    \end{proof}

    \begin{lem}
        Let $\forces_{\KS}\dot{r}\coloneqq\bigcup\{\stem(T):T\in\dot{G}\}\in\Omega^\omega$ be (the canonical $\KS$-name of) a $\KS$-generic real. Then
        \[
        \forces_{\KS}\dot{r}\text{ is $N_\Solecki$-quasi-generic.}
        \]
    \end{lem}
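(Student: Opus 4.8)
The plan is to reduce $N_\Solecki$-quasi-genericity to a density statement about the generators $N_y$ of $N_\Solecki$. Since $N_\Solecki$ is the $\sigma$-ideal generated by the sets $N_y$ ($y\in2^\omega$), every Borel $B\in N_\Solecki$ whose code lies in the ground model $V$ is, already in $V$, contained in $\bigcup_{n<\omega}N_{y_n}$ for some sequence $\langle y_n:n<\omega\rangle\in V$; as $\forces_{\KS}\dot{r}\notin B$ follows from $\forces_{\KS}\dot{r}\notin N_{y_n}$ for all $n$, it suffices to prove $\forces_{\KS}\dot{r}\notin N_y$ — equivalently $\forces_{\KS}y\in B_{\dot{r}}$ — for a fixed $y\in2^\omega\cap V$. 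Unwinding the definition of $B_{\dot{r}}$, this asks that $\dot{r}(n)\notin h_n^{-1}[I_y]$ for infinitely many $n$, so the concrete target becomes: for every $m<\omega$, the set of $T\in\KS$ deciding $\dot{r}(n)$, for some $n\ge m$, to a value $C$ with $y\notin h_n(C)$ is dense in $\KS$.

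To establish density I would fix $T\in\KS$ and $m<\omega$, pass to $T_\rho$ for a node $\rho\in T$ extending $\stem(T)$ with $|\rho|\ge m$ (using that $\KS$ is closed under $T\mapsto T_\rho$), and thereby assume $|\stem(T)|\ge m$. Writing $\sigma=\stem(T)$, the condition $T\in\KS$ supplies a level $N>|\sigma|$, hence $N\ge m$, with $\Spec_T(\sigma,N)\in\Solecki^+$. The crucial observation — the one that lets the argument go through even though $\Solecki$ is far from prime — is that $h_N$ is a Kat\v{e}tov reduction witnessing $\Solecki\geq_{\mathrm{K}}\Solecki_{1-2^{-N}}$ and $I_y\in\Solecki_{1-2^{-N}}$, so that $h_N^{-1}[I_y]=\{C\in\Omega:y\in h_N(C)\}\in\Solecki$. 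Consequently
\[
S:=\Spec_T(\sigma,N)\setminus h_N^{-1}[I_y]
\]
is an $\Solecki$-positive set with a member of $\Solecki$ deleted, so $S\in\Solecki^+$; in particular $S\neq\emptyset$.

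Finally I would choose $\rho\in T$ with $\sigma\subseteq\rho$, $|\rho|=N+1$ and $\rho(N)\in S$ (possible since $S\subseteq\Spec_T(\sigma,N)$ is nonempty) and set $T':=T_\rho$. A short check shows $T'\in\KS$: for $\tau\subseteq\rho$ every node of $T'$ above $\tau$ of length $>|\rho|$ passes through $\rho$, so $\Spec_{T'}(\tau,N')=\Spec_T(\rho,N')$ for $N'\ge|\rho|$, while for $\tau\supsetneq\rho$ one has $\Spec_{T'}(\tau,N')=\Spec_T(\tau,N')$; in either case the required $\Solecki$-positive spectra are inherited from $T$. Since $T'\forces\dot{r}\upharpoonright(N+1)=\rho$, we get $T'\forces\dot{r}(N)=\rho(N)\in S\subseteq\Omega\setminus h_N^{-1}[I_y]$, i.e.\ $T'\forces y\notin h_N(\dot{r}(N))$ with $N\ge m$, which is exactly membership in the dense set above; this proves the lemma. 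The only genuinely delicate point is the one flagged: one must resist intersecting the positive spectrum $\Spec_T(\sigma,N)$ with the ``good'' set $\{C:y\notin h_N(C)\}$ — which could easily land in $\Solecki$ — and instead subtract the \emph{small} set $h_N^{-1}[I_y]$; the rest is routine bookkeeping about $\KS$-conditions.
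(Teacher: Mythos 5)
Your proof is correct. It rests on exactly the same combinatorial heart as the paper's argument — namely that $h_N^{-1}[I_y]\in\Solecki$ (via the Kat\v{e}tov reduction $h_N$), so a $\Solecki$-positive spectrum $\Spec_T(\sigma,N)$ stays positive, in particular nonempty, after removing it — but you package it differently. The paper, following its treatment of $\mathbb{M}_\Solecki$, refines a given $T\in\KS$ to a single stronger condition $T'$ all of whose spectra at suitable levels avoid $h_N^{-1}[I_y]$, so that \emph{every} branch $x\in[T']$ satisfies $y\in B_x$; the forcing statement then follows from density of such $T'$. You instead run a minimal density argument: below any condition, and for any $m$, find a one-node extension $T_\rho$ (with $|\rho|=N+1$, $N\geq m$, $\rho(N)$ taken from the positive set $\Spec_T(\sigma,N)\setminus h_N^{-1}[I_y]$) deciding one further ``good'' coordinate, and let genericity supply infinitely many of them. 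Both routes are valid; the paper's refinement buys the stronger pure-condition fact that every $N_\Solecki$-positive analytic set contains $[T']$ with $[T']\cap N_y=\emptyset$ (the shape of statement used elsewhere for the $\sigma$-ideal computations), while your version is shorter and suffices for quasi-genericity. You also make explicit the reduction from quasi-genericity to the generators $N_y$ (countably many $y_n\in V$ plus absoluteness of the inclusion $B\subseteq\bigcup_n N_{y_n}$), a step the paper leaves implicit, and your verification that $T_\rho\in\KS$ is the right routine check.
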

    \begin{proof} 
        The proof is essentially the same as \Cref{lem:Soloecki_B_x}.
        Given $y\in2^\omega$ and $T\in\KS$, refine $T$ to $T'$ such that for all $s \in T'$, there is $N\geq\lvert\sigma\rvert$ such that $\Spec_{T'}(\sigma, N)$ is $\Solecki$-positive and disjoint from $h_{\lvert\sigma\rvert}^{-1}[I_y]$. Then $x \in [T']$ implies $y \in B_x$. This means $\forces_{\KS}\dot{r}\in\Omega^\omega$ is $N_\Solecki$-quasi-generic.
    \end{proof}

    By \Cref{lem:quasi}, we have:
    \begin{thm}\label{thm:K_S_exact}
        $\non(K_\mathcal{S}) = \max\{\mathfrak{b}, \cov_\omega(\mathcal{N})\}$ and $\cov(K_\mathcal{S}) = \min\{\mathfrak{d}, \non(\mathcal{N})\}$.
    \end{thm}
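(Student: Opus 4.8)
The plan is to obtain \Cref{thm:K_S_exact} purely by combining bounds that are already available, sandwiching $\non(K_\Solecki)$ and $\cov(K_\Solecki)$ from both sides. For the easy half I would first note that \Cref{lem:b_nonM} gives $\bb\leq\non(K_\Solecki)\leq\non(M_\Solecki)$ and, dually, $\cov(M_\Solecki)\leq\cov(K_\Solecki)\leq\dd$. Feeding in the exact values $\non(M_\Solecki)=\max\{\bb,\covomn\}$ and $\cov(M_\Solecki)=\min\{\dd,\nonn\}$ from \Cref{thm:M_S_exact}, this already yields $\max\{\bb,\covomn\}\geq\non(K_\Solecki)\geq\bb$ and $\min\{\dd,\nonn\}\leq\cov(K_\Solecki)\leq\dd$. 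So everything reduces to the two remaining inequalities $\covomn\leq\non(K_\Solecki)$ and $\cov(K_\Solecki)\leq\nonn$.

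For those I would apply \Cref{lem:quasi} with the auxiliary $\sigma$-ideal $N_\Solecki$ on $\Omega^\omega$ (generated by the sets $N_y=\{x\in\Omega^\omega:y\notin B_x\}$) playing the role of $\mathcal{I}$, and $K_\Solecki$, also a $\sigma$-ideal on $\Omega^\omega$, playing the role of $\mathcal{J}$. The lemma asserting that $\dot r$ is forced by $\KS$ to be $N_\Solecki$-quasi-generic says precisely that the identity map on $\Omega^\omega$ (which sends the canonical real of $\mathbb{P}_{K_\Solecki}\cong\KS$, namely $\dot r$, to itself) satisfies the hypothesis of \Cref{lem:quasi}. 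Hence \Cref{lem:quasi} gives a Kat\v{e}tov reduction $N_\Solecki\leq_{\mathrm{K}}K_\Solecki$ and, in particular, $\non(N_\Solecki)\leq\non(K_\Solecki)$ and $\cov(N_\Solecki)\geq\cov(K_\Solecki)$. Combining this with the preceding lemma that $\covomn\leq\non(N_\Solecki)$ and $\cov(N_\Solecki)\leq\nonn$ closes the sandwich and finishes the proof.

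I do not expect a genuine obstacle at this stage, since the substantial work has already been done: the fusion lemma producing, for any $T\in\KS$ and any countable $\{y_n:n<\omega\}\subseteq 2^\omega$, a refinement forcing the generic to land in every $B_x$ with all $y_n\in B_x$ (the $\KS$-counterpart of \Cref{lem:Solecki_omega_cover}, hidden inside the quasi-genericity lemma), and the computation $\non^*_\omega(\Solecki)=\covomn$ from \Cref{prop:Solecki_non_omega} underlying \Cref{thm:M_S_exact}. The only point needing a little care is the bookkeeping around the ambient spaces: checking that \Cref{lem:quasi} is invoked on $\Omega^\omega$ rather than on $2^\omega$, and that the quasi-genericity lemma refers to the genuine $\KS$-generic real, so that the transfer of $\non$ and $\cov$ goes in the directions claimed.
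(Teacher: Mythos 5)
Your proposal is correct and follows essentially the same route as the paper: the upper bound for $\non(K_\Solecki)$ and lower bound for $\cov(K_\Solecki)$ come from $K_\Solecki\subseteq M_\Solecki$ together with \Cref{thm:M_S_exact}, and the remaining inequalities $\covomn\leq\non(K_\Solecki)$ and $\cov(K_\Solecki)\leq\nonn$ are obtained exactly as in the paper by applying \Cref{lem:quasi} to the auxiliary ideal $N_\Solecki$ via the $N_\Solecki$-quasi-genericity of the $\KS$-generic real. Your cautionary remarks about the ambient space $\Omega^\omega$ and identifying $\dot r$ with the canonical real of $\mathbb{P}_{K_\Solecki}$ are the same (routine) points implicit in the paper's argument.
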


    \subsubsection{Asymptotic density zero ideal}

    Pawlikowski \cite{Paw00} studied $K_\Z$ and proved:
    
    \begin{lem}[{\cite[Lemma 2.3]{Paw00}}]
    	\label{lem:paw00_max}
    	There is a (continuous) function $f\colon \oo\times2^\omega\to\oo$ such that for any $F\in K_\Z$, there is $g\in\oo$ such that for any $x\in\oo$ unbounded from $g$, $\set{y\in2^\omega}{f(x,y)\in F}\in\mathcal{E}$. 
    \end{lem}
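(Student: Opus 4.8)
The plan is to produce a single universal continuous coding $f$, and then, given $F\in K_\Z$, read the witness $g$ off the density-zero sets defining $F$. First I would pass to a normal form for $K_\Z$-sets: because a finite union of members of $\Z$ is again in $\Z$, the usual absorption argument — set $\phi(n):=\bigcup_{m\le n}\phi_m(n)$ — shows that every $F\in K_\Z$ is contained in $K_\phi$ for a single $\phi\colon\omega\to\Z$. So it suffices to find, for each $\phi\colon\omega\to\Z$, a $g\in\oo$ such that $\{y:f(x,y)\in K_\phi\}\in\mathcal{E}$ whenever $x$ is unbounded from $g$ (i.e.\ $x(n)>g(n)$ for infinitely many $n$).

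Next I would define $f\colon\oo\times2^\omega\to\oo$ by an \emph{adaptive} block coding: for $x\in\oo$ put $s_0(x)=0$ and $s_{n+1}(x)=s_n(x)+x(n)$, and let $f(x,y)(n)$ be the integer in $[0,2^{x(n)})$ with binary expansion $\langle y(j):s_n(x)\le j<s_{n+1}(x)\rangle$. Since $f(x,y)(n)$ depends only on $x\on(n+1)$ and $y\on s_{n+1}(x)$, this $f$ is continuous and independent of $F$. The crucial design choice is that the block of bits of $y$ used to compute coordinate $n$ depends on $x$, but the blocks $[s_n(x),s_{n+1}(x))$ are pairwise disjoint for \emph{every} $x$; hence, for a fixed $x$, the events $E_n:=\{y:f(x,y)(n)\in\phi(n)\}$ involve disjoint sets of coordinates of $y$ and are independent for the coin-flipping measure $\mu$ on $2^\omega$, with $\mu(E_n)=|\phi(n)\cap 2^{x(n)}|/2^{x(n)}$.

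Now, given $\phi$, choose for each $n$ some $N_n$ with $|\phi(n)\cap N|/N<\tfrac12$ for all $N\ge N_n$ — possible since $\phi(n)\in\Z$ — and set $g(n)=\max(1,\lceil\log_2 N_n\rceil)$. If $x$ is unbounded from $g$ then $x(n)\ge g(n)\ge1$ for infinitely many $n$, so $\sum_n x(n)=\infty$ and the blocks partition $\omega$; and for each such $n$, $2^{x(n)}\ge 2^{g(n)}\ge N_n$, so $\mu(E_n)<\tfrac12$. Thus for every $N$ the set $C_N:=\{y:\forall n\ge N~f(x,y)(n)\in\phi(n)\}=\bigcap_{n\ge N}E_n$ is closed, and by independence $\mu(C_N)=\prod_{n\ge N}\mu(E_n)=0$ because infinitely many factors are $<\tfrac12$; hence $\bigcup_{N<\omega}C_N\in\mathcal{E}$. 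Finally, $f(x,y)\in K_\phi$ means $f(x,y)(n)\in\phi(n)$ for all sufficiently large $n$, so $\{y:f(x,y)\in F\}\subseteq\{y:f(x,y)\in K_\phi\}\subseteq\bigcup_N C_N\in\mathcal{E}$, which is the conclusion.

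The one genuinely delicate point I anticipate is the interplay between continuity of $f$ and independence of the events $E_n$: continuity forces each coordinate to be decoded from finitely many bits of $y$, yet a partition of $\omega$ into predetermined finite blocks cannot absorb the large values of $x(n)$, which are precisely the ones that make the $\Z$-density estimate effective; the running-sum partition circumvents this by letting the blocks track $x$ while staying disjoint. Everything else should be a routine product-measure computation together with the elementary observation that members of $\Z$ have eventually small initial-segment density.
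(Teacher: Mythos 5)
Your proof is correct: the absorption of the countably many generators into a single $\phi\colon\omega\to\Z$, the block coding in which the $n$-th coordinate of $f(x,y)$ is decoded from a block of $x(n)$ bits of $y$, and the independence-plus-density estimate producing the closed null sets $C_N$ all go through, and $g(n)=\max(1,\lceil\log_2 N_n\rceil)$ does what you need. The paper itself only cites \cite[Lemma 2.3]{Paw00} for this statement, but your argument is essentially the same coding technique the paper uses to prove its $M_\Z$-analogue (\Cref{lem:MZ_max}); the only difference is that you decode into $[0,2^{x(n)})$ rather than into a shifted interval, the shift being needed there solely to dominate the adaptive part $h$, which does not arise in the $K_\Z$ case.
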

        
    \begin{cor}\label{cor:paw00_ineq}
    	$\non(K_\Z)\leq\max\{\bb,\none\}$ and $\cov(K_\Z)\geq\min\{\dd,\cove\}$.
    \end{cor}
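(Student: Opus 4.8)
The plan is to obtain both inequalities as a direct "transfer" through the continuous map $f$ supplied by \Cref{lem:paw00_max}, in the same spirit as deriving cardinal bounds from a Tukey-type morphism. Throughout I use the convention that $x$ is \emph{unbounded from} $g$ means $\exists^{\infty} n<\omega\,(x(n)>g(n))$, i.e.\ $x\not\leq^* g$, so that an unbounded family (resp.\ a non-dominating family) always contains a witness $x$ for a prescribed $g$ (resp.\ for a prescribed countable-or-small set of $g$'s).

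For $\non(K_\Z)\leq\max\{\bb,\none\}$: fix an unbounded family $B\subseteq\oo$ with $\lvert B\rvert=\bb$ and a set $Y\subseteq2^\omega$ with $Y\notin\mathcal{E}$ and $\lvert Y\rvert=\none$. I would consider
\[
F\coloneqq\{f(x,y):x\in B,\ y\in Y\}\subseteq\oo,
\]
whose cardinality is at most $\bb\cdot\none=\max\{\bb,\none\}$, and claim $F\notin K_\Z$, which gives $\non(K_\Z)\leq\lvert F\rvert\leq\max\{\bb,\none\}$. Indeed, if $F\in K_\Z$, then \Cref{lem:paw00_max} yields $g\in\oo$ such that $\{y\in2^\omega:f(x,y)\in F\}\in\mathcal{E}$ for every $x$ unbounded from $g$; choosing $x\in B$ with $x\not\leq^* g$ (possible as $B$ is unbounded) gives $Y\subseteq\{y:f(x,y)\in F\}\in\mathcal{E}$, contradicting $Y\notin\mathcal{E}$.

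For $\cov(K_\Z)\geq\min\{\dd,\cove\}$: let $\kappa<\min\{\dd,\cove\}$ and let $\{A_\alpha:\alpha<\kappa\}\subseteq K_\Z$; it suffices to show $\bigcup_{\alpha<\kappa}A_\alpha\neq\oo$. Apply \Cref{lem:paw00_max} to each $A_\alpha$ to get $g_\alpha\in\oo$ witnessing its conclusion. Since $\kappa<\dd$, the family $\{g_\alpha:\alpha<\kappa\}$ is not dominating, so I would fix $x\in\oo$ with $x\not\leq^* g_\alpha$ for all $\alpha<\kappa$; then $\{y:f(x,y)\in A_\alpha\}\in\mathcal{E}$ for every $\alpha$. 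As $\kappa<\cove$, the union $\bigcup_{\alpha<\kappa}\{y:f(x,y)\in A_\alpha\}$ is not all of $2^\omega$, and any $y$ outside it satisfies $f(x,y)\in\oo\setminus\bigcup_{\alpha<\kappa}A_\alpha$.

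I do not expect a genuine obstacle here: the substantive content is entirely in \Cref{lem:paw00_max}, and the corollary is a routine application. The only points needing minor care are fixing the exact meaning of "unbounded from $g$" so the unbounded/non-dominating families genuinely supply the required $x$, and the trivial cardinal-arithmetic bookkeeping $\lvert B\rvert\cdot\lvert Y\rvert=\max\{\bb,\none\}$.
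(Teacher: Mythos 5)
Your proof is correct and follows essentially the same route as the paper: the first inequality is exactly the paper's argument (take $F=f[B\times E]$ and derive a contradiction from \Cref{lem:paw00_max} using an $x\in B$ not dominated by $g$), and your second part is just the dual argument that the paper leaves implicit with ``follows dually.'' Your reading of ``unbounded from $g$'' as $x\not\leq^* g$ matches the paper's usage, so no gap remains.
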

    \begin{proof}
        We will only prove the former inequality, as the other one follows dually. Let $B\subseteq\oo$ and $E\subseteq2^\omega$ witness $\bb$ and $\none$, respectively. We shall show $f[B\times E]\notin K_\Z$ where $f$ is as above. If not, let $g\in\oo$ as in Lemma \ref{lem:paw00_max}. Since $B$ is unbounded, some $x\in B$ satisfies $\mathcal{E}\ni \set{y\in2^\omega}{f(x,y)\in f[B\times E]}\supseteq E\notin \mathcal{E}$, a contradiction. 
    \end{proof}

    We prove the $M_\Z$-version of \Cref{lem:paw00_max}, in the following form:
    
    \begin{lem}\label{lem:MZ_max}
		There is a (continuous) function $f\colon \oo\times2^\omega\to\oo$ such that for any $F\in M_\Z$, there is $g\colon\fsq\to\omega$ such that for any $x\in\oo$, $x$ is either adaptively dominated by $g$ or  $\set{y\in2^\omega}{f(x,y)\in F}\in\mathcal{E}$. 
	\end{lem}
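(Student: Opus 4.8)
The plan is to carry out an \emph{adaptive} version of the construction behind Pawlikowski's \Cref{lem:paw00_max}: the function $f$ will be a single, ideal-free continuous map, and all the dependence on the given $F\in M_\Z$ will be absorbed into $g$.

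First I would define $f\colon\oo\times2^\omega\to\oo$ stagewise. Given $x\in\oo$ and $y\in2^\omega$, at stage $k$ read the block of $x(k)$ many coordinates $B_k(x)\coloneqq[\sum_{j<k}x(j),\ \sum_{j\le k}x(j))$ of $y$, and let $f(x,y)(k)$ be the natural number in $[0,2^{x(k)})$ whose binary expansion is $y\on B_k(x)$. Since $f(x,y)(k)$ depends only on $x\on(k{+}1)$ and a finite initial segment of $y$, the map $f$ is continuous, and it mentions no ideal. The design principle is that, for fixed $x$, the $k$-th coordinate $f(x,\cdot)(k)$ is uniformly distributed over $[0,2^{x(k)})$ as $y\on B_k(x)$ ranges over $2^{B_k(x)}$, so a large value $x(k)$ forces this coordinate to spread over a long interval — on which any $\Z$-set has vanishing relative density. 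Since $M_\Z$ is the $\sigma$-ideal generated by the sets $M_\phi$, any $F\in M_\Z$ satisfies $F\subseteq\bigcup_{n<\omega}M_{\phi_n}$ for suitable $\phi_n\colon\fsq\to\Z$; if I produce for each $n$ a function $g_n$ witnessing the conclusion for $M_{\phi_n}$, then $g(\sigma)\coloneqq\max\{g_n(\sigma):n\le|\sigma|\}$ works for $F$, because $\lnot(x\sbd g)$ implies $\lnot(x\sbd g_n)$ for every $n$ and $\mathcal{E}$ is a $\sigma$-ideal closed under subsets. Hence it suffices to treat a single $\phi\colon\fsq\to\Z$.

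Fixing such a $\phi$, I would then build $g$ as follows. For $s\in\fsq$, since $\phi(s)\in\Z$ we have $|\phi(s)\cap M|/M\to0$, so let $h_k(s)$ be the least $M_0$ such that $|\phi(s)\cap M|<2^{-k-1}M$ for all $M\ge M_0$. For $\sigma\in\omega^k$ let $Q_\sigma\coloneqq\{s\in\omega^k:\forall j<k\ s(j)<2^{\sigma(j)}\}$, the finite set of possible values of $f(x,y)\on k$ when $x\on k=\sigma$, and put $g(\sigma)\coloneqq\lceil\log_2(1+\max_{s\in Q_\sigma}h_k(s))\rceil$. The key estimate is that $x(k)>g(x\on k)$ forces $2^{x(k)}>h_k(s)$ for every $s\in Q_{x\on k}$, hence $|\phi(s)\cap 2^{x(k)}|<2^{-k-1}\cdot2^{x(k)}$. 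Now assume $\lnot(x\sbd g)$ and let $K\coloneqq\{k<\omega:x(k)>g(x\on k)\}$, an infinite set. Writing $z\coloneqq f(x,y)$, I would observe that $z\in M_\phi$ holds iff $y\in\bigcup_{m_0<\omega}C_{m_0}$, where $C_{m_0}\coloneqq\{y:z(n)\in\phi(z\on n)\text{ for all }n\ge m_0\}$ is closed by continuity of $f$. If $y\in C_{m_0}$ and $k\in K$ with $k\ge m_0$, then $z(k)\in\phi(z\on k)$ with $z\on k\in Q_{x\on k}$ and $z(k)<2^{x(k)}$, so $y\on B_k(x)$ must encode an element of $\phi(z\on k)\cap 2^{x(k)}$, which is a subset of $2^{B_k(x)}$ of relative size $<2^{-k-1}$; crucially this ``target set'' depends on $y$ only through $y\on\bigcup_{j<k}B_j(x)$. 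A conditional-density computation along the blocks then gives $\lambda(C_{m_0})\le\prod_{k\in K,\,k\ge m_0}2^{-k-1}=0$, so each $C_{m_0}$ is closed null, whence $\{y:f(x,y)\in M_\phi\}$ is a countable union of closed null sets and so belongs to $\mathcal{E}$.

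The step I expect to be the main obstacle is precisely showing $\lambda(C_{m_0})=0$: because the ``target set'' at block $B_k(x)$ depends on the earlier blocks of $y$, $C_{m_0}$ is not an intersection of independent small clopen sets, and one has to run a level-by-level shrinking argument on the tree defining $C_{m_0}$ (equivalently, a conditional-probability/martingale estimate) to conclude null-ness. A secondary point requiring care is the bookkeeping that lets the single $g$ simultaneously control all the thresholds $h_k(s)$ — over every $k$ and every admissible prefix $s\in Q_{x\on k}$ — while keeping $f$ genuinely universal; the device above, namely letting $x(k)$ itself fix both the length $x(k)$ of the $k$-th $y$-block and the range $2^{x(k)}$ of $z(k)$, is what makes this work, but the continuity of $f$ and the fact that $\lnot(x\sbd g)$ really delivers infinitely many ``good'' blocks both need to be verified against this definition.
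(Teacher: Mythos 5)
Your proposal is correct, but it takes a genuinely different route from the paper's proof. The paper first uses that $\Z$ is a P-ideal to de-adaptify the prediction: it picks $S\colon\omega\to\Z$ with $\phi(t)\subseteq^* S(i)$ for all $t\in\omega^i$ and a finite correction $h(t)$, and then designs the coding so that the $i$-th coded value lies in $[x(i)+1,\,x(i)+1+2^{x(i)+1})$; the offset $f(x,y)(i)\geq x(i)+1>g(x\upharpoonright i)$ kills the finite correction, so whenever $x$ escapes $g$ the bad set of $y$'s is contained in $\bigcup_j\bigcap_{j\le i\in D}[S^*_{x,i}]$, an intersection of events on \emph{disjoint} blocks each of relative measure $\leq\tfrac12$, and nullness is immediate by independence. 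You instead keep the adaptive target $\phi(z\upharpoonright k)$, absorb all thresholds into $g$ via the finitely many admissible prefixes $Q_{x\upharpoonright k}$ and the density-zero witnesses $h_k(s)$, and prove nullness of each closed piece $C_{m_0}$ by an iterated conditioning estimate along the blocks with per-block densities $<2^{-k-1}$ (in fact any bound away from $1$ on infinitely many blocks suffices). This avoids both the P-ideal property of $\Z$ and the offset trick, and you also handle general $F\in M_\Z$ by combining countably many $g_n$'s instead of dominating all generators by a single $\phi$ (which is how the paper's phrasing ``take $\phi$ with $F\subseteq M_\phi$'' is justified). The conditional-measure step you flag as the main obstacle does go through: the event at block $B_k(x)$ is determined by $y$ up to the end of that block, and given the earlier blocks its conditional probability is $<2^{-k-1}$ uniformly, so the finite products bound $\lambda(C_{m_0})$ by $\prod_{k\in K,\,k\geq m_0}2^{-k-1}=0$; each $C_{m_0}$ is closed since each constraint is clopen in $y$. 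The paper's argument buys independence and a one-line null computation at the cost of invoking the P-ideal structure; yours is more self-contained and makes explicit where the density-zero property of $\Z$ (and only that) is used.
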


    In the following proof, note that for any $A\in\Z$, there is $k_0<\omega$ such that for any $k\geq k_0$,
    \[
    \frac{|A\cap \left[k,k+2^k\right)|}{2^k}\leq\frac{1}{2}.
    \]
    Also, for a set $u$ of partial functions from $\omega$ to $2$, define
    \[
    [u]\coloneq\bigcup\{[\sigma]:\sigma\in u\}=\bigcup\{x\in2^\omega:\exists\sigma\in u\,(\sigma\subseteq x)\}.
    \]
    
	\begin{proof}
		Fix a function $e\colon\bigcup\{2^K:K\in[\omega]^{<\omega}\}\to\omega$ such that for each $K\in[\omega]^{<\omega}$, $e$ on $2^K$ is bijective to $\left[|K|, |K|+2^{|K|}\right)$. For $x\in\oo$, let $\langle I_{x,i}:i\in\omega\rangle$ denote the interval partition of $\omega$ of each length $x(i)+1$. Define
        \[
        f(x,y)\coloneq\langle e(y\on I_{x,i}):i\in\omega \rangle.
        \]
        Note that $f(x,y)=e(y\on I_{x,i})\geq |I_{x,i}|=x(i)+1$. To see this $f$ works, let $F\in M_\Z$ be arbitrary and take $\phi\colon\fsq\to\Z$ such that $F\subseteq M_\phi$. Since $\Z$ is a P-ideal, there is $S\colon\omega\to\Z$ such that $\phi(t)\subseteq^* S(i)$ for any $i\in\omega$ and $t\in\omega^i$, and take $h\colon\fsq\to\omega$ such that $\phi(t)\subseteq S(i) \cup h(t)$ for such $i$ and $t$.
		Choose $z\in\oo$ such that for any $i\in\omega$ and $k\geq z(i)$, 
		\[\frac{|S(i)\cap\left[k,k+ 2^{k}\right)|}{2^{k}}\leq\frac{1}{2}.\]
		Let $m\colon\fsq\to\omega$ be such that for any $x\in\oo$ and $y\in2^\omega$,
		\[m(x\on i)=\max\{h(f(x,y)\on i):y\in2^\omega\},\]
		which is possible since only $x\on i$ and finitely many $y$ are relevant here.
		
		Define $g\colon\fsq\to\omega$ by $g(t)\coloneqq\max\{z(i),m(t)\}$ for $i\in\omega$ and $t\in\omega^i$. Assume $x\in\oo$ is not adaptively dominated by $g$, that is, there is $D\in\ooo$ such that for any $i\in D$, $x(i)>g(x\on i)$. For $i\in\omega$ put
        \[
        S^*_{x,i}\coloneqq\{\sigma\in2^{I_{x,i}}:e(\sigma)\in S(i)\}.
        \]
        Since $e$ on $2^{I_{x,i}}$ is bijective to $\left[x(i)+1, x(i)+1+2^{x(i)+1}\right)$, 
        \begin{equation}\label{eq:leq_half}
            \text{ if }i\in D,~\frac {|S^*_{x,i}|}{2^{|I_{x,i}|}}\leq\frac{|S(i)\cap\left[x(i)+1,x(i)+1+ 2^{x(i)+1}\right)|}{2^{x(i)+1}}\leq\frac{1}{2},
        \end{equation}
		since $x(i)+1>x(i)> g(x\on i)\geq z(i)$.
		Thus, it suffices to show that:
		\begin{equation}\label{eq:Y_D}
			\{y\in 2^\omega:f(x,y)\in M_\phi\}\subseteq\bigcup_{j\in\omega}\bigcap_{j\leq i\in D}[S_{x,i}^*],
		\end{equation} 
        since the latter set is $F_\sigma$ null in $2^\omega$ by \eqref{eq:leq_half}.
		To prove \eqref{eq:Y_D}, let $y\in 2^\omega$ be such that $f(x,y)\in M_\phi$. Take $j\in\omega$ such that for $i\geq j$, 
		\[
        f(x,y)(i) \in \phi(f(x,y)\on i)\subseteq S(i)\cup h(f(x,y)\on i).
        \]
		By the choice of $m$, for such $i$, either $f(x,y)(i) \in  S(i)$ or $f(x,y)(i) \in  m(x\on i)$. If $j\leq i\in D$, then
        \[
        f(x,y)(i)\geq x(i)+1>g(x\on i)\geq m(x\on i),
        \]
        so the former case $f(x,y)(i) \in  S(i)$ holds, which means $f(x,y)(i)=e(y\on I_{x,i})\in S(i)$. Thus, $y\on I_{x,i}\in S^*_{x,i}$ and hence $y\in[S_{x,i}^*]$.
		Therefore, we obtain \eqref{eq:Y_D}.
	\end{proof}

    \begin{cor}\label{cor:nonMZ_leq_max}
    	$\non(M_\Z)\leq\max\{\bb,\none\}$ and $\cov(M_\Z)\geq\min\{\dd,\cove\}$.
    \end{cor}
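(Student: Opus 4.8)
The plan is to deduce \cref{cor:nonMZ_leq_max} from \cref{lem:MZ_max} in exactly the same manner as \cref{cor:paw00_ineq} is deduced from \cref{lem:paw00_max}, the only difference being that the witnessing families must now be taken with respect to adaptive domination $\sbd$ rather than $\leq^*$. This substitution costs nothing: by \cref{lem:seq_unbounded}, the least size of a $\sbd$-unbounded family is $\bb$ and the least size of a $\sbd$-dominating family is $\dd$.

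For the uniformity inequality, first I would fix the continuous map $f\colon\oo\times2^\omega\to\oo$ supplied by \cref{lem:MZ_max}, choose a $\sbd$-unbounded family $B\subseteq\oo$ with $\lvert B\rvert=\bb$ and a non-null set $E\subseteq2^\omega$ with $\lvert E\rvert=\none$, and set $F\coloneqq f[B\times E]$, so that $\lvert F\rvert\leq\max\{\bb,\none\}$. Then I would argue $F\notin M_\Z$: if $F\in M_\Z$, then \cref{lem:MZ_max} yields some $g\colon\fsq\to\omega$ such that every $x\in\oo$ either satisfies $x\sbd g$ or satisfies $\{y\in2^\omega:f(x,y)\in F\}\in\mathcal{E}$. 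Since $B$ is $\sbd$-unbounded, I can pick $x\in B$ with $\lnot(x\sbd g)$; for this $x$ we get $E\subseteq\{y\in2^\omega:f(x,y)\in F\}\in\mathcal{E}$, because $f(x,y)\in f[B\times E]=F$ for every $y\in E$ — contradicting $E\notin\mathcal{E}$.

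For the covering inequality I would run the dual argument. Suppose $\kappa<\min\{\dd,\cove\}$ and $\{F_\alpha:\alpha<\kappa\}\subseteq M_\Z$. Applying \cref{lem:MZ_max} to each $F_\alpha$ gives $g_\alpha\colon\fsq\to\omega$ as above. Since $\kappa<\dd$, the family $\{g_\alpha:\alpha<\kappa\}$ is not $\sbd$-dominating, so I can fix $x\in\oo$ with $\lnot(x\sbd g_\alpha)$ for every $\alpha<\kappa$; hence $\{y\in2^\omega:f(x,y)\in F_\alpha\}\in\mathcal{E}$ for all $\alpha$. As $\kappa<\cove$, these null sets do not cover $2^\omega$, so I can take $y\in2^\omega$ outside all of them, and then $f(x,y)\notin\bigcup_{\alpha<\kappa}F_\alpha$, witnessing that this family does not cover $\oo$. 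Therefore $\cov(M_\Z)\geq\min\{\dd,\cove\}$.

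I do not anticipate any genuine obstacle, as all the substantive work is concentrated in \cref{lem:MZ_max}, which is already available. The only points requiring care are invoking the correct clause of \cref{lem:seq_unbounded} (that $\bb$ and $\dd$ are the critical cardinals for $\sbd$-unboundedness and $\sbd$-domination), and verifying the inclusion $E\subseteq\{y:f(x,y)\in F\}$ from $x\in B$, $y\in E$; both are routine.
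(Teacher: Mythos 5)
Your proposal is correct and follows essentially the same route as the paper: apply \cref{lem:MZ_max} to $F=f[B\times E]$ and use \cref{lem:seq_unbounded} to extract an $x\in B$ not adaptively dominated by $g$, deriving the contradiction $E\in\mathcal{E}$; the paper only notes the covering inequality is dual, and your explicit dual argument (non-$\sbd$-dominating $\{g_\alpha\}$, then a point $y$ avoiding the $\kappa<\cove$ many null sets) is exactly the intended one.
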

    \begin{proof}
        We will only prove the first inequality. Let $B\subseteq\oo$ and $E\subseteq2^\omega$ witness $\bb$ and $\none$, respectively. We shall show $F\coloneqq f(B\times E)\notin M_\Z$ where $f$ is as above. If not, let $g\colon\fsq\to\omega$ as in Lemma \ref{lem:MZ_max}. By Lemma \ref{lem:seq_unbounded}, some $x\in B$ is not adaptively dominated by $g$. However, it follows $\mathcal{E}\ni \set{y\in2^\omega}{f(x,y)\in  F=f(B\times E)}\supseteq E\notin \mathcal{E}$, a contradiction. 
    \end{proof}
    
    In the following lemma, we consider $\Z$ as an ideal on $\sq$, so for any $A\subseteq\sq$,
    \[
    A\in\Z\iff\lim_{k\to\infty}\frac{|A\cap 2^k|}{2^k}=0.
    \]
    Recall the concatenating function $\mathsf{concat}$ in \Cref{dfn:concat_func}.
    
    \begin{lem}
    \label{lem:bbmz_adds_PEg}
        Let $\forces_{\bbmz}\dot{r}\coloneqq\bigcup\{\stem(T):T\in\dot{G}\}\in\cano$ be (the canonical $\bbmz$-name of) an $\bbmz$-generic real. Then $\forces_{\bbmz} \mathsf{concat}(\dot{r})\in 2^{\omega}$ is $\mathcal{E}$-quasi-generic.
    \end{lem}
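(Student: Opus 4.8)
The plan is to mimic the refinement arguments of \Cref{lem:bbmnwd_adds_Cohen} and \Cref{lem:Soloecki_B_x}. Recall that a real $z\in 2^\omega$ is $\mathcal{E}$-quasi-generic over $V$ exactly when it misses every closed null set coded in $V$ (every Borel member of $\mathcal{E}$ is covered by countably many such sets). So, by a density argument, it suffices to prove: for every tree $S\subseteq 2^{<\omega}$ with $S\in V$ and $\lambda([S])=0$, and every $T\in\bbmz$, there is $T'\in\bbmz$ with $T'\leq T$ such that $\mathsf{concat}(x)\notin[S]$ for all $x\in[T']$; such a $T'$ forces $\mathsf{concat}(\dot{r})\notin[S]$.

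The combinatorial core is the following observation about $\Z$ viewed as an ideal on $2^{<\omega}$. Since $k\mapsto 2^{-k}\lvert S\cap 2^k\rvert$ is nonincreasing with limit $\lambda([S])=0$, it tends to $0$. Hence, for every $w\in 2^{<\omega}$ with $\lvert w\rvert=\ell$, the set $S_w\coloneqq\{s\in 2^{<\omega}: w^\frown s\in S\}$ lies in $\Z$: the map $s\mapsto w^\frown s$ injects $S_w\cap 2^k$ into $S\cap 2^{\ell+k}$, so $2^{-k}\lvert S_w\cap 2^k\rvert\leq 2^\ell\cdot 2^{-(\ell+k)}\lvert S\cap 2^{\ell+k}\rvert\to 0$. (Equivalently: $S_w$ is a tree whose body has measure at most $2^\ell\lambda([S])=0$.) Consequently, whenever $A\in\Z^+$, the set $A\setminus S_w=\{s\in A: w^\frown s\notin S\}$ is still $\Z$-positive, because removing a member of $\Z$ from a $\Z$-positive set keeps it $\Z$-positive.

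Now, as in the refinement step of \Cref{lem:bbmnwd_adds_Cohen}, we shrink $T$ to a tree $T'\in\bbmz$ with $T'\leq T$ so that for every $\sigma\in T'$ the set $\mathrm{succ}_{T'}(\sigma)$ is either a singleton or a $\Z$-positive subset of $\{s\in 2^{<\omega}: s\neq\emptyset\text{ and }\mathsf{concat}(\sigma)^\frown s\notin S\}$; this is possible by the previous paragraph (since $\mathrm{succ}_T(\tau)\setminus(S_{\mathsf{concat}(\tau)}\cup\{\emptyset\})$ is $\Z$-positive whenever $\mathrm{succ}_T(\tau)\in\Z^+$), and such a shrinking is easily checked to remain a condition in $\bbmz$. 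To see that $T'$ works, fix $x\in[T']$ and let $\tau_0=\stem(T')$, so that $\mathrm{succ}_{T'}(\tau_0)\in\Z^+$ and $x(\lvert\tau_0\rvert)\in\mathrm{succ}_{T'}(\tau_0)$. Then $\mathsf{concat}(\tau_0)^\frown x(\lvert\tau_0\rvert)=\mathsf{concat}(x\restriction(\lvert\tau_0\rvert+1))\notin S$; since $S$ is a tree and this finite string is an initial segment of $\mathsf{concat}(x)\in 2^\omega$, it follows that $\mathsf{concat}(x)\notin[S]$, as desired.

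I do not anticipate a real obstacle: once the observation $S_w\in\Z$ is isolated, the construction runs exactly parallel to the $\nwd$ and $\Solecki$ cases, the only routine bookkeeping being that $\Z^+$ is closed under removing a $\Z$-set and that the shrinking preserves $\bbmz$. Combined with \Cref{lem:quasi} (applied with $f=\mathsf{concat}$, $\mathcal{J}=M_\Z$, $\mathcal{I}=\mathcal{E}$), \Cref{lem:b_nonM}, and \Cref{cor:nonMZ_leq_max}, this lemma will give $\non(M_\Z)=\max\{\bb,\none\}$ and $\cov(M_\Z)=\min\{\dd,\cove\}$.
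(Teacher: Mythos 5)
Your proof is correct and follows essentially the same route as the paper's: both reduce to showing that for a ground-model tree $S\subseteq 2^{<\omega}$ with $[S]$ null, any condition can be refined so that the concatenated real leaves $S$ right above the stem, the key point in both being that nullity of $[S]$ forces the level densities $2^{-k}\lvert S\cap 2^k\rvert$ (hence also of the shifts $S_w$) to vanish, so a $\Z$-positive successor set cannot be absorbed by them. The only difference is cosmetic: you remove the whole $\Z$-small set $S_{\mathsf{concat}(\tau)}$ from the splitting successors, obtaining a refinement $T'\leq T$ all of whose branches avoid $[S]$, whereas the paper counts at one well-chosen level to pick a single good successor and extend the stem.
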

    \begin{proof}
        We prove that for any tree $T\subseteq\sq$ such that $[T]\subseteq2^\omega$ is null, $\forces \mathsf{concat}(\dot{r})\notin[T]$. To see this, let $p\in\bbmz$ be arbitrary and we may assume that every node of $p$ is either non-splitting or $\Z^+$-splitting. Let $\sigma$ be the stem of $p$ and $A\coloneqq\suc_p(\sigma)\subseteq\sq$. Since $A\in\Z^+$, there is $\varepsilon\in(0,1)\cap\mathbb{Q}$ such that for infinitely many $k<\omega$: 
        \begin{equation}\label{eq:Ageq_varepsilon}
            \frac{|A\cap 2^k|}{2^k}\geq\varepsilon.
        \end{equation}
        Put $s\coloneqq \mathsf{concat}(\sigma)\in\sq$. Assume on the contrary that for infinitely many $k<\omega$:
        \begin{equation}\label{eq:Tgeq_varepsilon}
            \frac{|T\cap[s]\cap2^{|s|+k}|}{2^k}\geq\varepsilon.
        \end{equation}
        Since the left-hand side is non-increasing with respect to $k$, \eqref{eq:Tgeq_varepsilon} holds for all $k<\omega$. However, it implies $\frac{\mu([T])}{\mu([s])}\geq\varepsilon>0$ where $\mu$ denotes the Lebesgue measure on $2^\omega$, which contradicts that $[T]$ is null. Hence, there exists $k_0<\omega$ such that for all $k\geq k_0$, 
        \begin{equation}\label{eq:Tlessthan_varepsilon}
            \frac{|T\cap[s]\cap2^{|s|+k}|}{2^k}<\varepsilon.
        \end{equation}
        Take $k\geq k_0$ satisfying \eqref{eq:Ageq_varepsilon} and put $\Sigma\coloneqq\{\sigma^\frown a:a\in A\cap 2^k\}$. Note that $\mathsf{concat}$ is one-to-one on $\Sigma$ and $\mathsf{concat}[\Sigma]\subseteq [s]\cap 2^{|s|+k}$. By \eqref{eq:Ageq_varepsilon} and \eqref{eq:Tlessthan_varepsilon},
        \begin{equation*}
            |T\cap[s]\cap 2^{|s|+k}|<\varepsilon\cdot2^k\leq|A\cap 2^k|=|\mathsf{concat}[\Sigma]|,
        \end{equation*}
        so we have $\mathsf{concat}[\Sigma]\nsubseteq T\cap[s]\cap 2^{|s|+k}$.
        Take $a\in A\cap 2^k$ such that
        \[
        \mathsf{concat}(\sigma^\frown a)\notin T\cap[s]\cap 2^{|s|+k}
        \]
        and let $p^\prime\coloneqq p\cap[\sigma^\frown a]$. Then, we have $p^\prime\forces \mathsf{concat}(\dot{r})\notin [T]$, since $p^\prime\forces \mathsf{concat}(\dot{r})\on(|s|+k)=\mathsf{concat}(\sigma^\frown a)\notin T$.
    \end{proof}
    
    \begin{thm} \label{thm:nonMZ_exact_value}
        $\non(M_{\Z})\geq\none$ and $\cov(M_{\Z})\leq\cove$. Therefore,
        \[
        \non(M_\Z)=\max\{\bb,\none\}\text{ and }\cov(M_\Z)=\min\{\dd,\cove\}.
        \]
    \end{thm}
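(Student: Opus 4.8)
The plan is to obtain the two missing inequalities $\none\leq\non(M_\Z)$ and $\cov(M_\Z)\leq\cove$ from \Cref{lem:bbmz_adds_PEg} by feeding it into the transfer principle \Cref{lem:quasi}, and then to sandwich the two cardinal characteristics.

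First I would collect the bounds already available. By \Cref{lem:b_nonM}, $\bb\leq\non(M_\Z)$ and $\cov(M_\Z)\leq\dd$; by \Cref{cor:nonMZ_leq_max}, $\non(M_\Z)\leq\max\{\bb,\none\}$ and $\cov(M_\Z)\geq\min\{\dd,\cove\}$. Thus, once $\none\leq\non(M_\Z)$ is established we get $\max\{\bb,\none\}\leq\non(M_\Z)\leq\max\{\bb,\none\}$, and once $\cov(M_\Z)\leq\cove$ is established we get $\min\{\dd,\cove\}\leq\cov(M_\Z)\leq\min\{\dd,\cove\}$; the desired equalities follow immediately.

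Second, for the two remaining inequalities I would apply \Cref{lem:quasi} with $\mathcal{I}=\mathcal{E}$ (the $\sigma$-ideal on $2^\omega$ generated by closed null sets), $\mathcal{J}=M_\Z$ on $\cano$ (with $\Z$ regarded as an ideal on $\sq$, as in \Cref{lem:bbmz_adds_PEg}), and the continuous function $\mathsf{concat}\colon\cano\to2^\omega$. Both $\mathcal{E}$ and $M_\Z$ are $\sigma$-ideals on Polish spaces generated by Borel sets and $\mathsf{concat}$ is Borel, so the hypotheses of \Cref{lem:quasi} are satisfied. Since $\bbmz$ is forcing equivalent to $\mathbb{P}_{M_\Z}$ and under this equivalence the generic branch $\dot{r}=\bigcup\{\stem(T):T\in\dot{G}\}$ is the canonical real $\dot{r}^{M_\Z}_{\mathrm{gen}}$, \Cref{lem:bbmz_adds_PEg} reads exactly as $\mathbb{P}_{M_\Z}\Vdash\mathsf{concat}(\dot{r}^{M_\Z}_{\mathrm{gen}})$ is $\mathcal{E}$-quasi-generic. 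Hence $\mathsf{concat}$ witnesses $\mathcal{E}\leq_{\mathrm{K}}M_\Z$, and in particular $\none\leq\non(M_\Z)$ and $\cove\geq\cov(M_\Z)$, which is what we needed.

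The only delicate point is the bookkeeping identifying the $\bbmz$-generic real with the canonical real of $\mathbb{P}_{M_\Z}$, needed so that \Cref{lem:bbmz_adds_PEg} can be read as a statement about the idealized forcing $\mathbb{P}_{M_\Z}$; this is routine (the canonical real is the unique point lying in every ground-model Borel set of the generic filter, and for a tree forcing this is the generic branch), but it is worth a line. No other obstacle is anticipated, since the rest is merely a short assembly of \Cref{lem:b_nonM}, \Cref{cor:nonMZ_leq_max}, \Cref{lem:quasi}, and \Cref{lem:bbmz_adds_PEg}.
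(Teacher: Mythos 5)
Your proposal is correct and follows essentially the same route as the paper: the paper's proof is exactly the assembly of \Cref{cor:nonMZ_leq_max}, \Cref{lem:bbmz_adds_PEg} and \Cref{lem:quasi}, with the lower/upper bounds $\bb\leq\non(M_\Z)$ and $\cov(M_\Z)\leq\dd$ from \Cref{lem:b_nonM} closing the sandwich. Your extra remark identifying the $\bbmz$-generic branch with the canonical real $\dot{r}^{M_\Z}_{\mathrm{gen}}$ of $\mathbb{P}_{M_\Z}$ (via the Sabok--Zapletal dichotomy) is precisely the implicit step the paper leaves to the reader, and it is handled correctly.
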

    \begin{proof}
        It follows from \Cref{cor:nonMZ_leq_max}, \Cref{lem:bbmz_adds_PEg} and \Cref{lem:quasi}.
    \end{proof}
    
    As a corollary of \Cref{thm:nonMZ_exact_value}, we obtain new bounds of $\non^*(\Z)$ and $\cov^*(\Z)$. We quickly recall previous results about these cardinal invariants.
    
    \begin{thm}[Hern\'andez-Hern\'andez--Hru\v s\'ak \cite{HH07}]\leavevmode
        \begin{align*}
            \min\{\dd, \cov(\n)\}\leq & \non^*(\Z)\leq\max\{\dd, \non(\n)\},\\
            \min\{\bb, \cov(\n)\}\leq & \cov^*(\Z)\leq\max\{\bb, \non(\n)\}.
        \end{align*}
    \end{thm}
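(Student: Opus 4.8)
The plan is to work throughout with the interval partition $\omega=\bigsqcup_{n<\omega}I_n$ where $I_n=[2^n,2^{n+1})$, so $\lvert I_n\rvert=2^n$, together with the elementary fact that $A\in\Z$ if and only if $\lim_{n\to\infty}\lvert A\cap I_n\rvert/2^n=0$ (both directions follow quickly from the definition of $\Z$, using $\lvert A\cap I_n\rvert\le\lvert A\cap 2^{n+1}\rvert$ for one and summing the block contributions for the other). All four inequalities have the same shape: a ``scale family'' of strictly increasing functions — of size $\dd$ for the two inequalities on the $\non^*$ line, of size $\bb$ for the two on the $\cov^*$ line — absorbs the fact that the relevant sets can be arbitrarily sparse or dense across the $I_n$'s, and a single measure-theoretic step does the rest: a non-null set of size $\non(\n)$ for the two upper bounds, and ``fewer than $\cov(\n)$ null sets do not cover'' for the two lower bounds. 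For a scale $f$ one regroups the $I_n$'s into blocks $\tilde I_j^f=\bigcup_{k\in[f(j),f(j+1))}I_k$ and forms the product probability space $X_f=\prod_{j<\omega}\tilde I_j^f$, each factor carrying the normalised counting measure $\mu_{X_f}$; since $X_f$ is atomless and standard it is Borel-isomorphic to $(2^\omega,\mathrm{Leb})$ modulo null sets, so all data about $\n$ transfers to it. For $x\in X_f$ the set $\ran(x)$ is automatically infinite (the blocks are pairwise disjoint) and lies in $\Z$ (it meets each $I_k$ in at most one point), and a Borel--Cantelli estimate on $X_f$, made summable or divergent by the choice of $f$, finishes the argument.

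First I would treat $\non^*(\Z)\le\max\{\dd,\non(\n)\}$, the clearest case. For $B\in\Z$ let $g_B(j)$ be least such that $\lvert B\cap I_k\rvert\le 2^k/2^j$ for every $k\ge g_B(j)$. Fix a dominating family $\mathcal D$ of strictly increasing functions and a non-null set $Y\subseteq 2^\omega$ of size $\non(\n)$; for $f\in\mathcal D$ let $Y_f\subseteq X_f$ be the image of $Y$ under a measure isomorphism, so $Y_f$ is non-null in $X_f$ and $\lvert Y_f\rvert=\non(\n)$. The family $\mathcal A=\{\ran(x):f\in\mathcal D,\ x\in Y_f\}\subseteq[\omega]^\omega$ has size at most $\dd\cdot\non(\n)=\max\{\dd,\non(\n)\}$. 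Given $B\in\Z$, choose $f\in\mathcal D$ dominating $g_B$; then for all large $j$ and all $k\in[f(j),f(j+1))$ one has $k\ge f(j)\ge g_B(j)$, hence $\lvert B\cap\tilde I_j^f\rvert\le\sum_{k\in[f(j),f(j+1))}2^k/2^j=\lvert\tilde I_j^f\rvert/2^j$, so $\sum_j\mu_{X_f}\{x:x(j)\in B\}<\infty$ and $\{x\in X_f:x(j)\in B\text{ for infinitely many }j\}$ is null. As $Y_f$ is non-null there is $x\in Y_f$ with $\ran(x)\cap B$ finite, so $\mathcal A$ witnesses $\non^*(\Z)\le\max\{\dd,\non(\n)\}$.

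Next I would do the dual lower bound $\min\{\bb,\cov(\n)\}\le\cov^*(\Z)$, the cleanest of the two lower bounds. Let $\mathcal A\subseteq\Z$ with $\lvert\mathcal A\rvert<\bb$ and $\lvert\mathcal A\rvert<\cov(\n)$; I must produce an infinite $C$ almost disjoint from every member of $\mathcal A$. For $A\in\mathcal A$ let $g_A(j)$ be least such that $\lvert A\cap N\rvert<N/2^j$ for all $N\ge g_A(j)$. Since $\lvert\mathcal A\rvert<\bb$ the family $\{g_A:A\in\mathcal A\}$ is bounded, so fix one strictly increasing $f$ dominating all $g_A$ and satisfying $f(k)\ge 2f(k-1)$ (with $f(-1):=0$). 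With $E_k=[f(k-1),f(k))$ one gets, for all large $k$, $\lvert A\cap E_k\rvert\le\lvert A\cap f(k)\rvert<f(k)/2^k\le 2\lvert E_k\rvert/2^k$, so on $X=\prod_k E_k$ the set $\{x:x(k)\in A\text{ for infinitely many }k\}$ is null for each $A\in\mathcal A$. These are fewer than $\cov(\n)$ null sets in $X\cong(2^\omega,\mathrm{Leb})$, hence some $x\in X$ avoids all of them, and $C=\ran(x)$ is as required. The two remaining inequalities are the formal duals: for $\cov^*(\Z)\le\max\{\bb,\non(\n)\}$ one replaces the uniform block measures by product Bernoulli measures on $\mathcal P(\omega)$, the $n$-th coordinate being $1$ with probability $2^{-q_n}$ for $q$ ranging over a suitable family of sequences tending to infinity, together with a non-null set of size $\non(\n)$ and the second Borel--Cantelli lemma ($\sum_{n\in S}2^{-q_n}=\infty$) to force infinite intersection; for $\min\{\dd,\cov(\n)\}\le\non^*(\Z)$ one builds a single $B\in\Z$ by choosing, within each block and generically, a subfamily of points of size $o(\text{block length})$ so that $\{x:\ran(x)\cap A\text{ infinite}\}$ becomes conull for every $A\in\mathcal A$, and then avoids the fewer than $\cov(\n)$ complementary null sets.

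The main obstacle is the appearance of $\dd$ (not $\bb$) in $\min\{\dd,\cov(\n)\}\le\non^*(\Z)$, and dually of $\bb$ (not $\dd$) in $\cov^*(\Z)\le\max\{\bb,\non(\n)\}$. A literal dualization of the two clean arguments above would require, given only $\lvert\mathcal A\rvert<\dd$, a single block or scale structure making the relevant ``infinite intersection'' event simultaneously null or conull for all of $\mathcal A$, and obtaining that by dominating all the associated scale functions with one function needs $\lvert\mathcal A\rvert<\bb$. The remedy, following \cite{HH07}, is not to pass through a single dominating function but to construct the block structure by a diagonalization against a dominating family (respectively an unbounded family) — precisely the combinatorics encapsulated in Bartoszyński's slalom characterizations of $\cov(\n)$ and $\non(\n)$ — so I would route these two inequalities through those characterizations, reducing the remaining work to matching the $I_n$-block data of a density-zero set to a slalom of the right width. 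Beyond this the only non-combinatorial ingredients are the Borel isomorphism of $X_f$ (and of $\mathcal P(\omega)$ with a product Bernoulli measure) with $(2^\omega,\mathrm{Leb})$, which is what lets ``non-null of size $\non(\n)$'' and ``fewer than $\cov(\n)$ null sets fail to cover'' transfer, and the elementary interval characterization of $\Z$.
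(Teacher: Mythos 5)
The paper does not prove this statement at all --- it is quoted from Hern\'andez-Hern\'andez--Hru\v s\'ak \cite{HH07} as background for \Cref{thm:new_bounds} --- so your proposal has to stand on its own. Your two detailed arguments are correct: the block--product-measure/Borel--Cantelli proof of $\non^*(\Z)\leq\max\{\dd,\non(\n)\}$ and the bounded-scale proof of $\min\{\bb,\cov(\n)\}\leq\cov^*(\Z)$ both work as written (the functions $g_B$, $g_A$ exist because $B\in\Z$ iff $\lvert B\cap I_k\rvert/2^k\to0$, and transferring $\non(\n)$ and $\cov(\n)$ to the atomless product spaces is legitimate). The genuine gap is the other half of the theorem: $\min\{\dd,\cov(\n)\}\leq\non^*(\Z)$ and $\cov^*(\Z)\leq\max\{\bb,\non(\n)\}$ are never actually proved. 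Your sketches stop exactly at the point you yourself flag as ``the main obstacle'', and the proposed detour through Bartoszy\'nski's slalom characterizations is only named, not carried out. Moreover the sketch for the $\non^*$ lower bound, as stated, is the step that fails: choosing ``within each block a subfamily of size $o(\text{block length})$'' and running Borel--Cantelli over blocks needs each $A\in\mathcal A$ to occupy a fixed proportion of infinitely many blocks of a single block structure, and a member of $\mathcal A$ may be sparser than any block structure you can extract from $\lvert\mathcal A\rvert<\dd$; the per-block probabilities are capped at $1$, so density of $A$ inside blocks really is needed there.

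The fix, however, stays entirely inside your own framework, and your diagnosis of the obstacle is what misleads you: you never need to \emph{dominate} the associated scale functions (that is what would cost $\bb$); you only need functions that are infinitely often \emph{above} sped-up enumerations, which is exactly what ``not dominating'' ($<\dd$ many functions) resp.\ an unbounded family of size $\bb$ provide --- provided you feed the second Borel--Cantelli lemma with cumulative counts rather than per-block counts. Concretely, for $\min\{\dd,\cov(\n)\}\leq\non^*(\Z)$: given $\mathcal A\subseteq[\omega]^\omega$ with $\lvert\mathcal A\rvert<\min\{\dd,\cov(\n)\}$, the family $\{J\mapsto e_A(J\cdot2^J):A\in\mathcal A\}$ ($e_A$ the increasing enumeration) is not dominating, so a single increasing $g$ satisfies $g(J)>e_A(J\cdot 2^J)$, i.e.\ $\lvert A\cap g(J)\rvert\geq J\cdot2^J$, for infinitely many $J$, simultaneously for every $A$. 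Equip $\mathcal P(\omega)$ with the product Bernoulli measure giving $n\in[g(j-1),g(j))$ probability $2^{-j}$; almost every point is in $\Z$, and for each $A$,
\[
\sum_{n\in A}2^{-j(n)}\;\geq\;2^{-J}\,\lvert A\cap g(J)\rvert\;\geq\;J\quad\text{for infinitely many }J,
\]
so by the second Borel--Cantelli lemma (the point events are independent, and no capping occurs) the set $\{x:\lvert x\cap A\rvert=\omega\}$ is conull. Since fewer than $\cov(\n)$ null sets do not cover, some $B\in\Z$ meets every $A\in\mathcal A$ infinitely, so $\mathcal A$ is not a witness. Dually, for $\cov^*(\Z)\leq\max\{\bb,\non(\n)\}$ take an unbounded family $H$ of size $\bb$: for every infinite $B$ some $h\in H$ has $h(J)>e_B(J\cdot2^J)$ infinitely often, the same computation makes $\{x:\lvert x\cap B\rvert=\omega\}$ conull for the Bernoulli measure built on $h$, and a non-null set of size $\non(\n)$ transferred to each of these $\bb$ many measures (discarding the null set of non-density-zero points) yields a $\cov^*(\Z)$-witness of size $\max\{\bb,\non(\n)\}$. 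With these two additions your scheme proves the full theorem with no appeal to slaloms; without them, half of the statement is missing.
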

    
    \begin{thm}[Raghavan--Shelah \cite{RS17}]
        \label{fac:nonZ_covZ_bounds}
        $\bb\leq\non^*(\Z)$ and $\cov^*(\Z)\leq\dd$.
    \end{thm}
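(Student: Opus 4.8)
This final statement is Raghavan and Shelah's theorem; in this paper it is recalled as a black box and then combined with the computation $\non(M_\Z)=\max\{\bb,\non(\mathcal{E})\}$, $\cov(M_\Z)=\min\{\dd,\cov(\mathcal{E})\}$ to extract the new bounds on $\non^*(\Z)$ and $\cov^*(\Z)$. If one wanted a self-contained argument, here is the shape I would follow, treating the two (mutually dual) inequalities in parallel. Fix throughout the blocks $I_n=[2^n,2^{n+1})$, and recall that any set in $\Z$ which meets each $I_n$ in $o(2^n)$ points is automatically density zero (estimate the ratio at the positions $2^{n+1}$, using that $\sum_{k\le n}o(2^k)=o(2^n)$), and that any infinite $B\subseteq\omega$ meets infinitely many $I_n$.

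For $\cov^*(\Z)\le\dd$: let $\{f_\a:\a<\dd\}$ be a dominating family of strictly increasing functions with $f_\a\ge\mathrm{id}$, and for each $\a$ build super-blocks $\mathcal{C}^\a_j=[m^\a_j,m^\a_{j+1})$ with $m^\a_0=0$, $m^\a_{j+1}=f_\a(m^\a_j)+1$. A counting argument — the first $j$ super-blocks contain only $m^\a_j$ of the $I_n$'s — shows that if the increasing enumeration of $\{n:B\cap I_n\ne\emptyset\}$ is eventually dominated by $f_\a$ (which happens for \emph{some} $\a$, since $\{f_\a\}$ is dominating), then for all large $j$ there is $n\in\mathcal{C}^\a_j$ with $B\cap I_n\ne\emptyset$; thus $B$ determines an \emph{eventually total} choice function at the super-block level. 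Partitioning each $I_n$ into $r_n\to\infty$ pieces of size $o(2^n)$ and invoking a family of size $\covm\le\dd$ that is ``infinitely-equal-covering'' for $\prod_n r_n$ (i.e.\ every $\pi\in\prod_n r_n$ agrees infinitely often with a member of it — the standard dual characterization of $\covm$, valid because $r_n\to\infty$), one assembles for each relevant pair a set $A$ meeting every $I_n$ in $o(2^n)$ points, hence $A\in\Z$, such that every infinite $B$ has infinite intersection with one of them. It is the eventual totality of the choice function that defuses the ``partiality'' problem in the infinitely-equal-covering step.

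The inequality $\bb\le\non^*(\Z)$ is the dual and is where the genuine difficulty sits. Given $\mathcal{A}\subseteq[\omega]^\omega$ with $|\mathcal{A}|<\bb$, one first replaces each $A$ by an infinite subset meeting each $I_n$ in at most one point, and uses $|\mathcal{A}|<\bb$ to dominate, by a single $f$, the enumeration functions of the sets $\{n:A\cap I_n\ne\emptyset\}$; choosing $m_{j+1}=f(m_j+j)$ one arranges that every $A$ meets at least $j$ of the blocks inside the $j$-th super-block. The remaining task is to build a \emph{single} $B\in\Z$ — so still with only $o(2^n)$ points per $I_n$ — catching every $A\in\mathcal{A}$ infinitely often. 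Here the naive ideas all fail: multiplicities do not help against a deterministic sparse set (a small subset of $I_n$ cannot meet every large subset of $I_n$), a probabilistic/union-bound argument breaks because $\mathcal{A}$ may be uncountable, and a slalom argument would need $|\mathcal{A}|<\covm$, not $|\mathcal{A}|<\bb$. So the crux — the step I expect to be the main obstacle, and the real content of Raghavan--Shelah's argument — is a further, more delicate exploitation of $|\mathcal{A}|<\bb$ (a second round of bounding, organized around the super-block transversals) that lets one commit, block by block, to one sparse $B$ simultaneously good for the whole family. I would either reconstruct that argument in this form or, as is done here, simply cite \cite{RS17}.
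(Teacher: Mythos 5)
The paper does not prove this statement at all: it is quoted as an external theorem of Raghavan--Shelah and used as a black box, so your default option --- simply cite \cite{RS17} --- is exactly what the paper does, and on that point there is nothing to compare. However, as a candidate self-contained proof your sketch has genuine gaps in both halves. For $\bb\leq\non^*(\Z)$ you say so yourself: the step that builds a single density-zero set catching every member of a family of size $<\bb$ is precisely the content of the theorem, and you leave it as ``the real content of Raghavan--Shelah's argument.'' An argument with that step missing is not a proof, so the only complete route you offer is the citation.

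The $\cov^*(\Z)\leq\dd$ half is also not established as written. The key step invokes ``a family of size $\covm\leq\dd$ that is infinitely-equal-covering for $\prod_n r_n$,'' calling this the standard dual characterization of $\covm$. That is backwards: the characterization of $\covm$ (see \Cref{lem:chara_covm} in this paper) says that any family of size $<\covm$ admits a \emph{single} function matching all of its members infinitely often; the statement that every $\pi\in\prod_n r_n$ is matched infinitely often by some member of a fixed family is the Bartoszy\'nski-style characterization of $\nonm$ (in the unbounded case), and $\nonm\leq\dd$ is not provable --- it fails in the random model, where $\nonm=\cc>\aleph_1=\dd$. Whether the bounded-product analogue of this infinitely-often-equal covering number is $\leq\dd$ is not obvious and would itself need proof; indeed, if it followed routinely from a dominating family, the inequality $\cov^*(\Z)\leq\dd$ would not have been the notable result of \cite{RS17} that it is. So the super-block/eventual-totality reduction is fine as far as it goes, but the family you then need is not available at size $\leq\dd$ by the characterization you cite, and the argument does not close. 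Keeping the statement as a citation to \cite{RS17} is the correct choice.
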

    
    The lower bounds for $\non^*(\Z)$ and the upper bounds for $\cov^*(\Z)$ above were improved by Raghavan:
    
    \begin{thm}[Raghavan \cite{Rag20}]\label{fac:nonZ_covZ_bounds_2}
        \[
        \min\{\dd,\uu\}\leq\non^*(\Z)\text{ and }\cov^*(\Z)\leq\max\{\bb,\mathfrak{s}(\mathfrak{pr})\},
        \]
        where $\uu$ denotes the ultrafilter number and $\mathfrak{s}(\mathfrak{pr})$ is a variant of the splitting number $\mathfrak{s}$ introduced in \cite{Rag20}.\footnote{By definition, $\mathfrak{s}\leq\mathfrak{s}(\mathfrak{pr})$ but it is unknown to be distinguishable from $\mathfrak{s}$.}
    \end{thm}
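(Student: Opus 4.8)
The plan is to reduce both inequalities (which are due to Raghavan, \cite{Rag20}) to combinatorial statements phrased over the dyadic blocks $I_n=[2^n,2^{n+1})$ carrying the lower semi-continuous submeasure of $\Z$. Recall that $\Z=\Exh(\varphi)$ for $\varphi(A)=\sup_n|A\cap I_n|/2^n$, so $A$ is density zero exactly when the per-block densities $|A\cap I_n|/2^n$ tend to $0$; in particular every $b\in\prod_n I_n$ has $\ran(b)\in\Z$, and any sufficiently slowly growing ``partial selector'' (finitely many points per block, the per-block count being $o(2^n)$) is density zero. The two bounds are then the ``uniformity'' and ``covering'' sides of one phenomenon, morally parallel to the computation $\non^*_\omega(\Solecki)=\covomn$ in \Cref{prop:Solecki_non_omega}, the difference being that here the relevant clopen sets (the traces $A\cap I_n$, read as subsets of $2^n$) have small but varying measure rather than measure $1/2$.

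For $\min\{\dd,\uu\}\le\non^*(\Z)$ the plan is: fix a witness $\mathcal A\subseteq[\omega]^\omega$ for $\non^*(\Z)$ and assume $|\mathcal A|<\dd$; it then suffices to build an ultrafilter of character $\le|\mathcal A|$, which gives $\uu\le|\mathcal A|=\non^*(\Z)$. First I would record that, since $\ran(b)\in\Z$ for every $b\in\prod_n I_n$, the family $\mathcal A$ ``localizes'' every branch of $\prod_n I_n$ by the slaloms $S_A(n):=I_n\setminus A$; the crucial point is that these slaloms have no common width bound (they can be as wide as $2^n$), so a bare domination count only yields $\covm\le\non^*(\Z)$, and something reaping-flavoured is needed to do better. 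Next, using $|\mathcal A|<\dd$, I would coarsen the dyadic partition to a single interval partition $\langle J_k\rangle$ along which each $A\in\mathcal A$ is forced to behave like a block-selector — i.e.\ cannot be localized relative to $\langle J_k\rangle$ by any width-bounded slalom — and then, by a transfinite recursion of length $|\mathcal A|$ choosing at each step a suitable point of each trace $A\cap J_k$, assemble a filter base of size $|\mathcal A|$ and argue that it generates an ultrafilter, the maximality being exactly where the hypothesis ``$\mathcal A$ defeats every density-zero selector'' is spent.

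For $\cov^*(\Z)\le\max\{\bb,\spl(\pr)\}$ the plan is dual: construct a covering family of density-zero sets of that size. A $\bb$-sized subfamily of thin density-zero sets, built from an unbounded family of enumeration functions, should catch every infinite $B$ whose increasing enumeration $\langle b_k\rangle$ grows slowly along a cofinal set of indices; the remaining infinite sets (those not in $\Z$, i.e.\ with $\liminf_k b_k/k<\infty$) are caught by an $\spl(\pr)$-sized subfamily obtained by translating a prediction-splitting family of predictors into density-zero sets of the shape ``one residue class per block'', using that a predictor which infinitely often correctly guesses the value forced by $B$ in block $I_n$ yields a thin set meeting $B$ infinitely. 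The dual of the unbounded-width slalom phenomenon is what forces the \emph{prediction}-splitting number $\spl(\pr)$, rather than the ordinary splitting number $\spl$, to appear here.

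The main obstacle, in both halves, is the middle step: extracting the \emph{ultrafilter} number $\uu$ (dually, $\spl(\pr)$) rather than the cheaper $\covm$- or $\rr$-type bounds that the naive slalom argument gives. This requires the coarsening $\langle J_k\rangle$ and the ultrafilter (dually, the splitting family) to be chosen adaptively and in tandem, so that the block-traces of $\mathcal A$ genuinely serve as a base; carrying this out is the technical heart of \cite{Rag20}.
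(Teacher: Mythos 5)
The paper offers no proof of this statement to compare against: it is quoted as a black box from Raghavan \cite{Rag20}. Judged on its own, your proposal is not a proof but an outline that defers exactly the steps in which the theorem's content lies. For $\min\{\dd,\uu\}\leq\non^*(\Z)$, everything you describe before the final step is the routine part (and, as you yourself note, the bare slalom/localization count only gives a $\covm$- or $\rr$-type bound); the decisive step --- producing from a family $\mathcal A$ of size $<\dd$ that meets every density-zero set finitely a filter base of size $|\mathcal A|$ and verifying that it generates an \emph{ultrafilter} --- is asserted with no mechanism beyond ``choosing at each step a suitable point of each trace'' and ``the maximality being exactly where the hypothesis is spent''. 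You concede that carrying this out ``is the technical heart of \cite{Rag20}'', which is an acknowledgement that the gap is not filled: the proposal reduces the theorem to itself.

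The covering half has the same defect plus an internal inconsistency. You split the infinite sets $B$ (with increasing enumeration $\langle b_k\rangle$) into those that ``grow slowly along a cofinal set of indices'', to be caught by a $\bb$-sized subfamily, and ``the remaining infinite sets (those not in $\Z$, i.e.\ with $\liminf_k b_k/k<\infty$)'', to be caught by an $\mathfrak{s}(\mathfrak{pr})$-sized subfamily. But $\liminf_k b_k/k<\infty$ \emph{is} the slow-growth-along-a-cofinal-set condition (it is equivalent to $B$ having positive upper density), so your two cases describe the same class, and the sets with $b_k/k\to\infty$ --- which must also be caught, since $\cov^*(\Z)$ quantifies over all infinite $B$ --- are not addressed at all. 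Moreover, no argument is given that the ``one residue class per block'' sets extracted from a prediction-splitting family meet every relevant $B$ infinitely often, nor why the prediction variant $\mathfrak{s}(\mathfrak{pr})$ (as opposed to $\mathfrak{s}$ or $\rr$) is what the argument needs; that is again precisely the point of Raghavan's proof. As it stands, neither inequality is established by the proposal.
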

    
    \begin{thm}\label{thm:new_bounds}
        $\none\leq\non^*(\Z)$ and $\cov^*(\Z)\leq\cove$. Also, $\cov^*(\Z)\leq\non^*(\Z)$.
    \end{thm}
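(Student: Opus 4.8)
The plan is to deduce all three inequalities from the exact values $\non(M_\Z)=\max\{\bb,\none\}$ and $\cov(M_\Z)=\min\{\dd,\cove\}$ established in \Cref{thm:nonMZ_exact_value}, together with: the general bounds $\non(M_\I)\le\max\{\bb,\non^*_\omega(\I)\}$ and $\cov(M_\I)\ge\min\{\dd,\cov^*(\I)\}$ from \Cref{thm:nonmi_leq_max}; the fact that $\Z$ is a tall analytic $P$-ideal, so that $\non^*_\omega(\Z)=\non^*(\Z)$; and the Raghavan--Shelah bounds $\bb\le\non^*(\Z)$ and $\cov^*(\Z)\le\dd$ from \Cref{fac:nonZ_covZ_bounds}. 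With these in hand the first two inequalities are short, while the third requires an extra combinatorial idea.

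For $\none\le\non^*(\Z)$: \Cref{thm:nonmi_leq_max} applied to $\I=\Z$ gives $\non(M_\Z)\le\max\{\bb,\non^*_\omega(\Z)\}=\max\{\bb,\non^*(\Z)\}=\non^*(\Z)$, where the last equality uses $\bb\le\non^*(\Z)$; combined with $\none\le\max\{\bb,\none\}=\non(M_\Z)$ this yields $\none\le\non^*(\Z)$. Dually, for $\cov^*(\Z)\le\cove$: since $\cov^*(\Z)\le\dd$, \Cref{thm:nonmi_leq_max} gives $\cov^*(\Z)=\min\{\dd,\cov^*(\Z)\}\le\cov(M_\Z)=\min\{\dd,\cove\}\le\cove$. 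In fact these arguments give slightly more, namely $\non^*(\Z)\ge\max\{\bb,\none\}$ and $\cov^*(\Z)\le\min\{\dd,\cove\}$.

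The remaining inequality $\cov^*(\Z)\le\non^*(\Z)$ does not follow formally from what we have: by \Cref{mainthm:ext_CM} (taking $\I_1=\Z$, which lies in the relevant class since $\Z$ is a tall non-pathological analytic $P$-ideal) it is consistent that $\cov(M_\Z)=\min\{\dd,\cove\}>\max\{\bb,\none\}=\non(M_\Z)$, and already $\cove>\none$ in the Cohen model; so the two-sided bounds $\cov^*(\Z)\le\min\{\dd,\cove\}$ and $\non^*(\Z)\ge\max\{\bb,\none\}$ cannot be combined to give it. Thus a direct argument is needed, and this is the main obstacle. The approach I would take is to transpose to the $\Z$-side the Pawlikowski-type construction underlying \Cref{lem:paw00_max} and \Cref{lem:MZ_max}: one aims for a sharper bound — for instance $\cov^*(\Z)\le\non(M_\Z)$, which together with $\non(M_\Z)\le\non^*(\Z)$ from the previous paragraph would close the gap — by manufacturing, from a $\non^*(\Z)$-witness, a $\cov^*(\Z)$-witness of the same size that exploits the dyadic-block density structure of $\Z$; alternatively one could work with the Mathias-type forcing $\mathbb{R}_{\Z^*}$ and prove $\cov(R_{\Z^*})\le\non(R_{\Z^*})$ directly, invoking \Cref{thm:ramsey_null}. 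The hard part is that such a construction must track the ``fractional'' behaviour of density-zero sets on the blocks $[2^n,2^{n+1})$ while simultaneously controlling a dominating (or unbounded) parameter — precisely the coupling encapsulated in the counting estimates of \Cref{lem:MZ_max} — and it is exactly this coupling that the soft arguments of the first two parts do not detect.
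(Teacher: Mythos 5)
Your treatment of the first two inequalities is correct and is exactly the paper's argument: $\none\leq\non(M_\Z)\leq\max\{\bb,\non^*_\omega(\Z)\}=\max\{\bb,\non^*(\Z)\}=\non^*(\Z)$ using \Cref{thm:nonMZ_exact_value}, \Cref{thm:nonmi_leq_max}, P-ideality of $\Z$, and the Raghavan--Shelah bound $\bb\leq\non^*(\Z)$ from \Cref{fac:nonZ_covZ_bounds}, and dually $\cov^*(\Z)=\min\{\dd,\cov^*(\Z)\}\leq\cov(M_\Z)=\min\{\dd,\cove\}$. Your stronger forms $\non^*(\Z)\geq\max\{\bb,\none\}$ and $\cov^*(\Z)\leq\min\{\dd,\cove\}$ are also what the paper's computation actually yields.

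For the third inequality, however, your proposal has a genuine gap: you do not prove $\cov^*(\Z)\leq\non^*(\Z)$, you only outline a hoped-for Pawlikowski-style construction (or a strengthening $\cov^*(\Z)\leq\non(M_\Z)$, i.e.\ $\cov^*(\Z)\leq\max\{\bb,\none\}$, which is not established anywhere and may well be stronger than what is known). The missing idea is much softer and is already in the paper's toolkit: Raghavan's theorem (\Cref{fac:nonZ_covZ_bounds_2}) gives $\min\{\dd,\uu\}\leq\non^*(\Z)$, and the ZFC inequalities $\cove\leq\rr\leq\uu$ are standard. Combining these with the bound you did prove,
\[
\cov^*(\Z)\leq\min\{\dd,\cove\}\leq\min\{\dd,\uu\}\leq\non^*(\Z),
\]
which is precisely how the paper closes the argument. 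So the consistency discussion you give (that $\min\{\dd,\cove\}>\max\{\bb,\none\}$ is consistent) correctly shows that the two bounds from the first paragraph alone cannot be chained, but the right conclusion is not that a new combinatorial construction is needed; it is that one should bring in the $\uu$-bound from \Cref{fac:nonZ_covZ_bounds_2}, which you cited in spirit but never used.
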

    
    \begin{proof}
        Since $\Z$ is a P-ideal, $\non^*_\omega(\Z)=\non^*(\Z)$. By \Cref{thm:nonMZ_exact_value}, \Cref{thm:nonmi_leq_max} and \Cref{fac:nonZ_covZ_bounds}, we have
        \[
        \none\leq\non(M_{\Z})\leq\max\{\bb,\non^*_\omega(\Z)\}=\max\{\bb,\non^*(\Z)\}=\non^*(\Z).
        \]
        Similarly,
        \[
        \cov(\mathcal{E})\geq\cov(M_\Z)\geq\min\{\dd, \cov^*(\Z)\}=\cov^*(\Z).
        \]
        Since $\cove\leq\rr\leq\uu$, using \Cref{fac:nonZ_covZ_bounds} as well, we have
        \[
        \cov^*(\Z) \leq \min\{\dd, \cove\} \leq \min\{\dd, \uu\} \leq \non^*(\Z).
        \]
    \end{proof}

    We finish this section by remarking that the cardinal invariants $\min\{\dd,\uu\}$ and $\none$ are independent, so are $\max\{\bb,\mathfrak{s}(\mathfrak{pr})\}$ and $\cove$: 
    \begin{itemize}
        \item $\min\{\dd,\uu\}>\none$ holds in the Cohen model. 
        \item $\min\{\dd,\uu\}\leq \uu<\mathfrak{s}\leq\none$ holds in the Blass-Shelah model(\cite{BS87}). (Or: $\min\{\dd,\uu\}\leq\dd<\none$ holds in the model constructed in \cite{She92} (see also \cite[Section 3.4, Theorem 1]{Bre95}, it is easy to see $\ee_{ubd}\leq\ee_{fin}=\ee_2\leq\none$).)
        \item $\max\{\bb,\mathfrak{s}(\mathfrak{pr})\}\geq\bb>\cove$ holds in the Laver model.
        \item $\max\{\bb,\mathfrak{s}(\mathfrak{pr})\}\leq\dd<\covn\leq\cove$ holds in the random model.
    \end{itemize}

    \section{Consistency results}\label{sec:Consistency}
    
    In this section, we study consistency results for cardinal invariants associated with the $\sigma$-ideals $M_\I$ and $K_\I$. We conclude by constructing a model of  Cicho\'n's maximum in \Cref{mainthm:ext_CM}.
    
    \subsection{Forcing theory}
    
    We list the necessary facts on relational systems, and some preservation theorems that will be used to prove our consistency results.
    
    \begin{dfn}\leavevmode
        A triple $\R=\langle X, Y, \sqsubset\rangle$ is called a \emph{relational system} if $X$ and $Y$ are non-empty sets and $\sqsubset$ is a relation from $X$ to $Y$. Elements of $X$ are called \emph{challenges}, and elements of $Y$ are \emph{responses}. We say that \emph{$x$ is met by $y$} if $x\sqsubset y$.
    	\begin{itemize}
    		\item A subset $F\subseteq X$ is \emph{$\R$-unbounded} if no response meets all challenges in $F$, i.e., $\neg\exists y\in Y~\forall x\in F~(x\sqsubset y)$.
    		\item A subset $F\subseteq Y$ is \emph{$\R$-dominating} if every challenge is met by some response in $F$, i.e., $\forall x\in X~\exists y\in F~(x\sqsubset y)$.
    		\item We say that $\R$ is \emph{non-trivial} if $X$ is $\R$-unbounded and $Y$ is $\R$-dominating.
            \item Two cardinal invariants associated to $\R$ are introduced as follows:
    		\begin{align*}
    			\bb(\R) &= \min\{|F|\colon F\subseteq X \text{ is }\R\text{-unbounded}\},\\
    			\dd(\R) &= \min\{|F|\colon F\subseteq Y \text{ is }\R\text{-dominating}\}.
    		\end{align*}	
    	\end{itemize}	
    \end{dfn}
    
    Hereafter, we assume that a relational system is always non-trivial.
    		
    \begin{dfn}
        Let $\R=\langle X,Y,\sqsubset\rangle$ be a relational system. The \emph{dual of $\R$} is the relational system $\R^\bot\coloneq\langle Y,X,\sqsubset^\bot\rangle$, where $y\sqsubset^\bot x$ if and only if $\lnot(x\sqsubset y)$.
    \end{dfn}
    		
    \begin{dfn}\label{dfn:Tukey_connection}
    	Let $\R=\langle X,Y,\sqsubset \rangle, \R^{\prime}=\langle X^{\prime},Y^{\prime},\sqsubset^{\prime}~\rangle$ be two relational systems. We say that $(\Phi_-,\Phi_+)\colon\R\rightarrow\R^\prime$ is a \emph{Tukey connection from $\R$ into $\R^{\prime}$} if $\Phi_-\colon X\rightarrow X^{\prime}$ and $\Phi_+\colon Y^{\prime}\rightarrow Y$ are functions such that
    	\begin{equation*}
    		\forall x\in X~\forall y^{\prime}\in Y^{\prime}~\Phi_-(x)\sqsubset^{\prime} y^{\prime}\Rightarrow x \sqsubset \Phi_{+} (y^{\prime}).
    	\end{equation*}
    	We write $\R\preceq_T\R^{\prime}$ if there is a Tukey connection from $\R$ into $\R^{\prime}$ and call $\preceq_T$ the Tukey order. Tukey equivalence is defined by $\R\cong_T\R^{\prime}$ if and only if $\R\preceq_T\R^{\prime}\land \R^{\prime}\preceq_T\R$.
    \end{dfn}
    		
    \begin{fac}\label{Tukey order and b and d}
        Let $\R, \R^\prime$ be relational systems.
    	\begin{enumerate}
    		\item $\R\preceq_T\R^{\prime}$ implies $(\R^{\prime})^\bot\preceq_T\R^\bot$.
    		\item $\R\preceq_T\R^{\prime}$ implies $\mathfrak{b}(\R^{\prime})\leq\mathfrak{b}(\R)$ and $\mathfrak{d}(\R)\leq\mathfrak{d}(\R^{\prime})$.
    		\item $\bb(\R^\bot)=\dd(\R)$ and $\dd(\R^\bot)=\bb(\R)$.
    	\end{enumerate}
    \end{fac}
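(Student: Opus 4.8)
The plan is that all three items are pure diagram chases, obtained by unwinding the definitions of \Cref{dfn:Tukey_connection}, and so I would dispatch them in order with minimal computation.

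For (1), given a Tukey connection $(\Phi_-,\Phi_+)\colon\R\to\R'$ I would check that $(\Phi_+,\Phi_-)$ is a Tukey connection from $(\R')^\bot$ into $\R^\bot$. Since $(\R')^\bot=\langle Y',X',\sqsubset'^\bot\rangle$ and $\R^\bot=\langle Y,X,\sqsubset^\bot\rangle$, what must be verified is that for all $y'\in Y'$ and all $x\in X$, $\Phi_+(y')\sqsubset^\bot x$ implies $y'\sqsubset'^\bot\Phi_-(x)$, i.e.\ $\lnot(x\sqsubset\Phi_+(y'))$ implies $\lnot(\Phi_-(x)\sqsubset' y')$. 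But this is literally the contrapositive of the defining property $\Phi_-(x)\sqsubset' y'\Rightarrow x\sqsubset\Phi_+(y')$, so nothing else is needed.

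For (2), fix a Tukey connection $(\Phi_-,\Phi_+)\colon\R\to\R'$. If $F\subseteq X$ is $\R$-unbounded, I claim $\Phi_-[F]$ is $\R'$-unbounded: otherwise some $y'\in Y'$ would satisfy $\Phi_-(x)\sqsubset' y'$ for all $x\in F$, whence $x\sqsubset\Phi_+(y')$ for all $x\in F$, contradicting unboundedness of $F$. Since $|\Phi_-[F]|\le|F|$, taking $F$ of size $\bb(\R)$ gives $\bb(\R')\le\bb(\R)$. Dually, if $G\subseteq Y'$ is $\R'$-dominating, then $\Phi_+[G]$ is $\R$-dominating: for $x\in X$ choose $y'\in G$ with $\Phi_-(x)\sqsubset' y'$, so $x\sqsubset\Phi_+(y')$; hence $\dd(\R)\le\dd(\R')$.

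For (3), I would just observe that a set $F\subseteq Y$ is $\R^\bot$-unbounded precisely when it is $\R$-dominating, since both statements unwind to $\forall x\in X\,\exists y\in F\,(x\sqsubset y)$; and a set $F\subseteq X$ is $\R^\bot$-dominating precisely when it is $\R$-unbounded, since both unwind to $\forall y\in Y\,\exists x\in F\,\lnot(x\sqsubset y)$. Passing to least cardinalities yields $\bb(\R^\bot)=\dd(\R)$ and $\dd(\R^\bot)=\bb(\R)$. There is no real obstacle here; the only point requiring attention is bookkeeping—keeping straight which side of the relational system each of $\Phi_-,\Phi_+$ and each quantifier lives on when dualizing—and (2) can alternatively be deduced from (1) and (3) together, which I would mention as a sanity check.
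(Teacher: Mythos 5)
Your proof is correct: the paper states this as a background Fact without proof, and your argument is exactly the standard definition-unwinding one that is intended (the dual connection $(\Phi_+,\Phi_-)$ in (1), pushing unbounded/dominating families through the connection in (2), and the observation in (3) that $\R^\bot$-unbounded means $\R$-dominating and vice versa). Nothing is missing, and your remark that (2) also follows from (1) and (3) is a fine sanity check.
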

    
    We will list several basic lemmas from \cite{CM22}, where the role of Tukey connections in forcing Cicho\'n's maximum is clarified.
    
    \begin{dfn} \label{dfn_RS}
    	For an ideal $I$ on a set $X$, we define two relational systems:
        \begin{align*}
            \bar{I} & = \langle I, I, \subseteq\rangle\\
                    C_I &=\langle X,I,\in\rangle.
        \end{align*}
    	We write $\R\lq I$ to mean $\R\lq\bar{I}$ (we do the same for $\succeq_T$ and $\cong_T$). 
        Note that we have $\bb(\bar{I})=\addi,\dd(\bar{I})=\cofi$ and $\bb(C_I)=\noni,~\dd(C_I)=\covi$.
    \end{dfn}
    
    In this section, $\theta$ will always be a regular uncountable cardinal.
    
    \begin{fac}\leavevmode
    	Let $A$ be a set of size $\geq\theta$. Also, let $\R$ be a relational system.
    	\begin{enumerate}
    		\item If $\R\lq C_{[A]^{<\theta}}$, then $\theta\leq\bb(\R)$ and $\dd(\R)\leq|A|$.
    		\item If $C_{[A]^{<\theta}}\lq \R$, then $\bb(\R)\leq\theta$ and $|A|\leq\dd(\R)$.
    	\end{enumerate}
    \end{fac}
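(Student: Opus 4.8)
The plan is to reduce both items to the single computation $\bb(C_{[A]^{<\theta}})=\theta$ and $\dd(C_{[A]^{<\theta}})=|A|$, after which everything follows from \Cref{Tukey order and b and d}(2).

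First I would compute $\bb(C_{[A]^{<\theta}})$. A subset $F\subseteq A$ is $C_{[A]^{<\theta}}$-unbounded precisely when it is not contained in any $y\in[A]^{<\theta}$, i.e.\ precisely when $|F|\geq\theta$: if $|F|<\theta$ then $F$ itself is a response bounding $F$, and conversely no set of size $<\theta$ can contain a set of size $\geq\theta$. Since $|A|\geq\theta$, there is such an $F$ of size exactly $\theta$, so $\bb(C_{[A]^{<\theta}})=\theta$.

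Next I would compute $\dd(C_{[A]^{<\theta}})$. A family $F\subseteq[A]^{<\theta}$ is $C_{[A]^{<\theta}}$-dominating exactly when $\bigcup F=A$. Covering $A$ by singletons gives $\dd(C_{[A]^{<\theta}})\leq|A|$. For the reverse inequality, suppose $\bigcup F=A$ with $|F|=\kappa$. Then $|A|\leq\kappa\cdot\sup\{|y|:y\in F\}$, and here I would use that $\theta$ is regular: if $\kappa<\theta$ then $\sup\{|y|:y\in F\}<\theta$, hence $|A|<\theta$, contradicting $|A|\geq\theta$; and if $\kappa\geq\theta$ then $\sup\{|y|:y\in F\}\leq\theta$ forces $|A|\leq\kappa\cdot\theta=\kappa$. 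In either case $\kappa\geq|A|$, so $\dd(C_{[A]^{<\theta}})=|A|$.

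Finally, both items fall out of \Cref{Tukey order and b and d}(2). If $\R\lq C_{[A]^{<\theta}}$, then $\theta=\bb(C_{[A]^{<\theta}})\leq\bb(\R)$ and $\dd(\R)\leq\dd(C_{[A]^{<\theta}})=|A|$, which is (1); if $C_{[A]^{<\theta}}\lq\R$, then $\bb(\R)\leq\bb(C_{[A]^{<\theta}})=\theta$ and $|A|=\dd(C_{[A]^{<\theta}})\leq\dd(\R)$, which is (2). The only step that is not purely formal is the cardinal-arithmetic estimate for $\dd(C_{[A]^{<\theta}})$, and even that is routine once one invokes regularity of $\theta$; I do not expect any genuine obstacle here.
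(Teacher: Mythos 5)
Your proof is correct and is exactly the standard argument the paper has in mind when it states this as a Fact without proof: compute $\bb(C_{[A]^{<\theta}})=\theta$ and $\dd(C_{[A]^{<\theta}})=|A|$ (the latter using regularity of $\theta$, which the paper assumes throughout this section) and then apply the monotonicity of $\bb$ and $\dd$ under Tukey reductions from \Cref{Tukey order and b and d}(2). No gaps.
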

    		
    In \Cref{sec:CM}, 
    to obtain \Cref{thm:CM} from \Cref{thm:CM_left} using the \textit{submodel method}, 
    ``$\R\cong_T C_{[A]^{<\theta}}$'' does not work, but ``$\R\cong_T[A]^{<\theta}$'' does (see \cite{GKMS22}).
    The following fact gives a sufficient condition which implies $C_{[A]^{<\theta}}\cong_T [A]^{<\theta}$.
    
    \begin{fac}[{\cite[Lemma 1.15]{CM22}}] 
        \label{fac_suff_eq_CI_and_I}
    	If $A$ is a set with $|A|^{<\theta}=|A|$, then $C_{[A]^{<\theta}}\cong_T [A]^{<\theta}$.
    \end{fac}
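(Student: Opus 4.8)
The plan is to produce Tukey connections in both directions between the relational systems $C_{[A]^{<\theta}}=\langle A,[A]^{<\theta},\in\rangle$ and $[A]^{<\theta}=\langle[A]^{<\theta},[A]^{<\theta},\subseteq\rangle$, with the nontrivial direction using the hypothesis $|A|^{<\theta}=|A|$ to fix a bijection $e\colon A\to[A]^{<\theta}$, and the regularity of $\theta$ to control a union.

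First I would dispose of $C_{[A]^{<\theta}}\preceq_T[A]^{<\theta}$, which needs no hypothesis. Set $\Phi_-(a)=\{a\}$ for $a\in A$ and let $\Phi_+$ be the identity on $[A]^{<\theta}$. If $\Phi_-(a)=\{a\}\subseteq S'$, then $a\in S'=\Phi_+(S')$, so $(\Phi_-,\Phi_+)$ is a Tukey connection $C_{[A]^{<\theta}}\to[A]^{<\theta}$.

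For the converse $[A]^{<\theta}\preceq_T C_{[A]^{<\theta}}$, I would invoke $|A|^{<\theta}=|A|$ to choose a bijection $e\colon A\to[A]^{<\theta}$. Define $\Phi_-\colon[A]^{<\theta}\to A$ by $\Phi_-(S)=e^{-1}(S)$, and $\Phi_+\colon[A]^{<\theta}\to[A]^{<\theta}$ by $\Phi_+(S')=\bigcup_{a\in S'}e(a)$. Since $\theta$ is regular, $|S'|<\theta$, and each $e(a)$ has size $<\theta$, the union $\Phi_+(S')$ again has size $<\theta$, so $\Phi_+$ is well-defined with values in $[A]^{<\theta}$. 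Now if $\Phi_-(S)\in S'$, i.e.\ $e^{-1}(S)\in S'$, then $S=e(e^{-1}(S))\subseteq\bigcup_{a\in S'}e(a)=\Phi_+(S')$; hence $(\Phi_-,\Phi_+)$ is a Tukey connection $[A]^{<\theta}\to C_{[A]^{<\theta}}$. Combining the two directions yields $C_{[A]^{<\theta}}\cong_T[A]^{<\theta}$.

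The only step requiring genuine attention — and the place both hypotheses are used — is the well-definedness of $\Phi_+$: one needs $|A|^{<\theta}=|A|$ for the bijection $e$ to exist at all, and the regularity of $\theta$ to guarantee that $\Phi_+(S')\in[A]^{<\theta}$. Past that the verification of the Tukey-connection inequalities is purely formal, so I anticipate no real obstacle.
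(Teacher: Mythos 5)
Your proof is correct: the paper states this result as a Fact imported from \cite{CM22} without reproducing the argument, and your two Tukey connections — $\Phi_-(a)=\{a\}$ with the identity for $C_{[A]^{<\theta}}\preceq_T[A]^{<\theta}$, and a bijection $e\colon A\to[A]^{<\theta}$ (available exactly because $|A|^{<\theta}=|A|$) with $\Phi_+(S')=\bigcup_{a\in S'}e(a)$, of size $<\theta$ by regularity of $\theta$, for the converse — constitute the standard proof of that lemma. No gaps; both hypotheses are used exactly where you say they are.
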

    
    \begin{fac}[{\cite[Lemma 2.11]{CM22}}]\label{fac_cap_V}
    	Let $A$ be a set of size $\geq\theta$. Every ccc poset forces $[A]^{<\theta}\cong_T[A]^{<\theta}\cap V$ and $C_{[A]^{<\theta}}\cong_T C_{[A]^{<\theta}}\cap V$. Moreover, it forces $\mathfrak{x}([A]^{<\theta})=\mathfrak{x}^V([A]^{<\theta})$ where $\mathfrak{x}$ represents ``$\add$'', ``$\cov$'', ``$\non$'' or ``$\cof$''.
    \end{fac}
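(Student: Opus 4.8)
The plan is to reduce the whole statement to the standard \emph{covering property} of ccc forcing and then read off the Tukey equivalences and the cardinal coincidences from it. Fix a ccc poset $\mathbb{Q}$ and a generic $G$. The key lemma is: since $\theta$ is regular and uncountable and every ccc poset is $\kappa$-cc for each uncountable $\kappa$ (a $\kappa$-sized antichain contains an $\omega_1$-sized one), for every $B\in[A]^{<\theta}$ lying in $V[G]$ there is $\hat B\in[A]^{<\theta}\cap V$ with $B\subseteq\hat B$; more generally, any subset of a ground-model set that is forced to have size $<\theta$ (resp.\ of size $\lambda\ge\theta$) is contained in a ground-model set of size $<\theta$ (resp.\ of size $\le\lambda$). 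First I would state and prove this lemma: pass to a maximal antichain below the relevant condition deciding the size, enumerate, decide each entry by a maximal antichain of size $<\theta$, take the union of all decided values, and use regularity of $\theta$. This is routine, but everything rests on it, and I would also record the covering map $B\mapsto\hat B$ as a definable object of $V[G]$.

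Next I would establish the two Tukey equivalences in $V[G]$ using \Cref{dfn:Tukey_connection}. For $\overline{[A]^{<\theta}}=\langle[A]^{<\theta},[A]^{<\theta},\subseteq\rangle$ against $[A]^{<\theta}\cap V$: the inclusion $[A]^{<\theta}\cap V\hookrightarrow[A]^{<\theta}$ on challenges together with $B\mapsto\hat B$ on responses is a Tukey connection witnessing $[A]^{<\theta}\cap V\preceq_T[A]^{<\theta}$, since $x\subseteq B$ forces $x\subseteq B\subseteq\hat B$; putting the covering map on the challenge coordinate and the inclusion on responses gives the converse $[A]^{<\theta}\preceq_T[A]^{<\theta}\cap V$. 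For $C_{[A]^{<\theta}}=\langle A,[A]^{<\theta},\in\rangle$ the challenge set $A$ is literally the same in $V$ and $V[G]$, so $C_{[A]^{<\theta}}\preceq_T C_{[A]^{<\theta}}\cap V$ is witnessed by the two inclusion maps, and the other direction uses the identity on $A$ together with $B\mapsto\hat B$ on responses. Hence $[A]^{<\theta}\cong_T[A]^{<\theta}\cap V$ and $C_{[A]^{<\theta}}\cong_T C_{[A]^{<\theta}}\cap V$ hold in $V[G]$.

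For the ``Moreover'' clause I would invoke \Cref{Tukey order and b and d}(2) and the identifications of \Cref{dfn_RS} ($\add=\bb(\overline{[A]^{<\theta}})$, $\cof=\dd(\overline{[A]^{<\theta}})$, $\non=\bb(C_{[A]^{<\theta}})$, $\cov=\dd(C_{[A]^{<\theta}})$), so that in $V[G]$ each $\mathfrak{x}([A]^{<\theta})$ equals the corresponding $\bb$ or $\dd$ of one of the $V$-based relational systems, still computed in $V[G]$; it then remains to see these four cardinals are computed the same way in $V$ and in $V[G]$. Three are immediate: $\bb(\overline{[A]^{<\theta}}\cap V)=\bb(C_{[A]^{<\theta}}\cap V)=\theta$ in both models, because $\theta$ stays regular, any $V[G]$-family of size $<\theta$ of ground-model small sets is covered by a single ground-model small set (covering lemma), and a $V$-witness of size $\theta$ remains unbounded since the ambient poset and the order are unchanged; and $\dd(C_{[A]^{<\theta}}\cap V)=|A|$ in both, as $|A|$ is a preserved cardinal.

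The remaining, and main, point is $\dd(\overline{[A]^{<\theta}}\cap V)$, i.e.\ $\cof$: this is the cofinality of the $<\theta$-directed poset $P\coloneqq([A]^{<\theta})^V$, and one must check the ccc extension does not lower it. A $V$-cofinal subset of $P$ stays cofinal, so $\cof^{V[G]}\le\cof^V$. Conversely, given a cofinal $\mathcal D\in V[G]$ with $|\mathcal D|=\lambda$: if $\lambda<\theta$ then $\mathcal D$ is bounded in $P$ by the covering lemma, contradicting cofinality (as $|A|\ge\theta$ the poset $P$ has no maximum); if $\lambda\ge\theta$, cover $\mathcal D$ by some $\hat{\mathcal D}\in V$ with $\hat{\mathcal D}\subseteq P$ and $|\hat{\mathcal D}|\le\lambda$ (the covering lemma in the $\lambda\ge\theta$ case), and note $\hat{\mathcal D}$ is already cofinal in $P$ inside $V$, so $\cof^V\le\lambda$. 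Thus $\cof([A]^{<\theta})$ is absolute, which finishes the proof. I expect this last step --- preservation of $\mathrm{cf}(P)$ under ccc forcing, which is exactly where one is forced to run the covering argument for families of size $\ge\theta$ and to use regularity of $\theta$ in the extension --- to be the only non-bookkeeping part; all the Tukey-connection verifications are one-line checks once the covering lemma is in place.
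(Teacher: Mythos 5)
Your proposal is correct: the ccc covering lemma you isolate (every set of ground-model elements of size $<\theta$ in the extension is contained in a ground-model set of size $<\theta$, and one of size $\mu\ge\theta$ in one of size $\mu$) is exactly the engine behind the cited result, your four Tukey connections are the right one-line checks, and your treatment of the only delicate invariant, $\cof$, via covering a dominating family of size $\ge\theta$ by a ground-model family of the same size and using absoluteness of cofinality statements with ground-model parameters, is the standard argument. The paper itself quotes this fact from \cite{CM22} without proof, and your reconstruction follows essentially the same route as that proof.
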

    
    The following fact will be used to keep several cardinal invariants of Cicho\'n's diagram small through the forcing iteration we shall perform in \Cref{sec:CM}.
    
    \begin{fac}[{\cite[Corollary 2.16, also Example 2.15]{Yam25}}]
    	\label{fac:smallness_for_addn_and_covn_and_nonm}
    	Let $\p$ be a finite support iteration of ccc posets of length $\gamma\geq\theta$.		
    	\begin{enumerate}
    		\item Assume that each iterand is either:
    		\begin{itemize}
    			\item of size $<\theta$,
    			\item a subalgebra of random forcing, or
    			\item $\sigma$-centered.
    		\end{itemize}
    		Then, $\p$ forces $C_{[\gamma]^{<\theta}}\lq\n$,
    		in particular, $\addn\leq\theta$.
    		\item  Assume that each iterand is either:
    		\begin{itemize}
    			\item of size $<\theta$, or
    			\item $\sigma$-centered.
    		\end{itemize}
    		Then, $\p$ forces $C_{[\gamma]^{<\theta}}\lq C_\n^\bot$,
    		in particular, $\covn\leq\theta$.
    		\item Assume that each iterand is:
    		\begin{itemize}
    			\item of size $<\theta$.
    		\end{itemize}
    		Then, $\p$ forces $C_{[\gamma]^{<\theta}}\lq C_\m$, in particular, $\nonm\leq\theta$.
    	\end{enumerate}
    \end{fac}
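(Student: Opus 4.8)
The plan is to establish, in the generic extension by $\p$, the three Tukey connections $C_{[\gamma]^{<\theta}}\lq\n$, $C_{[\gamma]^{<\theta}}\lq C_\n^\bot$ and $C_{[\gamma]^{<\theta}}\lq C_\m$; the ``in particular'' clauses then follow immediately, since a Tukey connection $\R\lq\R^\prime$ yields $\bb(\R^\prime)\leq\bb(\R)$, and $\bb(C_{[\gamma]^{<\theta}})=\theta$. As $\p$ is a finite support iteration of ccc posets it is ccc, so every $\p$-name for a real — in particular every name for a Borel code of a meager or null set in the extension — has a nice name whose support is a countable set $b\subseteq\gamma$; since $\theta$ is regular and uncountable, such a $b$ automatically lies in $[\gamma]^{<\theta}$. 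This reduces each item to attaching to every stage $\alpha<\gamma$ a suitable ``local'' generic object (a real for (3), a null set for (1) and (2)) and then checking that the set of stages a given extension-Borel-code ``captures'' exceeds the support of its code by at most $<\theta$ many stages.

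For (3) I would use the Cohen reals the iteration produces on its own: at each limit ordinal $\delta<\gamma$ of cofinality $\omega$ which is a limit of stages with nontrivial iterand, the direct-limit structure of a finite support iteration adds a Cohen real $c_\delta$ over $V[G_\alpha]$ for every $\alpha<\delta$; put $\Phi_-(\delta)=c_\delta$ and let $\Phi_-$ be arbitrary on the remaining ordinals. Given a meager set $M$ in the extension whose code has countable support $b$, one verifies that $c_\delta\notin M$ whenever $\delta\notin b$ and $\delta$ is not a limit point of $b$: writing $\alpha=\sup(b\cap\delta)<\delta$, the code of $M$ lies in $V[G_b]$, while $c_\delta$ is Cohen over an intermediate model of the iteration that extends $V[G_{b\cap\delta}]$, and a mutual-genericity argument specific to finite support iterations shows $c_\delta$ remains Cohen over $V[G_b]$. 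Hence $\Phi_+(M):=\{\delta:c_\delta\in M\}$ is contained in $b$ together with its countably many limit points, so $\Phi_+(M)\in[\gamma]^{<\theta}$ and $(\Phi_-,\Phi_+)\colon C_{[\gamma]^{<\theta}}\to C_\m$ is the required Tukey connection, whence $\nonm\leq\theta$.

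For (1) and (2) the ``local'' objects are null sets, and the role of the extra hypotheses on the iterands (each being a subalgebra of random forcing, $\sigma$-centered, or of size $<\theta$) is that all three kinds of posets are $\theta$-$\n$-good: none adds a slalom — equivalently, none adds a null set — escaping the ground-model ones in the way that would push $\addn$ above $\theta$ (respectively, on the dual side, push $\covn$ above $\theta$). By the standard finite-support preservation theorem for $\theta$-$\n$-good posets, the whole iteration $\p$ is then $\theta$-$\n$-good; combining this with the null sets coded from the Cohen reals (for (2)) and from the random-type reals appearing cofinally along the iteration (for (1)), used exactly as the Cohen reals were used in (3), yields $C_{[\gamma]^{<\theta}}\lq\n$ and $C_{[\gamma]^{<\theta}}\lq C_\n^\bot$, hence $\addn\leq\theta$ and $\covn\leq\theta$.

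The hard part will be isolating and verifying the correct $\theta$-version of $\n$-goodness: one must check both that a poset of size $<\theta$ qualifies (covering it by $<\theta$ many of its generic branches produces the needed $\theta$-good slaloms) and that it is preserved under finite support iteration; a second delicate point is the mutual-genericity step in (3), namely that a Cohen real attached to one stage stays Cohen after forcing with a countably-supported piece of the rest of the iteration that is allowed to reference it — this is precisely where the finite support structure is used. All of this is carried out in \cite[Corollary 2.16, also Example 2.15]{Yam25}, which is why the paper simply cites it.
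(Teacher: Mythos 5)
There is a genuine gap, and it sits in your treatment of item (3). For a finite support \emph{iteration} (as opposed to a product), a nice name's countable ``support'' $b\subseteq\gamma$ does not give you a model $V[G_b]$: the iterand at a coordinate $\xi$ is a $\p_\xi$-name, so a condition's value at $\xi$ depends on the whole generic below $\xi$, and restricting the generic to a non-initial set of coordinates is not meaningful in general. Consequently your key claim ``$c_\delta\notin M$ whenever $\delta\notin b$ and $\delta$ is not a limit point of $b$'' fails: by bookkeeping, a later iterand of size $<\theta$ can be (say) the finite-condition poset adding a real $y$ eventually different from $c_\delta$ and from $<\theta$ many other prescribed reals; the meager set $M=\{x:x\text{ is eventually different from }y\}$ then contains $c_\delta$ although $\delta$ is nowhere near the support of $M$'s code. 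What is true is only that $\{\delta:c_\delta\in M\}$ has size $<\theta$, and the reason is not mutual genericity but $\theta$-goodness: every poset of size $<\theta$ is $\theta$-$\R$-good for \emph{any} Polish relational system, in particular for a Polish system Tukey-equivalent to $C_\m$ (Bartoszy\'nski's eventually-different/interval-matching characterization of $\nonm$), and then the preservation theorem recorded as \Cref{fac:goodness_iteration} together with \Cref{fac:FM21} yields $C_{[\gamma]^{<\theta}}\lq C_\m$. Your accounting via supports works only when the iteration is essentially a product (e.g.\ all iterands Cohen), which is not the situation covered by the Fact.

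Items (1) and (2) are in the right spirit (goodness of the iterands plus the finite-support preservation theorem, which is exactly how the cited result in \cite{Yam25} proceeds), but two points need correcting. First, for (1) the right Polish relational system is the localization (slalom) system with $\bb=\addn$, for which subalgebras of random forcing (Kamburelis) and $\sigma$-centered posets are good; your appeal to ``random-type reals appearing cofinally along the iteration'' is neither available under the hypotheses (all iterands could be $\sigma$-centered or small) nor needed: in the proof of \Cref{fac:FM21} the challenges attached to the ordinals are always the Cohen reals added automatically at limit stages of countable cofinality (for (2) one uses the null sets they code, which do cover the old reals, since Cohen reals make old reals null), and goodness handles the responses. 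Second, the phrase ``none adds a null set escaping the ground-model ones'' is not the correct formulation of goodness; goodness is a statement about names for responses of the relational system being captured by $<\theta$ many ground-model responses, and it is this, iterated inductively along the finite support iteration, that bounds $\{\alpha:\Phi_-(\alpha)\text{ is met by }y\}$ by a set of size $<\theta$ for each response $y$ of the final model.
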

    
    We introduce the relational systems for $\non(M_\I)$, $\non(K_\I)$ and their duals:
    
    \begin{dfn}
        For an ideal $\I$ on a countable set $X$, we define the following relational systems:
        \begin{itemize}
            \item $\mathbf{M}_{\I}\coloneqq\langle X^\omega,\I^{(X^{<\omega})}, \triangleleft^* \rangle$, where $x\triangleleft^*\phi:\Leftrightarrow\forall^\infty n<\omega~x(n)\in\phi(x\on n)$.
            \item $\mathbf{K}_{\I}\coloneqq\langle X^\omega,\I^{\omega}, \in^* \rangle$, where $x\in^*\phi:\Leftrightarrow\forall^\infty n<\omega~x(n)\in\phi(n)$.
        \end{itemize}
        Note that $\bb(\mathbf{M}_{\I})=\non(M_\I)$, $\dd(\mathbf{M}_{\I})=\cov(M_\I)$, $\bb(\mathbf{K}_{\I})=\non(K_\I)$, $\dd(\mathbf{K}_{\I})=\cov(K_\I)$.
    \end{dfn}
    
    We will make use of the preservation theory presented in \cite{CM19}, which is a generalization of the classical preservation theory in \cite{JS90} and \cite{Bre91}.
    
    \begin{dfn}[{\cite[Definition 4.1]{CM19}}]
    	A relational system $\R=\langle X,Y,\sqsubset\rangle$ is called a \emph{Polish relational system} if the following hold:
    	\begin{enumerate}
    		\item $X$ is a perfect Polish space.
    		\item  $Y$ is analytic subset of a Polish space $Z$.
    		\item $\sqsubset =\bigcup_{n<\omega}\sqsubset_n$ for some ($\subseteq$-)increasing sequence  $\langle \sqsubset_n:n<\omega\rangle$ of closed subsets of $X\times Z$ such that for any $n<\omega$ and any $y\in Y$, $\{x\in X\colon x\sqsubset_n y\}$ is closed nowhere dense subset of $X$.
    	\end{enumerate}
    	When dealing with a Polish relational system, we interpret it depending on the model we are working in.
    \end{dfn}
    
    \begin{dfn}[{\cite{JS90}}]
        Let $\R=\langle X,Y,\sqsubset\rangle$ be a Polish relational system. A poset $\p$ is \emph{$\theta$-$\R$-good} if for any $\p$-name $\dot{y}$ for a member of $Y$, there is a non-empty set $Y_0\subseteq Y$ of size $<\theta$ such that for any $x\in X$, if $x$ is not met by any $y\in Y_0$, then $\p$ forces $x$ is not met by $\dot{y}$. If $\theta=\aleph_1$, we say ``$\R$-good'' instead of ``$\aleph_1$-$\R$-good''.
    \end{dfn}
    
    \begin{fac}
    	[{\cite[Lemma 4]{Mej13}, \cite[Theorem 6.4.7]{BJ95}}]
        Let $\R=\langle X,Y,\sqsubset\rangle$ be a Polish relational system. Every poset of size $<\theta$ is $\theta$-$\R$-good. In particular, Cohen forcing is $\R$-good.
    \end{fac}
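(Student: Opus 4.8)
This is a standard fact about small posets, and the plan is to verify $\theta$-$\R$-goodness straight from the definition, in contrapositive form. Fix a $\p$-name $\dot y$ for a member of $Y$. I would attach to each condition $q\in\p$ a single point $y_q\in Y$ so that, setting $Y_0\coloneqq\{y_q:q\in\p\}$ (nonempty since $\p\ne\emptyset$, and of size $\le|\p|<\theta$), the following holds: whenever some $p\in\p$ forces $x\sqsubset\dot y$, then $x\sqsubset y_q$ for some $q\le p$. Taking the contrapositive, if $x\in X$ is met by no member of $Y_0$, then no condition forces $x\sqsubset\dot y$, i.e.\ $\Vdash_{\p}\lnot(x\sqsubset\dot y)$, which is exactly what goodness demands.

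The point $y_q$ will come from a canonical closed set. For $q\in\p$ put
\[
C_q\coloneqq\bigcap\{F\subseteq Z:F\text{ closed and }q\Vdash\dot y\in F\}.
\]
Since this family is closed under arbitrary intersections and $\dot y[G]$ lies in each of its members whenever $q\in G$, the set $C_q$ is a closed subset of $Z$ (coded in $V$) with $q\Vdash\dot y\in C_q$, and it is $\subseteq$-least such. Now recall from the definition of a Polish relational system that $\sqsubset=\bigcup_{n<\omega}\sqsubset_n$ with each $\sqsubset_n$ closed in $X\times Z$; hence for every $x\in X$ the section $S_{x,n}\coloneqq\{z\in Z:x\sqsubset_n z\}$ is closed, and being coded in $V$ the relation ``$x\sqsubset_n z$'' is absolute. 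Thus, if $p\Vdash x\sqsubset\dot y$, I would choose $q\le p$ and $n<\omega$ with $q\Vdash x\sqsubset_n\dot y$, i.e.\ $q\Vdash\dot y\in S_{x,n}$; minimality of $C_q$ then gives $C_q\subseteq S_{x,n}$, so every $z\in C_q$ satisfies $x\sqsubset_n z$, hence $x\sqsubset z$. In particular $x\sqsubset y_q$ as soon as $y_q\in C_q$.

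The remaining step is to choose $y_q\in C_q\cap Y$ inside $V$. Here $C_q\cap Y$ is an analytic set coded in $V$ (a closed set intersected with the analytic set $Y$), and it is nonempty in any extension $V[G]$ with $q\in G$ because it contains $\dot y[G]$; by Mostowski absoluteness for $\Sigma^1_1$ sets, $C_q\cap Y\ne\emptyset$ already in $V$, so one fixes $y_q$ there. This completes the construction, and since $|Y_0|\le|\p|<\theta$, $\p$ is $\theta$-$\R$-good. For the last assertion, one-step Cohen forcing $2^{<\omega}$ is countable, hence of size $<\aleph_1=\theta$, so it is $\aleph_1$-$\R$-good, i.e.\ $\R$-good. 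The main obstacle in this argument is the absoluteness step: one must be sure that the canonical closed set $C_q$, computed in the ground model, actually meets the ground-model copy of the analytic set $Y$, and this is precisely where $\Sigma^1_1$-absoluteness is used; everything else is routine forcing bookkeeping.
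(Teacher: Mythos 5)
The paper states this result as a cited Fact (Mej\'{\i}a, Bartoszy\'nski--Judah) without including a proof, so there is no in-text argument to compare against; your proof is essentially the standard one behind those citations: read goodness contrapositively, reduce to a single $\sqsubset_n$, attach to each condition a canonical closed set of possible values of $\dot y$, and use $\Sigma^1_1$-absoluteness to pick a ground-model response inside it. The overall structure, the absoluteness of the sections $S_{x,n}$, the choice of $y_q\in C_q\cap Y$ by Mostowski absoluteness, the bound $|Y_0|\le|\p|<\theta$, and the Cohen case are all correct.

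One step is under-justified as written: the claim that $q\Vdash\dot y\in C_q$. Your justification is that $\dot y[G]$ lies in every member of the family being intersected; but that only shows $\dot y[G]\in\bigcap_F F^{V[G]}$, whereas what you need is membership in $C_q^{V[G]}$, the reinterpretation of the ground-model intersection, which could a priori be a proper subset (reinterpreting an intersection of uncountably many closed sets need not commute with intersecting their reinterpretations; only the inclusion $C_q^{V[G]}\subseteq\bigcap_F F^{V[G]}$ is automatic). The claim is nevertheless true, and the clean repair is to work with the closed ``trace'' $D_q\coloneqq\{z\in Z:\text{for every basic open }B\ni z\text{ there is }r\le q\text{ with }r\Vdash\dot y\in B\}$. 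One checks directly that $D_q$ is closed; that $q\Vdash\dot y\in D_q$ (if $\dot y[G]$ fell into a rational ball $B$ from the ground-model complement of $D_q$, some $r\in G$ below $q$ would force $\dot y\in B$, contradicting the choice of $B$); and that $D_q\subseteq S$ for every ground-model closed $S$ with $q\Vdash\dot y\in S$, using absoluteness of ``$B\cap S=\emptyset$'' for rational balls $B$ --- so in fact $D_q=C_q$ and the minimality you invoke holds. With this substitution the rest of your argument (the inclusion $C_q\subseteq S_{x,n}$ whenever some $q\le p$ forces $x\sqsubset_n\dot y$, the $\Sigma^1_1$-absoluteness giving $C_q\cap Y\neq\emptyset$ in $V$, and the size count) goes through verbatim.
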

    
    \begin{fac}[{\cite{JS90}, \cite[Corollary 4.13]{CM19}, \cite[Corollary 4.10]{BCM25}}]\label{fac:goodness_iteration}
        Let $\R=\langle X,Y,\sqsubset\rangle$ be a Polish relational system. Then any finite support iteration of ccc $\theta$-$\R$-good posets is $\theta$-$\R$-good.
    \end{fac}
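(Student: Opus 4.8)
The plan is to prove the statement by transfinite induction on the length $\gamma$ of the iteration, the inductive hypothesis $P(\gamma)$ being ``$\p_\gamma$ is $\theta$-$\R$-good''. Writing the iteration as $\langle\p_\alpha,\qd_\alpha:\alpha<\gamma\rangle$ with $\p_\gamma$ the full iteration, I would reduce to two cases: the successor step $P(\alpha)\Rightarrow P(\alpha+1)$, which is really a two-step iteration argument and carries all the combinatorial content at successors, and the limit step, which I would subdivide according to $\cf(\gamma)$. Two facts are used throughout: a finite support iteration of ccc posets is ccc, so every $\p_\alpha$ is ccc; and $\theta$ is regular and uncountable, which is essential for the ``a small union of small sets is small'' bookkeeping.

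For the two-step case, let $\p_{\alpha+1}=\p_\alpha\ast\qd_\alpha$ with $\p_\alpha$ already $\theta$-$\R$-good and $\p_\alpha\Vdash$``$\qd_\alpha$ is $\theta$-$\R$-good'', and fix a $\p_{\alpha+1}$-name $\dot y$ for a member of $Y$. Working in $V^{\p_\alpha}$, goodness of $\qd_\alpha$ supplies a set $Y_1\in[Y]^{<\theta}$ witnessing goodness of $\qd_\alpha$ for $\dot y$; fix a $\p_\alpha$-name $\dot Y_1$ for it. Since $\p_\alpha$ is ccc and $\theta$ is regular uncountable, there is a ground-model family $\{\dot{y}^\eta:\eta<\mu\}$ of $\p_\alpha$-names for members of $Y$, with $\mu<\theta$, such that $\p_\alpha\Vdash\dot Y_1\subseteq\{\dot{y}^\eta:\eta<\mu\}$ (a $\p_\alpha$-name for an ordinal takes only countably many values, and $\theta$ is regular). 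Applying the induction hypothesis to each $\dot{y}^\eta$ gives $Y_0^\eta\in[Y]^{<\theta}$ witnessing goodness of $\p_\alpha$ for $\dot{y}^\eta$, and I set $Y_0=\bigcup_{\eta<\mu}Y_0^\eta$, again of size $<\theta$. To see $Y_0$ works for $\p_{\alpha+1}$ and $\dot y$: if $x\in X$ is met by no member of $Y_0$, then $\p_\alpha$ forces that $x$ is met by no $\dot{y}^\eta$, hence by no member of $\dot Y_1$; then in $V^{\p_\alpha}$, by the choice of $\dot Y_1$, $\qd_\alpha$ forces that $x$ is not met by $\dot y$, i.e.\ $\p_{\alpha+1}\Vdash$``$x$ is not met by $\dot y$''. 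The one delicate point is that ``$x$ is met by $z$'' is absolute between $V$, $V^{\p_\alpha}$ and $V^{\p_{\alpha+1}}$ whenever $x,z$ lie in the model in question — which holds because $\sqsubset=\bigcup_n\sqsubset_n$ with each $\sqsubset_n$ a closed (hence absolutely computed) subset of $X\times Z$; this is precisely why one works with Polish relational systems.

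For a limit $\gamma$: if $\cf(\gamma)>\omega$, then, $Y$ being a subset of a Polish (so second-countable) space and $\p_\gamma$ being ccc, a $\p_\gamma$-name for a member of $Y$ is determined by countably many countable maximal antichains, hence by countably many conditions, whose supports are bounded below $\gamma$; thus $\dot y$ is equivalent to a $\p_\beta$-name for some $\beta<\gamma$, and the induction hypothesis applies to $\p_\beta$. The hard part will be $\cf(\gamma)=\omega$, say $\gamma=\sup_k\alpha_k$ with $\langle\alpha_k\rangle$ increasing: now $\dot y$ need not reflect to any $\p_{\alpha_k}$, and this is the genuine obstruction and the heart of the Judah--Shelah preservation method. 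Here I would exploit the \emph{full} structure of a Polish relational system, namely that $\sqsubset_n$ is increasing in $n$ and each section $\{x:x\sqsubset_n z\}$ is closed \emph{nowhere dense} in $X$, to interpolate between the stages: for each $k$ one replaces $\dot y$ by a $\p_{\alpha_k}$-name approximating it accurately enough on the first $k$ levels that the closed nowhere dense set determined by the approximation at level $n\le k$ ``absorbs'' the corresponding set for $\dot y$ — and it is exactly nowhere-density that makes such approximations possible given only the information available at stage $\alpha_k$. Applying the induction hypothesis at each $\p_{\alpha_k}$ to the resulting names and amalgamating the countably many $[Y]^{<\theta}$-families so obtained (again using regularity of $\theta$) yields a single $Y_0\in[Y]^{<\theta}$ witnessing goodness of $\p_\gamma$ for $\dot y$. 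This $\cf(\gamma)=\omega$ interpolation is the step I expect to require by far the most care; it is the content packaged in \cite{JS90} and refined in \cite[Corollary 4.13]{CM19} and \cite[Corollary 4.10]{BCM25}, whereas the successor and uncountable-cofinality cases are comparatively routine.
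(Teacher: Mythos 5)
Your induction scheme, the successor (two-step) argument, and the $\cf(\gamma)>\omega$ reflection argument are all correct and are exactly the routine parts of the standard proof; note also that the paper offers no proof of this Fact at all (it is quoted from \cite{JS90}, \cite{CM19}, \cite{BCM25}), so the only meaningful comparison is with the published argument. Against that benchmark, your covering of the name $\dot Y_1$ by fewer than $\theta$ many $\p_\alpha$-names (ccc plus regularity of $\theta$) and your use of the closedness of the pieces $\sqsubset_n$ for absoluteness are exactly right.

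The genuine gap is the case $\cf(\gamma)=\omega$, which you acknowledge is the heart of the matter but do not prove, and the heuristic you give for it misidentifies the mechanism. What makes the interpolation work is \emph{not} nowhere-density of the sections $\{x: x\sqsubset_n y\}$ but their closedness together with the analyticity of $Y$: write $Y=f[\omega^\omega]$ for a continuous $f$ and $\dot y=f(\dot z)$; for each $k$ let $\dot y_k=f(\dot z_k)$, where $\dot z_k$ is a $\p_{\alpha_k}$-name for an \emph{interpretation} of $\dot z$, obtained from any decreasing sequence of conditions of the quotient $\p_\gamma/\p_{\alpha_k}$ deciding longer and longer initial segments of $\dot z$; apply the inductive hypothesis to each $\dot y_k$ and union the witnesses into $Y_0$. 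The verification is where closedness enters: if some $p$ forces $x\sqsubset_n\dot y$, then $p\in\p_{\alpha_k}$ for some $k$ by finiteness of supports, and once $p$ lies in the $\p_{\alpha_k}$-generic filter the \emph{entire} quotient forces $\dot z$ into the closed set $C=\{z: x\sqsubset_n f(z)\}$; since every basic neighbourhood $[\dot z_k\upharpoonright m]$ then meets $C$, closedness gives $x\sqsubset_n \dot y_k$, contradicting the stage-$k$ goodness witness if $x$ avoids $Y_0$. This also explains why a single approximation per stage suffices, a point your sketch does not address (a priori the interpretation would seem to have to be taken below each condition $p$). Nowhere-density of the sections is used elsewhere in the theory --- it is what makes Cohen reals $\R$-unbounded and thus drives statements like \Cref{fac:FM21} --- not in this preservation theorem, so building your $\cf(\gamma)=\omega$ step around it would lead you astray. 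As written, that step neither specifies how the approximating names are produced nor why unboundedness over their witnesses transfers to $\dot y$, so the content that all three citations exist for remains unproved; it can be filled in along the lines above or by citing \cite[Lemma 4.12 and Corollary 4.13]{CM19}.
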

    
    The following explains how $\R$-goodness is useful to control the cardinal invariants associated to $\R$.
    
    \begin{fac}[{\cite[Lemma 2.15]{CM22}, \cite[Theorem 4.11]{BCM25}}]\label{fac:FM21}
        Let $\R=\langle X,Y,\sqsubset\rangle$ be a Polish relational system and $\theta$ be an uncountable regular cardinal. Let $\p$ be the finite support iteration of non-trivial ccc $\theta$-$\R$-good posets of length $\gamma\geq\theta$. Then $\p$ forces $C_{[\gamma]^{<\theta}}\lq\R$.
    \end{fac}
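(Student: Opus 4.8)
The plan is to produce, in the generic extension by $\p$, a Tukey connection $(\Phi_-,\Phi_+)\colon C_{[\gamma]^{<\theta}}\to\R$. Since replacing $\gamma$ by the cardinal $\lvert\gamma\rvert\ge\theta$ changes $C_{[\gamma]^{<\theta}}$ only up to $\cong_T$, I may assume $\gamma$ is a limit ordinal. Write $\p=\langle\mathbb{P}_\alpha,\dot{\mathbb{Q}}_\alpha:\alpha<\gamma\rangle$ and $V_\alpha:=V^{\mathbb{P}_\alpha}$. The challenges of $C_{[\gamma]^{<\theta}}$ are the ordinals $\xi<\gamma$, so for each such $\xi$ I need to supply a challenge of $\R$, i.e.\ an element of $X$. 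Here I would use the classical fact that a finite support iteration of non-trivial posets adds a Cohen real at every stage: for each $\xi<\gamma$ the iteration $\mathbb{P}_{[\xi,\xi+\omega)}$, computed in $V_\xi$ (which makes sense since $\gamma$ is a limit, so $\xi+\omega\le\gamma$), adds a Cohen real over $V_\xi$ — pick, using non-triviality, incompatible conditions in each of $\dot{\mathbb{Q}}_\xi,\dot{\mathbb{Q}}_{\xi+1},\dots$ and read a Cohen-generic branch of $2^{<\omega}$ off the generic. Since Cohen forcing for the perfect Polish space $X$ is equivalent to the standard Cohen forcing, I fix a $\mathbb{P}_{\xi+\omega}$-name $\dot c_\xi$ for an element of $X$ that is Cohen-generic over $V_\xi$ — i.e.\ that avoids every meager Borel subset of $X$ coded in $V_\xi$ — and set $\Phi_-(\xi)=\dot c_\xi$.

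The soft half of the argument is that $c_\xi$ is automatically $\R$-unbounded over $V_\xi$: by clause (3) of the definition of a Polish relational system, $\{x\in X:x\sqsubset z\}=\bigcup_{n<\omega}\{x\in X:x\sqsubset_n z\}$ is a countable union of closed nowhere dense sets, hence meager, for every $z$ in the reinterpretation of $Y$; as $c_\xi$ avoids all meager Borel sets coded in $V_\xi$, we get $\neg(c_\xi\sqsubset z)$ for every $z\in Y\cap V_\xi$. Now define $\Phi_+\colon Y\to\mathcal{P}(\gamma)$ by $\Phi_+(y)=\{\xi<\gamma:c_\xi\sqsubset y\}$. With this choice the defining implication of a Tukey connection, $\Phi_-(\xi)\sqsubset y\Rightarrow\xi\in\Phi_+(y)$, holds by fiat, so the whole statement reduces to checking that $\Phi_+$ actually lands in $[\gamma]^{<\theta}$, that is,
\[
\forces_\p\ \forall y\in Y\ \lvert\{\xi<\gamma:\dot c_\xi\sqsubset y\}\rvert<\theta.
\]

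This displayed claim is the real content, and it is where $\theta$-$\R$-goodness enters. By \Cref{fac:goodness_iteration}, $\p$ and each of its tails $\mathbb{P}_{[\eta,\gamma)}$ (viewed as a poset over $V_\eta$) are $\theta$-$\R$-good. Given a $\p$-name $\dot y$ for a member of $Y$ and an index $\xi$, factor $\p$ as $\mathbb{P}_{\xi+\omega}*\dot{\mathbb{P}}_{[\xi+\omega,\gamma)}$ and apply $\theta$-$\R$-goodness of the tail over $V_{\xi+\omega}$ to $\dot y$: this yields a set $Y^\xi_0\in V_{\xi+\omega}$ of size $<\theta$ such that every $x\in X\cap V_{\xi+\omega}$ not met by any member of $Y^\xi_0$ is forced by the tail not to be met by $\dot y$; in particular, if $c_\xi$ is met by no element of $Y^\xi_0$ then $\xi\notin\Phi_+(\dot y[G])$ in the final extension. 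The main obstacle is then to see that the set of $\xi<\gamma$ for which $c_\xi$ \emph{is} met by some element of $Y^\xi_0$ has size $<\theta$: this requires choosing the witnesses $Y^\xi_0$ coherently and exploiting that, by the ccc of $\p$, a nice name for $\dot y$ uses only countably many coordinates, so that for cofinally-late stages $\xi$ the real $c_\xi$ is Cohen over a model already containing the relevant witness, and then reflecting via the $\theta$-cc of $\p$ to bound the number of exceptional $\xi$. Once the displayed claim is secured, $(\Phi_-,\Phi_+)$ witnesses $\p\forces C_{[\gamma]^{<\theta}}\preceq_T\R$, as required.
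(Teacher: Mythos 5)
Your setup is the right one, and it is the standard one: extract Cohen reals $c_\xi$ over $V_\xi$ from the finite support structure, observe that $\{x\in X:x\sqsubset y\}$ is meager so each $c_\xi$ is $\R$-unbounded over $V_\xi$, take $\Phi_-(\xi)=c_\xi$, $\Phi_+(y)=\{\xi<\gamma:c_\xi\sqsubset y\}$, and reduce everything to the quantitative claim $\Vdash_{\p}\forall y\in Y\ \lvert\{\xi<\gamma:c_\xi\sqsubset y\}\rvert<\theta$. But that claim \emph{is} the content of the Fact, and your argument for it has a genuine gap. Applying $\theta$-$\R$-goodness of the tail $\p_{[\xi+\omega,\gamma)}$ over $V_{\xi+\omega}$ produces a witness $Y_0^\xi\in V_{\xi+\omega}$ of size $<\theta$ --- but $c_\xi\in V_{\xi+\omega}$, so nothing prevents $Y_0^\xi$ from containing responses that already meet $c_\xi$; hence the ``exceptional set'' $\{\xi<\gamma: c_\xi\text{ is met by some member of }Y_0^\xi\}$ is a priori all of $\gamma$ and the per-$\xi$ information is vacuous. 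You flag this (``choose the witnesses coherently''), but the proposed repair does not work as stated: the remark that a nice name for $\dot y$ uses only countably many coordinates helps only if those coordinates are bounded in $\gamma$ (which fails when $\mathrm{cf}(\gamma)=\omega$), and even then it only disposes of the indices $\xi$ above that bound; it says nothing about the possibly $\geq\theta$ many indices \emph{below} the stage where $\dot y$ is essentially decided, which is exactly where the bad set could sit. ``Reflecting via the $\theta$-cc'' is not an argument.

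The missing piece is an induction on $\alpha\le\gamma$ showing that $\p_\alpha$ forces $\lvert\{\xi<\alpha:c_\xi\sqsubset y\}\rvert<\theta$ for every $y\in Y$, with goodness applied in the opposite relative position to yours, so that the witness precedes the challenges it controls. If $\mathrm{cf}(\alpha)>\omega$, a name for $y$ is a $\p_\beta$-name for some $\beta<\alpha$; the Cohen reals indexed $\ge\beta$ avoid $y$ outright (Cohenness alone), and the induction hypothesis handles $\xi<\beta$. At successors and at limits of countable cofinality, fix $\alpha_n\nearrow\alpha$ and apply $\theta$-$\R$-goodness of the tail $\p_{[\alpha_n,\alpha)}$ over $V_{\alpha_n}$ (legitimate by \Cref{fac:goodness_iteration}) to get $Y_0^n\in V_{\alpha_n}$ of size $<\theta$: now the challenges $c_\xi$ with $\xi<\alpha_n$ are ground-model elements of $V_{\alpha_n}$, so $\{\xi<\alpha_n:c_\xi\sqsubset y\}\subseteq\bigcup_{z\in Y_0^n}\{\xi<\alpha_n:c_\xi\sqsubset z\}$, each set in the union has size $<\theta$ by the induction hypothesis applied in $V_{\alpha_n}$ (the relation is Borel, hence absolute), and regularity of $\theta>\omega$ bounds the union over $z\in Y_0^n$ and over $n<\omega$. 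This inductive use of goodness (or an equivalent global contradiction argument) is what secures your displayed claim; as written, your proof leaves precisely this step open.
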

    		
    We need goodness results for two specific Polish relational systems. One is associated with the constant evasion/prediction numbers discussed in \Cref{sec:rel_w_const}, and the other is associated with the usual bounding/dominating numbers. Recall the notions introduced in \Cref{dfn:constant_evasion_numbers}.
    
    \begin{dfn}
        Let $k\geq 2$.
        \begin{align*}
            \mathbf{CPR}_2(k) & \coloneqq\langle2^\omega,\Pred_2,\pck\rangle.\\
            \mathbf{CPR}_2 & \coloneqq\langle2^\omega,\Pred_2,\pc\rangle.
        \end{align*}
        These are Polish relational systems. Also, note that $\bb(\mathbf{CPR}_2(k))=\eec_2(k)$, $\dd(\mathbf{CPR}_2(k))=\vvc_2(k)$, $\bb(\mathbf{CPR}_2)=\eec_2$ and  $\bb(\mathbf{CPR}_2)=\vvc_2$.
    \end{dfn}
    
    Brendle and Shelah proved the following lemma when $\mu=\omega$, but their proof can be easily generalized.
    
    \begin{lem}[{\cite{BS03}, see also \cite[Lemma 5.29]{CR25}}] 
    \label{lem:twok_linked_CPR_good}
        Let $2\leq k<\omega$ and $\mu$ be an infinite cardinal. Then every $\mu$-$2^k$-linked\footnote{For $n\geq2$, a subset $Q$ of a poset $\p$ is \textit{$n$-linked} if any $n$-many conditions of $Q$ have a common extension. $\p$ is \textit{$\mu$-$n$-linked} if it is a union of $\mu$-many $n$-linked components.} poset is $\mu^+$-$\mathbf{CPR}_2(k)$-good.
    \end{lem}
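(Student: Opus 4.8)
The plan is to adapt the Brendle--Shelah argument \cite{BS03} from the case $\mu=\omega$ to an arbitrary infinite cardinal $\mu$, replacing ``countable'' by ``of size $\le\mu$'' throughout. Fix a decomposition $\p=\bigcup_{\alpha<\mu}Q_\alpha$ witnessing that $\p$ is $\mu$-$2^k$-linked, and fix a $\p$-name $\dot\pi$ for an element of $\Pred_2$. I must produce $Y_0\subseteq\Pred_2$ with $|Y_0|\le\mu<\mu^+$ such that whenever $f\in 2^\omega$ satisfies $\lnot(f\pck\pi)$ for every $\pi\in Y_0$, then $\p\forces\lnot(f\pck\dot\pi)$; this is exactly the defining property of $\mu^+$-$\mathbf{CPR}_2(k)$-goodness.

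First I would read a partial ground-model predictor off each piece: for $\alpha<\mu$ and $\sigma\in\sq$, set $e_\alpha(\sigma)=v$ if some condition of $Q_\alpha$ forces $\dot\pi(\sigma)=v$, and leave $e_\alpha(\sigma)$ undefined otherwise. This is unambiguous because $Q_\alpha$ is $2$-linked, so two conditions of $Q_\alpha$ forcing different values of $\dot\pi(\sigma)$ would have a common extension forcing both. Since a window $[i,i+k)$ involves only the $k$ strings $f\restriction j$ for $j\in[i,i+k)$, and $k+1\le 2^k$, the $2^k$-linkedness of $Q_\alpha$ is exactly what lets one amalgamate, inside $Q_\alpha$, one witnessing condition for each decided value on such a window together with one further condition of $Q_\alpha$. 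To absorb the strings that $e_\alpha$ leaves undefined, I would add, for each of the $2^k$ ``phase patterns'' $s\in 2^k$, the total predictor $\pi_{\alpha,s}\in\Pred_2$ which agrees with $e_\alpha$ where the latter is defined and equals $s(|\sigma|\bmod k)$ elsewhere; then $Y_0\coloneqq\{\pi_{\alpha,s}:\alpha<\mu,\ s\in 2^k\}$ has size $\le 2^k\cdot\mu=\mu$, as required.

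For the verification, assume $f$ is not $k$-constantly predicted by any member of $Y_0$ but, toward a contradiction, that some $p_0\forces f\pck\dot\pi$. Strengthen $p_0$ to $p_1$ deciding the threshold, say $p_1\forces$ ``$\dot\pi$ correctly predicts $f$ on every window $[i,i+k)$ with $i\ge N_0$'', and fix $\beta<\mu$ with $p_1\in Q_\beta$. Since $\lnot(f\pck\pi_{\beta,s})$ holds for every $s\in 2^k$, and since the windows mispredicted throughout by $\pi_{\beta,s}$ are precisely those on which $e_\beta$ is wrong at every coordinate it decides and $s$ is wrong at every coordinate $Q_\beta$ leaves undecided, there are infinitely many windows $[i,i+k)$ mispredicted throughout by a suitable $\pi_{\beta,s}$; pick one with $i\ge N_0$. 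The coordinates that $Q_\beta$ decides come with witnessing conditions in $Q_\beta$, and the plan is to amalgamate $p_1$ with these (at most $k$) conditions, and then -- see below -- with the undecided coordinates handled as well, via the $2^k$-linkedness of $Q_\beta$, into a single $r\le p_1$ forcing $\dot\pi$ to be wrong throughout $[i,i+k)$, which contradicts the choice of $N_0$.

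The delicate point -- the combinatorial heart of \cite{BS03}, and the only place where the exponent $2^k$ matters rather than merely $2$ -- is making the last step genuinely work at the coordinates of the chosen window that $Q_\beta$ does not decide: having chosen the window to be entirely mispredicted by the appropriate $\pi_{\beta,s}$ and amalgamated the witnesses for the decided coordinates inside $Q_\beta$, one must still see that the undecided coordinates can be driven to their ``wrong'' values by a single common extension that stays within (an amalgamation governed by) $Q_\beta$, so that $r$ really does force an entire window past $N_0$ mispredicted. I expect this interplay between the bookkeeping over the $2^k$ window-patterns and the linked amalgamation to be the main obstacle; everything else, including the passage from $\mu=\omega$ to general $\mu$, is routine.
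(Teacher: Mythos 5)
The paper does not prove this lemma at all (it is quoted from \cite{BS03} and \cite[Lemma 5.29]{CR25} with the remark that the $\mu=\omega$ proof generalizes), so your proposal has to stand on its own, and as written it does not: the step you yourself flag as ``the delicate point'' is not a deferrable technicality but a genuine gap, and in your setup it cannot be repaired. Your ground-model data per piece $Q_\beta$ consists of the partial predictor $e_\beta$ (values forced by some member of $Q_\beta$) plus the periodic fill-ins $s(|\sigma|\bmod k)$. Now look at an ``undecided'' coordinate $j$ of your chosen window: by definition \emph{no} member of $Q_\beta$ forces $\dot\pi(f\restriction j)=0$ and none forces $\dot\pi(f\restriction j)=1$. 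Hence there is no witnessing condition in $Q_\beta$ that you could feed into the linked amalgamation for that coordinate, and the amalgam $r_0$ of $p_1$ with the witnesses for the decided coordinates lies outside $Q_\beta$, so nothing prevents $r_0$ (or any later extension produced while you treat the undecided coordinates one by one) from forcing $\dot\pi(f\restriction j)=f(j)$ and blocking you. The information ``$f(j)\neq s(j\bmod k)$'' is vacuous for forcing purposes, since the fill-in value was chosen arbitrarily and records nothing about $\dot\pi$. Note also that the way you use $2^k$-linkedness (amalgamating $p_1$ with at most $k$ witnesses, i.e.\ $k+1\leq 2^k$ conditions) would already follow from $(k+1)$-linkedness; the exponent $2^k$ must enter elsewhere.

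Where it actually enters is at the level of length-$k$ misprediction \emph{patterns}, not single values. For $\sigma\in\sq$ there is a canonical name $\dot t_\sigma$ for the unique $t\in 2^k$ such that every real extending $\sigma^\frown t$ is mispredicted by $\dot\pi$ at all $k$ coordinates of the window starting at $|\sigma|$ (defined recursively by $t(m)=1-\dot\pi(\sigma^\frown (t\restriction m))$). If for every $t\in 2^k$ some $q_t\in Q_\alpha$ forced $\dot t_\sigma\neq t$, then amalgamating these $2^k$ conditions --- this is the real use of $2^k$-linkedness --- would give a condition forcing $\dot t_\sigma\notin 2^k$, which is absurd; so for each $\alpha$ and $\sigma$ one can pick $t_\alpha(\sigma)\in 2^k$ that no member of $Q_\alpha$ excludes. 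One then builds at most $k\cdot\mu$ ground-model predictors whose completely-mispredicted patterns realize $t_\alpha(\sigma)$ (one predictor per piece and per phase $i\equiv r\pmod k$, or via a shift argument as in the paper's Lemma 4.31, to cope with overlapping windows). If $f$ escapes all of them, one gets infinitely many window starts $i$ (of the appropriate phase) with $f\restriction[i,i+k)=t_\beta(f\restriction i)$; but a condition $p_1\in Q_\beta$ forcing ``$\dot\pi$ predicts $f$ somewhere in every window past $N_0$'' forces $\dot t_{f\restriction i}\neq f\restriction[i,i+k)$ for all $i\geq N_0$, contradicting the choice of $t_\beta(f\restriction i)$ at any such $i\geq N_0$. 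Your decided/undecided dichotomy with periodic fill-ins records strictly weaker information and, in particular, gives an arbitrary condition $p_1\in Q_\beta$ no reason not to force correct prediction at the undecided coordinates; so the proposal as it stands does not prove the lemma.
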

    
    This lemma implies $\mathbf{CPR}_2$-goodness as follows:
    
    \begin{cor}\label{cor:infty_linked_CPR_good}
        Assume that $\mu$ is an infinite cardinal and a poset $\p$ is $\mu$-$l$-linked for any $l<\omega$. Then $\p$ is $\mu^+$-$\mathbf{CPR}_2$-good.
    \end{cor}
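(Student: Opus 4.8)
The plan is to reduce Corollary~\ref{cor:infty_linked_CPR_good} to Lemma~\ref{lem:twok_linked_CPR_good} by exploiting that the relation $\pc$ of $\mathbf{CPR}_2$ is the countable increasing union $\bigcup_{2\leq k<\omega}\pck$ of the relations defining the systems $\mathbf{CPR}_2(k)$. Since $\mu$ is infinite, a countable union of sets of size $\leq\mu$ still has size $\leq\mu<\mu^+$, so gluing together the witnessing families obtained for each $k$ stays within the budget allowed by $\mu^+$-goodness.

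Concretely, I would fix a $\p$-name $\dot\pi$ for a member of $\Pred_2$. For each $k\geq 2$, $\p$ is $\mu$-$2^k$-linked by hypothesis, so Lemma~\ref{lem:twok_linked_CPR_good} yields that $\p$ is $\mu^+$-$\mathbf{CPR}_2(k)$-good; applying goodness to $\dot\pi$ (which is also a name for a response of $\mathbf{CPR}_2(k)$, the response space being $\Pred_2$ in both systems) produces a nonempty set $D_k\subseteq\Pred_2$ with $|D_k|\leq\mu$ such that for every $f\in 2^\omega$, if $\lnot(f\pck\pi)$ for all $\pi\in D_k$, then $\p\Vdash\lnot(f\pck\dot\pi)$. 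Put $D=\bigcup_{2\leq k<\omega}D_k$; then $D$ is nonempty and $|D|\leq\aleph_0\cdot\mu=\mu$. To see $D$ witnesses $\mu^+$-$\mathbf{CPR}_2$-goodness for $\dot\pi$, suppose $f\in 2^\omega$ satisfies $\lnot(f\pc\pi)$ for every $\pi\in D$. Since $\pc=\bigcup_{k\geq 2}\pck$, for each fixed $k\geq 2$ we have $\lnot(f\pck\pi)$ for all $\pi\in D_k\subseteq D$, hence $\p\Vdash\lnot(f\pck\dot\pi)$ by the choice of $D_k$. As this holds for every $k$ in the ground-model countable index set $\{k:2\leq k<\omega\}$, in any generic extension all these statements hold simultaneously, so $\p\Vdash\forall k\geq 2\,\lnot(f\pck\dot\pi)$, which is exactly $\p\Vdash\lnot(f\pc\dot\pi)$.

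I do not expect a genuine obstacle here; the argument is bookkeeping. The only two points needing a moment's care are that $\mu\cdot\aleph_0=\mu$, which is where the infiniteness of $\mu$ is used to keep $|D|<\mu^+$, and the passage from ``for each $k$, $\p$ forces $\varphi_k$'' to ``$\p$ forces $\forall k\,\varphi_k$'', which is legitimate precisely because the index set is countable and lies in the ground model. Both are routine, so the proof should be short.
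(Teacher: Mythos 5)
Your proposal is correct and is essentially the paper's own argument: apply Lemma~\ref{lem:twok_linked_CPR_good} for each $k\geq 2$ to get families $\Pi(k)\subseteq\Pred_2$ of size $\leq\mu$ and take their union, using $\aleph_0\cdot\mu=\mu$. The only difference is cosmetic — the paper concludes by contradiction (a condition forcing $x\pc\dot\pi$ must force $x\pck\dot\pi$ for some fixed $k$), while you argue directly that each $\lnot(f\pck\dot\pi)$ is forced and hence so is their conjunction; these are the same step.
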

    \begin{proof}
        Let $\dot{\pi}$ be a $\p$-name of a member of $\Pred_2$. For each $k\geq2$, let $\Pi(k)\subset\Pred_2$ of size $\leq\mu$ induced by \Cref{lem:twok_linked_CPR_good} since $\p$ is $\mu$-$2^k$-linked. To see $\Pi\coloneqq\bigcup_{k\geq2}\Pi(k)$ witnesses that $\p$ is $\mu^+$-$\mathbf{CPR}_2$-good, take $x\in 2^\omega$ such that $\lnot(x\pc \pi)$ for any $\pi\in\Pi$. Assume towards contradiction that $p\Vdash x\pc \dot{\pi}$ for some $p\in\p$. Then there are $q\leq p$ and $k\geq 2$ such that $q\Vdash x\pck\dot{\pi}$, which contradicts that $\Pi(k)$ witnesses $\p$ is $\mu^+$-$\mathbf{CPR}_2(k)$-good.
    \end{proof}
    
    We now introduce the relational system for $\bb$ and $\dd$.
    
    \begin{dfn}
        Define the relational system $\mathbf{D}\coloneqq\langle \oo,\oo,\leq^*\rangle$. Note that this is a Polish relational system and $\bb(\mathbf{D})=\bb, \dd(\mathbf{D})=\dd$.
    \end{dfn}
    
    To treat $\mathbf{D}$-goodness, we will make use of the notion of \textit{Fr-limits}, introduced by Mej{\'\i}a \cite{Mej19}.
    
    \begin{dfn}
    	Let $\p$ be a poset.
    	\begin{enumerate}
    		\item For a countable sequence $\bar{p}=\langle p_m:m<\omega\rangle\in\p^\omega$, we define $\dot{W}(\bar{p})$ as the $\p$-name of an index set of the sequence $\bar{p}$ as follows:
            \[
            \Vdash_\p\dot{W}(\bar{p})\coloneq\{m<\omega:p_m\in\dot{G}\}
            \]
            where $\dot{G}$ denotes the canonical $\p$-name of a generic filter.
    		\item $Q\subseteq \p$ is \emph{Fr-linked} if there exists a function $\lim\colon Q^\omega\to\p$ such that for any countable sequence 
    		$\bar{q}\in Q^\omega$, 
    		\begin{equation}
    			\textstyle{\lim\bar{q}} \Vdash |\dot{W}(\bar{q})|=\omega.
    		\end{equation}
            Additionally, if $\ran(\lim)\subseteq Q$, we say $Q$ is closed-Fr-linked.
    		\item For an infinite cardinal $\mu$, $\p$ is \emph{$\mu$-(closed-)Fr-linked} if it is a union of $\mu$-many (closed-)Fr-linked components. When $\mu=\aleph_0$, we use $\sigma$ instead as usual. Define $<\mu$-(closed-)Fr-linkedness in the same way (for uncountable $\mu$). 
    	\end{enumerate}
    	We often say ``$\p$ has (closed-)Fr-limits'' instead of ``$\p$ is $\sigma$-(closed-)Fr-linked''.
    \end{dfn}
    
    \begin{lem}
    	\label{lem:poset_is_its_size_Frlinked}
    	Every poset $\p$ is closed-$|\p|$-Fr-linked. In particular, Cohen forcing $\mathbb{C}$ has closed-Fr-limits.
    \end{lem}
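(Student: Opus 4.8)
The plan is to use the most naive decomposition possible: write $\p=\bigcup_{p\in\p}\{p\}$, a union of exactly $|\p|$-many singletons, and argue that every singleton $Q_p\coloneqq\{p\}$ is closed-Fr-linked. Since $Q_p^\omega$ consists of the single constant sequence $\bar q=\langle p:m<\omega\rangle$, the limit function $\lim\colon Q_p^\omega\to\p$ only has to be specified on that one sequence, and the obvious choice is $\lim\bar q\coloneqq p$. Then $\ran(\lim)=\{p\}=Q_p$, so the extra ``closed'' requirement $\ran(\lim)\subseteq Q_p$ is satisfied automatically, and it remains only to check the defining condition $\lim\bar q\Vdash|\dot W(\bar q)|=\omega$.

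For this I would unwind the definition of $\dot W(\bar q)$: it is the $\p$-name forced to equal $\{m<\omega:q_m\in\dot G\}=\{m<\omega:p\in\dot G\}$, since $q_m=p$ for all $m$. As any condition forces itself into the generic filter, $p\Vdash p\in\dot G$, hence $p=\lim\bar q$ forces $\dot W(\bar q)=\omega$, which is in particular infinite. This shows $Q_p$ is closed-Fr-linked, and therefore $\p$ is closed-$|\p|$-Fr-linked. The ``in particular'' clause is then immediate: Cohen forcing $\mathbb{C}$ is the countable poset (say $2^{<\omega}$), so $|\mathbb{C}|=\aleph_0$, and the first part gives that $\mathbb{C}$ is $\aleph_0$-closed-Fr-linked, i.e.\ $\sigma$-closed-Fr-linked, which by definition means $\mathbb{C}$ has closed-Fr-limits.

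I do not expect any real obstacle here; the argument is essentially a definition-chase. The only points deserving a line of care are (i) recalling that a singleton $Q$ makes $Q^\omega$ a one-element set so that the somewhat demanding-looking quantifier ``for any countable sequence $\bar q\in Q^\omega$'' becomes vacuous beyond the constant sequence, and (ii) invoking correctly that $p\Vdash p\in\dot G$ for the canonical name $\dot G$ of the generic filter. I expect the final write-up to be a single short paragraph.
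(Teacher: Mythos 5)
Your argument is correct and is exactly the intended (and standard) one — the paper states this lemma without proof precisely because the singleton decomposition $\p=\bigcup_{p\in\p}\{p\}$, with the constant limit $\lim\langle p,p,\dots\rangle=p$ and the fact that $p\Vdash p\in\dot G$, settles it immediately. Nothing is missing, and the ``in particular'' clause for Cohen forcing follows just as you say from $|\mathbb{C}|=\aleph_0$.
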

    
    \begin{fac}[\cite{Mej19}]
        \label{fac:Fr_good}
        Any $\mu$-Fr-linked poset is $\mu^+$-$\mathbf{D}$-good for any infinite cardinal $\mu$. In particular, any $<\mu$-Fr-linked poset is $\mu$-$\mathbf{D}$-good for any uncountable cardinal $\mu$.
    \end{fac}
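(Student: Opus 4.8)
The plan is to verify the definition of $\mu^+$-$\mathbf{D}$-goodness directly and then read off the ``in particular'' clause. Write the $\mu$-Fr-linked poset as $\p=\bigcup_{i<\mu}Q_i$, where each $Q_i$ is Fr-linked, witnessed by a limit function $\lim_i\colon Q_i^\omega\to\p$, and fix a $\p$-name $\dot y$ for a member of $\oo$. Since, by enlarging $Y_0$, a $\kappa$-Fr-linked poset being $\kappa^+$-$\mathbf{D}$-good yields $\mu$-$\mathbf{D}$-goodness for any $\mu\geq\kappa^+$, the last sentence of the statement will follow at once from the first (applied to some $\kappa<\mu$ with $\p$ $\kappa$-Fr-linked, so that $\kappa^+\leq\mu$ since $\mu$ is uncountable).

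The heart of the argument is the following coordinatewise boundedness claim: \emph{for every Fr-linked $Q\subseteq\p$ and every $\p$-name $\dot z$ for a natural number, the downward-closed set $S_{Q,\dot z}\coloneqq\{m<\omega:\exists q\in Q\ (q\Vdash\dot z\geq m)\}$ is bounded in $\omega$.} The proof I have in mind is by contradiction: if $S_{Q,\dot z}=\omega$, choose for each $k<\omega$ a condition $q_k\in Q$ with $q_k\Vdash\dot z\geq k$, put $\bar q\coloneqq\langle q_k:k<\omega\rangle\in Q^\omega$, and note that $\lim\bar q$ is a genuine condition of $\p$ forcing $|\dot W(\bar q)|=\omega$; hence $\lim\bar q$ forces $\dot z\geq k$ for unboundedly many $k$, i.e.\ $\lim\bar q\Vdash\dot z\geq j$ for all $j<\omega$, contradicting $\Vdash\dot z\in\omega$. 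Granting this, for each $i<\mu$ apply the claim with $Q\coloneqq Q_i$ and $\dot z\coloneqq\dot y(n)$ and define $g_i\in\oo$ by $g_i(n)=\max S_{Q_i,\dot y(n)}$; set $Y_0\coloneqq\{g_i:i<\mu\}$, which is nonempty and of size $\leq\mu<\mu^+$.

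It remains to check that $Y_0$ witnesses goodness. Let $x\in\oo$ be unbounded over $Y_0$, that is, $x\not\leq^* g_i$ for every $i<\mu$, and suppose toward a contradiction that some $p\in\p$ forces $x\leq^*\dot y$, say $p\Vdash\forall n\geq N\,(x(n)\leq\dot y(n))$. Pick $i<\mu$ with $p\in Q_i$. Then for every $n\geq N$ the condition $p$ forces $\dot y(n)\geq x(n)$, so $x(n)\in S_{Q_i,\dot y(n)}$ and therefore $x(n)\leq g_i(n)$; thus $x\leq^* g_i$, contradicting the choice of $x$. Hence $\p$ forces $x\not\leq^*\dot y$, which is exactly the required instance of $\mu^+$-$\mathbf{D}$-goodness, and the ``in particular'' clause follows as explained above.

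The step I expect to be the only real obstacle is the boundedness claim --- more precisely, the realization that one should not try to dominate $\dot y$ itself but rather bound $\dot y(n)$ for each fixed $n$, so that the Fr-limit gets used exactly once, precisely to force the would-be unbounded value $\dot z$ out of $\omega$. Everything after that identification is routine bookkeeping with the definition of goodness.
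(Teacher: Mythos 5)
Your proof is correct and is essentially the standard argument from M\'ejia's paper that this Fact cites (the paper itself states it without proof): use the Fr-limit once per coordinate to show each $S_{Q_i,\dot y(n)}$ is finite, let $g_i(n)$ be its maximum, and take $Y_0=\{g_i:i<\mu\}$. One cosmetic point: from ``some $p$ forces $x\leq^*\dot y$'' you should first pass to an extension $p'\leq p$ deciding the threshold $N$ (and then pick $i$ with $p'\in Q_i$), since $p$ itself need not force $\forall n\geq N\,(x(n)\leq\dot y(n))$ for any fixed $N$; this does not affect the argument.
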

    
    We will now present a stronger property using (non-principal) ultrafilters, introduced in \cite{GMS16}, which will be used to get our desired model of Cicho\'n's maximum in \Cref{sec:CM}. 
    
    \begin{dfn}\label{dfn_UF_linked} 
    	Let $D$ be an ultrafilter and $\p$ be a poset.
    	\begin{enumerate}
    		\item A subset $Q\subseteq \p$ is \emph{$D$-lim-linked} if there exist a $\p$-name $\dot{D}^\prime$ of an ultrafilter extending $D$ and a function $\lim^D\colon Q^\omega\to\p$ such that for any countable sequence $\bar{q}\in Q^\omega$, 
    		\begin{equation}
    			\textstyle{\lim^D\bar{q}} \Vdash \dot{W}(\bar{q})\in \dot{D}^\prime.
    		\end{equation}
            Additionally, if $\ran(\lim^D)\subseteq Q$, we say that $Q$ is \emph{closed-$D$-lim-linked}.
            \item A subset $Q\subseteq \p$ is \emph{(closed-)UF-lim-linked} if it is (closed-)$D$-lim-linked for any ultrafilter $D$.
            \item For an infinite cardinal $\mu$, $\p$ is \emph{$\mu$-(closed-)UF-lim-linked} if it is a union of $\mu$-many (closed-)UF-lim-linked components. When $\mu=\aleph_0$, we use $\sigma$ instead as usual. Define $<\mu$-(closed-)UF-lim-linkedness in the same way (for uncountable $\mu$).  
    	\end{enumerate}
        We often say ``$\p$ has (closed-)UF-limits'' instead of ``$\p$ is $\sigma$-(closed-)UF-lim-linked''.
    \end{dfn}
    
    The following forcing-free characterization of $D$-lim-linkedness given by the fourth author will be useful.
    
    \begin{lem}[{\cite[Lem 3.28]{Yam25}}]\label{lem:chara_UF_linked}
    	Let $D$ be an ultrafilter, $\p$ a poset, $Q\subseteq\p$, $\lim^D\colon Q^\omega\to\p$. Then the following are equivalent:
    	\begin{enumerate}
    		\item $\lim^D$ witnesses $Q$ is $D$-lim-linked.\label{item_lim_1}
    		\item $\lim^D$ satisfies $(\star)_k$ for all $k<\omega$, where \label{item_lim_2}
    		\begin{align*}
    			(\star)_k:& \text{``Given }\bar{q}^j=\langle q_m^j:m<\omega\rangle\in Q^\omega\text{ for }j<k\text{ and }r\leq\textstyle{\lim^D}\bar{q}^j\text{ for all }j<k,\\
    			&\text{then }\{m<\omega:r \text{ and all }q_m^j \text{ for } j<k \text{ have a common extension}\}\in D\text{''}.
    		\end{align*}
    	\end{enumerate}
    \end{lem}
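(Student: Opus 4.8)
The plan is to prove both implications directly from the definition of $D$-lim-linkedness; the only substantive ingredient is a genericity argument turning the combinatorial clauses $(\star)_k$ into a finite-intersection-property statement for a generated filter. For (1)$\Rightarrow$(2), I would assume $\lim^D$ witnesses that $Q$ is $D$-lim-linked via a $\p$-name $\dot{D}^\prime$, fix $k<\omega$, sequences $\bar{q}^j\in Q^\omega$ for $j<k$ and $r\leq\lim^D\bar{q}^j$ for all $j<k$, and set $E=\{m<\omega:r\text{ and all }q^j_m\ (j<k)\text{ have a common extension}\}$. Since $r\leq\lim^D\bar{q}^j$, $r$ forces each $\dot{W}(\bar{q}^j)\in\dot{D}^\prime$, hence $r\Vdash\bigcap_{j<k}\dot{W}(\bar{q}^j)\in\dot{D}^\prime$ as $\dot{D}^\prime$ is forced to be a filter. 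The key observation is $r\Vdash\bigcap_{j<k}\dot{W}(\bar{q}^j)\subseteq\check{E}$: if $m$ lies in every $\dot{W}(\bar{q}^j)$ then $r,q^0_m,\dots,q^{k-1}_m$ all lie in the generic filter, so they have a common lower bound and $m\in E$. By upward closure $r\Vdash\check{E}\in\dot{D}^\prime$; if $E\notin D$ then $\omega\setminus E\in D\subseteq\dot{D}^\prime$ is forced too, contradicting properness of $\dot{D}^\prime$, so $E\in D$, i.e.\ $(\star)_k$.

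For (2)$\Rightarrow$(1), assuming all $(\star)_k$, I would let $\mathcal{F}_G$ denote (in each generic extension $V[G]$) the filter generated by $D$ together with $\{\dot{W}(\bar{q})[G]:\bar{q}\in Q^\omega,\ \lim^D\bar{q}\in G\}$, and fix one $\p$-name $\dot{D}^\prime$ naming, in every $V[G]$, an ultrafilter extending $\mathcal{F}_G$ (built by recursion along a ground-model well-ordering of the nice names for subsets of $\omega$, invoking Zorn's lemma inside $V[G]$). Granting that $\mathcal{F}_G$ is forced to be a proper filter, $\dot{D}^\prime$ is forced to be an ultrafilter extending $D$ and $\lim^D\bar{q}\Vdash\dot{W}(\bar{q})\in\mathcal{F}_G\subseteq\dot{D}^\prime$ for every $\bar{q}\in Q^\omega$, which is precisely what is required. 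So the remaining task is to show the generating family is forced to have the finite intersection property. For this I would fix $D_0\in D$ and $\bar{q}^0,\dots,\bar{q}^{k-1}\in Q^\omega$, take a condition $p$ forcing $\lim^D\bar{q}^j\in\dot{G}$ for all $j<k$, and prove $p\Vdash D_0\cap\bigcap_{j<k}\dot{W}(\bar{q}^j)\neq\emptyset$ by density: given $p^\prime\leq p$, pass to $p^{\prime\prime}\leq p^\prime$ lying below every $\lim^D\bar{q}^j$, apply $(\star)_k$ with $r=p^{\prime\prime}$ to obtain $E\in D$, choose $m\in D_0\cap E$ (nonempty since $D$ is a filter), and take a common extension $p^{\prime\prime\prime}\leq p^{\prime\prime}$ of $p^{\prime\prime},q^0_m,\dots,q^{k-1}_m$, which forces $m\in D_0\cap\bigcap_{j<k}\dot{W}(\bar{q}^j)$. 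Since $D$ is closed under finite intersections, this establishes the finite intersection property.

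The argument is short, and I expect the main obstacle to be the density step in (2)$\Rightarrow$(1): one must combine $(\star)_k$ with $D_0\in D$ to pin down a single index $m$ that simultaneously lies in $D_0$, lies in the ``common-extension'' set $E$, and is forceable into every $\dot{W}(\bar{q}^j)$ at once. The only other point needing care is the standard-but-fiddly bookkeeping required to produce the name $\dot{D}^\prime$ uniformly across all generic extensions, rather than choosing an ultrafilter extension separately in each $V[G]$.
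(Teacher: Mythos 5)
Your proof is correct, and it is essentially the standard argument for this characterization (the one given for \cite[Lem 3.28]{Yam25}, which this paper cites rather than reproves): $(1)\Rightarrow(2)$ via $r\Vdash\bigcap_{j<k}\dot W(\bar q^j)\in\dot D'$ and the fact that the common-extension set is a ground-model set containing this intersection, and $(2)\Rightarrow(1)$ by naming an ultrafilter extending the filter generated by $D$ together with the $\dot W(\bar q)$ for $\lim^D\bar q$ in the generic filter, with the finite intersection property supplied by $(\star)_k$. The minor glossed points (extending $p'$ successively below each $\lim^D\bar q^j$ using that forcing ``$\lim^D\bar q^j\in\dot G$'' gives compatibility, and obtaining a single name $\dot D'$ by fullness/maximal principle rather than your slightly convoluted recursion over nice names) are routine and do not affect correctness.
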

    
    This characterization is also useful to prove Fr-linkedness:
    
    \begin{lem}
    	\label{lem:Fr_linked_star_one}
    	Let $D$ be an ultrafilter, $\p$ a poset, $Q\subseteq\p$ and $\lim^D\colon Q^\omega\to\p$.
    	If $\lim^D$ satisfies $(\star)_1$ in Lemma \ref{lem:chara_UF_linked}, then  $\lim^D$ witnesses that $Q$ is Fr-linked.
    \end{lem}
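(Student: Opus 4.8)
The plan is to verify the defining property of Fr-linkedness directly, extracting it from $(\star)_1$ together with the fact that a (non-principal) ultrafilter on $\omega$ contains only infinite sets. Fix an arbitrary countable sequence $\bar{q}=\langle q_m:m<\omega\rangle\in Q^\omega$. Since $\lim^D$ is by assumption a function $Q^\omega\to\p$, it suffices to show that $\lim^D\bar{q}\Vdash|\dot{W}(\bar{q})|=\omega$.

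First I would assume toward a contradiction that this fails, so there are $r\leq\lim^D\bar{q}$ and $N<\omega$ with $r\Vdash\dot{W}(\bar{q})\subseteq N$. Apply $(\star)_1$ with $\bar{q}^0\coloneqq\bar{q}$ and the condition $r$, which is legitimate since $r\leq\lim^D\bar{q}^0$: the set
\[
W\coloneqq\{m<\omega:r\text{ and }q_m\text{ have a common extension}\}
\]
belongs to $D$, hence is infinite. Pick $m\in W$ with $m\geq N$ and a common extension $r'\leq r,q_m$. Then $r'\Vdash q_m\in\dot{G}$, that is, $r'\Vdash m\in\dot{W}(\bar{q})$. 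Since $r'\leq r$ and $m\geq N$, this contradicts $r\Vdash\dot{W}(\bar{q})\subseteq N$.

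Therefore no such $r,N$ exist, so $\lim^D\bar{q}\Vdash|\dot{W}(\bar{q})|=\omega$; as $\bar{q}\in Q^\omega$ was arbitrary, $\lim^D$ witnesses that $Q$ is Fr-linked. There is no genuine obstacle in this argument: the only point to keep track of is that the ultrafilters considered in \Cref{dfn_UF_linked} are non-principal, so membership in $D$ forces infiniteness, which is exactly what turns "$r$ is compatible with $q_m$ for $D$-many $m$" into the density statement defining $\Vdash|\dot{W}(\bar{q})|=\omega$.
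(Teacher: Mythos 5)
Your proof is correct and follows essentially the same route as the paper: apply $(\star)_1$ to a single sequence together with the non-principality of $D$ to conclude that every $r\leq\lim^D\bar{q}$ is compatible with $q_m$ for infinitely many $m$, which is exactly the density statement giving $\lim^D\bar{q}\Vdash|\dot{W}(\bar{q})|=\omega$. The paper phrases this directly rather than by contradiction, but the content is identical.
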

    \begin{proof}
    	By $(\star)_1$ and since $D$ is non-principal,
    	\[\text{if }\bar{q}=\langle q_m:m<\omega\rangle\in Q^\omega\text{ and }r\leq\textstyle{\lim^D}\bar{q},\text{ then we have }\exists^\infty m<\omega~\exists s\leq r,q_m.\]
    	Thus $\textstyle{\lim^D\bar{q}} \Vdash \exists^\infty m<\omega~q_m \in \dot{G}$.
    \end{proof}
    
    We will use closed-Fr-linkedness to control the values of $\non(M_{\finfin})$ and $\cov(M_{\finfin})$, not by goodness properties but by directly using closed-Fr-limits (\Cref{lem:closed_Fr_limits_keep_nonMfinfin_small}). To this end, we formulate a finite support iteration of ${<}\kappa$-closed-Fr-linked forcings. The formalization is based on \cite{Mej19}, \cite{CGHY24}.
    
    \begin{dfn}
    	\label{dfn:Gamma_iteration}
        Let $\kappa$ be an uncountable regular cardinal.
    	\begin{itemize}
    		\item 
    		A finite support iteration $\p_\gamma=\langle(\p_\xi,\qd_\xi):\xi<\gamma\rangle $ of ccc forcings is a \emph{${<}\kappa$-closed-Fr-iteration with witnesses $\langle\theta_\xi:\xi<\gamma\rangle$ and $\langle\dot{Q}_{\xi,\zeta}:\zeta<\theta_\xi,\xi<\gamma\rangle$} if for any $\xi<\gamma$, $\theta_\xi$ is a cardinal $<\kappa$ and
    		$\langle\dot{Q}_{\xi,\zeta}:\zeta<\theta_\xi\rangle$ are $\p_\xi$-names satisfying: 
    		\[
    		\Vdash_{\p_\xi}\dot{Q}_{\xi,\zeta}\subseteq\qd_\xi\text{ is closed-Fr-linked for }\zeta<\theta_\xi\text{ and }\bigcup_{\zeta<\theta_\xi}\dot{Q}_{\xi,\zeta}=\qd_\xi.
    		\]
    		\item A condition $p\in\p_\gamma$ is \emph{determined} if for each $\xi\in\dom(p)$, there is $\zeta_\xi<\theta_\xi$ such that  $\Vdash_{\p_\xi} p(\xi)\in \dot{Q}_{\xi,\zeta_\xi}$. Note that there are densely many determined conditions (proved by induction on $\gamma$).
    		
    	\end{itemize}
    \end{dfn}
    
    Closed-Fr-limits for conditions of the iteration $\p_\gamma$ are defined for ``refined'' sequences:
    
    \begin{dfn}
    	\label{dfn:uniform_delta_system}
        Let $\p_\gamma$ be a ${<}\kappa$-closed-Fr-iteration and $\delta$ be an ordinal.
    	We say that $\bar{p}=\langle p_m: m<\delta\rangle\in(\p_\gamma)^\delta$ is \emph{a uniform $\Delta$-system} if:
    	\begin{enumerate}
    		\item Each $p_m$ is determined, witnessed by $\langle\zeta^m_{\xi} :\xi\in\dom(p_m)\rangle$,
    		\item the family $\{\dom(p_m): m<\delta\}$ is a $\Delta$-system with root $\nabla$,
    		\item there is a sequence $\langle\zeta_\xi^{*} :\xi\in\nabla\rangle$ such that for $\xi\in\nabla$, $\zeta_\xi^{*}=\zeta^m_\xi$ for all $m<\omega$, i.e., all $p_m(\xi)$ are forced to be in a common closed-Fr-linked component for $\xi\in\nabla$,
    		\item all $\dom(p_ m)$ have $n^\prime$ elements, and $\dom(p_ m)=\{\a_{n, m}:n<n^\prime\}$ is the increasing enumeration,
    		\item there is $r^\prime\subseteq n^\prime$ such that $n\in r^\prime\Leftrightarrow\a_{n, m}\in\nabla$ for $n<n^\prime$,
    		\item for $n\in n^\prime\setminus r^\prime$, $\langle\a_{n, m}: m<\delta \rangle$ is (strictly) increasing.
    	\end{enumerate}
    \end{dfn}
    
    \begin{dfn} \label{dfn:of_lim_ite}
        Let $\p_\gamma$ be a ${<}\kappa$-closed-Fr-iteration and $\bar{p}=\langle p_m: m<\omega\rangle\in(\p_\gamma)^\omega$ be a uniform $\Delta$-system with root $\nabla$. We (inductively) define $p^\infty=\lim\bar{p}$ as follows:
    	\begin{enumerate}
    		\item $\dom(p^\infty)\coloneq\nabla$,
    		\item $p^\infty\on\xi\Vdash_{\p_\xi} p^\infty(\xi)\coloneq\lim\langle p_m(\xi):m\in\dot{W}(\bar{p}\on\xi)\rangle$ for $\xi\in\nabla$, where $\bar{p}\on\xi\coloneq\langle p_m\on\xi:m<\omega\rangle\in(\p_\xi)^\omega$. \label{eq_lim_second}
    	\end{enumerate}
    \end{dfn}
    
    To see that the second item is valid, $\dot{W}(\bar{p}\on\xi)$ has to be infinite, which is true: 
    
    \begin{lem}[{\cite{Mej19}, \cite[Lemma 3.6]{CGHY24}}]\label{lem:Fr-limit_principle}
        Let $\p_\gamma$ be a ${<}\kappa$-closed-Fr-iteration. Let $\bar{p}=\langle p_m: m<\omega\rangle\in(\p_\gamma)^\omega$ be a uniform $\Delta$-system and $p^\infty\coloneq\lim\bar{p}$.
    	Then
    	$p^\infty\Vdash_{\p_\gamma}\lvert\dot{W}(\bar{p})\rvert=\omega$.
    \end{lem}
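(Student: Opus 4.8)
The plan is to prove the statement by induction on $\gamma$, unpacking the recursive definition of $p^\infty=\lim\bar p$ in \Cref{dfn:of_lim_ite}. The case $\gamma=0$ is trivial, since then $\dot W(\bar p)=\omega$ outright.

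For the successor step $\gamma=\eta+1$, I would split according to whether $\eta$ lies in the root $\nabla$. If $\eta\notin\nabla$, then by conditions (4)--(6) of \Cref{dfn:uniform_delta_system} — using that $\bar p$ has length $\omega$ and $\eta$ is the top coordinate of $\p_\gamma$, so a strictly increasing $\omega$-sequence of ordinals bounded by $\eta$ cannot attain $\eta$ — no $p_m$ has $\eta$ in its domain; hence $\nabla\subseteq\eta$, $p^\infty$ is already a condition of $\p_\eta$, and $p_m\in\dot G_\gamma\iff p_m\on\eta\in\dot G_\eta$, so $\dot W(\bar p)=\dot W(\bar p\on\eta)$ and the inductive hypothesis applied to the uniform $\Delta$-system $\bar p\on\eta$ in $\p_\eta$ finishes this case. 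If $\eta\in\nabla$, then $\eta\in\dom(p_m)$ for every $m$ and, by condition (3), all the $p_m(\eta)$ are forced into one common closed-Fr-linked component $\dot Q_{\eta,\zeta^*_\eta}\subseteq\qd_\eta$. By the inductive hypothesis $p^\infty\on\eta$ forces $\dot W(\bar p\on\eta)$ to be infinite — exactly what is needed for the limit in the second clause of \Cref{dfn:of_lim_ite} to be well defined — and, passing below $p^\infty\on\eta$ to $V^{\p_\eta}$, Fr-linkedness of $\dot Q_{\eta,\zeta^*_\eta}$ gives that $p^\infty(\eta)=\lim\langle p_m(\eta):m\in\dot W(\bar p\on\eta)\rangle$ forces, in $\qd_\eta$, that infinitely many $p_m(\eta)$ with $m\in\dot W(\bar p\on\eta)$ enter the $\qd_\eta$-generic. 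For each such $m$ we have both $p_m\on\eta\in\dot G_\eta$ and $p_m(\eta)\in\dot G_{\qd_\eta}$, i.e.\ $p_m\in\dot G_\gamma$; hence $p^\infty\Vdash_{\p_\gamma}|\dot W(\bar p)|=\omega$.

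For the limit step the finite support of the iteration is decisive. Since $\dom(p^\infty)=\nabla$ is finite it lies below some $\xi^*<\gamma$, and $p_m\in\dot G_\gamma\iff p_m\on\xi\in\dot G_\xi$ for all $\xi<\gamma$, so $\dot W(\bar p)=\bigcap_{\xi<\gamma}\dot W(\bar p\on\xi)$, a $\subseteq$-decreasing intersection. If the supports $\bigcup_m\dom(p_m)$ are bounded below $\gamma$ — automatic when $\cf\gamma>\omega$ — I would pick $\xi^*$ bounding them, note that $\bar p$ is then a uniform $\Delta$-system in $\p_{\xi^*}$ with the same witnesses and with $p^\infty\in\p_{\xi^*}$ and $\dot W(\bar p)$ computed already from $\dot G_{\xi^*}$, and apply the inductive hypothesis at $\xi^*$, transferring the conclusion along the complete embedding $\p_{\xi^*}\lessdot\p_\gamma$. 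When $\cf\gamma=\omega$ and the supports are cofinal in $\gamma$ I would argue by density below $p^\infty$: given $q\le p^\infty$ and $N<\omega$, fix $\xi_0<\gamma$ with $\dom(q)\cup\nabla\subseteq\xi_0$; arranging (as one does in building such a $\Delta$-system) the non-root support blocks $\dom(p_m)\setminus\nabla$ to be increasing, one finds $m\ge N$ with $\dom(p_m)\cap\xi_0=\nabla$, and then, the part below $\xi_0$ being handled by the inductive hypothesis and Fr-linkedness at $\xi_0$ exactly as in the successor case, one grafts $p_m\on[\xi_0,\gamma)$ onto a condition of $\p_{\xi_0}$ below $q\on\xi_0$ to obtain $q'\le q,p_m$. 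This shows $p^\infty$ forces $\exists m\ge N\,(p_m\in\dot G_\gamma)$, so $p^\infty\Vdash|\dot W(\bar p)|=\omega$.

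I expect the main obstacle to be exactly this $\cf\gamma=\omega$ subcase of the limit step: one must keep careful track of how the restriction of the uniform $\Delta$-system to a bounded stage $\xi_0$ interacts with the recursive definition of $\lim\bar p$, and check that the grafted condition really is below $p^\infty$. The successor step, where the closed-Fr-linkedness of the iterands is actually used, is conceptually the heart of the matter but is routine once $\lim\bar p$ is unpackaged. Since the statement is essentially \cite{Mej19} and \cite[Lemma 3.6]{CGHY24}, one may alternatively just invoke those; the above is how their argument is reconstructed.
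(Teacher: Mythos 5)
Your overall route -- induction on $\gamma$, with the Fr-limit property of the common component doing the work when the top coordinate lies in $\nabla$, and finite supports doing the work at limits -- is exactly the argument behind the cited results (the paper itself only cites \cite{Mej19} and \cite{CGHY24} and gives no proof), and your successor step and your bounded-support limit step are correct. The genuine gap is the $\cf\gamma=\omega$ subcase with cofinal supports. There you claim that, ``arranging the non-root support blocks to be increasing'', one can find $m\geq N$ with $\dom(p_m)\cap\xi_0=\nabla$. \Cref{dfn:uniform_delta_system} only requires each non-root coordinate sequence $\langle\a_{n,m}\rangle_{m<\omega}$ to be strictly increasing; such a sequence may converge to an ordinal below $\xi_0$. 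For instance $\gamma=\omega+\omega$, $\nabla=\emptyset$, $\dom(p_m)=\{m,\omega+m\}$ is a legitimate uniform $\Delta$-system in which no $m$ satisfies $\dom(p_m)\cap\xi_0=\nabla$ once $\xi_0\geq\omega$; and the lemma must cover all uniform $\Delta$-systems, since in the paper it is applied (e.g.\ in \Cref{lem:closed_Fr_limits_keep_nonMfinfin_small}) to systems extracted from arbitrary conditions, where no such ``arrangement'' is available. Moreover, even where such $m$ exist, the phrase ``the part below $\xi_0$ being handled by the inductive hypothesis'' is not yet licensed: the inductive hypothesis speaks about uniform $\Delta$-systems in $\p_{\xi_0}$ and about \emph{their} limits, but $\bar{p}\on\xi_0\coloneq\langle p_m\on\xi_0:m<\omega\rangle$ need not be uniform (the restricted domains need not have constant size), while any tail or subsequence of it that is uniform has a limit computed by the abstract $\lim$ function which need not coincide with $p^\infty$, so one cannot conclude anything below $q\leq p^\infty$ from it.

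The standard repair keeps your structure but makes the induction robust under restriction. First, by the $\Delta$-system property alone, every ordinal outside $\nabla$ lies in the domain of at most one $p_m$; hence all but finitely many $m$ satisfy $\dom(p_m)\cap\dom(q)\subseteq\nabla$, and for such $m$ a common extension of $q$ and $p_m$ exists as soon as $q$ is compatible with $p_m\on\xi_0$ in $\p_{\xi_0}$ (graft $p_m\on[\xi_0,\gamma)$ on top, exactly as you do). Second, prove the lemma by induction on $\gamma$ for the wider class of sequences satisfying only clauses (1)--(3) of \Cref{dfn:uniform_delta_system}: the successor step goes through verbatim (a coordinate outside the root lies in at most one domain, so at a non-root top coordinate at most one $p_m$ is affected, and at a root coordinate only clause (3) is used), and this class is closed under restriction, with $\lim(\bar{p}\on\xi)=(\lim\bar{p})\on\xi$ since the recursion in \Cref{dfn:of_lim_ite} only involves the root coordinates. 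Then in the limit case the inductive hypothesis applies directly to $\bar{p}\on\xi_0$, whose limit \emph{is} $p^\infty$, yielding $p^\infty\Vdash_{\p_{\xi_0}}\lvert\dot{W}(\bar{p}\on\xi_0)\rvert=\omega$ and hence, via the grafting step, the desired density statement below $p^\infty$ in $\p_\gamma$.
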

    
    The following is specific for closedness:
    
    \begin{lem}\label{lem:closedness}
        Let $\p_\gamma$ be a ${<}\kappa$-closed-Fr-iteration. Let $\bar{p}=\langle p_m: m<\omega\rangle\in(\p_\gamma)^\omega$ be a uniform $\Delta$-system with parameters as in \Cref{dfn:uniform_delta_system}. Then $\lim\bar{p}$ is determined, witnessed by $\langle\zeta^{*}_{\xi} :\xi\in\nabla\rangle$.
    \end{lem}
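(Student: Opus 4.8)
The plan is to prove the statement by recursion along the finite root $\nabla=\{\eta_0<\cdots<\eta_{l-1}\}$ of the uniform $\Delta$-system, paralleling the recursive definition of $p^\infty:=\lim\bar{p}$ in \Cref{dfn:of_lim_ite}. Setting $\eta_l:=\gamma$, I would show by induction on $i\le l$ that $p^\infty\on\eta_i=\lim(\bar{p}\on\eta_i)$ is a condition in $\p_{\eta_i}$ which forces $p^\infty(\eta_j)\in\dot{Q}_{\eta_j,\zeta^*_{\eta_j}}$ for every $j<i$; the case $i=l$ (where $\bar{p}\on\gamma=\bar{p}$) is exactly the lemma. Here one uses the routine bookkeeping facts, immediate from \Cref{dfn:uniform_delta_system} and \Cref{dfn:of_lim_ite}, that each $\bar{p}\on\eta_i$ is again a uniform $\Delta$-system in $\p_{\eta_i}$ with root $\nabla\cap\eta_i$ and the inherited witnesses, and that $\lim$ commutes with the restriction $\on\eta_i$. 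The base case $i=0$ is trivial, since $p^\infty\on\eta_0$ has empty domain.

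The inductive step $i\mapsto i+1$ (for $i<l$, so $\eta_i\in\nabla$) is where closedness of the Fr-limits enters. Assume $p^\infty\on\eta_i\in\p_{\eta_i}$ forces $p^\infty(\eta_j)\in\dot{Q}_{\eta_j,\zeta^*_{\eta_j}}$ for $j<i$. By \Cref{lem:Fr-limit_principle} applied to $\p_{\eta_i}$ and $\bar{p}\on\eta_i$ we have $p^\infty\on\eta_i\Vdash_{\p_{\eta_i}}\lvert\dot{W}(\bar{p}\on\eta_i)\rvert=\omega$, so below $p^\infty\on\eta_i$ the expression $\langle p_m(\eta_i):m\in\dot{W}(\bar{p}\on\eta_i)\rangle$ (re-indexed by the increasing enumeration of $\dot{W}(\bar{p}\on\eta_i)$) is a genuine $\omega$-sequence. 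By clause (3) of \Cref{dfn:uniform_delta_system}, $\Vdash_{\p_{\eta_i}}p_m(\eta_i)\in\dot{Q}_{\eta_i,\zeta^*_{\eta_i}}$ for all $m<\omega$, so this $\omega$-sequence lies entirely in the single closed-Fr-linked component $\dot{Q}_{\eta_i,\zeta^*_{\eta_i}}$, and the limit function used in \Cref{dfn:of_lim_ite}\eqref{eq_lim_second} is the closed-Fr-limit function of that component. By closedness the range of that function is contained in $\dot{Q}_{\eta_i,\zeta^*_{\eta_i}}$, hence $p^\infty\on\eta_i\Vdash p^\infty(\eta_i)\in\dot{Q}_{\eta_i,\zeta^*_{\eta_i}}\subseteq\qd_{\eta_i}$. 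In particular $p^\infty(\eta_i)$ is forced to be a legitimate member of $\qd_{\eta_i}$, so $p^\infty\on\eta_{i+1}$ is a condition in $\p_{\eta_{i+1}}$ forcing $p^\infty(\eta_j)\in\dot{Q}_{\eta_j,\zeta^*_{\eta_j}}$ for all $j\le i$, completing the induction. (As usual, off $p^\infty\on\eta_i$ the name $p^\infty(\eta_i)$ is the trivial condition, which one may assume lies in every component, so the conclusion is the unrestricted forcing statement required by \Cref{dfn:Gamma_iteration}.)

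I expect no real obstacle: this is a bookkeeping lemma. The one point needing care is exactly the one emphasized above — that the limit at coordinate $\eta_i$ is computed \emph{inside} the component $\dot{Q}_{\eta_i,\zeta^*_{\eta_i}}$, which is legitimate because uniformity (clause (3)) forces all the $p_m(\eta_i)$ into that one component, and it is closedness of that component's limit that upgrades ``$p^\infty(\eta_i)$ is a condition'' to ``$p^\infty(\eta_i)$ is a condition lying in $\dot{Q}_{\eta_i,\zeta^*_{\eta_i}}$''. Everything else — coherence of restrictions of uniform $\Delta$-systems, commutation of $\lim$ with $\on\eta_i$, and the standing conventions on the names $p^\infty(\xi)$ — follows directly from \Cref{dfn:uniform_delta_system}, \Cref{dfn:of_lim_ite}, and the standard formalization of finite support iterations, so I would not belabor it.
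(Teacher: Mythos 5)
Your proof is correct and takes essentially the same route as the paper, whose entire proof reads ``Direct from the definitions of $\lim\bar{p}$ and closedness'': your induction along the root just spells out that clause (3) of \Cref{dfn:uniform_delta_system} forces all $p_m(\xi)$ into the single component $\dot{Q}_{\xi,\zeta^*_\xi}$, so the limit function used in \Cref{dfn:of_lim_ite} is that component's closed-Fr-limit and its value stays in $\dot{Q}_{\xi,\zeta^*_\xi}$. The appeal to \Cref{lem:Fr-limit_principle} for the infinitude of $\dot{W}(\bar{p}\on\xi)$ is likewise already part of the paper's justification that the limit is well defined, so there is nothing missing.
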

    \begin{proof}
        Direct from the definitions of $\lim\bar{p}$ and closedness.
    \end{proof}

    \subsection{Separations into two values}\label{sec:Fr-limits}

    In this subsection, we study the separation of cardinal invariants into two values.

    \subsubsection{The poset $\posetED$ and Fr-limits.}
    
    We have seen the connection between $\non(M_{\I})$ (and $\cov(M_{\I})$) and the constant evasion (and prediction) number in \Cref{sec:non_cov}. In \Cref{prop:non_M_finfin}, we have shown that $\non(M_{\finfin})=\eec_{\leq}(2)$, and in \Cref{prop:eect_nonMed}, we proved that $\eect\leq\non(M_{\ed})$, so $\max\{\eect,\bb\}\leq\non(M_{\ed})$. We will show that equality cannot be proved. 
    
    We introduce the forcing notion $\posetED$ which generically adds an $\mathbf{M}_{\ed}$-dominating function $\phi_G\colon(\omega\times\omega)^{<\omega}\to\ed$.
    
    \begin{dfn}\label{dfn_posetED}
        The poset $\posetED$ is defined as follows: Its conditions are $p=(\sigma^p,s^p,n^p,F^p)$, where
        \begin{itemize}
        	\item $\sigma^p\colon(\omega\times\omega)^{<n^p}\to\omega$ is a finite partial function, 
        	\item $s^p\colon\{(u^\frown a,v):(u,v)\in(\omega\times\omega)^{<n^p},a\in\omega\}\to\omega$ is a finite partial function, 
        	\item $n^p\in\omega$  
        	\item $F^p\subseteq\oo$ is finite.   
        \end{itemize}
        The order $q\leq p$ is given by
        \begin{itemize}
        	\item $\sigma^q\supseteq \sigma^p$, $s^q\supseteq s^p$, $n^q\geq n^p$ and $F^q\supseteq F^p$,  
        	\item $\forall i\in\left[n^p,n^q\right)\forall x,y\in F^p$, either:  
        	\begin{itemize}
        		\item $(x\on i, y\on i)\in\dom(\sigma^q)$ and $x(i)\leq\sigma^q(x\on i, y\on i)$, or  
        		\item $(x\on (i+1), y\on i)\in\dom(s^q)$ and $y(i)= s^q(x\on (i+1), y\on i)$.
        	\end{itemize}
        \end{itemize}
        Let $G\subset\posetED$ be a generic filter. In $V[G]$,  $\phi_G\colon(\omega\times\omega)^{<\omega}\to\omega\times\omega$ is given by: 
    	\[
        \phi_G(u,v)\coloneqq\bigcup_{p\in G}\{(a,b)\in\omega\times\omega:a\leq\sigma^p(u,v)\text{ or }b= s^p(u^\frown a,v)\}.
        \]
    \end{dfn}
    
    One can easily show the following.
    
    \begin{lem}
        \label{lem:posetED_basics}\leavevmode
    	\begin{itemize}
            \item For $n<\omega$, $\{p:n_p\geq n\}$ is dense. Thus $\posetED\Vdash\dom(\phi_G)=(\omega\times\omega)^{<\omega}$.
            \item For $(u,v)\in(\omega\times\omega)^{<\omega}$,  $\posetED\Vdash\forall^\infty n<\omega~|(\phi_G(u,v))_n|=1$, so $\ran(\phi_G)\subseteq\ed$.
            \item For $f,g\in \oo$, $\{p:f,g\in F^p\}$ is dense. 
    		Thus $\posetED\Vdash\forall(f,g)\in(\omega\times\omega)^\omega\cap V,~(f,g)\in M_{\phi_G}$, i.e., $(\omega\times\omega)^\omega\cap V\subseteq M_{\phi_G}$.
    	\end{itemize}
    \end{lem}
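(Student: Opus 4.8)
The plan is to verify the three items in turn; each reduces to a routine density argument exploiting the fact that a condition $p$ may be freely extended in all four coordinates subject only to the ``padding'' clause on the newly covered interval $[n^p,n^q)$, and this clause can always be satisfied by enlarging $\sigma^q$. The key preliminary remark is that, since $G$ is a filter, $\sigma_G\coloneqq\bigcup\{\sigma^p:p\in G\}$ and $s_G\coloneqq\bigcup\{s^p:p\in G\}$ are genuine partial functions, and $(a,b)\in\phi_G(u,v)$ holds iff $(u,v)\in\dom(\sigma_G)$ with $a\leq\sigma_G(u,v)$, or $(u^\frown a,v)\in\dom(s_G)$ with $b=s_G(u^\frown a,v)$.

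For the first item, given $p$ and $n<\omega$, I would put $n^q\coloneqq\max\{n,n^p\}$, $F^q\coloneqq F^p$, $s^q\coloneqq s^p$, and extend $\sigma^q\supseteq\sigma^p$ by declaring $\sigma^q(x\on i,y\on i)\coloneqq\max\{z(i):z\in F^p\}$ for all $i\in[n^p,n^q)$ and $x,y\in F^p$. This is a well-defined finite partial function (its new domain elements have length $\geq n^p$, hence lie outside $\dom(\sigma^p)$, and the value does not depend on the particular $x,y$), and $q\leq p$ holds by construction, so $\{p:n^p\geq n\}$ is dense. The same brute-force extension shows $\{p:(u,v)\in\dom(\sigma^p)\}$ is dense for each $(u,v)\in(\omega\times\omega)^{<\omega}$, whence genericity gives $\dom(\phi_G)=(\omega\times\omega)^{<\omega}$.

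For the second item, fix $(u,v)$. By the density just noted and compatibility of conditions in $G$, the value $m\coloneqq\sigma_G(u,v)$ is defined, and every $p\in G$ with $(u,v)\in\dom(\sigma^p)$ has $\sigma^p(u,v)=m$; similarly $\{p:(u^\frown a,v)\in\dom(s^p)\}$ is dense for each $a<\omega$, so $s_G(u^\frown a,v)$ is defined. Reading off the definition of $\phi_G$: for $n>m$ one gets $(\phi_G(u,v))_n\subseteq\{s_G(u^\frown n,v)\}$, hence $\forall^\infty n<\omega~\lvert(\phi_G(u,v))_n\rvert\leq 1$, so $\phi_G(u,v)\in\ed$ with witness $k=1$, giving $\ran(\phi_G)\subseteq\ed$.

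For the third item, given $p$ and $f,g\in\oo$, the tuple $(\sigma^p,s^p,n^p,F^p\cup\{f,g\})$ extends $p$ (the order clause over $[n^p,n^p)$ being vacuous), so $\{p:f,g\in F^p\}$ is dense. Now fix a $\posetED$-generic $G$ and $(f,g)\in(\omega\times\omega)^\omega\cap V$, and pick $p\in G$ with $f,g\in F^p$. For each $i\geq n^p$, choose $q\in G$ with $n^q>i$ (first item); applying $q\leq p$ at coordinate $i$ with $x\coloneqq f$, $y\coloneqq g$ yields either $f(i)\leq\sigma_G(f\on i,g\on i)$ or $g(i)=s_G((f\on i)^\frown f(i),g\on i)$, and in either case $(f(i),g(i))\in\phi_G(f\on i,g\on i)$. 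Since this holds for all $i\geq n^p$, we get $(f,g)\in M_{\phi_G}$, i.e.\ $(\omega\times\omega)^\omega\cap V\subseteq M_{\phi_G}$. No step is a genuine obstacle; the only care needed is the bookkeeping that $q\leq p$ constrains only coordinates in $[n^p,n^q)$ and that compatibility in $G$ makes $\sigma_G$ and $s_G$ functions, so the vertical-section and membership computations make sense.
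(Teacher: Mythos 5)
Your proof is correct and is precisely the routine density argument the paper intends here — the lemma is stated with ``One can easily show the following'' and no proof is given. Two small touch-ups: in the third item, a $q\in G$ with $n^q>i$ need not satisfy $q\leq p$, so first pass to a common extension $r\in G$ of $p$ and $q$ (then $r\leq p$ and $n^r>i$) before invoking the order clause at coordinate $i$; and in the second item your ``$\leq 1$'' already upgrades to the stated ``$=1$'' for $n>m$, since you showed $s_G(u^\frown n,v)$ is defined for every $n$.
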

    
    $\posetED$ is $\sigma$-centered and $\sigma$-Fr-linked.
    
    \begin{lem}\label{lem:posetED_centered_Frlinked}
    	Fix $n\in\omega$, $\sigma\colon(\omega\times\omega)^{<n}\to\omega$ finite partial function and $s\colon\{(u^\frown a,v):(u,v)\in(\omega\times\omega)^{<n},a\in\omega\}\to\omega$ finite partial function. Let $L<\omega$ and $\bar{x}^*=\{x^*_l:l<L\}\subseteq\omega^n$ such that all $x^*_l$ are pairwise different. 
    	Then
        \[
        Q\coloneqq\{p\in\posetED\colon\sigma^p=\sigma, s^p=s,n^p=n, \{x\on n\colon x\in F^p\}=\bar{x}^*\}
        \]
    	is centered and Fr-linked. In particular, $\posetED$ is $\sigma$-centered and $\sigma$-Fr-linked.
    \end{lem}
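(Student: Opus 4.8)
The plan is to prove the two assertions about the fixed block $Q=Q(\sigma,s,n,\bar x^{*})$ in turn, the second being the substantive one.

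\emph{Centeredness, hence $\sigma$-centeredness.} Given $p_{0},\dots,p_{k-1}\in Q$, let $r$ be the condition with $\sigma^{r}=\sigma$, $s^{r}=s$, $n^{r}=n$ and $F^{r}=\bigcup_{j<k}F^{p_{j}}$. Since every $x\in F^{r}$ lies in some $F^{p_{j}}$ we still have $\{x\on n:x\in F^{r}\}=\bar x^{*}$, so $r\in Q$; and $r\le p_{j}$ for every $j<k$, because the only nontrivial clause of the ordering concerns the levels $i\in[n^{p_{j}},n^{r})=[n,n)=\emptyset$ and is therefore vacuous. So $Q$ is directed, in particular centered. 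As there are only countably many tuples $(\sigma,s,n,\bar x^{*})$ of the prescribed form, and every $p\in\posetED$ belongs to $Q(\sigma^{p},s^{p},n^{p},\{x\on n^{p}:x\in F^{p}\})$, it follows that $\posetED$ is $\sigma$-centered.

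\emph{Fr-limits.} I would produce a function $\lim\colon Q^{\omega}\to\posetED$ with $\lim\bar q\Vdash|\dot{W}(\bar q)|=\omega$ for every $\bar q=\langle q_{m}:m<\omega\rangle\in Q^{\omega}$; together with the countable cover above this yields that $\posetED$ is $\sigma$-Fr-linked, and hence $\mathbf{D}$-good by \Cref{fac:Fr_good}. The limit condition $p^{\infty}=\lim\bar q$ keeps the common stem, $\sigma^{p^{\infty}}=\sigma$, $s^{p^{\infty}}=s$, $n^{p^{\infty}}=n$, and its $F$-part is built by a simultaneous diagonalization over the finite sets $F^{q_{m}}$: one thins $\omega$ to an infinite set $W$ (part of the data of $\lim$) and constructs, for each $l<L$, a ``trace'' real $t_{l}\supseteq x^{*}_{l}$ so that every sufficiently large $m\in W$ has some $x\in F^{q_{m}}$ agreeing with $t_{l}$ on an initial segment for as long as the relevant entries of the $F^{q_{m}}$ stay bounded, while on the remaining levels those entries are forced to grow beyond any prescribed bound. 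Then $F^{p^{\infty}}\coloneqq\{t_{0},\dots,t_{L-1}\}$ (extended arbitrarily to members of $\oo$). By a standard density argument, $p^{\infty}\Vdash|\dot{W}(\bar q)|=\omega$ is equivalent to the statement that every $r\le p^{\infty}$ is compatible with $q_{m}$ for infinitely many $m$, and it suffices to find such $m$ inside $W$.

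So fix $r\le p^{\infty}$; its finite partial functions $\sigma^{r},s^{r}$ involve only finitely many levels $i<n^{r}$ and finitely many sequences, and because $r\le p^{\infty}$ they already discharge, at each level $i\in[n,n^{r})$, the ordering requirement for the trace reals. For a sufficiently large $m\in W$ one builds a common extension $t\le r,q_{m}$ by treating each pair $x,y\in F^{q_{m}}$ and each level $i\in[n,n^{r})$ separately: at a ``bounded'' level the chosen real of $F^{q_{m}}$ coincides with a trace on its $i$-th coordinate, so the commitment $r$ already made about that trace transfers directly; at an ``unbounded'' level the length-$(i{+}1)$ initial segment of the relevant real escapes the finite domain of $s^{r}$, so one may extend $s^{t}$ there to pin $y(i)$, and extend $\sigma^{t}$ at a fresh pair to bound $x(i)$; finally one takes $n^{t}\ge n^{r}$, $F^{t}=F^{r}\cup F^{q_{m}}$ and collects the finitely many choices into one coherent pair $(\sigma^{t},s^{t})$. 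This amalgamation is the hard part: since the entries of the $F^{q_{m}}$ need not be bounded, the $\sigma$-coordinate alone cannot discharge the ordering clause and one must play the $\sigma$- and $s$-coordinates off against each other, and the finitely many discharges over all pairs in $F^{q_{m}}\times F^{q_{m}}$ and all levels in $[n,n^{r})$ must be simultaneously consistent — in particular two reals of $F^{q_{m}}$ branching at a level $i$ over a common $i$-prefix cannot be separated by the $s$-coordinate and must be absorbed by a single $\sigma$-value at $i$. Designing the diagonalization in $\lim$ strongly enough — the right interleaving of ``bounded-level pigeonhole'' with ``unbounded-level growth'', and tracking enough of the branching structure of the $F^{q_{m}}$ — so that this succeeds for infinitely many $m\in W$ is where the real work lies; the rest is routine bookkeeping.
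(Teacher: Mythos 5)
Your centeredness argument is correct (and more explicit than the paper, which just calls it clear): within a block $Q$ the stem data $\sigma,s,n$ coincide, so the union of the $F$-parts is a common extension because the ordering clause only concerns levels in $[n,n)=\emptyset$, and there are countably many blocks.

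The Fr-linked part, however, has a genuine gap: the two things the lemma actually requires — a concrete definition of the limit function $\lim\colon Q^\omega\to\posetED$ and a proof that every $r\leq\lim\bar q$ is compatible with $q_m$ for infinitely many $m$ — are exactly what you defer ("designing the diagonalization in $\lim$ strongly enough \dots\ is where the real work lies"). You correctly isolate the crux (at levels where the entries of the $F^{q_m}$ are unbounded, $\sigma$ alone cannot discharge the ordering clause and the $s$-coordinate must pin a value, while two reals of $F^{q_m}$ sharing an $i$-prefix cannot be separated by $s$), but you do not show that any choice of "trace reals" and thinned set $W$ makes the amalgamation go through, and your specific proposal — keep $\sigma^{p^\infty}=\sigma$, $n^{p^\infty}=n$, and put a trace $t_l\supseteq x^*_l$ for \emph{every} $l<L$ into $F^{p^\infty}$ — is problematic: beyond the level where the $l$-th coordinates diverge, the values of $t_l$ are arbitrary, so the commitments that $r\leq p^\infty$ makes about $t_l$ at those levels say nothing about the actual $x^m_l(i)$, and the set $W$ must be fixed before $r$ is seen, whereas which entries of $\sigma^r,s^r$ must be avoided depends on $r$.

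For comparison, the paper resolves both points at once by working with an arbitrary ultrafilter $D$ and the criterion $(\star)_1$ of \Cref{lem:Fr_linked_star_one} (so "infinitely many $m$" becomes "$D$-many $m$", and all pigeonholing is done after $r$ is given by intersecting finitely many $D$-large sets). The limit is defined by taking coordinatewise $D$-limits of the sequences $\langle x^m_l\rangle_m$: indices $l$ where the limit real is total go into $F^\infty$; for the divergent indices one records only the first divergence level $n_l$ and the stabilized prefix $t_l\in\omega^{n_l}$, increases $n^\infty$ beyond all $n_l+1$, and \emph{extends} $\sigma^\infty\supseteq\sigma$ so that the limit condition itself pre-discharges the ordering clause on the finite prefix sets $W_i$ up to level $n^\infty$ (this is the step your "keep $\sigma$ and $n$ unchanged" variant omits). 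Then, given $r\leq q^\infty$, for $D$-many $m$: prefixes of divergent coordinates above $n_l$ escape $\dom(\sigma^r)$, the pairs $(x^m_l\on(n_l+1),x^m_{l'}\on n_l)$ escape $\dom(s^r)$, and one pins $x^m_{l'}(n_l)$ there (well-defined precisely because the stems $x^*_l$ are pairwise distinct) and bounds by fresh $\sigma$-entries elsewhere; a three-case check of levels $i$ then gives a common extension of $r$ and $q_m$. Without this (or an equally explicit) construction and verification, the statement is not proved.
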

    \begin{proof}
        Centeredness is clear. To prove $Q$ is Fr-linked, take some ultrafilter $D$, we will show $(\star)_1$ as in \Cref{lem:Fr_linked_star_one}.
    	For $\bar{x}=\langle x^m\in\oo:m<\omega\rangle$, 
    	define a partial function $\bar{x}^\infty\colon\omega\to\omega$ as follows: for each $i<\omega$, if there (uniquely) exists $a_i<\omega$ such that $\{m<\omega:x^i(m)=a_i\}\in D$,
    	let $i\in\dom(\bar{x}^\infty)$ and $\bar{x}^\infty(i)\coloneq a_i$. Otherwise let $i\notin\dom(\bar{x}^\infty)$.
    	
    	For $\bar{q}=\langle q_m=(\sigma,s,n,F_m=\{x^m_l:l<L\}):m<\omega\rangle\in Q^\omega$,
    	we define $\lim^D\bar{q}\coloneqq q^\infty\coloneqq(\sigma^\infty,s^\infty,n^\infty,F^\infty)$ as follows: 
    	\begin{itemize}
    		\item $\bar{x}_l\coloneq\langle x^m_l:m<\omega\rangle$, $A\coloneq\{l<k:{\dom(\bar{x}_l^\infty})=\omega\}, B\coloneq L\setminus A$.
    		\item $F^\infty\coloneq\{{\bar{x}_l^\infty}:l\in A\}$.
    		\item For $l\in B$, $n_l\coloneqq \min(\omega\setminus \dom(\bar{x}_l^\infty))$
    		(hence $n_l\geq n$). For $l\in A$, put $n_l\coloneqq\omega$ for convenience.
    		\item $n^\infty\coloneqq n \cup \max\{n_l+1:l\in B\}$. 
    		\item For each $l\in B$, let $t_l\in\omega^{n_l}$ be such that \[
            X_0^l\coloneqq\{m<\omega:x_l^m\on n_l=t_l\}\in D.
            \]
            Put $X_0\coloneqq\bigcap_{l\in B}X_0^l\in D$.
    		\item For $i<\omega$,
            put
            \[
            W_i\coloneqq\{\bar{x}_l^\infty\on i:l\in A\}\cup\{t_l\on i:l\in B, i\leq n_l\}.
            \]
            Note that $W_i$ is a finite subset of $\omega^i$.
    		\item Take a finite partial function $\sigma^\infty\colon(\omega\times\omega)^{<n^p}\to\omega$ such that:
    		\begin{enumerate}
    			\item $\sigma^\infty\supseteq\sigma$.
    			\item For any $i\in\left[n,n^\infty\right)$ and any $u\in W_{i+1}$ and $v\in W_i$, $(u\on i,v)\in\dom(\sigma^\infty)$ and $u(i)\leq\sigma^\infty(u\on i,v)$.
    		\end{enumerate}
    		\item $s^\infty\coloneqq s$.
    	\end{itemize}
    	Clearly $q^\infty$ is a valid condition,
    	so it is enough to show that $\lim^D$ satisfies $(\star)_1$.
    	Assume that  $r\leq q^\infty=(\sigma^\infty,s^\infty,n^\infty,F^\infty)$.
    	Let $r\coloneq (\sigma^r,s^r,n^r,F^r)$.
    	For $l\in A$, put
        \[
        X_1^l\coloneqq\{m<\omega:x^m_l\on (n^r+1)=x^\infty_l\on(n^r+1)\}\in D.
        \]
    	Let $X_1\coloneqq \bigcap\{X_1^l:l\in A\}\in D$.
    	For $l\in B$ and $l^\prime<L$, let
        \[
        X_2^{l,l^\prime}\coloneqq \{m<\omega:(x^m_l\on(n_l+1),x^m_{l^\prime}\on n_l)\notin\dom(s^r)\}\in D.
        \]
    	Put $X_2\coloneqq\bigcap\{X_2^l:l\in B,l^\prime<L\}\in D$.
    	For $l\in B$ and $i>n_l$, let
        \[
        X_3^{l,i}\coloneqq \{m<\omega:x^m_l\on i\notin\operatorname{proj}_1(\dom(\sigma^r))\cup\operatorname{proj}_2(\dom(\sigma^r))\}\in D,
        \]
        where $\operatorname{proj}_1(\dom(\sigma^r))\coloneqq\dom(\dom(\sigma^r))$ and $\operatorname{proj}_2(\dom(\sigma^r))\coloneqq\ran(\dom(\sigma^r))$.
    	Put $X_3\coloneqq\bigcap\{X_3^{l,i}:l\in B, i\in\left(n_l,n^r\right)\}\in D$. Put $X\coloneq X_0\cap X_1\cap X_2\cap X_3\in D$.
        
    	It is enough to show that for all $m\in X$, $q_m$ and $r$ are compatible.
    	Fix such $m$. Note that $\sigma^r\supseteq\sigma^\infty\supseteq\sigma$ and $s^r\supseteq s^\infty\supseteq s$.  
    	Take a condition $q^\prime\coloneq (\sigma^\prime,s^\prime,n^\prime,F^\prime)$ (as a common extension of $q_m$ and $r$) such that: 
    	\begin{itemize}
    		\item $F^\prime\coloneq F^r\cup F_m$.
    		\item $n^\prime\coloneqq n^r$.
    		\item $\sigma^\prime\supseteq\sigma^r$ and for all $(u,v)\in(\omega\times\omega)^{<\omega}\setminus \dom (\sigma^r)$ of the form $(x^{m}_{l}\on i, x^{m}_{l^\prime}\on i)$ for some $l,l^\prime<L$ and $i<n^r$, let $(u,v)\in\dom(\sigma^\prime)$ and:
            \begin{equation}
                \sigma^\prime(u,v)\coloneqq \max\{x^{m}_{l^*}(i):l^*<L\}. 
            \end{equation}
            Note that there are only finitely many such $(u,v)$.
    		\item $s^\prime\supseteq s^r$ and for all $(u^\prime,v)\in\fsq\times\fsq\setminus \dom (s^r)$ of the form $(x^{m}_{l}\on (n_l+1), x^{m}_{l^\prime}\on n_l)$ for some $l\in B$ and $l^\prime<L$, 
            let $(u^\prime,v)\in\dom(s^\prime)$ and:
            \begin{equation}
            \label{equation:s_prime_ED}
                s^\prime(u^\prime,v)\coloneqq x^{m}_{l^\prime}(n_l),
            \end{equation}
            which is possible since all $x^*_l$ are pairwise distinct. Note that there are only finitely many such $(u^\prime,v)$.
    	\end{itemize}
    	By the choice of $X_3$, for any $l\in B$, $i\in\left(n_l,n^r\right)$ and $l^\prime<L$, we have 
    	$(x^m_l\on i, x^m_{l^\prime}\on i), (x^m_{l^\prime}\on i, x^m_l\on i)\notin \dom (\sigma^r)$ and hence
    	\begin{equation}
    		\label{eq:sigma_prime}
    		x^m_l(i)\leq\sigma^\prime(x^m_l\on i, x^m_{l^\prime}\on i)\text{ and }x^m_{l^\prime}(i)\leq\sigma^\prime(x^m_{l^\prime}\on i, x^m_l\on i).
    	\end{equation}
    	By the choice of $X_2$, for any $l\in B$ and $l^\prime<L$ we have $(x^m_l\on(n_l+1),x^m_{l^\prime}\on n_l)\notin\dom(s^r)$ and hence 
    	\begin{equation}
    		\label{eq:s_prime}
    		x^m_{l^\prime}(n_l)= s^\prime(x^m_l\on(n_l+1),x^m_{l^\prime}\on n_l).
    	\end{equation}
    	
    	$q^\prime\leq r$ trivially holds.
    	To see $q^\prime\leq q_m$, let $l,l^\prime<L $ and $ i\in\left[n, n^\prime\right)$. 
    	We show that:
        \begin{equation}
            \label{eq:posetED_WANT}
            \text{either }x^m_l(i)\leq\sigma^\prime(x^m_l\on i, x^m_{l^\prime}\on i)\text{ or }
            x^m_{l^\prime}(i)= s^\prime(x^m_l\on(i+1),x^m_{l^\prime}\on i).
        \end{equation}
        
    	First assume $l,l^\prime\in A$. If $ i\in\left[n, n^\infty\right)$, by the choice of $X_1$, $x^m_l\on (i+1)=\bar{x}^\infty_l\on (i+1)\in W_{i+1}$ and $x^m _{l^\prime}\on  i=\bar{x}^\infty_{l^\prime}\on  i\in W_i$. Thus $x^m_l(i)\leq\sigma^\infty(x^m_l\on i, x^m_{l^\prime}\on i)=\sigma^\prime(x^m_l\on i, x^m_{l^\prime}\on i)$. If $ i\in\left[n^\infty, n^r\right)$, \eqref{eq:posetED_WANT} follows from $r\leq q^\infty$ and by the choice of $X_1$.
    
        Thus we may assume either $l\in B$ or $l^\prime\in B$. If $i\in\left(n_l,n^r\right)$ or $i\in\left(n_{l^\prime},n^r\right)$, we are done by \eqref{eq:sigma_prime}. 
    	If $i\notin\left(n_l,n^r\right)$ and $i\notin\left(n_{l^\prime},n^r\right)$, we particularly have $i\in \left[n, n^\infty\right)$.
    	Put $u\coloneqq x^m_l \on (i+1)$ and $v\coloneqq x^m_{l^\prime} \on i$. There are three cases.
        \begin{enumerate}[label = (\roman*)]
            \item
            If $u\in W_{i+1}$ and $v\in W_i$, then $(u\on i,v)\in\dom(\sigma^\infty)$ and $u(i)\leq\sigma^\infty(u\on i, v)=\sigma^\prime(u\on i, v)$ by the choice of $\sigma^\infty$, so \eqref{eq:posetED_WANT} is true.
    
            \item
            If $v\notin W_i$, then $l^\prime\in B$ holds, since otherwise $v=x^m _{l^\prime}\on  i=\bar{x}^\infty_{l^\prime}\on  i\in W_i$ by the choice of $X_1$. Since $i\notin\left(n_{l^\prime},n^r\right)$, we have $i\leq n_{l^\prime}$. By the choice of $X_0$, $v=x_{l^\prime}^m\on i=t_{l^\prime}\on i\in W_i$, a contradiction.
    
            \item
            If $u\notin W_{i+1}$, for the same reason as the previous case, $l\in B$ and $i+1>n_l$ since otherwise we would obtain $u\in W_{i+1}$. Since $i\notin\left(n_l,n^r\right)$, we have $i=n_l$. In this case $x^m_{l^\prime}(n_l)= s^\prime(x^m_l\on(n_l+1),x^m_{l^\prime}\on n_l)$ by \eqref{eq:s_prime} and hence \eqref{eq:posetED_WANT} is true.
        \end{enumerate}
        Therefore, \eqref{eq:posetED_WANT} is satisfied in any case and hence $q^\prime\leq q_m$.
    \end{proof}
    
    Brendle introduced a forcing notion $\p^\omega$ which adds a generic predictor on the $2$-constant prediction:
    
    \begin{fac}[{\cite{Bre03}}]\label{fac:p_omega}
        For $2\leq k<\omega$, there is a $\sigma$-linked poset $\p^\omega$ such that:
        \[
        \Vdash_{\p^\omega}\exists\dot{\pi}\in\Pred~\forall x\in\oo\cap V~ (x\pck\dot{\pi}).
        \]
    \end{fac}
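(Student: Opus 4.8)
Fix $k\ge2$. The plan is to realize $\p^\omega$ as a poset of finite approximations to the generic predictor together with finitely many \emph{promises}. A condition is a triple $p=(t^p,F^p,n^p)$, where $n^p<\omega$, $t^p$ is a finite partial function from $\fsq$ to $\omega$ all of whose domain nodes have length $<n^p$ (a partial predictor), and $F^p\in[\oo\cap V]^{<\omega}$ is a finite set of ground-model reals, subject to a compatibility requirement relating the promises to the partial predictor. For $x\in F^p$ let $m^p_x$ be the least $m\le n^p$ such that for every $i$ with $m\le i<n^p$ there is $j\in[i,i+k)$, $j<n^p$, with $x\on j\in\dom(t^p)$ and $t^p(x\on j)=x(j)$; so $t^p$ already $k$-constantly predicts $x$ between $m^p_x$ and the height. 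The requirement is that for any two distinct $x,y\in F^p$, with $d(x,y)$ the least level at which they differ, $\max\{m^p_x,m^p_y\}>d(x,y)$. The order is $q\le p$ iff $t^q\supseteq t^p$, $t^q$ and $t^p$ agree on all nodes of length $<n^p$, $n^q\ge n^p$, $F^q\supseteq F^p$, and each old promise remains $k$-constantly predicted by $t^q$ above its (hence unchanged) threshold up to $n^q$. The generic object is $\dot\pi\coloneqq\bigcup\{t^p:p\in\dot G\}$.

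The genericity half is routine. One checks that $\{p:n^p>N\}$ is dense --- raise the height and extend $t^p$ above $n^p$ keeping every old promise $k$-constantly predicted at the new levels, which produces no conflict because the requirement forces all pairs in $F^p$ to have separated below $n^p$, so beyond $n^p$ their restrictions are pairwise distinct --- and that $\{p:x\in F^p\}$ is dense for each $x\in\oo\cap V$ --- add $x$ with a threshold past $n^p$ and past every level at which $x$ separates from a member of $F^p$, which decouples it from the old promises. It follows that $\dot\pi$ is forced to be a total predictor, every $x\in\oo\cap V$ enters some $F^p$ with $p\in\dot G$, and thereafter the threshold of $x$ is frozen at $m^p_x$ while $\dot\pi$ keeps a hit of $x$ in each window of length $k$ above $m^p_x$, for heights $n^q\to\omega$; hence $x\pck\dot\pi$, which is the displayed statement.

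The substantive half is $\sigma$-linkedness, and this is exactly where $k$-constant prediction does what ordinary prediction cannot. Colour $p$ by the pair $(t^p,n^p)$; since the $m^p_x$ depend only on $t^p$ and $n^p$, this is well behaved and there are only countably many colours. The task is to amalgamate conditions of a common colour, i.e. to show that a fixed finite partial predictor $t$ of height $n$ can simultaneously witness the $k$-constant prediction of the union of the given promise sets. The delicate configuration is a promise $x$ of one condition and a promise $y$ of another that agree up to a level $d$ (possibly $d\ge n$) and separate there: a single value $t(x\on d)=t(y\on d)$ can serve at most one of them at $d$, so for ordinary prediction this is an outright obstruction; but $k$-constant prediction only asks for one hit in every window of length $k$, and with $k\ge2$ the predictor can hit $x$ at $d$ and hit $y$ at an adjacent level --- below $d$ the two reals are literally indistinguishable so any hit serves both, and above $d$ they are completely decoupled. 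Converting this local slack into a correct global amalgamation --- handling the finitely many separation levels, which may cluster within one window, and bookkeeping the thresholds so the amalgam is a legitimate condition below each given one --- is the technical core, and is what Brendle's construction carries out. With it the colour classes are linked, so $\p^\omega$ is $\sigma$-linked; I expect this amalgamation to be the only genuinely hard step. (Brendle organizes the construction as an $\omega$-indexed object, whence the notation $\p^\omega$, and reads off the stronger linkedness used in the iteration behind \Cref{thm:const_evasion_b}; only $\sigma$-linkedness and the forcing statement above are needed here.)
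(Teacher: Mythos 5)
The paper itself gives no argument for this Fact (it is quoted from Brendle), so the only question is whether your reconstruction stands on its own, and as written it does not: the step you defer ("the technical core \dots is what Brendle's construction carries out") is exactly where your formulation fails. Concretely, your side condition $\max\{m^p_x,m^p_y\}>d(x,y)$ destroys $\sigma$-linkedness of the poset itself, not just of the colouring. Take $p=(t,\{x\},n^p)$ and $q=(t,\{y\},n^p)$ where $x\neq y$ are ground-model reals extending a common sequence of length $n^p$, so $d(x,y)\geq n^p$; both are legal conditions (the side condition is vacuous for singletons). Any common extension $r$ must contain both promises, and by your ordering it must keep $x$ $k$-constantly predicted above $m^p_x$ and $y$ above $m^q_y$; hence $m^r_x\leq m^p_x\leq n^p\leq d(x,y)$ and likewise for $y$, so $\max\{m^r_x,m^r_y\}\leq d(x,y)$ and $r$ violates your side condition. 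Thus $p\perp q$ although they share the colour $(t,n^p)$; since there are uncountably many single-promise conditions whose promises pairwise agree up to level $n^p$, no countable colouring whatsoever has linked classes. (There is also an internal slip: by your definition $m^p_x\leq n^p$, yet your density argument for adding a promise wants to "add $x$ with a threshold past $n^p$", which is impossible without first raising the height.)

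Nor can the side condition simply be dropped: for $k=2$ take $x,z\in F^p$ agreeing up to level $d+1\geq n^p$, $y,w\in F^q$ agreeing up to $d+1$, with $x$ and $y$ first differing at $d$. At level $d$ a predictor can hit only one of the two values $x(d)=z(d)$ and $y(d)=w(d)$, and at level $d+1$ only one of $x,z$ and one of $y,w$, so some promise misses both $d$ and $d+1$ and its window $[d,d+2)$ is lost; hence the amalgam $(t,F^p\cup F^q,n^p)$, though now a condition, can never be extended in height beyond $d+1$, height-density fails below it, and the displayed statement is no longer forced. What a correct construction has to do is tie the side condition to the height rather than to the thresholds (e.g.\ demand that the promises of a condition pairwise split below $n^p$, with each promise's starting level recorded explicitly and allowed to exceed old heights when the promise is introduced); then two same-coloured conditions are amalgamated by first raising the height past all cross-splitting levels, where each node above $n^p$ carries at most one promise from each side, conflicts are disjoint cross pairs concentrated at single levels, and the $k\geq2$ window slack resolves them. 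So your architecture (finite partial predictor plus finitely many promises, colour by the predictor part, exploit the $k$-window slack) is the right one, but the proposal both misstates the structural condition that makes it work and leaves the one genuinely hard verification unproved.
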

    
    Now we prove the consistency of $\max\{\bb,\eect\}<\non(M_{\ed})$ (and the dual).
    
    \begin{thm}
    \label{thm:Con_max_b_eect_nonMED}\leavevmode
        \begin{enumerate}
            \item Given $\kappa<\lambda=\lambda^{<\kappa}$ regular uncountable, there is a ccc poset forcing $\bb=\eec(2)=\eec_2=\kappa$ and $\non(M_{\ed})=\cc=\lambda$.
            In particular, $\max\{\bb,\eect\}<\non(M_{\ed})$ is consistent.
            \item Given $\kappa$ regular uncountable and $\lambda=\lambda^{\omega}>\kappa$, there is a ccc poset forcing $\cov(M_{\ed})=\kappa$ and $\dd=\vvc_2=\lambda=\continuum$. In particular, $\cov(M_{\ed})<\min\{\dd,\vvct\}$ is consistent.
        \end{enumerate}
    \end{thm}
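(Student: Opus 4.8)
\emph{Plan.} The construction follows the standard bookkeeping scheme for separating invariants of Cicho\'n type, as in \cite{Bre03} and \cite{Yam25}; the only new ingredient is the poset $\posetED$ of \Cref{dfn_posetED}, whose relevant properties have already been isolated: it is $\sigma$-centered and $\sigma$-Fr\'echet-linked (\Cref{lem:posetED_centered_Frlinked}), and its generic $\phi_G$ covers all ground-model points of $(\omega\times\omega)^\omega$ by the single set $M_{\phi_G}\in M_\ed$ (\Cref{lem:posetED_basics}). Over a ground model satisfying $\lambda=\lambda^{<\kappa}$ (e.g.\ a model of $\mathsf{GCH}$), build a finite-support iteration $\langle\mathbb{P}_\alpha,\dot{\mathbb{Q}}_\alpha:\alpha<\lambda\rangle$ of ccc forcings with Fr\'echet-limits in the sense of \Cref{dfn:Gamma_iteration}, where a bookkeeping function enumerating cofinally often all $\mathbb{P}_\alpha$-names for $<\kappa$-sized families of reals dictates that $\dot{\mathbb{Q}}_\alpha$ be one of:
\begin{itemize}
\item $\posetED$ as computed in $V^{\mathbb{P}_\alpha}$, used cofinally often;
\item a Hechler-type poset of size $<\kappa$ adding a real dominating a bookkept $<\kappa$-sized subfamily of $\oo$;
\item Brendle's poset $\mathbb{P}^\omega$ of \Cref{fac:p_omega} (with $k=2$), relativized to a bookkept $<\kappa$-sized subfamily of $\oo$ so as to have size $<\kappa$.
\end{itemize}

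\emph{Values.} As the iteration has length $\lambda$, all iterands have size $\leq\lambda$, and $\lambda^{\aleph_0}=\lambda$, it forces $\continuum=\lambda$. By \Cref{lem:posetED_basics}, any $F\subseteq(\omega\times\omega)^\omega$ of size $<\lambda=\cf(\lambda)$ lies in some $V^{\mathbb{P}_\beta}$ with $\beta<\lambda$, and the next $\posetED$-generic puts $F$ into $M_\ed$; thus $\non(M_\ed)\geq\lambda$, so $\non(M_\ed)=\lambda$. The $\posetED$-iterands are $\sigma$-Fr\'echet-linked, hence $\mathbf{D}$-good (\Cref{fac:Fr_good}), and $\sigma$-centered, hence $\sigma$-$l$-linked for every $l$ and so $\mathbf{CPR}_2$-good (\Cref{cor:infty_linked_CPR_good}); the other iterands, having size $<\kappa$, are $\kappa$-good for every Polish relational system. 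Since $\kappa$-goodness is iterable (\Cref{fac:goodness_iteration}), \Cref{fac:FM21} yields $C_{[\lambda]^{<\kappa}}\preceq_T\mathbf{D}$ and $C_{[\lambda]^{<\kappa}}\preceq_T\mathbf{CPR}_2$, so $\bb\leq\kappa$ and $\eec_2\leq\kappa$; the reverse inequalities $\bb\geq\kappa$ and $\eec(2)\geq\kappa$ hold because, by the bookkeeping, every $<\kappa$-sized family of reals is eventually dominated, resp.\ $2$-constantly predicted by a single predictor (\Cref{fac:p_omega}). With $\eec(2)\leq\eec_2$ this gives $\bb=\eec(2)=\eec_2=\kappa$, and the ``in particular'' clause follows since $\bb,\eect\leq\non(M_\ed)$ by \Cref{lem:b_nonM} and \Cref{prop:eect_nonMed}.

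\emph{Main obstacle.} The delicate point, hidden above, is to choose the Hechler-type and --- especially --- the relativized prediction iterands so that, although each raises $\bb$ or $\eec(2)$, the $\kappa$-sized witnesses accumulated for the upper bounds on $\bb$ and $\eec_2$ survive to the end; this, together with the Fr\'echet-limit machinery of \cite{Mej19} that prevents $\bb$ from dropping at limit stages, is precisely the technical heart of Brendle's \Cref{thm:const_evasion_b}, reused here almost verbatim --- what is genuinely new is only the verification that $\posetED$ fits this framework with the correct linkedness and coverage, and that is the content of \Cref{lem:posetED_basics} and \Cref{lem:posetED_centered_Frlinked}.

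\emph{Part (2).} One dualizes throughout: iterate Hechler forcing and $\mathbb{P}^\omega$ cofinally to force $\dd=\vvc_2=\continuum=\lambda$ (whence $\vvct=\lambda$ as well, since a family of predictors witnessing $\vvct$ restricts to one witnessing $\vvc_2$, so $\vvc_2\leq\vvct$), use copies of $\posetED$ scheduled on a cofinal $\kappa$-pattern --- together with the dual preservation results keeping the resulting $\kappa$-sized family of $M_\ed$-covers cofinal --- to force $\cov(M_\ed)=\kappa$, and note that the ``in particular'' clause then follows from $\cov(M_\ed)\leq\dd$ and $\cov(M_\ed)\leq\vvct$ (\Cref{lem:b_nonM}, \Cref{prop:eect_nonMed}).
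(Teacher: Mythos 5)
Your part (1) is correct and is essentially the paper's own proof: the same three kinds of iterands (cofinally many copies of $\posetED$, plus subforcings of Hechler and of $\mathbb{P}^\omega$ of size $<\kappa$ chosen by bookkeeping), the lower bounds $\bb,\eec(2)\geq\kappa$ and $\non(M_{\ed})\geq\lambda$ by bookkeeping, and the upper bounds $\bb,\eec_2\leq\kappa$ via \Cref{lem:posetED_centered_Frlinked}, \Cref{fac:Fr_good}, \Cref{cor:infty_linked_CPR_good}, \Cref{fac:goodness_iteration} and \Cref{fac:FM21}. Your ``main obstacle'' paragraph, however, misplaces the difficulty: once each iterand is known to be $\kappa$-$\mathbf{D}$-good and $\kappa$-$\mathbf{CPR}_2$-good (and every poset of size $<\kappa$ is automatically $\kappa$-good), \Cref{fac:goodness_iteration} and \Cref{fac:FM21} already give $\bb\leq\kappa$ and $\eec_2\leq\kappa$; there is no extra delicate choice of the small iterands and no direct use of Fr-limits at limit stages in this theorem (Fr-linkedness enters only through \Cref{fac:Fr_good} to certify goodness of $\posetED$). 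This is harmless for (1), since your ``Values'' paragraph already contains the complete argument.

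Part (2), as you describe it, does not work, and it is not how the paper argues. If Hechler and $\mathbb{P}^\omega$ occur cofinally while the copies of $\posetED$ sit on a cofinal pattern of order type $\kappa$, then the iteration has cofinality $\kappa$, and the Hechler generics (resp.\ the $\mathbb{P}^\omega$-predictors of \Cref{fac:p_omega}) added along a cofinal $\kappa$-sequence form a dominating family (resp.\ a witnessing family of predictors), forcing $\dd\leq\kappa<\lambda$ and $\vvc_2\leq\vvct\leq\kappa$, contradicting $\dd=\vvc_2=\lambda$. If instead the length has cofinality $>\kappa$ so that Hechler can be cofinal, then no set of order type $\kappa$ is cofinal, and moreover $\cov(M_{\ed})\geq\covm\geq\cf(\text{length})>\kappa$ (Cohen reals appear cofinally at limit stages of any nontrivial finite support iteration, cf.\ \Cref{lem:b_nonM}), contradicting $\cov(M_{\ed})=\kappa$. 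The ``dual preservation results keeping the $\kappa$-sized family of $M_{\ed}$-covers cofinal'' you invoke are established nowhere in the paper and are unnecessary under the correct design. The paper instead takes length $\lambda+\kappa$ with the first $\lambda$ iterands Cohen and the last $\kappa$ iterands $\posetED$: then $\cov(M_{\ed})\leq\kappa$ because every real appears before some stage of the final $\kappa$-block and is caught by the next generic $M_{\phi_G}$ (\Cref{lem:posetED_basics}), $\cov(M_{\ed})\geq\covm\geq\kappa$ since the length has cofinality $\kappa$, and $\dd,\vvc_2\geq\lambda$ follow from \Cref{fac:FM21} with $\theta=\aleph_1$, using that every iterand (Cohen and $\posetED$) is $\mathbf{D}$-good and $\mathbf{CPR}_2$-good; no Hechler or $\mathbb{P}^\omega$ is used at all in (2). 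Note also that this argument covers singular $\lambda$ with $\lambda^{\omega}=\lambda$, whereas ``Hechler cofinally'' would only yield $\dd=\cf(\lambda)$, so even a two-block repair of your design would need both the goodness-based preservation through the final $\posetED$-block and an unwarranted regularity assumption on $\lambda$.
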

    \begin{proof}
        \begin{enumerate}
            \item Using a bookkeeping argument, craft $\p$ a finite support iteration of length $\lambda$ whose iterands are either:
            \begin{enumerate}
                \item $\posetED$,
                \item a subforcing of Hechler forcing of size $<\kappa$, or
                \item a subforcing of $\p^\omega$ of size $<\kappa$.
            \end{enumerate}
            such that $\posetED$ appears cofinally, and all possible witnesses for $\bb$ and $\eec(2)$ of size $< \kappa$ are destroyed.
            In the $V^\p$-extension, clearly $\lambda\leq\non(M_{\ed})\leq\cc$ and $\bb,\eec(2)\geq\kappa$. By \Cref{lem:poset_is_its_size_Frlinked} and \Cref{lem:posetED_centered_Frlinked}, every iterand is $<\kappa$-Fr-linked and hence $\kappa$-$\mathbf{D}$-good by \Cref{fac:Fr_good}.
            Thus $\p$ is also $\kappa$-$\mathbf{D}$-good by \Cref{fac:goodness_iteration}, so we have $\bb\leq\kappa$ by \Cref{fac:FM21}. Similarly, by \Cref{lem:posetED_centered_Frlinked}, every iterand is $<\kappa$-$l$-linked for any $l<\omega$, $\p$ is $\kappa$-$\mathbf{CPR}_2$-good by \Cref{cor:infty_linked_CPR_good} and hence $\eec(2)\leq\eec_2\leq\kappa$.
            \item Let $\p$ be a finite support iteration of length $\lambda+\kappa$ such that the first $\lambda$-many iterands are Cohen forcings and each of the rest is $\posetED$. It is easy to see that $\p$ forces $\kappa\leq\covm\leq\cov(M_{\ed})\leq\kappa$ and $\cc\leq\lambda$. 
            Since every iterand is $\mathbf{D}$-good and $\mathbf{CPR}_2$-good, we have $\Vdash_{\p}\dd,\vvc_2\geq\lambda$ by \Cref{fac:FM21}.
        \end{enumerate}
    \end{proof}

    \subsubsection{The poset $\posetEDfin$ and closed-Fr-limits.}\label{sec:posetEDfin}

    Let $\langle I_i=\left[M_i,M_{i+1}\right)\rangle_{i<\omega}$ be the interval partition of $\omega$ with $|I_i|=i$ and we consider $\edfin$ on this interval partition. We introduce the forcing notion $\posetEDfin$ which generically adds a $\mathbf{K}_{\edfin}$-dominating function $\phi_G\colon\omega\to\edfin$.
	
    \begin{dfn} 	
		A forcing notion $\posetEDfin$ is defined as follows:
		\begin{itemize}
			\item Conditions are $p=(i,k,s,\varphi)=(i_p,k_p,s_p,\varphi_p)$ where $i,k<\omega$, $s\in([M_i]^{\leq k})^{<\omega}$ and $\varphi\colon \omega\to [\omega]^{\leq k}$ such that $\varphi\on |s|=s$.
			\item The order $(i^\prime,k^\prime,s^\prime,\varphi^\prime)\leq(i,k,s,\varphi)$ is defined by: $i^\prime\geq i$, $k^\prime\geq k$, $\varphi^\prime(n)\supseteq\varphi(n)$ for all $n<\omega$, $|s^\prime|\geq|s|$ and for $n<|s|$,
			\begin{gather}
				\label{gath_end_ext_EDfin} s^\prime(n)\cap M_i=s(n),\text{ and}\\ 
				\label{gath_density_small_EDfin} \text{if }i\leq j< i^\prime,\text{ then } |s^\prime(n)\cap I_j|\leq n.
			\end{gather}
			\item For a generic filter $G$, define $\phi_G\colon\omega\to\mathcal{P}(\omega)$ by:
			\[\phi_G(n)\coloneq\bigcup \{s_p(n):p\in G\text{ and }n<|s|\}.\]
		\end{itemize}
	\end{dfn}
	
	One easily sees:
    
	\begin{lem}
		\label{lem:pEDfin_basics}
		The following sets are open dense:
		\begin{enumerate}
			\item\label{item_lem_pEDfin_basics_x} For $x\in\oo$, $\{p:x(n)\in\varphi_p(n)\text{ for all }n\geq |s_p|\}$, 
			\item for $i<\omega$, $\{p:i_p\geq i\}$, 
			\item\label{item_lem_pEDfin_basics_s} for $n<\omega$, $\{p:|s_p|\geq n\}$. 
		\end{enumerate}
	\end{lem}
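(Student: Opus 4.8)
The plan is to verify, for each of the three families of conditions listed, both that it is downward closed (open) and that it is dense, working directly from the definition of the order on $\posetEDfin$. All of this is short bookkeeping; the only point requiring a moment's care is density of the family in \eqref{item_lem_pEDfin_basics_s}, where lengthening the stem $s$ forces us to raise the first coordinate $i_p$ of the condition.

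For downward closure, recall that $q\leq p$ gives $i_q\geq i_p$, $|s_q|\geq|s_p|$, and $\varphi_q(m)\supseteq\varphi_p(m)$ for every $m<\omega$. Hence \eqref{item_lem_pEDfin_basics_x}', sorry—items (2) and (3) are immediate. For item \eqref{item_lem_pEDfin_basics_x}: if $p$ is in the set then $x(m)\in\varphi_p(m)\subseteq\varphi_q(m)$ for all $m\geq|s_p|$, and in particular for all $m\geq|s_q|\geq|s_p|$, so $q$ is in the set too. For density, I would start from an arbitrary $p=(i,k,s,\varphi)$ and argue as follows. For item \eqref{item_lem_pEDfin_basics_s} with target length $n>|s|$: since $\bigcup_{m<n}\varphi(m)$ is finite, choose $i'\geq i$ with $M_{i'}\supseteq\bigcup_{m<n}\varphi(m)$ and set $q=(i',k,\varphi\on n,\varphi)$. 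One checks that $\varphi\on n\in([M_{i'}]^{\leq k})^{<\omega}$, that $\varphi_q\on|s_q|=s_q$, and that the order clauses hold because for $m<|s|$ the set $s(m)\subseteq M_i$ is disjoint from every $I_j$ with $j\geq i$, so the ``small density'' requirement $|s_q(m)\cap I_j|\leq m$ holds trivially (it is imposed only on the old coordinates $m<|s|$); thus $q\leq p$ and $|s_q|=n$. Allowing $i'$ to exceed any prescribed bound (and keeping $n=|s|$) simultaneously gives density for item (2). For item \eqref{item_lem_pEDfin_basics_x}: keep $i'=i$ and $s_q=s$ (so $|s_q|=|s|$), set $k_q=k+1$, and define $\varphi_q(m)=\varphi(m)$ for $m<|s|$ and $\varphi_q(m)=\varphi(m)\cup\{x(m)\}$ for $m\geq|s|$; then $\varphi_q$ maps into $[\omega]^{\leq k+1}$, $\varphi_q\on|s|=s$, and the two order clauses on coordinates $m<|s|$ are vacuous since there is no $j$ with $i\leq j<i'$, so $q\leq p$ lies in the set because $x(m)\in\varphi_q(m)$ for every $m\geq|s_q|$.

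The only obstacle worth flagging is the one mentioned above: in item \eqref{item_lem_pEDfin_basics_s}, the new stem entries $s_q(m)=\varphi(m)$ for $|s|\leq m<n$ must be subsets of $M_{i'}$, which is exactly why we must raise $i_p$; everything else, including the fact that the ``small density'' clause never plays a nontrivial role in these density arguments, is routine.
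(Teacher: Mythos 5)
Your proof is correct and is exactly the routine verification the paper leaves implicit (it states the lemma with ``One easily sees''): openness from the monotonicity of $i_p$, $|s_p|$, $\varphi_p$ under the order, and density by raising $i_p$, copying $\varphi\on n$ into the stem, or enlarging $k_p$ by one and adding $x(m)$ to $\varphi(m)$ for $m\geq|s_p|$. One tiny wording slip: in the density argument for item (1) the end-extension clause $s_q(m)\cap M_i=s(m)$ is not \emph{vacuous} (only the clause quantifying over $i\leq j<i'$ is); it simply holds trivially because $s_q=s$ and $s(m)\subseteq M_i$, which is what your argument in item (3) already uses.
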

    
	By Lemma \ref{lem:pEDfin_basics}\eqref{item_lem_pEDfin_basics_s}, we may always assume $|s_p|\geq k_p$.
	By \eqref{gath_density_small_EDfin} and by Lemma \ref{lem:pEDfin_basics} \eqref{item_lem_pEDfin_basics_x}, we have the following:
	
    \begin{lem}
		Let $G$ be a generic filter. The following are true in $V[G]$:
		\begin{enumerate}
			\item $\forall n<\omega~\forall^\infty i<\omega~|\phi_G(n)\cap I_i|\leq n$. In particular, $\phi_G(n)\in\edfin$.
			\item $\forall x\in\oo\cap V~\forall^\infty n<\omega~x(n)\in \phi_G(n)$. In other words, $\oo\cap V\subset K_{\phi_G}$.
		\end{enumerate}
	\end{lem}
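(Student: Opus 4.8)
\textit{Proof proposal.} The plan is to read off both clauses directly from the density facts in \Cref{lem:pEDfin_basics} together with the conditions \eqref{gath_end_ext_EDfin} and \eqref{gath_density_small_EDfin} in the definition of the order on $\posetEDfin$. The one preliminary observation I would isolate first is that, for a fixed $n<\omega$, the family $\{s_r(n):r\in G,\ n<|s_r|\}$ is directed under inclusion with union $\phi_G(n)$: if $r_1,r_2\in G$ both have $n<|s_{r_\ell}|$ and $r_3\le r_1,r_2$ lies in $G$, then \eqref{gath_end_ext_EDfin} gives $s_{r_\ell}(n)=s_{r_3}(n)\cap M_{i_{r_\ell}}\subseteq s_{r_3}(n)$ for $\ell=1,2$. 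Moreover, by \Cref{lem:pEDfin_basics}\eqref{item_lem_pEDfin_basics_s} I may fix once and for all a condition $p\in G$ with $n<|s_p|$, and then the conditions $r\le p$ in $G$ are cofinal in this directed family, so $\phi_G(n)=\bigcup\{s_r(n):r\in G,\ r\le p\}$.

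For clause (1), with $p$ and $n$ as above I would set $i_0\coloneqq i_p$ and show that $|s_r(n)\cap I_i|\le n$ for every $r\le p$ in $G$ and every $i\ge i_0$. If $i\ge i_r$ this is immediate because $s_r(n)\subseteq M_{i_r}$ is disjoint from $I_i=[M_i,M_{i+1})$; if $i_0\le i<i_r$ it is precisely \eqref{gath_density_small_EDfin} with $j=i$. Since a directed union of finite sets each of size $\le n$ again has size $\le n$, it follows that $|\phi_G(n)\cap I_i|\le n$ for all $i\ge i_0$. As only finitely many $i$ are $<i_0$, this is exactly $\forall^\infty i<\omega~|\phi_G(n)\cap I_i|\le n$, whence $\phi_G(n)\in\edfin$ (the bound $k\coloneqq n$ works).

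For clause (2), I would fix $x\in\oo\cap V$ and use \Cref{lem:pEDfin_basics}\eqref{item_lem_pEDfin_basics_x} to choose $p\in G$ with $x(n)\in\varphi_p(n)$ for all $n\ge|s_p|$. Given any $n\ge|s_p|$, by \Cref{lem:pEDfin_basics}\eqref{item_lem_pEDfin_basics_s} and directedness of $G$ I pick $q\le p$ in $G$ with $|s_q|>n$; then $\varphi_q(n)\supseteq\varphi_p(n)\ni x(n)$, and since $s_q=\varphi_q\on|s_q|$ and $n<|s_q|$, this gives $x(n)\in s_q(n)\subseteq\phi_G(n)$. Hence $x(n)\in\phi_G(n)$ for all $n\ge|s_p|$, i.e.\ $x\in K_{\phi_G}$, which proves $\oo\cap V\subseteq K_{\phi_G}$.

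I do not anticipate a genuine obstacle here; the only point that needs a moment of care is that $\phi_G(n)$ is an increasing union of the sets $s_r(n)$ rather than a single one of them, so that the bound $|\phi_G(n)\cap I_i|\le n$ must be obtained from the directedness observation rather than from a naive appeal to one condition.
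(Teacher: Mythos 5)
Your proof is correct and is exactly the argument the paper has in mind: the paper leaves the lemma as an immediate consequence of the density facts in Lemma \ref{lem:pEDfin_basics} together with \eqref{gath_end_ext_EDfin} and \eqref{gath_density_small_EDfin}, and your write-up just supplies those details (including the directedness point needed to pass from the bound on each $s_r(n)\cap I_i$ to the bound on $\phi_G(n)\cap I_i$). No gaps.
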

	
    $\posetEDfin$ is $\sigma$-centered and $\sigma$-Fr-linked:
	
    \begin{lem}
		\label{lem:pEDfin_has_Fr-limits}
        For $i,k<\omega$ and $s\in([M_i]^{\leq k})^{<\omega}$ with $|s|\geq k$,
		\[
        Q=\{p\in\posetEDfin:i_p=i, k_p=k, s_p=s\}
        \]
        is centered and closed-Fr-linked. In particular, $\posetEDfin$ is $\sigma$-centered and $\sigma$-closed-Fr-linked.
	\end{lem}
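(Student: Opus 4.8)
The plan is to dispose of centeredness by a trivial width-enlarging trick and to obtain closed-Fr-linkedness from a single fixed ultrafilter, reducing the latter to the combinatorial condition $(\star)_1$ of \Cref{lem:chara_UF_linked} and then applying \Cref{lem:Fr_linked_star_one}. For centeredness, finitely many $q_0,\dots,q_{l-1}\in Q$ are all of the form $q_j=(i,k,s,\varphi_j)$ with $\varphi_j\on|s|=s$, differing only in the last coordinate; the condition $r\coloneqq(i,lk,s,\varphi)$ with $\varphi(n)\coloneqq\bigcup_{j<l}\varphi_j(n)$ satisfies $|\varphi(n)|\leq lk$ and $\varphi\on|s|=s$, so $r\in\posetEDfin$, and $r\leq q_j$ for every $j$ because $i_r=i$ trivializes clause \eqref{gath_density_small_EDfin} and the remaining requirements of the order are immediate. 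Hence $Q$ is centered.

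For closed-Fr-linkedness, fix a nonprincipal ultrafilter $D$ on $\omega$. Given $\bar q=\langle q_m:m<\omega\rangle\in Q^\omega$, write $q_m=(i,k,s,\varphi_m)$; for each $n<\omega$ the sequence $\langle\varphi_m(n):m<\omega\rangle$ takes values in the countable set $[\omega]^{\leq k}$, so there is a unique $\varphi^\infty(n)\in[\omega]^{\leq k}$ with $\{m<\omega:\varphi_m(n)=\varphi^\infty(n)\}\in D$. For $n<|s|$ this forces $\varphi^\infty(n)=s(n)$, so $\varphi^\infty\on|s|=s$, and we set $\lim^D\bar q\coloneqq(i,k,s,\varphi^\infty)\in Q$. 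Taking the limit of the \emph{same} width $k$, rather than an enlarged one, is exactly what gives $\ran(\lim^D)\subseteq Q$, i.e.\ closedness.

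It then remains to verify $(\star)_1$ for $\lim^D$. Let $r=(i_r,k_r,s_r,\varphi_r)\leq\lim^D\bar q$. Since $r\leq\lim^D\bar q$, for each $n$ with $|s|\leq n<|s_r|$ we have $\varphi_r(n)=s_r(n)$ and $\varphi_r(n)\supseteq\varphi^\infty(n)$, hence $\varphi^\infty(n)\subseteq s_r(n)$; therefore
\[
X\coloneqq\{m<\omega:\varphi_m(n)=\varphi^\infty(n)\text{ for all }n\in[\,|s|,|s_r|\,)\}
\]
is a finite intersection of members of $D$, so $X\in D$. For $m\in X$ the condition $q''_m\coloneqq(i_r,k_r+k,s_r,\varphi_r\cup\varphi_m)$ (pointwise union in the last coordinate) is legitimate: the only point needing care is $(\varphi_r\cup\varphi_m)\on|s_r|=s_r$, which holds since for $n<|s|$ one has $\varphi_m(n)=s(n)=s_r(n)\cap M_i\subseteq s_r(n)$, for $|s|\leq n<|s_r|$ one has $\varphi_m(n)=\varphi^\infty(n)\subseteq s_r(n)$ by choice of $X$, and $\varphi_r(n)=s_r(n)$. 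Moreover $q''_m\leq r$ is trivial (the $s$- and $i$-coordinates agree with $r$'s), and $q''_m\leq q_m$ because requirements \eqref{gath_end_ext_EDfin} and \eqref{gath_density_small_EDfin} for $r\leq\lim^D\bar q$ concern only $n<|s|$ and the unchanged coordinate $s_r$ and hence transfer verbatim, while $i_{q''_m}=i_r$ kills the density clause against $r$. Thus $X$ witnesses $\{m<\omega:q_m\text{ and }r\text{ are compatible}\}\in D$, which is $(\star)_1$, so by \Cref{lem:Fr_linked_star_one} the map $\lim^D$ witnesses that $Q$ is Fr-linked, and since $\ran(\lim^D)\subseteq Q$, $Q$ is closed-Fr-linked. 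Since nowhere in the argument was the hypothesis $|s|\geq k$ used, the same conclusion holds for every triple $(i,k,s)$, and $\posetEDfin$ is the union of the countably many sets $Q$ so obtained; hence $\posetEDfin$ is $\sigma$-centered and $\sigma$-closed-Fr-linked.

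The delicate step is reconciling closedness with producing genuine compatibility witnesses: because the width $k$ must not grow, one cannot absorb $\varphi_m$ into $\varphi^\infty$ and must instead compare each $\varphi_m(n)$ against $s_r(n)$ for an arbitrary $r\leq\lim^D\bar q$. The whole argument rests on the observation that such an $r$ is forced to have $s_r(n)\supseteq\varphi^\infty(n)$ on the coordinates $n\in[\,|s|,|s_r|\,)$ it newly decides, so that on those coordinates a $D$-large set of the $\varphi_m$'s is already swallowed by $s_r$ itself; everything else is routine bookkeeping with the order of $\posetEDfin$.
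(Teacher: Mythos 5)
Your centeredness argument is fine, and your overall architecture (define a $D$-limit into $Q$, verify $(\star)_1$ of \Cref{lem:chara_UF_linked}, apply \Cref{lem:Fr_linked_star_one}) is the same as the paper's; but the key step is broken. You claim that since $\langle\varphi_m(n):m<\omega\rangle$ takes values in the countable set $[\omega]^{\leq k}$, there is a unique $\varphi^\infty(n)\in[\omega]^{\leq k}$ with $\{m<\omega:\varphi_m(n)=\varphi^\infty(n)\}\in D$. A nonprincipal ultrafilter only decides \emph{finite} partitions of $\omega$; for a countably infinite range no cell need be $D$-large (take $\varphi_m(n)=\{m\}$), so your limit function is simply not defined. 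The correct definition (the one in the paper) is elementwise: $a\in\varphi_\infty(n)$ iff $\{m:a\in\varphi_m(n)\}\in D$, which has size $\leq k$ and keeps the limit inside $Q$. But with that definition one only gets, for the finitely many relevant $n\in[\,|s|,|s_r|)$, a $D$-large set of $m$ with $\varphi_m(n)\cap M_{i_r}=\varphi_\infty(n)$ — exact equality $\varphi_m(n)=\varphi_\infty(n)$ can fail for \emph{every} $m$ (e.g.\ $\varphi_m(n)=\varphi_\infty(n)\cup\{b_m\}$ with $b_m$ distinct and large). Then your amalgamation $(i_r,k_r+k,s_r,\varphi_r\cup\varphi_m)$ is not a condition, because the uncontrolled part of $\varphi_m(n)$ above $M_{i_r}$ violates $(\varphi_r\cup\varphi_m)\on|s_r|=s_r$. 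The necessary repair is to enlarge the stem entries to $t(n)=\varphi_r(n)\cup\varphi_m(n)$ and raise the first coordinate, and then clause \eqref{gath_density_small_EDfin} must be checked for the newly admitted blocks $i_r\leq j<i''$; this is exactly where the bound $|\varphi_m(n)|\leq k\leq|s|\leq n$ is used, i.e.\ where the hypothesis $|s|\geq k$ enters.

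For the same reason, your closing claim that $|s|\geq k$ was never needed and the conclusion holds for every triple $(i,k,s)$ is false, not merely unsupported. Take $k=1$, $s=\emptyset$ and $q_m=(i,1,\emptyset,\varphi_m)$ with $\varphi_m(n)=\{m\}$ for all $n$. By \eqref{gath_density_small_EDfin} applied at $n=0$, once a condition $p$ with $|s_p|\geq1$ lies in the generic filter, $s_p(0)$ can never be enlarged, so $\phi_G(0)=s_p(0)$ is a fixed finite set. If moreover $q_m\in\dot{G}$, a common extension $p'\in\dot{G}$ of $p$ and $q_m$ satisfies $m\in\varphi_{p'}(0)=s_{p'}(0)=s_p(0)$. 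Hence every condition has an extension forcing $\dot{W}(\bar{q})\subseteq s_p(0)$, so no condition at all forces $|\dot{W}(\bar{q})|=\omega$, and $Q_{i,1,\emptyset}$ is not even Fr-linked. So the restriction to $|s_p|\geq k_p$ (legitimate because such conditions are dense) is essential to the statement, and your argument must be rewritten along the paper's lines rather than shortened.
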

	\begin{proof}
        Centeredness is clear.
		To show $(\star)_1$ in Lemma \ref{lem:Fr_linked_star_one}, take some non-principal ultrafilter $D$ on $\omega$ and let and $\bar{q}=\langle(i,k,s,\varphi_m)\rangle_{m<\omega}\in Q^\omega$. 
		Define $\varphi_\infty\colon\omega\to[\omega]^{\leq k}$ by:
		\begin{equation}
			\label{eq_def_of_varphi_infty_EDfin}
			a\in\varphi_\infty(n):\Leftrightarrow\{m<\omega:a\in\varphi_m(n)\}\in D,
		\end{equation}
		for $n,a<\omega$. Put $\lim^D\bar{q}\coloneqq q_\infty\coloneqq(i,k,s,\varphi_\infty)$ (so $\ran(\lim^D)\subseteq Q$). To see $(\star)_1$, 
		assume $q^\prime=(i^\prime,k^\prime,s^\prime,\varphi^\prime)\leq q_\infty$.
		Put $M\coloneq M_i$ and $M^\prime\coloneq M_{i^\prime}$.
		By \eqref{eq_def_of_varphi_infty_EDfin}, there exists $X\in D$ such that:
		\begin{equation}
			\label{eq_X_EDfin}
			\text{For all }m\in X\text{ and }n\in|s^\prime|\setminus|s|, ~\varphi_m(n)\cap M^\prime=\varphi_\infty(n),
		\end{equation}
        since $\varphi_\infty(n)\subseteq M^\prime$ for such $n$ by $q^\prime\leq q_\infty$.
		Fix $m\in X$ and we will define a common extension $r$ of $q^\prime$ and $q_m$. 
        Define $\psi\colon\omega\to [\omega]^{\leq k^\prime+k}$ by $\psi(n)\coloneqq\varphi^\prime(n)\cup\varphi_m(n)$ and $t\coloneq\psi\on|s^\prime|$. Note that:
		\begin{equation}
			\label{eq_t_n_equal_s_prime_n_EDfin}
			\text{for }n<|s|,~t(n)=\varphi^\prime(n)\cup\varphi_m(n)=s^\prime(n)\cup s(n)=s^\prime(n).
		\end{equation}
        Put $r\coloneq(i^{\prime\prime},k^\prime+k,t,\psi)$, where $i^{\prime\prime}\geq i^\prime$ is so large that $r$ is a valid condition. 
		To see $r\leq q^\prime$, it is enough to check that for $n<|t|$:
		\begin{enumerate}
			\item\label{item_r_leq_q_prime_first_EDfin} $t(n)\cap M^\prime=s^\prime(n)$, and
			\item\label{item_r_leq_q_prime_second_EDfin}  $\text{if }i^\prime\leq j< i^{\prime\prime},\text{ then } |t(n)\cap I_j|\leq n$.
		\end{enumerate}
		We show \eqref{item_r_leq_q_prime_first_EDfin}.
		If $n<|s|$, by \eqref{eq_t_n_equal_s_prime_n_EDfin} we have
        \[
        t(n)\cap M^\prime=s^\prime(n)\cap M^\prime=s^\prime(n).
        \]
        If $n\in|t|\setminus|s|$, by \eqref{eq_X_EDfin} we have
        \[
        t(n)\cap M^\prime=(\varphi^\prime(n)\cap M^\prime)\cup(\varphi_m(n)\cap M^\prime)=\varphi^\prime(n)\cup\varphi_\infty(n)=\varphi^\prime(n)=s^\prime(n)
        \]
        by $\varphi^\prime(n)\supseteq\varphi_\infty(n)$, which follows from $q^\prime\leq q_\infty$.
		
		We show \eqref{item_r_leq_q_prime_second_EDfin}. 
		If $n<|s|$, then
        \[
        t(n)\cap I_j=s^\prime(n)\cap I_j=\emptyset
        \]
        by \eqref{eq_t_n_equal_s_prime_n_EDfin}.
		If $n\in|t|\setminus |s|$, then 
		\[
        t(n)\cap I_j=(\varphi^\prime(n)\cup\varphi_m(n))\cap I_j=\varphi_m(n)\cap I_j\subseteq \varphi_m(n)
        \]
        has size $\leq k\leq|s|\leq n$. 
		
		To see $r\leq q_m$, it is enough to check that for $n<|s|$:
		\begin{enumerate}
			\setcounter{enumi}{2}
			\item \label{item_r_leq_q_m_third_EDfin} $t(n)\cap M=s(n)$, and:
			\item \label{item_r_leq_q_m_fourth_EDfin} $\text{if }i\leq j< i^{\prime\prime},\text{ then } |t(n)\cap I_j|\leq n$.
		\end{enumerate}
		\eqref{item_r_leq_q_m_third_EDfin} follows from $t(n)\cap M=s^\prime(n)\cap M=s(n)$ by \eqref{eq_t_n_equal_s_prime_n_EDfin} and $q^\prime\leq q_\infty$.
		
		To show \eqref{item_r_leq_q_m_fourth_EDfin}, let $i\leq j< i^{\prime\prime}$ and we may assume $i\leq j< i^{\prime}$ by \eqref{item_r_leq_q_prime_second_EDfin}.
		$t(n)\cap I_j=s^\prime(n)\cap I_j$ by \eqref{eq_t_n_equal_s_prime_n_EDfin} and $|s^\prime(n)\cap I_j|\leq n$ follows from $q^\prime\leq q_\infty$.
		
		Therefore, $r$ extends $q^\prime$ and $q_m$.
	\end{proof}

    Closed-Fr-limits can control the values of $\non(M_{\finfin})$ and $\cov(M_{\finfin})$ using Cohen reals:

    \begin{lem}\label{lem:closed_Fr_limits_keep_nonMfinfin_small}
        Let $\kappa$ be an uncountable regular cardinal and $\p_\gamma$ be a ${<}\kappa$-closed-Fr-iteration such that the first $\kappa$-many iterands are Cohen forcings. Let $\{\dot{c}_\b:\b<\kappa\}$ be the added Cohen reals as members of $(\omega\times\omega)^\omega$. Then, for any $\p_\gamma$-name $\dot{\phi}$ of a function $(\omega\times\omega)^{<\omega}\to\finfin$, $\Vdash_{\p_\gamma}|\{\b<\kappa:\dot{c}_\b\in M_{\dot{\phi}} \}|<\kappa$.
    \end{lem}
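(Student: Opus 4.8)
The plan is to argue by contradiction and exploit the closed-Fr-limit structure through a \emph{two-level} limit construction whose end product forces $\dot\phi(t^*)$ to have infinitely many infinite columns for a suitable $t^*\in(\omega\times\omega)^{<\omega}$, contradicting $\dot\phi(t^*)\in\finfin$. Suppose some $p^*\in\p_\gamma$ forced $|\{\beta<\kappa:\dot c_\beta\in M_{\dot\phi}\}|=\kappa$. Writing $\dot c_\beta\in M_{\dot\phi}$ as $\exists m\,\forall n\ge m\,(\dot c_\beta(n)\in\dot\phi(\dot c_\beta\on n))$ and using $\cf(\kappa)>\omega$, I first pass below $p^*$ to fix $m^*<\omega$ with $p^*\Vdash|\dot A|=\kappa$ for $\dot A:=\{\beta:\forall n\ge m^*\,(\dot c_\beta(n)\in\dot\phi(\dot c_\beta\on n))\}$. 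Then $\mathrm{Pos}:=\{\beta<\kappa:\exists q\le p^*\,(q\Vdash\beta\in\dot A)\}$ is cofinal in $\kappa$; for each $\beta\in\mathrm{Pos}$ I choose a determined $q_\beta\le p^*$ forcing $\beta\in\dot A$ whose Cohen coordinate at $\beta$ has some length $\ge m^*$, and since the forced statement at level $n=|q_\beta(\beta)|\ge m^*$ gives $q_\beta\Vdash\dot c_\beta(|q_\beta(\beta)|)\in\dot\phi(q_\beta(\beta))$, a pigeonhole over $\kappa$-many $\beta$ yields a cofinal $P\subseteq\mathrm{Pos}$ and a single $t^*$ with $q_\beta(\beta)=t^*$ for all $\beta\in P$; thus $q_\beta\Vdash\dot c_\beta(|t^*|)\in\dot\phi(t^*)$.

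Next I record that, by ccc, the name $\dot\phi$ (a countable sequence of real names) has a countable support $E\subseteq\gamma$, and I fix a countable $E'\supseteq E\cup\supp(p^*)$. Fix a partition $\omega=\bigsqcup_{c<\omega}L_c$ into infinite pieces and enumerate $\{(c,l):l\in L_c\}$ in order type $\omega$. Recursively I build distinct ordinals $\beta_{c,l}\in P\setminus E'$ (chosen cofinally in $\kappa$, each above the supports of all previously chosen conditions), determined conditions $q_{c,l}\le p^*$, and naturals $d_{c,l}$ (distinct within each fixed $L_c$) so that $q_{c,l}$ is obtained from $q_{\beta_{c,l}}$ by strengthening \emph{only} its $\beta_{c,l}$-th coordinate from $t^*$ to $t^*{}^\frown\langle(c,d_{c,l})\rangle$. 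This is possible because that coordinate is free at stage $|t^*|$, and since $q_{c,l}\le q_{\beta_{c,l}}$ still forces $\beta_{c,l}\in\dot A$ and now decides $\dot c_{\beta_{c,l}}(|t^*|)=(c,d_{c,l})$, we get $q_{c,l}\Vdash(c,d_{c,l})\in\dot\phi(t^*)$. Since $\dom(q_{c,l})=\dom(q_{\beta_{c,l}})\subseteq E'\cup\{\beta_{c,l}\}$, the chosen coordinates satisfy $\beta_{c,l}\notin\dom(q_{c',l'})$ whenever $(c,l)\neq(c',l')$. After the standard $\Delta$-system uniformizations (first fixing the sizes $|\dom(q_{c,l})|$ inside each $L_c$, then applying the $\Delta$-system lemma to the resulting equal-sized finite sets $\dom(q_{c,l})$, then uniformizing the component witnesses), I may thin each $L_c$ to an infinite $L_c^*$ so that $\langle q_{c,l}\rangle_{l\in L_c^*}$ is a uniform $\Delta$-system with root $\nabla_c$; by the previous remark $\beta_{c,l}\notin\nabla_c$, so strengthening the Cohen conditions at these coordinates does not clash with the uniformity requirement on the root.

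Now for each $c$ set $r_c:=\lim\langle q_{c,l}\rangle_{l\in L_c^*}$, which by \Cref{lem:closedness} is again a determined condition and by \Cref{lem:Fr-limit_principle} satisfies $r_c\Vdash|\dot W(\langle q_{c,l}\rangle_{l\in L_c^*})|=\omega$. Since membership of $l$ in that index set forces $q_{c,l}\in\dot G$, hence $(c,d_{c,l})\in\dot\phi(t^*)$, and the $d_{c,l}$ are distinct, $r_c$ forces that the $c$-th column $\{d:(c,d)\in\dot\phi(t^*)\}$ of $\dot\phi(t^*)$ is infinite. Finally, using once more the $\Delta$-system uniformizations (legitimate because each $r_c$ is determined, by \Cref{lem:closedness}), I thin $\langle r_c\rangle_{c<\omega}$ to an infinite uniform $\Delta$-system $\langle r_c\rangle_{c\in C}$ and put $r^*:=\lim\langle r_c\rangle_{c\in C}$. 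By \Cref{lem:Fr-limit_principle}, $r^*\Vdash|\dot W(\langle r_c\rangle_{c\in C})|=\omega$, and membership of $c$ in that set forces $r_c\in\dot G$, hence that the $c$-th column of $\dot\phi(t^*)$ is infinite; as these $c$ are distinct, $r^*$ forces $\dot\phi(t^*)$ to have infinitely many infinite columns, i.e.\ $\dot\phi(t^*)\notin\finfin$, contradicting that $\dot\phi$ names a function into $\finfin$.

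The reductions in the first paragraph and the existence of all the limits are immediate from the cited lemmas, and the failure of ``only finitely many infinite columns'' for $\finfin$ is the sole conceptual ingredient at the end. The genuinely delicate part — the one I expect to be the main obstacle — is the simultaneous \emph{two-level} $\Delta$-system bookkeeping of the second and third paragraphs: one must arrange, by a careful recursion, that after thinning each inner family $\langle q_{c,l}\rangle_{l\in L_c^*}$ and the outer family $\langle r_c\rangle_{c\in C}$ becomes a uniform $\Delta$-system in the precise sense of \Cref{dfn:uniform_delta_system}, while the coordinates $\beta_{c,l}$ at which the Cohen conditions are varied stay out of every root; the countable-support observation for $\dot\phi$ is exactly what makes the latter possible.
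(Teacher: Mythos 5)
Your overall mechanism is exactly the paper's: extend the Cohen coordinate of suitable conditions to decide the next value of $\dot c_\beta$, take inner closed-Fr-limits along one column to force that column of $\dot\phi(t^*)$ infinite, then an outer limit across columns to force infinitely many infinite columns, contradicting $\dot\phi(t^*)\in\finfin$. The problem is that the bookkeeping you yourself flag as the delicate part does not work as written, in two places. First, the claim that you may choose $q_\beta$ with $\dom(q_\beta)\subseteq E'\cup\{\beta\}$ because $\dot\phi$ ``has countable support $E$'' is unjustified: in a finite support iteration one cannot restrict to an arbitrary countable set of coordinates (it need not be a complete suborder, and the iterands' names need not live there), and even if the nice name for $\dot\phi$ only mentions conditions with domains inside $E$, a condition forcing ``$\beta\in\dot A$'' can perfectly well have domain meeting coordinates outside $E'\cup\{\beta\}$. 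Without this containment, your recursion does not guarantee $\beta_{c',l'}\notin\dom(q_{c,l})$ for earlier $(c',l')$, so the modified coordinates may land in the roots.

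Second, and more fatally, your $\Delta$-system uniformizations are performed on \emph{countable} families, where they can fail. An infinite countable family of determined conditions need not admit an infinite subfamily with constant domain size (sizes may be unbounded), and even after fixing sizes and domains, condition (3) of \Cref{dfn:uniform_delta_system} requires the component witnesses $\zeta^m_\xi<\theta_\xi$ to coincide on the root; since $\theta_\xi$ is only assumed $<\kappa$ and may be uncountable, among countably many conditions no witness value need repeat infinitely often, so neither the inner thinnings to $L_c^*$ nor the outer thinning to $C$ need exist. The paper's proof avoids both issues by doing \emph{all} the uniformization at the level of the $\kappa$-sized family first: pick $\kappa$-many $(p_i,\beta_i,n_i)$, and use that $\kappa$ is regular uncountable while there are only $<\kappa$-many possibilities (finite domains, $n_i$, the Cohen condition at $\beta_i$, and the witnesses in $\theta_\xi<\kappa$) to thin to a single uniform $\Delta$-system of size $\kappa$ with $\beta_i\notin\nabla$; then the first $\omega$-many conditions, their Cohen modifications (which only touch the non-root coordinates $\beta_i$), and the inner limits (which by \Cref{lem:closedness} all have domain exactly $\nabla$ with the same witnesses) are \emph{automatically} uniform $\Delta$-systems, so \Cref{lem:Fr-limit_principle} applies at both levels without any further countable pigeonholing. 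Your argument becomes correct if you insert this global $\kappa$-uniformization before selecting the countably many conditions and drop the countable-support maneuver entirely.
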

    \begin{proof}
        For $x\in(\omega\times\omega)^\omega$, $\phi\colon(\omega\times\omega)^{<\omega}\to\finfin$ and $n<\omega$, let $x\in^*_n\phi$ denote $\forall m\geq n~x(m)\in\phi(x\on m)$. 
    	Assume towards contradiction that there exists a condition $p\in\p_\gamma$  such that 
    	$p\Vdash |\{\b<\kappa:\dot{c}_\b\in M_{\dot{\phi}}\}|=\kappa$.
    	For each $i<\kappa$, inductively pick $p_i\leq p$, $\b_i<\lambda$ and $n_i<\omega$ such that $\b_i\notin\{\b_{i^\prime}:i^\prime<i\}$ and $ p_i\Vdash\dot{c}_{\b_i}\in^*_{n_i}\dot{\phi}$.
    	By extending and thinning, we may assume:
    	
    	\begin{enumerate}
    		\item $\b_i\in\dom(p_i)$. (By extending $p_i$.)
    		\item $\{p_i:i<\kappa\}$ forms a uniform $\Delta$-system with root $\nabla$.
    		\item All $n_i$ are equal to $ n^*$.
    		\item All $p_i(\b_i)$ are the same Cohen condition $s\in(\omega\times\omega)^{<\omega}$. 
    		\item $|s|=n^*$. (By extending $s$ or increasing $n^*$.)
    	\end{enumerate}
    	
    	In particular, we have that:
    	\begin{equation}
    		\label{eq_property_of_refined_pi}
    		\text{For each }i<\kappa, p_i\text{ forces }\dot{c}_{\b_i}\on n^*=s\text{ and }\dot{c}_{\b_i}\in^*_{n^*}\dot{\phi}.
    	\end{equation}
    	
    	Note that $\b_i\notin\nabla$ for $i<\kappa$ since all $\b_i$ are distinct. Pick the first $\omega$ many $p_i$ and fix some bijection $i\colon\omega\times\omega\to\omega$.
    	For each $(a,b)\in\omega\times\omega$, define $q_{a,b}\leq p_{i(a,b)}$ by extending the $\b_{i(a,b)}$-th position $q_\sigma(\b_{i(a,b)}):=s^\frown(a,b)$.
    	By \eqref{eq_property_of_refined_pi}, 
    		$\text{for each }(a,b)\in\omega\times\omega,~q_{a,b}\text{ forces }\dot{c}_{\b_{i(a,b)}}\on(n^*+1)=s^\frown(a,b)\text{ and }\dot{c}_{\b_{i(a,b)}}\in^*_{n^*}\dot{\phi}$,
    	thus
    	\begin{equation}
    		\label{eq:q_a_b}
    		q_{a,b}\forces\dot{c}_{\b_{i(a,b)}}(n^*)=(a,b)\in\dot{\phi}(s)=\dot{\phi}(\dot{c}_{\b_{i(a,b)}}\on n^*).
    	\end{equation}
    	
    	Fix $a<\omega$ and we consider the sequence $\bar{q}_a\coloneq\langle q_{a,b}:b<\omega\rangle$.
    	When defining $q_{a,b}$ we changed the $\b_{i(a,b)}$-th position which is out of $\nabla$, so $\{q_{a,b}:b<\omega\}$ forms a uniform $\Delta$-system with root $\nabla$.
    	Thus we can take their limit $q^\infty_a\coloneqq\lim\bar{q}_a$.
    	By \eqref{eq:q_a_b} and \Cref{lem:Fr-limit_principle}, we obtain:
    	\begin{equation}
    		\label{eq:q_a_infty}
    		q^\infty_a\forces \exists^\infty b<\omega ~(a,b)\in\dot{\phi}(s).
    	\end{equation}
    	
    	Unfix $a$ and consider the sequence $\bar{q}\coloneq\langle q^\infty_{a}:a<\omega\rangle$.
        By \Cref{lem:closedness}, all $q^\infty_{a}$ have domain $\nabla$ and they form a uniform $\Delta$-system with root $\nabla$. Take their limit $q^\infty\coloneqq\lim\bar{q}$.
    	By \eqref{eq:q_a_infty},
    	\begin{equation}
    		q^\infty\forces \exists^\infty a<\omega\exists^\infty b<\omega ~(a,b)\in\dot{\phi}(s),
    	\end{equation}
    	which contradicts $\dot{\phi}(s)\in\finfin$.
    \end{proof}

    Now we prove the consistency of $\non(M_{\finfin})<\non(K_{\edfin})$ (and its dual).
    
    \begin{thm}
    \label{thm:Con_nonMfinfin_nonKEDfin}
        \begin{enumerate}
            \item Given $\kappa<\lambda=\lambda^{<\kappa}$ regular uncountable, there is a ccc poset forcing $\bb=\non(M_{\finfin})=\kappa$ and $\non(K_{\edfin})=\cc=\lambda$.
            In particular, $\non(M_{\finfin})<\non(K_{\edfin})$ is consistent.
            \item Given $\kappa$ regular uncountable and $\lambda=\lambda^{\omega}>\kappa$, there is a ccc poset forcing $\cov(K_{\edfin})=\kappa$ and $\cov(M_{\finfin})=\lambda=\continuum$.
        \end{enumerate}
    \end{thm}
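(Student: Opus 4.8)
The plan is to imitate the two finite--support iterations in the proof of \Cref{thm:Con_max_b_eect_nonMED}, replacing $\posetED$ by $\posetEDfin$ and replacing the $\mathbf{CPR}_2$--goodness step (which there pinned down $\eec_2$) by \Cref{lem:closed_Fr_limits_keep_nonMfinfin_small}, which here will pin down $\non(M_{\finfin})$ (resp.\ $\cov(M_{\finfin})$). The relevant properties are already available: $\posetEDfin$ is $\sigma$--centered, is $\sigma$--closed--Fr--linked by \Cref{lem:pEDfin_has_Fr-limits} (hence $\aleph_1$--$\mathbf{D}$--good by \Cref{fac:Fr_good}), and generically adds $\phi_G\colon\omega\to\edfin$ with $\oo\cap V\subseteq K_{\phi_G}$; Cohen forcing $\mathbb{C}$ and any poset of size ${<}\kappa$ are closed--$|\p|$--Fr--linked (\Cref{lem:poset_is_its_size_Frlinked}) and $\kappa$--$\mathbf{D}$--good; and a finite--support iteration of ccc $\kappa$--$\mathbf{D}$--good posets is again $\kappa$--$\mathbf{D}$--good (\Cref{fac:goodness_iteration}) and, over length $\geq\kappa$, forces $\bb\leq\kappa$ (\Cref{fac:FM21}).

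\emph{Proof of (1).} Work over a model of $\mathsf{CH}$ and, by a bookkeeping argument, build a finite--support iteration $\p=\p_\lambda$ of length $\lambda$ whose first $\kappa$ iterands are $\mathbb{C}$ and whose remaining $\lambda$ iterands are either $\posetEDfin$ (appearing cofinally) or a subforcing of Hechler forcing of size ${<}\kappa$, arranged so that every unbounded family of size ${<}\kappa$ occurring along the iteration gets dominated; here $\lambda=\lambda^{<\kappa}$ guarantees that there are only $\lambda$ many such tasks. Then $\p$ is a ${<}\kappa$--closed--Fr--iteration whose first $\kappa$ iterands are $\mathbb{C}$, and $\p$ is $\kappa$--$\mathbf{D}$--good. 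Since $|\p|\leq\lambda$, $\p$ forces $\continuum\leq\lambda$. Every set of reals of size ${<}\lambda$ in the extension already lies in some $V_\alpha$ with $\alpha<\lambda$ (ccc and regularity of $\lambda$), and the next copy of $\posetEDfin$ puts it inside the single set $K_{\phi_G}\in K_{\edfin}$; hence $\non(K_{\edfin})\geq\lambda$, so $\non(K_{\edfin})=\continuum=\lambda$. The bookkeeping yields $\bb\geq\kappa$, while $\kappa$--$\mathbf{D}$--goodness and \Cref{fac:FM21} give $\bb\leq\kappa$, so $\bb=\kappa$, and then $\non(M_{\finfin})\geq\bb=\kappa$ by \Cref{lem:b_nonM}. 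Finally, \Cref{lem:closed_Fr_limits_keep_nonMfinfin_small} applied to the Cohen reals $\langle\dot c_\b:\b<\kappa\rangle$ added first shows that for every $\phi\colon(\omega\times\omega)^{<\omega}\to\finfin$ in the extension the set $\{\b<\kappa:\dot c_\b\in M_\phi\}$ has size ${<}\kappa$; as $\kappa$ is regular uncountable, $\{\dot c_\b:\b<\kappa\}$ lies in no countable union of such $M_\phi$'s, i.e.\ $\{\dot c_\b:\b<\kappa\}\notin M_{\finfin}$, so $\non(M_{\finfin})\leq\kappa$. Thus $\bb=\non(M_{\finfin})=\kappa<\lambda=\non(K_{\edfin})=\continuum$.

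\emph{Proof of (2).} Run the dual iteration: a finite--support iteration $\p=\p_{\lambda+\kappa}$ whose first $\lambda$ iterands are $\mathbb{C}$ and whose last $\kappa$ iterands alternate $\mathbb{C}$ and $\posetEDfin$. Again $\p$ is a ${<}\lambda$--closed--Fr--iteration whose first $\lambda$ iterands are $\mathbb{C}$; $|\p|\leq\lambda^\omega=\lambda$ gives $\continuum=\lambda$; the cofinally many Cohen reals of the tail give $\covm\geq\kappa$, and the cofinally many copies of $\posetEDfin$ give a family $\{K_{\phi_{G_\xi}}:\xi<\kappa\}$ of $K_{\edfin}$--sets covering $\oo$, so, using $K_{\edfin}\subseteq\m$ and \Cref{lem:b_nonM}, $\kappa\leq\covm\leq\cov(K_{\edfin})\leq\kappa$. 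For the other cardinal, apply \Cref{lem:closed_Fr_limits_keep_nonMfinfin_small} with $\lambda$ in the role of $\kappa$ to the first $\lambda$ Cohen reals: each $\phi\colon(\omega\times\omega)^{<\omega}\to\finfin$ catches fewer than $\lambda$ of them, so a covering family of $M_{\finfin}$--sets of size ${<}\lambda$ would catch only ${<}\lambda$ of the $\lambda$ Cohen reals (as $\lambda$ has uncountable cofinality), a contradiction; hence $\cov(M_{\finfin})\geq\lambda$, so $\cov(M_{\finfin})=\continuum=\lambda$, and $\cov(K_{\edfin})=\kappa<\lambda=\cov(M_{\finfin})=\continuum$.

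The genuinely new combinatorics is already quarantined in \Cref{lem:pEDfin_has_Fr-limits} and \Cref{lem:closed_Fr_limits_keep_nonMfinfin_small}; given those, the only obstacle left in the theorem is organizational, namely to produce one iteration that is simultaneously a ${<}\kappa$--closed--Fr--iteration (so that \Cref{lem:closed_Fr_limits_keep_nonMfinfin_small} applies to the initial block of Cohen reals), $\kappa$--$\mathbf{D}$--good (so that $\bb\leq\kappa$), equipped with cofinally many copies of $\posetEDfin$ (to raise $\non(K_{\edfin})$ in (1), resp.\ keep $\cov(K_{\edfin})$ small in (2)), and, in (1), rich enough to dominate every small unbounded family. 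One has to check that Hechler subforcings of size ${<}\kappa$ and $\posetEDfin$ are at once ${<}\kappa$--closed--Fr--linked and $\kappa$--$\mathbf{D}$--good, and that the bookkeeping has room for all of these demands; this is routine but is the one place that needs care.
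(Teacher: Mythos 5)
Your argument follows the paper's proof essentially verbatim in structure: the same ${<}\kappa$-closed-Fr-iteration with an initial block of Cohen forcings, cofinally many copies of $\posetEDfin$, small Hechler subforcings for bookkeeping, and the whole weight carried by \Cref{lem:poset_is_its_size_Frlinked}, \Cref{lem:pEDfin_has_Fr-limits} and \Cref{lem:closed_Fr_limits_keep_nonMfinfin_small}. Part (1) is correct; the detour through $\mathbf{D}$-goodness and \Cref{fac:FM21} to get $\bb\leq\kappa$ is redundant (once $\non(M_{\finfin})\leq\kappa$ is established, $\bb\leq\kappa$ follows from \Cref{lem:b_nonM}), and the ground-model $\mathsf{CH}$ assumption is unnecessary since $\lambda^{<\kappa}=\lambda$ already yields $\lambda^{\aleph_0}=\lambda$, which is all the nice-name count needs; interleaving Cohen forcing in the tail of (2) is likewise harmless.

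There is, however, a genuine gap in your part (2). There $\lambda$ is only assumed to satisfy $\lambda=\lambda^{\omega}>\kappa$, so $\lambda$ may be singular of uncountable cofinality (e.g.\ $\aleph_{\omega_1}$ under $\mathsf{GCH}$). Two steps of your argument then fail: \Cref{lem:closed_Fr_limits_keep_nonMfinfin_small} is stated only for regular cardinals, so you cannot apply it ``with $\lambda$ in the role of $\kappa$''; and even granting that each $M_{\dot\phi_\alpha}$ catches fewer than $\lambda$ of the $\lambda$ Cohen reals, a union of $\mu<\lambda$ many sets each of size ${<}\lambda$ can have size $\lambda$ when $\lambda$ is singular --- uncountable cofinality alone does not save the count. (The paper's proof instead uses the sharper bound that the union over the $\mu$ names catches at most $\mu$ many Cohen reals.) Your argument can be repaired locally: given names $\dot\phi_\alpha$, $\alpha<\mu<\lambda$, choose a regular $\kappa'$ with $\mu<\kappa'\leq\lambda$ (say $\kappa'=\mu^+$); the iteration is in particular a ${<}\kappa'$-closed-Fr-iteration whose first $\kappa'$ iterands are Cohen, so \Cref{lem:closed_Fr_limits_keep_nonMfinfin_small} applies with $\kappa'$ and each $\dot\phi_\alpha$ catches fewer than $\kappa'$ of the first $\kappa'$ Cohen reals; by regularity of $\kappa'$ some such Cohen real escapes all $M_{\dot\phi_\alpha}$, whence $\cov(M_{\finfin})>\mu$ for every $\mu<\lambda$, i.e.\ $\cov(M_{\finfin})\geq\lambda$.
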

    \begin{proof}
        \begin{enumerate}
            \item Using a bookkeeping argument, craft $\p$ a $\kappa$-closed-Fr-iteration, in such a way that the first $\kappa$ iterands are Cohen forcing, and the rest are a combination of $\posetEDfin$ or a subforcing of the Hechler forcing of size $<\kappa$ so it kills all witnesses of $\bb$ of size $< \kappa$, which is possible by \Cref{lem:poset_is_its_size_Frlinked} and \Cref{lem:pEDfin_has_Fr-limits}. It is easy to see that $\p_\gamma$ forces $\non(K_{\edfin})=\cc=\lambda$ and $\bb\geq\kappa$. \Cref{lem:closed_Fr_limits_keep_nonMfinfin_small} implies that the first $\kappa$-many Cohen reals witnesses $\non(M_{\finfin})\leq\kappa$.
            \item  Let $\gamma=\lambda+\kappa$ and $\p_\gamma$ be a $<\aleph_1$-closed-Fr-iteration such that the first $\lambda$-many iterands are Cohen forcings and each of the rest is $\posetEDfin$, which is possible by \Cref{lem:poset_is_its_size_Frlinked} and \Cref{lem:pEDfin_has_Fr-limits}. It is easy to see that $\p$ forces $\kappa\leq\covm\leq\cov(K_{\edfin})=\kappa$ and $\cc\leq\lambda$.
            To see $\cov(M_{\finfin})\geq\lambda$,
            let $\mu<\lambda$ be an infinite cardinal and $\{\dot{\phi}_\a:\a<\mu\}$ be $\p_\gamma$-names of functions $(\omega\times\omega)^{<\omega}\to\finfin$. 
            Let $\{\dot{c}_\b:\b<\lambda\}$ be Cohen reals as members of $(\omega\times\omega)^\omega$ added at the first $\lambda$ stages. By \Cref{lem:closed_Fr_limits_keep_nonMfinfin_small}, $\Vdash_{\p_\gamma}|\bigcup_{\a<\mu}\{\b<\lambda:\dot{c}_\b\in M_{\dot{\phi}_\a}\}|\leq\mu<\lambda$, which implies that $\{M_{\dot{\phi}_\a}:\a<\mu\}$ cannot cover $(\omega\times\omega)^\omega$. Thus $\cov(M_{\finfin})\geq\lambda$.
        \end{enumerate}
    \end{proof}
    
    Recall that $\max\{\bb,\eec_b(2)\}\leq\non(M_{\edfin})$ for any increasing function $b\in\oo$ by \Cref{prop:edfin_const}.
    By an argument similar to the previous theorem, it follows that equality cannot be proved. Moreover, we obtain the following stronger theorem: 
    
    \begin{thm}
    \label{thm:Con_max_nonKEDfin}
        \begin{enumerate}
            \item Given $\kappa<\lambda=\lambda^{<\kappa}$ regular uncountable, there is a ccc poset forcing that $\bb=\eec(2)=\eec_2=\kappa$ and $\non(K_{\edfin})=\cc=\lambda$.
            In particular, for any increasing function $b\in\oo$,  $\max\{\bb,\eec_b(2)\}<\non(K_{\edfin}) = \non(M_{\edfin})$ is consistent.
            \item Given $\kappa$ regular uncountable and $\lambda=\lambda^{\omega}>\kappa$, there is a ccc poset forcing  that $\cov(K_{\edfin})=\kappa$ and $\dd=\vvc_2=\lambda=\continuum$.
            In particular, for any increasing function $b\in\oo$,  $\cov(K_{\edfin}) = \cov(M_{\edfin}) <\max\{\dd,\vvc_b(2)\}$ is consistent.
        \end{enumerate}
    \end{thm}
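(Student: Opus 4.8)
The plan is to amalgamate the two constructions already in place: the $\posetED$/Fr-limit iteration of \Cref{thm:Con_max_b_eect_nonMED} and the $\posetEDfin$/closed-Fr-limit machinery of \Cref{thm:Con_nonMfinfin_nonKEDfin}. The crucial point is that the single forcing $\posetEDfin$ can do \emph{both} jobs at once: on one hand it generically adds $\phi_G\colon\omega\to\edfin$ with $\oo\cap V\subseteq K_{\phi_G}$ (\Cref{lem:pEDfin_basics} and the lemma following it), so iterating it cofinally makes $\non(K_{\edfin})$ as large as the length of the iteration; on the other hand, by \Cref{lem:pEDfin_has_Fr-limits} it decomposes into countably many \emph{centered} pieces, each of which is closed-Fr-linked, so $\posetEDfin$ is $\sigma$-Fr-linked — hence $\aleph_1$-$\mathbf{D}$-good by \Cref{fac:Fr_good} — and simultaneously $\sigma$-$l$-linked for every $l<\omega$ — hence $\aleph_1$-$\mathbf{CPR}_2$-good by \Cref{cor:infty_linked_CPR_good}. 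Since goodness only improves as the parameter grows, $\posetEDfin$ is $\kappa$-$\mathbf{D}$-good and $\kappa$-$\mathbf{CPR}_2$-good for every regular uncountable $\kappa$, which is exactly what keeps $\bb$ and $\eec_2$ below $\kappa$.

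For (1), using a bookkeeping argument I would build a finite support iteration $\p$ of length $\lambda$ whose iterands are either $\posetEDfin$ (appearing cofinally), a subforcing of Hechler forcing of size $<\kappa$, or a subforcing of Brendle's $\p^\omega$ (\Cref{fac:p_omega}) of size $<\kappa$, arranged so that every potential $<\kappa$-sized witness for $\bb$ and for $\eec(2)$ is destroyed. In $V^\p$ the cofinal copies of $\posetEDfin$ force $\lambda\le\non(K_{\edfin})$ (a set of reals of size $<\lambda$ lives in an intermediate model, so it is swallowed by some $K_\phi\in K_{\edfin}$), and $\cc\le\lambda$ because $\lambda^{<\kappa}=\lambda$; also $\bb,\eec(2)\ge\kappa$ since all small witnesses were killed. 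For the reverse inequalities, each iterand is of size $<\kappa$ or is $<\kappa$-Fr-linked and $<\kappa$-$l$-linked for all $l$, so by \Cref{lem:poset_is_its_size_Frlinked}, \Cref{fac:Fr_good}, \Cref{cor:infty_linked_CPR_good} and \Cref{fac:goodness_iteration} the whole iteration is $\kappa$-$\mathbf{D}$-good and $\kappa$-$\mathbf{CPR}_2$-good; \Cref{fac:FM21} then forces $C_{[\lambda]^{<\kappa}}\lq\mathbf{D}$ and $C_{[\lambda]^{<\kappa}}\lq\mathbf{CPR}_2$, whence $\bb\le\kappa$ and $\eec(2)\le\eec_2\le\kappa$; together with $\eec(2)\le\eec\le\eec_2$ this gives $\bb=\eec(2)=\eec_2=\kappa$. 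For the ``in particular'', for increasing $b$ the monotonicity facts following \Cref{dfn:constant_evasion_numbers} give $\eec_b(2)\le\eec_2(2)\le\eec_2=\kappa$, while $\non(K_{\edfin})=\non(M_{\edfin})=\lambda$ by \Cref{mainthm:basic_ineq}\eqref{item:thmA_two} together with $\non(M_{\edfin})\le\cc=\lambda$; hence $\max\{\bb,\eec_b(2)\}=\kappa<\lambda=\non(K_{\edfin})=\non(M_{\edfin})$.

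For (2), which is completely dual, I would take $\p$ to be the finite support iteration of length $\lambda+\kappa$ whose first $\lambda$ iterands are Cohen forcings and each of whose remaining $\kappa$ iterands is $\posetEDfin$. The $\kappa$-many functions $\phi_G$ added in the tail form a $\mathbf{K}_{\edfin}$-dominating family, so $\cov(K_{\edfin})\le\kappa$; together with the Cohen reals (which, as always, appear cofinally in a finite support iteration) one checks $\kappa\le\covm\le\cov(M_{\edfin})\le\cov(K_{\edfin})=\kappa$ via \Cref{lem:b_nonM} and \Cref{mainthm:basic_ineq}, and $\cc\le\lambda$ since $\lambda^\omega=\lambda$. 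Every iterand — Cohen or $\posetEDfin$ — is $\mathbf{D}$-good and $\mathbf{CPR}_2$-good, so by \Cref{fac:goodness_iteration} and \Cref{fac:FM21} the iteration forces $C_{[\lambda+\kappa]^{<\aleph_1}}\lq\mathbf{D}$ and $C_{[\lambda+\kappa]^{<\aleph_1}}\lq\mathbf{CPR}_2$, whence $\dd,\vvc_2\ge\lambda$, and with $\cc\le\lambda$ this yields $\dd=\vvc_2=\lambda=\cc$. The ``in particular'' then follows at once, since $\dd=\lambda>\kappa\ge\cov(K_{\edfin})=\cov(M_{\edfin})$ forces $\cov(K_{\edfin})=\cov(M_{\edfin})<\max\{\dd,\vvc_b(2)\}$ for every increasing $b\in\oo$.

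I do not expect a serious obstacle here: the linkedness analysis of $\posetEDfin$ (\Cref{lem:pEDfin_has_Fr-limits}), the goodness-preservation results, and \Cref{fac:p_omega} are all available, and the only genuinely new point is the simultaneous use of the closed-Fr-linkedness and the $l$-linkedness of $\posetEDfin$ to pin down $\bb$ and $\eec_2$ at the same time — immediate from \Cref{lem:pEDfin_has_Fr-limits}. The one place deserving a little care is the bookkeeping: one must verify that every $<\kappa$-sized family of reals is captured at a bounded stage of the length-$\lambda$ iteration so that the corresponding Hechler- and $\p^\omega$-subforcings indeed kill the $\bb$- and $\eec(2)$-witnesses, which is standard since $\lambda$ is regular above $\kappa$ and the iteration is ccc.
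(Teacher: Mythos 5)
Your proposal is correct, and for part (2) it coincides with the paper's proof (the same length-$(\lambda+\kappa)$ iteration of Cohen forcings followed by $\posetEDfin$, with $\dd,\vvc_2\geq\lambda$ obtained from $\mathbf{D}$- and $\mathbf{CPR}_2$-goodness). For part (1) you diverge from the paper in one place: the paper literally reuses the iteration of \Cref{thm:Con_nonMfinfin_nonKEDfin}(1) — a ${<}\kappa$-closed-Fr-iteration whose first $\kappa$ iterands are Cohen — interleaving subforcings of $\p^\omega$, and keeps $\bb\leq\kappa$ indirectly, via $\bb\leq\non(M_{\finfin})\leq\kappa$, where the bound on $\non(M_{\finfin})$ comes from the closed-Fr-limit argument with the initial Cohen block (\Cref{lem:closed_Fr_limits_keep_nonMfinfin_small}). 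You instead drop the Cohen block and the closed-Fr-iteration formalism altogether and bound $\bb\leq\kappa$ directly through goodness: $\posetEDfin$ is $\sigma$-Fr-linked by \Cref{lem:pEDfin_has_Fr-limits}, hence $\kappa$-$\mathbf{D}$-good by \Cref{fac:Fr_good}, and $\sigma$-centered, hence $\kappa$-$\mathbf{CPR}_2$-good by \Cref{cor:infty_linked_CPR_good}; the remaining iterands have size ${<}\kappa$, so \Cref{fac:goodness_iteration} and \Cref{fac:FM21} give $\bb,\eec_2\leq\kappa$. Both routes are valid for the statement as written (which says nothing about $\non(M_{\finfin})$); yours is leaner because it never needs limits of conditions, while the paper's choice yields a slightly stronger model in which $\non(M_{\finfin})=\kappa$ also holds, for free, by reusing the earlier construction verbatim. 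The rest of your argument — cofinal copies of $\posetEDfin$ giving $\non(K_{\edfin})=\cc=\lambda$, killing small witnesses of $\bb$ and $\eec(2)$ by Hechler- and $\p^\omega$-subforcings, the monotonicity chain $\eec(2)\leq\eec_b(2)\leq\eec_2(2)\leq\eec_2$ for the ``in particular'' clauses, and $\non(K_{\edfin})\leq\non(M_{\edfin})\leq\cc$ — matches the paper's reasoning.
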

    \begin{proof}
        \begin{enumerate}
            \item Note that $\eec(2)\leq\eec_b(2)\leq\eec_2(2)\leq\eec_2$ for any increasing function $b\in\oo$. Perform the same iteration $\p_\gamma$ as in \Cref{thm:Con_nonMfinfin_nonKEDfin} (1), but interleave subforcings of $\p^\omega$ (see \Cref{fac:p_omega}) of size $<\kappa$.
            Similarly, $\p_\gamma$ forces $\bb=\kappa$,  $\non(K_{\edfin})=\cc=\lambda$, and $\eec(2)\geq\kappa$. For the same reason as \Cref{thm:Con_max_b_eect_nonMED}, $\p_\gamma$ forces $\eec_2\leq\kappa$.
            \item Use the same $\p_\gamma$ in \Cref{thm:Con_nonMfinfin_nonKEDfin} (2). For the same reason as in \Cref{thm:Con_max_b_eect_nonMED}, $\p_\gamma$ forces $\vvc_2=\lambda$ and we are done.
        \end{enumerate}
    \end{proof}
    
    We do not know whether $\non(K_{\edfin})<\non(M_{\edfin})$ and $\cov(K_{\edfin})>\cov(M_{\edfin})$ are consistent or not.

    \subsection{Extending Cicho\'n's maximum}\label{sec:CM}
    
    In this section, we construct a model of Cicho\'n's maximum as described in \Cref{mainthm:ext_CM}. 
    Following the construction of the original model of Cicho\'n's maximum in \cite{GKS19},\cite{GKMS22}, we shall separate the left side of the diagram (\Cref{fig:CM_left}) and then separate the right (\Cref{fig:CM}). The separation of the right side is obtained by just applying the method in  \cite{GKMS22} (the method in \cite{GKS19} requires large cardinals),
    so our essential goal is to separate the left side to have \Cref{fig:CM_left}.
    
    Let us review the separation of the left side of (the not extended) Cicho\'n's diagram by Goldstern--Mej{\'\i}a--Shelah \cite{GMS16}. The main work is \textit{to keep the bounding number $\bb$ small} through the forcing iteration, since the naive bookkeeping iteration used to increase the cardinal invariants in the left side guarantees the smallness of the other numbers but not of $\bb$.
    As we have seen in \Cref{sec:Fr-limits}, Fr-limits keep $\bb$ small, but these limits do not work here. In the bookkeeping iteration, \textit{subforcings} of particular forcing notions are iterated, but subforcings of a poset which has Fr-limits might not have Fr-limits, and hence the smallness of $\bb$ is not guaranteed.
    However, we can overcome this problem by using the stronger limits, \textit{UF-limits}.
    By using this UF-limit method, we can choose suitable subforcings while keeping $\bb$ small (see \cite{GMS16} for details). This is why UF-limits are needed instead of Fr-limits.
    
    Let us go back to the separation of the left side of our extended Cicho\'n's diagram (\Cref{fig:CM_left}). The main work is to keep the bounding number $\bb$ and $\non(M_{\finfin})$ small through the forcing iteration (it is known that there is a forcing notion denoted by $\mathbb{LE}$ that keeps $\none$ small through the iteration, by \cite{Yam24}). Although we know that Fr-limits and closed-Fr-limits keep $\bb$ and $\non(M_{\finfin})$ small (\Cref{lem:closed_Fr_limits_keep_nonMfinfin_small}) respectively, we need UF-limits and closed-UF-limits as explained above.
    In \Cref{sec:Fr-limits}, we introduced the posets $\posetED$ and $\posetEDfin$, which have Fr-limits and closed-Fr-limits, and increase $\non(M_{\ed})$ and $\non(K_{\edfin})$, respectively (\Cref{table:Fr-limits}). However, it is unclear whether these posets have (closed-)UF-limits. More specifically, when proving that these posets have (closed-)Fr-limits, we showed $(\star)_1$ in \Cref{lem:chara_UF_linked}, but we need to show $(\star)_k$ for $k\geq2$ to prove that they have (closed-)UF-limits, which is unclear (see \Cref{rem:posetED_UF_limits} and \Cref{rem:posetEDfin_UF_limits}). 
    
    Instead, we introduce new posets which have (closed-)UF-limits, by slightly modifying the definitions of $\posetED$ and $\posetEDfin$. 
    Consequently, we change the ideals we deal with, and we encounter two ideals: $\J_L$ and $\I^f_L$ (\Cref{table:UF-limits}).
    These ideals themselves have interesting properties which are not directly related to (closed-)UF-limits (we study their properties in \Cref{sec:more_growth}), so our use of these ideals does not seem ad hoc.
    In this section, we introduce the ideals $\J_L$ and $\I^f_L$, and the posets $\posetJL$ and $\dif$, which increase $\non(M_{\J_L})$ and $\non(K_{\I_L^f})$ and have UF-limits and closed-UF-limits, respectively. In the end, we construct our desired model of Cicho\'n's maximum in \Cref{thm:CM} using these posets.
    
    \begin{table}[h]
    	\begin{minipage}[h]{0.49\textwidth}
    		\caption{Posets with (closed-)Fr-limits}\label{table:Fr-limits}
    		\centering
    		\begin{tabular}{c|c|c}
    			 posets & increase & limits    \\  
    			\hline
    			 $\posetED$ &      $\non(M_{\ed})$ & Fr-limits \\
    			 $\posetEDfin$ &      $\non(K_{\edfin})$ & closed-Fr-limits  
    		\end{tabular}
    		
    	\end{minipage}
    	\begin{minipage}[h]{0.49\textwidth}
    		\caption{Posets with (closed-)UF-limits}\label{table:UF-limits}
    		\centering
    		\begin{tabular}{c|c|c}
    			  posets & increase & limits   \\  
    			\hline
    			 $\posetJL$ &      $\non(M_{\J_L})$ & UF-limits  \\
    			 $\dif$ &      $\non(K_{\I_L^f})$ & closed-UF-limits  
    		\end{tabular}
    		
    	\end{minipage}
    \end{table}

    \subsubsection{The ideal $\J_L$ and the poset $\posetJL$}
    
    We change the forcing notion $\posetED$ so that it will have UF-limits. The following ideal was already in the introduction, and we calculated some of its associated cardinal invariants in \Cref{sec:omega}.
    
    \begin{dfn}\label{dfn:JL}
        The ideal $\J_L$ on $\omega\times\omega$ is defined by
        \[
        \J_L = \{A\subset\omega\times\omega: \exists k<\omega~\forall^{\infty} i<\omega~\lvert (A)_i\rvert \leq k\cdot i\}.
        \]
    \end{dfn}
    
    By definition, $\ed\subset\J_L\subset\finfin$. The ideal $\J_L$ is to the linear growth ideal $\I_L$ what the eventually different ideal $\ed$ is to $\edfin$. Namely, $\J_L\upharpoonright\Delta_{\mathrm{exp}}$, where $\Delta_{\mathrm{exp}} = \{\langle i, j\rangle:j\leq \exp(i)\}$, is isomorphic to $\I_L$. It follows that $\J_L\leq_{\mathrm{KB}}\I_L$. Ideals between $\ed$ and $\finfin$ in the sense of Kat\v{e}tov order (including $\J_L$) are extensively studied in \cite{DFGT21}.
    
    We introduce the forcing notion $\posetJL$ which generically adds an $\mathbf{M}_{\J_L}$-dominating function $\phi_G\colon(\omega\times\omega)^{<\omega}\to\J_L$.
    The poset is obtained by slightly changing the definition of $\posetED$.
    
    \begin{dfn}
    	\label{dfn:posetfin}
    	The poset $\posetJL$ is defined as follows: Its conditions are $p=(\sigma^p,s^p,n^p,F^p)$ such that 
        \begin{itemize}
        	\item $\sigma^p\colon(\omega\times\omega)^{<n^p}\to\omega$ is a finite partial function,
        	\item $s^p\colon\{(u^\frown a,v):(u,v)\in(\omega\times\omega)^{<n^p},a\in\omega\}\to[\omega]^{<\omega}$ is a finite partial function such that $|s^p(u^\frown a,v)|\leq a$ for $(u^\frown a,v)\in\dom(s^p)$,
        	\item $n^p\in\omega$, 
        	\item $F^p\subseteq\oo$ is finite.   
        \end{itemize}
        The order $q\leq p$ is given by  
        \begin{itemize}
        	\item $\sigma^q\supseteq \sigma^p$, $s^q\supseteq s^p$, $n^q\geq n^p$ and $F^q\supseteq F^p$,  
        	\item $\forall i\in\left[n^p,n^q\right)\forall x,y\in F^p$, either:  
        	\begin{itemize}
        		\item $(x\on i, y\on i)\in\dom(\sigma^q)$ and $x(i)\leq\sigma^q(x\on i, y\on i)$, or  
        		\item $(x\on (i+1), y\on i)\in\dom(s^q)$ and $y(i)\in s^q(x\on (i+1), y\on i)$.
        	\end{itemize}
        \end{itemize}
        Let $G$ be a generic filter. In $V[G]$,  $\phi_G\colon(\omega\times\omega)^{<\omega}\to\omega\times\omega$ is given by
        \[
        \phi_G(u,v)\coloneqq\bigcup_{p\in G}\{(a,b)\in\omega\times\omega:a\leq\sigma^p(u,v)\text{ or }b\in s^p(u^\frown a,v)\}.
        \]
    \end{dfn}
    
    As in the case of \Cref{lem:posetED_basics},
    
    \begin{lem}\label{lem:posetJL_basics}
    	\begin{enumerate}
    	   \item For $n<\omega$, $\{p:n_p\geq n\}$ is dense. Thus $\posetJL\Vdash\dom(\phi_G)=(\omega\times\omega)^{<\omega}$.
           \item For $(u,v)\in(\omega\times\omega)^{<\omega}$,  $\posetJL\Vdash\forall^\infty n<\omega~|(\phi_G(u,v))_n|\leq n$, so $\ran(\phi_G)\subseteq\J_L$.
    	   \item For $f,g\in \oo$, $\{p:f,g\in F^p\}$ is dense. 
    		Thus $\posetJL\Vdash(\omega\times\omega)^\omega\cap V\subseteq M_{\phi_G}$.
    		\end{enumerate}
    \end{lem}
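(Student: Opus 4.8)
The plan is to follow the proof of \Cref{lem:posetED_basics} essentially verbatim, since $\posetJL$ is obtained from $\posetED$ by only two changes: on arguments of the form $(u^\frown a,v)$ the values of $s^p$ are now subsets of $\omega$ of cardinality at most $a$ (rather than single natural numbers), and the second clause of the order reads $y(i)\in s^q(x\on(i+1),y\on i)$. Each of the three density statements is proved by writing down an explicit extension of an arbitrary condition, and the forcing consequences then follow by a routine genericity remark.

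For item (1), given $p=(\sigma^p,s^p,n^p,F^p)$ with $n^p<n$, I would keep $s^q\coloneqq s^p$, $F^q\coloneqq F^p$, put $n^q\coloneqq n$, and extend $\sigma^p$ to $\sigma^q$ by declaring, for each $i\in[n^p,n)$ and all $x,y\in F^p$, $\sigma^q(x\on i,y\on i)\coloneqq\max\{x'(i):x'\in F^p\}$, leaving $\sigma^q$ equal to $\sigma^p$ elsewhere. This adds only finitely many entries, all with argument in $(\omega\times\omega)^{<n}$, so $q$ is a condition with $q\leq p$, and the order requirement over $[n^p,n^q)$ is met through the first disjunct. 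Hence $\{p:n_p\geq n\}$ is dense, and $\posetJL$ forces $\dom(\phi_G)=(\omega\times\omega)^{<\omega}$.

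For item (2), fix $(u,v)$. Along $G$ the value $\sigma^p(u,v)$ is $\subseteq$-monotone in $p$ and integer-valued, hence eventually equal to a fixed $N$. For $n>N$ the disjunct ``$a\leq\sigma^p(u,v)$'' never contributes a pair with first coordinate $n$, so $(\phi_G(u,v))_n\subseteq\bigcup_{p\in G}s^p(u^\frown n,v)$; this is a $\subseteq$-increasing union of sets each of cardinality $\leq n$ (by the defining constraint on $s^p$), and an increasing union of sets of size $\leq n$ again has size $\leq n$. Therefore $\lvert(\phi_G(u,v))_n\rvert\leq n$ for all but finitely many $n$, i.e.\ $\phi_G(u,v)\in\J_L$ (with constant $k=1$), so $\ran(\phi_G)\subseteq\J_L$. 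This last cardinality computation is the only step that genuinely uses the modification of the poset: it is exactly what replaces ``$\ran(\phi_G)\subseteq\finfin$'' (the situation for $\posetED$) by ``$\ran(\phi_G)\subseteq\J_L$''.

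For item (3), for arbitrary $p$ the tuple $q\coloneqq(\sigma^p,s^p,n^p,F^p\cup\{f,g\})$ is a condition and $q\leq p$, because the only nonvacuous clause of $q\leq p$ ranges over $i\in[n^p,n^q)=\emptyset$; so $\{p:f,g\in F^p\}$ is dense. For the forcing consequence, take $z\in(\omega\times\omega)^\omega\cap V$, write $z=(f,g)$ with $f,g\in\oo\cap V$, and choose $p\in G$ with $f,g\in F^p$; by item (1) and genericity, for every $i\geq n^p$ there is $q\in G$ with $q\leq p$ and $n^q>i$, and applying the order clause at $i$ to $x=f$, $y=g$ puts $z(i)=(f(i),g(i))$ into $\phi_G(z\on i)$ — via the first disjunct when $f(i)\leq\sigma^q(f\on i,g\on i)$, and via the second when $g(i)\in s^q(f\on(i+1),g\on i)$. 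Hence $z\in M_{\phi_G}$. There is no real obstacle here; the single point requiring a moment's thought is the union-of-small-sets estimate in item (2), which is what pins down $\ran(\phi_G)\subseteq\J_L$.
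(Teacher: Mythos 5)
Your proof is correct and takes essentially the approach the paper intends: the paper states this lemma without proof, deferring to the analogous routine lemma for $\posetED$, and your explicit extension constructions for the three density claims together with the section-size estimate in (2) (using that $\sigma^p(u,v)$ and $s^p(u^\frown n,v)$ stabilize along the filter and that the latter has size $\leq n$) are exactly that routine verification. (Two cosmetic asides, immaterial to the argument: for $\posetED$ the analogous lemma yields $\ran(\phi_G)\subseteq\ed$, not merely $\finfin$, and since conditions extend as partial functions the value $\sigma^p(u,v)$ is in fact constant, not just eventually constant, along $G$ where defined.)
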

    
    $\posetJL$ is $\sigma$-centered and has UF-limits:
    
    \begin{lem}\label{lem:posetfin_centered_Frlinked}
    	Fix $n\in\omega$, $\sigma\colon(\omega\times\omega)^{<n}\to\omega$ a finite partial function, and $s\colon\{(u^\frown a,v):(u,v)\in(\omega\times\omega)^{<n},a\in\omega\}\to\omega$ a finite partial function. Let $L<\omega$ and $\bar{x}^*=\{x^*_l:l<L\}\subseteq\omega^n$ such that all $x^*_l$ are pairwise different. 
    	Then, $Q\coloneqq\{p:\sigma^p=\sigma, s^p=s,n^p=n, \{x\on n:x\in F^p\}=\bar{x}^*\}$
    	is centered and $D$-lim-linked for any ultrafilter $D$. 
    \end{lem}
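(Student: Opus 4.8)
The plan is to follow the structure of the proof of \Cref{lem:posetED_centered_Frlinked}, upgrading the verification from $(\star)_1$ (which only gives Fr-linkedness) to $(\star)_k$ for all $k<\omega$; by the forcing-free characterization \Cref{lem:chara_UF_linked}, this is exactly what is needed for $D$-lim-linkedness. Centeredness of $Q$ is immediate: any finitely many $p_1,\dots,p_N\in Q$ share the same $\sigma$, $s$, $n$ and the same set $\{x\on n:x\in F^{p_i}\}=\bar{x}^*$, so $(\sigma,s,n,\bigcup_i F^{p_i})$ is a common extension, the compatibility requirement at levels $i\in[n^{p_i},n^q)$ being vacuous because $n^q=n=n^{p_i}$. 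So the real content is the construction of $\lim^D$ and the proof of $(\star)_k$.

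Fix an ultrafilter $D$ and define $\lim^D\colon Q^\omega\to\posetJL$ exactly as in the $\posetED$ case: given $\bar{q}=\langle q_m=(\sigma,s,n,F_m=\{x^m_l:l<L\})\rangle_{m<\omega}$, form the diagonal $D$-limits $\bar{x}_l^\infty$ (with $\bar{x}_l^\infty(i)=a$ iff $\{m:x^m_l(i)=a\}\in D$), split $L$ into $A=\{l:\dom(\bar{x}_l^\infty)=\omega\}$ and $B=L\setminus A$, set $n_l\coloneqq\min(\omega\setminus\dom(\bar{x}_l^\infty))$ for $l\in B$, fix stems $t_l\in\omega^{n_l}$ along a $D$-large set, choose $n^\infty$ large, build $\sigma^\infty$ dominating the relevant values on the finite sets $W_i$ of surviving initial segments, and put $s^\infty\coloneqq s$; one checks as before that $\lim^D\bar{q}$ is a valid condition. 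To verify $(\star)_k$, take $\bar{q}^0,\dots,\bar{q}^{k-1}\in Q^\omega$ and $r\le\lim^D\bar{q}^j$ for all $j<k$, and one must show that
\[
X=\{m<\omega: r\text{ and }q^0_m,\dots,q^{k-1}_m\text{ have a common extension}\}\in D.
\]
For $m\in X$ one produces a common extension $q'$ with $F'=F^r\cup\bigcup_{j<k}F^j_m$, keeping $\sigma',s'$ extending $\sigma^r,s^r$; the pairs of branches both lying in $F^r$ are governed by $r$, the pairs both in some single $F^j_m$ are dealt with level by level by either enlarging $\sigma'(x\on i,y\on i)$ to dominate $x(i)$ or adding $y(i)$ to $s'(x\on(i+1),y\on i)$, and for $i\ge n^\infty$ one invokes $r\le\lim^D\bar{q}^j$ together with a $D$-large set fixing the values of all the branches below $n^r$.

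The step I expect to be the main obstacle — and the reason $\posetJL$ is defined with set-valued $s$ — is precisely this bookkeeping at the $s'$-nodes when $k\ge 2$. In $\posetED$ the second component $s$ was single-valued, so two branches coming from different sequences $j\ne j'$ that happen to share a long common initial segment could impose \emph{conflicting} values at one node; that is exactly why only $(\star)_1$ could be proved there. In $\posetJL$ a node of the form $(u^\frown a,v)$ carries a \emph{set} of size up to $a$, so the (boundedly many, with a bound depending only on $k$ and $L$) demands landing on any one node can all be recorded at once, provided the capacity $a=x(n_l)$ of that node is large enough. This is where the ultrafilter is used crucially: for $l\in B$ we have $n_l\notin\dom(\bar{x}_l^\infty)$, so no value $a$ satisfies $\{m:x^m_l(n_l)=a\}\in D$, and since a finite union of sets outside the ultrafilter $D$ is again outside $D$, this forces $\{m:x^m_l(n_l)\ge N\}\in D$ for every $N$. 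Hence one may shrink $X$ by intersecting in, for every $j<k$ and $l\in B^j$, the $D$-large set on which $x^{m,j}_l(n_l)$ exceeds the needed bound, and then there is enough room at every $s'$-node. Once $(\star)_k$ is established for all $k$, \Cref{lem:chara_UF_linked} shows $\lim^D$ witnesses that $Q$ is $D$-lim-linked; as $D$ was arbitrary, $Q$ is UF-lim-linked, and since $\posetJL$ is the union of countably many such pieces $Q$ — one for each choice of $\sigma$, $s$, $n$ and $\bar{x}^*$ — it follows that $\posetJL$ is $\sigma$-centered and $\sigma$-UF-lim-linked, i.e.\ has UF-limits.
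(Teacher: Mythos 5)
Your proposal is correct and follows essentially the same route as the paper: define $\lim^D$ exactly as for $\posetED$, verify $(\star)_k$ via \Cref{lem:chara_UF_linked}, and use the fact that for $l\in B$ no value at level $n_l$ is $D$-frequent, so $\{m: x^{m}_l(n_l)\geq k\cdot L\}\in D$, which (thanks to the set-valued $s$ with capacity $a$) lets the common extension record all conflicting demands at an $s'$-node as a union --- this is precisely the paper's set $X_4$ and its definition of $s'$. No gaps worth flagging.
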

    \begin{proof}
        We prove $(\star)_k$ for $k\geq1$ in \Cref{lem:chara_UF_linked}.
        We follow the proof of \Cref{lem:posetED_centered_Frlinked}, so we:
        \begin{enumerate}
            \item define a limit function $\lim^D\colon Q^\omega\to\posetJL$, 
            \item\label{item:X_JL} take $X\in D$ for a given $\bar{q}\in Q^\omega$ and $r\leq \lim^D \bar{q}$, and 
            \item $X$ witnesses that $(\star)_1$ is satisfied.
        \end{enumerate}
    	We will show $(\star)_k$ for $k\geq2$ assuming $(\star)_1$.
        Let $X(\bar{q},r)$ denote the set $X\in D$ constructed in \eqref{item:X_JL} according to $\bar{q},r$.
    	Suppose that:
    	\begin{itemize}
    		\item $\bar{q}^j=\langle q_m^j:m<\omega\rangle\in Q^\omega$ for $j<k$,
    		\item $r\leq\lim^D\bar{q}^j$ for all $j<k$,
    	\end{itemize}
    	By $(\star)_1$, $X^\prime\coloneqq \bigcap_{j<k}X(\bar{q}^j, r)\in D$.
        For $m<\omega$ and $j<k$, put $q^j_m=(\sigma,s,n,F^j_m)$ and $F^j_m=\{x^{m,j}_l:l<L\}$. For each $j<k$, let $B_j\subseteq L$ and $n_{j,l}<\omega$ for $l\in B_j$ be $B$ and $n_l$ respectively in the proof of \Cref{lem:posetED_centered_Frlinked} by replacing $\bar{q}$ with $\bar{q_j}$. For $j<k$ and $l\in B_l$, let $X^{j,l}_4\coloneqq\{m<\omega:x^{m,j}_l(n_{j,l})\geq k\cdot L\}\in D$. Put $X_4\coloneqq\bigcap\{X^{j,l}_4:j<k,l\in B_l\}\in D$.
        Put $X\coloneqq X^\prime\cap X_4$.
        To see that $X$ witnesses $(\star)_k$ is true,
    	let $m\in X$, and we will find $q^\prime$ as a common extension of $r$ and all $q_m^j$. Put $r=(\sigma^r,s^r,n^r,F^r)$ and define $q^\prime\coloneqq (\sigma^\prime,s^\prime,n^r,F^r\cup \bigcup_{j<k}F^j_m)$ as follows:
        
        $\sigma^\prime\supseteq \sigma^r$ and for all $(u,v)\in(\omega\times\omega)^{<\omega}\setminus \dom (\sigma^r)$ of the form $(x^{m,j}_{l}\on i, x^{m,j}_{l^\prime}\on i)$ for some $j<k$, $l,l^\prime<L$, $i<n_r$, let $(u,v)\in\dom(\sigma^\prime)$ and:
        \begin{equation}
            \sigma^\prime(u,v)\coloneqq\max\{x^{m,j^*}_{l^*}(i):j^*<k,l^*<L\}. 
        \end{equation}
        $s^\prime\supseteq s^r$ and for all $(u^\prime,v)\in\fsq\times\fsq\setminus \dom (s^r)$ of the form $(x^{m,j}_{l}\on (n_{j,l}+1), x^{m,j}_{l^\prime}\on n_{j,l})$ for some $j<k$, $l\in B_j$ and $l^\prime<L$, let $(u^\prime,v)\in\dom(s^\prime)$ and:
        \begin{equation}
            \label{equation:s_prime_JL}
            s^\prime(u^\prime,v)\coloneqq\bigcup\{x^{m,j^*}_{l^*}(n_{j,l}):j^*<k,l^*<L\}.
        \end{equation}
        For such $(u^\prime,v)=(x^{m,j}_{l}\on (n_{j,l}+1), x^{m,j}_{l^\prime}\on n_{j,l})$, by the choice of $X_4$, we have $x^{m,j}_{l} (n_{j,l})\geq k\cdot L\geq |s^\prime(u^\prime,v)|$ and hence \eqref{equation:s_prime_JL} is a valid definition of $s^\prime$.

        Then, the argument of the case $(\star)_1$ in the proof of \Cref{lem:posetED_centered_Frlinked} implies that $q^\prime$ extends $r$ and all $q^j_m$. The only difference appears in the final case of that proof, in which case we have $x^{m,j}_{l^\prime}(i)\leq s^\prime(x^{m,j}_l\on(i+1),x^{m,j}_{l^\prime}\on i)$ by our current choice of $s^\prime$ and hence $q^\prime\leq q^j_m$ by the definition of $\leq_{\posetJL}$.
    \end{proof}
    
    \begin{rem}\label{rem:posetED_UF_limits}
        In the previous argument, when we define $s^\prime(u^\prime,v)$ in  \eqref{equation:s_prime_JL}, even in the case that $(x^{m,j}_{l_0}\on (n_{j,l}+1), x^{m,j}_{l_1}\on n_{j,l})=(x^{m,j^*}_{l_2}\on (n_{j^*,l}+1), x^{m,j^*}_{l_3}\on n_{j^*,l})$ for some $j,j^*<k$, $l_0\in B_j$, $l_2\in B_{j^*}$ and $l_1,l_3<L$, it could be the case that $x^{m,j}_{l_1}(n_{j,l})$ and $ x^{m,j^*}_{l_3}(n_{j^*,l})$ are different. For this reason, we are not sure if $(\star)_k$ for $k\geq2$ is true for $\posetED$.
    \end{rem}

    \subsubsection{The $f$-linear growth ideal $\ILf$ and the poset $\dif$.}
    
    We will now modify the forcing $\posetEDfin$ so that it will have closed-UF-limits. We introduce the \emph{$f$-linear growth ideal} $\I^f_L$ as well as its associated poset to increase $\non(K_{\I^f_L})$.
    
    \begin{dfn}
        For $g,h\in\oo$, we write
        $g\ll h$ if $\lim_{n\to\infty}\frac{g(n)}{h(n)}=0$. Let $\exp\in\oo$ denote the exponential function, i.e. $\exp(n)=2^n$. 
    \end{dfn}
    
    \begin{dfn}
    	Fix an interval partition $\bar{P}^\mathrm{exp}=(P_i^\mathrm{exp})_{i<\omega}$ of $\omega$ such that $|P^\mathrm{exp}_i|=\exp(i)$. Let $f\in\oo$ be a function such that $f\ll\exp$.
        The \emph{$f$-linear growth ideal} $\IL^f$ is given by:
    	\begin{equation*}
    		\IL^f\coloneqq\set{A\subseteq\omega}{\exists k<\omega~\forall^\infty i<\omega~\card{A\cap P_i^\mathrm{exp}}\leq k\cdot f(i)}.
    	\end{equation*}
    \end{dfn}

    In particular, when $f$ is the identity function, $\IL^f$ is the linear growth ideal (cf.\ {\cite[Page 56]{Hru11}, \cite[Page 3]{BM14}}), from which the subscript $L$ comes.
    Note that $f\leq^* g$ implies $\IL^f\subseteq\IL^g$ and if $f$ is bounded then $\IL^f\cong_{\mathrm{KB}}\edfin$. Moreover, if $f\ll\exp$ then $\IL^f\leq_{\mathrm{KB}}\Z$; this follows almost directly from the definition.
    
    Fix $f\in\oo$ with $f\ll\exp$. 
    We introduce the forcing notion $\dif$ which generically adds a $\mathbf{K}_{\I_L^f}$-dominating function $\phi_G\colon\omega\to\IL^f$.
    We may assume that $\posetEDfin$ is defined on $(P_i^\mathrm{exp})_{i<\omega}$, instead of the interval partition $(P_i)_{i<\omega}$ with $|P_i|=i$. Under this assumption, the conditions of $\dif$ are exactly the same as $\posetEDfin$ but the order is weaker. Let $P_i^\mathrm{exp}\coloneqq I_i\coloneqq\left[M_i,M_{i+1}\right)$ for each $i<\omega$.

    \begin{dfn} 
        The poset $\dif$ is defined as follows:
        \begin{itemize}
		    \item Conditions are $p=(i,k,s,\varphi)=(i_p,k_p,s_p,\varphi_p)$ where $i,k<\omega$, $s\in([M_i]^{\leq k})^{<\omega}$ and $\varphi\colon \omega\to [\omega]^{\leq k}$ such that $\varphi\on |s|=s$.
		    \item The order $(i^\prime,k^\prime,s^\prime,\varphi^\prime)\leq(i,k,s,\varphi)$ is defined by: $i^\prime\geq i$, $k^\prime\geq k$, $\varphi^\prime(n)\supseteq\varphi(n)$ for all $n<\omega$, $|s^\prime|\geq|s|$ and for $n<|s|$,
		      \begin{gather}
			    \label{gath_end_ext} s^\prime(n)\cap M_i=s(n),\text{ and}\\ 
			    \label{gath_density_small} \text{if }i\leq j< i^\prime,\text{ then } |s^\prime(n)\cap I_j|\leq n \cdot f(j).
		    \end{gather}
	        \item For a generic filter $G$, define $\phi_G\colon\omega\to\mathcal{P}(\omega)$ by:
            \[\phi_G(n)\coloneq\bigcup \{s_p(n):p\in G\text{ and }n<|s|\}.\]
	   \end{itemize}
    \end{dfn}

    One can easily prove:
    \begin{lem}
    	\label{lem:DI_basics}
    	The following sets are open dense:
    	\begin{enumerate}
    		\item\label{item_lem_DI_basics_x} For $x\in\oo$, $\{p:x(n)\in\varphi_p(n)\text{ for all }n\geq |s_p|\}$, 
    		\item for $i<\omega$, $\{p:i_p\geq i\}$, 
    		\item\label{item_lem_DI_basics_s} for $n<\omega$, $\{p:|s_p|\geq n\}$.
    	\end{enumerate}
    \end{lem}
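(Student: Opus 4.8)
The plan is to check each of the three items directly from the definition of $\dif$, imitating the proof of the analogous statement \Cref{lem:pEDfin_basics} for $\posetEDfin$. Recall that the conditions of $\dif$ coincide with those of $\posetEDfin$ (now over the partition $\bar P^{\mathrm{exp}}$, with $I_i=P_i^{\mathrm{exp}}$), and the order is only \emph{weakened}: the density clause \eqref{gath_density_small} now merely requires $|s'(n)\cap I_j|\leq n\cdot f(j)$ in place of $|s'(n)\cap I_j|\leq n$. Consequently every openness and density argument there goes through here with at most cosmetic changes. For openness: whenever $q\leq p$ we have $i_q\geq i_p$, $k_q\geq k_p$, $|s_q|\geq|s_p|$ and $\varphi_q(m)\supseteq\varphi_p(m)$ for all $m$, so membership in the sets of items (2) and (3) is inherited downward immediately; and for item (1), if $n\geq|s_q|$ then $n\geq|s_p|$, whence $x(n)\in\varphi_p(n)\subseteq\varphi_q(n)$, so that set is open too.

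For density of item (1), given $p=(i,k,s,\varphi)$ I would pass to $q=(i,k+1,s,\varphi')$ where $\varphi'(m)=s(m)$ for $m<|s|$ and $\varphi'(m)=\varphi(m)\cup\{x(m)\}$ for $m\geq|s|$; then $|\varphi'(m)|\leq k+1$, $\varphi'\upharpoonright|s|=s$, and $q\leq p$ since $i$ was not increased (so \eqref{gath_end_ext} holds trivially and \eqref{gath_density_small} vacuously), and plainly $x(m)\in\varphi_q(m)$ for all $m\geq|s_q|=|s|$. For density of item (2): if $i_p\geq i$ there is nothing to do, and otherwise $q=(i,k_p,s_p,\varphi_p)$ works, because $s_p(m)\subseteq M_{i_p}$ forces $s_p(m)\cap M_{i_p}=s_p(m)$ and $s_p(m)\cap I_j=\emptyset$ for all $i_p\leq j<i$, so both order clauses hold and $i_q=i$. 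For density of item (3), given $p=(i,k,s,\varphi)$ with $|s|<n$, I would first pick $i'\geq i$ with $M_{i'}\supseteq\bigcup_{m<n}\varphi(m)$ — possible since only finitely many finite sets occur — and then take $s'$ of length $n$ with $s'(m)=s(m)$ for $m<|s|$ and $s'(m)=\varphi(m)$ for $|s|\leq m<n$; one checks $s'(m)\in[M_{i'}]^{\leq k}$, $\varphi\upharpoonright n=s'$, and $q=(i',k,s',\varphi)\leq p$ (again \eqref{gath_end_ext} and \eqref{gath_density_small} are immediate because $s'(m)=s(m)\subseteq M_i$ for $m<|s|$), with $|s_q|=n$.

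I do not expect any real obstacle: the argument is essentially bookkeeping, and the weakening of the order relative to $\posetEDfin$ only makes the verifications easier. The only two points needing a moment's attention are that in item (1) the size parameter $k$ must be bumped by one to make room for the new point $x(m)$ in each $\varphi(m)$, and that in item (3) one must enlarge $i_p$ enough to absorb the finitely many new coordinates of $s$ into an initial segment $M_{i'}$ of $\omega$ — both entirely harmless.
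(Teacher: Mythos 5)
Your proof is correct and is exactly the routine verification the paper intends — the lemma is stated there without proof (``One can easily prove''), in direct parallel with \Cref{lem:pEDfin_basics}, and your three density witnesses (bumping $k$ to $k+1$ for item (1), raising $i_p$ to $i$ for item (2), and enlarging $i_p$ to absorb $\bigcup_{m<n}\varphi_p(m)$ into $M_{i'}$ for item (3)) are the standard ones. The only quibble is the side remark that the $\dif$-order is ``only weakened'' relative to $\posetEDfin$, which literally requires $f(j)\geq 1$; but this remark is not load-bearing, since in every instance you check, the relevant intersection $s'(n)\cap I_j$ is empty and clause \eqref{gath_density_small} holds trivially.
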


    By \eqref{gath_density_small} and by Lemma \ref{lem:DI_basics} \eqref{item_lem_DI_basics_x}, we have:
    
    \begin{lem}\label{lem:posetILf_basics}
    	In $V[G]$,  $\ran(\phi_G)\subseteq\IL^f$ and $\oo\cap V\subseteq K_{\phi_G}$.
    \end{lem}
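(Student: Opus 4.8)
The plan is to mirror, almost verbatim, the corresponding basic facts established for $\posetEDfin$ in \Cref{lem:pEDfin_basics} (and the lemma immediately following it), since $\dif$ has exactly the same conditions as $\posetEDfin$ and an order differing only in the density clause: \eqref{gath_density_small} now reads $|s^\prime(n)\cap I_j|\leq n\cdot f(j)$ in place of $|s^\prime(n)\cap I_j|\leq n$. Accordingly everything should go through with the uniform bound $n$ replaced by $n\cdot f(j)$, landing the slaloms in $\IL^f$ rather than in $\edfin$.

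First I would prove $\oo\cap V\subseteq K_{\phi_G}$. Given $x\in\oo\cap V$, use \Cref{lem:DI_basics}\eqref{item_lem_DI_basics_x} to fix $p\in G$ with $x(n)\in\varphi_p(n)$ for all $n\geq|s_p|$. For each such $n$, by \Cref{lem:DI_basics}\eqref{item_lem_DI_basics_s} and directedness of $G$ there is $q\in G$ with $q\leq p$ and $|s_q|>n$; then $s_q(n)=\varphi_q(n)\supseteq\varphi_p(n)\ni x(n)$, so $x(n)\in\phi_G(n)$. Hence $x(n)\in\phi_G(n)$ for all $n\geq|s_p|$, i.e.\ $x\in K_{\phi_G}$.

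Next, for $\ran(\phi_G)\subseteq\IL^f$, I would fix $n<\omega$ and show $\phi_G(n)\in\IL^f$. Pick $p_0\in G$ with $|s_{p_0}|>n$ (\Cref{lem:DI_basics}\eqref{item_lem_DI_basics_s}) and put $i_0\coloneqq i_{p_0}$. The crucial observation — the only point requiring a little care, exactly as in the $\posetEDfin$ case — is that the end-extension clause \eqref{gath_end_ext} freezes the trace of $\phi_G(n)$ on each $M_i$: for a fixed $j\geq i_0$, choosing (by density of $\{p:i_p\geq j+1\}$ from \Cref{lem:DI_basics} together with directedness of $G$) a condition $q\in G$ with $q\leq p_0$, $|s_q|>n$ and $i_q>j$, one gets $s_r(n)\cap I_j=s_q(n)\cap I_j$ for every $r\in G$ with $r\leq q$, whence $\phi_G(n)\cap I_j=s_q(n)\cap I_j$. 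Applying \eqref{gath_density_small} to $q\leq p_0$ (valid since $i_0\leq j<i_q$) gives $|\phi_G(n)\cap I_j|\leq n\cdot f(j)$. As this holds for every $j\geq i_0$, the witness $k\coloneqq n$ shows $\phi_G(n)\in\IL^f$. I do not expect any genuine obstacle: the argument is routine bookkeeping with the generic filter, and the sole subtlety is the freezing observation above, which is what lets the per-condition density bound transfer to $\phi_G(n)$ itself.
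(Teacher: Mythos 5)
Your proof is correct and follows the same route the paper intends: the paper dismisses this lemma as immediate from the density clause \eqref{gath_density_small} and \Cref{lem:DI_basics} (mirroring the unproved analogue for $\posetEDfin$), and you simply spell out the routine generic-filter details, including the key ``freezing'' of $\phi_G(n)\cap I_j$ via the end-extension clause \eqref{gath_end_ext}, which is exactly the intended argument.
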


    \begin{dfn}\label{dfn:Q_iks}
    	For $i,k<\omega$ and $s\in([M_i]^{\leq k})^{<\omega}$,
    	define $Q=Q_{i,k,s}\coloneq\{p\in\dif:i_p=i, k_p=k, s_p=s\}$.
    \end{dfn}
    
    \begin{lem}\label{lem:DI_sigma_centered}
    	$Q$ is centered. In particular, $\dif$ is $\sigma$-centered.
    \end{lem}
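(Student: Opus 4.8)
The plan is to use the fact that, once $i,k$ and $s$ are fixed, a condition $p=(i_p,k_p,s_p,\varphi_p)\in Q=Q_{i,k,s}$ carries only the extra datum of a function $\varphi_p\colon\omega\to[\omega]^{\leq k}$ with $\varphi_p\on|s|=s$, and that such functions can be amalgamated by taking pointwise unions. So, given finitely many conditions $p_0,\dots,p_{n-1}\in Q$, say $p_j=(i,k,s,\varphi_j)$, I would form
\[
r\coloneqq(i,\,nk,\,s,\,\varphi),\quad\text{where }\varphi(m)\coloneqq\bigcup_{j<n}\varphi_j(m)\text{ for }m<\omega,
\]
and claim that $r$ is a common extension. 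First I would check $r\in\dif$: each $\varphi_j(m)$ has size $\leq k$, so $|\varphi(m)|\leq nk$, and for $m<|s|$ we get $\varphi(m)=\bigcup_{j<n}s(m)=s(m)$, hence $\varphi\on|s|=s$.

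Next I would verify $r\leq p_j$ for every $j<n$. The requirements $i_r=i\geq i$, $k_r=nk\geq k$, $\varphi(m)\supseteq\varphi_j(m)$ for all $m<\omega$, and $|s_r|=|s|\geq|s|$ are immediate from the construction. For $m<|s|$, the end-extension clause \eqref{gath_end_ext} asks $s_r(m)\cap M_i=s(m)$, which holds since $s_r(m)=s(m)\subseteq M_i$; and the density clause \eqref{gath_density_small} only speaks about $j'$ with $i\leq j'<i_r$, i.e.\ $i\leq j'<i$, so it is vacuously satisfied. Thus $r\leq p_j$ for all $j<n$, so $Q$ is centered.

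Finally, for $\sigma$-centeredness of $\dif$, I would note that every $p\in\dif$ belongs to $Q_{i_p,k_p,s_p}$, and that the family of triples $(i,k,s)$ with $i,k<\omega$ and $s\in([M_i]^{\leq k})^{<\omega}$ is countable (for fixed $i,k$ the set $([M_i]^{\leq k})^{<\omega}$ consists of finite sequences over a finite set). Hence $\dif=\bigcup_{i,k,s}Q_{i,k,s}$ exhibits $\dif$ as a countable union of centered sets.

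I do not expect any real obstacle here: this is essentially the verbatim argument used for $\posetEDfin$ in \Cref{lem:pEDfin_has_Fr-limits}, and the only structural change — that in \eqref{gath_density_small} the bound $n$ has been replaced by $n\cdot f(j)$, so that the order on $\dif$ is weaker than the one on $\posetEDfin$ — is irrelevant for centeredness and if anything only eases the check $r\leq p_j$. The genuinely delicate part of this cluster of lemmas, that such a $Q$ is closed-UF-lim-linked, will be handled separately.
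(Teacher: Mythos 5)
Your amalgamation argument (take the pointwise union of the $\varphi_j$'s and raise $k$ to $nk$, noting that clause \eqref{gath_density_small} is vacuous since $i_r=i$) is correct and is exactly the intended, routine verification; the paper omits the proof, just as centeredness for $\posetEDfin$ was declared clear in \Cref{lem:pEDfin_has_Fr-limits}. The counting argument for $\sigma$-centeredness is also right, since for fixed $i,k$ the set $([M_i]^{\leq k})^{<\omega}$ is countable.
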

    
    When $f$ is unbounded, $\dif$ has closed-UF-limits.
    
    \begin{lem}
    	\label{lem:DI_has_UF-limits}
        Assume $f\gg1$. Let $i,k<\omega$, $s\in([M_i]^{\leq k})^{<\omega}$ with $|s|\geq 1$ and $D$ be an ultrafilter. Then, $Q=Q_{i,k,s}$ is closed-$D$-lim-linked. In particular, $\dif$ has closed-UF-limits.
    \end{lem}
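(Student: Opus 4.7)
The plan is to follow the scheme of \Cref{lem:pEDfin_has_Fr-limits} but upgrade $(\star)_1$ to $(\star)_\ell$ of \Cref{lem:chara_UF_linked} for every $\ell \geq 1$. The extra slack $f(j')\to\infty$ in the density bound $|s(n)\cap I_{j'}|\leq n\cdot f(j')$ is what enables the upgrade, and without it one is stuck at $(\star)_1$ exactly as in the $\posetEDfin$-case (cf.\ \Cref{rem:posetEDfin_UF_limits}).

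First, reuse the same limit function as for $\posetEDfin$: for $\bar q=\langle (i,k,s,\varphi_m):m<\omega\rangle\in Q^\omega$, set $\lim^D\bar q:=(i,k,s,\varphi_\infty)$ where $a\in\varphi_\infty(n)\iff\{m:a\in\varphi_m(n)\}\in D$. A routine ultrafilter argument (if $k{+}1$ elements belonged to $\varphi_\infty(n)$, intersecting the corresponding $D$-sets would contradict $|\varphi_m(n)|\leq k$) shows $|\varphi_\infty(n)|\leq k$, and $\varphi_\infty\on|s|=s$ is immediate from $\varphi_m\on|s|=s$. Hence $\lim^D\bar q\in Q$, giving closedness.

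The main work is $(\star)_\ell$ for arbitrary $\ell$. Given sequences $\bar q^j=\langle q_m^j=(i,k,s,\varphi_m^j):m<\omega\rangle$ for $j<\ell$ and $r=(i',k',s',\varphi')\leq\lim^D\bar q^j$ for all $j<\ell$, I would first use $f\gg 1$ to pick $i^*\geq i'$ with $f(j')\geq \ell k$ for every $j'\geq i^*$. Since $\varphi_\infty^j(n)\subseteq\varphi'(n)=s'(n)$ for $n<|s'|$, the ultrafilter property applied to each $a\in M_{i^*}\setminus s'(n)$ yields a $D$-large set
\[
X:=\bigcap_{j<\ell,\,n<|s'|}\{m<\omega:\varphi_m^j(n)\cap M_{i^*}\subseteq s'(n)\}\in D.
\]
For each $m\in X$, the candidate common extension is $r''=(i'',k'',s'',\varphi'')$ with $\varphi''(n):=\varphi'(n)\cup\bigcup_{j<\ell}\varphi_m^j(n)$, $k'':=k'+\ell k$, $|s''|:=|s'|$, $s'':=\varphi''\on|s'|$, and $i''\geq i^*$ taken large enough that $\bigcup_{j<\ell,\,n<|s'|}\varphi_m^j(n)\subseteq M_{i''}$.

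The main obstacle is verifying the density constraint $|s''(n)\cap I_{j'}|\leq n\cdot f(j')$ demanded by both $r''\leq r$ and $r''\leq q_m^j$. For $n<|s|$, $\varphi_m^j(n)=s(n)\subseteq s'(n)$ forces $s''(n)=s'(n)$, and the bound reduces either to $r\leq\lim^D\bar q^0$ (for $j'<i'$) or to $s'(n)\subseteq M_{i'}$ (for $j'\geq i'$). For $n\in[|s|,|s'|)$ and $j'\in[i',i^*)$, the defining property of $X$ combined with $s'(n)\cap I_{j'}=\emptyset$ forces $s''(n)\cap I_{j'}=\emptyset$. For $n\in[|s|,|s'|)$ and $j'\geq i^*$, the crude estimate $|s''(n)\cap I_{j'}|\leq\ell k\leq n\cdot f(j')$ uses exactly the two hypotheses $f(j')\geq\ell k$ (from the choice of $i^*$) and $n\geq|s|\geq 1$, and is the sole place where both $f\gg 1$ and $|s|\geq 1$ are used; without the slack factor $f(j')$ we would be forced back to the bound $\ell k\leq n$, which fails for small $n$. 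The remaining verifications ($s''(n)\cap M_i=s(n)$ for $n<|s|$, $s''(n)\cap M_{i'}=s'(n)$ for $n<|s'|$, and the straightforward comparisons of $i,k$) are routine. This completes $(\star)_\ell$ for every $\ell$, so $Q$ is closed-$D$-lim-linked for arbitrary $D$, and the ``in particular'' statement follows since \Cref{lem:DI_basics} gives density of conditions with $|s_p|\geq 1$, covering $\dif$ by countably many such $Q_{i,k,s}$.
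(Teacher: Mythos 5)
Your proof is correct and follows essentially the same route as the paper: you reuse the $\posetEDfin$-limit function, verify $(\star)_\ell$ of \Cref{lem:chara_UF_linked} via a $D$-large set confining the $\varphi_m^j(n)\cap M_{i^*}$ inside $s'(n)$, and use $f\gg1$ together with $n\geq|s|\geq1$ to absorb the extra $\ell k$ points into the bound $n\cdot f(j')$ on the new blocks. The only cosmetic difference is that you fix a separate threshold $i^*$ with $f(j')\geq\ell k$ beyond it, whereas the paper strengthens $r$ by increasing $i'$ so that $kL\leq|s|\cdot f(j)$; the two devices are interchangeable.
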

    \begin{proof}
        The argument is based on the proof of \Cref{lem:pEDfin_has_Fr-limits}. 
        Under the assumption that $\posetEDfin$ is defined on $(P_i^\mathrm{exp})_{i<\omega}$, we make use of the same limit function $\lim^D$ as in the proof of \Cref{lem:pEDfin_has_Fr-limits}. 
    	Let $0<L<\omega$, we will show $(\star)_L$ as required in Lemma \ref{lem:chara_UF_linked}: Suppose that  $\bar{q}^l=\langle(i,k,s,\varphi_m^l)\rangle_{m<\omega}\in Q^\omega$ and $q_\infty^l=\lim^D\bar{q}_l=(i,k,s,\varphi_\infty^l)$ for $l<L$ and $q^\prime=(i^\prime,k^\prime,s^\prime,\varphi^\prime)\leq q_\infty^l$ for all $l<L$.
    	As $f\gg1$, we may assume that: 
        \begin{equation}
        \label{eq:f_gg}
            kL\leq |s|\cdot f(j)\text{ for any }j\geq i^\prime,
        \end{equation}
        by increasing $i^\prime$.
        Take $X\in D$ such that:
    	\begin{equation}
    		\text{For all }m\in X, n\in|s^\prime|\setminus|s|\text{ and }l<L, ~\varphi_m^l(n)\cap  M_{i^\prime}=\varphi_\infty^l(n).
    	\end{equation}
    	Fix $m\in X$ and we will define a common extension $r$ of $q^\prime$ and all $q^l_m$ for $l<L$. Define $\psi\colon\omega\to [\omega]^{\leq k^\prime+kL}$ by $\psi(n)\coloneq\varphi^\prime(n)\cup\bigcup_{l<L}\varphi^l_m(n)$ and $t\coloneq\psi\on|s^\prime|$. Put $r\coloneq(i^{\prime\prime},k^\prime+kL,t,\psi)$, where $i^{\prime\prime}\geq i^\prime$ large enough so $r$ is a valid condition. 
        Then, the argument of the proof of \Cref{lem:pEDfin_has_Fr-limits} implies that $r\leq q^\prime,q^l_m$ for $l<L$.
        The only difference appears when, in order to see $r\leq q^\prime$, we prove the following:
        \[\text{ for }n\in |t|\setminus|s|,\text{ if }i^\prime\leq j\leq i^{\prime\prime}, \text{then }|t(n)\cap I_j|\leq n \cdot f(j).\]
    	It follows from $|t(n)\cap I_j|\leq|\bigcup_{l<L}\varphi^l_m(n)|\leq kL\leq |s|\cdot f(j)\leq n \cdot f(j)$ by \eqref{eq:f_gg}.
    \end{proof}
    
    \begin{rem}\label{rem:posetEDfin_UF_limits}
        When proving $(\star)_L$ for $L>1$, it seems that $f\gg 1$ is necessary to have \eqref{eq:f_gg}. Because of this, we are not sure if the assumption $f\gg 1$ can be dropped. 
        For this reason, we are not sure if $(\star)_k$ for $k\geq2$ is true for $\posetEDfin$.
    \end{rem}

    \subsubsection{Extended Cicho\'n's maximum}

    Now we are ready to construct a model of extended Cicho\'n's maximum. As explained in the beginning of this section, the forcing construction consists of two steps: the first one is to separate the left side of the diagram (\Cref{thm:CM_left}, \Cref{fig:CM_left}) with some additional properties ($\R_\ind\cong_T [\lambda_7]^{<\lambda_\ind}$ in \Cref{thm:CM_left}) and the second one is to separate the right side (\Cref{thm:CM}, \Cref{fig:CM}) using these properties. The framework of the forcing construction is based on \cite{Yam25}, \cite{Yam24} (as well as the notation). In this paper, we do not describe the framework in detail (see \cite{Yam25}, \cite{Yam24} for the details), but we list necessary items to state and prove \Cref{thm:CM_left}, \ref{thm:CM}.

    The locally eventually different forcing $\lebb$ was introduced in \cite{Yam24} to increase $\nonm$, while keeping $\none$ small.
    
    \begin{dfn}
        The poset $\lebb$ is defined as follows:
		\begin{enumerate}
			\item The conditions are triples $(d,s,\varphi)$ where $d\in\sq$, $s\in\prod_{n<|d|} \exp(n)$ and $\varphi\in\prod_{n<\omega}\mathcal{P}(\exp(n))$ such that, for some $k<\omega$:
			\begin{equation*}
				\dfrac{|\varphi(n)|}{\exp(n)}\leq\exp\left(-\dfrac{n}{2^k}\right)\text{ for all }n\geq|d|.
			\end{equation*} 
			
            \item $(d^\prime,s^\prime,\varphi^\prime)\leq(d,s,\varphi)$ if $d^\prime\supseteq d$, $s^\prime\supseteq s, \varphi^\prime(n)\supseteq\varphi(n)$ for $n<\omega$ and:
			\begin{equation*}
		          \text{for all }n\in(d^\prime)^{-1}(\{1\})\setminus d^{-1}(\{1\}), s^\prime(n)\notin\varphi(n).
			\end{equation*}
		\end{enumerate}	
    \end{dfn}

    \begin{lem}[{\cite[Lemma 4.27,4.29]{Yam24}}]
    \label{lem:lebb_sigma_centered_cUF}
        $\lebb$ is $\sigma$-centered and $\sigma$-closed-UF-lim-linked (witnessed by the same countable components).
    \end{lem}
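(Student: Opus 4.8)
The plan is to produce a single countable family of components that simultaneously witnesses $\sigma$-centeredness and $\sigma$-closed-UF-lim-linkedness. For $d\in\sq$, $s\in\prod_{n<|d|}\exp(n)$, and $k<\omega$, let
\[
Q_{d,s,k}=\{(d,s,\varphi)\in\lebb: |\varphi(n)|\leq\exp(n)\cdot\exp(-n/2^k)\text{ for all }n\geq|d|\}.
\]
These countably many sets cover $\lebb$ by the very form of the smallness clause in the definition of a condition. To see that $Q_{d,s,k}$ is centered, given $(d,s,\varphi_0),\dots,(d,s,\varphi_{L-1})\in Q_{d,s,k}$ put $\varphi^*(n)=\bigcup_{l<L}\varphi_l(n)$; then $|\varphi^*(n)|\leq L\cdot\exp(n)\exp(-n/2^k)\leq\exp(n)\exp(-n/2^{k'})$ for all $n$ beyond some $N$, for a suitable $k'>k$. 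Extending $d$ to $d^*=d{}^\frown 0^{N-|d|}$ and $s$ to any $s^*$ of length $|d^*|$ yields a condition $(d^*,s^*,\varphi^*)$ below all the given ones, since the eventual-difference clause of the order is vacuous on coordinates where $d^*$ has value $0$.

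The substantive part is $\sigma$-closed-UF-lim-linkedness, which I will verify via the forcing-free characterization \Cref{lem:chara_UF_linked}: it suffices to fix $Q=Q_{d,s,k}$ and an ultrafilter $D$, define a map $\lim^D\colon Q^\omega\to Q$, and check $(\star)_\ell$ for all $\ell<\omega$. For $\bar q=\langle(d,s,\varphi_m):m<\omega\rangle\in Q^\omega$ set $\lim^D\bar q\coloneqq(d,s,\varphi_\infty)$, where $\varphi_\infty(n)=\{a<\exp(n):\{m<\omega:a\in\varphi_m(n)\}\in D\}$. The key point is that $\lim^D\bar q$ is again a condition lying in $Q$: if $a_0,\dots,a_b$ are distinct elements of $\varphi_\infty(n)$ with $n\geq|d|$, then $\bigcap_{i\leq b}\{m:a_i\in\varphi_m(n)\}\in D$ is non-empty, so some $\varphi_m(n)$ contains all the $a_i$, forcing $b+1\leq\exp(n)\exp(-n/2^k)$; hence $|\varphi_\infty(n)|\leq\exp(n)\exp(-n/2^k)$, which both makes $\lim^D\bar q$ a valid condition and keeps it in $Q$ (``closed''). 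For $(\star)_\ell$, suppose $\bar q^j=\langle(d,s,\varphi_m^j):m<\omega\rangle\in Q^\omega$ for $j<\ell$ and $r=(d^r,s^r,\varphi^r)\leq\lim^D\bar q^j$ for every $j$. Only finitely many $n$ lie in $(d^r)^{-1}(\{1\})\setminus d^{-1}(\{1\})$, and for each such $n$ and each $j<\ell$ the relation $r\leq(d,s,\varphi_\infty^j)$ gives $s^r(n)\notin\varphi_\infty^j(n)$, i.e.\ $\{m:s^r(n)\notin\varphi_m^j(n)\}\in D$. Intersecting these finitely many $D$-sets gives $Y\in D$ such that, for every $m\in Y$, the triple obtained by taking $\varphi^r\cup\bigcup_{j<\ell}\varphi_m^j$ as the slalom and padding $d^r$ by zeros far enough out for the smallness bound (leaving $s^r$ extended arbitrarily) is a common extension of $r$ and of all $(d,s,\varphi_m^j)$, $j<\ell$: the zero-padded coordinates impose no constraint, and the eventual-difference clause on the old coordinates is exactly what membership in $Y$ supplies. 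Since $Y\subseteq\{m:r\text{ and all }(d,s,\varphi_m^j)\text{ have a common extension}\}$, this set lies in $D$, so $(\star)_\ell$ holds and \Cref{lem:chara_UF_linked} applies.

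I expect the only genuinely non-routine point to be the choice of components. Using the coarser pieces $Q_{d,s}$ would fail, because then the uniform smallness witness $k$ is lost and $\lim^D\bar q$ need not be a condition; it is exactly by fixing $k$ in the index that the counting argument above goes through and makes $\lim^D$ \emph{closed}. Everything else — the zero-padding, enlarging the smallness witness, and isolating the finitely many coordinates on which $d^r$ is newly $1$ — is the same bookkeeping already carried out for $\posetEDfin$ and $\dif$ in \Cref{lem:pEDfin_has_Fr-limits} and \Cref{lem:DI_has_UF-limits}.
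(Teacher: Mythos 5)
Your argument is correct: fixing the smallness witness $k$ in the components $Q_{d,s,k}$, taking the pointwise $D$-limit of the slalom parts, and verifying $(\star)_\ell$ via zero-padding of $d^r$ is exactly the standard argument, and it matches the proof of the cited result in \cite{Yam24} (the paper itself gives no proof, quoting Lemmas 4.27 and 4.29 there). Your remark that the coarser components $Q_{d,s}$ would destroy closedness (and even well-definedness of the limit) is also the right key observation behind the phrase ``witnessed by the same countable components.''
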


    \begin{dfn}
		Put the following relational systems and forcing notions:
		\begin{itemize}
			\item $\R_1\coloneq\n$ and $\br_1\coloneq\mathbb{A}$, the Amoeba forcing.
			\item $\R_2\coloneq C_\n^\bot$ and $\br_2\coloneq\mathbb{B}$, the random forcing.
			\item $\R_3\coloneq\mathbf{D}$ and $\br_3\coloneq\mathbb{D}$, the Hechler forcing.
			\item $\R_4\coloneq\mathbf{M}_{\J_L}$, $\R_4^*\coloneq\mathbf{M}_{\finfin}$ and $\br_4\coloneq\mathbb{\posetJL}$.
			\item $\R_5\coloneq C_\mathcal{E}$, $\R_5^{\Z}\coloneqq \mathbf{M}_{\Z}$, $\R_5^f\coloneq \mathbf{K}_{\I_L^f}$, and $\br_5^f\coloneq\dif$ for $f\in\oo$ with $1\ll f\ll\exp$.
			\item $\R_6\coloneq C_\m$, and $\br_6\coloneq\lebb$.
		\end{itemize}
		Let $I^+\coloneq \{1,2,\ldots,6\}$ be the index set.
	\end{dfn}
        
    \begin{thm}\label{thm:CM_left}
        Assume:
		\begin{itemize}
			\item $\lambda_1\leq\cdots\leq\lambda_6$ are uncountable regular cardinals and $\lambda_7\geq\lambda_6$ is a cardinal.
			\item $\lambda_3=\mu_3^+$, $\lambda_4=\mu_4^+$ and $\lambda_5=\mu_5^+$ are successor cardinals and $\mu_3$ is regular.
            \item \label{item_aleph1_inacc}$\kappa<\lambda_\ind$ implies $\kappa^{\aleph_0}<\lambda_\ind$ for all $\ind\in I^+$.
			\item \label{item_ca_3}
			$\lambda_7^{<\lambda_6}=\lambda_7$, hence $\lambda_7^{<\lambda_\ind}=\lambda_7$ for all $\ind\in I^+$.
			\item $\lambda_7\leq2^{\mu_3}$.
		\end{itemize}
        Then, there is a ccc poset which forces that for each $\ind\in I^+$, $\R_\ind\cong_T C_{[\lambda_7]^{<\lambda_\ind}}\cong [\lambda_7]^{<\lambda_\ind}$, in particular, $\bb(\R_\ind)=\lambda_\ind$ and $\dd(\R_\ind)=\cc=\lambda_7$ (see \Cref{fig:CM_left}, and the same also holds for $\R_4^*$, $\R_5^{\Z}$ and $\R_5^f$ for $f\in\oo$ with $1\ll f\ll\exp$).
    \end{thm}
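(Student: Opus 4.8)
The plan is to adapt, essentially line by line, the ccc finite-support iteration used by the fourth author to force Cicho\'n's maximum in \cite{Yam25} and \cite{Yam24} (which refines \cite{GKS19, GKMS22, GMS16}), the only change being that the ``column $4$'' and ``column $5$'' iterands $\posetED$ and $\posetEDfin$ of those papers are replaced by $\posetJL$ and $\dif$. Concretely: fix an ordinal $\gamma$ with $|\gamma|=\lambda_7$ and a bookkeeping function enumerating, with cofinal repetitions, all $\p_\xi$-names for the relevant objects of the relational systems $\R_\ind$ ($\ind\in I^+$), $\R_4^*$, $\R_5^{\Z}$ and $\R_5^f$; then build a finite-support iteration $\p_\gamma=\langle(\p_\xi,\qd_\xi):\xi<\gamma\rangle$ in which each $\qd_\xi$ is a complete subforcing, generated by a suitable elementary submodel, of one of $\mathbb{A},\mathbb{B},\mathbb{D},\posetJL,\dif,\lebb$ (of size $<\lambda_{\ind(\xi)}$ whenever it is not the full poset), arranging that the full version of each of these six posets occurs cofinally. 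By \Cref{lem:posetJL_basics} and \Cref{lem:posetILf_basics} (and the standard analogues for $\mathbb{A},\mathbb{B},\mathbb{D},\lebb$), each full iterand adds a generic that is $\R_\ind$-dominating over its ground model for the appropriate $\ind$; together with the Cohen reals the iteration adds at limit stages, this forces $\dd(\R_\ind)=\cc=\lambda_7$ for every $\ind\in I^+$ (and likewise for $\R_4^*,\R_5^{\Z},\R_5^f$), which is the ``$\dd$-side'' of the desired Tukey equivalence and already gives $|\gamma|\le\dd(\R_\ind)$.

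For the ``$\bb$-side'' I would establish both Tukey inequalities as in the cited papers. The inequality $C_{[\gamma]^{<\lambda_\ind}}\preceq_T\R_\ind$ (hence $\bb(\R_\ind)\le\lambda_\ind$) follows from \Cref{fac:FM21} once every iterand is seen to be $\lambda_j$-$\R_j$-good for all $j\ne\ind(\xi)$: for $\mathbb{A},\mathbb{B},\mathbb{D},\lebb$ this is exactly as in \cite{Yam24}, and for $\posetJL$ and $\dif$ it follows from $\sigma$-centeredness (\Cref{lem:posetfin_centered_Frlinked}, \Cref{lem:DI_sigma_centered}), which via \Cref{fac:smallness_for_addn_and_covn_and_nonm} yields goodness for $\R_1,\R_2$ and via \Cref{cor:infty_linked_CPR_good} for $\mathbf{CPR}_2$, together with the UF-limits of \Cref{lem:posetfin_centered_Frlinked} and \Cref{lem:DI_has_UF-limits}, which by \Cref{lem:Fr_linked_star_one} and \Cref{fac:Fr_good} give $\mathbf{D}$-goodness; the point of working with UF-limits rather than the mere Fr-limits of \Cref{sec:Fr-limits} is precisely that this linkedness is inherited by the submodel-generated complete subforcings that actually appear in the iteration, and likewise for the closed-UF-limits of $\dif$ and $\lebb$. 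The reverse inequality $\R_\ind\preceq_T C_{[\lambda_7]^{<\lambda_\ind}}$ (hence $\bb(\R_\ind)\ge\lambda_\ind$) is the ``keeping $\bb(\R_\ind)$ large'' part, obtained by the guardrail/uniform-$\Delta$-system argument of \cite{GMS16, Yam25}: since the tail of the iteration past each appearance of $\br_\ind$ is $<\lambda_\ind$-UF-lim-linked, a UF-limit fusion shows that every family of fewer than $\lambda_\ind$ many $\R_\ind$-challenges is met, and the required Tukey map is read off the guardrails. Finally $\non(M_{\finfin})=\bb(\R_4^*)$ is kept down exactly as in \cite{Yam24}, via the closed-UF-limit analogue of \Cref{lem:closed_Fr_limits_keep_nonMfinfin_small} applied to the cofinal Cohen reals (here \Cref{lem:lebb_sigma_centered_cUF} and the closed-UF-limits of $\dif$ are used), and then $\R_4^*$, $\R_5^{\Z}$, $\R_5^f$ are squeezed between $C_{[\lambda_7]^{<\lambda_\ind}}$ and themselves using the Kat\v{e}tov reductions $\J_L\le_{\mathrm{K}}\finfin$ and $\I_L^f\le_{\mathrm{K}}\Z$, \Cref{lem:Katetov_non}, and the $\mathsf{ZFC}$ identities of \Cref{thm:nonMZ_exact_value} and \Cref{thm:nonmi_leq_max} together with $\bb=\lambda_3\le\lambda_4,\lambda_5$.

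The hypotheses on the cardinals are precisely those the submodel construction of \cite{GKMS22, Yam25} needs: ``$\kappa<\lambda_\ind\Rightarrow\kappa^{\aleph_0}<\lambda_\ind$'' keeps the iteration ccc and the chosen subforcings of the right cardinality; the successor assumptions on $\lambda_3,\lambda_4,\lambda_5$ with $\mu_3$ regular and $\lambda_7\le 2^{\mu_3}$ govern the ``separating'' families of reals witnessing the lower bounds; and $\lambda_7^{<\lambda_6}=\lambda_7$ (so $\lambda_7^{<\lambda_\ind}=\lambda_7$ for all $\ind$) allows \Cref{fac_suff_eq_CI_and_I}, applied with $A=\lambda_7$ and $\theta=\lambda_\ind$, to upgrade $C_{[\lambda_7]^{<\lambda_\ind}}$ to $[\lambda_7]^{<\lambda_\ind}$. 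Combining the two Tukey inequalities with \Cref{fac_cap_V} then yields $\R_\ind\cong_T C_{[\lambda_7]^{<\lambda_\ind}}\cong_T[\lambda_7]^{<\lambda_\ind}$ in the extension, whence $\bb(\R_\ind)=\lambda_\ind$ and $\dd(\R_\ind)=\cc=\lambda_7$, i.e.\ the constellation of \Cref{fig:CM_left}; the same applies verbatim to $\R_4^*$ (with $\theta=\lambda_4$) and to $\R_5^{\Z},\R_5^f$ (with $\theta=\lambda_5$).

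The one genuinely new verification, and the step I expect to be the main obstacle, is checking that the (closed-)UF-limit structure of $\posetJL$ and $\dif$ established in \Cref{lem:posetfin_centered_Frlinked} and \Cref{lem:DI_has_UF-limits} is preserved under passage to the submodel-generated complete subforcings used in the bookkeeping --- that is, that the limit functions and the verification of the conditions $(\star)_k$ relativize correctly to the relevant elementary submodels, so that those subforcings remain $<\lambda_\ind$-(closed-)UF-lim-linked. Once this (routine but bookkeeping-heavy) compatibility is in place, the rest is a careful transcription of \cite{Yam25} and \cite{Yam24}, with no new forcing input required for the auxiliary systems $\R_4^*,\R_5^{\Z},\R_5^f$, which ride along on $\R_4$ and $\R_5$ through the Kat\v{e}tov reductions above.
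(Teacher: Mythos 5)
Your outline is essentially the paper's proof (transplant the construction of \cite{Yam24}, \cite{Yam25} with the column-$4$/column-$5$ iterands replaced by $\posetJL$ and $\dif$, upper bounds via goodness and (closed-)UF-limits, lower bounds via bookkeeping, the auxiliary systems $\R_4^*,\R_5^{\Z},\R_5^f$ via Kat\v{e}tov reductions and the $\mathsf{ZFC}$ identities), but one step of your construction, as written, would fail: you arrange that ``the full version of each of these six posets occurs cofinally'' and use those full iterands to obtain the dominating side. In the intended construction \emph{every} iterand beyond an initial Cohen block is a subforcing of $\br_\ind$ of size $<\lambda_\ind$; the direction $\R_\ind\lq C_{[\lambda_7]^{<\lambda_\ind}}$ comes from bookkeeping, since any family of fewer than $\lambda_\ind$ challenges lives in some intermediate model and is met by the generic of a later \emph{small} subforcing (this is exactly where the hypothesis $\kappa<\lambda_\ind\Rightarrow\kappa^{\aleph_0}<\lambda_\ind$ enters). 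Full iterands are not only unnecessary but incompatible with the upper bounds you yourself derive from \Cref{fac:FM21}, \Cref{fac:smallness_for_addn_and_covn_and_nonm} and the ${<}\lambda_4$-closed-UF-iteration framework: all of these require each iterand to be small or suitably linked, whereas the full posets have size $\cc$; e.g.\ full Hechler adds a dominating real, hence is not $\sigma$-Fr-linked and the route via \Cref{fac:Fr_good} and \Cref{fac:FM21} to $\bb\le\lambda_3$ is blocked, and full $\lebb$ has size $\ge\lambda_6$, so \Cref{fac:smallness_for_addn_and_covn_and_nonm}(3) no longer yields $\nonm\le\lambda_6$. Moreover, if the full posets really occur cofinally then $\bb\ge\cf(\gamma)$ and $\non(M_{\J_L})\ge\cf(\gamma)$ outright, contradicting $\bb=\lambda_3$ and $\non(M_{\J_L})=\lambda_4$ whenever $\cf(\gamma)$ is large (for the paper's $\gamma=\lambda_7+\lambda_7$ one has $\cf(\gamma)\ge\lambda_6$); taking $\cf(\gamma)$ small instead leaves you with no proof of the upper bounds. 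This part must be replaced by the small-subforcing bookkeeping of the paper.

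A second, smaller gap: the paper's iteration begins with $\lambda_7$ many Cohen iterands, and the argument for $C_{[\lambda_7]^{<\lambda_4}}\lq\R_4$ (the closed-UF version of \Cref{lem:closed_Fr_limits_keep_nonMfinfin_small}, cf.\ \cite[Main Lemma 3.26]{Yam25}) uses precisely these coordinates: the Cohen reals of the initial block form the $\lambda_7$-sized witness family, and the uniform-$\Delta$-system/limit argument modifies conditions at the single coordinates where those Cohen reals are added. Your plan has no Cohen block and appeals to ``the Cohen reals the iteration adds at limit stages''; those are not values of single iterands, so the coordinatewise surgery in that lemma does not apply to them, and you would need a different argument there. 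Apart from these two points (and modulo the relativization of the UF-limit structure to the chosen subforcings, which is exactly what the cited framework of \cite[Construction 5.6]{Yam24} provides, using \Cref{lem:posetfin_centered_Frlinked}, \Cref{lem:DI_has_UF-limits} and \Cref{lem:lebb_sigma_centered_cUF}), your outline coincides with the paper's proof, including the treatment of $\R_4^*$, $\R_5^{\Z}$ and $\R_5^f$.
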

    \begin{proof}
        The following construction of the forcing iteration is based on \cite{Yam24} and it is obtained by replacing $\mathbb{PR}$ with $\posetJL$ and $\mathbb{PR}_g$ with $\dif$ in \cite[Construction 5.6]{Yam24}. More specifically, perform a finite support iteration $\langle\p_\a,\dot{\q}_\xi:\a\leq\gamma, \xi<\gamma \rangle$ of length $\gamma\coloneqq\lambda_7+\lambda_7$ such that for each $\xi<\lambda_7$, $\qd_\xi$ is Cohen forcing and for each $\xi\in\gamma\setminus\lambda_7$, $\qd_\xi$ is a subforcing of $\br_i$ of size $<\lambda_i$ where $\xi\equiv i\in I^+$ modulo $6$. By bookkeeping, we can have $\R_\ind\lq C_{[\lambda_7]^{<\lambda_\ind}}$ for $\ind\in I^+$ (use \Cref{lem:posetJL_basics} (3) for $\ind=4$ and \Cref{lem:posetILf_basics} for $\ind=5$). By \Cref{fac:smallness_for_addn_and_covn_and_nonm} and \Cref{lem:posetfin_centered_Frlinked}, \ref{lem:DI_sigma_centered}, \ref{lem:lebb_sigma_centered_cUF}, we have $C_{[\lambda_7]^{<\lambda_\ind}}\lq\R_\ind$ for $\ind=1,2,6$. As in \cite[Construction 5.6]{Yam24}, we can inductively define names of ultrafilters for UF-limits and closed-UF-limits through the iteration construction, which are possible by \Cref{lem:posetfin_centered_Frlinked}, \ref{lem:DI_has_UF-limits}, \ref{lem:lebb_sigma_centered_cUF} (and \Cref{lem:DI_sigma_centered}). $C_{[\lambda_7]^{<\lambda_\ind}}\lq\R_\ind$ for $\ind=3,5$ follows as in \cite[Construction 5.6]{Yam24}. By modifying the proof of \Cref{lem:closed_Fr_limits_keep_nonMfinfin_small},  we have $C_{[\lambda_7]^{<\lambda_\ind}}\lq\R_4$ (see also \cite[Main Lemma 3.26]{Yam25}).
    \end{proof}

    \begin{figure}[h!]
        \centering	
		\begin{tikzpicture}
			\tikzset{
				textnode/.style={text=black}, 
			}
			\tikzset{
				edge/.style={color=black, thin, opacity=0.4}, 
			}
			\newcommand{\w}{2.8}
			\newcommand{\h}{2.5}
			
			\node[textnode] (addN) at (0,  0) {$\addn$};
			\node (t1) [fill=lime, draw, text=black, circle,inner sep=1.0pt] at (-0.25*\w, 0.8*\h) {$\lambda_1$};
			
			\node[textnode] (covN) at (0,  \h*3) {$\covn$};
			\node (t2) [fill=lime, draw, text=black, circle,inner sep=1.0pt] at (0.15*\w, 3.3*\h) {$\lambda_2$};

			\node[textnode] (addM) at (\w,  0) {$\cdot$};
			
			\node (t3) [fill=lime, draw, text=black, circle,inner sep=1.0pt] at (0.68*\w, 1.1*\h) {$\lambda_3$};
			
			\node[textnode] (nonM) at (\w,  \h*3) {$\nonm$};
			\node (t6) [fill=lime, draw, text=black, circle,inner sep=1.0pt] at (1.35*\w, 3.3*\h) {$\lambda_6$};
			
			\node[textnode] (covM) at (\w*2,  0) {$\covm$};
			\node (t7) [fill=lime, draw, text=black, circle,inner sep=1.0pt] at (3.5*\w, 1.5*\h) {{\scalebox{3.0}{$\lambda_7$}}};
			
			\node[textnode] (d) at (\w*2,  1.9*\h) {$\dd$};
			\node[textnode] (cofM) at (\w*2,  \h*3) {$\cdot$};

			\node[textnode] (nonN) at (\w*3,  0) {$\nonn$};

			\node[textnode] (cofN) at (\w*3,  \h*3) {$\cofn$};

			\node[textnode] (aleph1) at (-\w,  0) {$\aleph_1$};
			\node[textnode] (c) at (\w*4,  \h*3) {$\continuum$};

			\node (t4) [fill=lime, draw, text=black, circle,inner sep=1.0pt] at (0.35*\w, 1.8*\h) {$\lambda_4$};

			\node (t5) [fill=lime, draw, text=black, circle,inner sep=1.0pt] at (1.3*\w, 2.5*\h) {$\lambda_5$};

            \node[textnode] (b) at (\w,  1.1*\h) {$\bb$};

            \node[textnode] (nonMFinFin) at (\w,  1.9*\h) {$\non(M_{\finfin})$}; 
            \node[textnode] (covMFinFin) at (2*\w,  1.1*\h) {$\cov(M_{\finfin})$};

            \node[textnode] (nonMZ) at (\w*0.5,  \h*2.65) {$\non(M_{\Z})$};
            
			\node[textnode] (covMZ) at (\w*2.5,  \h*0.35) {$\cov(M_{\Z})$};
            \node[textnode] (nonMILf) at (\w*0.5,  \h*2.2) {$\non(K_{\I_L^f})$};
            \node[textnode] (covMILf) at (\w*2.5,  \h*0.8) {$\cov(K_{\I_L^f})$};
			
			\node[textnode] (nonMJL) at (\w,  1.5*\h) {$\non(M_{\J_L})$};
            \node[textnode] (covMJL) at (2.0*\w,  1.5*\h) {$\cov(M_{\J_L})$};

			\draw[->, edge] (addN) to (covN);
			\draw[->, edge] (addN) to (addM);
			\draw[->, edge] (covN) to (nonM);	
			\draw[->, edge] (addM) to (b);
			\draw[->, edge] (nonMFinFin) to (nonM);
			\draw[->, edge] (addM) to (covM);
			\draw[->, edge] (nonM) to (cofM);
			\draw[->, edge] (covM) to (covMFinFin);
			\draw[->, edge] (d) to (cofM);
			\draw[->, edge] (covM) to (nonN);
			\draw[->, edge] (cofM) to (cofN);
			\draw[->, edge] (nonN) to (cofN);
			\draw[->, edge] (aleph1) to (addN);
			\draw[->, edge] (cofN) to (c);
			
			\draw[->, edge] (nonMFinFin) to (d);
			\draw[->, edge] (nonMJL) to (nonMFinFin);

			\draw[->, edge] (b) to (covMFinFin);
			\draw[->, edge] (covMFinFin) to (covMJL);
			
			\draw[->, edge] (nonMZ) to (nonM);
			
			\draw[->, edge] (covM) to (covMZ);

			\draw[->, edge] (b) to (nonMILf);
			\draw[->, edge] (nonMILf) to (nonMZ);

            \draw[->, edge] (b) to (nonMJL);

            \draw[->, edge] (covMILf) to (d);
			\draw[->, edge] (covMZ) to (covMILf);

            \draw[->, edge] (covMJL) to (d);

			\draw[blue,thick] (-0.5*\w,1.3*\h)--(1.5*\w,1.3*\h);
			\draw[blue,thick] (1.5*\w,-0.25*\h)--(1.5*\w,3.25*\h);
			
			\draw[blue,thick] (-0.5*\w,-0.25*\h)--(-0.5*\w,3.25*\h);
			
			\draw[blue,thick] (0.5*\w,-0.25*\h)--(0.5*\w,1.3*\h);
			
			\draw[blue,thick] (0.15*\w,2.05*\h)--(1.5*\w,2.05*\h);
			\draw[blue,thick] (0.15*\w,2.75*\h)--(1.5*\w,2.75*\h);
			\draw[blue,thick] (0.15*\w,1.3*\h)--(0.15*\w,2.75*\h);
			\draw[blue,thick] (0.5*\w,2.75*\h)--(0.5*\w,3.25*\h);
		\end{tikzpicture}
        \caption{Separation Constellation of \Cref{thm:CM_left}. Note that $\non(M_\Z)=\max\{\bb,\none\}$ by \Cref{thm:nonMZ_exact_value}.}\label{fig:CM_left}
\end{figure}

    \begin{thm}
	\label{thm:CM} 
			Let $\aleph_1\leq\theta_1\leq\cdots\leq\theta_{12}$ be regular cardinals and $\theta_\cc$ an infinite cardinal with $\theta_\cc\geq\theta_{12}$ and $\theta_\cc^{\aleph_0}=\theta_\cc$.
			Then, there exists a ccc poset which forces $\bb(\R_\ind)=\theta_\ind$ and $\dd(\R_\ind)=\theta_{13-\ind}$ for each $\ind\in I^+$ (the same also holds for $\R_4^*$, $\R_5^{\Z}$, $\R_5^f$ for $f\in\oo$ with $1\ll f\ll\exp$) and $\cc=\theta_\cc$ (see \Cref{fig:CM}).
            In particular, in the forcing extension, for ideals $\I_0, \I_1$ on countable sets such that $\J_L \leq_{\mathrm{K}} \I_0 \leq_{\mathrm{K}} \finfin$ and $\I_L^f \leq_{\mathrm{K}} \I_1 \leq_{\mathrm{K}} \Z$ for some $1\ll f\ll \exp$, we have:
        \[\non(M_{\I_0})=\theta_4,~\cov(M_{\I_0})=\theta_9,\]
        \[\non(K_{\I_1})=\non(M_{\I_1})=\theta_5,~\cov(K_{\I_1})=\cov(M_{\I_1})=\theta_8.\]
	\end{thm}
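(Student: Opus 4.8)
The plan is to derive \Cref{thm:CM} from \Cref{thm:CM_left} by the \emph{submodel method} of Goldstern--Kellner--Mej\'{\i}a--Shelah \cite{GKMS22}, in the form packaged by the fourth author in \cite{Yam25,Yam24}, and then to read the ``moreover'' clause off the $\mathsf{ZFC}$ (in)equalities collected in \Cref{sec:ZFC}. Accordingly the argument splits into two parts: (i) producing the full Cicho\'n's maximum with $\cc=\theta_\cc$ and the prescribed twelve values $\bb(\R_\ind)=\theta_\ind$, $\dd(\R_\ind)=\theta_{13-\ind}$ for $\ind\in I^+$ (and likewise for $\R_4^*,\R_5^\Z,\R_5^f$), and (ii) computing $\non(M_{\I_0}),\cov(M_{\I_0})$ and $\non(K_{\I_1})=\non(M_{\I_1})$, $\cov(K_{\I_1})=\cov(M_{\I_1})$ for the Kat\v{e}tov intervals of ideals in the statement.

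For part (i), I would first fix once and for all a function $f\in\oo$ with $1\ll f\ll\exp$ together with auxiliary cardinals $\lambda_1\le\dots\le\lambda_6$, $\lambda_7$ satisfying the hypotheses of \Cref{thm:CM_left} (in particular $\lambda_3,\lambda_4,\lambda_5$ successors of regulars, $\lambda_7^{<\lambda_6}=\lambda_7$, $\lambda_7\le 2^{\mu_3}$, $\kappa<\lambda_\ind\Rightarrow\kappa^{\aleph_0}<\lambda_\ind$, and $\lambda_7$ sufficiently large), arranged (cf.\ \cite{GKMS22}) so that the values become $\bb(\R_\ind)=\theta_\ind$, $\dd(\R_\ind)=\theta_{13-\ind}$ and $\cc=\theta_\cc$ after the submodel step. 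Applying \Cref{thm:CM_left} produces a ccc poset $\p$ forcing $\R_\ind\cong_T C_{[\lambda_7]^{<\lambda_\ind}}\cong_T[\lambda_7]^{<\lambda_\ind}$ for every $\ind\in I^+$, as well as for $\R_4^*,\R_5^\Z,\R_5^f$. I would then pass to the restriction of $\p$ along a suitable increasing sequence of elementary submodels of some $H_\chi$, of sizes governed by $\theta_7\le\dots\le\theta_{12}$ (the largest having size $\theta_\cc$), exactly as in \cite{GKMS22}. The crucial feature --- highlighted already in the discussion around \Cref{table:Fr-limits} --- is that \Cref{thm:CM_left} yields genuine Tukey \emph{equivalences} with $[\lambda_7]^{<\lambda_\ind}$ rather than merely with $C_{[\lambda_7]^{<\lambda_\ind}}$, so that \Cref{fac_suff_eq_CI_and_I} together with the ccc preservation statement \Cref{fac_cap_V} let the intersection with the submodels localize each $[\lambda_7]^{<\lambda_\ind}$ correctly and drive the invariants to their target values, with $\cc=\theta_\cc$. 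Since $\p$ is assembled from standard ccc posets together with the $\sigma$-centered posets $\posetJL$, $\dif$ and $\lebb$, which carry UF-limits, closed-UF-limits and closed-UF-limits respectively by \Cref{lem:posetfin_centered_Frlinked}, \Cref{lem:DI_has_UF-limits} and \Cref{lem:lebb_sigma_centered_cUF}, the hypotheses of the submodel method are met and nothing beyond \cite{GKMS22} is needed here; this is precisely the passage from \Cref{fig:CM_left} to \Cref{fig:CM}.

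For part (ii), fix the values of part (i). If $\J_L\leq_{\mathrm{K}}\I_0\leq_{\mathrm{K}}\finfin$, then Kat\v{e}tov monotonicity (\Cref{lem:Katetov_non}) gives $\non(M_{\J_L})\le\non(M_{\I_0})\le\non(M_{\finfin})$ and $\cov(M_{\finfin})\le\cov(M_{\I_0})\le\cov(M_{\J_L})$; since $\bb(\R_4)=\non(M_{\J_L})=\theta_4=\non(M_{\finfin})=\bb(\R_4^*)$ and $\dd(\R_4)=\cov(M_{\J_L})=\theta_9=\cov(M_{\finfin})=\dd(\R_4^*)$, we conclude $\non(M_{\I_0})=\theta_4$ and $\cov(M_{\I_0})=\theta_9$. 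If $\I_L^f\leq_{\mathrm{K}}\I_1\leq_{\mathrm{K}}\Z$, then by \Cref{thm:nonMZ_exact_value} and the separation, $\non(M_\Z)=\max\{\bb,\none\}=\max\{\theta_3,\theta_5\}=\theta_5$ and $\cov(M_\Z)=\min\{\dd,\cove\}=\min\{\theta_{10},\theta_8\}=\theta_8$; combining \Cref{lem:Katetov_non} with \Cref{lem:b_nonM} yields
\[
\theta_5=\non(K_{\I_L^f})\le\non(K_{\I_1})\le\non(M_{\I_1})\le\non(M_\Z)=\theta_5
\]
and dually $\theta_8=\cov(M_\Z)\le\cov(M_{\I_1})\le\cov(K_{\I_1})\le\cov(K_{\I_L^f})=\theta_8$, whence $\non(K_{\I_1})=\non(M_{\I_1})=\theta_5$ and $\cov(K_{\I_1})=\cov(M_{\I_1})=\theta_8$, as required.

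The main obstacle lies in neither of these two parts --- part (ii) is a short chase through the monotonicity lemmas, and part (i) applies the submodel machinery essentially verbatim --- but in its prerequisite \Cref{thm:CM_left}: the genuinely delicate point, settled in \Cref{sec:CM} before the present statement, is that the $\sigma$-centered posets $\posetJL$ and $\dif$ must not only raise $\non(M_{\J_L})$ and $\non(K_{\I_L^f})$ but also satisfy $(\star)_k$ for every $k$, not just $(\star)_1$, so that they have (closed-)UF-limits; the more transparent posets $\posetED$ and $\posetEDfin$ of \Cref{sec:Fr-limits} are not known to have this stronger property (see \Cref{rem:posetED_UF_limits}, \Cref{rem:posetEDfin_UF_limits}), which is why the ideals $\J_L$ and $\I_L^f$ appear in the theorem at all.
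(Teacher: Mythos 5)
Your proposal matches the paper's proof, which for part (i) consists precisely of invoking the submodel method of \cite{GKMS22} (and \cite[Theorem 5.9]{Yam24} without $\mathsf{GCH}$) applied to \Cref{thm:CM_left}, relying on the Tukey equivalences $\R_\ind\cong_T[\lambda_7]^{<\lambda_\ind}$ there. Your part (ii), the chase through \Cref{lem:Katetov_non}, \Cref{lem:b_nonM} and the values of $\R_4,\R_4^*,\R_5^{\Z},\R_5^f$, is exactly the (implicit) argument behind the ``in particular'' clause, so the proposal is correct and essentially the same as the paper's.
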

    \begin{proof}
        See \cite{GKMS22} (and \cite[Theorem 5.9]{Yam24} for the argument without $\mathsf{GCH}$).
    \end{proof}

    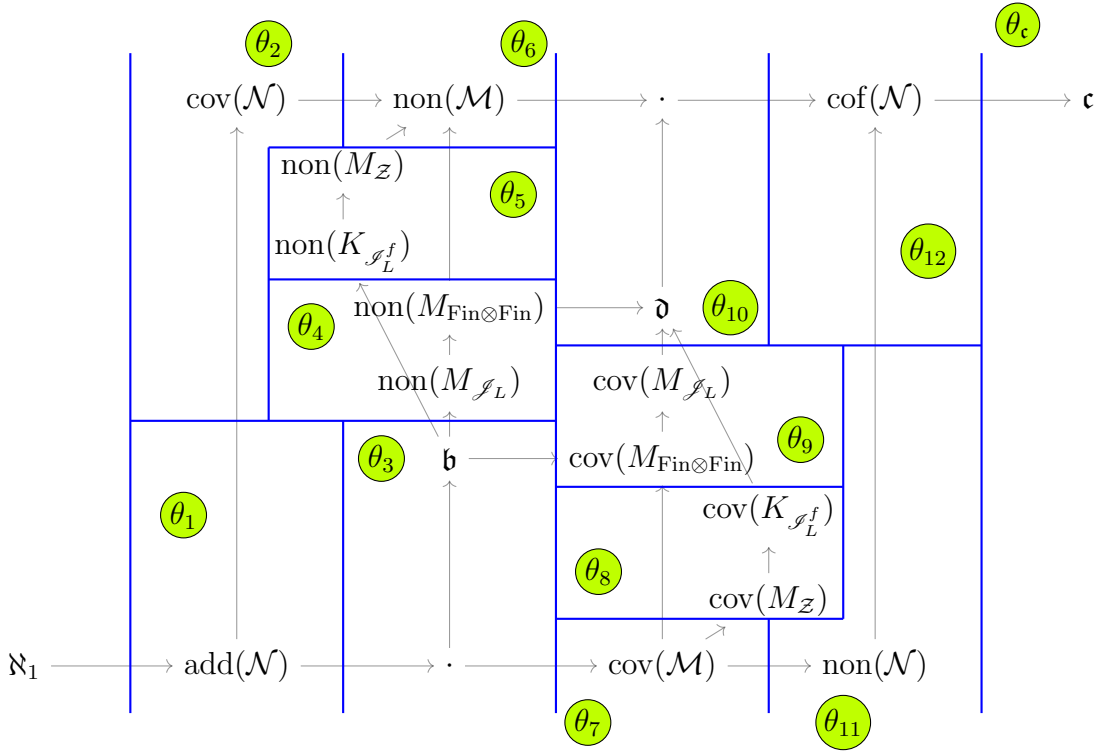
\begin{figure}[h!]
        \centering	
		\begin{tikzpicture}
			\tikzset{
				textnode/.style={text=black}, 
			}
			\tikzset{
				edge/.style={color=black, thin, opacity=0.4}, 
			}
			\newcommand{\w}{2.8}
			\newcommand{\h}{2.5}
			
			\node[textnode] (addN) at (0,  0) {$\addn$};
			\node (t1) [fill=lime, draw, text=black, circle,inner sep=1.0pt] at (-0.25*\w, 0.8*\h) {$\theta_1$};
			
			\node[textnode] (covN) at (0,  \h*3) {$\covn$};
			\node (t2) [fill=lime, draw, text=black, circle,inner sep=1.0pt] at (0.15*\w, 3.3*\h) {$\theta_2$};

			\node[textnode] (addM) at (\w,  0) {$\cdot$};
			
			\node (t3) [fill=lime, draw, text=black, circle,inner sep=1.0pt] at (0.68*\w, 1.1*\h) {$\theta_3$};
			
			\node[textnode] (nonM) at (\w,  \h*3) {$\nonm$};
			\node (t6) [fill=lime, draw, text=black, circle,inner sep=1.0pt] at (1.35*\w, 3.3*\h) {$\theta_6$};
			
			\node[textnode] (covM) at (\w*2,  0) {$\covm$};
			\node (t7) [fill=lime, draw, text=black, circle,inner sep=1.0pt] at (1.65*\w, -0.3*\h) {$\theta_7$};
			
			\node[textnode] (d) at (\w*2,  1.9*\h) {$\dd$};
			\node (t10) [fill=lime, draw, text=black, circle,inner sep=1.0pt] at (2.32*\w, 1.9*\h) {$\theta_{10}$};
			\node[textnode] (cofM) at (\w*2,  \h*3) {$\cdot$};

			\node[textnode] (nonN) at (\w*3,  0) {$\nonn$};
			\node (t11) [fill=lime, draw, text=black, circle,inner sep=1.0pt] at (2.85*\w, -0.3*\h) {$\theta_{11}$};
			
			\node[textnode] (cofN) at (\w*3,  \h*3) {$\cofn$};
			\node (t12) [fill=lime, draw, text=black, circle,inner sep=1.0pt] at (3.25*\w, 2.2*\h) {$\theta_{12}$};
			
			\node[textnode] (aleph1) at (-\w,  0) {$\aleph_1$};
			\node[textnode] (c) at (\w*4,  \h*3) {$\continuum$};
			\node (t10) [fill=lime, draw, text=black, circle,inner sep=1.0pt] at (3.67*\w, 3.4*\h) {$\theta_\cc$};

			\node (t4) [fill=lime, draw, text=black, circle,inner sep=1.0pt] at (0.35*\w, 1.8*\h) {$\theta_4$};
			
			\node (t9) [fill=lime, draw, text=black, circle,inner sep=1.0pt] at (2.65*\w, 1.2*\h) {$\theta_9$};
			
			\node (t5) [fill=lime, draw, text=black, circle,inner sep=1.0pt] at (1.3*\w, 2.5*\h) {$\theta_5$};
            
			\node (t8) [fill=lime, draw, text=black, circle,inner sep=1.0pt] at (1.7*\w, 0.5*\h) {$\theta_8$};

            \node[textnode] (b) at (\w,  1.1*\h) {$\bb$};

            \node[textnode] (nonMFinFin) at (\w,  1.9*\h) {$\non(M_{\finfin})$}; 
            \node[textnode] (covMFinFin) at (2*\w,  1.1*\h) {$\cov(M_{\finfin})$};

            \node[textnode] (nonMZ) at (\w*0.5,  \h*2.65) {$\non(M_{\Z})$};
            
			\node[textnode] (covMZ) at (\w*2.5,  \h*0.35) {$\cov(M_{\Z})$};
            \node[textnode] (nonMILf) at (\w*0.5,  \h*2.2) {$\non(K_{\I_L^f})$};
            \node[textnode] (covMILf) at (\w*2.5,  \h*0.8) {$\cov(K_{\I_L^f})$};
			
			\node[textnode] (nonMJL) at (\w,  1.5*\h) {$\non(M_{\J_L})$};
            \node[textnode] (covMJL) at (2.0*\w,  1.5*\h) {$\cov(M_{\J_L})$};

			\draw[->, edge] (addN) to (covN);
			\draw[->, edge] (addN) to (addM);
			\draw[->, edge] (covN) to (nonM);	
			\draw[->, edge] (addM) to (b);
			\draw[->, edge] (nonMFinFin) to (nonM);
			\draw[->, edge] (addM) to (covM);
			\draw[->, edge] (nonM) to (cofM);
			\draw[->, edge] (covM) to (covMFinFin);
			\draw[->, edge] (d) to (cofM);
			\draw[->, edge] (covM) to (nonN);
			\draw[->, edge] (cofM) to (cofN);
			\draw[->, edge] (nonN) to (cofN);
			\draw[->, edge] (aleph1) to (addN);
			\draw[->, edge] (cofN) to (c);

			\draw[->, edge] (nonMFinFin) to (d);
			\draw[->, edge] (nonMJL) to (nonMFinFin);

			\draw[->, edge] (b) to (covMFinFin);
			\draw[->, edge] (covMFinFin) to (covMJL);
			
			\draw[->, edge] (nonMZ) to (nonM);
			
			\draw[->, edge] (covM) to (covMZ);
            
			\draw[->, edge] (b) to (nonMILf);
			\draw[->, edge] (nonMILf) to (nonMZ);

            \draw[->, edge] (b) to (nonMJL);

            \draw[->, edge] (covMILf) to (d);
			\draw[->, edge] (covMZ) to (covMILf);

            \draw[->, edge] (covMJL) to (d);

			\draw[blue,thick] (-0.5*\w,1.3*\h)--(1.5*\w,1.3*\h);
			\draw[blue,thick] (3.5*\w,1.7*\h)--(1.5*\w,1.7*\h);
			\draw[blue,thick] (1.5*\w,-0.25*\h)--(1.5*\w,3.25*\h);
			
			\draw[blue,thick] (-0.5*\w,-0.25*\h)--(-0.5*\w,3.25*\h);
			\draw[blue,thick] (3.5*\w,-0.25*\h)--(3.5*\w,3.25*\h);
			
			\draw[blue,thick] (0.5*\w,-0.25*\h)--(0.5*\w,1.3*\h);
			\draw[blue,thick] (2.5*\w,1.7*\h)--(2.5*\w,3.25*\h);
			
			\draw[blue,thick] (0.15*\w,2.05*\h)--(1.5*\w,2.05*\h);
			\draw[blue,thick] (0.15*\w,2.75*\h)--(1.5*\w,2.75*\h);
			\draw[blue,thick] (0.15*\w,1.3*\h)--(0.15*\w,2.75*\h);
			\draw[blue,thick] (0.5*\w,2.75*\h)--(0.5*\w,3.25*\h);
			
			\draw[blue,thick] (2.85*\w,0.95*\h)--(1.5*\w,0.95*\h);
			\draw[blue,thick] (2.85*\w,0.25*\h)--(1.5*\w,0.25*\h);
			\draw[blue,thick] (2.85*\w,1.7*\h)--(2.85*\w,0.25*\h);
			\draw[blue,thick] (2.5*\w,0.25*\h)--(2.5*\w,-0.25*\h);
		\end{tikzpicture}
        \caption{Separation Constellation of \Cref{thm:CM}.}\label{fig:CM}
    \end{figure}

    \section{More on the ideals $\J_L$ and $\I^f_L$}\label{sec:more_growth}

    In this final section, we collect several results related to $\J_L$ and the $f$-linear growth ideals $\I_L^f$ introduced in \Cref{sec:CM}. We start with considering the following variants of $\J_L$.

    \begin{dfn}\label{dfn:JLf}
        For any function $f\in\omega^\omega$ with $f\gg1$, an ideal $\J_L^f$ on $\omega\times\omega$ is defined by
        \[
        \J_L^f = \{A\subset\omega\times\omega: \exists k<\omega\,\forall^{\infty} i<\omega\,(\lvert (A)_i\rvert \leq k\cdot f(i))\}.
        \]
        We simply write $\J_L$ for $\J_L^f$ in the case that $f$ is the identity function.
    \end{dfn}
    
   These ideals appeared in \cite{DFGT21}. According to their notation, $\J_L^f = I(\mathcal{F}_f)$. By definition, $\ed\subseteq\J_L^f\subseteq\finfin$ for any $f\in\omega^\omega$ with $f\gg 1$. Also, note that if $f\ll\exp$ then $\J_L^f\upharpoonright\Delta_{\mathrm{exp}}$, where $\Delta_{\mathrm{exp}} = \{\langle i, j\rangle:j\leq \exp(i)\}$, is isomorphic to $\I_L^f$. 
   
   Now recall that $\ed$ is a critical ideal for local selectivity in the following sense.

    \begin{dfn}[{\cite{BTW82}}]
    	Let $\bar{P}=(P_i)_{i<\omega}$ be a partition of $\omega$. We say that $A\subseteq\omega$ is a \emph{selector of $\bar{P}$} if $|A\cap P_i|\leq 1$ for all $i<\omega$. An ideal $\I$ on $\omega$ is called \emph{locally selective} if for every partition $\bar{P}$ of $\omega$ into sets in $\I$, there is an $\I$-positive selector of $\bar{P}$.
    \end{dfn}
    
    \begin{fac}[{\cite[Proposition 3.2]{Hru11}}]
    	\label{fac:ED_chara}
        For any ideal $\I$ on $\omega$, $\mathcal{ED}\leq_{\mathrm{K}}\I$ if and only if $\I$ is not locally selective.
    \end{fac}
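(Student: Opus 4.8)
The plan is to prove the two implications separately, using the same elementary bridge in both directions. The bridge is this: a set $A\subseteq\omega\times\omega$ lies in $\mathcal{ED}$ exactly when, after deleting finitely many full columns, it becomes a union of boundedly many \emph{selectors} of the column partition $(\{i\}\times\omega)_{i<\omega}$ (recalling that a selector meets each piece in at most one point); dually, a set $B\subseteq\omega$ that meets each piece of a partition $\bar P=(P_i)_{i<\omega}$ in at most $k$ points for all but finitely many $i$ is a union of at most $k$ selectors of $\bar P$ together with a remainder contained in a finite union of the $P_i$'s — hence, when every $P_i\in\I$, a union of at most $k$ selectors of $\bar P$ modulo an $\I$-set.

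For the direction ``$\I$ not locally selective $\Rightarrow\mathcal{ED}\leq_{\mathrm{K}}\I$'', I would start from a partition $\bar P=(P_i)_{i<\omega}$ of $\omega$ into members of $\I$ all of whose selectors lie in $\I$; such a $\bar P$ automatically has infinitely many nonempty parts, since a finite cover by $\I$-sets would contradict $\omega\notin\I$. Enumerating each $P_i$ increasingly as $\{p_i^0<p_i^1<\cdots\}$, I set $f(p_i^j)=\langle i,j\rangle$, a total function $\omega\to\omega\times\omega$. Given $A\in\mathcal{ED}$ with $\lvert(A)_i\rvert\leq k$ for $i\geq i_0$, the preimage $f^{-1}[A]$ meets each $P_i$ in at most $k$ points for $i\geq i_0$, so it splits into a piece inside $\bigcup_{i<i_0}P_i$ (an $\I$-set by closure of $\I$ under finite unions) and a piece that is a union of at most $k$ selectors of $\bar P$ (each in $\I$ by hypothesis); hence $f^{-1}[A]\in\I$, and $f$ witnesses the Kat\v{e}tov reduction.

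For the converse, given $f\colon\omega\to\omega\times\omega$ witnessing $\mathcal{ED}\leq_{\mathrm{K}}\I$, I would take $P_i\coloneqq f^{-1}[\{i\}\times\omega]$; each single column belongs to $\mathcal{ED}$, so each $P_i\in\I$, and the nonempty $P_i$'s partition $\omega$ with infinitely many parts (again by properness of $\I$). For any selector $S$ of this partition, the image $f[S]$ meets every column in at most one point, hence $f[S]\in\mathcal{ED}$ and therefore $S\subseteq f^{-1}[f[S]]\in\I$, so $S\in\I$; thus $\bar P$ admits no $\I$-positive selector, i.e.\ $\I$ is not locally selective. Note that, because Kat\v{e}tov rather than Kat\v{e}tov--Blass reducibility is used, no finite-to-one condition on $f$ ever needs to be arranged.

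The argument is short and purely combinatorial; the only genuine point of care — and where a careless write-up would slip — is that when the parts $P_i$ are infinite one cannot argue ``modulo a finite set'' but must argue ``modulo a set in $\I$''. This is exactly why it is the \emph{finite union} $\bigcup_{i<i_0}P_i$, controlled by closure of $\I$ under finite unions rather than by mere properness, that absorbs the low-index remainder. I expect this bookkeeping of finite tails to be the main (minor) obstacle; once it is handled, everything reduces to the bridge observation of the first paragraph.
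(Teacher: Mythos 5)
Your proof is correct: both directions are sound, and the two points that need care — splitting a set that meets each block in at most $k$ points into at most $k$ selectors, and absorbing the low-index remainder $\bigcup_{i<i_0}P_i$ using closure of $\I$ under finite unions — are handled exactly right. Note that the paper only cites this fact from Hru\v{s}\'ak without reproducing a proof, but your argument is the standard one and is structurally identical to the paper's own proof of the analogous characterization of $\J_L^f\leq_{\mathrm{K}}\I$ via local linear selectivity in \Cref{prop:chara_JL}.
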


    Analogously, $\J_L^f$ is critical for ``locally linear selectivity.''
    
    \begin{dfn}
    	Let $\bar{P}=(P_i)_{i<\omega}$ be a partition of $\omega$. We say that $A\subseteq\omega$ is a \emph{linear selector} of $\bar{P}$ if $|A\cap P_i|\leq i$ for all $i<\omega$. An ideal $\I$ on $\omega$ is \emph{locally linear selective} if for every partition $\bar{P}$ of $\omega$ into sets in $\I$, there is an $\I$-positive linear selector of $\bar{P}$.
    \end{dfn}

    \begin{prop}\label{prop:chara_JL}
        Let $\I$ be an ideal on $\omega$. Then the following are equivalent:
        \begin{enumerate}
            \item $\J_L^f \leq_{\mathrm{K}} \I$ for all $f\in\omega^\omega$ such that $f\gg 1$.
            \item $\J_L^f \leq_{\mathrm{K}} \I$ for some $f\in\omega^\omega$ such that $f\gg 1$.
            \item $\I$ is not locally linear selective.
        \end{enumerate}
    \end{prop}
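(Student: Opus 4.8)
The plan is to establish the cycle $(1)\Rightarrow(2)\Rightarrow(3)\Rightarrow(1)$, following the template of Hru\v{s}\'ak's characterization of $\ed$ (\Cref{fac:ED_chara}) but with careful bookkeeping of growth rates. The implication $(1)\Rightarrow(2)$ is immediate: just pick any $f$ with $f\gg1$.

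For $(2)\Rightarrow(3)$ I would fix $f\gg1$ and a Kat\v{e}tov reduction $g\colon\omega\to\omega\times\omega$ witnessing $\J_L^f\leq_{\mathrm{K}}\I$, and produce a partition of $\omega$ into $\I$-sets with no $\I$-positive linear selector by \emph{grouping columns} of $\omega\times\omega$. Since $f\to\infty$, choose $0=b_0<b_1<\cdots$ with $f(i)\ge m$ whenever $i\ge b_m$, put $J_m=[b_m,b_{m+1})$ and $R_m=g^{-1}[J_m\times\omega]$. All but finitely many columns of $J_m\times\omega$ are empty, so $J_m\times\omega\in\J_L^f$ and hence $R_m\in\I$; thus $\bar R=(R_m)_m$ is a partition into $\I$-sets. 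If $B$ is a linear selector of $\bar R$, then $B\cap R_0=\emptyset$ and, for $i\in J_m$ with $m\ge1$, $|(g[B])_i|\le|B\cap R_m|\le m\le f(i)$; hence $g[B]\in\J_L^f$ (with constant $1$), and since $B\subseteq g^{-1}[g[B]]$ and $g$ is a reduction, $B\in\I$. So $\bar R$ witnesses that $\I$ is not locally linear selective.

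For $(3)\Rightarrow(1)$ I would fix $f\gg1$ and a partition $\bar P=(P_i)_i$ of $\omega$ into $\I$-sets with no $\I$-positive linear selector, and build a reduction $\J_L^f\leq_{\mathrm{K}}\I$ by \emph{coarsening $\bar P$}. The first step is to strengthen the hypothesis: for every $C\in\I^+$ the set $S_C=\{i:|C\cap P_i|>i\}$ must be infinite, since otherwise $C\setminus\bigcup_{i\in S_C}P_i$ would be an $\I$-positive linear selector. Next pick $0=a_0<a_1<\cdots$ with $a_j\ge f(j)$ for all $j\ge1$, and set $Q_j=\bigcup_{i\in[a_j,a_{j+1})}P_i$, a finite union of $\I$-sets, so $Q_j\in\I$ and $\bar Q=(Q_j)_j$ is again a partition into $\I$-sets. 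The point of this coarsening is that no $C\in\I^+$ can satisfy $|C\cap Q_j|\le f(j)$ for all $j$: infiniteness of $S_C$ together with finiteness of each block $[a_j,a_{j+1})$ yields an index $j\ge1$ and some $i\in S_C\cap[a_j,a_{j+1})$, whence $|C\cap Q_j|\ge|C\cap P_i|>i\ge a_j\ge f(j)$. Now enumerate $Q_j=\{q^j_0,q^j_1,\dots\}$ and set $g(q^j_l)=(j,l)$. Given $A\in\J_L^f$, say $|(A)_j|\le k\cdot f(j)$ for all $j\ge j^\ast$, the set $g^{-1}[A]$ meets each $Q_j$ with $j\ge j^\ast$ in at most $k\cdot f(j)$ points; splitting it (off the finitely many small columns, whose contribution is a finite union of $\I$-sets) round-robin into $k$ pieces, each meeting every $Q_j$ in at most $f(j)$ points, and invoking the displayed property, every piece—hence $g^{-1}[A]$—lies in $\I$. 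So $\J_L^f\leq_{\mathrm{K}}\I$.

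The main obstacle is exactly this growth-rate mismatch: the partition one reads off from a reduction for $\J_L^f$ is calibrated to the profile $f$, whereas ``linear selector'' refers to the identity profile, so one must coarsen the columns of $\omega\times\omega$ on one side and coarsen $\bar P$ on the other, each time verifying that the anti-selector property is preserved; the cleanest device for making this go through is the observation above that a partition witnessing failure of local linear selectivity in fact obstructs \emph{every} $\I$-positive set at infinitely many coordinates. The remaining verifications (that $J_m\times\omega$ and $Q_j$ lie in the respective ideals, the round-robin split into $k$ many $f$-bounded pieces, and $B\subseteq g^{-1}[g[B]]$) are routine.
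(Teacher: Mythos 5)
Your proof is correct and follows essentially the same route as the paper: $(2)\Rightarrow(3)$ by pulling back a grouping of the columns of $\omega\times\omega$ calibrated to $f$, and $(3)\Rightarrow(1)$ by mapping pieces of the given partition injectively into columns and splitting a $k\cdot f$-bounded pullback into $k$ many selector-bounded pieces. The only differences are bookkeeping: you group columns into blocks where $f\ge m$ (the paper uses the level sets $f(j)=i$), and in $(3)\Rightarrow(1)$ you coarsen $\bar P$ into $\bar Q$ and route through the auxiliary ``$S_C$ infinite'' observation, whereas the paper keeps $\bar P$ and instead reindexes the target columns via $j_i$ and applies local linear selectivity to linear selectors of $\bar P$ directly.
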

    \begin{proof}
        (1) trivially implies (2). To show that (2) implies (3), assume that $\J_L^f \leq_{\mathrm{K}} \I$ is witnessed by $\pi\colon\omega\to\omega\times\omega$. Let
        \[
        P_i = \bigcup\{\pi^{-1}[\{j\}\times\omega]: f(j) = i\}.
        \]
        Then $\overline{P} = \langle P_i\rangle_{i<\omega}$ is a partition of $\omega$ into sets in $\I$, since each $\{j\}\times\omega$ belongs to $\J_L^f$ and for each $i$ there are only finitely many $j$ such that $f(j)=i$ by $f\gg1$. Now let $A\subset\omega$ be a linear selector of $\overline{P}$. Then for all $i<\omega$,
        \[
        \lvert (\pi[A])_i \rvert \leq \lvert \pi[A\cap P_{f(i)}]\rvert \leq \lvert A\cap P_{f(i)}\rvert \leq f(i)
        \]
        and thus $\pi[A]\in\J_L^f$. So $A\subset \pi^{-1}[\pi[A]]\in\I$.

        To show that (3) implies (1), let $f\in\omega^\omega$ be such that $f\gg 1$ and let $\overline{P} = \langle P_i\rangle_{i<\omega}$ be a partition of $\omega$ into sets in $\I$ such that all linear selectors are in $\I$. Take any function $\pi\colon\omega\to\omega\times\omega$ such that for all $i<\omega$, $\pi[P_i]\subset\{j_i\}\times\omega$ and $\pi\on P_i$ is injective, where $j_i$ is the least $j_i$ such that  $f(j_i +1) > i$.
        To show that $\pi$ witnesses $\J_L^f \leq_{\mathrm{K}} \I$, let $A\in\J_L^f$ 
        and take $k,j^*<\omega$ such that for all $j\geq j^*$,
        $\lvert (A)_{j}\rvert \leq k\cdot f(j)$.
        Let $i_0<\omega$ be such that $j_i\geq j^*$ for all $i\geq i_0$. Then, for all $i\geq i_0$,
        \[
        \lvert \pi^{-1}[A]\cap P_i\rvert = \lvert \pi[\pi^{-1}[A]\cap P_i]\rvert \leq \lvert (A)_{j_i}\rvert \leq k\cdot f(j_i) \leq k\cdot i.
        \]
        Thus $\pi^{-1}[A]$ is included by a union of $(P_i)_{i<i_0}$ and $k$-many linear selectors of $\bar{P}$, so $\pi^{-1}[A]\in\I$ by (3).
    \end{proof}
    
    In particular, $\J_L^f$ does not depend on $f\in\oo$ in the sense of  Kat\v{e}tov--Blass order:

    \begin{cor}\label{cor:JL_KB_eq}
        $\J_L^f\cong_{\mathrm{KB}}\J_L$ for any $f\gg1$.
    \end{cor}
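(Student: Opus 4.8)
The plan is to deduce the corollary directly from \Cref{prop:chara_JL}, the only extra ingredient being the observation that the Katětov reductions produced in its proof are finite-to-one, hence witness Katětov--Blass reducibility.

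First I would record that for every $f\in\oo$ with $f\gg1$ the ideal $\J_L^f$ (viewed as an ideal on $\omega$ after identifying $\omega\times\omega$ with $\omega$) is not locally linear selective: the identity map trivially witnesses $\J_L^f\leq_{\mathrm{K}}\J_L^f$, so implication (2)$\Rightarrow$(3) of \Cref{prop:chara_JL} applies. Taking $f=\mathrm{id}$ (which satisfies $\mathrm{id}\gg1$) in particular shows that $\J_L=\J_L^{\mathrm{id}}$ is not locally linear selective either.

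Next I would apply implication (3)$\Rightarrow$(1) of \Cref{prop:chara_JL} twice: with $\I=\J_L^f$ and test function $\mathrm{id}$ it yields a reduction $\pi$ witnessing $\J_L\leq_{\mathrm{K}}\J_L^f$, and with $\I=\J_L$ and test function $f$ it yields a reduction witnessing $\J_L^f\leq_{\mathrm{K}}\J_L$. It then remains to check that each such $\pi$ is finite-to-one. In the construction in that proof, $\pi$ is built from a partition $\bar P=\langle P_i\rangle_{i<\omega}$ whose linear selectors all lie in the target ideal, so that $\pi[P_i]\subseteq\{j_i\}\times\omega$ with $\pi\on P_i$ injective, where $j_i$ is the least $j$ with $f(j+1)>i$ (respectively with $\mathrm{id}$ in place of $f$). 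Fixing $\langle j,m\rangle\in\omega\times\omega$, any $x\in\pi^{-1}[\{\langle j,m\rangle\}]$ lies in some $P_i$ with $j_i=j$, the set $\{i:j_i=j\}$ is finite because $j_i=j$ forces $i<f(j+1)$ (respectively $i<j+1$), and $\pi$ is injective on each $P_i$; hence $\pi^{-1}[\{\langle j,m\rangle\}]$ is finite. Thus both reductions are finite-to-one, giving $\J_L\leq_{\mathrm{KB}}\J_L^f$ and $\J_L^f\leq_{\mathrm{KB}}\J_L$, i.e.\ $\J_L^f\cong_{\mathrm{KB}}\J_L$.

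The main (and only) point to be careful about is this finite-to-one verification, since $\cong_{\mathrm{KB}}$ is strictly stronger than the $\cong_{\mathrm{K}}$ that \Cref{prop:chara_JL} literally provides; everything else is a formal bookkeeping of the two implications of the proposition.
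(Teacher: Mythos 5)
Your proof is correct and follows essentially the same route as the paper: both Kat\v{e}tov reductions are extracted from \Cref{prop:chara_JL}, and the key point is that the maps built in the proof of (3)$\Rightarrow$(1) are finite-to-one (the paper notes injectivity of that map when $f$ is the identity to get $\J_L\leq_{\mathrm{KB}}\J_L^f$ and then cites (2)$\Rightarrow$(1) for $\J_L^f\leq_{\mathrm{KB}}\J_L$, whereas you verify finite-to-one-ness of the construction uniformly in both directions). Your explicit check that $\{i:j_i=j\}$ is finite (since $j_i=j$ forces $i<f(j+1)$) is precisely the detail that upgrades the Kat\v{e}tov conclusion of the proposition to the Kat\v{e}tov--Blass statement, so nothing is missing.
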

    \begin{proof}
        In the previous proof of (3) $\Rightarrow$ (1), when $f\in\oo$ is the identity function, $j_i=i$ and hence $\pi\colon\omega\to\omega\times\omega$ is injective. This implies that if $\I$ is not locally linear selective, then $\J_L \leq_{\mathrm{KB}} \I$. Thus $\J_L \leq_{\mathrm{KB}} \J_L^f$ for any $f\gg1$ by (2) $\Rightarrow$ (3). $\J_L^f \leq_{\mathrm{KB}} \J_L$ follows from (2) $\Rightarrow$ (1).
     \end{proof} 

    The relation of $\J_L$ and other Borel ideals with respect to the Kat\v{e}tov order can be summarized as follows.
    
    \begin{prop}\label{prop:JL_KB}\leavevmode
        \begin{enumerate}
            \item $\J_L \leq_{\mathrm{KB}} \I_L^f$ for all $1\ll f\ll \exp$.
            \item $\ed\leq_{\mathrm{KB}}\J_L\leq_{\mathrm{KB}}\finfin$ and $\ed\not\geq_{\mathrm{K}}\J_L\not\geq_{\mathrm{K}}\finfin$.
            \item $\edfin\not\leq_{\mathrm{K}}\J_L$ and $\J_L\not\leq_{\mathrm{K}}\edfin$.
        \end{enumerate}
    \end{prop}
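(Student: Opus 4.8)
The plan is to dispatch the three items separately. Part (1) and the two inclusions in part (2) are direct; the negative reductions come in two flavours, and the crucial point is that one flavour is handled by ``local linear selectivity'' through \Cref{prop:chara_JL}, while the other lies outside its scope and needs an ad hoc diagonalization.

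\emph{Part (1).} Fix $1\ll f\ll\exp$. Since $f\to\infty$ there is a finite-to-one $c\colon\omega\to\omega$ with $c(i)\le f(i)$ for all $i$ (e.g.\ $c(i)=\min\{i,\lfloor f(i)\rfloor\}$). Writing $P_i^{\mathrm{exp}}=\{p^i_0<\dots<p^i_{2^i-1}\}$, I would define $\pi\colon\omega\to\omega\times\omega$ by $\pi(p^i_j)=(c(i),j)$. This $\pi$ is finite-to-one because $c$ is, and if $A\in\J_L$, say $|(A)_n|\le kn$ for $n\ge n_0$, then for every $i$ with $c(i)\ge n_0$ (all but finitely many) one gets $|\pi^{-1}[A]\cap P_i^{\mathrm{exp}}|\le|(A)_{c(i)}|\le k\,c(i)\le k\,f(i)$, so $\pi^{-1}[A]\in\I_L^f$; thus $\J_L\le_{\mathrm{KB}}\I_L^f$.

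\emph{Part (2).} The inclusions $\ed\subseteq\J_L\subseteq\finfin$ are immediate from the definitions, so the identity maps witness $\ed\le_{\mathrm{KB}}\J_L\le_{\mathrm{KB}}\finfin$. For $\ed\not\ge_{\mathrm K}\J_L$, i.e.\ $\J_L\not\le_{\mathrm K}\ed$, by \Cref{prop:chara_JL} it suffices to show $\ed$ is locally linear selective. Given a partition $\langle P_i\rangle_{i<\omega}$ of $\omega\times\omega$ into $\ed$-sets, fix $k_i,N_i$ with $|(P_i)_n|\le k_i$ for $n\ge N_i$, so that $|(\bigcup_{i<J}P_i)_n|\le\sum_{i<J}k_i$ once $n\ge\max_{i<J}N_i$. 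I would build the selector $A$ greedily: at stage $k$, pick a new, sufficiently large column $n_k$ and put $k$ points of $A$ into column $n_k$, each lying in some $P_i$ with $i$ larger than $k$ and larger than every index used before --- possible because all but $\sum_{i<J}k_i$ points of the infinite column $n_k$ lie outside $P_0\cup\dots\cup P_{J-1}$ for the relevant threshold $J$. Then each $P_i$ is used at most once and gets at most $k\le i$ points, so $A$ is a linear selector, while $|(A)_{n_k}|\ge k$ with the $n_k$ distinct shows $A\in\ed^+$. For $\J_L\not\ge_{\mathrm K}\finfin$ (i.e.\ $\finfin\not\le_{\mathrm K}\J_L$) this route fails, since $\finfin$ is \emph{not} locally linear selective (the column partition of $\omega\times\omega$ has no $\finfin$-positive linear selector), so a direct argument is needed. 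If $\pi$ witnessed $\finfin\le_{\mathrm K}\J_L$, then since each target column $C_n=\{n\}\times\omega$, and each finite union of them, lies in $\finfin$, the preimages $D_n=\pi^{-1}[C_n]$ and their finite unions lie in $\J_L$ and hence have eventually finite source-columns. I would recursively pick source rows $i_0<i_1<\dots$ and finite target-column sets $F_{-1}\subseteq F_0\subseteq\dots$: at stage $K$, $\bigcup_{n\in F_{K-1}}D_n\in\J_L$ has finite $i$-th column for large $i$, so choosing $i_K$ large, $\{m:\pi(i_K,m)\notin F_{K-1}\times\omega\}$ is infinite; select $Ki_K+1$ such $m$, throw the points $\pi(i_K,m)$ into a set $A$, and add the finitely many fresh target columns they occupy to $F_K$. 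Each target column is then touched at most once, so $A$ has finite columns, i.e.\ $A\in\finfin$; but $|(\pi^{-1}[A])_{i_K}|\ge Ki_K+1$ for every $K$, so $\pi^{-1}[A]\notin\J_L$, a contradiction.

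\emph{Part (3).} For $\J_L\not\le_{\mathrm K}\edfin$ I would repeat the $\ed$-argument on $\Delta$: $\edfin$ is locally linear selective by the same greedy construction, now additionally requiring $n_k+1-\sum_{i<J}k_i\ge k$ so that the finite column $n_k$ still has $k$ points outside the first $J$ pieces; then \Cref{prop:chara_JL} yields $\J_L\not\le_{\mathrm K}\edfin$. For $\edfin\not\le_{\mathrm K}\J_L$ I would use cardinal invariants: from $\J_L\le_{\mathrm K}\finfin$ (part (2)), \Cref{lem:b_nonM}, \Cref{lem:Katetov_non} and \Cref{prop:K_finfin_b} one obtains $\non(K_{\J_L})=\bb$ in $\mathsf{ZFC}$, so if $\edfin\le_{\mathrm K}\J_L$ held then \Cref{lem:Katetov_non} would make $\non(K_{\edfin})\le\bb$ a theorem of $\mathsf{ZFC}$, contradicting the consistency of $\bb<\non(K_{\edfin})$ established in \Cref{thm:Con_max_nonKEDfin}.

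The main difficulty is bookkeeping rather than ideas: in the local-linear-selectivity proofs one must simultaneously keep the chosen indices fresh, no smaller than the current stage $k$, and available in abundance in the selected column; and in the $\finfin\not\le_{\mathrm K}\J_L$ diagonalization one must exploit precisely the fact that finite unions of the $D_n$ are $\J_L$-sets with eventually finite columns in order to keep finding rows $i_K$ that avoid the portion of $A$ already built. The single conceptual point worth flagging is that $\finfin\not\le_{\mathrm K}\J_L$ genuinely falls outside the scope of \Cref{prop:chara_JL} and therefore demands its own argument.
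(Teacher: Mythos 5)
Your items (1) and (2) and the first half of (3) are correct, and mostly run parallel to the paper, though with different implementations. For (1) the paper factors the reduction as $\J_L\cong_{\mathrm{KB}}\J_L^f\leq_{\mathrm{KB}}\I_L^f$ via \Cref{cor:JL_KB_eq}, while your explicit finite-to-one map $p^i_j\mapsto(c(i),j)$ does the same job directly; both are fine. For $\J_L\not\leq_{\mathrm{K}}\ed$ and $\J_L\not\leq_{\mathrm{K}}\edfin$ you take exactly the paper's route — show $\ed$ and $\edfin$ are locally linear selective and quote \Cref{prop:chara_JL} — except that you replace the paper's short two-case analyses by a greedy construction of an $\I$-positive linear selector; your construction is correct (the key points, that indices used are strictly increasing across stages and exceed the current stage $k$, and that a sufficiently large column always leaves room outside the first $J$ pieces, are all in place). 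The genuinely different part is $\finfin\not\leq_{\mathrm{K}}\J_L$: the paper deduces it from the facts that $\J_L$ is Kat\v{e}tov--Blass below every tall analytic P-ideal and that $\finfin$ is not Kat\v{e}tov below any P-ideal (citing the literature), whereas you give a self-contained diagonalization against a putative reduction $\pi$, using that finite unions of target columns pull back to $\J_L$-sets with eventually finite source columns. That diagonalization checks out and is arguably more elementary, at the cost of being less informative about where $\J_L$ sits relative to P-ideals.

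The one genuine gap is your argument for $\edfin\not\leq_{\mathrm{K}}\J_L$. As written, assuming the reduction ``held'' and concluding that $\non(K_{\edfin})\leq\bb$ ``would be a theorem of $\mathsf{ZFC}$'' conflates truth with provability: a hypothesis true in $V$ together with \Cref{lem:Katetov_non} gives $\non(K_{\edfin})\leq\bb$ in $V$ only, which does not by itself contradict the mere consistency of $\bb<\non(K_{\edfin})$ from \Cref{thm:Con_max_nonKEDfin}; and if instead you read the hypothesis as ``$\mathsf{ZFC}$ proves the reduction,'' you only obtain non-provability, not the outright non-reduction asserted in the proposition. The argument can be repaired: the statement ``$\exists f\,\forall A\in\edfin\,(f^{-1}[A]\in\J_L)$'' is $\Sigma^1_2$, so by Shoenfield absoluteness a witness in $V$ would remain a witness in the ccc extension of \Cref{thm:Con_max_nonKEDfin}, where $\non(K_{\edfin})\leq\non(K_{\J_L})=\bb$ then contradicts $\bb<\non(K_{\edfin})$; but this absoluteness step must be said, and it makes a short $\mathsf{ZFC}$ fact depend on a heavy forcing construction. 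The paper's route is much lighter: since $\J_L\subseteq\finfin$, any Kat\v{e}tov reduction of $\edfin$ to $\J_L$ is also one to $\finfin$, and $\edfin\not\leq_{\mathrm{K}}\finfin$ is a known theorem (cited in the paper), which settles it outright.
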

    \begin{proof}
        (1) $\J_L\cong_{\mathrm{KB}}\J^f_L \leq_{\mathrm{KB}} \I^f_L$ follows from \Cref{cor:JL_KB_eq}.

        (2) $\ed\subseteq\J_L\subseteq\finfin$ by definition, so $\ed\leq_{\mathrm{KB}}\J_L\leq_{\mathrm{KB}}\finfin$ hold. To see $\finfin\not\leq_{\mathrm{K}} \J_L$, note that any tall analytic P-ideal is Kat\v{e}tov--Blass above some $\I_L^f$, and thus above $\J_L$. Since $\finfin\not\leq_{\mathrm{K}} \I$ for any P-ideal $\I$ (cf.\ \cite[Observation 2.3]{BF17}), $\finfin\not\leq_{\mathrm{K}} \J_L$. To show that $\J_L\not\leq_{\mathrm{KB}}\ed$, it is enough to show that $\ed$ does not satisfy (3) in \cref{prop:chara_JL}. Let $\langle P_n\rangle_{n<\omega}$ be a partition of $\omega$ into sets in $\ed$.  There are two cases:
        \begin{enumerate}[label=(\roman*)]
            \item For infinitely many $i<\omega$, there is $n<\omega$ such that $\lvert P_n\cap (\{i\}\times\omega)\rvert = \omega$.
            \item For all but finitely many $i<\omega$, there are infinitely many $n<\omega$ such that $P_n\cap(\{i\}\times\omega)\neq\emptyset$.
        \end{enumerate}
        In both cases, it is easy to find a linear-selector of $\langle P_n\rangle_{n<\omega}$ that is not in $\ed$.
        
        (3) Since $\edfin\not\leq_{\mathrm{K}}\finfin$ (cf.\ \cite{Hru17}), $\edfin\not\leq_{\mathrm{K}}\J_L$. To show that $\J_L\not\leq_{\mathrm{K}}\edfin$, it is enough to show that $\edfin$ does not satisfy (3) in \cref{prop:chara_JL}. Let $\langle P_n\rangle_{n<\omega}$ be a partition of $\omega$ into sets in $\edfin$.  Let $\Delta_n = \{n\}\times (n+1)$. Then either
        \begin{enumerate}[label=(\roman*)]
            \item $\sup_{n<\omega}\lvert\{k<\omega: P_k\cap\Delta_n\neq\emptyset\}\rvert < \omega$, or
            \item $\sup_{n<\omega}\lvert\{k<\omega: P_k\cap\Delta_n\neq\emptyset\}\rvert = \omega$.
        \end{enumerate}
        In either case, it is easy to find a linear-selector of $\langle P_n\rangle_{n<\omega}$ that is not in $\edfin$.
    \end{proof}

    \begin{rem}
        Since $\J_L$ is not a P-ideal and not Kat\v{e}tov above $\edfin$,
        \[
        \add^*(\J_L) = \non^*(\J_L) = \omega.
        \]
        It is easy to see that $\J_L$ has an uncountable strongly unbounded set, so we have
        \begin{gather*}
            \add^*_\omega(\J_L) = \add(M_{\J_L}) = \add(K_{\J_L}) = \omega_1,\\
            \cof^*(\J_L) = \cof^*_\omega(\J_L) = \cof(M_{\J_L}) = \cof(K_{\J_L}) = \cc.
        \end{gather*}
        In \cite{DFGT21}, $\cov^*(\J_L) = \nonm$ is shown. It is not hard to show $\non^*_\omega(\J_L) = \covm$; since $\ed\subset\J_L$, $\covm=\non^*_\omega(\ed)\leq\non^*_\omega(\J_L)$ holds and the proof of $\non^*_\omega(\ed)\leq\covm$ (\Cref{prop:non_star_omega_ed}) can be modified to show that $\non^*_\omega(\J_L)\leq\covm$.
        Since $\ed\subset \J_L \subset \finfin$, we have
        \begin{gather*}
            \non(K_{\J_L}) = \bb, \cov(K_{\J_L}) = \dd, \\
            \ee^{\mathrm{const}}(2) \leq \non(M_{\J_L}) \leq \ee^\mathrm{const}_\leq(2),\\
            \vv^{\mathrm{const}}_{\leq}(2) \leq \cov(M_{\J_L}) \leq \vv^\mathrm{const}(2).
        \end{gather*}
    \end{rem}

    Next we consider the $f$-linear growth ideals $\I^f_L$. Unlike the $\J^f_L$'s, these ideals are not necessarily Kat\v{e}tov--Blass equivalent. Recall that $f \leq^* g$ implies $\I_L^f \subset \I_L^g$.

    \begin{prop}\label{prop:no_KB_minimal}
	   There is no $\leq_{\mathrm{KB}}$-minimal or maximal member among the family $\{\mathscr{I}_{\mathrm{L}}^f:1\ll f\ll\exp\}$.
    \end{prop}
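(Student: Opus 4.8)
The plan is to reduce the proposition to a single separation lemma and then feed that lemma two explicit families of comparison functions. For the easy half, observe that whenever $g\leq^* f$ one has $\I_L^g\subseteq\ILf{}$ (if $\lvert A\cap P_i^\mathrm{exp}\rvert\leq k\cdot g(i)$ for large $i$ and $g(i)\leq f(i)$ for large $i$, then $\lvert A\cap P_i^\mathrm{exp}\rvert\leq k\cdot f(i)$ for large $i$), so the identity map witnesses $\I_L^g\leq_{\mathrm{KB}}\ILf{}$. Hence, to see that $\ILf{}$ is neither $\leq_{\mathrm{KB}}$-minimal nor $\leq_{\mathrm{KB}}$-maximal among $\{\I_L^h:1\ll h\ll\exp\}$, it is enough, given $f$ with $1\ll f\ll\exp$, to produce $g_-,g_+\in\oo$ with $1\ll g_-\ll\exp$, $1\ll g_+\ll\exp$, $g_-\leq^* f\leq^* g_+$, and both $\ILf{}\not\leq_{\mathrm{KB}}\I_L^{g_-}$ and $\I_L^{g_+}\not\leq_{\mathrm{KB}}\ILf{}$; for then $\I_L^{g_-}<_{\mathrm{KB}}\ILf{}<_{\mathrm{KB}}\I_L^{g_+}$. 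Concretely I would take $g_-(i)=\lceil\sqrt{f(i)}\,\rceil$ and $g_+(i)=\lceil\sqrt{f(i)\cdot 2^{i}}\,\rceil$; using $f\ll\exp$ one checks routinely that these lie in the required range, that $g_-\leq^* f\leq^* g_+$, and that $f/g_-\to\infty$ and $g_+/f\to\infty$. So everything comes down to the following: \emph{if $F,G\in\oo$ satisfy $1\ll G\ll\exp$, $1\ll F\ll\exp$ and $F(i)/G(i)\to\infty$, then $\I_L^F\not\leq_{\mathrm{KB}}\I_L^G$} (apply it with $F=f$, $G=g_-$ and with $F=g_+$, $G=f$).

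To prove the separation lemma, fix a finite‑to‑one $\pi\colon\omega\to\omega$ (recall this is equivalent to $\pi(n)\to\infty$) and build $A\in\I_L^F$, living on the range copy, with $\pi^{-1}[A]\notin\I_L^G$, i.e.\ with $\sup_j\lvert\pi^{-1}[A]\cap P_j^\mathrm{exp}\rvert/G(j)=\infty$. The relevant structure is: for a large index $j$, the multiset $\pi\restriction P_j^\mathrm{exp}$ has total size $2^j$ and, since $\pi(n)\to\infty$ and $\sum_{m<j}\lvert P_m^\mathrm{exp}\rvert=2^{j}-1<\lvert P_j^\mathrm{exp}\rvert$, it is supported in blocks $P_m^\mathrm{exp}$ with $m\geq j-1$, and into $A$ we may put at most $F(m)$ points from each $P_m^\mathrm{exp}$. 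A counting argument — choosing from each block the points with the largest fibres, and using $F\ll\exp$ so that $F(m)\leq 2^{m}$ eventually — shows that one can always capture at least $c\cdot F(m_j)$ many preimages inside $P_j^\mathrm{exp}$ for some block index $m_j\geq j-1$ hit by $\pi[P_j^\mathrm{exp}]$, where $c>0$ is absolute. Running this over a sparse set of indices $j$ chosen so that the range‑footprints of $P_j^\mathrm{exp}$ under $\pi$ are pairwise disjoint (possible because each $P_m^\mathrm{exp}$ has finite $\pi$-preimage), the union $A=\bigcup_j A_j$ lies in $\I_L^F$, while $\lvert\pi^{-1}[A]\cap P_j^\mathrm{exp}\rvert\geq c\cdot F(m_j)$ for each chosen $j$. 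Since $F\gg G$, this beats every fixed multiple of $G(j)$ as soon as $F(m_j)$ is comparable to $F(j)$, and as $\pi$ was arbitrary finite‑to‑one, $\I_L^F\not\leq_{\mathrm{KB}}\I_L^G$ follows.

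The hard part, and the only place where $F\ll\exp$ is genuinely needed, is to handle a $\pi$ that "hides" $P_j^\mathrm{exp}$ inside the slightly lower block $P_{j-1}^\mathrm{exp}$ (with multiplicity roughly $2$), or more generally inside blocks $m\geq j-1$ where $F(m)$ is much smaller than $F(j)$; if $F$ were permitted to jump super‑multiplicatively in logarithm at infinitely many "record‑low" scales $N_k$, such a strategy could in principle defeat the construction at every large $j$. I expect the bulk of the proof to be ruling this out: tracking how a hypothetical finite‑to‑one reduction $\pi$ must spread the exponential blocks forces the jump scales to satisfy an inequality of the form $2^{N_k}\leq F(N_k)^{O(1)}$, which is incompatible with $F(i)=o(2^{i})$ — so no such "jumpy" $F$ exists in the family, and the construction goes through for every $F$ with $1\ll F\ll\exp$. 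Once the separation lemma is in hand, the two non‑reductions, and hence the proposition, follow at once; this also sharpens the contrast with $\J_L^f\cong_{\mathrm{KB}}\J_L$ for all $f\gg 1$ (\Cref{cor:JL_KB_eq}): the restriction to the exponential block structure makes $\ILf{}$ depend genuinely, and non‑trivially, on the growth of $f$.
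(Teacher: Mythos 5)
There is a genuine gap, and it sits exactly where you flag the "hard part". Your whole argument funnels through the separation lemma ``$1\ll G,F\ll\exp$ and $F/G\to\infty$ imply $\I_L^F\not\leq_{\mathrm{KB}}\I_L^G$'', but you do not prove it: the final paragraph only says you \emph{expect} the problematic reductions to be ruled out, and the sketched contradiction is not one — an inequality of the form $2^{N}\leq F(N)^{O(1)}$ is perfectly compatible with $F\ll\exp$ (take $F(N)=2^{N/2}$), so ``no such jumpy $F$ exists in the family'' does not follow. Moreover, the counting argument in your second paragraph rests on a false structural claim: finite-to-one only gives $\pi(n)\to\infty$, i.e.\ the image of $P_j^{\mathrm{exp}}$ eventually avoids any \emph{fixed} finite set; it does not force $\pi[P_j^{\mathrm{exp}}]$ into blocks $P_m^{\mathrm{exp}}$ with $m\geq j-1$. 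The comparison $\sum_{m<j}\lvert P_m^{\mathrm{exp}}\rvert<\lvert P_j^{\mathrm{exp}}\rvert$ is only relevant for injective maps; a finite-to-one $\pi$ may compress $P_j^{\mathrm{exp}}$ into a much lower block with large (finite) fibres, or spread it over many blocks with footprints far exceeding $F(m)$, and then choosing the $F(m)$ points with largest fibres captures only a small fraction of the mass — nothing like ``at least $c\cdot F(m_j)$ with $c>0$ absolute'' is guaranteed. Since $F$ need not be monotone, upward-hiding reductions (sending $P_j^{\mathrm{exp}}$ injectively into a higher block where $F$ happens to dip) also have to be excluded, and your sketch does not address them at all; whether your strong lemma is even true is unclear to me, and it certainly needs a new idea.

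Note also that the paper never claims your lemma. It only produces, for each $f$, a specific $g$ (and dually a specific $f'$) satisfying the calibrated condition $g(n)^2\leq f(n)$ together with $g(n)^2\cdot\bigl(1+\sum_{k<n}\lceil 2^k/f(k)\rceil\bigr)\leq 2^n$, and proves strictness only for such pairs: the second inequality is exactly what makes the pigeonhole argument capture $g(n_i)^2$ many preimages inside $P_{n_i}^{\mathrm{exp}}$ from a single cell of a partition into pieces of size $\leq f(k)$, and the inductive choice of the $n_i$ (using finite-to-one-ness to disjointify the footprints) is what lets the pieces be assembled into a single $A\in\I_L^f$ with $\pi^{-1}[A]\notin\I_L^g$. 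Since the proposition only asks for \emph{some} strictly smaller and \emph{some} strictly larger member above and below each $f$, this weaker, carefully calibrated comparison suffices; your reduction to ``ratio tends to infinity'' (via $g_-=\lceil\sqrt{f}\,\rceil$, $g_+=\lceil\sqrt{f\cdot\exp}\,\rceil$) is fine as bookkeeping, but it replaces the paper's provable lemma by a much stronger unproved one, so the proposal as it stands does not establish the proposition.
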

    \begin{proof}
    	In the rest of the proof, we write $P_n = P_n^{\exp}$. 
    	Let $F\coloneqq\{f\in\oo:1\ll f\ll\exp\}$.
    	For $f,g\in F$ and $n<\omega$, define the following condition:
    	\begin{equation}
    		\label{eq:f_g_pigeon}
    		g(n)^2\leq f(n)\text{ and }g(n)^2 \cdot \left( 1+\sum_{k<n} \left\lceil \frac{2^k}{f(k)} \right\rceil \right) \leq 2^n.
    	\end{equation}
    	It can be easily seen that ``$\forall g\in F~\exists f\in F~\forall^\infty n<\omega~(\text{\eqref{eq:f_g_pigeon} holds})$'' and ``$\forall f\in F~\exists g\in F~\forall^\infty n<\omega~(\text{\eqref{eq:f_g_pigeon} holds})$''.
    	Thus, it suffices to show that if $f,g\in F$ satisfy \eqref{eq:f_g_pigeon} for almost all $n<\omega$, then $\mathscr{I}_{\mathrm{L}}^g\lneq_{\mathrm{KB}}\mathscr{I}_{\mathrm{L}}^f$.
    	Take such $f,g$ and we may assume that \eqref{eq:f_g_pigeon} is true for all $n<\omega$. $\I_L^g \leq_{\mathrm{KB}}\I_L^f$ is obvious as $g\leq^* f$.	
    	To see $\I_L^f \not\leq_{\mathrm{KB}} \I_L^g$, suppose toward a contradiction that $\pi\colon\omega\to\omega$ witnesses $\I_L^f\leq_{KB}\I_L^g$. It suffices to find $A\in\I_L^f$ such that $\pi^{-1}[A]\notin\I_L^g$.
    	To construct such a set $A$, we inductively define $n_i<\omega$ and finite sets $A_i\subset\omega$ as follows.
    	Let $i<\omega$ and assume $(n_j)_{j<i}$ and $(A_j)_{j<i}$ are defined.
    	First, pick $n_i<\omega$ so that if $A_j\cap P_n\neq\emptyset$ for some $j<i$ and $n<\omega$, then for all $m\geq n_i$,
    	\begin{equation}\label{eqn:disjointness}
    		\pi[P_m]\cap P_n = \emptyset.
    	\end{equation}
    	This is possible because $\pi$ is finite-to-one. 
    	To define $A_i$, 
    	put \[N_i \coloneqq \sum_{k<n_i} \left\lceil \frac{2^k}{f(k)} \right\rceil ,\]
    	and let $\langle I_k\rangle_{k< N_i}$ be a subpartition of $\langle P_n\rangle_{n< n_i}$ such that if $I_k \subseteq P_n$ then $|I_k|\leq f(n)$. Put $I_{N_i} = \bigcup_{m\geq n_i}P_m$. The pigeonhole principle implies that there is $k\leq N_i$ such that
    	\[
    	\lvert\pi^{-1}[I_k]\cap P_{n_i}\rvert \geq g(n_i)^2,
    	\]
    	since otherwise \eqref{eq:f_g_pigeon} implies
    	\[|P_{n_i}|=2^{n_i}<g(n_i)^2\cdot (N_i+1)\leq 2^{n_i},\]
    	a contradiction.
    	Let $k_i$ be the least such $k$. Let $\{a^i_j: j<l\}$ be the increasing enumeration of $I_{k_i}\cap\pi[P_{n_i}]$ and let $A_i = \{a^i_j:j<\min\{g(n_i)^2, l\}\}$.
    	
    	Now set $A = \bigcup_{i<\omega}A_i$. Then the following hold:
    	\begin{enumerate}
    		\item For all $i<\omega$, $\lvert\pi^{-1}[A_i]\cap P_{n_i}\rvert \geq g(n_i)^2$.
    		\item For all $i<\omega$, $\lvert A_i\rvert\leq g(n_i)^2$ and if $A_i\cap P_n\neq\emptyset$ for some $n < n_i$ then $\lvert A_i\rvert\leq \lvert I_{k_i}\rvert\leq f(n)$.
    		\item For all $n<\omega$, there is $i<\omega$ such that $A\cap P_n = A_i \cap P_n$ and such $i$ is unique if $A\cap P_n \neq\emptyset$.
    	\end{enumerate}
    	Note that the third clause follows from (\ref{eqn:disjointness}). Then $\pi^{-1}[A]\notin\I_L^g$ because by (1),
    	\[
    	\lvert\pi^{-1}[A]\cap P_{n_i}\rvert \geq \lvert\pi^{-1}[A_i]\cap P_{n_i}\rvert\geq g(n_i)^2
    	\]
    	for all $i<\omega$. To see $A\in\I_L^f$, it suffices to show that $\lvert A\cap P_n\rvert\leq f(n)$ for all $n>0$. By (3), for each $n>0$, there is $i<\omega$ such that $A\cap P_n = A_i\cap P_n$. Then by (2), if $n < n_i$, then $\lvert A_i\rvert\leq f(n)$, or else
    	\[
    	\lvert A\cap P_n\rvert = \lvert A_i \cap P_n\rvert \leq g(n_i)^2 \leq f(n_i) \leq f(n).
    	\]
    	This completes the proof.
    \end{proof}
        
    \begin{fac}[{\cite[Theorem 2.2]{HMTU17}}]
    	\label{fac:EDfin_chara}
    	$\mathcal{ED}_\mathrm{fin}\leq_{\mathrm{KB}}\I$ if and only if there is a partition of $\omega$ into finite sets such that all the selectors are in $\I$.
    \end{fac}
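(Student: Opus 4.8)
The plan is to prove both implications by directly exploiting the partition structure of $\edfin$, which lives on $\Delta=\{(n,m):m\le n\}$ with finite columns $\Delta_n\coloneqq\{n\}\times\{0,\dots,n\}$ (so $|\Delta_n|=n+1$). The key elementary observation I would isolate first is: a set $A\subseteq\Delta$ belongs to $\edfin$ if and only if there are $k,n_0<\omega$ with $|(A)_n|\le k$ for all $n\ge n_0$, and in that case $A=A''\cup A'$ where $A''\coloneqq A\cap\bigcup_{n<n_0}\Delta_n$ is finite and $A'$ has all columns of size $\le k$; a greedy argument then splits $A'=B_1\cup\dots\cup B_k$ with each $B_j$ meeting every column of $\Delta$ in at most one point.

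For the forward direction, given a finite-to-one $\pi\colon\omega\to\Delta$ witnessing $\edfin\leq_{\mathrm{K}}\I$, I would set $P_n\coloneqq\pi^{-1}[\Delta_n]$. Each $P_n$ is finite since $\pi$ is finite-to-one and $\Delta_n$ is finite, and $\langle P_n:n<\omega,\ P_n\ne\emptyset\rangle$ is a partition of $\omega$ into finite sets (infinitely many nonempty blocks, since $\pi$ has infinite range). If $A$ is a selector of this partition, then $|\pi[A]\cap\Delta_n|\le|A\cap P_n|\le 1$ for every $n$, so $\pi[A]\in\edfin$; hence $\pi^{-1}[\pi[A]]\in\I$ and $A\subseteq\pi^{-1}[\pi[A]]$ give $A\in\I$. (This uses only $\leq_{\mathrm{K}}$, which is all that is available.)

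For the converse, given a partition $\langle P_i:i<\omega\rangle$ of $\omega$ into finite sets all of whose selectors lie in $\I$, I would pick strictly increasing column indices $n(0)<n(1)<\cdots$ with $n(i)+1\ge|P_i|$ and fix an injection $\pi$ sending $P_i$ into the single column $\Delta_{n(i)}$; then $\pi\colon\omega\to\Delta$ is injective, hence finite-to-one. To check $\edfin\leq_{\mathrm{KB}}\I$, take $A\in\edfin$ and decompose $A=A''\cup B_1\cup\dots\cup B_k$ as in the first paragraph. For each $j$ and $i$, $\pi^{-1}[B_j]\cap P_i=\pi^{-1}[B_j\cap\Delta_{n(i)}]\cap P_i$ has size $\le 1$ because $\pi\upharpoonright P_i$ is injective and $|B_j\cap\Delta_{n(i)}|\le 1$; so $\pi^{-1}[B_j]$ is a selector of $\langle P_i\rangle$ and thus lies in $\I$. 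Since $\pi^{-1}[A'']$ is finite (hence in $\I$), $\pi^{-1}[A]$ is a finite union of members of $\I$, so $\pi^{-1}[A]\in\I$.

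There is no substantial obstacle here; the only care-points are verifying the decomposition $A=A''\cup B_1\cup\dots\cup B_k$ (immediate from the definition of $\edfin$ plus a greedy choice) and checking that the choice $n(i)+1\ge|P_i|$ with the $n(i)$ strictly increasing simultaneously makes each target column large enough to receive $P_i$ injectively and makes $\pi$ globally injective — both routine since the columns of $\Delta$ have unbounded size and the $P_i$ are finite. I would also note in passing that the forward direction in fact shows $\edfin\leq_{\mathrm{K}}\I\Rightarrow\edfin\leq_{\mathrm{KB}}\I$, since the $\pi$ produced in the converse is finite-to-one.
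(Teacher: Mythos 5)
There is nothing in the paper to compare against here: \Cref{fac:EDfin_chara} is quoted from \cite{HMTU17} without proof, so the only question is whether your self-contained argument is sound, and it is. The reduction of an arbitrary $A\in\edfin$ to a finite piece $A''$ plus $k$ partial selectors $B_1,\dots,B_k$ of the columns of $\Delta$ is correct; the forward direction correctly uses finite-to-one-ness of the KB-witness to make each block $P_n=\pi^{-1}[\Delta_n]$ finite, and a selector of $\langle P_n\rangle$ pushes forward to a set meeting each column at most once, hence lies in $\edfin$, so its $\pi$-preimage (which contains it) is in $\I$; the converse correctly injects each $P_i$ into a column $\Delta_{n(i)}$ with $n(i)$ strictly increasing and $n(i)+1\ge\lvert P_i\rvert$, so that $\pi^{-1}[B_j]$ is a selector and $\pi^{-1}[A'']$ is finite, giving $\pi^{-1}[A]\in\I$ by the paper's convention that ideals contain all finite sets. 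This is precisely the finite-selector analogue of the arguments the paper does write out for the linear versions, \Cref{prop:chara_JL} and \Cref{prop:chara_f_growth} (pull back blocks along the witness in one direction; inject blocks into sufficiently long columns in the other), so your route matches the paper's toolkit even though the paper itself only cites the result. One caveat: your closing aside that the forward direction ``in fact shows $\edfin\leq_{\mathrm{K}}\I\Rightarrow\edfin\leq_{\mathrm{KB}}\I$'' is not justified by your argument, and the earlier parenthetical ``this uses only $\leq_{\mathrm{K}}$'' is misleading for the same reason: your forward direction needs the witness to be finite-to-one exactly in order to know that each $P_n=\pi^{-1}[\Delta_n]$ is finite. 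From a mere Kat\v{e}tov witness you only get $P_n\in\I$ (since $\Delta_n$ is finite, hence in $\edfin$), which is the partition-into-$\I$-sets situation of \Cref{fac:ED_chara}, not a partition into finite sets. So either drop that remark or supply a separate argument for it; the equivalence as stated in the Fact is fully proved by what you wrote.
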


    Recall that an ultrafilter $\mathcal{U}$ on $\omega$ is \emph{rapid} if for every partition $\bar{P}$ of $\omega$ into finite sets, there is a linear selector of $\bar{P}$ in $\mathcal{U}$. According to the naming convention in \cite{BTW82}, we introduce the following notion.
    
    \begin{dfn}
        An ideal $\I$ on $\omega$ is \emph{locally rapid} if for every partition $\bar{P}$ of $\omega$ into finite sets, there is an $\I$-positive linear selector of $\bar{P}$.
    \end{dfn}

    \begin{prop}\label{prop:chara_f_growth}
        The following are equivalent:
        \begin{enumerate}
            \item There is $f\in\oo$ such that $1\ll f\ll\exp$ and $\IL^f\leq_{\mathrm{KB}} \I$. \label{item:If_KB}
            \item $\I$ is not locally rapid. 
            \label{item:non_locally_rapid}
        \end{enumerate}
    \end{prop}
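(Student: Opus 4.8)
The plan is to prove \Cref{prop:chara_f_growth} by mimicking the structure of the proof of \Cref{prop:chara_JL}, replacing ``partition into sets in $\I$'' by ``partition into finite sets'' throughout, and replacing the linear-selector bound $i$ by the bound $f(i)$ on the appropriate partition. The equivalence will be established by showing \eqref{item:If_KB} $\Rightarrow$ \eqref{item:non_locally_rapid} and \eqref{item:non_locally_rapid} $\Rightarrow$ \eqref{item:If_KB} directly.

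For \eqref{item:If_KB} $\Rightarrow$ \eqref{item:non_locally_rapid}: suppose $\IL^f\leq_{\mathrm{KB}}\I$ is witnessed by a finite-to-one $\pi\colon\omega\to\omega$. I would define $P_i\coloneqq\bigcup\{\pi^{-1}[P^{\mathrm{exp}}_j]: \lceil \log_2 f^{-1}\rceil \ldots\}$ — more carefully, since $f\ll\exp$ ensures $\lvert\{j: f(j)\leq m\}\rvert$ grows but each ``level'' is finite, I want to group the exponential blocks $P^{\mathrm{exp}}_j$ so that each group has size controlled by $f$. Concretely, choose a partition $\langle P_i\rangle_{i<\omega}$ of $\omega$ into finite sets refining $\langle \pi^{-1}[P^{\mathrm{exp}}_j]\rangle_{j<\omega}$ such that whenever $P_i\subseteq\pi^{-1}[P^{\mathrm{exp}}_j]$ we have $\lvert\pi[P_i]\rvert\leq f(j)$ and the indices are chosen so that $i\mapsto j$ is nondecreasing with $f(j)\geq i$ eventually (possible since $f\gg1$ makes $f$ unbounded). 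This is again a partition into finite sets because $\pi$ is finite-to-one. Given a linear selector $A$ of $\langle P_i\rangle_{i<\omega}$, the image $\pi[A]$ meets each $P^{\mathrm{exp}}_j$ in at most $\sum_{i:P_i\subseteq\pi^{-1}[P^{\mathrm{exp}}_j]} i \leq$ (a constant multiple of $f(j)$) points for large $j$, using that only boundedly many $P_i$ lie inside each $\pi^{-1}[P^{\mathrm{exp}}_j]$, so $\pi[A]\in\IL^f$ and hence $A\subseteq\pi^{-1}[\pi[A]]\in\I$. Thus no $\I$-positive linear selector of $\langle P_i\rangle_{i<\omega}$ exists, so $\I$ is not locally rapid.

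For \eqref{item:non_locally_rapid} $\Rightarrow$ \eqref{item:If_KB}: let $\langle P_i\rangle_{i<\omega}$ be a partition of $\omega$ into finite sets all of whose linear selectors lie in $\I$. I would then pick any $f\in\oo$ with $1\ll f\ll\exp$ such that $f$ eventually dominates the ``merging function'' needed below — more precisely, choose an increasing sequence $\langle n_k\rangle$ and set things up so that $f$ grows slowly enough to absorb $\lvert P_{n}\rvert$-style bounds while still being $\ll\exp$; concretely, one can take $f$ so that for a suitable reindexing, $\lvert P_i\rvert \leq f$ along the relevant scale. Then define $\pi\colon\omega\to\omega$ by mapping $P_i$ injectively into $P^{\mathrm{exp}}_{j_i}$ where $j_i$ is least with $f(j_i)\geq \lvert P_i\rvert$ (this needs $\lvert P^{\mathrm{exp}}_{j_i}\rvert = 2^{j_i}$ large enough, which holds since $f\ll\exp$) and arranging $j_i$ nondecreasing and finite-to-one. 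For $A\in\IL^f$, with $\lvert A\cap P^{\mathrm{exp}}_j\rvert\leq k\cdot f(j)$ eventually, the preimage $\pi^{-1}[A]\cap P_i$ has size at most $\lvert A\cap P^{\mathrm{exp}}_{j_i}\rvert\leq k\cdot f(j_i)\leq k\cdot(\lvert P_i\rvert)\leq k\cdot i$ for large $i$ (using $f(j_i - 1)<\lvert P_i\rvert$ and a smoothness of $f$, or simply building $\langle P_i\rangle$ so that $\lvert P_i\rvert \leq i$ — one may freely resubdivide the finite partition to arrange this), so $\pi^{-1}[A]$ is a finite union of $P_i$'s and $k$-many linear selectors, hence in $\I$. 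Therefore $\pi$ witnesses $\IL^f\leq_{\mathrm{KB}}\I$.

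The main obstacle I anticipate is the bookkeeping of growth rates: one must choose $f$ (in the backward direction) and the merged partition (in the forward direction) so that the bound $f(i)\cdot\text{const}$ on preimages is correctly dominated, while simultaneously keeping $1\ll f\ll\exp$ so that the exponential blocks $P^{\mathrm{exp}}_j$ have room to receive the images injectively. The cleanest route is probably to first observe that one may always pass to a finer partition into finite sets in which $\lvert P_i\rvert\leq i$ (the family of locally-rapid counterexample partitions is closed under refinement in the relevant direction), which trivializes the size estimates; after that, both implications reduce to essentially the same computation as in \Cref{prop:chara_JL}, with $P^{\mathrm{exp}}_i$ playing the role that $\{i\}\times\omega$ played there and the finite-to-one hypothesis on $\pi$ replacing the hypothesis that the cells lie in $\I$.
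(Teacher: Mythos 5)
There are genuine gaps in both directions, and they share a common root: your constructions go in the wrong direction relative to the growth of $f$. In \eqref{item:If_KB} $\Rightarrow$ \eqref{item:non_locally_rapid}, you refine the family $\langle \pi^{-1}[P^{\mathrm{exp}}_j]\rangle_j$ into cells of image-size $\leq f(j)$ and ask for an index assignment with $i\leq f(j)$ eventually; this is unachievable for slowly growing $f$ (already for $f=\mathrm{id}$, i.e.\ $\I_L$, and with $\pi=\mathrm{id}$): a block preimage whose image fills most of $P_j^{\mathrm{exp}}$ needs about $2^j/f(j)$ cells, so by pigeonhole the indices of cells sitting inside block $j$ must be far larger than $f(j)$, and your auxiliary claim that only boundedly many cells lie inside each block preimage is false for the same reason. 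Consequently a linear selector of your partition can meet $P_j^{\mathrm{exp}}$ in far more than $k\cdot f(j)$ points, and nothing forces it into $\I$. The fix is the opposite operation: \emph{coarsen} rather than refine, grouping the exponential blocks by the value of $f$, i.e.\ set $Q_j=\bigcup\{P^{\mathrm{exp}}_i : f(i)=j\}$ (finite since $f\gg1$) and take the partition $P_j=\pi^{-1}[Q_j]$ (finite since $\pi$ is finite-to-one); then the cell index $j$ coincides with the $f$-value on every block it contains, so a linear selector $A$ satisfies $|\pi[A]\cap P^{\mathrm{exp}}_i|\leq j=f(i)$ and hence $\pi[A]\in\IL^f$ and $A\subseteq\pi^{-1}[\pi[A]]\in\I$.

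In \eqref{item:non_locally_rapid} $\Rightarrow$ \eqref{item:If_KB}, your choice of $j_i$ as least with $f(j_i)\geq|P_i|$ ties $f$ to the \emph{sizes} of the cells, and the final estimate $k\cdot f(j_i)\leq k\cdot|P_i|\leq k\cdot i$ needs $|P_i|\leq i$, which is simply not available: the witnessing partition may have enormous cells, and your fallback of ``freely resubdividing'' is invalid because the property ``every linear selector lies in $\I$'' is \emph{not} preserved under refinement (refine into singletons: then every subset of $\omega$ is essentially a linear selector, but $\I$ is proper). The same false closure claim reappears in your final paragraph. The correct calibration is to make $f$ record the cell \emph{index} rather than the cell size: choose $i_0<i_1<\cdots$ with $|P_j|\leq 2^{i_j}$, set $f(i_j)=j$ (passing to a subsequence so that $f\ll\exp$, and interpolating monotonically so that $1\ll f$), and map $P_j$ injectively into $P^{\mathrm{exp}}_{i_j}$; then for $A\in\IL^f$ one gets $|\pi^{-1}[A]\cap P_j|\leq k\cdot f(i_j)=k\cdot j$, so $\pi^{-1}[A]$ splits into $k$ linear selectors of $\bar{P}$ and lies in $\I$. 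Note also that this direction only produces \emph{some} $f$ depending on the partition, which is exactly what the existential form of \eqref{item:If_KB} requires; your plan of fixing $f$ in advance and then adjusting the partition cannot work, for the reasons above.
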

    \begin{proof}
        To show \eqref{item:If_KB} implies \eqref{item:non_locally_rapid}, assume that $f\in\oo$ is such that $\IL^f\leq_{\mathrm{KB}} \I$, witnessed by the function $g\colon\omega\to\omega$. For $j\in\omega$, put $Q_j\coloneqq\bigcup\{P_i^\mathrm{exp}:i<\omega,f(i)=j\}$ and $P_j\coloneqq g^{-1}(Q_j)$. Note that $\bar{Q}\coloneqq(Q_j)_{j<\omega}$ is a partition of $\omega$ (possibly including $\emptyset$), so is $\bar{P}\coloneqq(P_j)_{j<\omega}$. To see that $\bar{P}$ witnesses \eqref{item:non_locally_rapid}, let $A\subseteq\omega$ be a linear selector of $\bar{P}$.
        Let $i<\omega$ be arbitrary and $j=f(i)$.
        Then, $g(A)\cap P_i^\mathrm{exp}\subseteq g(A)\cap Q_j=g(A\cap g^{-1}(Q_j))=g(A\cap P_j)$ has size $\leq|A\cap P_j|\leq j=f(i)$ since $A$ is a linear selector of $\bar{P}$ and we shall show $A\in\I$. Since $i$ was arbitrary, $g(A)\in\IL^f$ (witnessed by $k=1$). $g$ witnesses $\IL^f\leq_{\mathrm{KB}} \I$, so $A\subseteq g^{-1}(g(A))\in\I$.
        
    	To show \eqref{item:non_locally_rapid} implies \eqref{item:If_KB}, let $\bar{P}=(P_j)_{j<\omega}$ be a partition of $\omega$ into finite sets such that all the linear selectors are in $\I$. Take $i_0<i_1<\cdots<\omega$ such that $|P_j|\leq|P_{i_j}^\mathrm{exp}|$ and choose some function $f\in\oo$ such that $f(i_j)=j$ for all $j$. We may assume $f\ll\exp$, by taking a subsequence of $(i_j)_{j<\omega}$ if necessary. Let $g\colon\omega\to\omega$ be an injection such that $g(P_j)\subseteq P^\mathrm{exp}_{i_j}$. To see that $g$ witnesses $\IL^f\leq_{\mathrm{KB}} \I$, let us assume $A\in \IL^f$ and we shall show $B\coloneqq g^{-1}(A)\in\I$. Take $k<\omega$ such that for each $i<\omega$, $|A\cap P_i^\mathrm{exp}|\leq k\cdot f(i)$. For each $j<\omega$, $|B\cap P_j |=|g(B\cap P_j)|\leq |A\cap P_{i_j}^\mathrm{exp}|\leq k\cdot f(i_j)=k\cdot j$, so $B$ can be partitioned into $k$-many sets $(B_l)_{l<k}$ such that for each $j<\omega$, $|B_l\cap P_j |\leq j$. By \eqref{item:non_locally_rapid}, each $B_l$ is in $\I$ and hence $B=\bigcup_{l<k}B_l\in\I$.
    \end{proof}
    
    \begin{cor}\label{cor:chara_rapid_ult}
        Let $\mathscr{U}$ be an ultrafilter on $\omega$. Then $\mathscr{U}$ is rapid if and only if $\I_L^f\not\leq_{\mathrm{KB}}\mathscr{U}^*$ for all $f\in\omega^\omega$ such that $1\ll f\ll \exp$.
    \end{cor}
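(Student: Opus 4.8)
The plan is to deduce the corollary immediately from \Cref{prop:chara_f_growth} by instantiating it at the dual ideal $\I = \mathscr{U}^*$ and then translating the notion ``locally rapid'' for $\mathscr{U}^*$ into the notion ``rapid'' for $\mathscr{U}$. Concretely, I would proceed as follows.

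First I would record the elementary fact that since $\mathscr{U}$ is an ultrafilter, its dual ideal satisfies $(\mathscr{U}^*)^+ = \mathscr{U}$; that is, a set $A\subseteq\omega$ is $\mathscr{U}^*$-positive precisely when $A\in\mathscr{U}$. Consequently, unwinding the definition of locally rapid ideal (\Cref{dfn:JLf}'s neighborhood, i.e.\ the definition preceding \Cref{prop:chara_f_growth}), ``$\mathscr{U}^*$ is locally rapid'' says: for every partition $\bar P$ of $\omega$ into finite sets there is a linear selector of $\bar P$ that is $\mathscr{U}^*$-positive, i.e.\ that belongs to $\mathscr{U}$. This is exactly the definition of ``$\mathscr{U}$ is rapid'' as stated in the text. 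Hence $\mathscr{U}$ is rapid if and only if $\mathscr{U}^*$ is locally rapid.

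Next I would apply \Cref{prop:chara_f_growth} with $\I \coloneqq \mathscr{U}^*$: conditions \eqref{item:If_KB} and \eqref{item:non_locally_rapid} there are equivalent, so $\mathscr{U}^*$ is \emph{not} locally rapid if and only if there exists $f\in\omega^\omega$ with $1\ll f\ll\exp$ and $\I_L^f\leq_{\mathrm{KB}}\mathscr{U}^*$. Taking the contrapositive, $\mathscr{U}^*$ is locally rapid if and only if $\I_L^f\not\leq_{\mathrm{KB}}\mathscr{U}^*$ for every $f\in\omega^\omega$ with $1\ll f\ll\exp$. Chaining this with the equivalence from the previous paragraph yields exactly the statement of the corollary.

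\textbf{Main obstacle.} There is essentially no obstacle here: the corollary is a direct specialization of \Cref{prop:chara_f_growth}, and the only point that requires a (trivial) verification is the identification $(\mathscr{U}^*)^+ = \mathscr{U}$ needed to see that ``locally rapid for $\mathscr{U}^*$'' and ``rapid for $\mathscr{U}$'' are literally the same condition. So I expect the write-up to be a single short paragraph; the substantive content lives entirely in \Cref{prop:chara_f_growth}.
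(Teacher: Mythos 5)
Your argument is correct and is exactly how the paper obtains the corollary: it is stated as an immediate consequence of \Cref{prop:chara_f_growth} applied to $\I=\mathscr{U}^*$, using that $(\mathscr{U}^*)^+=\mathscr{U}$ identifies ``$\mathscr{U}^*$ locally rapid'' with ``$\mathscr{U}$ rapid.'' Nothing is missing.
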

    
    For the following results, we will use Solecki's characterization for analytic P-ideals $\I=\Exh(\phi)$ in \Cref{fac:submeasure_chara}. When $\I$ is tall, the submeasure $\phi$ has the following property:
    
    \begin{lem}[{\cite[Lemma 1.4]{HH07}}]
    	\label{lem:tall_HH07}
    	Let $\I=\Exh(\phi)$ where $\phi$ is a lower semi-continuous submeasure on $\omega$. Then $\I$ is tall if and only if $\lim_{n\to\omega}\phi(\{n\})=0$.
    \end{lem}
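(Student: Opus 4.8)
The plan is to argue both implications directly from the defining property $\Exh(\phi)=\{A\subseteq\omega:\lim_{n\to\infty}\phi(A\setminus n)=0\}$, using monotonicity of the submeasure for one direction and countable subadditivity (obtained from finite subadditivity together with lower semi-continuity) for the other.

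First I would prove the contrapositive of the ``tall $\Rightarrow$'' implication. Suppose $\phi(\{n\})\not\to 0$. Then there are $\varepsilon>0$ and an infinite set $A=\{a_0<a_1<\cdots\}$ with $\phi(\{a\})\geq\varepsilon$ for all $a\in A$. For any infinite $B\subseteq A$ and any $n<\omega$, the set $B\setminus n$ is nonempty, so picking some $b\in B\setminus n$ and using monotonicity of $\phi$ gives $\phi(B\setminus n)\geq\phi(\{b\})\geq\varepsilon$; hence $\lim_{n\to\infty}\phi(B\setminus n)\geq\varepsilon>0$ and $B\notin\Exh(\phi)=\I$. Thus $A$ is an infinite set with no infinite subset in $\I$, so $\I$ is not tall.

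For the converse, assume $\phi(\{n\})\to 0$ and let $A=\{a_0<a_1<\cdots\}\in[\omega]^\omega$ be arbitrary. Since $a_k\to\infty$ we have $\phi(\{a_k\})\to 0$, so we may choose indices $n_0<n_1<\cdots$ with $\phi(\{a_{n_k}\})\leq 2^{-k}$; set $b_k\coloneqq a_{n_k}$ and $B\coloneqq\{b_k:k<\omega\}\subseteq A$. For each $m<\omega$ we have $B\setminus b_m=\{b_k:k\geq m\}$, and countable subadditivity of $\phi$ (which is where lower semi-continuity is used, via $\phi(X)=\lim_{N\to\infty}\phi(X\cap N)$) yields $\phi(B\setminus b_m)\leq\sum_{k\geq m}\phi(\{b_k\})\leq 2^{-m+1}$. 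Given any $n$, letting $m$ be least with $b_m\geq n$ we get $\phi(B\setminus n)=\phi(B\setminus b_m)\leq 2^{-m+1}$, and $m\to\infty$ as $n\to\infty$, so $\lim_{n\to\infty}\phi(B\setminus n)=0$, i.e.\ $B\in\Exh(\phi)=\I$. As $B$ is infinite, this shows $\I$ is tall.

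The whole argument is short; the only point requiring a moment of care — the main ``obstacle'' — is justifying the telescoping estimate $\phi(\{b_k:k\geq m\})\leq\sum_{k\geq m}\phi(\{b_k\})$, i.e.\ upgrading finite subadditivity to countable subadditivity, which is precisely what the lower semi-continuity hypothesis provides. Everything else reduces to monotonicity of $\phi$ and the definition of $\Exh(\phi)$.
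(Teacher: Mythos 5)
Your proof is correct. The paper itself gives no argument for this lemma (it is quoted from \cite[Lemma 1.4]{HH07}), and your two directions — monotonicity of $\phi$ on singletons of submeasure $\geq\varepsilon$ to refute tallness, and lower semi-continuity upgrading finite to countable subadditivity to extract an infinite subset in $\Exh(\phi)$ from a sparse subsequence with $\phi(\{b_k\})\leq 2^{-k}$ — constitute exactly the standard argument from the cited source, so nothing is missing.
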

    
    \begin{prop}\label{prop:tallAP_non_locally_rapid}
    	Every tall analytic P-ideal is not locally rapid.
    \end{prop}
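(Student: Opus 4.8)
The statement is the (ZFC) assertion that no tall analytic P-ideal $\I$ is locally rapid; equivalently, unwinding the definition of \emph{locally rapid}, I must exhibit a single partition $\bar P=(P_j)_{j<\omega}$ of $\omega$ into finite sets such that \emph{every} linear selector of $\bar P$ (i.e.\ every $A\subseteq\omega$ with $|A\cap P_j|\le j$ for all $j$) belongs to $\I$. (Equivalently, by \Cref{prop:chara_f_growth} one could instead produce $f$ with $1\ll f\ll\exp$ and $\IL^f\leq_{\mathrm{KB}}\I$, but the direct construction of the partition is cleaner and I will carry that out.)

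\textbf{Step 1: set-up via Solecki's representation.} By \Cref{fac:submeasure_chara}(2), write $\I=\Exh(\varphi)$ for a lower semi-continuous submeasure $\varphi$ on $\omega$. Since $\I$ is tall, \Cref{lem:tall_HH07} gives $\lim_{n\to\infty}\varphi(\{n\})=0$. In particular, for each $j\geq 1$ the set $\{n<\omega:\varphi(\{n\})\le 2^{-j}/j\}$ is cofinite. I will use these two facts to build the desired partition: put all the finitely many ``bad'' indices at the bottom and make each later block consist only of points of very small $\varphi$-mass.

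\textbf{Step 2: the partition.} Choose $0=M_0<M_1<M_2<\cdots$ recursively so that, for every $j\geq1$, every $n$ with $M_j\le n<M_{j+1}$ satisfies $\varphi(\{n\})\le 2^{-j}/j$ (possible since that set is cofinite, so some consecutive block past $M_j$ lies in it). Set $P_0=[0,M_1)$ and $P_j=[M_j,M_{j+1})$ for $j\ge1$; this is a partition of $\omega$ into finite sets. Now let $A$ be any linear selector. Since $|A\cap P_0|\le 0$ we have $A\cap P_0=\emptyset$, so $A\subseteq[M_1,\infty)=\bigcup_{j\ge1}P_j$. For $j\ge1$, subadditivity gives
\[
\varphi(A\cap P_j)\le |A\cap P_j|\cdot\max_{n\in P_j}\varphi(\{n\})\le j\cdot\frac{2^{-j}}{j}=2^{-j}.
\]
Fix $m\ge1$ and $n\ge M_m$. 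Then $A\setminus n\subseteq A\cap[M_m,\infty)=\bigcup_{j\ge m}(A\cap P_j)$, and by monotonicity and lower semi-continuity,
\[
\varphi(A\setminus n)\le \varphi\Big(\bigcup_{j\ge m}(A\cap P_j)\Big)=\lim_{k\to\infty}\varphi\big(A\cap[M_m,M_k)\big)\le\lim_{k\to\infty}\sum_{j=m}^{k-1}\varphi(A\cap P_j)\le\sum_{j\ge m}2^{-j}=2^{-m+1}.
\]
Hence $\lim_{n\to\infty}\varphi(A\setminus n)=0$, i.e.\ $A\in\Exh(\varphi)=\I$. Since $A$ was an arbitrary linear selector of $\bar P$, the partition $\bar P$ witnesses that $\I$ is not locally rapid, which is what we wanted.

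\textbf{Expected difficulties.} There is no deep obstacle here; the proof is essentially a careful bookkeeping argument. The two points that need care are (i) placing the finitely many indices of large $\varphi$-mass inside $P_0$ (using that the linear-selector condition forces $A\cap P_0=\emptyset$), so that the $\Exh$-condition is only tested on the ``good'' tail, and (ii) correctly invoking lower semi-continuity of $\varphi$ to bound $\varphi$ on the infinite tail $\bigcup_{j\ge m}(A\cap P_j)$ by the convergent series $\sum_{j\ge m}2^{-j}$. Both are handled above.
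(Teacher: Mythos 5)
Your proof is correct and is essentially the paper's own argument: both write $\I=\Exh(\varphi)$ via Solecki's representation, use tallness to get $\lim_{n\to\infty}\varphi(\{n\})=0$, build an interval partition whose $j$-th block consists of points of singleton mass at most $\tfrac{1}{j\cdot 2^j}$, and then bound $\varphi(A\setminus M_m)$ for a linear selector $A$ by the tail of $\sum_j 2^{-j}$ using subadditivity and lower semi-continuity. The only cosmetic difference is that you should phrase the recursion as in the paper (choose $M_j$ beyond \emph{all} $n$ with $\varphi(\{n\})>2^{-j}/j$, so that every $n\ge M_j$ is small for level $j$), since the block $[M_j,M_{j+1})$ starts at $M_j$ rather than ``past'' it; with that reading your construction is exactly the paper's.
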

    \begin{proof}
     Let $\I=\Exh(\phi)$ where $\phi$ is a lower semi-continuous submeasure as in \Cref{fac:submeasure_chara}. Inductively take $M_0<M_1<\cdots<\omega$ as follows:
    	$M_0\coloneq 0$ and for $i>0$, 
    	\begin{equation*}
    		M_i\coloneq\min\left\{M>M_{i-1}:\text{ for }n\geq M,~\phi(\{n\})\leq\dfrac{1}{i\cdot2^i}\right\},
    	\end{equation*}
    	which is possible by Lemma \ref{lem:tall_HH07}.
    	Let $P_i\coloneq\left[M_i,M_{i+1}\right)$. To see the interval partition $\bar{P}=(P_i)_{i<\omega}$ witnesses \eqref{item:non_locally_rapid} in \Cref{prop:chara_f_growth} , assume $A\subseteq \omega$ is a linear-selector of $\bar{P}$. For $i>i_0$:
    	\begin{align*}
    		\phi(A\cap P_i)&\leq \sum_{n\in A\cap P_i}\phi(\{n\})\\
    					   &\leq |A\cap P_i|\cdot\dfrac{1}{i\cdot2^i}\leq\dfrac{1}{2^i}.
    	\end{align*}
    	Thus, 
    	\begin{align*}
    		\phi(A\setminus M_i)&=\phi\left(A\cap \bigcup_{k\geq i} P_k\right) = \sup_{i^\prime\geq i}\phi\left(A\cap \bigcup_{i\leq k \leq i^\prime} P_k\right)\\
    		&\leq\sup_{i^\prime\geq i}\sum_{i\leq k \leq i^\prime}\phi\left(A\cap P_k\right)\leq\sum_{i^\prime\geq i}\dfrac{1}{2^{i^\prime}}=\dfrac{1}{2^{i-1}}. 
    	\end{align*}
    	Therefore, $\lim_{n\to\infty}\phi(A\setminus n)=0$, so $A\in \I=\Exh(\phi)$. Hence $\I$ is not locally rapid.
    \end{proof}

    We end this section with providing a characterization of locally rapid Borel ideals. Such a characterization was suggested by Laflamme in \cite[Section 4]{Laf96}, but he did not write a precise statement in the paper, so we will include it with a proof for the record.\footnote{Note that an ideal is locally rapid if and only if its dual filter is weakly rapid in Laflamme's sense.} Following \cite{Laf96} and \cite{KZ17}, we say that an ideal $\I$ on $\omega$ is \emph{$\omega$-diagonalizable by elements of $\mathcal{A}\subset[\omega]^{\omega}$} if there is a sequence $\langle A_n\rangle_{n<\omega}$ of elements of $\mathcal{A}$ such that
    \[
    \forall X\in\I~\exists n<\omega~\lvert A_n\cap X\rvert<\omega.
    \]
    By definition, $\non^*(\I) > \omega$ if and only if $\I$ is not $\omega$-diagonalizable by elements of $[\omega]^\omega$. \cref{lem:addstar_nonstar} (2) states that if $\I$ is Borel, these conditions are also equivalent to $\edfin\leq_{\mathrm{KB}}\I$. We introduce the following variant of $\omega$-diagonalizability.

    \begin{dfn}
        We say that $A\subset\Fin$ is \emph{unbounded}\footnote{This terminology was used in \cite{HMM10}. Unboundedness is also called $\mathrm{Fr}$-universality, where $\mathrm{Fr}$ is the Fr\'echet filter, in \cite{Laf96}.} if for any $k<\omega$, there is $a\in A$ such that $a\cap k = \emptyset$. For an ideal $\I$ on $\omega$ and $\mathcal{A}\subset [\Fin]^{\omega}$, we say that $\I$ is \emph{$\omega$-diagonalizable by unbounded elements of $\mathcal{A}$} if there is a sequence $\langle A_n\rangle_{n<\omega}$ of unbounded elements of $\mathcal{A}$ such that
        \[
        \forall X\in\I~\exists n<\omega~\lvert\{a\in A_n: a\subset X\}\rvert < \omega.
        \]
    \end{dfn}
    
    Next we introduce the locally rapid ideal game $\mathcal{G}_{\textrm{local-rapid}}(\I)$ according to Laflamme's suggestion. The game consists of $\omega$ rounds and a typical run of this game looks as follows.
    \[
    \begin{array}{c|ccccc}
        \mathrm{I} & X_0 & & X_1 & & \cdots \\ \hline
        \mathrm{II} & & s_0 & & s_1 & \cdots
    \end{array}
    \]
    In this game, Player I must play finite subsets $X_n$ of $\omega$ and Player II must respond by choosing $s_n \in [\omega\setminus X_n]^{\leq n}$. Player I wins if and only if $\bigcup_{n<\omega}s_n\in\I$.

    \begin{prop}[Laflamme \cite{Laf96}]\label{weakly_rapid_game}
        Let $\I$ be an ideal on $\omega$. Then the following holds.
        \begin{enumerate}
            \item Player I has a winning strategy in $\mathcal{G}_{\textrm{local-rapid}}(\I)$ if and only if $\I$ is not locally rapid.
            \item Player II has a winning strategy in $\mathcal{G}_{\textrm{local-rapid}}(\I)$ if and only if $\I$ is $\omega$-diagonalizable by unbounded elements of $\bigcup_{n<\omega}[[\omega]^{\leq n}]^{\omega}$.
        \end{enumerate}
    \end{prop}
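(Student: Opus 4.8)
The plan is to split both $(1)$ and $(2)$ into the implication producing a winning strategy and its reverse, and to handle the four implications separately; the two ``existence of a winning strategy'' directions are direct constructions, while the two reverse directions require extracting combinatorial data from a given strategy and are where the work lies.

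\emph{The easy halves.} For $(1)$, if $\I$ is not locally rapid, fix a partition $\bar P=\langle P_j\rangle_{j<\omega}$ of $\omega$ into finite sets all of whose linear selectors lie in $\I$, and coarsen it exponentially: put $Q_k=\bigcup\{P_j:2^k\le j<2^{k+1}\}$ for $k\ge 1$ and $Q_0=P_0\cup P_1$. I would let Player I play $X_n=Q_0\cup\cdots\cup Q_{n-1}$ at round $n$. Then any legal answer $s_n$ of Player II lies in $\bigcup_{k\ge n}Q_k$ and has size $\le n$, so $A:=\bigcup_n s_n$ satisfies $|A\cap Q_k|\le\binom{k+1}{2}$ for all $k$; since each $P_j$ inside $Q_k$ has $j\ge 2^k\ge\binom{k+1}{2}$ for all large $k$, $A$ agrees off a finite set with a linear selector of $\bar P$, hence $A\in\I$ and Player I wins. (Alternatively, by \Cref{prop:chara_f_growth} it suffices to produce such a strategy for some $\I_L^f$ with $1\ll f\ll\exp$ and transport it along a finite-to-one Kat\v etov--Blass reduction, which preserves legality of moves and sends the resulting union into $\I$.) For $(2)$, if $\langle A_m\rangle_{m<\omega}$ witnesses $\omega$-diagonalizability by unbounded elements of $\bigcup_n[[\omega]^{\le n}]^\omega$, say $A_m\subseteq[\omega]^{\le\ell_m}$, I would fix a surjection-type map $d$ assigning to each round an index so that every $m$ is the value at cofinally many rounds, and have Player II at round $n$ play a not-yet-played $a\in A_{d(n)}$ with $a\cap X_n=\emptyset$ whenever $n\ge\ell_{d(n)}$ (such an $a$ exists since $A_{d(n)}$ is unbounded, hence infinitely many of its members avoid the finite set $X_n$) and play $\emptyset$ otherwise. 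Then for every $m$ infinitely many distinct members of $A_m$ are contained in $\bigcup_n s_n$; if $\bigcup_n s_n\in\I$, diagonalizability yields an $m$ with $\{a\in A_m:a\subseteq\bigcup_n s_n\}$ finite, a contradiction, so Player II wins.

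\emph{The hard half of $(1)$.} I would prove the contrapositive: if $\I$ is locally rapid, no strategy $\tau$ of Player I wins. The core step is to build, by recursion on $n$ and diagonalizing against $\tau$, a partition $\bar P=\langle P_n\rangle_{n<\omega}$ of $\omega$ into finite sets such that for every linear selector $A$ of $\bar P$ the ``canonical play'' $\langle A\cap P_0,A\cap P_1,\dots\rangle$ (legal in the size sense, as $|A\cap P_n|\le n$) is a legal play against $\tau$. At stage $n$ one maintains a finite tree of legal partial plays whose moves respect $P_0,\dots,P_{n-1}$; adding $P_n$ requires avoiding the finite set $Y_n$ of $\tau$-responses over that tree while still covering the least not-yet-covered integer. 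The main obstacle is exactly that last requirement: one must show that no integer stays trapped in $Y_{n'}$ for all large $n'$. I expect this to follow from the fact that a winning $\tau$ cannot indefinitely withhold any set with co-infinite (in particular cofinite) complement—else Player II covers the complement and wins—combined with a König's-lemma argument on the finitely branching tree of partial plays. Once $\bar P$ is built, local rapidity gives an $\I$-positive linear selector $A$, and its canonical play $\langle s_n\rangle$ against $\tau$ has $\bigcup_n s_n\supseteq A\notin\I$, whence $\bigcup_n s_n\notin\I$ and $\tau$ does not win; together with the easy half this proves $(1)$.

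\emph{The hard half of $(2)$.} Given a winning strategy $\sigma$ for Player II, I would set, for each $m$,
$A_m:=\{\sigma(\langle X_0,s_0,\dots,s_{m-1},X_m\rangle):\langle X_0,s_0,\dots,X_m\rangle\text{ a legal position consistent with }\sigma\}\subseteq[\omega]^{\le m}$.
Each $A_m$ is unbounded, since Player I may play $X_m\supseteq\{0,\dots,k\}$ to push $\sigma$'s answer into $[k+1,\omega)$. The remaining point is that $\langle A_m\rangle$ diagonalizes $\I$: if not, fix $X\in\I$ with $\{a\in A_m:a\subseteq X\}$ infinite for every $m$, and derive a contradiction by exhibiting a play against $\sigma$ with $\bigcup_n s_n\subseteq X\in\I$. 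The obstacle here is coherence—knowing that many of $\sigma$'s possible $m$-th answers lie inside $X$ does not by itself tell Player I how to steer $\sigma$ into $X$ at \emph{every} round—so this step needs a tree/fusion argument against $\sigma$ in the style of Laflamme's filter-game analysis \cite{Laf96}, building the play along a branch at each node of which Player I has a move forcing $\sigma$'s answer to stay inside $X$. Combined with the easy half, this yields $(2)$ and completes the proof.
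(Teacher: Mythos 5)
Your two easy halves are fine: the exponentially coarsened strategy for Player I is correct (and in fact verifies the linear-selector bound more robustly than a naive block-forbidding strategy), and your strategy for Player II from a diagonalizing family is essentially the paper's. The genuine problems are in the two converse directions. For (1), your plan to build a partition whose blocks avoid the finite sets of $\tau$-responses breaks exactly at the point you flag, and the principle you invoke to repair it --- that a winning strategy for Player I ``cannot indefinitely withhold any set with co-infinite (in particular cofinite) complement, else Player II covers the complement and wins'' --- is false as stated: a set with cofinite complement is a finite set, and any winning strategy for I can be modified to include a fixed finite set in every move and remains winning; moreover, even when II does manage to cover some co-small set, she wins only if the union of \emph{her} moves is $\I$-positive, which is not what covering gives (individual points can still be forbidden at the rounds when she would need them). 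So the König's-lemma/avoidance construction has no working engine behind it. The paper's argument for this direction is different and avoids the issue entirely: from the strategy $\sigma$ of I one defines a fast-growing $g$ such that every response to a play whose moves lie inside $g(n)$ is contained in $g(n+1)$, takes the interval partition $P_0=[0,g(1))$, $P_{n+1}=[g(n+1),g(n+2))$, and, given an $\I$-positive linear selector $S$ (local rapidity), lets II play the traces of $S$ on every other block, interleaved with empty moves; legality is automatic from the growth of $g$, and one of the two parity pieces of $S$ is still positive. I recommend replacing your construction by this one.

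For (2), the family $A_m=\{\text{all possible }\sigma\text{-answers at round }m\text{ over all positions}\}$ is too coarse, and the missing ``tree/fusion'' step cannot be supplied for it: knowing that infinitely many elements of each $A_m$ lie inside $X$ only gives answers inside $X$ arising from pairwise incompatible positions, and in fact $\langle A_m\rangle$ need not diagonalize $\I$ at all. For instance, let $\I$ be the ideal of sets almost contained in the set $E$ of even numbers, and let $\sigma$ read off a parity $b\in\{0,1\}$ from Player I's move at round $1$ and then play a fresh large odd number at rounds $\equiv b \pmod 2$ and a history-dependent even number (avoiding the current $X_m$) at the other rounds. This $\sigma$ is winning, yet for every $m\geq 1$ infinitely many possible round-$m$ answers are even singletons, i.e., subsets of $E\in\I$, so your family fails to catch $E$. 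The correct move (and the paper's) is to take a finer family indexed by II's move-histories $\overline{s}$: along the recursion one fixes a concrete Player-I realization of each history and lets $X_{\overline{s}}$ be the set of possible \emph{next} answers after that fixed position. Then failure of diagonalization yields, at every node of a single coherent run, a Player-I move whose answer lies inside $X$, and following that branch produces an actual play against $\sigma$ whose union is contained in $X\in\I$ --- the desired contradiction. Your unboundedness argument (forbid a long initial segment to push the answer up) transfers verbatim to these finer sets, so once the family is re-indexed this direction goes through.
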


    Borel determinacy immediately implies the desired characterization.
    
    \begin{cor}\label{char_of_weakly_rapid_for_Borel}
        For any Borel ideal $\I$ on $\omega$, $\I$ is locally rapid if and only if it is $\omega$-diagonalizable by unbounded elements of $\bigcup_{n<\omega}[[\omega]^{\leq n}]^{\omega}$.
    \end{cor}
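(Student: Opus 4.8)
The plan is to present $\mathcal{G}_{\textrm{local-rapid}}(\I)$ as a game on a countable set with Borel payoff, apply Martin's Borel determinacy theorem, and read off the equivalence from \Cref{weakly_rapid_game}. First I would fix a bijection between $\Fin = [\omega]^{<\omega}$ and $\omega$, so that a run $\langle X_n, s_n\rangle_{n<\omega}$ of the game is coded by an element of $\omega^\omega$; the legality constraints $X_n\in\Fin$ and $s_n\in[\omega\setminus X_n]^{\leq n}$ are closed conditions on the run, which is harmless for determinacy. The payoff set for Player I is $P=\{\langle X_n,s_n\rangle_{n<\omega}:\bigcup_{n<\omega}s_n\in\I\}$. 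The key observation is that the map $\langle X_n,s_n\rangle_{n<\omega}\mapsto\bigcup_{n<\omega}s_n\in\mathcal{P}(\omega)$ is Borel: for each $k<\omega$, the preimage of the subbasic clopen set $\{A\subseteq\omega:k\in A\}$ equals $\bigcup_{n<\omega}\{\,\text{runs}:k\in s_n\,\}$, a countable union of clopen sets, hence open, and the preimage of its complement is closed. Since $\I$ is Borel, $P$ is the preimage of a Borel set under a Borel map, hence Borel.

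Next I would invoke the Borel determinacy theorem: the game $\mathcal{G}_{\textrm{local-rapid}}(\I)$ is determined, so exactly one of the two players has a winning strategy (not both, since a pair of winning strategies could be played against each other). Then combining this with \Cref{weakly_rapid_game} gives the chain of equivalences: by part (1), $\I$ is locally rapid if and only if Player I has no winning strategy; by determinacy this holds if and only if Player II has a winning strategy; and by part (2) this holds if and only if $\I$ is $\omega$-diagonalizable by unbounded elements of $\bigcup_{n<\omega}[[\omega]^{\leq n}]^{\omega}$. This is exactly the statement of the corollary.

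The only step requiring any care — and hence the main (very minor) obstacle — is the verification that the payoff set $P$ is Borel, i.e.\ that the run-to-union map is Borel measurable; everything else is immediate from the cited results. I would note in passing that no Borel complexity level of $\I$ is lost: whatever the position of $\I$ in the Borel hierarchy, $P$ is Borel, which is all that Martin's theorem requires. Since \Cref{weakly_rapid_game} is assumed, this is essentially all there is to the proof.
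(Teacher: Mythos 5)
Your proposal is correct and is exactly the argument the paper intends: the paper's proof of this corollary is just the one-line observation that Borel determinacy applied to $\mathcal{G}_{\textrm{local-rapid}}(\I)$ combined with \Cref{weakly_rapid_game} yields the equivalence, and your write-up simply fills in the routine verification that the payoff set is Borel.
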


    In the proof of \cref{weakly_rapid_game}, we use the following notation. For any strategy $\sigma$ of Player I and any move $\overline{s}$ of Player II, $\sigma * \overline{s}$ denotes the next move of Player I according to $\sigma$ against $\overline{s}$. Analogously, for any strategy $\tau$ of Player II and any move $\overline{X}$ of Player I, $\overline{X}*\tau$ denotes the next move of Player II according to $\tau$ against $\overline{X}$.

    \begin{proof}[Proof of \cref{weakly_rapid_game}]
        To show (1), suppose that $\I$ is not locally rapid witnessed by a partition $\langle P_n\rangle_{n<\omega}$ of $\omega$ into finite sets. We define a strategy $\sigma$ for I by
        \[
        \sigma*\langle s_0, \ldots, s_{n-1}\rangle = \bigcup\{P_k: k < n \land  P_k\cap\bigcup_{i<n}s_i \neq \emptyset\}
        \]
        To show that $\sigma$ is a winning strategy for Player I, suppose that II plays $\langle s_i\rangle_{i<\omega}$ against $\sigma$. Let $S = \bigcup_{i<\omega} s_i$. Then for all $n < \omega$, there is $k\leq n$ such that $S\cap P_n \subset s_k$ and thus $\lvert S \cap P_n\rvert \leq k \leq n$. By the choice of $\langle P_n\rangle_{n<\omega}$, $S\in\I$. So Player I wins the game.

        Now suppose that Player I has a winning strategy $\sigma$. Let $g\in\omega^\omega$ such that $\sigma(\emptyset)\subset g(0)$ and for each $n<\omega$,
        \[
        \bigcup\{\sigma*\langle s_0, \ldots, s_n\rangle:\forall i \leq n\,(s_i\subset g(n))\}\subset g(n+1).
        \]
        Set $P_0 = [0, g(1))$ and $P_{n+1} = [g(n+1), g(n+2))$ for all $n<\omega$. Suppose toward a contradiction that $\I$ is locally rapid. Then there is $S\in\I^+$ such that for all $n<\omega$, $\lvert S \cap P_n \rvert \leq n$. Take such an $S$ and for each $i<2$, let $S_i = S \cap \bigcup_{k < \omega} P_{2k+i+1}$. Since $S = (S\cap P_0) \cup S_0 \cup S_1$, we have $S_i\in\I^+$ for some $i < 2$. Assuming $S_0\in\I^+$, Player II can defeat $\sigma$ by playing $s_{2k} = \emptyset$ and $s_{2k+1} = S_0\cap P_{2k+1}$ for all $k<\omega$. Player II can defeat $\sigma$ in an analogous way in the other case too, which is a contradiction.

        To show (2), suppose that $\I$ is $\omega$-diagonalized by a sequence $\langle A_n\rangle_{n<\omega}$ of unbounded elements of $\bigcup_{n<\omega}[[\omega]^{\leq n}]^{\omega}$. Adding $[\omega]^n$ into the sequence if necessary, we may assume that $A_n \subset [\omega]^{\leq n}$ for all $n<\omega$. Fix a surjection $\pi\colon\omega\to\omega$ such that $\pi(n) \leq n$ and $\lvert\pi^{-1}(n)\rvert=\omega$ for all $n<\omega$. Then we define a strategy $\tau$ for Player II by 
        \[
        \langle X_0, \ldots, X_n\rangle*\tau = \text{the lexicographically least element of } A_{\pi(n)} \cap [\omega\setminus (X_n\cup n)]^{\leq\pi(n)}.
        \]
        This is well-defined since all $A_n$'s are unbounded. To show that $\tau$ is a winning strategy for Player II, let $\langle s_n\rangle_{n<\omega}$ be Player II's move following $\tau$. Then the set $S:=\bigcup_{n<\omega}s_n$ includes infinitely many members of each $A_n$. Since $\I$ is $\omega$-diagonalized by $\langle A_n\rangle_{n<\omega}$, $S\in\I^+$.

        Finally, suppose that Player II has a winning strategy $\tau$ in the game.  We define $X_{\overline{s}}$ for any possible move $\overline{s}$ of player II when she follows $\tau$. This is done by induction on the length of $\overline{s}$ as follows: Let $X_{\emptyset} = \{\langle X \rangle*\tau: X\in\Fin\}$. For each $s\in X_{\emptyset}$, choose $X^s_{\emptyset}$ such that $\langle X^s_{\emptyset}\rangle * \tau = s$. For any $\overline{s}$ with $X_{\overline{s}}$ well-defined and any $t\in X_{\vec{s}}$, define
        \[
        X_{\overline{s}^{\frown}t} = \{\langle X_{\emptyset}^{s_0}, \ldots, X_{\overline{s}}^t, X\rangle*\tau : X\in\Fin\},
        \]
        and for each $u\in X_{\overline{s}^{\frown}t}$, choose $X_{\overline{s}^{\frown}t}^u$ such that $\langle X_{\emptyset}^{s_0}, \ldots, X_{\overline{s}}^t, X_{\overline{s}^{\frown}t}^u\rangle * \tau = u$. We claim that $\I$ is $\omega$-diagonalized by the family of all the sets $X_{\overline{s}}$. Suppose otherwise. Then there is $Y\in\I$ such that for any $\overline{s}$ with $X_{\overline{s}}$ well-defined, for infinitely many $t \in X_{\overline{s}}$,
        \[
        \langle X_{\emptyset}^{s_0}, \ldots, X_{\overline{s}}^t\rangle*\tau \subset Y.
        \]
        Then one can easily produce a run that is a win for Player I, but Player II follows $\tau$. This is a contradiction.
    \end{proof}

    \printbibliography

\end{document}